\documentclass[12pt]{amsart}
\usepackage{amsmath}
\usepackage{amsthm}
\usepackage{amssymb}
\usepackage{mathtools}
\usepackage{amscd}
\usepackage{hyperref}

\usepackage{stackengine}

\usepackage{color}
\usepackage{pgfplots}
\pgfplotsset{compat=1.15}

\usepackage{tikz-cd}
\usepackage[all,cmtip]{xy}

\setlength{\textwidth}{6.5in} \setlength{\textheight}{8.5 in}
\setlength{\oddsidemargin}{0.0 in}
\setlength{\evensidemargin}{\oddsidemargin}
\hfuzz 2pt 
\vfuzz 1.5pt


\newcommand{\gothic}{\mathfrak}
\newcommand{\p}{{\gothic{p}}}
\newcommand{\q}{{\gothic{q}}}

\newcommand{\Q}{{\mathbb {Q}}}
\newcommand{\m}{{\gothic{m}}}
\newcommand{\n}{{\gothic{n}}}
\newcommand{\mf}{\mathfrak}
\newcommand{\hght}{\operatorname{ht}}
\newcommand{\cotimes}[1]{\mathbin{\widehat{\otimes_{#1}}}}

\newcommand{\Ann}{\operatorname{Ann}}
\newcommand{\charac}{\operatorname{char}}

\newcommand{\height}{\operatorname{ht}}

\newcommand{\Spec}{\operatorname{Spec}}

\newcommand{\sh}{\operatorname{sh}}

\newcommand{\red}[1]{{#1}_{\textup{red}}}
\newcommand{\unm}[1]{{#1}^{\textup{un}}}

\newcommand{\length}{\ell}
\newcommand{\sO}{\mathcal O}

\renewcommand{\phi}{\varphi}

\DeclareMathOperator{\ord}{ord}
\DeclareMathOperator{\Proj}{Proj}
\DeclareMathOperator{\HH}{H}
\DeclareMathOperator{\lm}{c_{LM}}
\DeclareMathOperator{\grlm}{c^{gr}_{LM}}
\DeclareMathOperator{\limlm}{c^{\infty}_{LM}}
\DeclareMathOperator{\eh}{e}

\DeclareMathOperator{\Gr}{gr}

\DeclareMathOperator{\vol}{vol}
\DeclareMathOperator{\type}{type}

\DeclareMathOperator{\charc}{char}
\DeclareMathOperator{\Bl}{Bl}
\DeclareMathOperator{\init}{in}
\DeclareMathOperator{\edim}{edim}
\DeclareMathOperator{\rank}{rank}
\DeclareMathOperator{\Hom}{Hom}
\DeclareMathOperator{\num}{num}
\DeclareMathOperator{\lcm}{lcm}
\DeclareMathOperator{\sep}{sep}

\DeclareRobustCommand{\stirlingII}[2]{\genfrac{ \{ }{ \} }{0pt}{}{#1}{#2}}

\DeclareRobustCommand{\stirlingI}[2]{\genfrac{ [ }{ ] }{0pt}{}{#1}{#2}}

\newcommand{\N}{\mathbb N}
\newcommand{\CC}{\mathbb C}

\newcommand{\gr}{\text{gr}}


\newtheorem{theoremx}{Theorem}

\newtheorem{theoremR}{Theorem}

\newtheorem{theorem}{Theorem}
\newtheorem{lemma}[theorem]{Lemma}
\newtheorem{proposition}[theorem]{Proposition}
\newtheorem{corollary}[theorem]{Corollary}
\newtheorem{claim}[theorem]{Claim}
\newtheorem{conjecture}[theorem]{Conjecture}
\newtheorem*{statement*}{Statement}
\newtheorem*{theorem*}{Theorem}
\newtheorem*{lemma*}{Lemma}
\newtheorem*{fact*}{Fact}

\theoremstyle{definition}
\newtheorem{definition}[theorem]{Definition}
\newtheorem*{definition*}{Definition}
\newtheorem{example}[theorem]{Example}
\newtheorem*{example*}{Example}

\theoremstyle{remark}
\newtheorem{remark}[theorem]{Remark}
\newtheorem{setting}[theorem]{Setting}
\newtheorem*{remark*}{Remark}
\newtheorem{question}[theorem]{Question}

\numberwithin{theorem}{subsection}
\numberwithin{equation}{theorem}


\begin{document}

\title{Lech--Mumford constant and stability of local rings}
\author{Linquan Ma and Ilya Smirnov}
\address{Department of Mathematics, Purdue University, West Lafayette, IN 47907, USA}
\email{ma326@purdue.edu}
\address{BCAM – Basque Center for Applied Mathematics, Mazarredo 14, 48009 Bilbao, Basque Country – Spain}
\address{Ikerbasque, Basque Foundation for Science, Plaza Euskadi 5, 48009 Bilbao, Basque Country– Spain}
\email{ismirnov@bcamath.org}
\date{\today}

\begin{abstract}
This work systematically develops a theory of the Lech--Mumford constant, an invariant defined 
as an optimal constant in the classical Lech's inequality and underlined Mumford's notion of local semistability.
We establish a number of properties of semistable singularities, and, in particular, 
show that semistable singularities are log canonical under mild assumptions. We also provide new examples of semistable singularities. 
\end{abstract}

\maketitle

\tableofcontents

\addtocontents{toc}{\protect\setcounter{tocdepth}{1}}

\section*{Introduction}

\subsection*{Overview} 
Let $(R,\m)$ be a Noetherian local ring. In \cite{Lech2}, Lech proved a simple inequality relating two basic invariants of an $\m$-primary ideal $I \subseteq R$ -- the Hilbert--Samuel multiplicity (denoted $\eh(I)$) and the colength -- as follows: 

\begin{theoremR}[Lech's inequality {\cite[Theorem 3]{Lech2}}]
\label{thm Lech}
Let $(R,\m)$ be a Noetherian local ring of dimension $d$, and let $I\subseteq R$ be an $\m$-primary ideal. Then $$\eh(I)\leq d!\eh(R)\length(R/I),$$ where $\eh(R) \coloneqq \eh(\m)$.
\end{theoremR}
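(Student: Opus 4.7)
The plan is to follow Lech's original approach, which is built around his companion formula for the multiplicity of an ideal generated by powers of a system of parameters.

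First, I would reduce to the case where $R/\m$ is infinite via the standard faithfully flat extension $R \to R(t) \coloneqq R[t]_{\m R[t]}$, under which $\dim$, $\length(R/I)$, $\eh(I)$, and $\eh(R)$ are all preserved. With the residue field infinite one can select a minimal reduction $(y_1,\ldots,y_d)$ of $\m$; this is automatically generated by a system of parameters and satisfies $\eh(y_1,\ldots,y_d) = \eh(R)$.

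The main tools would be Lech's companion formula
$$\eh(y_1^{n_1},\ldots,y_d^{n_d}) = n_1 n_2 \cdots n_d\cdot\eh(y_1,\ldots,y_d)$$
(valid for any system of parameters) together with the monotonicity $\eh(J') \leq \eh(J)$ for $\m$-primary ideals $J \subseteq J'$. The strategy is to produce, for each large $n$, positive integers $N_{1,n},\ldots,N_{d,n}$ with $y_i^{N_{i,n}} \in I^n$ and $\prod_i N_{i,n} \leq d!\,\length(R/I)\,n^d + o(n^d)$. Applied to $J_n \coloneqq (y_1^{N_{1,n}},\ldots,y_d^{N_{d,n}}) \subseteq I^n$, the formula and monotonicity yield
$$n^d\,\eh(I) = \eh(I^n) \leq \eh(J_n) = N_{1,n}\cdots N_{d,n}\,\eh(R) \leq d!\,\eh(R)\,\length(R/I)\,n^d + o(n^d),$$
and dividing by $n^d$ and letting $n \to \infty$ gives the desired inequality.

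The principal obstacle is producing the integers $N_{i,n}$ with this asymptotic product bound. The naive choice of minimal $N_{i,n}$ with $y_i^{N_{i,n}} \in I^n$ is insufficient: for example in $R = k[[x,y]]$ with $I = (x^2-y^3,xy)$ one has $\length(R/I) = 5$, but the minimal exponents at $n=1$ are $N_1 = 3$, $N_2 = 4$, whose product $12$ already exceeds $d!\length(R/I) = 10$. The correct argument must control not individual exponents but the full staircase of monomials $y_1^{a_1}\cdots y_d^{a_d}$ that remain outside $I^n$: one shows that its normalized volume is asymptotically at most $\length(R/I)$, and converting this volume bound into one for an inscribed axis-aligned box acquires the factor of $d!$. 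This combinatorial step — essentially a convex-geometric comparison between the staircase and a rectangular solid — is the heart of Lech's original argument and the step I expect to be the most delicate to implement carefully.
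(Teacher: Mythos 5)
The paper does not prove Theorem~\ref{thm Lech}; it is imported from Lech's original article, so there is no in-text proof to compare against. Evaluating your proposal on its own terms: the high-level scaffolding (reduce to an infinite residue field, pick a minimal reduction $\underline{y}=(y_1,\ldots,y_d)$ of $\m$, invoke Lech's multiplicativity formula $\eh(y_1^{a_1},\ldots,y_d^{a_d})=a_1\cdots a_d\,\eh(\underline{y})$, and compare $I^n$ with a parameter ideal inside it) is sensible, and steps 1--3 and 5 are fine. However, the key claim you isolate in step 4 is not merely delicate -- it is false, so the argument cannot be completed as designed.

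For the plan to work you would need, for large $n$, exponents $N_{i,n}$ with $y_i^{N_{i,n}}\in I^n$ and $\prod_i N_{i,n}\leq d!\,\length(R/I)\,n^d+o(n^d)$. Writing $\alpha_i=\lim_n N_{i,n}/n$ for the minimal admissible exponents (the limit exists by subadditivity), this forces $\prod_i\alpha_i\leq d!\,\length(R/I)$. Take $R=k[[x,y]]$ and $I=(x^m,xy,y^m)$ with $m\geq 4$. Here $\length(R/I)=2m-1$, $\eh(R)=1$, $d!=2$, so the required upper bound is $4m-2$. On the other hand, the two Rees valuations of $I$ take the values $(1,m-1)$ and $(m-1,1)$ on $(x,y)$, and one checks from this (or, with $y_1=x$, $y_2=y$, directly: the least $N$ with $x^N\in I^n$ is $N=mn$) that for \emph{every} minimal reduction of $\m$ one has $\alpha_1\alpha_2\geq m^2>4m-2$. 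Thus no choice of box ideal $J_n=(y_1^{N_{1,n}},y_2^{N_{2,n}})\subseteq I^n$ can do better than $\eh(I)\leq m^2\,\eh(R)$, which is a true inequality but strictly weaker than Lech's $\eh(I)=2m\leq 4m-2$. The issue is not the nonminimal behavior at $n=1$ that your own example flags; it persists in the limit.

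The underlying error is a directional one in the convex-geometric picture you sketch. To have $J_n\subseteq I^n$ you need the box $[0,N_{1,n})\times\cdots\times[0,N_{d,n})$ to meet all the axis points outside $T_n=\{a:y^a\notin I^n\}$, which for a down-closed $T_n$ forces the box to \emph{circumscribe} $T_n$; and the circumscribing box of a down-closed set is not controlled by $d!$ times its cardinality (already the $L$-shape $\{(a,0):a<N\}\cup\{(0,b):b<N\}$ has $2N-1$ points but bounding box $N^2$). The estimate ``box $\leq d!\cdot$volume'' that you have in mind is valid for the convex region under the Newton polyhedron, but that region has volume $\eh(I)/(d!\,\eh(R))$, so the box comparison only yields the circular statement $\eh(I)\leq\eh(I)\,\eh(R)$. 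The genuine content of Lech's inequality is an \emph{upper} bound on $\length(R/I^n)$ in terms of lattice-point data, and the route that produces the $d!$ goes through the opposite containment: one degenerates to the associated graded ring (cf.\ Proposition~\ref{prop: LM associated graded} in the present paper, applied with $J_\bullet=\{\m^n\}$), and in the monomial setting uses that the region below the Newton polyhedron is contained in the union of unit cubes indexed by the staircase, so its volume is at most $\length(R/I)$. That inclusion, not the box inclusion, is what makes the constant $d!\,\eh(R)$ appear.
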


Lech also observed that his inequality is never sharp when $d \geq 2$ (see \cite[Page 74, after (4.1)]{Lech2}); that is, the inequality in Theorem~\ref{thm Lech} is always strict in this case. This naturally raises the question of whether the constant $\eh(R)$ in Lech's inequality can be improved. As far as the authors are aware, this problem was first considered by Mumford \cite{Mumford}. Hence, for a Noetherian local ring $(R,\m)$ of dimension $d$, it seems proper to introduce the following invariant: 
\[
\lm(R) \coloneqq \sup_{\sqrt{I}=\m}\left\{\frac{\eh(I)}{d!\length(R/I)}\right\}.
\]
We call $\lm(R)$ the {\it Lech--Mumford constant} of $R$. Note that Mumford introduced the same invariant but called and denoted it differently, see Remark~\ref{rmk: LM notation different from Mumford}.

Mumford's motivation for studying this invariant came from understanding singularities on the compactification of the moduli spaces of smooth projective varieties constructed via Geometric Invariant Theory (GIT). More precisely, he proved the following result.

\begin{theoremR}
[{\cite[Proposition~3.12]{Mumford}, see also \cite[Proposition~1.3]{Shah}}]
\label{thm Mumford GIT}
Let $X$ be a projective scheme over a field $k$ and $L$ be an ample line bundle on $X$. Set $N_n \coloneqq \dim_k \HH^0 (X, L^n)$ and
let $\Phi_n\colon X \to \mathbb P_k^{N_n - 1}$ be the map defined by $L^n$ for sufficiently large $n$.
Suppose $(X,L)$ is asymptotically Chow (or Hilbert) semistable,\footnote{Abusing notation, we will abbreviate both of them as ``asymptotically GIT semistable".} that is, for all $n\gg0$, the Chow (or the Hilbert) point corresponding to $\Phi_n(X)$ is semistable under the natural action of $SL(N_n)$. Then for every closed point $x\in X$, the local ring $\sO_{X,x}$ satisfies $\lm(\sO_{X,x}[[T]])=1$.
\end{theoremR}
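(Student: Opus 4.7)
The plan is to prove $\lm(R[[T]]) \leq 1$; the reverse inequality $\lm(R[[T]]) \geq 1$ holds universally, since for any Noetherian local ring $(S,\n)$ of dimension $e$, the choice $I = \n^k$ gives $\eh(\n^k)/(e!\length(S/\n^k)) \to 1$ as $k \to \infty$ by the very definition of Hilbert--Samuel multiplicity. Writing $R = \sO_{X,x}$ and $R' = R[[T]]$ with $\dim R' = d+1$, I therefore need to show $\eh(J) \leq (d+1)!\length(R'/J)$ for every $\m'$-primary ideal $J \subseteq R'$.

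The strategy is to attach to $J$ a one-parameter subgroup $\lambda_n^J \colon \mathbb{G}_m \to SL(H^0(X, L^n))$ for each $n \gg 0$ and invoke the Hilbert--Mumford numerical criterion: asymptotic GIT semistability says that the Chow (respectively Hilbert) weight $\mu(\Phi_n(X), \lambda) \geq 0$ for every such $\lambda$ when $n \gg 0$. Geometrically, a $\mathbb{G}_m$-equivariant test configuration of $(X, L^n)$ degenerating $X$ so that the action concentrates at $x$ is essentially the same data as an $\m'$-primary ideal of $R'$, with the variable $T$ playing the role of the deformation parameter. So to $J$ there corresponds a decreasing filtration $F_n^0 \supseteq F_n^1 \supseteq \cdots$ of $H^0(X, L^n)$, constructed by taking germs of sections at $x$, extending $T$-adically, and filtering by $J$-adic order; the one-parameter subgroup $\lambda_n^J$ then acts on $F_n^i/F_n^{i+1}$ with weight $-i$, suitably shifted so that the total trace is zero.

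The key computation is the asymptotic of $\mu(\Phi_n(X), \lambda_n^J)$ as $n \to \infty$. Dimensions $\dim_k F_n^i / F_n^{i+1}$ are controlled by the Hilbert--Samuel function of $J$ in $R'$, which for large $i$ grows like $\eh(J)\, i^{d+1}/(d+1)!$; the trace-free normalization of $\lambda_n^J$ subtracts off a multiple of $\length(R'/J)$. A direct computation should show that the leading coefficient in $n$ of $\mu(\Phi_n(X), \lambda_n^J)$ is, up to a positive constant, equal to $(d+1)!\length(R'/J) - \eh(J)$. Applying $\mu \geq 0$ and extracting this leading coefficient then yields $\eh(J) \leq (d+1)!\length(R'/J)$, as desired.

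The main obstacle is precisely this asymptotic computation, matching the \emph{global} Chow weight to the \emph{local} invariants $\eh(J)$ and $\length(R'/J)$ at $x$; this is the heart of Mumford's original argument and requires a careful analysis of the Hilbert polynomial of the graded algebra $\bigoplus_{n \geq 0} H^0(X, L^n)$ filtered by $J$, together with the standard formula for the Chow weight of a one-parameter subgroup. A secondary technical issue is that an ideal $J$ in the completion $R[[T]]$ need not arise from a coherent ideal sheaf on $X \times \mathbb{A}^1$, so one must first approximate $J$ by an algebraic ideal with the same multiplicity and colength, using continuity of $\eh(-)$ and $\length(R'/-)$ in the $\m'$-adic topology on $\m'$-primary ideals.
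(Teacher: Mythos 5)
The paper does not give a proof of this theorem; it is stated as background and attributed entirely to Mumford \cite[Proposition~3.12]{Mumford} (see also Shah \cite[Proposition~1.3]{Shah}). So there is no internal proof to compare against. Your sketch does, however, follow the architecture of Mumford's original argument: reduce via the Hilbert--Mumford numerical criterion to the positivity of Chow (or Hilbert) weights for one-parameter subgroups coming from local filtrations at $x$, and then match the leading asymptotic in $n$ of $\mu(\Phi_n(X),\lambda_n^J)$ to the quantity $(d+1)!\length(R'/J) - \eh(J)$.

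One refinement worth making explicit. Your ``secondary technical issue''---that an ideal $J$ of the completion $R[[T]]$ need not come from a coherent sheaf on $X\times\mathbb{A}^1$, and therefore must be approximated---is in Mumford's treatment dissolved rather than patched: by \cite[Lemma~3.6]{Mumford} (Corollary~\ref{cor: compute LM by homogeneous} in this paper), $\lm(R[[T]])$ is computed by \emph{homogeneous} ideals $I=\bigoplus_{k\geq 0}I_kT^k$ with $I_k\subseteq R$ and $I_k=R$ for $k\gg 0$, i.e.\ by finite decreasing $\m$-primary filtrations of $R=\sO_{X,x}$. These are precisely the objects that induce filtrations of $H^0(X,L^n)$ by order of vanishing along the filtration at $x$, hence one-parameter subgroups of $SL(N_n)$, with no approximation step and no detour through $X\times\mathbb{A}^1$. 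With this reduction in hand, the remaining content is the asymptotic Chow-weight computation you flag as ``the heart,'' which is indeed where the work lies in Mumford's proof; your sketch is consistent with it, but you would need to carry out the two-variable (in $n$ and the filtration index $i$) asymptotics carefully and get the trace-zero normalization and sign conventions right before it becomes a proof.
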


In accordance with Theorem~\ref{thm Mumford GIT}, Mumford called a Noetherian local  ring $(R, \m)$ {\it semistable} if $\lm(R[[T]]) = 1$ and initiated the study of such singularities, primarily in dimensions one and two.
He also introduced a notion of {\it stable} singularities, whose definition is more technical but is still coming from and inspired by GIT (see Definition~\ref{def: stability of local rings}). Continuing Mumford's work, 
Shah \cite{Shah, Shah2} attempted to classify semistable singularities in dimension two and obtained necessary conditions for such surface singularities. Ultimately, he obtained a list of candidates for semistable singularities of small multiplicity (but did not prove that all singularities on the list are semistable). 


In this paper, we improve and generalize Mumford's and Shah's classification results by providing necessary conditions for semistable singularities in all dimensions. Our results are based on the Minimal Model Program (MMP), which is natural given that the recent progress in the MMP has brought restrictions on singularities appearing on asymptotically GIT semistable varieties. Inspired by Shah's classification, Odaka has shown in \cite{Odaka} that asymptotically GIT semistable varieties have at worst \emph{semi-log canonical} singularities -- a class of singularities arising from the MMP introduced by 
Koll{\' a}r and Shepherd-Barron to present an MMP approach to the compactification of the moduli space of smooth surfaces \cite{KollarShepherdBarron}. The connection between GIT semistability and MMP singularities is mediated by \emph{K-semistability}: it is well-known that asymptotic GIT semistability implies K-semistability (see \cite[Theorem~3.9]{RossThomas}), and Odaka proved that singularities on K-semistable varieties are semi-log canonical. 

Our first theorem established a direct relation between semistable singularities and semi-log canonical singularities:

\begin{theoremx}[Theorem~\ref{thm: lim-stable implies slc}]
\label{thm Main A}
Let $(R,\m)$ be a local ring essentially of finite type over a field of characteristic zero that satisfies $(\textnormal{S}_2)$ and $(\textnormal{G}_1)$. Suppose $R$ is $\Q$-Gorenstein and semistable. Then $R$ is semi-log canonical.   
\end{theoremx}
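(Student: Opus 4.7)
The plan is to prove the contrapositive: assuming $R$ is not semi-log canonical, I will exhibit an $\n$-primary graded family in $S := R[[T]]$ whose Hilbert--Samuel multiplicities and colengths violate Lech's normalized bound, contradicting $\lm(S) = 1$. Throughout, $d := \dim R$ and $\n$ denotes the maximal ideal of $S$.

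Since $R$ is $\Q$-Gorenstein and satisfies $(\mathrm{S}_2)$ and $(\mathrm{G}_1)$, the semi-log canonical condition is well-defined \`a la Koll\'ar. Its failure, via existence of semi-log resolutions in characteristic zero (Bierstone--Milman together with Koll\'ar), is witnessed by a prime divisor $E$ over $\Spec R$ with log discrepancy $A_R(E) < 0$. Let $v := \ord_E$ and denote by $\mathfrak{a}_m := \{f \in R : v(f) \geq m\}$ the associated valuation ideals. Extend $v$ to a quasi-monomial valuation $\tilde v_\alpha$ on $S$ by fixing a rational $\alpha > 0$ and setting $\tilde v_\alpha(\sum_n a_n T^n) := \min_n (v(a_n) + \alpha n)$. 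Since $T$ is a regular parameter and $K_S$ pulls back from $K_R$, the log discrepancy computes as $A_S(\tilde v_\alpha) = A_R(v) + \alpha$, which stays strictly negative for $0 < \alpha < -A_R(v)$.

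The hypothesis $\lm(S) = 1$ applied to the $\n$-primary valuation ideals $\mathfrak{b}_m := \mathfrak{a}_m(\tilde v_\alpha) \subseteq S$ gives
$$\eh(\mathfrak{b}_m) \;\leq\; (d+1)!\,\length(S/\mathfrak{b}_m) \quad \text{for every } m \geq 1.$$
Standard asymptotic formulas for quasi-monomial valuations express both sides to leading order in terms of $\vol(v)$ and $\alpha$; at this leading order the Lech ratio tends to $1$, since Lech's inequality is asymptotically sharp along divisorial graded families. Thus the graded family by itself is insufficient. To force strict violation of the bound, I intend to combine this with a Musta\c{t}\u{a}-type inequality of the form $\eh(\mathfrak{b}_m)\cdot \mathrm{lct}(\mathfrak{b}_m)^{d+1} \geq (d+1)^{d+1}$, using the fact that negativity of $A_S(\tilde v_\alpha)$ forces $\mathrm{lct}(\mathfrak{b}_m)$ below the threshold necessary for Lech equality. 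Replacing $\mathfrak{b}_m$ by a suitable integral closure or adjoint modification arising from the semi-log resolution should then yield an $\n$-primary ideal $I \subset S$ with $\eh(I) > (d+1)!\,\length(S/I)$.

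The main obstacle is precisely this final step: since the leading-order computation is tight along divisorial families, the contradiction cannot be read off at the limit and must come from finite-level information, controlling lower-order Hilbert-polynomial terms by the negative quantity $A_R(v)$, or from a modification that shifts the colength without affecting the multiplicity. The philosophy parallels Odaka's proof that K-semistability implies semi-log canonical: where a non-slc divisor yields a destabilizing test configuration, here the analog must be an $\n$-primary ideal in $R[[T]]$ built directly from the resolution geometry of the bad divisor $E$, and whose explicit construction constitutes the technical heart of the argument.
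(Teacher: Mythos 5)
Your overall plan—pass to the contrapositive, extract a divisor $E$ of negative log discrepancy, and build a destabilizing graded family in a power series extension—is in the right spirit, but the step you yourself flag as the ``main obstacle'' is precisely where the proposal breaks, and the gap is not fillable within the $R[[T]]$ framework. With a single auxiliary variable, any graded family $\{\mathfrak{b}_m\}$ built from a divisorial or quasi-monomial valuation has Lech ratio $\eh(\mathfrak{b}_m)/((d+1)!\,\length(S/\mathfrak{b}_m))$ tending to $1$ as $m\to\infty$: the subleading coefficient of the Hilbert quasi-polynomial, which is where the negative discrepancy lives, is a lower-order correction that washes out in the limit of colengths. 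A Musta\c{t}\u{a}-type inequality relating $\eh$ and $\mathrm{lct}$ does not produce a \emph{strict} Lech violation at any finite $m$ either—it gives a normalized volume bound, not a colength--multiplicity gap of the right order. So the proposal, as written, cannot produce the single ideal $I\subset R[[T]]$ you need.

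The paper avoids this by proving the stronger assertion that $\limlm(R)>1$, not $\lm(R[[T]])>1$, and then using the non-increasing chain $\lm(R)\geq\lm(R[[T]])\geq\cdots\geq\limlm(R)$ (so semistable $\Rightarrow$ lim-stable, and the contrapositive disproves semistability). The machinery (Proposition~\ref{prop: exponential bound}, Theorem~\ref{thm: criterion for lim unstable via derivative}) constructs ideals $J_{N,k}\subset R[[t_1,\dots,t_r]]$ and lets $r\to\infty$; only in this limit does the bound become sensitive to the \emph{derivative} of the Hilbert series numerator at $t=1$, i.e.\ to the second coefficient of $\length(R/I_n)$. Concretely, the paper replaces your valuation ideals with the graded family of rational powers $\{I^{n/\rho}\}$ of an $\m$-primary ideal $I$ coming from the semi-log canonical modification $\pi\colon Y\to X$ (Odaka--Xu), which has the extra structure needed: the discrepancies along exceptional divisors are strictly $<-1$, the divisors are regular at their generic points, and the family is Noetherian. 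A Riemann--Roch computation (Theorem~\ref{thm: asymptotic RR}, Corollary~\ref{cor: Riemann-Roch for rational powers}, Lemma~\ref{lemma: generating function 2}) then identifies the relevant second coefficient as an intersection number weighted by $\sum_i(-a_i-1)>0$, exactly the negativity you were hoping to exploit—but made quantitative in a way your proposal is missing. You also omit the localization step (Theorem~\ref{thm: localization}): one must first pass to a minimal prime of non-slc locus so that the destabilizing divisors have center the closed point and the resulting ideals are $\m$-primary.
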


We note and highlight that Theorem~\ref{thm Main A} is a local result without assuming global geometry behind, and it naturally fits into the following diagram:
\[
\begin{tikzcd}[column sep = huge]
\text{asymptotic GIT semistability} \ar[r, Rightarrow] \ar[d, Rightarrow, "\text{Mumford}"]  & \text{K-semistability} \ar[d, Rightarrow, "\text{Odaka}"] \\
\text{semistable singularities} \ar[r, Rightarrow, "\text{Theorem~\ref{thm Main A}}"] & \text{semi-log canonical singularities}
\end{tikzcd}
\]
We also point out that K-semistable varieties may not have semistable singularities in general, see
\cite[Section 4.2]{WangXu}. Thus, although there have been tremendous breakthroughs in the study of K-stability \cite{XuKstabilityBook} and we hope to relate them with our study of stability of local rings, we have not been able to see a precise connection or application at the moment.

Our approach to Theorem~\ref{thm Main A} is inspired by Odaka's method, and the key step is similar in spirit: using (semi-)log canonical modification to construct a ``destabilizing" graded family of ideals. A crucial difference, however, is that we must reduce to a situation in which the graded family consists of $\m$-primary ideals. This recruitment naturally led us to introduce a weaker notion of {\it lim-stable} singularities, which localizes well by Theorem~\ref{thm: localization}. (By contrast, it remains unclear whether semistability itself localizes.) A further technical difficulty arises while proving that the graded family constructed via the (semi-)log canonical modification actually violates {lim-stability}. This hinges on a delicate analysis of the Hilbert series of certain rational powers of ideals, see Theorem~\ref{thm: lim-stable main technical result slc}.   

In fact, Theorem~\ref{thm: lim-stable implies slc} is stronger than Theorem~\ref{thm Main A} in many ways: we only need to assume lim-stability, and in dimension two, the result holds in all characteristics $\neq 2$. Moreover, when $R$ is normal, we obtain even stronger results in Theorem~\ref{thm: lim-stable implies log canonical general}. In what follows, we will give a more detailed explanation of our main results and our main computations.


\subsection*{Detailed results} 
The definition of semistability is based on the following important feature of the Lech--Mumford constant: 
$$\lm (R) \geq \lm (R[[T]]) \text{ and strict inequality is possible (and often).}$$ 
Moreover, it is easy to see that $\lm(R)\geq 1$ always holds. Thus, the limit 
$$\lim_{n \to \infty} \lm(R[[T_1, \dots, T_n]])$$
exists and we will denote it by $\limlm(R)$. This prompted us to introduce two new classes of singularities: \emph{Lech-stable} (when $\lm(R) = 1$) and \emph{lim-stable} (when $\limlm(R)=1$). These two classes as well as Mumford's notions of local stability are related as follows:
\[
\begin{tikzcd}
\text{Lech-stable} \ar[r, Rightarrow, "\dim(R)\geq 1"]  &[1.7em] \text{stable} \ar[r, Rightarrow] &
\text{semistable} \ar[r, Rightarrow] & \text{lim-stable}.
\end{tikzcd}
\]
The following is our main result on the connection between stability of local rings and MMP singularities when $R$ is normal:

\begin{theoremx}[Theorem~\ref{thm: lim-stable implies log canonical general}, Theorem~\ref{thm: Lech-stable implies canonical}]
\label{thm Main B}
Let $(R,\m)$ be an excellent normal local domain admitting a dualizing complex. Suppose that either
\begin{enumerate}
    \item $\dim(R)\leq 2$, or
    \item $R$ is essentially of finite type over a field of characteristic zero that is numerically $\Q$-Gorenstein.
\end{enumerate}
Then we have that
\begin{enumerate}
    \item[(a)] if $R$ is lim-stable, then $R$ is $\Q$-Gorenstein and log canonical;
\item[(b)] if $R$ is Lech-stable and $R$ has canonical singularities on the punctured spectrum, then $R$ is $\Q$-Gorenstein and canonical. 
\end{enumerate}
\end{theoremx}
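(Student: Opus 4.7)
My plan would follow the same paradigm as the proof of Theorem~\ref{thm Main A}: assume the stability hypothesis, suppose the MMP conclusion fails, and then exhibit $\m$-primary ideals whose Lech--Mumford ratios strictly exceed one -- a graded family for lim-stability in part (a), a single ideal for Lech-stability in part (b). The ideals would be constructed as pushforwards from an appropriate modification $\pi\colon Y\to X=\Spec R$: the log canonical modification (or a suitable relative MMP output) in setting~(2), and the minimal good resolution in the surface setting~(1). Given an exceptional $\Q$-divisor $F$ with $-F$ being $\pi$-ample, I would take $I_n=\pi_*\sO_Y(-nF)$ as the destabilizing family, compute $\eh(I_n)$ and $\length(R/I_n)$ by intersection theory on $Y$ and Riemann--Roch/vanishing, and pass the ratio through the Hilbert-series machinery of Theorem~\ref{thm: lim-stable main technical result slc}.

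For part (a), the first task is to promote the hypothesis to genuine $\Q$-Gorenstein. In the surface setting~(1) this is essentially classical: the intersection form on the exceptional divisor of the minimal resolution is negative definite, so the canonical-cycle equations $Z_K\cdot E_i=-K_Y\cdot E_i$ have a unique rational solution; once lim-stability excludes pathological discrepancies, numerically $\Q$-Gorenstein and $\Q$-Gorenstein coincide. In setting~(2) we start with numerically $\Q$-Gorenstein and argue that if $K_X$ were not actually $\Q$-Cartier, any irrational or discordant discrepancy would produce an $F$ whose associated $I_n$ gives $\limsup_n \eh(I_n)/(d!\length(R/I_n))>1$, contradicting lim-stability. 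Once $R$ is $\Q$-Gorenstein, Theorem~\ref{thm Main A} applies (normality supplies $(\textnormal{S}_2)$ and $(\textnormal{G}_1)$ for free) and yields semi-log canonical; normality then collapses semi-log canonical to log canonical, finishing~(a).

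For part (b), the implication diagram before the theorem gives $\text{Lech-stable}\Rightarrow\text{lim-stable}$ in positive dimension, so part~(a) already gives that $R$ is $\Q$-Gorenstein and log canonical. By hypothesis the only possible non-canonical point is the closed point $\m$. If $R$ were log canonical but not canonical at $\m$, there would exist a divisorial valuation $v$ centered at $\m$ with discrepancy $a(v)\in[-1,0)$. Using a valuation ideal $I=\{f\in R : v(f)\geq n\}$ for $n$ large (adjusted by a general element, or capped by a power of $\m$, to guarantee $\m$-primary) I would produce a single honest $\m$-primary ideal with $\eh(I)>d!\length(R/I)$, violating $\lm(R)=1$ directly; the computation uses the canonical cycle on a log resolution together with the discrepancy $a(v)<0$ to separate the ratio from one, not merely in a limit.

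The main obstacle is the delicate estimate needed to ensure strict inequality of Lech--Mumford ratios when the discrepancy failure is small -- e.g., a single $a_i$ only slightly below $-1$ in part~(a), or $a(v)$ just below $0$ in part~(b). The heavy lifting here is done by Theorem~\ref{thm: lim-stable main technical result slc}, which controls the Hilbert series of rational powers of $I_n$ and converts a discrepancy inequality into a strict inequality of limits. A secondary technical subtlety in case~(2)(a) is the interplay between the numerical and actual $\Q$-Cartier properties of $K_X$: one must verify that the relative MMP run from the numerically $\Q$-Gorenstein hypothesis is compatible with the bookkeeping of Hilbert--Samuel multiplicities on $Y$, so that the destabilizing family can be constructed without first knowing $K_X$ is $\Q$-Cartier.
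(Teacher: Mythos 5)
Your proposal correctly identifies the overall paradigm (destabilizing ideals built from an MMP modification, colengths estimated by intersection theory / Riemann--Roch, converted into a strict Lech--Mumford bound via the Hilbert-series criterion), but the reduction paths you sketch diverge from the paper's in ways that create genuine gaps.

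The most serious issue is in part~(a), where you propose to first establish $\Q$-Gorensteinness and then invoke Theorem~\ref{thm Main A} plus normality to collapse ``semi-log canonical'' to ``log canonical.'' This is not the paper's route, and it does not work as stated. Theorem~\ref{thm: lim-stable implies slc} (the lim-stable version of Theorem~\ref{thm Main A}) assumes $\Q$-Gorenstein as a hypothesis and, in the surface case, requires $\charac(R/\m)\neq 2$; Theorem~\ref{thm: lim-stable implies log canonical general}(1) has no characteristic restriction. So you cannot recover case~(1) in characteristic~$2$ this way. Structurally the two are parallel applications of two distinct technical lemmas (Theorem~\ref{thm: lim-stable main technical result} for normal rings, Theorem~\ref{thm: lim-stable main technical result slc} for non-normal $\Q$-Gorenstein rings), neither derived from the other. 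Relatedly, your argument for ``$R$ is $\Q$-Gorenstein'' is where the real work lives and it remains vague: in dimension two the canonical cycle is automatically a rational solution of the linear system, i.e., numerical $\Q$-Gorensteinness is free, but passing from numerically $\Q$-Cartier to genuinely $\Q$-Cartier is the substance. The paper handles this inside Theorem~\ref{thm: lim-stable main technical result} via Theorem~\ref{thm: num Q-Gor lc is Q-Gor lc} (pseudo-log canonical $+$ numerically $\Q$-Gorenstein $\Rightarrow$ $\Q$-Gorenstein and log canonical), and in the surface case Theorem~\ref{thm: lim-stable implies log canonical general} also needs a nontrivial argument that the log canonical modification $Y$ is numerically log terminal, hence pseudo-rational, hence $\Q$-factorial, to obtain the required ample exceptional Cartier divisor. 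Your proposal does not address either of these points.

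For part~(b), your first step is correct and matches the paper: part~(a) applied to the lim-stable ring $R$ gives $\Q$-Gorenstein and log canonical. But then you propose to produce a violating ideal from a single divisorial valuation with discrepancy in $[-1,0)$, truncated to be $\m$-primary. The paper instead takes the canonical modification $f\colon Y\to X$, uses the hypothesis on the punctured spectrum to conclude that $f$ is the blowup of a single $\m$-primary ideal $I$, and applies Proposition~\ref{prop: consequence asymptotic RR} to $\length(R/\overline{I^n})$; the coefficient $(K_Y - f^*K_X)\cdot(-E)^{d-1}$ is strictly negative because all discrepancies are negative and $-E$ is $f$-ample, giving $\eh(\overline{I^n}) > d!\,\length(R/\overline{I^n})$ for $n\gg 0$. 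This is cleaner and sidesteps several subtleties: valuation ideals $\{f : v(f)\geq n\}$ need not be cofinal with a blowup-friendly family, and their Hilbert function is harder to control by Riemann--Roch than the normal Hilbert function of a single ideal whose blowup is the canonical modification. Your truncation-by-a-power-of-$\m$ fix would further perturb both sides of the ratio and needs its own estimate. These are surmountable, but they are the heart of the argument, not a routine step.

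Finally, a smaller point: you cite Theorem~\ref{thm: lim-stable main technical result slc} as the Hilbert-series engine for part~(a), but the paper's Theorem~\ref{thm: lim-stable implies log canonical general} relies on Theorem~\ref{thm: lim-stable main technical result} (and its criterion Theorem~\ref{thm: criterion for lim unstable via derivative}) in the normal setting; Theorem~\ref{thm: lim-stable main technical result slc} is the non-normal $\Q$-Gorenstein variant used for Theorem~\ref{thm: lim-stable implies slc}.
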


We also obtain similar results by relaxing the normality condition, as already mentioned in Theorem~\ref{thm Main A}. We note that in Theorem~\ref{thm Main B} (b), there is an additional assumption on the singularities of the punctured spectrum of $R$. This assumption cannot be removed, see Remark~\ref{rmk: Lech-stable but not canonical} (which relies on a weak semicontinuity result for the Lech--Mumford constant, Proposition~\ref{prop: new weak semicontinuity}). We also point out that lim-stability (or even Lech-stability) alone does not imply $\Q$-Gorenstein in general (Theorem~\ref{thm: max minors are Lech-stable}).

The proof of Theorem~\ref{thm Main B} (a) relies on a variation of log canonical modification essentially due to Hashizume \cite{HashizumeSingularityArbitraryPair}, see Theorem~\ref{thm: Hashizume}, as well as a lower bound on $\limlm(R)$ in terms of the Hilbert--Poincare series of graded families of $\m$-primary ideals, see Proposition~\ref{prop: exponential bound}. The proof then proceeds by examining the ``second coefficient" in the asymptotic colengths of certain rational powers, see Theorem~\ref{thm: criterion for lim unstable via derivative} and Theorem~\ref{thm: lim-stable main technical result}. The proof of part (b) is similar and easier (we use canonical modifications \cite{BCHM}). The key estimate on $\limlm(R)$, Proposition~\ref{prop: exponential bound}, applied to powers of maximal ideals, already yields some consequences: 

\begin{theoremx}[Corollary~\ref{cor: strict complete intersections}, Corollary~\ref{cor: large multiplicity is lim-unstable}, Remark~\ref{rmk: values of E(d)}]
Let $(R,\m)$ be a Noetherian Cohen--Macaulay local ring of dimension $d$. Then 
\begin{itemize}
    \item if $R$ is a strict complete intersection and lim-stable (resp., Lech-stable), then $\eh(R)\leq \edim(R)$ (resp., $\eh(R)<\edim(R)$);
    \item there exists a constant $C(d)$ depending only on $d$ so that if $R$ is lim-stable, then $\eh(R)< C(d)$. Moreover, one can take $C(1)=3$ and $C(2)=17$.  
\end{itemize}
\end{theoremx}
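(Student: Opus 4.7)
The plan is to apply Proposition~\ref{prop: exponential bound} to the graded family of powers of the maximal ideal. As indicated in the text preceding this theorem, that proposition converts the Hilbert--Poincar\'e series of the associated graded ring $\gr_\m(R)$ into a lower bound for $\limlm(R)$; lim-stability or Lech-stability then imposes an explicit numerical inequality on this series, which can be read as a constraint on the invariants of $R$.

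For the first bullet, a strict complete intersection has the explicit description $\gr_\m(R) \cong k[x_1,\dots,x_n]/(f_1,\dots,f_c)$ with $f_1,\dots,f_c$ a homogeneous regular sequence of degrees $d_1,\dots,d_c$, where $n=\edim(R)$, $c=n-d$, and $\eh(R)=\prod_i d_i$. Its Hilbert series is the rational function
\[
H(t) = \frac{\prod_{i=1}^{c}(1-t^{d_i})}{(1-t)^n}.
\]
Substituting this into Proposition~\ref{prop: exponential bound} and enforcing $\limlm(R)=1$ reduces the problem to a concrete comparison between $\prod_i d_i$ and $n$; an elementary estimate (essentially an AM--GM type bound on the $d_i$) then yields $\eh(R) = \prod_i d_i \leq \edim(R) = n$. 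The Lech-stable version runs identically but uses the stronger hypothesis $\lm(R)=1$ at finite stage, forcing strict inequality.

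For the second bullet one can no longer write $\gr_\m(R)$ down explicitly, but a universal upper bound on $\ell(R/\m^k)$ purely in terms of $d$ and $\eh(R)$ is still available -- for instance via Macaulay-type bounds on admissible Hilbert functions combined with the Cohen--Macaulay hypothesis. Inserting such an upper bound into Proposition~\ref{prop: exponential bound} gives an estimate $\limlm(R) \geq \Psi(d,\eh(R))$ for an explicit function $\Psi$ that tends to infinity in its second argument once the first is fixed. Imposing $\Psi(d,e) \leq 1$ isolates a threshold $C(d)$ beyond which lim-stability must fail. Dimension one admits a direct closed-form computation giving $C(1)=3$; dimension two requires a more involved combinatorial optimization over the admissible Hilbert functions with prescribed multiplicity, producing $C(2)=17$.

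The main obstacle is the combinatorial optimization underlying the second bullet when $d \geq 2$: Proposition~\ref{prop: exponential bound} depends on the full Hilbert--Poincar\'e series of $\gr_\m(R)$, so one must minimize the resulting bound over all Hilbert series compatible with a Cohen--Macaulay local ring of given dimension and multiplicity in order to extract a sharp value of $C(d)$. The strict complete intersection setting is much cleaner because the Hilbert series is rigidly determined by the degrees $d_1,\dots,d_c$ alone, so the only nontrivial step is the elementary inequality $\prod d_i \leq n$.
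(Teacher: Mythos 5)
Your overall strategy --- feeding the Hilbert--Poincar\'e series of a graded family into Proposition~\ref{prop: exponential bound} and extracting a numerical constraint from $\limlm(R)=1$ --- is exactly the engine the paper uses for both bullets, but the way you extract the constraints has concrete gaps.

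For the strict complete intersection case, the paper's argument is \emph{not} a comparison between $\prod_i D_i$ and $n$, and it does not invoke AM--GM. Writing $L(x)$ for the lower bound on $\limlm(R)$ coming from Proposition~\ref{prop: exponential bound} together with the Hilbert series $\prod_i(1-t^{D_i})/(1-t)^n$, one Taylor-expands at $x=0$: the constant $\prod_i D_i$ cancels and
\[
L(x) = 1 + \frac{1}{2}\left(\sum_i D_i - n\right)x + O(x^2),
\]
so lim-stability forces $\sum_i D_i \leq n$. That is the precise content of Corollary~\ref{cor: strict complete intersections}: a constraint on the \emph{sum} of the degrees, with the multiplicity $\prod_i D_i$ playing no role in the inequality. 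Your ``AM--GM type bound'' cannot upgrade this to $\prod_i D_i \leq n$: AM--GM only gives $\prod_i D_i \leq (\sum_i D_i/c)^c$, which exceeds $n$ in general (for instance $c=3$ quadrics in $n=6$ variables give $\sum D_i = 6 = n$ but $\prod D_i = 8$). So the step you rely on to pass from the degree-sum inequality to the multiplicative form ``$\eh(R)\leq\edim(R)$'' does not close.

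For the multiplicity bound, inserting an upper bound on the Hilbert series into Proposition~\ref{prop: exponential bound} is the right idea, but the paper chooses a different and more flexible input. Rather than a Macaulay-type bound on $\length(R/\m^k)$, it uses the Rossi--Valla--Vasconcelos bound $\big(\length(R/I) + (\eh(I) - \length(R/I))t\big)/(1-t)^d$ on the Hilbert series of $\gr_I(R)$ for an \emph{arbitrary} $\m$-primary ideal $I$. Feeding this in and taking the supremum over $I$ produces a lower bound on $\limlm(R)$ that depends only on $\lm(R)$; the corollary is thus naturally stated as ``$\lm(R)\geq C(d)$ implies $\limlm(R)>1$,'' with the $\eh(R)$ version following from $\lm(R)\geq \eh(R)/d!$. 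The explicit thresholds in Remark~\ref{rmk: values of E(d)} come from a single-variable supremum over $x>0$ of an explicit function, not from a combinatorial optimization over admissible Hilbert functions; for $d=2$ this gives $C(2)\leq 8.375$ as a bound on $\lm(R)$, hence $\eh(R)<17$. Finally, $C(1)=3$ is not obtained by this machinery at all but from the complete classification of one-dimensional lim-stable rings in Theorem~\ref{thm: dimension one semistable}.
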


The definition of Lech--Mumford constant involves a supremum, and this creates a major difficulty in computing (or even bounding) the invariant. We have managed to compute this invariant for some surface singularities, which include pseudo-rational singularities ($\lm(R) = \eh(R)/2$, Corollary~\ref{cor: RDP Lech stable}); standard graded normal domains ($\lm(R) = \eh(R)/2$, Corollary~\ref{cor: Watanabe}); simple elliptic singularities and minimally elliptic singularities of degree one (Theorem~\ref{thm: minimal elliptic irred exc} and Theorem~\ref{thm: minimal elliptic deg one}).

We next summarize our main results on classification of (Lech-, semi-, lim-)stable singularities and computations in examples. 

\begin{theoremx}
\label{thm Main D}
    Let $(R, \m)$ be a Noetherian local ring.
    \begin{enumerate}
    \item If $\dim(R)=0$, then Lech-stability, semistability, and lim-stability of $R$ are all equivalent to $R$ being a field. Moreover, $R$ is never stable. (Proposition~\ref{prop: Artinian case})
    \item 
    If $\dim(R) = 1$, then 
    \begin{enumerate}
        \item $R$ is Lech-stable if and only if $R$ is stable if and only if the unmixed part of $\widehat{R}$, $\unm{\widehat{R}}$, is regular. (Proposition~\ref{prop: dimension one Lech-stable})
        \item $R$ is semistable if and only if $R$ is lim-stable if and only if $\unm{\widehat{R}}$ is either regular or a node. (Theorem~\ref{thm: dimension one semistable}) 
    \end{enumerate}

    \item 
    If $\dim (R) = 2$, then Cohen--Macaulay Lech-stable singularities are essentially rational double points, given explicitly via the ADE classification. (Theorem~\ref{thm: Lech-stable CM surface})
    \item If $\dim(R)=2$, then the following singularities are semistable:
    \begin{enumerate}
        \item cone of the twisted cubic curve (Proposition~\ref{prop: cone of twisted cubic semistable});
        \item certain rational triple points (Proposition~\ref{prop: triple A});
        \item elliptic and rational polygonal cones in Mumford's terminology (Proposition~\ref{prop: Mumford elliptic cones} and Proposition~\ref{prop: Mumford rational cones});
        \item Veronese subrings $k[x,y]^{(n)}$ for $n \leq 6$ (Example~\ref{example: Veronese in two variables}); 
        \item most of semi-log canonical hypersurfaces (Theorem~\ref{thm: slc semistable}).
        \end{enumerate}    
    \item In higher dimensions, simple normal crossings are semistable (Theorem~\ref{thm: snc is semistable}), and generic determinantal rings of maximal minors are Lech-stable (Theorem~\ref{thm: max minors are Lech-stable}).
    \end{enumerate}
\end{theoremx}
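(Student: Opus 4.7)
The theorem is a summary of results proved throughout the paper, and my plan is to treat each part by techniques matching the dimension and the class of singularities involved. For parts (1) and (2), the rigid structure of low-dimensional local rings makes direct computation feasible. In dimension zero, every $\m$-primary ideal $I$ has multiplicity $\eh(I) = \length(R)$ (since the Hilbert function of $I$ stabilizes at $\length(R)$), so $\lm(R) = \length(R)$, and Lech-stability, semistability, and lim-stability all collapse to $\length(R) = 1$, equivalently to $R$ being a field; the ``never stable'' clause would follow by a direct inspection of Mumford's more restrictive definition applied to $R[[T]]$. In dimension one, I would first reduce to the complete, unmixed case using the invariance of $\eh$ and $\lm$ under completion and unmixed quotients, and then analyze $\unm{\hat R}$ through its normalization, which is a finite product of DVRs. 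Lech-stability would force the conductor to be trivial, hence $\unm{\hat R}$ to be a DVR (regular); semistability would additionally allow the ordinary double point (node), and ruling out all other conductor patterns requires explicit estimation of $\eh(I)/\length(R/I)$ using Northcott-type inequalities.

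For part (3), my strategy would be to combine two ingredients already developed in the paper: the effective multiplicity bound $\eh(R) < C(2) = 17$ for lim-stable Cohen--Macaulay rings, and the implication ``Lech-stable with canonical punctured spectrum $\Rightarrow$ canonical'' from Theorem~\ref{thm Main B}(b). Together these force $R$ to be a canonical surface singularity of bounded multiplicity, and canonical surface singularities are classically the rational double points, classified by ADE (via Artin's classification, equivalently the McKay correspondence). One would then verify the converse direction, that each ADE singularity is in fact Lech-stable, by an explicit computation of $\lm$ using the well-understood graded rings of these Du Val singularities.

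For parts (4) and (5), each example requires a tailored upper bound $\lm(R) \leq 1$ matched against the trivial lower bound $\lm(R) \geq 1$. The techniques are case-specific: toric/monomial reductions for simple normal crossings and Veronese rings, explicit resolution data and Hirzebruch--Jung type computations for elliptic and rational polygonal cones and for rational triple points, Hilbert-series manipulations for minimally elliptic cones building on the degree-one computations cited in the introduction, and a genericity argument for maximal-minor determinantal rings. The main obstacle, beyond part (3), is part (4)(d): bounding $\lm$ for the Veronese subrings $k[x,y]^{(n)}$ with $n \leq 6$ is delicate because the supremum defining $\lm$ is rarely attained at powers of $\m$ or at any obvious monomial ideal, so a careful enumeration of candidate $\m$-primary monomial ideals together with a reduction of arbitrary $\m$-primary ideals to monomial ones (via initial ideals and semicontinuity of $\eh$ and $\length$) is needed for each $n$ separately, explaining why the bound stops at $n = 6$.
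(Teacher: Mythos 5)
Your overall framing — reduce to low-dimensional structure in parts (1)--(2), classify via MMP in part (3), and do case-by-case bounds in parts (4)--(5) — roughly matches the paper's organization, but the route to part (3) has a genuine gap and your characterization of the proof of (4d) misses the mechanism.

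For part (3), you propose combining the bound $\eh(R) < C(2) = 17$ with ``Lech-stable $+$ canonical punctured spectrum $\Rightarrow$ canonical'' (Theorem~\ref{thm Main B}(b)). This fails for two reasons. First, $C(2)$ is a bound for lim-stability; for Lech-stability the relevant bound is much sharper -- taking $I = \m$ in the definition of $\lm$ gives $\eh(R) \leq d! = 2$ directly -- and it is precisely this sharp bound, combined with Abhyankar's inequality, that forces $\edim(R) \leq 3$ and hence the hypersurface structure that the classification rests on. The bound $\eh(R) < 17$ alone gives no control on the embedding dimension. Second, and more seriously, Theorem~\ref{thm Main B}(b) requires $R$ to be a normal domain, so your route is blind to the non-normal members $A_\infty$ ($f = x^2 + y^2$) and $D_\infty$ ($f = x^2 + zy^2$) that actually appear in the classification. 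The paper handles these by a direct structural analysis of the equation (following \cite{CRMPST}) after establishing that $R$ is a hypersurface, and proves their Lech-stability via a Brian{\c c}on--Skoda argument (Proposition~\ref{prop: BS for non normal limit}) using $\m$-adic stability of associated graded rings -- none of which comes through the MMP machinery. Your ``converse direction'' for the normal RDPs likewise does not require a case-by-case computation of graded rings: the paper derives it conceptually from Lipman--Teissier (pseudo-rational $\Rightarrow$ every integrally closed $\m$-primary ideal is stable) combined with a type bound (Lemma~\ref{lem: LM for two dimensional stable}).

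For part (4)(d), your description is the opposite of what happens. The proof for $k[x,y]^{(n)}$ is a one-line Gr\"{o}bner degeneration: the defining ideal of the Veronese is the $2\times 2$ minors of a Hankel matrix, whose initial ideal defines the rational polygonal $n$-cone (Proposition~\ref{prop: Mumford rational cones}), and Corollary~\ref{cor: LM monomial ideal} transports the semistability. The cutoff $n \leq 6$ is not a consequence of any enumeration becoming unmanageable; it is the multiplicity constraint $\eh(R) \leq (d+1)! = 6$ from Remark~\ref{rmk: mult bound for Lech-stable and semistable}, since $\eh(k[x,y]^{(n)}) = n$. Similarly, for (4)(c) the paper does not use resolution data or Hirzebruch--Jung continued fractions -- the polygonal cones are non-normal unions of planes, so that apparatus does not apply -- but rather the sharp dimension-three Lech inequality (Theorem~\ref{thm: optimal Lech dim three}) together with the monomial inclusion--exclusion formula (Corollary~\ref{cor: monomial inc/exc}).
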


Establishing Theorem~\ref{thm Main D} relies on a variety of tools used in computing and bounding the Lech--Mumford constant, including the analysis of its behavior under Gr\"{o}bner deformations (Proposition~\ref{prop: LM multigraded}, Corollary~\ref{cor: grober degeneration} and Corollary~\ref{cor: LM monomial ideal}) and the development of sharper, often optimal, versions of Lech's inequality (Theorem~\ref{thm: main Lech incl/excl} and Theorem~\ref{thm: optimal Lech dim three}).

Lastly, we note that while the main motivation of this paper comes from understanding Mumford's notion of semistability \cite{Mumford}, which has geometric origin, our interest on the algebraic side lies in finding refinements, generalizations, and variations of the classical Lech's inequality (Theorem~\ref{thm Lech}). We have developed a series of works in this direction -- see  \cite{KMQSY,MQS,HMQS,MaSmirnovUniformLech,MQSColengthII}. This paper builds on these works by employing several of their techniques and unifying various aspects.

\subsection*{Acknowledgement} During the (long) preparation of this manuscript, we were inspired by and benefited from discussions and conversations with many colleagues. 

We would like to thank Craig Huneke for encouraging us initiating this project and for many fruitful discussions and suggestions, in particular for his suggestion of using rational powers of ideals. We would also like to thank him and Pham Hung Quy for collaborating with us on Lech's inequality which motivates a large portion of this manuscript. 

We would like to thank Joe Waldron and Ziquan Zhuang for their numerous help on the Minimal Model Program and for their patience in answering our questions on the materials presented in Section~\ref{section: MMP}. Among other things, we learned the proof of Theorem~\ref{thm: Hashizume} from Ziquan, and we learned Remark~\ref{rmk: more lim-stable implies log canonical} from Joe. 

We would like to thank Kei-ichi Watanabe for discussions on two-dimensional singularities related to the techniques in Section~\ref{section: surface}. In particular, Lemma~\ref{lem: Watanabe} is due to Kei-ichi. We would also like to thank Hailong Dao for many communications on low-dimensional singularities. In particular, Lemma~\ref{lem: LM for two dimensional stable} and Corollary~\ref{cor: RDP Lech stable} were generalized from the Gorenstein case in discussion with him.

We would like to thank Yuchen Liu for explaining to us several arguments in Mumford's paper \cite{Mumford} and teaching us the degeneration techniques. We would like to thank Chi Li for inspiring us to consider the second coefficient of the colengths of certain graded family of ideals (Proposition~\ref{prop: consequence asymptotic RR}). 

Finally, we would like to thank Harold Blum, Aldo Conca, Igor Dolgachev, David Eisenbud, Hang Huang, Shihoko Ishii, Mircea Musta\c{t}\u{a}, Yuji Odaka, Suchitra Pande, Quentin Posva, Marilina Rossi, Karl Schwede, Shunsuke Takagi, Kevin Tucker, Bernd Ulrich, Jakub Witaszek, and Chenyang Xu for their expertise and comments on various parts of this paper.    

This material is based upon work supported by the National Science Foundation under Grant No. DMS-1928930 and by the Alfred P. Sloan Foundation under grant G-2021-16778, while the author was in residence at the Simons Laufer Mathematical Sciences Institute (formerly MSRI) in Berkeley, California, during the Spring 2024 semester. 

The first author was supported in part by NSF Grant DMS \#2302430. 

The second author was supported by the State Research Agency of Spain through the Ramon y Cajal fellowship RYC2020-028976-I funded by MCIN/AEI/10.13039/501100011033 and by FSE ``invest in your future'' and grants PID2021-125052NA-I00 and EUR2023-143443 funded by MCIN/AEI/10.13039/501100011033 and the European Union NextGenerationEU/PRTR and by BCAM Severo-Ochoa accreditation CEX2021-001142-S funded by MCIN/AEI/10.13039/501100011033. For a period he was supported through a fellowship from “la Caixa” Foundation
(ID 100010434), and the European Union’s Horizon 2020 research and innovation programme under the Marie Sk{\l}odowska-Curie grant agreement No 847648 (fellowship code ``LCF/BQ/PI21/11830033'').

\subsection*{Notations and Conventions}
Throughout the rest of this article, all rings are commutative, Noetherian, with multiplicative identity $1$. We will use $(R, \m)$ to denote a (Noetherian) local ring with unique maximal ideal $\m$. All schemes are Noetherian and separated.

\newpage

\addtocontents{toc}{\protect\setcounter{tocdepth}{2}}
\section{Preliminaries}

\subsection{Hilbert functions, multiplicities, and integral closure}
Let $G = \oplus_{n = 0}^\infty G_n$ be an $\mathbb{N}$-graded ring such that $G_0$ is an Artinian local ring and $G$ is finitely generated over $G_0$. 
For any finitely generated $\mathbb{Z}$-graded $G$-module $M$ one defines the 
\emph{Hilbert function} $H_M(n) \coloneqq \length_{G_0} (M_n)$ 
and the \emph{Hilbert--Poincare series} 
$$h_M(t) \coloneqq \sum_{n = -\infty}^\infty \length_{G_0} (M_n) t^n \in \mathbb{Z}[[t]].$$ 

Since $G$ is a finitely generated $G_0$-algebra, we may choose homogeneous generators $x_1, \ldots, x_s$ and let $d_1, \ldots, d_s > 0$ be their degrees. 
By the Hilbert--Serre theorem \cite[Theorem~11.1]{AtiyahMacDonald}, $h_M(t)$ is a rational function 
of the form
\[
h_M(t) = \frac{f(t)}{\prod_{i = 1}^s (1 - t^{d_i})},
\]
where $f(t) \in \mathbb{Z}[t, t^{-1}]$ (and if $M$ is generated in non-negative degrees, then we have $f(t)\in \mathbb{Z}[t]$) and the order of the pole of 
$h_M(t)$ at $t = 1$ is $\dim (M) - 1$. It follows that $H_M(n)$ is eventually, for $n \gg 0$, given by a quasi-polynomial of degree $\dim(M) - 1$. 
In the case where $d_1 = \cdots = d_s = 1$, the function $H_M(n)$ is eventually a polynomial.
We will refer to this (quasi-)polynomial as the Hilbert (quasi-)polynomial of $M$.

Let $I$ be an $\m$-primary ideal of a local ring $(R, \m)$. The \emph{associated graded ring} of $I$, 
\[
\gr_I (R) \coloneqq R/I \oplus I/I^2 \oplus I^2/I^3 \oplus \cdots
\]
is an $\N$-graded ring generated in degree one over $R/I$.
If $M$ is a finitely generated $R$-module then 
$\gr_I (M) \coloneqq M/I \oplus IM/I^2M \oplus \cdots$ is a finitely 
generated graded $\gr_I (R)$-module. Hence its {Hilbert function} $H_{\gr_I(M)}(n)$ is given by a polynomial of degree $\dim (M) - 1$ for $n\gg0$, which is called the \emph{Hilbert polynomial} of $M$ with respect to $I$.

We will be mostly working with the {\it Hilbert--Samuel} polynomial $p_I(t)$ of $I$ which computes $n \mapsto \length (R/I^n)$ for $n \gg 0$.
The Hilbert and Hilbert--Samuel polynomials are related by the sum/difference transform: if we write the Hilbert--Samuel polynomial as 
\[
p_I(n) = \eh_0(I) \binom{n + d - 1}{d} - \eh_1(I) \binom{n + d - 2}{d - 1} + \cdots + (-1)^d \eh_d(I),
\]
then 
\[
\Delta p_I(n) \coloneqq 
p_I (n+1) - p_I(n) = \eh_0(I) \binom{n + d - 1}{d-1} - \eh_1(I) \binom{n + d - 2}{d - 2} + \cdots + (-1)^{d-1} \eh_{d-1}(I) 
\]
is the Hilbert polynomial of $I$. 
Since the Hilbert--Samuel polynomial is integer-valued, 
the coefficients $\eh_i(I)$ are integers and are called the \emph{Hilbert coefficients} of $I$.  

\begin{definition}
Let $(R,\m)$ be a local ring of dimension $d$ and let $I\subseteq R$ be an $\m$-primary ideal. The integer $\eh(I) \coloneqq \eh_0(I)$ is called the {\it Hilbert--Samuel multiplicity} of $I$ and can be directly defined as
\[
\eh (I) \coloneqq \lim_{n \to \infty} 
\frac{d! \length (R/I^n)}{n^{d}}.
\]
The multiplicity of a finitely generated $R$-module $M$ with respect to $I$ is defined similarly as 
\[
\eh(I, M) \coloneqq \lim_{n \to \infty} 
\frac{d!\length (M/I^nM)}{n^{d}}.
\]
\end{definition}

We note that, under the definition above, $\eh (I, M) = 0$ when $\dim (M) < d$, and if $\dim(M)=d$, then $\eh(I,M)$ is the normalized leading coefficient (i.e., $(d-1)!$ times the leading coefficient) of the Hilbert polynomial of $M$ with respect to $I$. 

The Hilbert--Samuel multiplicity is additive in short exact sequences of $R$-modules, this property yields 
what is often called the {additivity or associativity formula} for multiplicities \cite[Theorem 11.2.4]{SwansonHuneke} which we will use throughout the article: 
\[
\eh (I, M) = \sum_{\mf p} \length (M_\mf p) \eh (I, R/\p), 
\]
where the sum varies through all primes $\p$ such that 
$\dim (R/\p) = \dim (R)$.

The Hilbert--Samuel multiplicity is intrinsically connected to {integral closure} of ideals. We summarize the main ideas and refer to \cite{SwansonHuneke} for more background information. 

\begin{definition}
    An element $x \in R$ is \emph{integral} over an ideal $I \subseteq R$ if it satisfies a monic polynomial equation
    \[
    x^n + a_1x^{n-1} + \cdots + a_n=0,
    \]
    where $a_k \in I^k$ for $k = 1, \ldots, n$. The set of all elements $x$ integral over $I$ forms an ideal and we call this ideal the \emph{integral closure} of $I$ and denoted it by $\overline{I}$. An ideal $I$ is \emph{integrally closed} if $I = \overline{I}$ and is {\it normal} if $I^m = \overline{I^m}$ for all $m\in\N$. 
\end{definition}

We say that an ideal $Q \subseteq J$ is a {\it reduction} of $J$ if there is an integer $k$ such that 
$QJ^k = J^{k +1}$. It is well-known that $Q$ is a reduction of $J$ if and only if $J \subseteq \overline{Q}$ (see \cite[Corollary 1.2.5]{SwansonHuneke}, note that we always work with Noetherian rings). It follows easily that $\eh (I) = \eh (\overline{I})$ for all $\m$-primary ideals $I\subseteq R$. Conversely, a celebrated theorem of Rees shows that if $(R,\m)$ is formally equidimensional (i.e., the $\m$-adic completion $\widehat{R}$ is equidimensional), then for two $\m$-primary ideals $I\subseteq J$, we have $\eh (I) = \eh (J)$ if and only if $J \subseteq \overline{I}$ \cite[Theorem 11.3.1]{SwansonHuneke}. Thus the study of multiplicities can often be reduced to integrally closed ideals.

In the opposite way, the study of multiplicities can be often reduced to {parameter ideals}, i.e., ideals generated by a system of parameters. It is well-known that for a local ring $(R,\m)$ with an infinite residue field, 
any $\m$-primary ideal $I\subseteq R$ has a reduction generated by a system of parameters, and any such reduction is called a {\it minimal reduction} of $I$. 
In fact, it turns out that any $d=\dim(R)$ \emph{general} elements $x_1, \ldots, x_d \in I$ form a minimal reduction of $I$, see \cite[Theorem 8.6.6]{SwansonHuneke}. Here, we define general elements of $I$ by equipping the vector space $I/\m I$ with the Zariski topology. 

More generally, the theory of superficial elements extends the formula $\eh(I) = \eh(J)$ to shorter sequences, thus providing a suitable tool for inductive proofs, see \cite[Section 8.6]{SwansonHuneke}. We record the following fact which is a consequence of \cite[Proposition 11.1.9]{SwansonHuneke}. If $(R,\m)$ is a local ring of dimension $d$ with an infinite residue field, then for any $\m$-primary ideal $I\subseteq R$ and any general elements $x_1, \ldots, x_k \in I$ with $k < d$, we have
$$\eh (I, R) = \eh (IR/(x_1, \ldots, x_k), R/(x_1, \ldots, x_k)).$$

We next recall some basics on mixed multiplicities that will be used in this paper. Mixed multiplicities originate from the work of Risler and Teissier (see \cite[Section 2]{TeissierMixed}) which uses the work of Bhattacharya \cite{Bhattacharya}. Given two $\m$-primary ideals $I, J$ in a local ring $(R, \m)$ of dimension $d$, the function $(n, m) \mapsto \length (R/I^nJ^m)$
is eventually, i.e., for $n,m \gg 0$, given by a polynomial function of degree $d$.
One then defines the {\it mixed multiplicities} 
$\eh (I^{[k]} \mid J^{[d-k]})$
by writing the degree $d$ bihomogeneous part of that polynomial as 
\[
\frac{1}{d!} \sum_{k = 0}^d \binom{d}{d-k} \eh (I^{[k]} \mid J^{[d-k]}) n^km^{d-k}.
\]
All mixed multiplicities are positive integers, 
and we have $\eh(I) = \eh (I^{[d]} \mid J^{[0]})$ and 
$\eh(J)= \eh (I^{[0]} \mid J^{[d]})$. 
In \cite{TeissierMixed}, the theory of superficial sequences was extended to prove that
\[
\eh (I^{[k]} \mid J^{[d-k]}) = \eh (IR/(x_1, \ldots, x_{d-k}))
\]
for $k\geq 1$, where $x_1, \ldots, x_{d-k} \in J$ are general elements (assuming $R$ has an infinite residue field). We will mostly use mixed multiplicities when $\dim(R) = 2$. In this case, we will clean up the notation by using $\eh (I \mid J)$ instead of $\eh (I^{[1]} \mid J^{[1]})$.

We will often express multiplicities in terms of intersection numbers on certain blowups. For this we will use intersection theory for proper schemes over an Artinian local ring, see \cite{CutkoskyMontanoMultiplicities} or \cite[Chapter VI.2]{KollarBookRationalCurves}. This theory was originated from \cite{SnapperMultiplesDivisors}, and was further developed in  \cite{MumfordLecturesCurvesSurface,KleimanNumericalTheoryAmpleness}. 
We refer to \cite[Theorem A]{CutkoskyMontanoMultiplicities} for the generality stated here.
Let $(R,\m)$ be a local ring of dimension $d$ and let $I\subseteq R$ be an $\m$-primary ideal. Let $Y\to \Spec(R)$ be a projective birational map such that $I\cdot \sO_Y \cong \sO_Y(-E)$ is an invertible sheaf corresponding to the (non-effective) Cartier divisor $-E$. Then we have 
$$\eh(I) = -(-E)^{d}.$$
One way to interpret the intersection product $-(-E)^d$ is to restrict the line bundle $\sO_Y(-E)$ to $E$ and then compute the intersection number $(\sO_Y(-E)|_E)^{d-1}$, here $E$ is projective over an Artinian local ring $R/I$ and the intersection product is defined in \cite[Definition I.2.1]{KleimanNumericalTheoryAmpleness}, see also \cite[Chapter VI.2]{KollarBookRationalCurves} and \cite[Chapter 20]{FultonBookIntersection} for intersection theory in more general setups. Similarly, we can also express mixed multiplicities as intersection numbers on certain blowups \cite[Theorem E]{CutkoskyMontanoMultiplicities}.

\subsection{Rational powers}
We will use the theory of rational powers of ideals. We begin by collecting some definitions.  

\begin{definition}
    Let $I\subseteq R$ be an ideal and let $a, b \in\mathbb{N}$. We define the $(b/a)$-th {\it rational power} of $I$ as  
    $$I^{\frac b a} \coloneqq \{x \in R \mid x^a \in \overline{I^b}\}.$$
\end{definition}

\begin{remark}
It is easy to see that rational powers are integrally closed and that $I^{\frac n 1} = \overline{I^n}$. By the valuation criterion of integral closure \cite[Theorem 6.8.3]{SwansonHuneke}, we have $x \in I^{\frac b a}$ if and only if $\nu (x) \geq (b/a) \nu (I)$ for all valuations $\nu$ supported at the minimal primes of $R$.
It follows easily that rational powers are well-defined, i.e., $I^{\frac b a}= I^{\frac {b'}{a'}}$ whenever $b/a=b'/a'$. 
\end{remark}

Let $I\subseteq R$ be an ideal. There exists finitely many discrete valuations $v_i$ supported at minimal primes of $R$ so that for each $n$, $x\in \overline{I^n}$ if and only if $v_i(x)\geq n$ for all $i$. The minimal collection of such $v_i$'s are called the {\it Rees valuations} of $I$, see \cite[Definition 10.1.1]{SwansonHuneke} and Proposition~\ref{prop: geometric rational powers} for the construction. It follows that $x \in I^{\frac b a}$ if and only if $\nu (x) \geq (b/a) \nu (I)$ for all Rees valuations $\nu$ of $I$. 

\begin{definition}
    We define the \emph{Rees period} of $I\subseteq R$ as 
    \[
    \rho(I) \coloneqq \lcm \{v_i (I) \mid v_i \text{ are the Rees valuations of } I\}.
    \]
\end{definition}

The Rees period is a common denominator for all rational powers, i.e., if $\rho$ is the Rees period of $I$ and $\alpha \in \mathbb{Q}_{\geq 0}$, then $I^\alpha = I^\frac{\lceil \rho \alpha \rceil}{\rho}$, see \cite[Proposition~10.5.5]{SwansonHuneke}. The common denominator allows one to show that the Hilbert--Samuel function of rational powers of $I$
is eventually a quasi-polynomial. This fact is well-known to experts and is closely related to \cite[Proposition~2.13]{BDHM} (and alternatively can be deduced from Corollary~\ref{cor: Riemann-Roch for rational powers} in Section~\ref{section: MMP}). We record it here for completeness. 

\begin{proposition}\label{prop: rational powers are quasi-polynomial}
    Let $(R, \m)$ be an analytically unramified local ring (i.e., the $\m$-adic completion $\widehat{R}$ is reduced) of dimension $d \geq 1$
    and let $I\subseteq R$ be an $\m$-primary ideal. 
    If $\rho$ is the Rees period of $I$ then 
    for any integer $0 \leq r < \rho$
    the function 
    $
    n \mapsto \length (R/I^\frac {n\rho + r}{\rho})
    $
    is given by a polynomial for $n \gg 0$.
    Moreover, 
    \[
        \length (R/I^\frac {n\rho + r}{\rho}) = \frac{\eh(I)}{d!}n^d + O(n^{d-1}). 
    \]
\end{proposition}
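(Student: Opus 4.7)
The plan is to exhibit the $\mathbb{N}$-graded algebra $B \coloneqq \bigoplus_{n \geq 0} I^{n/\rho}$ as finitely generated over $R$ with generators in degrees dividing $\rho$, and then extract the asymptotics of $\length(R/I^{n/\rho})$ by a standard Hilbert--Serre argument combined with a Rees-valuation sandwich.

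The first step, and the main obstacle, is the finite generation of $B$. A direct computation from the definition $x \in I^{n/\rho} \Longleftrightarrow x^\rho \in \overline{I^n}$ identifies $B$ with the integral closure of the Rees algebra $R[It^\rho]$ inside $R[t]$. Since $R$ is analytically unramified, Rees's classical finiteness theorem guarantees that the ordinary integral closure of the Rees algebra, $A \coloneqq \bigoplus_n \overline{I^n} = B^{(\rho)}$, is a finitely generated $R$-algebra; the corresponding finiteness of $B$ as an $A$-module is exactly the content of \cite[Proposition~2.13]{BDHM}. Because every homogeneous $x \in B_n$ satisfies $x^\rho \in B_{n\rho} \subseteq A$, one sees that $B$ is integral of bounded degree over $A$, and hence generated as an $A$-module by elements in degrees $0,1,\dots,\rho-1$. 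In particular, $B$ is generated as an $R$-algebra by finitely many homogeneous elements whose degrees all divide $\rho$.

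Once $B$ is known to be finitely generated, the rest is standard. The ideal $I^{1/\rho}$ is $\m$-primary (its $\rho$-th power lies in $\overline{I} \subseteq \m$), so $R/I^{1/\rho}$ is Artinian, and the associated graded ring
\[
\gr(B) = \bigoplus_{n \geq 0} I^{n/\rho}/I^{(n+1)/\rho}
\]
is a finitely generated $\mathbb{N}$-graded algebra over $R/I^{1/\rho}$ with homogeneous generators of degrees dividing $\rho$. The Hilbert--Serre theorem then writes its Hilbert--Poincare series as a rational function with denominator dividing $(1-t^\rho)^d$, so that $\length(I^{n/\rho}/I^{(n+1)/\rho})$ is eventually a quasi-polynomial of degree $d - 1$ whose period divides $\rho$. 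Summing these differences, $\length(R/I^{n/\rho})$ is eventually a quasi-polynomial of degree $d$ with period dividing $\rho$; restricting $n$ to any fixed residue class modulo $\rho$ yields an honest polynomial, which is the first assertion of the proposition.

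To pin down the leading coefficient, I would use the sandwich $\overline{I^{n+1}} \subseteq I^{(n\rho+r)/\rho} \subseteq \overline{I^n}$ for $0 \leq r < \rho$, which is immediate from the Rees-valuation definition of rational powers. Since $R$ is analytically unramified, Rees's uniform bound gives $\overline{I^m} \subseteq I^{m-c}$ for some constant $c$ and all $m \gg 0$, while $I^m \subseteq \overline{I^m}$, so that
\[
\length(R/\overline{I^m}) = \frac{\eh(I)}{d!}m^d + O(m^{d-1}).
\]
Applying this to both sides of the sandwich forces the leading term of the already-polynomial function $\length(R/I^{(n\rho+r)/\rho})$ to be $\eh(I) n^d/d!$, completing the proof.
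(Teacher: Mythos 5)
Your route --- exhibiting $B=\bigoplus_{n\geq 0}I^{n/\rho}$ as a finitely generated graded $R$-algebra and invoking Hilbert--Serre on $\gr(B)$, with the sandwich $\overline{I^{n+1}}\subseteq I^{(n\rho+r)/\rho}\subseteq \overline{I^n}$ to pin down the leading term --- is genuinely different from the paper's, which works directly from the identity $I^{(w+\rho n)/\rho}=I^nI^{w/\rho}$ (for $w$ sufficiently large) and a short exact sequence. However, the crucial intermediate claim that ``$B$ is generated as an $R$-algebra by finitely many homogeneous elements whose degrees all divide $\rho$'' is false, and the gap it leaves is genuine. Since $B$ is a finite module over $A=B^{(\rho)}=\bigoplus_n\overline{I^n}$, its $R$-algebra generators include those of $A$, which under the $B$-grading live in degrees that are \emph{multiples} of $\rho$ and in general do not divide $\rho$. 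Hilbert--Serre then only bounds the period of the eventual quasi-polynomial of $\gr(B)$ by the lcm of the algebra-generating degrees, which can be strictly larger than $\rho$; without a period dividing $\rho$, restricting $n$ to a fixed residue class modulo $\rho$ does not produce an honest polynomial, which is precisely the first assertion of the proposition. (Also, ``integral of bounded degree'' over $A$ does not by itself give module-finiteness with generators in degrees $0,\ldots,\rho-1$; the module generators supplied by \cite[Proposition~2.13]{BDHM} sit in degrees up to the threshold plus $\rho$.)

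The statement is true and your framework can be repaired. The right observation is not that the algebra generators of $B$ have degrees dividing $\rho$, but that $B$ is a \emph{finite module over the subalgebra $R[B_\rho]$}, whose only positive generating degree is exactly $\rho$: analytic unramification gives $\overline{I^{n+b}}=I^n\overline{I^b}$, so $A$ is finite over $R[It^\rho]\subseteq R[B_\rho]$, and combined with $B$ finite over $A$ (\cite[Proposition~2.13]{BDHM}) this makes $B$ finite over $R[B_\rho]$; passing to the induced filtration then exhibits $\gr(B)$ as a finite graded module over $(R/I^{1/\rho})[\bar{B_\rho}]$, which is generated in degree $\rho$, and Hilbert--Serre does give a denominator that is a power of $1-t^\rho$. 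The paper's proof avoids the Hilbert series altogether: after enlarging the \cite{BDHM} threshold using $\overline{I^{n+b}}=I^n\overline{I^b}$, one obtains $I^{(w+\rho n)/\rho}=I^nI^{w/\rho}$, so that $\length(R/I^{(w+\rho n)/\rho})$ is a shifted $I$-Hilbert--Samuel function of the $\m$-primary ideal $I^{w/\rho}$ and is immediately a polynomial with leading coefficient $\eh(I)/d!$.
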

\begin{proof}
It was shown in \cite[Proposition~2.13]{BDHM} (note that its proof is still valid when $R$ is not necessarily a domain) that there exists $a \in \mathbb{Z}$ such that $I^{\frac{w + \rho n}{\rho}} = \overline{I^n} I^\frac{w}{\rho}$ for all integers $w \geq a$ and $n \geq 1$.
Since $R$ is analytically unramified, there exists $b \in \mathbb{Z}$ so that $\overline{I^{n+ b}} = I^n\overline{I^b}$ for all $n \geq 1$  \cite[Corollary~9.2.1]{SwansonHuneke}. 
Hence, after enlarging $a$ if needed, we may assume that $I^{\frac{w + \rho n}{\rho}} = I^n I^\frac{w}{\rho}$.
Thus we have an exact sequence 
\[
        0 \to I^\frac{w}{\rho}/I^nI^\frac{w}{\rho}        \to R/I^\frac{w + n \rho}{\rho} \to R/I^\frac{w}{\rho} \to 0
\]
and the conclusion follows after noting that the function $n \mapsto \length (I^\frac{w}{\rho}/I^nI^\frac{w}{\rho})$ is eventually a polynomial for $n\gg0$ with the prescribed leading coefficient. 
\end{proof}

We will need the following geometric description of rational powers of ideals, which is also well-known to experts. 

\begin{proposition}\label{prop: geometric rational powers}
    Let $R$ be a domain and $I\subseteq R$ be an ideal. Suppose that 
    $f \colon Z\to \Spec(R)$ is a projective birational map such that $I\sO_Z=\sO_Z(-E)$ for a Cartier divisor $E$ on $Z$ and the generic point of each irreducible component of $\red{E}$ is regular. Then the Rees valuations of $I$ are the divisorial valuations attached to the irreducible components of $\red{E}$
    and we may compute  
    \[
        I^{\frac b a} = \Gamma (Z, \sO_Z (- \lceil  \frac{b}{a}E\rceil)) \cap R.
    \]

    Furthermore, if $R$ is excellent then the result holds for the natural map
    \[
        Y\coloneqq\Proj (R \oplus \overline{I}T \oplus \overline{I^2}T^2 \oplus \overline{I^3}T^3 \oplus \cdots)
        \to \Spec (R).
    \]  
\end{proposition}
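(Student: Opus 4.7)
The plan is to establish the central claim that for every $n \geq 0$ and $x \in R$, one has $x \in \overline{I^n}$ if and only if $v_i(x) \geq n e_i$ for all $i$, where $v_i$ is the discrete valuation attached to the generic point $\eta_i$ of the irreducible component $E_i$ of $\red E$. Since by hypothesis $\sO_{Z,\eta_i}$ is regular, it is a DVR, so $v_i$ is well-defined on $K \coloneqq \operatorname{Frac}(R)$; writing $E = \sum_{i=1}^{r} e_i E_i$, the invertibility $I\sO_Z = \sO_Z(-E)$ immediately gives $v_i(I) = e_i$. Once the central claim is in hand, all conclusions of the proposition will follow in short order.

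The ``only if'' direction is routine: applying $v_i$ to an integral equation $x^k + a_1 x^{k-1} + \cdots + a_k = 0$ with $a_j \in I^{nj}$ and using the ultrametric inequality yields $v_i(x) \geq n e_i$. For the ``if'' direction, I would appeal to the valuative criterion of integral closure (\cite[Theorem~6.8.3]{SwansonHuneke}): it suffices to verify $\mu(x) \geq n\mu(I)$ for every DVR $\mu$ on $K$ with $\mu|_R \geq 0$. Such a $\mu$ has a unique center $z \in Z$ by properness of $f$, and $I\sO_{Z,z}$ is principal since $I\sO_Z$ is invertible. To handle possible non-normality of $Z$ at $z$, pass to the normalization $\nu \colon \tilde Z \to Z$; because each $\eta_i$ already has a regular stalk in $Z$, $\nu$ is an isomorphism near each $\eta_i$, and the components of $\red{\nu^{*}E}$ are in bijection with the $E_i$ carrying the same valuations $v_i$ and multiplicities $e_i$. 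On the normal scheme $\tilde Z$, every stalk is the intersection of its codimension-one localizations inside $K$, so the inequalities $v_i(x) \geq n e_i$ globally force $x \in I^n \sO_{\tilde Z}$, whence $\mu(x) \geq n\mu(I)$.

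Granted the central claim, $\{v_i\}$ satisfies the valuative criterion for integral closure of all powers of $I$, so the Rees valuations of $I$ sit inside $\{v_i\}$; equality holds because distinct $\eta_i$ give distinct DVR subrings of $K$ by separatedness of $Z$, so none is redundant. For the rational power formula, the central claim yields
\[
I^{\frac{b}{a}} = \bigl\{x \in R : v_i(x) \geq \tfrac{b}{a} e_i \text{ for all } i\bigr\} = \bigl\{x \in R : v_i(x) \geq \lceil \tfrac{b}{a} e_i \rceil \text{ for all } i\bigr\}
\]
(using that the $v_i$ are integer-valued on $K^\times$), and the same normalization argument applied to the Cartier divisor $\lceil \tfrac{b}{a} E\rceil$ identifies this with $\Gamma(Z, \sO_Z(-\lceil \tfrac{b}{a} E\rceil)) \cap R$.

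For the ``furthermore'' case, excellence of $R$ guarantees that $\bigoplus_n \overline{I^n} T^n$ is a finitely generated $R$-algebra, so $Y$ is Noetherian, projective, and birational over $\Spec R$, with $I\sO_Y = \overline{I}\sO_Y = \sO_Y(-E)$ invertible (the second equality because invertible ideals on a normal scheme are integrally closed); normality of this Rees-type algebra makes $Y$ normal, hence regular at every codimension-one point, verifying the hypothesis for $Z = Y$. The main obstacle is the ``if'' direction at non-generic centers, navigated by the normalization trick --- the hypothesized regularity at each $\eta_i$ is precisely what ensures $\nu \colon \tilde Z \to Z$ preserves the divisorial data $v_i, e_i$.
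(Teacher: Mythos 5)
Your route --- prove a ``central claim'' that $\overline{I^n}$ is cut out by the divisorial valuations $v_i$ at the generic points of $\red E$, then deduce the Rees-valuation identification and the rational-power formula --- is a reasonable blueprint, but two links in the chain fail.

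The first is the minimality of $\{v_i\}$. You write that ``equality holds because distinct $\eta_i$ give distinct DVR subrings of $K$ by separatedness of $Z$, so none is redundant.'' That inference is unsound: distinctness of two discrete valuations says nothing about whether one of them can be omitted when computing $\overline{I^n}$ for all $n$, which is what minimality in the definition of Rees valuations requires. (Indeed, if you further blow up a closed point of $\red E$, the hypotheses of the proposition are preserved but the new model carries an extra divisorial valuation over $I$ that is certainly not a Rees valuation; so \emph{some} use of $Z$'s relation to $\Bl_I(R)$ is unavoidable.) The paper instead argues that each $v_i$ dominates a local ring at a minimal prime of $I\sO_X$ on the plain blowup $X = \Bl_I(R)$, and, being a DVR birational to $R$, must therefore coincide with a localization of the integral closure of that one-dimensional local ring --- which is precisely the construction of Rees valuations in \cite[Proposition~10.2.5]{SwansonHuneke}. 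Your argument never relates the $v_i$ to $\Bl_I(R)$ and so never makes contact with the canonical construction from which minimality flows.

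The second gap is in the ``furthermore'' case. You assert that $\bigoplus_n \overline{I^n}T^n$ is normal, hence $Y$ is normal, hence regular in codimension one. This holds only when $R$ is itself normal: the Rees algebra of the integral-closure filtration is the integral closure of $R[IT]$ \emph{inside $R[T]$}, not its normalization in its fraction field, so its Proj inherits any non-normality of $R$. The proposition does not assume $R$ normal, and the paper's proof avoids this by working with the extended Rees algebra $S = R[T^{-1},\overline{I}T,\ldots]$: the principal ideal $(T^{-1})$ is a \emph{normal ideal of $S$} (\cite[Propositions~5.2.1, 5.3.1]{SwansonHuneke}, no normality of $R$ needed), so $S_Q$ is a DVR for each associated prime $Q$ of $T^{-1}S$ by \cite[Lemma~4.2]{Rees}, and regularity of the degree-zero part $[S_{(Q)}]_0$ then follows from the graded-regularity result of \cite[Proposition~1.2.5]{GotoWatanabeGradedII}. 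That is the substitute argument you need; the normality claim as written is false in general.
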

\begin{proof}
    Let $X \to \Spec (R)$ be the blow-up of $I$ and $\overline{X}$ be its normalization. 
    The construction given in \cite[Proposition~10.2.5, Exercises~10.5,6]{SwansonHuneke} identifies
    the Rees valuations as the valuations arising from the minimal primes of $I\sO_{\overline{X}}$. Since localization and normalization commute, we could equivalently normalize the local rings of the minimal primes of $I\sO_X$. 
    Now, by construction $Z \to \Spec (R)$ factors through $X$, so the DVRs at the minimal primes of $I\sO_Z$ (i.e., the generic points of the irreducible components of $\red{E}$) factor through the normalizations of the minimal primes of $I\sO_X$. Therefore, those are precisely the Rees valuations. The formula for rational powers now follows directly from their description in terms of Rees valuations. 

    We now verify that $Y \to \Spec (R)$ satisfies the necessary assumptions. In the extended Rees algebra $S = R[T^{-1}, \overline{I}T, \overline{I^2}T^2, \ldots]$
    the principal ideal $(T^{-1})$ is normal
    by \cite[Proposition~5.2.1 and Proposition~5.3.1]{SwansonHuneke}. 
    Hence, for every associated prime $Q$ of $T^{-1}S$ the ring $S_Q$ is a DVR \cite[Lemma~4.2]{Rees}. Since $R$ is excellent, 
    we know that $S$ is Noetherian (e.g., by \cite[Corollary 9.2.1]{SwansonHuneke}). Since $S_Q$ is a DVR, it follows that the homogeneous localization $S_{(Q)}$ (which is a $\mathbb{Z}$-graded ring) is regular by \cite[Proposition 1.2.5]{GotoWatanabeGradedII}. Thus $[S_{(Q)}]_0$ is a DVR: it is normal since it is a direct summand of $S_{(Q)}$ and it is one-dimensional. It is easy to see that $[S_{(Q)}]_0$ are exactly the local rings of the minimal primes of $I\sO_Y$, and thus they are regular. 
\end{proof}

We end this subsection by providing a formula for the colength of integral closures of powers of $(I,T^m)$ in the ring $R[[T]]$ using rational powers of $I$. 

\begin{lemma}\label{lemma: first rational powers expansion}
Let $I\subseteq R$ be an ideal. Then for all positive integers $a, n$ we have 
\[
        (I, T^a)^{\frac n a} = \sum_{k = 0}^{n} I^{\frac k a} T^{n - k}.
    \]
\end{lemma}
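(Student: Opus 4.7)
The plan is to prove the two inclusions separately. For $\sum_{k=0}^{n}I^{k/a}T^{n-k}\subseteq (I,T^a)^{n/a}$ it suffices to show that each generator $xT^{n-k}$ with $x\in I^{k/a}$ and $0\le k\le n$ lies in $(I,T^a)^{n/a}$. First I would take an integral equation $(x^a)^m+\sum_{j=1}^m b_j(x^a)^{m-j}=0$ with $b_j\in I^{jk}$ and multiply through by $T^{am(n-k)}$ to obtain a monic equation satisfied by $(xT^{n-k})^a=x^aT^{a(n-k)}$ whose $j$-th coefficient $b_jT^{aj(n-k)}\in I^{jk}T^{a(jn-jk)}$ is the summand with index $\ell=jk$ of the binomial expansion $(I,T^a)^{jn}=\sum_{\ell=0}^{jn}I^\ell T^{a(jn-\ell)}$. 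Hence $(xT^{n-k})^a\in\overline{(I,T^a)^n}$ and $xT^{n-k}\in (I,T^a)^{n/a}$.

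For the reverse inclusion I would use the valuative criterion for rational powers recalled just after the definition. After reducing to the case where $R$ is a domain (by passing to minimal primes of $R$, which are compatible with the minimal primes $\mathfrak{p}R[[T]]$ of $R[[T]]$), I fix a Rees valuation $\nu$ of $I$ with $e_\nu\coloneqq\nu(I)$, realized on a DVR $V\supseteq R$ with uniformizer $\pi_V$ and $\nu=v_V|_R$. On the two-dimensional regular local ring $V[[T]]$ I introduce the weighted order valuation
\[
\tilde\nu\Bigl(\sum_{j\ge 0} f_j T^j\Bigr)=\min_j\bigl(a\,v_V(f_j)+j\,e_\nu\bigr),
\]
and verify it truly is a valuation by identifying its associated graded ring with the polynomial ring $(V/\pi_V)[\bar\pi_V,\bar T]$ graded by the weights $(a,e_\nu)$, which is a domain. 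A direct computation then gives $\tilde\nu(I)=\tilde\nu(T^a)=ae_\nu$, hence $\tilde\nu\bigl((I,T^a)V[[T]]\bigr)=ae_\nu$.

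To conclude, I would observe that for $y\in (I,T^a)^{n/a}$ the relation $y^a\in\overline{(I,T^a)^n}$ in $R[[T]]$ extends to $y^a\in\overline{(I,T^a)^nV[[T]]}$ in $V[[T]]$, so the valuative criterion applied to $\tilde\nu$ yields $a\tilde\nu(y)=\tilde\nu(y^a)\ge nae_\nu$, i.e.\ $\tilde\nu(y)\ge ne_\nu$. Writing $y=\sum y_jT^j$, this unpacks as $a\nu(y_j)+je_\nu\ge ne_\nu$ for every $j$, equivalently $\nu(y_j)\ge(n-j)e_\nu/a$ for every $j\le n$. Letting $\nu$ range over all Rees valuations of $I$, the valuative criterion applied back on $R$ gives $y_j\in I^{(n-j)/a}$ for every $j\le n$, placing $y\in\sum_{k=0}^nI^{k/a}T^{n-k}$. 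The hard part will be verifying the multiplicativity of $\tilde\nu$ on $V[[T]]$ (as opposed to mere sub-additivity), which I expect to get from the identification of its associated graded with a polynomial ring — a standard feature of weighted order valuations on regular local rings; beyond this, the proof is a routine match between the valuative criteria for rational powers of $(I,T^a)$ and of $I$.
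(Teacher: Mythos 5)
Your proof is correct, and the reverse inclusion takes a genuinely different route from the paper's. For $\supseteq$, you spell out the same idea the paper records more compactly as $(I^{k/a}T^{n-k})^a\subseteq\overline{I^k}(T^a)^{n-k}\subseteq\overline{(I,T^a)^n}$; the integral-equation manipulation you describe is exactly what underlies that chain, so this direction matches. For $\subseteq$, the paper stays entirely equational: starting from $x^aT^{a(n-s)}\in\overline{(I,T^a)^n}$ it writes out the integral dependence over $(I,T^a)^n$, isolates the coefficient of $T^{a(n-s)m}$ in each term, and reads off an integral dependence of $x^a$ over $I^s$. You instead push everything through the valuative criterion: to each Rees valuation $\nu$ of $I$ with DVR $V$ and $e_\nu=\nu(I)$ you attach the monomial valuation $\tilde\nu$ on $V[[T]]$ with weights $(a,e_\nu)$ on $(\pi_V,T)$, use the associated-graded-is-a-domain argument (standard for weighted orders on a regular local ring, exactly as you anticipate) to see that $\tilde\nu$ is multiplicative, compute $\tilde\nu(I,T^a)=ae_\nu$, and unwind $\tilde\nu(y)\ge ne_\nu$ into the coefficient-wise conditions $\nu(y_j)\ge(n-j)e_\nu/a$. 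Your route has the advantage that it handles an arbitrary $y=\sum y_jT^j\in(I,T^a)^{n/a}$ directly, without the implicit reduction to $T$-homogeneous elements behind the paper's phrase ``it suffices to characterize when $xT^{n-s}\in(I,T^a)^{n/a}$''; it also makes explicit the monomial valuations that carve out the rational power, which explains the shape of the formula and dovetails with Proposition~\ref{prop: geometric rational powers}. The paper's route is shorter and more elementary, needing neither Rees valuations nor the lemma that weighted orders are honest valuations, and it avoids passing to the domain case. Both arguments are sound.
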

\begin{proof}   
    It is clear that the right hand side is contained in $(I, T^a)^{\frac n a}$. Namely, 
    \[
     (I^{\frac k a} T^{n - k})^a \subseteq \overline{I^k} (T^{a})^{n-k} \subseteq \overline{(I, T^a)^n}. 
    \]
    For the opposite inclusion, we note that $T^n \in (I, T^a)^{\frac n a}$ and so it suffices to characterize when $x T^{n-s} \in (I, T^a)^{\frac n a}$. 
    Given that $x^a T^{a(n-s)} \in \overline{(I, T^a)^{n}}$, it satisfies an equation of integral dependence 
    \[
        (x^aT^{a(n-s)})^{m} + f_1 (x^aT^{a(n-s)})^{m-1} + \cdots + f_k (x^aT^{a(n-s)})^{m-k} + \cdots + f_m =0
    \]
    where $f_k \in (I, T^a)^{nk}$.
    We now group the terms of the coefficients of $T^{a(n-s)m}$ to obtain
    \[
        x^{am} + y_1 x^{a(m-1)} + \cdots + y_m=0, 
    \]
    where $y_k T^{ak(n-s)} \in (I, T^a)^{nk}$ and 
    so $y_k \in I^{ks}$. This shows that $x^a \in \overline{I^s}$, thus $x \in I^{\frac s a}$
    and the conclusion follows. 
\end{proof}

We next express the colength of rational powers of $(I, T^m)$ in $R[[T]]$ when $m$ is a multiple of the Rees period of $I$. 

\begin{proposition}
\label{prop: rational powers of I+T^m}
Let $(R, \m)$ be a local ring and $I\subseteq R$ be an $\m$-primary ideal with the Rees period $\rho$. Let $m$ be a positive integer divisible by $\rho$. 
For positive integers $n$ and $0 \leq r < m$ we write 
$s = \lceil \frac{r\rho}{m} \rceil$ and $t = s \frac{m}{\rho} - r$.
Then we have
    \[
        \length (R[[T]]/(I, T^m)^{\frac {nm + r} m} )
        = \sum_{h = 0}^{s-1} \sum_{k = 0}^n \frac m\rho \length (R/I^\frac{k\rho + h}{\rho}) 
        + \sum_{h = s}^{\rho-1} \sum_{k = 0}^{n-1} \frac m\rho \length (R/I^\frac{k\rho + h}{\rho}) 
        + \left(\frac m \rho - t \right) \length (R/I^{\frac{n\rho+s}{\rho}}).
    \]    
\end{proposition}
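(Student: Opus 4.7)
The plan is to unpack the ideal $(I,T^m)^{\frac{nm+r}{m}}$ using Lemma~\ref{lemma: first rational powers expansion}, read off the colength by decomposing $R[[T]]$ along powers of $T$, then convert the resulting sum into the stated double-sum form via the hypothesis that $\rho$ divides $m$.

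First I would apply Lemma~\ref{lemma: first rational powers expansion} with $a = m$ and $n$ replaced by $nm+r$, which gives
\[
(I,T^m)^{\frac{nm+r}{m}} = \sum_{k=0}^{nm+r} I^{\frac{k}{m}}\,T^{nm+r-k}\,R[[T]].
\]
Decomposing the free $R$-module $R[[T]]$ along powers of $T$, the coefficient of $T^i$ in this ideal is $\sum_{k\geq nm+r-i}^{nm+r} I^{k/m}$, which equals $I^{\frac{nm+r-i}{m}}$ for $0\leq i < nm+r$ (the smallest $k$ in the range dominates the sum since rational powers form a decreasing filtration) and equals $R$ once $i\geq nm+r$. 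Therefore
\[
\length\bigl(R[[T]]/(I,T^m)^{\frac{nm+r}{m}}\bigr) = \sum_{i=0}^{nm+r-1}\length(R/I^{\frac{nm+r-i}{m}}) = \sum_{j=1}^{nm+r}\length(R/I^{\frac{j}{m}}).
\]

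Next I would exploit the fact that $\rho\mid m$ by writing $c = m/\rho$. Since $\rho$ is a common denominator for all rational powers of $I$, $I^{\frac{j}{m}} = I^{\frac{\lceil j\rho/m\rceil}{\rho}} = I^{\frac{\lceil j/c\rceil}{\rho}}$, so $\length(R/I^{j/m})$ depends only on $\ell := \lceil j/c\rceil$. Counting how many $j\in\{1,\dots,nm+r\}$ share a given value of $\ell$: for $\ell = 1,\dots, n\rho$ we get exactly $c$ values of $j$; for $\ell$ larger, one uses $r=(s-1)c+(c-t)$ to see that $\ell = n\rho+1,\dots,n\rho+s-1$ each contribute $c$ terms while $\ell = n\rho+s$ contributes exactly $c-t$ terms. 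This yields
\[
\sum_{j=1}^{nm+r}\length(R/I^{j/m}) = \sum_{\ell=0}^{n\rho+s-1}c\,\length(R/I^{\ell/\rho}) + (c-t)\length(R/I^{\frac{n\rho+s}{\rho}}),
\]
where the term $\ell=0$ is included for free since $\length(R/I^0)=0$.

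Finally I would reindex $\ell = k\rho+h$ with $0\leq h<\rho$. The inequality $k\rho+h\leq n\rho+s-1$ gives $k\leq n$ when $h\leq s-1$ and $k\leq n-1$ when $h\geq s$, splitting the single sum in $\ell$ into the two double sums in the statement; the boundary contribution at $\ell = n\rho+s$ produces the final term $(c-t)\length(R/I^{(n\rho+s)/\rho})$. The main obstacle is purely the bookkeeping in this last step: one has to verify that the edge cases ($r=0$ giving $s=t=0$, and $r$ a multiple of $c$ giving $t=0$) all collapse correctly so that no $\ell$ is double-counted and none is missed.
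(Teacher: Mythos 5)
Your proof is correct and follows essentially the same route as the paper: you apply Lemma~\ref{lemma: first rational powers expansion} to express the colength as $\sum_{j=1}^{nm+r}\length(R/I^{j/m})$, then use that $\rho$ divides $m$ to group the indices $j$ by the value of $\lceil j\rho/m\rceil$ and finally reindex via $\ell = k\rho + h$; the paper performs the same counting directly on the expansion without naming the intermediate single sum. Your explicit intermediate step and the identity $nm+r = (n\rho+s)\frac{m}{\rho} - t$ make the bookkeeping slightly more transparent, but the underlying argument is the same.
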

\begin{proof}
Using Lemma~\ref{lemma: first rational powers expansion}, we first rewrite 
\begin{equation}\label{eq: for rational powers sum}
    (I, T^m)^{\frac {nm + r} m} =
\sum_{k = 0}^{nm + r} I^{\frac k m }T^{nm + r - k}
= \sum_{k = 0}^{nm + r} I^{\frac{\lceil k\rho/m \rceil }{\rho} } T^{nm + r - k}.
\end{equation}
It is left to count which of the appearing fractions in the exponent of $I$ are distinct.
Note that we have
\[
\left \lceil \frac {\rho (nm + r)} m \right \rceil = \rho n + s.
\]
We see that:
\begin{enumerate}
    \item For each $0 \leq h \leq s$, all of $\frac{h}{\rho}, \ldots, \frac{n\rho + h}{\rho}$
appear among the exponents $\frac{\lceil k\rho/m\rceil }{\rho}$, and 
\begin{itemize}
    \item if $0\leq h<s$, then each $I^{\frac{i\rho + h}{\rho}}$ above appears $\frac m \rho$ times in the sum;
    \item if $h=s$, then each $I^{\frac{i\rho + h}{\rho}}$ above appears $\frac m \rho$ times in the sum when $0\leq i\leq n-1$, while $I^{\frac{n\rho + h}{\rho}}$ appears $\frac m \rho - s\cdot \frac m \rho +r = \frac m \rho - t$ times in the sum.
\end{itemize}
    \item For $s< h \leq \rho - 1$, only $\frac{h}{\rho}, \ldots, \frac{(n-1)\rho + h}{\rho}$
appear among the exponents $\frac{\lceil k\rho/m\rceil }{\rho}$, and in this case each $I^{\frac{i\rho + h}{\rho}}$ appears $\frac m \rho$ times for all $h$ and all $0\leq i\leq n-1$. 
\end{enumerate}
Putting these together we obtain the formula in the proposition.
\end{proof}

\begin{corollary}
\label{cor: integral closures of powers of I+T^m}
    Let $(R, \m)$ be a local ring and $I\subseteq R$ be an $\m$-primary ideal with Rees period $\rho$. Let $m$ be a positive integer divisible by $\rho$ and $n$ be an arbitrary positive integer. 
    Then we have
    \[
        \length (R[[T]]/\overline{(I, T^m)^{n}})
        = \frac m\rho \sum_{h = 1}^{\rho - 1} \sum_{k = 1}^{n-1} \length (R/I^\frac {k\rho + h}\rho)
        + \frac m\rho \sum_{k = 0}^{n} \length (R/\overline{I^k}) = \frac{m}{\rho}\sum_{j=0}^{\rho n}\length(R/I^{\frac{j}{\rho}}).
    \]
\end{corollary}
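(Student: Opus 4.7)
The plan is to deduce this result as a direct specialization of Proposition~\ref{prop: rational powers of I+T^m} to the case $r = 0$. Since rational powers are well-defined, we have the identity $\overline{(I, T^m)^n} = (I,T^m)^{\frac{n}{1}} = (I, T^m)^{\frac{nm + 0}{m}}$, which puts us in the setup of the proposition with $r = 0$. Similarly, since the Rees period of $I$ divides $\rho$, we have $I^{\frac{n\rho}{\rho}} = I^{\frac{n}{1}} = \overline{I^n}$, and more generally $I^{\frac{k\rho}{\rho}} = \overline{I^k}$ for every $k \geq 0$; these identifications will let us recognize the integral closures appearing in the stated formula.

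The first step is to compute the auxiliary parameters when $r = 0$: we get $s = \lceil 0 \rceil = 0$ and $t = 0 \cdot \frac{m}{\rho} - 0 = 0$. Plugging into the proposition's formula, the first double sum (over $0 \leq h \leq s - 1 = -1$) is empty, the middle double sum ranges over $0 \leq h \leq \rho-1$ and $0 \leq k \leq n-1$, and the tail term becomes $\left(\frac{m}{\rho}-0\right) \length(R/I^{\frac{n\rho+0}{\rho}}) = \frac{m}{\rho}\length(R/\overline{I^n})$. Thus we obtain the working expression
\[
\length (R[[T]]/\overline{(I, T^m)^{n}})
= \frac{m}{\rho}\sum_{h=0}^{\rho-1}\sum_{k=0}^{n-1}\length(R/I^{\frac{k\rho+h}{\rho}}) + \frac{m}{\rho}\length(R/\overline{I^n}).
\]

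The second step is purely a reindexing. For the rightmost form in the corollary, I would re-parameterize via $j = k\rho + h$ with $0 \leq h \leq \rho - 1$ and $0 \leq k \leq n-1$, which bijects onto $0 \leq j \leq \rho n - 1$; the remaining term $\frac{m}{\rho}\length(R/\overline{I^n})$ then contributes exactly the missing index $j = \rho n$ via $\overline{I^n} = I^{\frac{n\rho}{\rho}}$, producing $\frac{m}{\rho}\sum_{j=0}^{\rho n}\length(R/I^{\frac{j}{\rho}})$. For the middle form, I would split off the $h = 0$ slice of the double sum and rewrite $I^{\frac{k\rho}{\rho}} = \overline{I^k}$ to combine it with the $\frac{m}{\rho}\length(R/\overline{I^n})$ tail into $\frac{m}{\rho}\sum_{k=0}^{n}\length(R/\overline{I^k})$, leaving the remaining double sum over $1 \leq h \leq \rho - 1$.

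There is no essential obstacle here: the content is entirely contained in Proposition~\ref{prop: rational powers of I+T^m}, and the corollary is just the careful bookkeeping needed to rewrite the specialization in two clean closed forms. The only mild subtlety is remembering to identify $I^{\frac{k\rho}{\rho}}$ with $\overline{I^k}$ (via the valuation criterion or the well-definedness of rational powers) so that the integral-closure sums appearing on the right of the stated identities emerge correctly.
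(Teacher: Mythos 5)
Your proof is correct and takes the same approach as the paper: specialize Proposition~\ref{prop: rational powers of I+T^m} to $r=0$, observe that $s=t=0$ so the first double sum is empty, and reindex the result into the two closed forms. One thing worth pointing out is that your careful bookkeeping for the middle expression actually yields $\sum_{k=0}^{n-1}$ in the inner sum rather than the $\sum_{k=1}^{n-1}$ printed in the statement of Corollary~\ref{cor: integral closures of powers of I+T^m}; the printed lower bound appears to be a typo, since for $\rho>1$, $n=1$ the printed middle form misses the terms $\frac{m}{\rho}\sum_{h=1}^{\rho-1}\length(R/I^{h/\rho})$ and fails to equal the rightmost closed form, which is the one you derive correctly and the one used downstream in the paper.
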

\begin{proof}
This follows from Proposition~\ref{prop: rational powers of I+T^m}: in this case we have $r=s=t=0$, the first term in Proposition~\ref{prop: rational powers of I+T^m} vanish and we obtain 
$$\sum_{h = 0}^{\rho-1} \sum_{k = 0}^{n-1} \frac m\rho \length (R/I^\frac{k\rho + h}{\rho}) 
        + \frac m \rho  \length (R/\overline{I^{n}})
$$
which is easily checked to agree with both expressions in the corollary.
\end{proof}

\subsection{Stable ideals and pseudo-rational rings}
In this subsection, we briefly recall the concept of stable ideals and its connections with rings of minimal multiplicity and pseudo-rational rings.

\begin{definition}
Let $(R, \m)$ be a local ring and $I\subseteq R$ be an $\m$-primary ideal. We say that $I$ is \emph{stable} if there is a parameter ideal $J \subseteq I$ such that $I^2 = JI$.
\end{definition}

\begin{remark}
We note that if $I$ is stable, then the parameter ideal $J$ in the definition is necessarily a minimal reduction of $I$ and clearly $I^2\subseteq J$. Furthermore, when $(R,\m)$ is a Cohen--Macaulay local ring with an infinite residue field and $I$ is integrally closed, then $I^2\subseteq J$ for some minimal reduction $J$ implies $I$ is stable. For if $I^2\subseteq J$, then we have
$$I^2\subseteq \overline{I^2}\cap J=\overline{J^2}\cap J = J\overline{J}=JI \subseteq I^2$$
where the second equality above follows from \cite[Theorem 1]{ItohIntegralClosure}. Thus we have equalities throughout and $I$ is stable. 
\end{remark}

As far as we know, the notion of stable ideals first appeared in the work of Lipman for one-dimensional Cohen--Macaulay local rings, see \cite[Lemma 1.11]{LipmanArf}. The definition in higher dimensions appeared in \cite{Kubota}, the definition there is equivalent to ours when the residue field is infinite due to the following result that was independently observed by Huneke and Ooishi. 

\begin{theorem}[{\cite[Theorem~2.1]{Huneke} or \cite[Theorem~3.3]{Ooishi}}]
\label{thm: Northcott}
Let $(R, \mf m)$ be a Cohen--Macaulay local ring of dimension $d$ with an infinite residue field.
Then the following conditions are equivalent:
\begin{enumerate}
    \item $I$ is stable, 
    \item $I^2=JI$ for any minimal reduction $J$ of $I$,
    \item $\length (R/I) = \eh_0(I) - \eh_1(I)$.
\end{enumerate}
If these conditions hold, then $\eh_2(I), \ldots, \eh_d(I) = 0$
and $$\length (R/I^n) = \eh_0(I) \binom{n + d-1}{d} - \eh_1(I) \binom{n + d-2}{d-1}$$ for all $n \geq 1$.
\end{theorem}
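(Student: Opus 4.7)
The plan is to combine a reduction to dimension one via superficial elements with the Valabrega--Valla criterion, which bridges $R$ and its associated graded ring $\gr_I R$. Throughout I will use that since $R$ is Cohen--Macaulay with infinite residue field, any minimal reduction $J$ of $I$ is generated by a regular sequence of length $d$ and satisfies $\length(R/J) = e_0(J) = e_0(I)$. The implication $(2) \Rightarrow (1)$ is immediate from the existence of minimal reductions.

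For $(1) \Rightarrow (3)$ together with the Hilbert polynomial formula, I would first observe that $I^2 = JI$ with induction gives $I^{n+1} = J^n I$ for every $n \geq 1$, hence $I^n \subseteq J$ for $n \geq 2$ and $J \cap I^n = JI^{n-1}$ throughout. By Valabrega--Valla, the initial forms $x_1^*, \dots, x_d^*$ of the generators of $J$ then form a regular sequence in $\gr_I R$; since they are in degree one, quotienting by them contracts $\gr_I R$ to its degree zero and one pieces, $R/I$ and $I/(J+I^2) = I/J$ (using $I^2 \subseteq J$). This yields
\[
h_{\gr_I R}(t) = \frac{\length(R/I) + \length(I/J)\, t}{(1-t)^d}.
\]
Summing via $\sum_{n \geq 1} \length(R/I^n) t^n = \tfrac{t}{1-t} h_{\gr_I R}(t)$ and applying Pascal's identity gives $\length(R/I^n) = e_0(I) \binom{n+d-1}{d} - \length(I/J) \binom{n+d-2}{d-1}$ for all $n \geq 1$, where I use $\length(R/J) = e_0(I)$; reading off Hilbert coefficients then gives $e_1(I) = \length(I/J) = e_0(I) - \length(R/I)$ and $e_i(I) = 0$ for $i \geq 2$, hence (3) and the moreover formula. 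The implication $(1) \Rightarrow (2)$ follows next: for any other minimal reduction $J'$, the formula yields $\length(I^2/J'^2) = d\,\length(I/J')$; on the other hand, freeness of $J'/J'^2$ as an $R/J'$-module (since $J'$ is generated by a regular sequence) gives $J'/J'I \cong (R/I)^d$ and so $\length(J'I/J'^2) = d\,\length(I/J')$, which together with $J'I \subseteq I^2$ forces $J'I = I^2$.

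For $(3) \Rightarrow (1)$, I would reduce to dimension one by passing to $\bar R \coloneqq R/(x_1, \dots, x_{d-1})$ for a sufficiently general superficial sequence $x_1, \dots, x_{d-1} \in I$; standard superficial-sequence preservations yield $e_0(\bar I) = e_0(I)$, $e_1(\bar I) = e_1(I)$, and $\length(\bar R/\bar I) = \length(R/I)$, so (3) descends to $\bar R$. In dimension one, fix a non-zero-divisor minimal reduction $(x)$ of $\bar I$ and observe that the sequence $\length(\bar I^n/(x^n))$ is weakly increasing in $n$ (the increment $\length(\bar I^{n+1}/x\bar I^n)$ being nonnegative), with lower bound $\length(\bar I/(x)) = e_0(\bar I) - \length(\bar R/\bar I)$ and limit $e_1(\bar I)$; condition (3) forces equality throughout, so $\bar I^{n+1} = x \bar I^n$ for every $n \geq 1$, in particular $\bar I^2 = x \bar I$. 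The main obstacle is lifting this back to $R$: once the partial sequence $x_1, \dots, x_{d-1}$ is chosen generically enough to satisfy the Valabrega--Valla hypothesis $(x_1, \dots, x_{d-1}) \cap I^n = (x_1, \dots, x_{d-1}) I^{n-1}$ for all $n$ -- possible since the residue field is infinite, but requiring care to verify -- Valabrega--Valla upgrades $\bar I^2 = x\bar I$ to $I^2 = JI$ for a minimal reduction $J$ completing $(x_1, \dots, x_{d-1})$.
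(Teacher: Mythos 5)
The paper cites this result from Huneke and Ooishi without reproving it, so there is no internal proof to compare against; I evaluate your argument on its own.

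Most of what you wrote is correct. $(2)\Rightarrow(1)$ is immediate. In $(1)\Rightarrow(3)$, the identity $I^{n+1}=J^nI$ gives the Valabrega--Valla hypothesis for free (for $n\geq 2$ both $J\cap I^n$ and $JI^{n-1}$ equal $I^n=J^{n-1}I$, and for $n=1$ both equal $J$), so the passage to $\gr_I(R)$ and the Hilbert series computation go through, yielding the closed formula and $\eh_i(I)=0$ for $i\geq 2$. The length comparison $\length(I^2/J'^2)=d\,\length(I/J')=\length(J'I/J'^2)$ you use for $(1)\Rightarrow(2)$ is also correct and clean. And the dimension-one step of $(3)\Rightarrow(1)$ is fine: the weakly increasing sequence $\length(\bar I^n/(x^n))$ has initial term $\eh_0(\bar I)-\length(\bar R/\bar I)$ and stable value $\eh_1(\bar I)$, so $(3)$ forces it to be constant and $\bar I^{n+1}=\bar x\bar I^n$ for all $n$.

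The gap is the lifting at the end of $(3)\Rightarrow(1)$. You claim the superficial sequence $x_1,\ldots,x_{d-1}$ can be chosen ``generically enough'' to satisfy $(x_1,\ldots,x_{d-1})\cap I^n=(x_1,\ldots,x_{d-1})I^{n-1}$ for all $n$, ``possible since the residue field is infinite.'' That assertion is false as stated: by Valabrega--Valla the condition is \emph{equivalent} to $x_1^*,\ldots,x_{d-1}^*$ being a regular sequence in $\gr_I(R)$, which forces $\depth\gr_I(R)\geq d-1$, and there are $\m$-primary ideals in Cohen--Macaulay local rings with $\depth\gr_I(R)=0$ --- no generic choice can repair that. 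In the situation at hand the depth bound does hold in the end, but that is part of what you are proving; it cannot be assumed. The missing ingredient is Sally's machine: if $x_1,\ldots,x_{d-1}$ is a superficial sequence and $\depth\gr_{\bar I}(\bar R)\geq 1$ --- which you \emph{have} established, since $\bar x^*$ is a nonzerodivisor on $\gr_{\bar I}(\bar R)$ once $\bar I^{n+1}=\bar x\bar I^n$ --- then $x_1^*,\ldots,x_{d-1}^*$ is a regular sequence in $\gr_I(R)$. This is a nontrivial intermediate theorem (due to Sally; see Huckaba--Marley or Rossi--Valla), not a genericity statement. Without it, your computation only gives $I^2\subseteq JI+\bigl((x_1,\ldots,x_{d-1})\cap I^2\bigr)$, and placing the intersection inside $JI$ is exactly the content of Sally's machine. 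Either invoke it explicitly, or replace the final step by a self-contained argument such as the Sally-module approach of Rossi--Valla, where $\eh_1(I)=\eh_0(I)-\length(R/I)$ is shown directly to force $\bigoplus_{n\geq 1}I^{n+1}/J^nI=0$.
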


The notion of minimal multiplicity is defined through the Abhyankar's inequality. 

\begin{theorem}[Abhyankar's inequality, \cite{Abhyankar}]
    Let $(R, \m)$ be a Cohen--Macaulay local ring of dimension $d$. 
    Then $\eh(R) \geq \edim(R) - d + 1$. Moreover, if the residue field of $R$ is infinite, then equality holds if and only if $\m$ is stable. 
\end{theorem}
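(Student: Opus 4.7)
The plan is to first reduce to the case of infinite residue field by passing to the faithfully flat extension $R(X) \coloneqq R[X]_{\m R[X]}$. This preserves being Cohen--Macaulay of dimension $d$, and since $\m R(X) = \m \otimes_R R(X)$, one checks directly that neither $\eh(R)$ nor $\edim(R)$ is altered. So I may assume that $R$ has infinite residue field.

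Next I would pick a minimal reduction $J = (a_1, \ldots, a_d)$ of $\m$, which is a parameter ideal generated by $d$ elements. Since $R$ is Cohen--Macaulay, $\eh(R) = \eh(J) = \length(R/J)$. Decomposing this length along the filtration $J \subseteq J + \m^2 \subseteq \m \subseteq R$ and using that $(J + \m^2)/\m^2$ is generated by at most $d$ elements as an $R/\m$-vector space gives
\[
\eh(R) - 1 = \length(\m/J) \geq \length(\m/(J+\m^2)) \geq \edim(R) - d,
\]
which is exactly Abhyankar's inequality.

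For the ``if'' direction of the equality statement, suppose $\m$ is stable. Theorem~\ref{thm: Northcott} then provides $\eh_1(\m) = \eh(R) - 1$ (from the $n = 1$ case of its closed-form Hilbert--Samuel polynomial) and
\[
\length(R/\m^2) = (d+1)\eh(R) - d\,\eh_1(\m) = \eh(R) + d.
\]
Comparing this with the trivial identity $\length(R/\m^2) = 1 + \edim(R)$ yields $\eh(R) = \edim(R) - d + 1$.

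For the converse, the equality $\eh(R) = \edim(R) - d + 1$ forces both inequalities in the displayed chain above to be equalities; in particular $J + \m^2 = J$, i.e., $\m^2 \subseteq J$. At this point I would invoke Itoh's theorem, exactly as applied in the remark following the definition of stable ideals: since $\m$ is integrally closed and $\m \subseteq \overline{J}$, we have $\overline{\m^2} = \overline{J^2}$, so
\[
\m^2 \subseteq \overline{J^2} \cap J = J\overline{J} = J\m,
\]
yielding $\m^2 = J\m$ and hence stability. The main subtle point is the appeal to Itoh's theorem at this last step; once that is in hand, the rest of the argument is bookkeeping with the Hilbert--Samuel polynomial and minimal reductions.
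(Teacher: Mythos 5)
Your argument is correct, and since the paper merely cites Abhyankar for this result without giving a proof, there is no internal argument to compare against. The reduction to infinite residue field, the filtration $J \subseteq J + \m^2 \subseteq \m$ giving $\eh(R)-1 = \length(\m/J) \geq \length(\m/(J+\m^2)) \geq \edim(R)-d$, and the derivation of the ``if'' direction from the closed-form Hilbert--Samuel polynomial in Theorem~\ref{thm: Northcott} are all sound.

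One remark on the converse: the appeal to Itoh's theorem (via the remark after the definition of stable ideals) works and is internally consistent with the paper's framework, but it is heavier than what the situation demands. The two forced equalities in the chain give you not only $\m^2 \subseteq J$ but also $\dim_k (J+\m^2)/\m^2 = d$, i.e., the generators $a_1, \ldots, a_d$ of $J$ are $k$-linearly independent modulo $\m^2$. With that, if $x \in \m^2 \subseteq J$ is written as $x = \sum a_i c_i$ and some $c_i$ were a unit, you would get $a_i \in (a_1, \ldots, \hat{a_i}, \ldots, a_d) + \m^2$, contradicting the linear independence just established; hence all $c_i \in \m$ and $\m^2 \subseteq J\m$, giving $\m^2 = J\m$ directly. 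This is the classical Abhyankar/Sally argument and avoids integral closure considerations entirely. Your route is not wrong --- it just proves a local statement by quoting a global theorem --- and it has the minor virtue of mirroring the paper's own remark, so either version would be acceptable here.
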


A Cohen--Macaulay local ring $(R,\m)$ has {\it minimal multiplicity} if Abhyankar's inequality is an equality, that is, $\eh(R) = \edim(R) - \dim(R) + 1$. This property is stable on extending residue field (since multiplicity and embedding dimension are preserved under such extensions). Hence by Theorem~\ref{thm: Northcott}, if $(R,\m)$ is a $d$-dimensional Cohen--Macaulay local ring of minimal multiplicity, then 
the Hilbert--Samuel polynomial of $\m$ is 
$$p_\m(n)= \eh(R) \binom{n+d-1}{d} + (1 - \eh(R)) \binom{n+d-2}{d-1}.$$

We next recall that a local ring $(R,\m)$ is {\it pseudo-rational} in the sense of Lipman--Teissier \cite{LipmanTeissier}, if it is Cohen--Macaulay, normal, analytically unramified (i.e., $\widehat{R}$ is reduced) and for every proper birational map $\pi \colon Y\to X\coloneqq\Spec(R)$, the canonical map $H_\m^d(R)\to H^d(R\Gamma_\m(R\pi_*\sO_Y))$ is injective where $d=\dim(R)$. When $X$ admits a dualizing complex $\omega_X^\bullet$ (e.g., when $R$ is a homomorphic image of a Gorenstein local ring), then $\omega_Y^\bullet\coloneqq\pi^!\omega_X^\bullet$ is a dualizing complex of $Y$, and in this case, the last condition in the definition is equivalent to that $\pi_*\omega_Y=\omega_X$.

It is well-known that if $(R,\m)$ is essentially of finite type over a field of characteristic zero or if $\dim(R)\leq 2$, then pseudo-rational singularities are exactly rational singularities: that is, $Rf_*\sO_Y=\sO_X$ for some (equivalently, any) resolution of singularities $f$: $Y\to X=\Spec(R)$. 
This is essentially a consequence of the Grauert--Riemenschneider vanishing theorem \cite{GRVanishing,LipmanResolution}. The following theorem characterizes pseudo-rational surface singularities in terms of integral closure of ideals.

\begin{theorem}[{\cite[Proposition 5.5]{LipmanTeissier}}]
\label{thm: Lipman--Teissier}
Let $(R, \mf m)$ be a normal analytically unramified local domain of dimension two.
Then $R$ is pseudo-rational if and only if $I\overline{I} = \overline{I^2}$ for every $\m$-primary ideal $I$. In particular, if $R$ is pseudo-rational with an infinite residue field, then every $\m$-primary integrally closed ideal is stable.
\end{theorem}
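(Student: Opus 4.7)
\textbf{Strategy.} My plan is to prove the main equivalence by analyzing $\m$-primary ideals on the normalized blow-up, and then to deduce the ``in particular'' clause as a formal consequence of the equivalence combined with a minimal reduction.

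\textbf{Setup.} Let $I\subseteq R$ be $\m$-primary and let $\pi\colon Y \to X = \Spec R$ be the normalized blow-up of $I$, so $I\sO_Y = \sO_Y(-E)$ for an effective Cartier divisor $E$. Since $R$ is normal and analytically unramified, $\pi_*\sO_Y(-nE) = \overline{I^n}$ for all $n \geq 0$, and $\sO_Y(-nE)$ is globally generated by $\overline{I^n}$ as an $\sO_Y$-module. Moreover $E$ is one-dimensional, so $R^i\pi_*$ vanishes in degree $\geq 2$. Note also that two-dimensional normal is automatically Cohen--Macaulay, so Theorem~\ref{thm: Northcott} applies to integrally closed $\m$-primary ideals.

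\textbf{Forward direction.} By Lipman's theorem, a two-dimensional normal pseudo-rational singularity is rational, so $R^1\pi_*\sO_Y = 0$. The crucial cohomological input, going back to Artin's analysis of rational surface singularities, is that this extends to the vanishing $R^1\pi_*\sO_Y(-nE) = 0$ for every $n \geq 1$ (one reads this off the surjection $R = \pi_*\sO_Y \twoheadrightarrow R/\overline{I^n} \hookrightarrow \pi_*\sO_{nE}$ and rationality). Pushing forward $0 \to \sO_Y(-(n+1)E) \to \sO_Y(-nE) \to \sO_E(-nE) \to 0$ then yields short exact sequences
\[
0 \to \overline{I^{n+1}} \to \overline{I^n} \to H^0(E, \sO_E(-nE)) \to 0.
\]
After passing to an infinite residue field by a faithfully flat extension, fix a minimal reduction $J \subseteq I$; by generality, the two generators of $J$ globally generate $\sO_Y(-E)$ as an $\sO_Y$-module. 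Applying the cohomological vanishing to the Koszul-type syzygy sheaf of these two generators twisted by $\sO_Y(-(n-1)E)$ then shows that $J\overline{I^{n-1}}$ surjects onto $H^0(E, \sO_E(-nE))$, so $\overline{I^n} = J\overline{I^{n-1}} + \overline{I^{n+1}}$ for every $n \geq 1$. A graded Nakayama argument, using Artin--Rees to control $\overline{I^{n+1}}$ inside $\m\overline{I^n}$, promotes this to $\overline{I^n} = J\overline{I^{n-1}}$ for all $n \geq 1$. Taking $n = 2$ and using $J \subseteq I$ with $\overline{J} = \overline{I}$ gives $\overline{I^2} = J\overline{I} \subseteq I\overline{I} \subseteq \overline{I^2}$, so $I\overline{I} = \overline{I^2}$.

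\textbf{Converse and ``in particular'' clause.} Conversely, assume $I\overline{I} = \overline{I^2}$ for every $\m$-primary $I$. For any integrally closed $\m$-primary $I$ with minimal reduction $J$ (using an infinite residue field), one computes
\[
I^2 = I\overline{I} = \overline{I^2} = \overline{J^2} = J\overline{J} = JI,
\]
using $\overline{I} = I$ and $\overline{J} = \overline{I} = I$ together with applications of the hypothesis to $I$ and to $J$. Hence every integrally closed $\m$-primary ideal is stable, which directly gives the ``in particular'' statement when $R$ is already pseudo-rational. To close the converse direction, iterating the identity together with the Cohen--Macaulay property forces $\overline{I^n} = J\overline{I^{n-1}}$ for all $n$, which by Theorem~\ref{thm: Northcott} pins $\length(R/\overline{I^n})$ down to the polynomial $\eh(I)\binom{n+1}{2} - \eh_1(I)n$; via the short exact sequences from the forward step this forces $R^1\pi_*\sO_Y(-nE) = 0$ for every $n$, and Lipman's cohomological criterion then delivers pseudo-rationality. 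The main obstacle is the combined cohomological input in the forward direction --- the vanishing $R^1\pi_*\sO_Y(-nE) = 0$ together with the surjectivity $J\overline{I^{n-1}} \twoheadrightarrow H^0(E, \sO_E(-nE))$ --- where the dimension-two hypothesis and the full strength of pseudo-rationality are used essentially.
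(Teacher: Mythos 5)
The paper does not actually prove this statement: it is cited verbatim from Lipman--Teissier, with the ``in particular'' clause appended by the authors (the preceding remark in Section 1.3 supplies an Itoh-based argument for stability under the hypothesis $I^2\subseteq J$). So there is no paper proof to compare against, and I will evaluate your sketch on its own.

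Your argument for the ``in particular'' clause is correct and is in fact cleaner than the paper's surrounding remark: the chain $I^2 = I\overline{I} = \overline{I^2} = \overline{J^2} = J\overline{J} = JI$ uses only the main equivalence applied to $I$ and to $J$, together with $\overline{J}=\overline{I}=I$. The forward direction is also sound in outline and matches the shape of the original Lipman--Teissier argument: rationality of $R$ plus the Grauert--Riemenschneider-type vanishing $R^1\pi_*\sO_Y(-nE)=0$ on the normalized blowup, plus the Koszul sequence for a general two-element reduction, give $J\overline{I^n}=\overline{I^{n+1}}$, and $I\overline{I}=\overline{I^2}$ follows.

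The gap is in the converse, at the step ``iterating the identity together with the Cohen--Macaulay property forces $\overline{I^n} = J\overline{I^{n-1}}$ for all $n$.'' This claim is equivalent to the normality of the ideal $\overline{I}$, and it does not obviously follow from the hypothesis by iteration. What you actually get from $I\overline{I}=\overline{I^2}$ is: applied to the integrally closed ideal $\overline{I}$, that $\overline{I}^2 = \overline{I^2}$ (and, combined with a minimal reduction, that $\overline{I}$ is stable); applied to $\overline{I^2}$, that $\overline{I}^4 = \overline{I^4}$; and so on, so you control even powers. But the odd powers, for example whether $\overline{I}^3 = \overline{I^3}$, do not drop out of this iteration --- the hypothesis applied to $\overline{I}\cdot\overline{I^2}$ returns information about $\overline{I^6}$, not $\overline{I^3}$. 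Without normality of $\overline{I}$, the Hilbert function $\length(R/\overline{I^n})$ need not agree with $\length(R/\overline{I}^n)$, and the Northcott-style length formula does not directly apply to the integral-closure filtration. Lipman--Teissier's own converse argument goes by a more direct cohomological route (working with the injectivity of $H^2_\m(R) \to H^2(R\Gamma_\m R\pi_*\sO_Y)$ and the exact sequences on the normalized blowup) rather than by first establishing $\overline{I^n} = J\overline{I^{n-1}}$ for all $n$.
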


\newpage
\section{Definition and basic properties}

\subsection{Lech--Mumford constant} 
We start by introducing the key concept of this paper. 
\begin{definition}
Let $(R, \mf m)$ be a local ring of dimension $d$. We call the quantity
\[
\lm(R) \coloneqq \sup_{\sqrt{I}=\m}\left\{\frac{\eh(I)}{d!\length(R/I)}\right\}
\]
the {\it Lech--Mumford constant} of $R$. 
\end{definition}

\begin{remark}
\label{rmk: LM notation different from Mumford}
In \cite{Mumford}, $\lm(R)$ is called the $0$th {\it flat multiplicity} of $R$ and is denoted by $\eh_0(R)$. Furthermore, $\lm(R[[T_1,\dots,T_n]])$ is called the $n$th flat multiplicity of $R$ and is denoted by $\eh_n(R)$. We decided not to follow Mumford's notation because $\eh_i$ is widely used in commutative algebra for the $i$th Hilbert coefficient.
\end{remark}

We collect some basic properties of $\lm(R)$. 

\begin{proposition}
\label{prop: basic results on cLM}
Let $(R,\m)$ be a local ring. Then we have the following: 
\begin{enumerate}
  \item the supremum in the definition of $\lm(R)$ can be restricted to integrally closed ideals;\label{part restrict to integrally closed}
  \item $\lm(R)=\lm(\widehat{R})$; \label{part restrict to complete}
  \item $\eh(R)\geq \lm(R)\geq \eh(R)/\eh(\red{R})$, in particular $\lm(R)\geq 1$; \label{part multiplicity bounds}
  \item $\lm(R)=\eh(R)$ if $\dim(R)\leq 1$; \label{part LM dim 1}
  \item if $S$ is a quotient of $R$ such that $\dim(S) = \dim(R)$, then $\lm(S) \leq \lm(R)$; \label{part LM of big image}
  \item if $\unm{R}$ is the unmixed part of $R$,\footnote{Recall that $\unm{R}=R/j(R)$ where $j(R)$ is the largest $R$-submodule of $R$ such that $\dim(j(R))<\dim(R)$, see \cite{HochsterHunekeIndecomposable}.} then $\lm(R)=\lm(\unm{R})$. \label{part restrict to unmixed}
\end{enumerate}
\end{proposition}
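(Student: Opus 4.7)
The plan is to address each item using standard tools, primarily the associativity formula for multiplicities and basic properties of integral closure. Parts (1) and (2) are formal: for (1), since $\eh(I) = \eh(\overline{I})$ while $I \subseteq \overline{I}$ forces $\length(R/I) \geq \length(R/\overline{I})$, replacing $I$ by $\overline{I}$ can only increase the defining ratio, so the supremum is attained among integrally closed ideals. For (2), completion preserves the length and multiplicity of every $\m$-primary ideal because $R/I^n \cong \widehat{R}/I^n\widehat{R}$ canonically for all $n$; moreover, any ideal $J$ of $\widehat{R}$ containing $\m^n\widehat{R}$ is determined by its image in $\widehat{R}/\m^n\widehat{R} = R/\m^n$, so $I \mapsto I\widehat{R}$ is a bijection between $\m$-primary ideals of $R$ and $\m\widehat{R}$-primary ideals of $\widehat{R}$, making the two suprema identical.

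For (3), the upper bound $\lm(R) \leq \eh(R)$ is exactly Lech's inequality (Theorem~\ref{thm Lech}) applied to every $\m$-primary ideal. For the lower bound, I will use the explicit family $I_n \coloneqq \m^n + \sqrt{0}$. Any nilpotent $x$ with $x^N = 0$ satisfies the trivial integral relation $X^N = 0$ over $\m^{nN}$, so $\sqrt{0} \subseteq \overline{\m^n}$, hence $\overline{I_n} = \overline{\m^n}$, which gives $\eh(I_n) = \eh(\m^n) = n^d \eh(R)$. On the other hand $R/I_n = \red{R}/(\red{\m})^n$, so $d!\length(R/I_n) = \eh(\red{R}) n^d + O(n^{d-1})$. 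Passing to the limit yields $\lm(R) \geq \eh(R)/\eh(\red{R}) \geq 1$, where the last inequality uses the associativity formula: at each minimal prime $\p$ of maximal dimension $R_\p$ is Artinian local, so $(\red{R})_\p = \kappa(\p)$ has length $1 \leq \length(R_\p)$. Part (4) then follows by setting $I = \m$: for $d \leq 1$, the ratio $\eh(\m)/(d!\length(R/\m)) = \eh(R)/d! = \eh(R)$, matching the upper bound from (3).

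Part (5) follows by writing each $\m_S$-primary ideal of $S = R/J$ as $I/J$ with $I \supseteq J$, noting that $\length(S/(I/J)) = \length(R/I)$, and observing via the associativity formula that $\eh(I, R) \geq \eh(I/J, S)$: the sum defining $\eh(I, R)$ ranges over all minimal primes of $R$ of maximal dimension, contains as a subsum the analogous formula for $S$ (restricted to primes containing $J$), and the relevant local lengths satisfy $\length(R_\p) \geq \length((R/J)_\p)$. The main obstacle is part (6). One direction, $\lm(\unm{R}) \leq \lm(R)$, is immediate from (5) since $\unm{R} = R/j(R)$ with $\dim(\unm{R}) = d$. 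For the reverse, the essential point is that at a minimal prime $\p$ of $R$ with $\dim(R/\p) = d$, the primary decomposition of $0$ in $R$ localizes to the trivial decomposition in the Artinian local ring $R_\p$, so the $\p$-primary component -- and a fortiori $j(R)$ -- becomes zero in $R_\p$. Hence $R_\p = (\unm{R})_\p$, and the associativity formula then gives the \emph{equality} $\eh(I, R) = \eh(\bar{I}, \unm{R})$ for every $\m$-primary $I$ (where $\bar{I}$ is the image in $\unm{R}$), while $\length(R/I) \geq \length(\unm{R}/\bar{I})$, yielding $\lm(R) \leq \lm(\unm{R})$.
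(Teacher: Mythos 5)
Your proof is correct and follows essentially the same approach as the paper: item by item, identical choices (integral closure, the family $I_n = \m^n + \sqrt{0}$ for the lower bound in (3), taking $I = \m$ for (4), preimages for (5), and combining (5) with length comparison for (6)). The only differences are that you spell out details the paper dispatches by citation or in one line: the bijection on $\m$-primary ideals under completion, the associativity-formula justification of $\eh(I, R) \geq \eh(\bar I, S)$ in (5) (the paper uses the equivalent, slightly quicker observation $\length_R(R/I'^n) \geq \length_S(S/\bar I{}^n)$), and the verification in (6) that $j(R)_\p = 0$ for minimal primes of full dimension — though your phrasing via the ``$\p$-primary component of $0$'' is a mild detour; the cleanest version is simply that $\dim(j(R)) < d$ forces $\p \notin \Supp(j(R))$ for any $\p$ with $\dim(R/\p) = d$.
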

\begin{proof}
First of all, (\ref{part restrict to integrally closed}) is obvious: replacing $I$ by its integral closure will keep the multiplicity and will not increase the colength. Second, there is a one-to-one correspondence between (integrally closed) $\m$-primary ideals of $R$ and $\widehat{R}$ \cite[Lemma 9.1.1]{SwansonHuneke}, and multiplicity and colength are preserved by passing to completion, (\ref{part restrict to complete}) follows. 

Next, in (\ref{part multiplicity bounds}) we have $\lm(R)\leq \eh(R)$ by Theorem~\ref{thm Lech}. Now let $d\coloneqq\dim(R)$ and consider the sequence of ideals $I_n = \mf m^n + \sqrt{0}$. Note that $I_n$ is integral over $\mf m^n$. It follows that
\[
\frac{\eh(I_n)}{d! \length (R/I_n)}= \frac{\eh(\m^n)}{d! \length (\red{R}/\mf m^n\red{R})}
= \frac{\eh(R)}{d! \length (\red{R}/\mf m^n\red{R})/n^{d}} \to \frac{\eh(R)}{\eh(\red{R})} \text{ as $n\to\infty$}. 
\]
This proves that $\lm(R)\geq \eh(R)/\eh(\red{R})\geq 1$. 

To see (\ref{part LM dim 1}), note that if $\dim(R)\leq 1$, then $\eh(\m)/d!\length(R/\m)=\eh(R)$ and thus $\lm(R)\geq \eh(R)$, this together with (2) shows that $\lm(R)=\eh(R)$. 

To see (\ref{part LM of big image}), let $I$ be an $\mf m$-primary ideal of $S$. We may consider its preimage $I'$ in $R$. Then $\length (R/I') = \length (S/I)$ and, since $S$ is a quotient of $R$ with $\dim(S)=\dim(R)$, we have $\eh (I') \geq \eh(I)$. Thus $\lm(S) \leq \lm(R)$.

To see (\ref{part restrict to unmixed}), note that $\unm{R}$ is a quotient of $R$ with $\dim(\ker(R\to \unm{R}))<\dim(R)$. Thus for any $\m$-primary ideal $I$ of $R$, we have $\eh(I)=\eh(I\unm{R})$ and $\length(R/I)\geq\length(\unm{R}/I\unm{R})$. It follows that $\lm(R)\leq \lm(\unm{R})$. Since $\lm(R)\geq \lm(\unm{R})$ by (4), we have $\lm(R)=\lm(\unm{R})$.
\end{proof}

We can completely characterize when $\lm(R)=\eh(R)$. This is essentially a reformulation of the main result of \cite{MaSmirnovUniformLech} (see also \cite[Theorem A (2)]{HMQS}).

\begin{theorem}
\label{thm: uniform Lech}
Let $(R,\m)$ be a local ring. Then $\lm(R)=\eh(R)$ if and only if one of the following holds.
\begin{enumerate}
  \item[(a)] $\dim(R)\leq 1$.
  \item[(b)] $\dim(R)\geq 2$ and $\eh(\red{\widehat{R}})=1$. 
\end{enumerate}
\end{theorem}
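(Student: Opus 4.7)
The plan is to treat the two directions separately; only the converse is substantive, and it reduces to citing the uniform Lech inequality.

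For the easy direction, I would verify that conditions (a) and (b) each force $\lm(R)=\eh(R)$ directly from Proposition~\ref{prop: basic results on cLM}. Case (a) is literally part (\ref{part LM dim 1}). In case (b), I use the completion-invariance of $\lm$ (part (\ref{part restrict to complete})) together with the completion-invariance of multiplicity to compute
\[
\lm(R)=\lm(\widehat{R})\geq \frac{\eh(\widehat{R})}{\eh(\red{\widehat{R}})}=\eh(R),
\]
where the inequality is part (\ref{part multiplicity bounds}). Lech's inequality (Theorem~\ref{thm Lech}) supplies the reverse bound $\lm(R)\leq \eh(R)$, so equality holds.

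For the harder direction I would contrapose: assume $\dim(R)\geq 2$ and $\eh(\red{\widehat{R}})\geq 2$, and aim to show $\lm(R)<\eh(R)$. Since $\lm(R)=\lm(\widehat{R})$, I may pass to the completion and assume $R$ is complete, so that the hypothesis becomes $\eh(\red{R})\geq 2$. At this point I would invoke the uniform Lech inequality established in \cite{MaSmirnovUniformLech} and sharpened in \cite[Theorem~A(2)]{HMQS}: under exactly this hypothesis there is an $\epsilon=\epsilon(R)>0$ such that
\[
\eh(I)\leq d!\bigl(\eh(R)-\epsilon\bigr)\length(R/I)
\]
for every $\m$-primary ideal $I\subseteq R$. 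Taking the supremum over such $I$ gives $\lm(R)\leq \eh(R)-\epsilon<\eh(R)$, which is what I need.

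The only real obstacle is recognizing that the statement is simply the contrapositive packaging of the uniform Lech inequality once one has the reduction to the complete case and the observation that $\eh(\red{\widehat{R}})=1$ is the precise dichotomy separating the rings for which the Lech ratio can be asymptotically saturated from those for which it cannot. Everything else is formal manipulation of the definitions and of Proposition~\ref{prop: basic results on cLM}.
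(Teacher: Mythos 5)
Your proof is correct and follows essentially the same route as the paper's: the forward implication is formal from Proposition~\ref{prop: basic results on cLM} (parts (\ref{part LM dim 1}), (\ref{part restrict to complete}), (\ref{part multiplicity bounds})), and the converse is the uniform Lech inequality of \cite[Theorem 3]{MaSmirnovUniformLech} applied after passing to the completion. The paper's proof is a slightly more compressed version of the same argument.
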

\begin{proof}
First of all, whenever (a) or (b) holds, we have $\lm(R)=\eh(R)$ by (\ref{part LM dim 1}) and (\ref{part multiplicity bounds}) of Proposition~\ref{prop: basic results on cLM}. Conversely, suppose $\dim(R)\geq 2$ and $\eh(\red{\widehat{R}})>1$, by \cite[Theorem 3]{MaSmirnovUniformLech}, there exists $\epsilon>0$ such that $\eh(I)\leq d!(\eh(R)-\epsilon)\length(R/I)$ for all $\m$-primary ideals $I\subseteq R$. This implies $\lm(R)\leq \eh(R)-\epsilon<\eh(R)$. 
\end{proof}

It will be useful to be able to restrict in the definition of $\lm (R)$ to special classes of ideals. The discussion and results below extend some ideas in \cite[Proposition~3.4]{Shah}. Recall that a graded family of ideals in a ring $R$ is a sequence of ideals $\{J_n\}_n$ such that $J_{n+1} \subseteq J_n$ and $J_n J_m \subseteq J_{n+m}$. For convenience, we shall also set $J_0 = R$. Examples of such families are powers of an ideal, or integral closures of powers of an ideal, or valuation ideals. For each graded family $\{J_n\}_n$ we can form
\begin{itemize}
  \item the Rees algebra $R[J_\bullet T^\bullet] \coloneqq \oplus_{n=0}^\infty J_nT^n$,
  \item the extended Rees algebra $R[J_\bullet T^\bullet, T^{-1}]$,
  \item the associated graded ring $\gr_{J_\bullet}(R)\coloneqq R[J_\bullet T^\bullet, T^{-1}]/(T^{-1})= \oplus_{n = 0}^\infty J_n/J_{n+1}$.
\end{itemize}
The graded family $\{J_n\}_n$ is called Noetherian if $R[J_\bullet T^\bullet]$ is a Noetherian ring. Note that this implies $R[J_\bullet T^\bullet, T^{-1}]$ and $\gr_{J_\bullet}(R)$ are both Noetherian. Moreover, for a Noetherian graded family, by \cite[Remark 2.4.3]{RatliffFiltration} there exists $m\in\mathbb{N}$ such that 
$$J_{n+m}=J_nJ_m \text{ for all } n\geq m.$$
This implies (when $R$ is local) that $\cap_nJ_n\subseteq \cap_n J_m^n\subseteq \cap_n\m^n=0$. Furthermore, we have that $R[J_\bullet T^\bullet, T^{-1}]$ is an integral extension of the usual extended Rees algebra $R[J_m^nT^n, T^{-1}]$, and thus by \cite[Theorem 2.2.5 and Theorem 5.1.4]{SwansonHuneke}, we have
$$\dim(R[J_\bullet T^\bullet, T^{-1}])=\dim(R[J_m^nT^n, T^{-1}])=\dim(R)+1, \text{ and } \dim(\gr_{J_\bullet}R)=\dim(R).$$

For the ease of the presentation of the next few results, we introduce a graded version of Lech--Mumford constant for graded rings. We will show that, in our cases of interest, it agrees with the usual Lech--Mumford constant of the localization at the unique homogeneous maximal ideal so it will not cause any ambiguity later. Let $R$ be an $\mathbb{N}^n$-graded algebra over a local ring $(R_0,\m_0)$. We define
$$\grlm(R)\coloneqq\sup\left\{\frac{\eh(I)}{d!\length(R/I)}\right\}$$
where the supremum runs over all $\mathbb{N}^n$-graded ideals of finite colength, i.e., those $\mathbb{N}^n$-graded ideals $I$ such that $\sqrt{I}=\m\coloneqq\m_0+R_{>0}$ ($\m$ is the unique $\mathbb{N}^n$-graded  maximal ideal).

\begin{proposition}
\label{prop: LM associated graded}
Let $(R, \mf m)$ be a local ring and $\{J_n\}_n$ be a Noetherian graded family.
Then $\lm(R) \leq \grlm(\Gr_{J_\bullet}(R))$. In particular, $\lm(R)\leq \grlm(\Gr_J(R))$ for any ideal $J\subseteq R$.
\end{proposition}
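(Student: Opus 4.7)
The plan is to run the classical associated-graded degeneration argument attached to the filtration $\{J_n\}$. Since $\{J_n\}$ is Noetherian, the discussion preceding the proposition records that $\bigcap_n J_n = 0$ and $J_n \subseteq \m$ for $n \gg 0$, so every nonzero $x \in R$ has a well-defined order $v(x) \coloneqq \max\{n \geq 0 : x \in J_n\}$ and a leading form $\ini(x) \coloneqq x + J_{v(x)+1} \in J_{v(x)}/J_{v(x)+1} \subseteq \Gr_{J_\bullet}(R)$. For every $\m$-primary ideal $I \subseteq R$, I would define $\ini(I)$ to be the homogeneous ideal of $\Gr_{J_\bullet}(R)$ generated by the leading forms of all nonzero elements of $I$; equivalently, $\ini(I)_n = (I \cap J_n + J_{n+1})/J_{n+1}$ in each degree $n$.

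The first key step is the colength identity. A direct computation gives
$$[\Gr_{J_\bullet}(R)/\ini(I)]_n \cong J_n/(I \cap J_n + J_{n+1}) \cong (J_n + I)/(J_{n+1} + I),$$
and since $J_n \subseteq I$ for all $n \gg 0$ (because $I$ is $\m$-primary and the $J_n$ are eventually contained in any fixed power of $\m$), only finitely many graded pieces are nonzero. Telescoping the descending chain $R \supseteq J_1 + I \supseteq J_2 + I \supseteq \cdots$, which stabilizes at $I$, yields
$$\length(\Gr_{J_\bullet}(R)/\ini(I)) = \length(R/I),$$
so in particular $\ini(I)$ is an $\mathbb{N}$-graded ideal of finite colength. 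Applying the same identity to $I^n$ also gives $\length(\Gr_{J_\bullet}(R)/\ini(I^n)) = \length(R/I^n)$.

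The second key step is the multiplicity inequality $\eh(\ini(I)) \geq \eh(I)$. I would deduce this from the inclusion $\ini(I)^n \subseteq \ini(I^n)$, verified on generators: for any $x_1,\dots,x_n \in I$, the product $\ini(x_1)\cdots\ini(x_n)$ either equals $\ini(x_1\cdots x_n)$ (when $v(x_1\cdots x_n) = \sum_i v(x_i)$) or vanishes in $\Gr_{J_\bullet}(R)$ (when the strict inequality holds), and in both cases the product lies in $\ini(I^n)$ since $x_1\cdots x_n \in I^n$. Combined with the colength identity, this yields
$$\length(\Gr_{J_\bullet}(R)/\ini(I)^n) \geq \length(\Gr_{J_\bullet}(R)/\ini(I^n)) = \length(R/I^n).$$
Since $\dim \Gr_{J_\bullet}(R) = \dim R = d$ (as recorded in the preliminaries), dividing by $n^d/d!$ and letting $n \to \infty$ gives $\eh(\ini(I)) \geq \eh(I)$.

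Combining the two steps produces
$$\frac{\eh(I)}{d!\length(R/I)} \leq \frac{\eh(\ini(I))}{d!\length(\Gr_{J_\bullet}(R)/\ini(I))} \leq \grlm(\Gr_{J_\bullet}(R)),$$
and taking the supremum over all $\m$-primary $I \subseteq R$ gives the main inequality. The ``in particular'' statement follows because for any ideal $J \subseteq R$ the ordinary Rees algebra $R[JT]$ is finitely generated, so $\{J^n\}_n$ is automatically a Noetherian graded family. No step appears to present a genuine obstacle; the only delicate points are the order-function bookkeeping for leading forms and the basic structural properties of Noetherian graded families quoted above.
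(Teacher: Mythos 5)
Your proof is correct and follows essentially the same path as the paper's: both define the form ideal $\init(I)$ degree-by-degree, establish $\length(\Gr_{J_\bullet}(R)/\init(I)) = \length(R/I)$ by telescoping, use the containment $\init(I)^n \subseteq \init(I^n)$ to get $\eh(\init(I)) \geq \eh(I)$, and conclude by taking the supremum. Your version is a little more explicit (the leading-form case analysis for the containment, the observation that $J_n \subseteq I$ for $n \gg 0$), but there is no substantive difference in method.
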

\begin{proof}
Let $I$ be an $\mf m$-primary ideal of $R$ and let $\init(I)$ be its form ideal in $\Gr_{J_\bullet}(R)$, i.e.,
\[
\init(I) = \bigoplus_{k \geq 0} \frac{I \cap J_k + J_{k + 1}}{J_{k+1}}.
\]
Since the graded family is Noetherian, we have $\cap_k J_k = 0$. It follows that
\[
\length(\gr_{J_\bullet}(R)/\init(I)) =
\sum_{k = 0}^\infty \length\left(\frac{J_k}{(I \cap J_k) + J_{k + 1}}\right)
= \sum_{k = 0}^\infty \length\left(\frac{I + J_k}{I + J_{k + 1}}\right) = \length(R/I).
\]
Moreover, it is easy to see that $\init(I_1)\init(I_2) \subseteq \init(I_1I_2)$ for any two ideals $I_1, I_2$ of $R$, thus
\[ \length(\gr_{J_\bullet}(R)/\init(I)^n) \geq \length(\gr_{J_\bullet}(R)/\init(I^n)) = \length(R/I^n).\]
Since $\dim (R) = \dim (\gr_{J_\bullet}(R))$, the above discussion shows that $\eh(I) \leq \eh(\init(I))$. Since $\init(I)$ is clearly a homogeneous ideal, we have $\lm(R)\leq\grlm(\gr_{J_\bullet}(R))$ as wanted.
\end{proof}

\begin{proposition}
\label{prop: LM multigraded}
If $R$ is an $\mathbb N^n$-graded ring over a local ring $(R_0,\m_0)$
and $\m=\m_0+R_{>0}$ is its unique homogeneous maximal ideal, then $\lm(R_\m) = \grlm(R)$ and we may restrict to multi-homogeneous $\m$-primary ideals in the supremum computing $\lm(R_\m)$. 
\end{proposition}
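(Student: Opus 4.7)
The plan is to prove the two inequalities separately. The direction $\grlm(R) \leq \lm(R_\m)$ is immediate: for a multi-homogeneous $\m$-primary ideal $I \subseteq R$, the only prime containing $I$ is $\m$, so $R/I$ and each $R/I^n$ are already $\m$-local. Hence $\length(R/I^n) = \length(R_\m/I^nR_\m)$ for all $n$, giving $\length(R/I) = \length(R_\m/IR_\m)$ and $\eh(I) = \eh(IR_\m)$; the ratio is preserved under localization, and taking the sup gives the inequality.

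The main content is the reverse $\lm(R_\m) \leq \grlm(R)$. I would apply Proposition~\ref{prop: LM associated graded} with a Noetherian graded family derived from the multi-grading. For a weight vector $\mathbf{v} \in \N_{>0}^n$, set $J_k^\mathbf{v} \coloneqq \bigoplus_{\mathbf{v}\cdot\alpha \geq k} R_\alpha$. Because $\mathbf{v}$ has strictly positive entries, $J_1^\mathbf{v} = R_{>0}$, and each quotient $J_k^\mathbf{v}/J_{k+1}^\mathbf{v}$ is annihilated by $R_{>0}$; thus its $R$-action factors through the local ring $R_0$, and extending to $R_\m$ does not alter the quotients (since $R_0\otimes_R R_\m = R_0$). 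This yields an isomorphism of rings $\gr_{J_\bullet^\mathbf{v} R_\m}(R_\m) \cong R$, equipped with the $\N$-grading by $\mathbf{v}$-weights. Proposition~\ref{prop: LM associated graded} then produces, for each $\m R_\m$-primary $I \subseteq R_\m$, an ideal $\init_\mathbf{v}(I) \subseteq R$, homogeneous in the $\mathbf{v}$-grading, with $\length(R/\init_\mathbf{v}(I)) = \length(R_\m/I)$ and $\eh(\init_\mathbf{v}(I)) \geq \eh(I)$.

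The main difficulty is upgrading $\mathbf{v}$-homogeneity to multi-homogeneity. I would argue that for $\mathbf{v}$ sufficiently lex-like depending on $I$---concretely, $v_i = M^{n-i}$ for $M$ large relative to the multi-degrees occurring in a suitable finite generating set of (the contraction of) $I$---the weight $\mathbf{v}$ separates the relevant multi-degrees, so $\init_\mathbf{v}(I)$ is in fact multi-homogeneous, coinciding with the lex-initial ideal of $I\cap R$. Alternatively, one can iterate Proposition~\ref{prop: LM associated graded} once per coordinate of the multi-grading, progressively refining the homogeneity of the initial ideal at each step. Either way, once multi-homogeneity is granted, the ratio bound gives $\eh(I)/(d!\length(R_\m/I)) \leq \eh(\init_\mathbf{v}(I))/(d!\length(R/\init_\mathbf{v}(I))) \leq \grlm(R)$, and taking the sup over $I$ yields $\lm(R_\m) \leq \grlm(R)$. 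The final assertion---that the sup defining $\lm(R_\m)$ is attained by multi-homogeneous ideals---follows immediately, as each $\m R_\m$-primary $I$ is dominated in ratio by a multi-homogeneous ideal of $R$ of the same colength.
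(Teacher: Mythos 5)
Your proof is correct and follows essentially the same strategy as the paper's: pass to an initial ideal via a weight filtration, using Noetherianness of $R$ to reduce the problem to finitely many multi-degrees, on which a well-chosen weight produces a multi-homogeneous initial ideal of the same colength and no smaller multiplicity. The paper anchors the construction on the degree-lexicographic order and then matches a weight vector to it on the relevant finite set, while you go directly to a lex-approximating weight $v_i = M^{n-i}$; both routes are sound, and your verification that $\gr_{J_\bullet^{\mathbf v}R_\m}(R_\m) \cong R$ is a useful step the paper leaves implicit. The one spot to sharpen is that ``a suitable finite generating set'' must specifically mean elements $f_1,\dots,f_r \in I\cap R$ whose lex-initial forms generate $\init_{\mathrm{lex}}(I\cap R)$ (such exist precisely because $\init_{\mathrm{lex}}(I\cap R)$ is finitely generated, by Noetherianness); only with such a set does choosing $M$ large enough to separate the multi-degrees of the $f_i$ force $\init_{\mathbf v}(I\cap R)=\init_{\mathrm{lex}}(I\cap R)$, which is multi-homogeneous because each $\init_{\mathrm{lex}}(f)$ lies in a single multi-graded piece. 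Without this precision the choice of $M$ would appear to depend on $\init_{\mathbf v}(I\cap R)$ itself, which depends on $M$; the Noetherianness argument breaks that circularity exactly as the paper's argument does.
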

\begin{proof}
We consider the degree lexicographic order filtration on $\mathbb N^n$: for two multi-degrees, $(a_1,...,a_n) \prec (b_1,...,b_n)$ if either 
\begin{itemize}
    \item $\sum^n_{i = 1} a_i < \sum^n_{i = 1} b_i$, or 
    \item $\sum^n_{i = 1} a_i=\sum^n_{i = 1} b_i$, and there is $j$ such that  $a_j < b_j$ and $a_k=b_k$ for all $k<j$. 
\end{itemize}

Given an element $f \in R$, we define its initial form $\init_\prec (f)$ to be the smallest graded piece with respect to $\prec$. For any ideal $J\subseteq R$, we define $\init_\prec(J)$ to be the ideal generated by all $\{\init_\prec(f) \mid f \in J\}$. Furthermore, for any vector $w \in \mathbb N^n$, we define the weight filtration as follows: if $g$ is any multi-homogeneous element then define $w(g) = w\cdot \deg(g)$. For an arbitrary element $f \in R$, we define its initial form $\init_w(f)$ to be the smallest graded piece with respect to $w$, and define $\init_w(J)$ similarly.

Now we fix an $\m$-primary ideal $I\subseteq R$ and consider $\init_\prec(I)$. By Noetherianness, $\init_\prec(I)$ is generated by $\init_\prec (f_i)$ for finitely many $f_1,\ldots, f_r \in I$. It follows that we can find a weight vector $w \in \mathbb N^n$ so that 
$\init_\prec(f_i) = \init_w(f_i)$ for all $f_i$. Thus $\init_\prec(I) = \init_w(I)$ (since the two ideals have the same colengths).

Next, we consider a graded family $J_\bullet \coloneqq \{J_n\}_n$, where $J_n$ is the ideal in $R$ generated by all multi-homogeneous elements $g$ such that $w(g) \geq n$, and set $\gr_w (R)\coloneqq\gr_{J_\bullet}(R)$ to be the associated graded ring with respect to $J_\bullet$.
Note that $\gr_w (R) \cong R$ since the grading induced by the weight is coarser than the original $\mathbb N^n$-grading on $R$. The form ideal of $I$ in $\gr_w(R)$ is $\init_w(I) = \init_\prec (I)$ by construction. By the argument in Proposition~\ref{prop: LM associated graded}, we know that 
$$\frac{\eh(I)}{\length (R/I)} \leq \frac{\eh(\init_w(I))}{\length (R/\init_w(I))} = \frac{\eh(\init_\prec(I))}{\length (R/\init_\prec(I))}.$$

Finally, by varying $I$ it follows that $\lm(R_\m) \leq \grlm (R)$. But the other direction $\grlm(R)\leq \lm(R_\m)$ is clear since each multi-homogeneous ideal of finite colength corresponds to an $\m$-primary ideal of $R_\m$ (and $R_\m$ has possibly more $\m$-primary ideals). Thus $\lm(R_\m) = \grlm (R)$ and the latter can be computed by multi-homogeneous ideals.
\end{proof}

As a special case of Proposition~\ref{prop: LM multigraded}, we recover the following fundamental result of Mumford.

\begin{corollary}[{\cite[Lemma~3.6]{Mumford}}]
\label{cor: compute LM by homogeneous}
Let $(R, \mf m)$ be a local ring.
Then $\lm(R[[T]])$ can be computed by looking only at homogeneous
ideals, i.e., ideals of the form $I = \oplus_{k \geq 0} I_k T^k$ where $I_k \subseteq R$ and $I_k = R$ for $k \gg 0$. 
\end{corollary}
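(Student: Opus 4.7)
The plan is to apply Proposition~\ref{prop: LM multigraded} to the polynomial ring $R[T]$ viewed as an $\mathbb{N}$-graded algebra over the local ring $R$ with $\deg T = 1$, and then transfer the conclusion to $R[[T]]$ through completion. Under this grading, $R[T]$ has the unique homogeneous maximal ideal $\m+(T)$, so the proposition yields
\[
\lm\bigl(R[T]_{\m+(T)}\bigr) = \grlm(R[T]),
\]
where the right-hand side is computed as the supremum of $\eh(J)/(d!\,\length(R[T]/J))$ over homogeneous ideals $J$ whose radical is $\m+(T)$.

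Next I would describe explicitly the shape of these admissible homogeneous ideals. Any homogeneous ideal of $R[T]$ decomposes as $J = \bigoplus_{k \geq 0} I_k T^k$ for some ideals $I_k \subseteq R$, and closure under multiplication by $T$ forces the chain $I_0 \subseteq I_1 \subseteq I_2 \subseteq \cdots$; the condition $\sqrt{J} = \m+(T)$ is then equivalent to $I_0$ being $\m$-primary and $I_k = R$ for all $k \gg 0$. If $N$ is the smallest index with $I_N = R$, then $J$ is generated in $T$-degrees at most $N$, and the same generators define the extended ideal in $R[[T]]$, which is precisely the ``homogeneous'' ideal of $R[[T]]$ described in the statement. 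This gives a bijection between the two classes of ideals.

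Finally, transfer the equality to $R[[T]]$ via completion. A direct check on finite-length quotients modulo $(\m,T)^n$ shows that both $R[T]_{\m+(T)}$ and $R[[T]]$ have $(\m,T)$-adic completion equal to $\widehat{R}[[T]]$, so Proposition~\ref{prop: basic results on cLM}\,(\ref{part restrict to complete}) gives $\lm(R[T]_{\m+(T)}) = \lm(\widehat{R}[[T]]) = \lm(R[[T]])$. Moreover, faithfully flat base change to the completion preserves both the colength and the Hilbert--Samuel multiplicity of an $\m$-primary ideal, so $\eh(J)/(d!\,\length(R[T]/J))$ is unchanged when $J$ is replaced by its extension to $R[[T]]$. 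Combining these three observations, the supremum computing $\lm(R[[T]])$ may be restricted to homogeneous ideals of the prescribed form, as claimed. The only mildly delicate point is the identification of completions; the rest is a straightforward unwinding of definitions.
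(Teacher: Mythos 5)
Your proof is correct and follows the same route as the paper: apply Proposition~\ref{prop: LM multigraded} to $R[T]$ with the $T$-grading to get $\grlm(R[T]) = \lm(R[T]_{\m+(T)})$, then pass to $R[[T]]$ via the common $(\m,T)$-adic completion using Proposition~\ref{prop: basic results on cLM}(\ref{part restrict to complete}). The explicit bookkeeping identifying homogeneous ideals of $R[T]$ with ideals $\oplus_{k\geq 0} I_kT^k$ of $R[[T]]$, and the preservation of multiplicity and colength under that extension, is spelled out more carefully in your write-up than in the paper (which treats it as immediate from the definition of $\grlm$), but the underlying argument is identical.
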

\begin{proof}
By Proposition~\ref{prop: LM multigraded}, we have
$$\grlm(R[T]) =\lm(R[T]_{\m+(T)}) =\lm(R[[T]]).$$ 
where the last equality follows from Proposition~\ref{prop: basic results on cLM}(\ref{part restrict to complete}) since $R[T]_{\m+(T)}$ and $R[[T]]$ agree up to completion. The conclusion follows from the definition of $\grlm$.
\end{proof}

\begin{lemma}
\label{lem: associated graded with respect to x}
Let $(R, \mf m)$ be a local ring and $x$ be a parameter element.
Then we have $\grlm (\Gr_{(x)}(R)) \leq \grlm ((R/xR)[T])$.
\end{lemma}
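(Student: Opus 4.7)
The plan is to realize $\Gr_{(x)}(R)$ as a graded quotient of $(R/xR)[T]$ and then invoke a graded analog of Proposition~\ref{prop: basic results on cLM}(\ref{part LM of big image}).

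First, I would construct the natural surjection $\phi \colon (R/xR)[T] \twoheadrightarrow \Gr_{(x)}(R)$ of $\N$-graded rings that is the identity on $R/xR$ in degree zero and sends $T$ to the initial form $x^* = x + (x)^2 \in (x)/(x)^2$. Surjectivity is clear because $(x)^n/(x)^{n+1}$ is generated by $x^n$ as an $R/xR$-module, so $\Gr_{(x)}(R)$ is generated by $x^*$ as an $R/xR$-algebra. Moreover, both rings have Krull dimension $\dim R$: on the target, $\dim \Gr_{(x)}(R) = \dim R$ holds for any ideal (for instance, the extended Rees algebra has dimension $\dim R + 1$ and $t^{-1}$ is a nonzerodivisor in it), while on the source $\dim(R/xR) = \dim R - 1$ since $x$ is a parameter element, giving $\dim((R/xR)[T]) = \dim R$.

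Next, I would establish the following graded analog of Proposition~\ref{prop: basic results on cLM}(\ref{part LM of big image}): if $\psi \colon A \twoheadrightarrow B$ is a surjective homomorphism of $\N$-graded rings with local Artinian degree zero pieces and equal Krull dimension $d$, then $\grlm(B) \leq \grlm(A)$. For any graded finite-colength ideal $I \subseteq B$, its preimage $J = \psi^{-1}(I)$ is a graded ideal and, since a surjection of graded rings with local degree zero pieces automatically sends $\m_A$ onto $\m_B$, also has finite colength, with $\length(A/J) = \length(B/I)$. The surjections $A/J^n \twoheadrightarrow B/I^n$ induced by $\psi$ yield $\length(A/J^n) \geq \length(B/I^n)$ for every $n$, and comparing the leading coefficients of the Hilbert--Samuel polynomials (both of degree $d$) gives $\eh(J) \geq \eh(I)$. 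Hence $\eh(I)/(d!\length(B/I)) \leq \eh(J)/(d!\length(A/J)) \leq \grlm(A)$, and taking the supremum over $I$ yields $\grlm(B) \leq \grlm(A)$. Applying this with $\psi = \phi$ proves the lemma.

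I do not anticipate any serious obstacle; the steps above amount to graded counterparts of standard facts. The only minor subtlety is the identification $\phi^{-1}(\m_B) = \m_A$ (automatic for a graded surjection of this type), which guarantees that preimages of finite-colength homogeneous ideals remain finite-colength so that their Hilbert--Samuel polynomials have the expected degree $d$.
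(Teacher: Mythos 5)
Your proof is correct and follows essentially the same route as the paper's: construct the natural graded surjection $(R/xR)[T] \twoheadrightarrow \Gr_{(x)}(R)$ sending $T$ to the initial form of $x$, note that both rings have dimension $\dim R$ (since $x$ is a parameter), and invoke the graded analog of Proposition~\ref{prop: basic results on cLM}(\ref{part LM of big image}). The only small slip is the hypothesis ``local Artinian degree zero pieces'' in your statement of the graded analog: in the application $A_0 = R/xR$ is local but not Artinian once $\dim R > 1$, so the lemma as you state it would not apply. The Artinian assumption is also unused in your argument --- finite length of $A/J^n$ follows from $J$ being a graded ideal of finite colength, not from $A_0$ being Artinian --- so simply dropping it repairs the statement and nothing else changes.
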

\begin{proof}
We have an isomorphism $(x^n)/(x^{n+1}) \cong R/((x) + (0:x^n))$. Thus
we have a natural surjective graded homomorphism $(R/xR)[T] \to \Gr_{(x)}(R)$. Moreover, it is clear that $\dim((R/xR)[T])=\dim(\Gr_{(x)}(R))=\dim(R)$. Thus $\grlm (\Gr_{(x)}(R)) \leq \grlm ((R/xR)[T])$ by the graded version of Proposition~\ref{prop: basic results on cLM}(\ref{part LM of big image}). 
\end{proof}

We next generalize \cite[Proposition~3.5]{Mumford} and show that Lech--Mumford constant does not decrease under specialization.

\begin{proposition}
\label{prop: Mumford prop 3.5}
Let $(R, \mf m)$ be a local ring and $x$ be a parameter element.
Then we have  $\lm(R) \leq \lm(R/xR)$.
Moreover, if the equality holds and $\dim(R) > 1$,
then the supremum in the definition of $\lm(R)$ is not attained.
\end{proposition}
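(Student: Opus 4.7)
The plan is to establish the chain
\[
\lm(R) \;\le\; \grlm(\Gr_{(x)}(R)) \;\le\; \grlm(\bar R[T]) \;\le\; \lm(\bar R),
\]
where $\bar R = R/xR$. The first inequality comes from Proposition~\ref{prop: LM associated graded} applied to the (Noetherian, since $x$ is a parameter) graded family $\{(x^n)\}_n$, and the second is Lemma~\ref{lem: associated graded with respect to x}. The main new ingredient is the third inequality.

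To prove $\grlm(\bar R[T]) \le \lm(\bar R)$, by Proposition~\ref{prop: LM multigraded} it suffices to bound $\frac{\eh(J)}{d!\ell(\bar R[T]/J)}$ on a homogeneous $(\m\bar R + (T))$-primary ideal $J = \bigoplus_{k\ge 0} J_k T^k \subseteq \bar R[T]$, where $J_0 \subseteq J_1 \subseteq \cdots \subseteq J_{N-1}$ is an ascending chain of $\m\bar R$-primary ideals of $\bar R$ and $J_k = \bar R$ for $k \ge N$, so that $\ell(\bar R[T]/J) = \sum_{k=0}^{N-1}\ell(\bar R/J_k)$. The key estimate is
\[
\eh(J,\bar R[T]) \;\le\; d\sum_{k=0}^{N-1}\eh(J_k,\bar R), \qquad d = \dim R.
\]
It follows by noting that for each $n\ge 1$ and each $k$ with $jn\le k < (j+1)n$ ($0\le j\le N-1$), the $n$-tuple $(j,\ldots,j)$ has $\sum k_i = nj \le k$, so $J_j^n \subseteq (J^n)_k$; hence
\[
\ell(\bar R[T]/J^n) = \sum_{k=0}^{nN-1}\ell(\bar R/(J^n)_k) \le \sum_{j=0}^{N-1} n\cdot \ell(\bar R/J_j^n),
\]
and dividing by $n^d$, letting $n\to\infty$, and using $\ell(\bar R/J_j^n) = \eh(J_j)n^{d-1}/(d-1)! + O(n^{d-2})$ yields the claim. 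Combined with the trivial bound $\eh(J_k,\bar R) \le (d-1)!\lm(\bar R)\ell(\bar R/J_k)$ and a simple averaging, this produces $\frac{\eh(J)}{d!\ell(\bar R[T]/J)} \le \lm(\bar R)$.

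For the ``moreover'' part, suppose $\lm(R) = \lm(\bar R)$ and the supremum in $\lm(R)$ is attained by some $\m$-primary $I \subseteq R$. Tracing the chain of equalities produces a homogeneous $J \subseteq \bar R[T]$ with $\frac{\eh(J)}{d!\ell(\bar R[T]/J)} = \lm(\bar R)$. If the chain is constant, i.e. $J = (J_0, T^N)$, then a direct computation gives the ratio as $\frac{\eh(J_0)}{d!\ell(\bar R/J_0)} \le \lm(\bar R)/d < \lm(\bar R)$ whenever $d \ge 2$, a contradiction. Otherwise, the finer lower bounds on $(J^n)_k$ (for example $(J^2)_1 \supseteq J_0 J_1 \supsetneq J_0^2$ when $J_0 \subsetneq J_1$) strictly improve the key estimate at the leading order, forcing $\frac{\eh(J)}{d!\ell(\bar R[T]/J)} < \lm(\bar R)$, again a contradiction. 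The main obstacle is making the asymptotic strict improvement rigorous for general $\bar R$ in the non-constant case: in the monomial setting a Newton-polyhedral argument shows that the convex hull of the generators strictly refines the box-union of the slice polyhedra whenever $\dim\bar R \ge 1$ and the chain $\{J_k\}$ is non-constant (in dimension one, the gap $d\sum_k\eh(J_k)-\eh(J)$ can be quantified as $\sum_k(\eh(J_k)-\eh(J_{k+1}))$), and the general case should reduce to the monomial one by a Gr\"obner degeneration.
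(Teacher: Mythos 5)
Your chain $\lm(R) \leq \grlm(\Gr_{(x)}(R)) \leq \grlm((R/xR)[T]) \leq \lm(R/xR)$ is exactly the paper's reduction, and your proof of the third inequality is correct but rests on a slightly weaker multiplicity bound than the paper's. (A small imprecision along the way: $(J^n)_k$ is the sum over tuples with $\sum k_i = k$ \emph{exactly}, so from the tuple $(j,\dots,j)$ with $nj \leq k$ you must increase entries to reach total $k$; this is harmless because the $J_i$ are ascending, but it should be stated. You should also make explicit the ``tracing the chain of equalities'' step in the moreover part, which uses that passing to the form ideal and then to its preimage in $(R/xR)[T]$ does not decrease the Lech ratio.)

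The genuine gap is in the ``moreover'' clause, and it is structural rather than a matter of polish. Your flat bound $\eh(J) \leq \dim(R)\sum_{j<N}\eh(J_j)$ turns the crucial middle comparison
\[
\frac{\dim(R)\sum_j\eh(J_j)}{\dim(R)!\,\sum_j\length(\bar R/J_j)}
\;\leq\;
\frac{\sum_j\eh(J_j)}{(\dim R-1)!\,\sum_j\length(\bar R/J_j)}
\]
into an identity, so nothing in your estimate can detect strictness, which is why you are forced into an incomplete case analysis. The paper avoids this by refining only the top slice of the containment: since $T^N \in J$, one has $(J^n)_{(N-1)n+i} \supseteq J_{N-1}^{\,n-i}$ for $0 \leq i < n$, a ``triangular'' rather than ``flat'' bound. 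Summing that block gives $\sum_{m=1}^{n}\length(\bar R/J_{N-1}^{m}) = \eh(J_{N-1})\,n^{\dim R}/(\dim R)! + O(n^{\dim R - 1})$ instead of $n\,\length(\bar R/J_{N-1}^{n})$, which sharpens the estimate to
\[
\eh(J)\;\leq\;\dim(R)\sum_{j=0}^{N-2}\eh(J_j)\;+\;\eh(J_{N-1}),
\]
strictly smaller than your bound by $(\dim(R)-1)\,\eh(J_{N-1})>0$. This single refinement makes the displayed comparison strict precisely when $\dim R \geq 2$, giving the ``moreover'' clause uniformly for all ideals $J$ at once; no case split, Newton-polyhedral argument, or Gr\"obner degeneration is required.
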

\begin{proof}
By Proposition~\ref{prop: LM associated graded} and Lemma~\ref{lem: associated graded with respect to x}, we have that
$$\lm(R) \leq \grlm(\Gr_{(x)}(R)) \leq \grlm((R/xR)[T]).$$
Thus it suffices to show that $\grlm(S[T]) \leq \lm(S)$
for a local ring $(S, \mf m)$.
This was proved by Mumford in \cite[Proposition~3.5]{Mumford} and we include a proof here for completeness.

Fix a homogeneous ideal $I = \oplus_{k \geq 0} I_kT^k$ in $S[T]$ of finite colength. We want to bound $\eh(I)$.  
Suppose that $I_{N + 1} = S$ and note that $I^n$ contains the ideal
\begin{align*}
&\left(I_0^n \oplus I_0^{n-1}I_1T \oplus \cdots \oplus I_0I_1^{n-1}T^{n-1} \right)
\oplus \left(I_1^{n}T^{n} \oplus \cdots \oplus
I_1I_2^{n-1}T^{2n-1}\right) \oplus \cdots \oplus\\
&\left(I_{N-1}^{n}T^{n(N - 1)} \oplus \cdots \oplus
I_{N-1}^{1}I_N^{n-1}T^{nN - 1}\right)
\oplus\left(I_{N}^{n}T^{Nn} \oplus \cdots \oplus
I_{N}^{1}T^{Nn + n - 1}
\right) \oplus T^{n(N+1)} \oplus \cdots.
\end{align*}
Thus using that $I_k \subseteq I_{k+1}$ we obtain that
\begin{align*}
I^n \supseteq  \bigoplus_{i = 0}^{n- 1} I_0^nT^{i} \oplus
\bigoplus_{i = 0}^{n- 1} I_1^nT^{n + i} \oplus \cdots \oplus
\bigoplus_{i = 0}^{n- 1} I_{N-1}^nT^{(N-1)n + i} \oplus
\bigoplus_{i = 0}^{n - 1} I_N^{n-i}T^{Nn + i} \oplus
T^{(N + 1)n} \oplus \cdots.
\end{align*}
Let $d=\dim(S)$. The inclusion of ideals above implies that
\begin{align*}
\length(S[T]/I^n) &\leq \sum_{i = 0}^{N - 1} n\length(S/I_i^n) + \sum_{j = 1}^{n} \length(S/I_{N}^j)
\leq \frac{n^{d+1}}{d!}\sum_{i = 0}^{N - 1} \eh(I_i) + \frac{n^{d+1}}{(d+1)!} \eh(I_N) + O(n^{d}).
\end{align*}
It follows that
\[
\frac{\eh(I)}{(d+1)!\length(S[T]/I)} \leq
\frac{(d+1)\sum\limits_{i = 0}^{N - 1} \eh(I_i) + \eh(I_N)}{(d + 1)!\sum\limits_{i = 0}^{N} \length (S/I_i)}
\leq \frac{\sum\limits_{i = 0}^{N} \eh(I_i)}{d!\sum\limits_{i = 0}^{N}\length(S/I_i)} \leq \max_i \left\{\frac{\eh(I_i)}{d!\length(S/I_i)}\right\}.
\]
Therefore $\grlm(S[T]) \leq \lm(S)$. Finally, for the last claim, we simply note that the middle
inequality in the above formula is strict unless $d=0$. This completes the proof. 
\end{proof}

\begin{corollary}
\label{cor: Mumford 3.5}
Let $(R, \mf m)$ be a local ring of dimension $d$.
Then we have 
$$\lm(R[[T]]) \leq \lm (R) \leq (d + 1)\lm(R[[T]]).$$
\end{corollary}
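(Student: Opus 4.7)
The plan is to derive both inequalities from the circle of ideas in Proposition~\ref{prop: Mumford prop 3.5}, using the ideal $J=(I,T)$ as the bridge for the upper bound.

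For the first inequality $\lm(R[[T]]) \leq \lm(R)$, I would simply apply Proposition~\ref{prop: Mumford prop 3.5} to the ring $R[[T]]$ with the parameter element $x = T$. Since $R[[T]]/(T) = R$, we get $\lm(R[[T]]) \leq \lm(R[[T]]/T) = \lm(R)$ immediately.

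For the second inequality $\lm(R) \leq (d+1)\lm(R[[T]])$, the idea is to lift an arbitrary $\m$-primary ideal $I\subseteq R$ to the ideal $J = (I,T)\subseteq R[[T]]$ and compare the two quotients. The key step is a direct unpacking of powers: using that $T$ is a non-zerodivisor and $(I,T)^n = (I^n, I^{n-1}T, \ldots, IT^{n-1}, T^n)$, a straightforward term-by-term analysis yields
\[
R[[T]]/(I,T)^n \;\cong\; \bigoplus_{i=0}^{n-1} \big(R/I^{n-i}\big)\cdot T^i,
\]
so that $\length\bigl(R[[T]]/(I,T)^n\bigr) = \sum_{k=1}^{n} \length(R/I^k)$. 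Reading off the leading coefficient of this sum via $\length(R/I^k) = \tfrac{\eh(I)}{d!}k^d + O(k^{d-1})$ and $\sum_{k=1}^n k^d = \tfrac{n^{d+1}}{d+1} + O(n^d)$ gives
\[
\length\bigl(R[[T]]/(I,T)^n\bigr) \;=\; \frac{\eh(I)}{(d+1)!}\, n^{d+1} + O(n^d),
\]
and hence $\eh\bigl((I,T)\bigr) = \eh(I)$ in $R[[T]]$. Combined with the obvious equality $\length\bigl(R[[T]]/(I,T)\bigr) = \length(R/I)$, this yields
\[
\frac{\eh\bigl((I,T)\bigr)}{(d+1)!\,\length\bigl(R[[T]]/(I,T)\bigr)} \;=\; \frac{1}{d+1}\cdot\frac{\eh(I)}{d!\,\length(R/I)}.
\]
Taking the supremum over all $\m$-primary ideals $I\subseteq R$ on the right, and using that $(I,T)$ is $\m[[T]]$-primary to estimate $\lm(R[[T]])$ from below on the left, produces $\lm(R[[T]]) \geq \tfrac{1}{d+1}\lm(R)$, which rearranges to the claimed bound.

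The only real work is verifying $\eh((I,T)) = \eh(I)$; this is the step that could be considered the ``main obstacle,'' but it is handled cleanly by the direct decomposition of $R[[T]]/(I,T)^n$ written above and does not require any superficial-element machinery.
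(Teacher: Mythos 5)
Your proof is correct and takes essentially the same approach as the paper: the first inequality is Proposition~\ref{prop: Mumford prop 3.5} applied with $x=T$, and the second follows by lifting $I$ to $J=(I,T)\subseteq R[[T]]$ and using $\eh(J)=\eh(I)$ together with $\length(R[[T]]/J)=\length(R/I)$. The only difference is that the paper states $\eh(J)=\eh(I)$ as ``easy to see,'' whereas you justify it explicitly via the direct decomposition of $R[[T]]/(I,T)^n$, which is a clean and correct way to fill in that step.
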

\begin{proof}
The first inequality follows from Proposition~\ref{prop: Mumford prop 3.5}. For the second inequality, let $I$ be an $\mf m$-primary ideal in $R$ and
consider the ideal $J = IR[[T]] + (T)$. It is easy to see that $\length (R[[T]]/J) = \length (R/I)$ and $\eh(J)=\eh(I)$. 
Thus
\[
\frac{\eh(I)}{d!\length (R/I)} =
\frac{(d+1)\eh(J)}{(d+1)!\length (R[[T]]/J)} \leq (d+1)\lm(R[[T]]).
\]
This completes the proof.
\end{proof}

\begin{remark}
It is not true that $\lm (R/xR) \leq d\lm (R)$ for a linear form $x \in \m-\m^2$, where $d=\dim(R)$. We will see in Theorem~\ref{thm: Lech-stable CM surface} in Section~\ref{section: surface} that the $E_8$ singularity
$R = \mathbb C [[x,y,z]]/(x^2 + y^3 + z^5)$ satisfies $\lm (R) = 1$, but it follows from Proposition~\ref{prop: basic results on cLM}(\ref{part LM dim 1}) that $\lm(R/xR) = \eh(R/xR) = 3$. On the other hand, we do not know whether $\lm(R/xR)\leq d\lm(R)$ for a general linear form $x\in \m-\m^2$. This holds when $d = 2$, because in this case 
$$\lm (R/xR)/2 = \eh(R/xR)/2= \eh(R)/d! \leq \lm (R)$$ 
where the first equality follows from Proposition~\ref{prop: basic results on cLM}(\ref{part LM dim 1}) and the second equality follows from the fact that $x$ is a general linear form.
\end{remark}

We conclude this subsection by recalling another result of Mumford, which extends the main step in Proposition~\ref{prop: Mumford prop 3.5} by bounding the multiplicity of an ideal in the power series using mixed multiplicities of its components. This result will be used several times in Section~\ref{section: example} and Section~\ref{section: Lech's inequality revisited}.

\begin{proposition}[{\cite[Proposition~4.3]{Mumford}}]\label{prop: Mumford 4.3}
    Let $(R, \m)$ be a local ring and 
    \[
        I = I_0 + I_1 T + \cdots + I_{N-1}T^{N-1} + T^N
    \]
    be an $(\m, T)$-primary ideal of $R[[T]]$. Set $I_N = R$.
    Then for all sequences of integers $0 = r_0 < r_1 < \cdots < r_l = N$
    we have
    \[
        \eh(I) \leq \sum^{l - 1}_{k = 0} (r_{k+1} - r_k) \sum_{i = 0}^d \eh (I_k^{[i]} \mid I_{k + 1}^{[d-i]}). 
    \]
\end{proposition}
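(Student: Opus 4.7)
The plan is to reduce to a \emph{sparsified} ideal whose $T$-graded structure is transparent, and then mirror the bookkeeping of Proposition~\ref{prop: Mumford prop 3.5} with mixed multiplicities in place of a single multiplicity. I would first set
\[
    J \coloneqq \sum_{k=0}^{l} I_{r_k}T^{r_k} = I_{r_0} + I_{r_1}T^{r_1} + \cdots + I_{r_{l-1}}T^{r_{l-1}} + (T^N) \subseteq R[[T]],
\]
where the last summand comes from $I_{r_l} = I_N = R$. Each generator $I_{r_k}T^{r_k}$ is one of the summands of $I$, so $J\subseteq I$ and $\eh(I)\leq \eh(J)$; it suffices to bound $\eh(J)$ by the right-hand side. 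For the terms with $I_{r_{k+1}} = R$ (which includes $k = l-1$ and possibly some others) one uses the convention $\eh(I^{[d]}\mid R^{[0]}) = \eh(I)$ and $\eh(I^{[i]}\mid R^{[d-i]}) = 0$ for $i<d$, forced by $\length(R/I^m R^n) = \length(R/I^m)$.

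Next, let $C_j^{(n)}\subseteq R$ denote the $T^j$-coefficient of $J^n$, i.e.\ the image in $R\cong T^jR[[T]]/T^{j+1}R[[T]]$ of $(J^n \cap T^jR[[T]])/(J^n \cap T^{j+1}R[[T]])$. Since $T^{Nn}\in J^n$, the $T$-adic filtration on $R[[T]]/J^n$ terminates and gives
\[
    \length\bigl(R[[T]]/J^n\bigr) = \sum_{j=0}^{Nn-1}\length\bigl(R/C_j^{(n)}\bigr).
\]
For any $0\leq a\leq n$ the product $(I_{r_k}T^{r_k})^{n-a}(I_{r_{k+1}}T^{r_{k+1}})^{a}$ equals $I_{r_k}^{n-a}I_{r_{k+1}}^{a}\cdot T^{nr_k+a(r_{k+1}-r_k)}R[[T]]$, which forces $C_j^{(n)}\supseteq I_{r_k}^{n-a}I_{r_{k+1}}^{a}$ whenever $j\geq nr_k+a(r_{k+1}-r_k)$. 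Because $I_{r_k}\subseteq I_{r_{k+1}}$, the ideal $I_{r_k}^{n-a}I_{r_{k+1}}^{a}$ grows with $a$, so for $j=nr_k+b$ with $0\leq b<n(r_{k+1}-r_k)$ the optimal choice is $a=\lfloor b/(r_{k+1}-r_k)\rfloor$. The combinatorial key is that, as $b$ ranges through $[0,n(r_{k+1}-r_k))$, each value $a\in\{0,\ldots,n-1\}$ is hit by exactly $r_{k+1}-r_k$ choices of $b$, whence
\[
    \length\bigl(R[[T]]/J^n\bigr)\leq \sum_{k=0}^{l-1}(r_{k+1}-r_k)\sum_{a=0}^{n-1}\length\bigl(R/I_{r_k}^{n-a}I_{r_{k+1}}^{a}\bigr).
\]

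Finally, I would expand $\length(R/I_{r_k}^{m_1}I_{r_{k+1}}^{m_2})$ using its bivariate Hilbert--Samuel polynomial and substitute $m_1=n-a$, $m_2=a$. Combining the Beta-integral asymptotic $\sum_{a=0}^{n-1}(n-a)^{i}a^{d-i}=\tfrac{i!\,(d-i)!}{(d+1)!}n^{d+1}+O(n^d)$ with the clean identity $\binom{d}{i}\tfrac{i!\,(d-i)!}{(d+1)!}=\tfrac{1}{d+1}$ (so each $i$ contributes equally) collapses the inner sum to
\[
    \sum_{a=0}^{n-1}\length\bigl(R/I_{r_k}^{n-a}I_{r_{k+1}}^{a}\bigr)=\frac{n^{d+1}}{(d+1)!}\sum_{i=0}^{d}\eh(I_{r_k}^{[i]}\mid I_{r_{k+1}}^{[d-i]})+O(n^d).
\]
Multiplying by $(d+1)!/n^{d+1}$ and letting $n\to\infty$ extracts $\eh(J)$ and yields the claimed inequality. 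The only delicate point of the argument is the containment $C_j^{(n)}\supseteq I_{r_k}^{n-a}I_{r_{k+1}}^{a}$ with the correct value of $a$, together with the ``$r_{k+1}-r_k$ multiplicity'' in the counting; once these are in place the asymptotic bookkeeping is entirely routine.
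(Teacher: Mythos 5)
Your proof is correct and follows essentially the same approach as the paper: the containment $C_j^{(n)}\supseteq I_{r_k}^{n-a}I_{r_{k+1}}^a$ organized via the sparsified ideal $J$ is precisely the explicit containment $I^m\supseteq\cdots$ that the paper writes out, and the resulting colength bound $\length(R[[T]]/J^n)\leq\sum_k(r_{k+1}-r_k)\sum_a\length(R/I_{r_k}^{n-a}I_{r_{k+1}}^a)$ is the paper's intermediate inequality. The only cosmetic difference is the finishing step, where you invoke the Beta-integral asymptotic $\sum_{a}(n-a)^i a^{d-i}\sim \frac{i!(d-i)!}{(d+1)!}n^{d+1}$ while the paper arrives at the same coefficient via the Vandermonde-type identity $\sum_i\binom{a+i}{a}\binom{b+N-i}{b}=\binom{a+b+N+1}{a+b+1}$.
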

\begin{proof}
    One can verify directly that 
    \begin{align*}
        I^m & \supseteq I^m_{r_0} (1 + \cdots + T^{r_1 - 1})
        + I^{m-1}_{r_0}I_{r_1} (T^{r_1} + \cdots + T^{2r_1 - 1})
        + \cdots +  I_{r_0}I^{m-1}_{r_1} (T^{(m-1)r_1} + \cdots + T^{mr_1 - 1})\\
        & + I^m_{r_1} (T^{mr_1} + \cdots + T^{(m-1)r_1 + r_2 - 1})
        + \cdots + I_{r_1}I^{m-1}_{r_2} (T^{r_1 + (m-1)r_2} + \cdots + T^{mr_2 - 1})
        \\
        &\quad\vdots \\
        & + I_{r_{l- 1}}^m (T^{mr_{l-1}} + \cdots + T^{(m-1)r_{l-1} + r_l - 1}) + \cdots + T^{mr_l}.
    \end{align*}
    It follows that
    \[
    \length (R[[T]]/I^m)\leq \sum_{k = 0}^{l-1} (r_{k+1}-r_k) \sum_{i = 0}^m \length (R/I_{k}^{m-i}I_{k+1}^i).
    \]
    By the definition of mixed multiplicities, we have
    \begin{align*}
        \sum_{i = 0}^m \length (R/I_{k}^{m-i}I_{k+1}^i)
        &= \frac{1}{d!} \sum_{i = 0}^m \left( \sum_{j = 0}^d \binom{d}{d-j} \eh (I_k^{[j]} \mid I_{k+1}^{[d-j]}) i^j(m-i)^{d-j} + O(m^{d-1})\right)
        \\
        &= \sum_{j = 0}^d \eh (I_k^{[j]} \mid I_{k+1}^{[d-j]}) \left( \sum_{i = 0}^m \frac{i^j}{j!}\frac{(m-i)^{d-j}}{(d-j)!} \right) + O(m^{d}).
    \end{align*}
    Since $\eh (I) = \lim_{m \to \infty} \frac{(d+1)!}{m^{d+1}}\length (R[[T]]/I^m)$, it suffices to show that 
    \[
    \sum_{i = 0}^m \frac{i^j}{j!}\frac{(m-i)^{d-j}}{(d-j)!}
    = \frac{m^{d+1}}{(d+1)!} + O(m^d). 
    \]
    This formula follows by setting $N = m-d, a = j, b = d - j$ and expanding the binomial coefficients in the identity 
    \[
    \sum_{i = 0}^N \binom{a + i}{a}\binom{b + N -i}{b} = \binom{a+b+N+1}{a+b+1}
    \]
    obtained by comparing the coefficients at $x^{N}$
    of the formal power series identity
    \[(1+x)^{-a-1}(1+x)^{-b-1}=(1 + x)^{-a-b-2}.\qedhere \]
\end{proof}

\subsection{Stability of local rings} As an immediate consequence of Proposition~\ref{prop: basic results on cLM} and Proposition~\ref{prop: Mumford prop 3.5}, for any local ring $(R,\m)$, we have the following inequalities
\begin{equation*}
\label{eqn: chain of inequalities}
\eh(R)\geq \lm(R)\geq \lm(R[[T_1]]) \geq \lm(R[[T_1, T_2]]) \geq  \cdots \geq 1. \tag{$\dagger$}
\end{equation*}
\normalsize
We thus define 
$$\limlm(R)\coloneqq\displaystyle\lim_{n\to\infty}\lm(R[[T_1,\dots,T_n]]).$$

\begin{definition}
\label{def: stability of local rings}
We say a local ring $(R,\m)$ is
\begin{itemize}
  \item {\it Lech-stable} if $\lm(R)=1$,
  \item {\it semistable} if $\lm(R[[T]])=1$,
  \item {\it lim-stable} if $\limlm(R)=1$,
  \item {\it stable} if $R$ is semistable and the supremum in the definition of $\lm(R[[T]])$ is not attained.
\end{itemize}
\end{definition}

\begin{remark}
We have the following implications of these stability conditions, which are straightforward from Proposition~\ref{prop: Mumford prop 3.5} (see (\ref{eqn: chain of inequalities})): 
\[
\xymatrix{
& \text{stable} \ar@{=>}[d] & \\
\text{Lech-stable} \ar@{=>}[ru]^{\dim(R)\geq 1} \ar@{=>}[r] & \text{semistable} \ar@{=>}[r] & \text{lim-stable}.
}
\]

We will see in Proposition~\ref{prop: Artinian case}, Proposition~\ref{prop: dimension one Lech-stable} and Theorem~\ref{thm: dimension one semistable} a complete characterization of these stability notions for local rings of dimension $0$ and $1$. We also have a good understanding of them in dimension $2$, see Theorem~\ref{thm: Lech-stable CM surface}, Theorem~\ref{thm: lim-stable implies log canonical general} and Theorem~\ref{thm: lim-stable implies slc}; and for many class of rings of combinatorial nature, see Theorem~\ref{thm: snc is semistable}, Theorem~\ref{thm: max minors are Lech-stable},   and Example~\ref{example: Veronese in two variables}. From these results one easily obtains examples of semistable singularities that are not stable, and examples of stable singularities that are not Lech-stable. On the other hand, quite embarrassingly, we do not know/have a single example of a lim-stable local ring that is not semistable: in fact, even in some simple examples, computing the entire chain (\ref{eqn: chain of inequalities}) is challenging (for us), see Example~\ref{example: double drop} for a partial attempt. 
\end{remark}

\begin{remark}
\label{rmk: mult bound for Lech-stable and semistable}
Let $(R,\m)$ be a local ring of dimension $d$. By considering $\m$ and $\m^2$ in the definition of $\lm(R)$, we see that if $R$ is Lech-stable, then 
$$\eh(R)\leq \min \left\{d!, \frac{d!(\edim(R)+1)}{2^d}\right \}.$$
Similarly, if $R$ is semistable, then 
\[
\eh(R)\leq \min \left \{(d+1)!, \frac{(d+1)!(\edim(R)+2)}{2^{d+1}} \right \}.
\]
\end{remark}

\begin{proposition}
\label{prop: limit-stable reduced}
Let $(R,\m)$ be a local ring that is lim-stable. Then $\unm{\widehat{R}}$ is reduced. In particular, Cohen--Macaulay semistable singularities are reduced.
\end{proposition}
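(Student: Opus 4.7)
The strategy I would adopt is the contrapositive: if $\unm{\widehat R}$ fails to be reduced, then $\limlm(R)>1$. First I would reduce to a complete unmixed ring. Combining the identity $\widehat{R[[T_1,\ldots,T_k]]} = \widehat R[[T_1,\ldots,T_k]]$ with Proposition~\ref{prop: basic results on cLM}(\ref{part restrict to complete}),(\ref{part restrict to unmixed}), it is enough to verify that taking unmixed parts commutes with power series extensions, i.e., $\unm{\widehat R[[T_1,\ldots,T_k]]} = \unm{\widehat R}[[T_1,\ldots,T_k]]$. This would follow from flatness of the power series extension: a primary decomposition of $(0)$ in $\widehat R$ extends to one in $\widehat R[[T_1,\ldots,T_k]]$ whose associated primes are exactly $\{\p\widehat R[[T_1,\ldots,T_k]] : \p \in \Ass \widehat R\}$ with dimensions uniformly increased by $k$, so the top-dimensional components correspond bijectively. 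After this reduction one may replace $R$ by $S\coloneqq\unm{\widehat R}$, which is complete and unmixed.

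The main estimate is then a uniform lower bound. For every $k \geq 0$ the ring $S[[T_1,\ldots,T_k]]$ remains complete and unmixed (by the same primary-decomposition analysis) and, when $S$ is non-reduced, non-reduced as well, since $\sqrt{0}_S\cdot S[[T_1,\ldots,T_k]]$ is a nonzero nil ideal. Proposition~\ref{prop: basic results on cLM}(\ref{part multiplicity bounds}) then gives
\[
\lm(S[[T_1,\ldots,T_k]]) \geq \frac{\eh(S[[T_1,\ldots,T_k]])}{\eh(\red{S[[T_1,\ldots,T_k]]})} = \frac{\eh(S)}{\eh(\red S)},
\]
using that multiplicity is preserved under power series extensions (the $T_i$ form a superficial sequence for the maximal ideal) and that $\red{S[[T_1,\ldots,T_k]]} = \red{S}[[T_1,\ldots,T_k]]$ (power series over a reduced Noetherian ring remain reduced, by embedding into the product of power series over the residue domains $S/\p$ for $\p \in \Min S$). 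Critically, the bound does not depend on $k$. The associativity formula then yields
\[
\eh(S) - \eh(\red S) = \sum_{\p \in \Min S}\bigl(\length(S_\p) - 1\bigr)\,\eh(\m, S/\p);
\]
unmixedness makes every $\eh(\m, S/\p)$ positive, and non-reducedness forces some $\length(S_\p) \geq 2$. Hence $\eh(S)/\eh(\red S)>1$, so $\limlm(R) > 1$, contradicting the hypothesis.

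For the \emph{in particular} statement, $R$ Cohen--Macaulay implies $\widehat R$ is Cohen--Macaulay, hence unmixed, so $\unm{\widehat R} = \widehat R$; the main result forces $\widehat R$ to be reduced, and this descends to $R$ via the faithfully flat inclusion $R \hookrightarrow \widehat R$. The main technical obstacle I anticipate is the interchange $\unm{\widehat R[[T_1,\ldots,T_k]]} = \unm{\widehat R}[[T_1,\ldots,T_k]]$ in the reduction step; once that is in hand, the rest of the argument is merely unpacking Proposition~\ref{prop: basic results on cLM} together with the additivity of multiplicity.
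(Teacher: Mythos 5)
Your proof is correct and follows the same strategy as the paper's: reduce to the complete unmixed ring $\unm{\widehat R}$, apply the lower bound $\lm \geq \eh/\eh(\text{reduction})$ from Proposition~\ref{prop: basic results on cLM}(\ref{part multiplicity bounds}) to $R[[T_1,\ldots,T_k]]$ -- observing the ratio is independent of $k$ -- and contradict non-reducedness via additivity of multiplicity. You are somewhat more careful than the paper about verifying that the reduction to $\unm{\widehat R}$ preserves lim-stability (the commutation of unmixed parts with power-series adjunction) and about descending reducedness back from $\widehat R$ to $R$; both are routine points the paper leaves implicit, and your observation that $\red{S[[T_\bullet]]}=\red{S}[[T_\bullet]]$ is exactly the fact needed to see the ratio is constant in $k$.
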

\begin{proof}
By Proposition~\ref{prop: basic results on cLM} (\ref{part restrict to complete}) and (\ref{part restrict to integrally closed}), we may replace $R$ by $\unm{\widehat{R}}$ to assume that $R$ is complete and unmixed. Since $R$ is lim-stable, unwinding the definition we have that for any $\epsilon>0$, there exists $n$ such that 
$$1+\epsilon \geq \lm(R[[T_1,\dots,T_n]]) \geq \frac{\eh(R[[T_1,\dots,T_n]])}{\eh(\red{R[[T_1,\dots,T_n]]})}=\frac{\eh(R)}{\eh(\red{R})},$$
where the second inequality follows from Proposition~\ref{prop: basic results on cLM}(\ref{part multiplicity bounds}). Letting $\epsilon\to 0$ we obtain that $\eh(R)=\eh(\red{R})$. Since $R$ is unmixed, if $R$ is not reduced then the nilradical $\sqrt{0}\neq 0$ have full dimension and thus $\eh(R)=\eh(\red{R})+\eh(\m, \sqrt{0})>\eh(R)$ which is a contradiction. Thus $R$ is reduced as wanted. The last conclusion follows from the fact that Cohen--Macaulay rings are unmixed and semistability implies lim-stability. 
\end{proof}

We now give characterizations of various notions of stability of local rings in small dimensions. We start with the case that $\dim(R)=0$. 

\begin{proposition}
\label{prop: Artinian case}
Let $(R, \mf m)$ be an Artinian local ring. The following are equivalent:
\begin{enumerate}
\item $R$ is regular (i.e., $R$ is a field),
\item $R$ is Lech-stable,
\item $R$ is semistable,
\item $R$ is lim-stable.
\end{enumerate}
Furthermore, $R$ is never stable.
\end{proposition}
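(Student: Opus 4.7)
The plan is to reduce all four stability conditions to the single equation $\length(R) = 1$. First observe that for any Artinian local ring $R$, Proposition~\ref{prop: basic results on cLM}(\ref{part LM dim 1}) gives $\lm(R) = \eh(R)$, and since $\mf m^n = 0$ for $n \gg 0$ we have $\eh(R) = \length(R/\mf m^n) = \length(R)$. Thus $(1) \Leftrightarrow (2)$: Lech-stability is exactly $\length(R) = 1$, i.e.\ $R$ is a field.

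The main step is then to extend this lower bound to the higher stability conditions by showing $\lm(R[[T_1,\dots,T_n]]) \geq \length(R)$ for every $n \geq 0$. For this I would apply Proposition~\ref{prop: basic results on cLM}(\ref{part multiplicity bounds}) to $S = R[[T_1,\dots,T_n]]$, which gives $\lm(S) \geq \eh(S)/\eh(\red{S})$. Since $R$ is Artinian local with unique minimal prime $\mf m$, the ideal $\mf m S$ is the unique minimal prime of $S$; hence $\red{S} \cong (R/\mf m)[[T_1,\dots,T_n]]$ is a regular local ring of multiplicity $1$. The associativity formula for multiplicities now yields
\[
\eh(S) = \length(S_{\mf m S}) \cdot \eh(\red{S}) = \length(R),
\]
where the identification $\length(S_{\mf m S}) = \length(R)$ is verified by filtering $R$ by powers of $\mf m$ and noting that each successive quotient $\mf m^i/\mf m^{i+1}$ extends to a free module over $(R/\mf m)[[T_1,\dots,T_n]]$. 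Combining, $\lm(S) \geq \length(R)$, and letting $n \to \infty$ gives $\limlm(R) \geq \length(R)$. This establishes $(3) \Rightarrow (1)$ and $(4) \Rightarrow (1)$.

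The reverse implication $(1) \Rightarrow (2), (3), (4)$ is immediate: when $R$ is a field, each $R[[T_1,\dots,T_n]]$ is a regular local ring, so Lech's inequality together with $\lm \geq 1$ from Proposition~\ref{prop: basic results on cLM}(\ref{part multiplicity bounds}) forces $\lm(R[[T_1,\dots,T_n]]) = 1$.

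Finally, for the assertion that $R$ is never stable: since stability implies semistability, a stable Artinian local ring must already be a field $k$. But then $R[[T]] = k[[T]]$ is a DVR whose $\mf m$-primary ideals are exactly the $(T^j)$ for $j \geq 1$, and each realizes the ratio $\eh((T^j))/(1! \cdot \length(k[[T]]/(T^j))) = j/j = 1$. So the supremum defining $\lm(k[[T]])$ is attained, contradicting the definition of stable in Definition~\ref{def: stability of local rings}. I do not foresee any serious obstacle; the only mildly delicate point is the computation $\eh(R[[T_1,\dots,T_n]]) = \length(R)$, which is cleanly handled by associativity because $S$ has a unique minimal prime.
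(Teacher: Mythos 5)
Your proof is correct and takes essentially the same approach as the paper's: the key lower bound $\lm(R[[T_1,\dots,T_n]]) \geq \eh(S)/\eh(\red{S}) = \length(R)$ is exactly the argument underlying the paper's cited Proposition~\ref{prop: limit-stable reduced}, which you have simply inlined for the Artinian case. The only cosmetic difference is in the ``never stable'' step, where the paper notes directly that for any Artinian $R$ the supremum defining $\lm(R[[T]])=\eh(R[[T]])$ is attained at the maximal ideal, whereas you first reduce to $R$ being a field before making the same observation.
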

\begin{proof}
Obviously we have $(1)\Rightarrow(2)\Rightarrow(3)\Rightarrow(4)$ (and these hold in any dimension). To see $(4)\Rightarrow(1)$, note that since $R$ is Artinian, Proposition~\ref{prop: limit-stable reduced} shows that if $R$ is lim-stable then $R$ is reduced and thus regular. For the last part, note that since $\dim(R[[T]])=1$, we have $\lm(R[[T]])=\eh(R[[T]])$ (see Proposition~\ref{prop: basic results on cLM}) and it is easy to see that the supremum is achieved at the maximal ideal of $R[[T]]$. Thus $R$ is not stable. 
\end{proof}

We end this subsection by providing a characterization of Lech-stable and stable singularities of dimension one. The semistable and lim-stable singularities in dimension one will be treated later, after computing the necessary surface examples.

\begin{proposition}
\label{prop: dimension one Lech-stable}
Let $(R, \mf m)$ be a local ring of dimension one. Then the following conditions are equivalent:
\begin{enumerate}
  \item $R$ is Lech-stable,
  \item $R$ is stable,
  \item $\unm{\widehat{R}}$ is a regular local ring.
\end{enumerate}
\end{proposition}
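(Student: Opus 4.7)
The plan is to prove the three equivalences by pivoting through condition (3) for the equivalence (1)$\Leftrightarrow$(3), and by comparing $R$ with $R[[T]]$ via its maximal ideal for the equivalence (1)$\Leftrightarrow$(2). All of the bookkeeping about $\lm$ is already contained in Proposition~\ref{prop: basic results on cLM}, and the only genuine input from outside is a classical structure theorem for $1$-dimensional unmixed local rings of multiplicity one.

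First I would handle (1)$\Leftrightarrow$(3). The key observation is that in dimension one, parts (\ref{part restrict to complete}), (\ref{part LM dim 1}), and (\ref{part restrict to unmixed}) of Proposition~\ref{prop: basic results on cLM} combine to give
\[
\lm(R)=\lm(\widehat R)=\lm(\unm{\widehat R})=\eh(\unm{\widehat R}),
\]
so Lech-stability of $R$ is equivalent to $\eh(\unm{\widehat R})=1$. Now $\unm{\widehat R}$ is one-dimensional and unmixed, and in dimension one, being unmixed means that no associated prime equals the maximal ideal; equivalently, $\unm{\widehat R}$ has positive depth and is therefore Cohen--Macaulay (hence equidimensional). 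Because $\unm{\widehat R}$ is already complete, it is formally equidimensional, so by the classical theorem of Nagata (equivalent to Rees's theorem in this setting) a formally equidimensional local ring of multiplicity one is regular, which gives (1)$\Leftrightarrow$(3). If preferred, this last step can be obtained hands-on: the associativity formula forces $\unm{\widehat R}$ to be a one-dimensional complete local domain of multiplicity one, which must coincide with its integral closure (a DVR) and hence be regular.

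For (1)$\Leftrightarrow$(2), both directions come directly from Lech's inequality applied to $R[[T]]$, which has dimension two with $\eh(R[[T]])=\eh(R)$. For (1)$\Rightarrow$(2): if $\lm(R)=1$ then $\eh(R)=1$, and by Lech's strict inequality in dimension $\geq 2$ (noted after Theorem~\ref{thm Lech}) every $(\m,T)$-primary ideal $J$ of $R[[T]]$ satisfies $\eh(J)<2\,\length(R[[T]]/J)$. Combined with $\lm(R[[T]])\geq 1$ from Proposition~\ref{prop: basic results on cLM}(\ref{part multiplicity bounds}), this yields $\lm(R[[T]])=1$ with the supremum not attained, i.e.\ $R$ is stable. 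Conversely, if $R$ is stable, then testing the definition of $\lm(R[[T]])$ on the maximal ideal $\m R[[T]]+(T)$, whose multiplicity is $\eh(R)$ and whose colength is one, gives $\eh(R)/2<\lm(R[[T]])=1$ because the supremum is not attained; hence $\eh(R)=1$ and $\lm(R)=\eh(R)=1$.

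The only nontrivial ingredient is the step $\eh(\unm{\widehat R})=1\Rightarrow\unm{\widehat R}$ regular, which is where I would expect to spend the most care; everything else is formal manipulation of the invariants already set up in Proposition~\ref{prop: basic results on cLM} together with the strict Lech inequality in dimension two.
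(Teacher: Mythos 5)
Your proof is correct, and it reaches the same destination as the paper's via slightly different routing. The paper closes the cycle $(1)\Rightarrow(2)\Rightarrow(3)\Rightarrow(1)$: it cites the standing implication Lech-stable $\Rightarrow$ stable (which rests on Proposition~\ref{prop: Mumford prop 3.5}), uses Remark~\ref{rmk: mult bound for Lech-stable and semistable} together with Abhyankar's inequality to force $\edim(\unm{\widehat R})\leq 2$, and then rules out $\edim=2$ by showing the supremum for $\lm(R[[T]])$ would be attained at the maximal ideal. You instead establish $(1)\Leftrightarrow(3)$ and $(1)\Leftrightarrow(2)$ separately. For $(1)\Rightarrow(2)$ you replace the appeal to Proposition~\ref{prop: Mumford prop 3.5} by the strictness of Lech's inequality in dimension $\geq 2$ applied to $R[[T]]$ with $\eh(R[[T]])=1$; this is a valid, somewhat more self-contained derivation of the same implication. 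For $(2)\Rightarrow$ Lech-stable you test the maximal ideal of $R[[T]]$, which is exactly the computation the paper uses in its $(2)\Rightarrow(3)$ step (ratio $\eh(R)/2$, forced below $1$ because the supremum is not attained). Finally, for the step ``$\eh(\unm{\widehat R})=1$ implies regular'' you invoke Nagata's theorem for formally equidimensional rings (or the hands-on domain argument via the associativity formula), whereas the paper reaches the same conclusion through Abhyankar's inequality and an embedding-dimension case split; either is fine, and in fact Abhyankar's inequality alone already gives $\edim\leq\eh=1$ here. In summary: correct, with the same key calculations as the paper but with the cycle of implications reorganized and a couple of interchangeable lemmas swapped in.
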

\begin{proof}
We first note that by Proposition~\ref{prop: basic results on cLM} (\ref{part restrict to complete}) and (\ref{part restrict to unmixed}), we have $\lm(R)=\lm(\unm{\widehat{R}})$ and $\lm(R[[T_1,\dots,T_r]])=\lm(\unm{\widehat{R}}[[T_1,\dots, T_r]])$ for all $r>0$. Hence to prove the proposition, we may replace $R$ by $\unm{\widehat{R}}$ to assume that $R$ is a Cohen--Macaulay complete local ring of dimension one.

We already know that $(1)\Rightarrow(2)$, and clearly if $R$ is regular, then it is Lech-stable and thus $(3)\Rightarrow(1)$. It remains to prove $(2)\Rightarrow (3)$. Suppose $R$ is stable, then $R$ is semistable and thus by Remark~\ref{rmk: mult bound for Lech-stable and semistable}, we know that $\eh(R)\leq 2$. Since $R$ is a one-dimensional Cohen--Macaulay local ring, $\edim(R)\leq \eh(R)+1-1 \leq 2$ by Abhyankar's inequality.
If $\edim(R)=2$, then we have equalities throughout. In particular, $\eh(R)=2$ and it is easy to see that $\lm(R[[T]])=1$ is attained at the maximal ideal of $R[[T]]$, in contradiction with $R$ being stable. Therefore we must have $\edim(R)=1$, i.e., $R$ is regular. 
\end{proof}

\subsection{Localization and faithfully flat extension} In this subsection, we study the behavior of the Lech--Mumford constant under certain ring homomorphisms. We start with a key lemma. 

\begin{lemma}
\label{lem: localization key}
Let $(R,\m)$ be a local ring and $\mf p\in \Spec(R)$ such that $\hght\mf p+\dim(R/\mf p)=\dim(R)$. Let $x_1,\dots,x_r$ be part of a system of parameters of $R$ whose images in $R/\mf p$ form a system of parameters of $R/\mf p$. Then we have 
$$\lm(R_\mf p)\leq \lm(R/(x_1,\dots,x_r)).$$
\end{lemma}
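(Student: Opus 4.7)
The plan is to argue by induction on $r$, with the heart of the argument being the case $r=1$.

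For the base case $r=1$, we have $\dim R/\p=1$, which forces $x_1\notin\p$. Given any $\p R_\p$-primary ideal $\tilde I\subseteq R_\p$, set $I\coloneqq\tilde I\cap R$, a $\p$-primary ideal of $R$ with $IR_\p=\tilde I$, and let $J\coloneqq(I+(x_1))/(x_1)\subseteq S\coloneqq R/(x_1)$, an $\m S$-primary ideal. The crux is that $\Ass(R/I)=\{\p\}$ together with $x_1\notin\p$ imply $x_1$ is a nonzerodivisor on the one-dimensional module $R/I$, so $R/I$ is Cohen--Macaulay. The associativity formula then yields the exact equality
\[
\length(S/J)=\length(R/(I+(x_1)))=\eh(x_1;R/I)=\length(R_\p/\tilde I)\cdot\eh(x_1;R/\p).
\]
Since higher powers $I^n$ may acquire embedded primes at $\m$, we apply the same reasoning to the $\p$-primary contractions $I^{(n)}\coloneqq\tilde I^n\cap R\supseteq I^n$ to obtain
\[
\length(S/J^n)=\length(R/(I^n+(x_1)))\geq\length(R_\p/\tilde I^n)\cdot\eh(x_1;R/\p).
\]
Passing to leading coefficients as $n\to\infty$ yields $\eh(J;S)\geq\eh(\tilde I)\cdot\eh(x_1;R/\p)$. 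Combining the two displays and cancelling the common factor $\eh(x_1;R/\p)$ produces
\[
\frac{\eh(J;S)}{(d-1)!\,\length(S/J)}\geq\frac{\eh(\tilde I)}{(d-1)!\,\length(R_\p/\tilde I)},
\]
and taking the supremum over $\tilde I$ gives $\lm(R_\p)\leq\lm(R/(x_1))$.

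For the inductive step $r\geq 2$, pick a minimal prime $\p_1$ of $\p+(x_1)$ with $\dim R/\p_1=r-1$. The chain $\p\subsetneq\p_1$ forces $\hght\p_1\geq h+1$, while $\hght\p_1\leq d-(r-1)=h+1$ is automatic, so $\hght\p_1=h+1$; the hypothesis that $(x_1,\ldots,x_r)+\p$ is $\m$-primary (together with $x_1\in\p_1$) makes $\bar x_2,\ldots,\bar x_r$ a system of parameters of $R/\p_1$, and $x_1$ is a parameter of $R_{\p_1}$ since $\dim R_{\p_1}/(x_1)=\hght(\bar\p_1)=h$ in $R/(x_1)$. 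The case $r=1$ just proven, applied to $R_{\p_1}$, its prime $\p R_{\p_1}$ (which has one-dimensional quotient), and the parameter $x_1$, gives
\[
\lm(R_\p)=\lm((R_{\p_1})_{\p R_{\p_1}})\leq\lm(R_{\p_1}/(x_1))=\lm((R/(x_1))_{\bar{\p_1}}).
\]
The inductive hypothesis applied to $R/(x_1)$, $\bar{\p_1}$, and $\bar x_2,\ldots,\bar x_r$ then concludes $\lm((R/(x_1))_{\bar{\p_1}})\leq\lm(R/(x_1,\ldots,x_r))$.

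The main obstacle is the base case, where Cohen--Macaulayness of $R/I$ is decisive: it converts the general inequality $\length(M/xM)\geq\eh(x;M)$ (which for $r\geq 2$ would run the wrong way to close the argument) into the needed exact equality. This Cohen--Macaulayness is automatic only in the one-dimensional slice $r=1$, since then $\dim(R/I)=1$ and $x_1\notin\p=\Ass(R/I)$ suffices; for larger $r$ the analogous module $R/I$ has dimension $r$ and need not be Cohen--Macaulay, which is why the lemma must be organized as an induction through the one-parameter-at-a-time reduction rather than attempted directly.
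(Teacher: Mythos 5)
Your proof is correct and takes essentially the same route as the paper's: the same $r=1$ base case (contracting the $\p R_\p$-primary ideal, using that $x_1$ is a nonzerodivisor on the $\p$-primary quotient to get an equality on colengths and an inequality on multiplicities via the associativity formula), and the same inductive step passing through a suitably chosen minimal prime of $\p+(x_1)$. The extra verifications you include (that $\hght\bar\p_1=h$ in $R/(x_1)$, that $\bar x_2,\ldots,\bar x_r$ is a sop of $R/\p_1$, etc.) are details the paper states but does not spell out.
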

\begin{proof}
We use induction on $r$ and we first assume that $r=1$. Let $d\coloneqq \dim(R)$ and thus $\dim(R_\mf p)=d-1$. Let $J$ be a $\mf p R_\mf p$-primary ideal of $R_\mf p$ and let $I$ be the contraction of $J$ to $R$. Note that $I$ is a $\mf p$-primary ideal by construction and thus $x\coloneqq x_1$ is a nonzerodivisor on $R/I$. It follows that 
$$\length(R/I+(x)) = \eh(x, R/I)=\eh(x, R/\mf p) \length(R_\mf p/IR_\mf p)=\eh(x, R/\mf p)\length(R_\mf p/J),$$
where the second equality follows from the associativity formula for multiplicity. Now let $I_n$ be the contraction of $J^n$ to $R$, we clearly have $I^n\subseteq I_n$, thus we have 
$$\length(R/I^n+(x))\geq \length(R/I_n+(x))=\eh(x, R/I_n)=\eh(x, R/\mf p) \length(R_\mf p/I_nR_\mf p)=\eh(x, R/\mf p)\length(R_\mf p/J^n),$$
where the first equality follows from the fact that $I_n$ is $\mf p$-primary (and thus $x$ is a nonzerodivisor on $R/I_n$) and the second equality follows again from the associativity formula for multiplicity. Letting $n\to\infty$ and taking limit, we obtain that 
$\eh(I, R/(x)) \geq \eh(x, R/\mf p)\eh(J, R_\mf p).$
Therefore we have 
$$\frac{\eh(J, R_\mf p)}{(d-1)!\length(R_\mf p/J)} = \frac{\eh(x, R/\mf p)\eh(J, R_\mf p)}{(d-1)!\eh(x, R/\mf p)\length(R_\mf p/J)}\leq \frac{\eh(I, R/(x))}{(d-1)!\length(R/I+(x))}\leq\lm(R/(x)).$$
It follows that $\lm(R_\mf p)\leq \lm(R/(x))$. This completes the proof when $r=1$.

Now suppose $r\geq 2$. We can choose a minimal prime $\mf q$ of $\mf p+(x_1)$ such that $\hght\mf q + \dim(R/\mf q)=\dim(R)$. It follows from the choice of $\mf q$ that the images of $x_2,\dots,x_r$ form a system of parameters of $R/\mf q$, and that 
$\hght(\mf p R_\mf q)+1=\dim(R_\mf q)$ and $\hght(\mf q/(x_1))+\dim(R/\mf q)=\dim(R/x_1R)$. Therefore we have 
$$\lm(R_\mf p)\leq\lm(R_\mf q/x_1R_\mf q)=\lm((R/x_1R)_\mf q)\leq \lm(R/(x_1,x_2,\dots,x_r))$$
where the first inequality follows from the $r=1$ case applied to $R_\mf q$, and the second inequality follows by the inductive hypothesis applied to $R/x_1R$. This completes the proof.
\end{proof}

We now prove the effect of localization on Lech--Mumford constants.

\begin{theorem}
\label{thm: localization}
Let $(R, \mf m)$ be a local ring
and $\mf p\in\Spec(R)$ such that $\hght \mf p + \dim (R/\mf p) = \dim(R)$.
Then we have 
\begin{enumerate}
  \item $\lm (R) \geq \lm (R_\mf p[[T_1, \ldots, T_r]])$ where $r = \dim (R/\mf p)$;
  \item $\limlm(R)\geq \limlm(R_\mf p)$. 
\end{enumerate}
In particular, if $R$ is equidimensional, catenary, and lim-stable, then so is $R_{\mf p}$ for all $\mf p\in\Spec(R)$.
\end{theorem}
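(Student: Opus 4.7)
The plan is to prove (1) by a direct construction inspired by Lemma~\ref{lem: localization key}, deduce (2) from (1) via an application inside $R[[S_1,\dots,S_n]]$, and extract the ``In particular'' consequence from (2).

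For part (1), by iterating Corollary~\ref{cor: compute LM by homogeneous} it suffices to bound $\eh(J)/d!\,\length(R_\mf p[[T_1,\dots,T_r]]/J)$ for a multi-homogeneous $\m$-primary ideal $J=\bigoplus_{\alpha\in\mathbb N^r} I_\alpha T^\alpha$ with each $I_\alpha\subseteq R_\mf p$ a $\mf p R_\mf p$-primary ideal (and $I_\alpha=R_\mf p$ for $|\alpha|\gg 0$). The contraction $\tilde I_\alpha\coloneqq I_\alpha\cap R$ is a $\mf p$-primary ideal of $R$ whose only associated prime is $\mf p$: indeed the natural injection $R/\tilde I_\alpha\hookrightarrow R_\mf p/I_\alpha$ forces $\Ass_R(R/\tilde I_\alpha)=\{\mf p\}$. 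After extending to an infinite residue field (which preserves $\lm$), I would choose $y_1,\dots,y_r\in R$ whose images in $R/\mf p$ form a minimal reduction of $\m\cdot R/\mf p$ and which also form part of a system of parameters of $R$, and construct the candidate $\m$-primary ideal
\[
\tilde I\;\coloneqq\;\sum_{\alpha\in\mathbb N^r}\tilde I_\alpha\, y^\alpha\;\subseteq\;R.
\]

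The key comparison is carried out on the graded $R[T_1,\dots,T_r]$-module $M\coloneqq R[T_1,\dots,T_r]/\tilde J=\bigoplus_\alpha(R/\tilde I_\alpha)T^\alpha$, where $\tilde J\coloneqq\bigoplus_\alpha\tilde I_\alpha T^\alpha$. Under the surjection $\phi\colon R[T_1,\dots,T_r]\twoheadrightarrow R$ given by $T_i\mapsto y_i$ one has $\tilde I=\phi(\tilde J)$, so $R/\tilde I\cong M/(T_1-y_1,\dots,T_r-y_r)M$. The module $M$ has $\dim M=r$ and is supported on $V(\mf p+(T_1,\dots,T_r))$ in $\Spec R[T_1,\dots,T_r]$, whose generic point $\mf q_0\coloneqq\mf p+(T_1,\dots,T_r)$ has codimension $d$. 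Since $y_i\notin\mf p$ and each $\tilde I_\alpha$ has no embedded primes, a direct graded check shows $(T-y)$ is a regular sequence on $M$. The associativity formula for multiplicity then gives, writing $C\coloneqq\eh((y_1,\dots,y_r),R/\mf p)$,
\[
\length(R/\tilde I)\;=\;\eh((T-y),M)\;=\;\length(M_{\mf q_0})\cdot\eh((y),R/\mf p)\;=\;C\cdot\length(R_\mf p[[T_1,\dots,T_r]]/J),
\]
where $M_{\mf q_0}=\bigoplus_\alpha(R_\mf p/I_\alpha)T^\alpha$. Applying the same construction to $J^n$ produces an ideal $K_n\supseteq\tilde I^n$ satisfying $\length(R/K_n)=C\cdot\length(R_\mf p[[T_1,\dots,T_r]]/J^n)$; since $\length(R/\tilde I^n)\geq\length(R/K_n)$, passing to the asymptotic $n\to\infty$ yields $\eh(\tilde I,R)\geq C\cdot\eh(J,R_\mf p[[T_1,\dots,T_r]])$. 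Combining the equality of colengths with the multiplicity inequality,
\[
\frac{\eh(\tilde I,R)}{d!\,\length(R/\tilde I)}\;\geq\;\frac{C\cdot\eh(J)}{d!\,C\cdot\length(R_\mf p[[T_1,\dots,T_r]]/J)}\;=\;\frac{\eh(J)}{d!\,\length(R_\mf p[[T_1,\dots,T_r]]/J)},
\]
proving $\lm(R)\geq\lm(R_\mf p[[T_1,\dots,T_r]])$.

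For part (2), apply (1) inside $R[[S_1,\dots,S_n]]$ at the prime $\mf p^\sharp\coloneqq\mf p+(S_1,\dots,S_n)$, which satisfies $\hght\mf p^\sharp+\dim(R[[S_1,\dots,S_n]]/\mf p^\sharp)=\hght\mf p+n+\dim R/\mf p=\dim R[[S_1,\dots,S_n]]$. The standard completion identification $\widehat{(R[[S_1,\dots,S_n]])_{\mf p^\sharp}}\cong\widehat{R_\mf p}[[S_1,\dots,S_n]]$, combined with Proposition~\ref{prop: basic results on cLM}(\ref{part restrict to complete}), gives $\lm((R[[S_1,\dots,S_n]])_{\mf p^\sharp}[[T_1,\dots,T_r]])=\lm(R_\mf p[[S_1,\dots,S_n,T_1,\dots,T_r]])$; then (1) yields $\lm(R[[S_1,\dots,S_n]])\geq\lm(R_\mf p[[S_1,\dots,S_n,T_1,\dots,T_r]])$. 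Letting $n\to\infty$ on both sides produces $\limlm(R)\geq\limlm(R_\mf p)$. For the ``In particular'' consequence, the dimension formula for equidimensional catenary rings gives $\hght\mf p+\dim R/\mf p=\dim R$ for every $\mf p\in\Spec R$; equidimensionality and catenarity descend to $R_\mf p$; so (2) applies and $1\leq\limlm(R_\mf p)\leq\limlm(R)=1$, forcing $\limlm(R_\mf p)=1$.

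The main technical obstacle is in part (1): verifying the regular sequence property for $(T-y)$ on $M$, and the corresponding equality $\length(R/K_n)=C\cdot\length(R_\mf p[[T_1,\dots,T_r]]/J^n)$ for $K_n$, requires controlling the $\mf pR_\mf p$-primary/no-embedded-primes structure of the graded pieces of $J^n$ -- one may need to pass to $\overline{J^n}$ or restrict to normal $J$ to ensure the argument propagates. The no-embedded-primes property for the contracted ideals $\tilde I_\alpha$ themselves is automatic from localization, and the remaining dimension-theoretic and completion-theoretic identifications are routine.
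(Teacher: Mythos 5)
Your approach is genuinely different from the paper's. The paper first establishes a descent lemma (Lemma~\ref{lem: localization key}) by a one-step-at-a-time induction in which $R$ is reduced modulo a single element $x_1$ and the argument passes through a carefully chosen minimal prime $\mf q$ of $\mf p + (x_1)$; Theorem~\ref{thm: localization} then applies that lemma to the \emph{polynomial-ring} localization $S = R[T_1,\dots,T_r]_{(\m,T)}$, using that $S_P$ completes to $R_\mf p[[T_1,\dots,T_r]]$ and $S/(x_1-T_1,\dots,x_r-T_r) \cong R$. You instead attempt a single global construction of a candidate $\m$-primary ideal $\tilde I \subseteq R$ out of the graded pieces of $J \subseteq R_\mf p[[T_1,\dots,T_r]]$.

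The gap is where you flag it, but it is deeper than controlling embedded primes or passing to $\overline{J^n}$. You need $\length(R/\tilde I) \leq C\cdot\length(R_\mf p[[T_1,\dots,T_r]]/J)$, and the only route offered is the regular-sequence claim for $(T_1-y_1,\dots,T_r-y_r)$ on $M$. In general $\length(M/(T-y)M) \geq \eh((T-y),M)$ with equality if and only if $M$ is Cohen--Macaulay (of dimension $r$). Since $M$, as an $R$-module, is $\bigoplus_\alpha R/\tilde I_\alpha$, we have $\depth_R(M) = \min_\alpha \depth_R(R/\tilde I_\alpha)$, and each $R/\tilde I_\alpha$ has dimension $r = \dim R/\mf p$; so $M$ Cohen--Macaulay forces every $R/\tilde I_\alpha$ Cohen--Macaulay, which fails whenever $R/\mf p$ is not Cohen--Macaulay (e.g.\ $R$ regular of dimension four with $R/\mf p$ a non-Cohen--Macaulay surface domain). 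Your ``direct graded check'' is only valid for the first element: $T_1 - y_1$ is a nonzerodivisor on $M$ because $\Ass_R(R/\tilde I_\alpha) = \{\mf p\}$ and $y_1 \notin \mf p$, but after one Koszul step the graded pieces of $M/(T_1-y_1)M$ are $R/K_\beta$ with $K_\beta = \sum_a \tilde I_{(a,\beta)}y_1^a$, and $\Ass_R(R/K_\beta)$ can contain $\m$ when $\depth(R/\mf p) < r$, so $T_2 - y_2$ is a zerodivisor on the quotient. Worse, without the regular-sequence property the inequality goes the wrong way: you only know $\length(R/\tilde I) \geq C\cdot\length(R_\mf p[[T_1,\dots,T_r]]/J)$, which makes $\eh(\tilde I)/\length(R/\tilde I)$ \emph{smaller} than the target ratio, not larger.

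Note that in the $r=1$ case your construction does work (a single nonzerodivisor on a one-dimensional module already yields $\eh = \length$), and this is exactly the base case of the paper's induction in Lemma~\ref{lem: localization key}. For $r \geq 2$, rather than imposing a Cohen--Macaulayness you cannot control, the correct move is the paper's inductive descent through a minimal prime of $\mf p + (x_1)$, so that each step only ever invokes the easy one-element case.
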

\begin{proof}
Let $S\coloneqq R[T_1,\dots,T_r]_{(\m+(T_1,\dots,T_r))}$ and let $P\coloneqq \mf p + (T_1,\dots,T_r)$ be a prime ideal of $S$. We can choose $x_1,\dots,x_r$ part of a system of parameters of $R$ such that their images in $R/\mf p$ form a system of parameters of $R/\mf p$. It follows that that $\hght(P)+\dim(S/P)=\dim(S)$ and that $x_1-T_1,\dots,x_r-T_r$ is part of a system of parameters of $S$ such that their images in $S/P$ form a system of parameters of $S/P$. Therefore we have
$$\lm(R_\mf p[[T_1,\dots,T_r]])=\lm(S_P)\leq \lm(S/(x_1-T_1,\dots, x_r-T_r))=\lm(R)$$
where the first equality follows from Proposition~\ref{prop: basic results on cLM}(\ref{part restrict to complete}) as $S_P$ and $R_\mf p[[T_1,\dots,T_r]]$ agree upon completion at their maximal ideals, and the inequality above follows from Lemma~\ref{lem: localization key}. This completes the proof of the part $(1)$.

We next proceed with part $(2)$. Note that by the definition of $\limlm(R)$, for any $\epsilon>0$, there exists $n$ such that $\limlm(R)\geq \lm(R[[T_1,\dots,T_n]])-\epsilon$. It follows from part $(1)$ that
\begin{align*}
  \limlm(R) &\geq \lm(R[[T_1,\dots,T_n]])-\epsilon \\
   & \geq \lm(R_\mf p[[T_1,\dots, T_n, T'_1,\dots, T'_r]]) -\epsilon \\
   & \geq \limlm(R_\mf p)-\epsilon.
\end{align*}
for any $\epsilon>0$. Thus, we conclude that $\limlm(R)\geq\limlm(R_\mf p)$, as required.
\end{proof}

\begin{remark}
\label{rmk: lm does not localize well}
One cannot expect that $\lm(R)\geq \lm(R_\mf p)$ in general: for example let $R = k[[x,y,z,t]]/(xyz)$ and $\mf p = (x,y,z)$, then we will see from Theorem~\ref{thm: snc is semistable} and Proposition~\ref{prop: LM of xyz} from Section~\ref{section: example} that  
$\lm(R) = 1$ while $\lm(R_\mf p) = 3/2$. 
\end{remark}

\begin{remark}
We expect the following generalization of Theorem~\ref{thm: localization}. Suppose $(R,\m)$ is a local ring such that $\widehat{R}$ is equidimensional (more generally, one can work on an excellent and biequidimensional scheme). Then we suspect that the function
$\Phi\colon \Spec(R) \to \mathbb{R}$ defined by
$\Phi(\mf p) = \lm(R_\mf p[[T_1,\dots,T_{\dim(R/\mathfrak{p})}]])$, and the function $\Phi'\colon \Spec(R) \to \mathbb{R}$ defined by $\Phi'(\mf p) = \limlm(R_\mf p)$ are upper semicontinuous. 
\end{remark}

\begin{corollary}\label{c loc bound}
Let $(R, \mf m)$ be a local ring
and let $\mf p\in\Spec(R)$ be such that $\hght \mf p + \dim (R/\mf p) = \dim (R)$.
Then $\dim (R)!\lm (R) \geq (\hght \mf p)!\lm(R_\mf p)$.
\end{corollary}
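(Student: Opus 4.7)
The plan is to combine the localization bound from Theorem~\ref{thm: localization}(1) with an iterated application of the second inequality in Corollary~\ref{cor: Mumford 3.5}. Write $h = \hght \mf p$ and $r = \dim (R/\mf p)$, so $h + r = \dim (R)$ by assumption. Theorem~\ref{thm: localization}(1) directly gives
\[
\lm(R) \;\geq\; \lm \bigl(R_\mf p[[T_1, \dots, T_r]]\bigr),
\]
so it suffices to control $\lm(R_\mf p)$ in terms of $\lm(R_\mf p[[T_1, \dots, T_r]])$.

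For the latter, I would apply Corollary~\ref{cor: Mumford 3.5} to the chain of rings $R_\mf p \subset R_\mf p[[T_1]] \subset \cdots \subset R_\mf p[[T_1, \dots, T_r]]$, using that $\dim(R_\mf p[[T_1, \dots, T_i]]) = h + i$. At each step the corollary produces
\[
\lm\bigl(R_\mf p[[T_1, \dots, T_{i-1}]]\bigr) \;\leq\; (h + i)\, \lm\bigl(R_\mf p[[T_1, \dots, T_i]]\bigr),
\]
and multiplying these $r$ inequalities together yields
\[
\lm(R_\mf p) \;\leq\; (h+1)(h+2)\cdots(h+r)\, \lm\bigl(R_\mf p[[T_1, \dots, T_r]]\bigr) \;=\; \frac{(h+r)!}{h!}\, \lm\bigl(R_\mf p[[T_1, \dots, T_r]]\bigr).
\]

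Rearranging gives $h!\, \lm(R_\mf p) \leq (h+r)!\, \lm(R_\mf p[[T_1, \dots, T_r]])$. Combining with the localization bound and using $h + r = \dim(R)$, we conclude
\[
\dim(R)!\, \lm(R) \;\geq\; \dim(R)!\, \lm\bigl(R_\mf p[[T_1, \dots, T_r]]\bigr) \;\geq\; h!\, \lm(R_\mf p),
\]
which is the desired inequality. There is no real obstacle here; the whole argument is a telescoping of Corollary~\ref{cor: Mumford 3.5} combined with Theorem~\ref{thm: localization}(1), and the factorial on the right appears precisely because we pay a factor of $(h+i)$ at each of the $r$ power series adjunctions.
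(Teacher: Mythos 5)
Your proof is correct and is essentially identical to the paper's argument: both start with the localization bound $\lm(R) \geq \lm(R_\mf p[[T_1,\dots,T_r]])$ from Theorem~\ref{thm: localization}, and both then iterate the inequality $\lm(S) \leq (\dim S + 1)\lm(S[[T]])$ from Corollary~\ref{cor: Mumford 3.5} across the $r$ power series adjunctions to produce the factor $(h+1)(h+2)\cdots(h+r) = d!/h!$. The only difference is presentational — you spell out the telescoping step explicitly, while the paper condenses it into a single rewriting of $d!$ as $h! \cdot d(d-1)\cdots(h+1)$.
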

\begin{proof}
Let $d=\dim(R)$, $r=\dim(R/\mf p)$ and $h=\hght \mf p$. By Theorem~\ref{thm: localization} we have 
\begin{align*}
  d!\lm(R) & \geq d!\lm(R_\mf p[[T_1,\dots,T_r]]) \\
   & =h! (d(d-1)\cdots (h+1))\lm(R_\mf p[[T_1,\dots,T_r]]) \\
   & \geq h!\lm(R_\mf p).
\end{align*}
where the last line above is due to Corollary~\ref{cor: Mumford 3.5}.
\end{proof}

We next study the behavior of Lech--Mumford constant under faithfully flat extension. 

\begin{proposition}
\label{prop: faithfully flat extension}
Let $(R, \mf m) \to (S, \mf n)$ be a flat local extension of local rings.
Then we have $\lm (S) \geq \lm (R [[T_1, \ldots, T_r]])$ where $r = \dim (S) - \dim (R)$. Furthermore, we have $\limlm(S)\geq\limlm(R)$.
\end{proposition}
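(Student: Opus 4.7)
The plan is to first handle the case of zero relative dimension ($r = 0$), and then deduce the general case by combining this base case with Theorem~\ref{thm: localization}. In the base case $\dim S = \dim R$, the ideal $\m S$ is $\mf n$-primary since $\dim(S/\m S) = 0$, so $\length_S(S/\m S) < \infty$. For any $\m$-primary ideal $I \subseteq R$, applying flatness of $R \to S$ to a composition series of $R/I^n$ shows
\[
\length_S(S/I^n S) = \length_S(S/\m S) \cdot \length_R(R/I^n)
\]
for every $n \geq 1$. Comparing leading coefficients yields $\eh(IS, S) = \length_S(S/\m S) \cdot \eh(I, R)$, and hence the ratio
\[
\frac{\eh(IS, S)}{d!\, \length_S(S/IS)} = \frac{\eh(I, R)}{d!\, \length_R(R/I)}
\]
is preserved, where $d = \dim R = \dim S$. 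Taking suprema over $\m$-primary $I$ yields $\lm(S) \geq \lm(R)$.

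For the general case, the key step will be to pick a minimal prime $\mf q$ of $\m S$ with $\dim(S/\mf q) = r$. Such a $\mf q$ exists because a chain of primes in $S/\m S$ realizing $\dim(S/\m S) = r$ pulls back to a chain in $S$ above $\m S$ whose bottom element can be replaced by a minimal prime of $\m S$ of the same coheight. The dimension formula for flat extensions, together with going down, yields $\hght \mf q = \dim R$, so $\hght \mf q + \dim(S/\mf q) = \dim S$. Theorem~\ref{thm: localization}(1) then produces $\lm(S) \geq \lm(S_\mf q[[T_1, \ldots, T_r]])$. The extension $R \to S_\mf q$ is flat local of the same dimension $\dim R$ (since $\mf q \cap R = \m$ and $\hght \mf q = \dim R$). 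Base-changing along the flat map $R \to R[[T_1, \ldots, T_r]]$ produces a flat local extension $R[[T_1, \ldots, T_r]] \to S_\mf q[[T_1, \ldots, T_r]]$ of matching dimensions $\dim R + r$, to which the base case applies to yield $\lm(S_\mf q[[T_1, \ldots, T_r]]) \geq \lm(R[[T_1, \ldots, T_r]])$. Chaining the two inequalities proves the first assertion.

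For the $\limlm$ statement, I would note that for every $n \geq 0$, the map $R[[T_1, \ldots, T_n]] \to S[[T_1, \ldots, T_n]]$ is again flat local of relative dimension $r$. Applying the first part gives $\lm(S[[T_1, \ldots, T_n]]) \geq \lm(R[[T_1, \ldots, T_{n+r}]])$, and passing to the limit $n \to \infty$ yields $\limlm(S) \geq \limlm(R)$. The main technical point will be the dimension calculation for $\mf q$ needed to invoke Theorem~\ref{thm: localization}; the rest is formal bookkeeping around multiplicativity of length and multiplicity across flat extensions with zero-dimensional fibers.
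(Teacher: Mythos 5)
Your proof is correct and follows essentially the same route as the paper's: both reduce to the relative-dimension-zero case via Theorem~\ref{thm: localization} applied to a minimal prime of $\m S$ of maximal coheight, and then use the flatness identities $\length_S(S/I^nS) = \length_S(S/\m S)\length_R(R/I^n)$ and $\eh(IS) = \length_S(S/\m S)\eh(I)$ to compare the suprema. The only differences are presentational (you do the base case first and then reduce to it, the paper does the reduction first), and you spell out the $\limlm$ deduction and the verification that $\hght\mf q + \dim(S/\mf q) = \dim S$ which the paper leaves implicit.
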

\begin{proof}
Since $S$ is faithfully flat over $R$, $\dim (S) = \dim (R) + \dim (S/\mf mS)$.
Let $\mf p$ be a minimal prime of $\mf m S$ such that $\dim (S/\mf p) = \dim (S/\mf mS)=r$. By Theorem~\ref{thm: localization}, we have 
$\lm(S)\geq \lm(S_\mf p[[T_1,\dots, T_r]])$. By replacing $R\to S$ by $R[[T_1,\dots, T_r]]\to S_\mf p[[T_1,\dots, T_r]]$, we may assume that $\dim(R)=\dim(S)$.
In this case, all we need to show is that $\lm(R)\leq \lm(S)$. But for any $\m$-primary ideal $I$ of $R$, we have
\[
\length_R(R/I) \length_S (S/\mf m S) = \length_S (S/IS), \text{ and } \eh(I)\length_S(S/\m S) = \eh(IS).
\]
It follows that
\[
\lm (R) = \sup_{\sqrt{I} = \mf m} \left\{\frac{\eh(I)}{d! \length_R (R/I)}\right\}
= \sup_{\sqrt{I} = \mf m} \left\{\frac{\eh(IS)}{d! \length_S (S/IS)}\right\} \leq \lm (S).
\]
The proof of $\limlm(S)\geq \limlm(R)$ is entirely similar and we omit the details.
\end{proof}

Putting Theorem~\ref{thm: localization} and Proposition~\ref{prop: faithfully flat extension} together allows us to prove a graded version of the localization inequality. 

\begin{corollary}
\label{cor: graded localization}
Let $R$ be a $\mathbb{N}$-graded ring generated in degree one over a 
local ring $(R_0,\m_0)$ and let $\m=\m_0+R_{>0}$.
Let $\mf p$ be a homogeneous prime ideal in $R$ 
such that $\hght \mf p + \dim (R/\mf p) = \dim (R)$.
Then we have $\grlm(R)=\lm (R_{\m}) \geq \lm (R_{(\mf p)}[[T_1, \ldots, T_r]])$ where $r = \dim (R/\mf p)$ and $R_{(\mf p)}$ is the homogeneous localization.
\end{corollary}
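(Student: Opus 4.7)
The equality $\grlm(R) = \lm(R_\m)$ is immediate from Proposition~\ref{prop: LM multigraded} applied to $R$ viewed as an $\N$-graded ring. For the inequality, my plan is to establish $\lm(R_\m) \geq \lm(R_{(\mf p)}[[T_1, \ldots, T_r]])$ by passing through the inhomogeneous localization $R_\mf p$, chaining Theorem~\ref{thm: localization} with Proposition~\ref{prop: faithfully flat extension}.

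First, I would apply Theorem~\ref{thm: localization} to the local ring $R_\m$ at the prime $\mf p R_\m$. Since $R$ is generated in degree one over the local ring $R_0$ and $\m$ is the unique homogeneous maximal ideal, standard dimension theory for graded rings yields $\dim R_\m = \dim R$ and $\dim(R_\m/\mf p R_\m) = \dim R/\mf p = r$; together with $\hght(\mf p R_\m) = \hght \mf p$, these turn the assumption $\hght \mf p + \dim R/\mf p = \dim R$ into the hypothesis required by Theorem~\ref{thm: localization}. That theorem then gives
\[
\lm(R_\m) \;\geq\; \lm\bigl(R_\mf p[[T_1, \ldots, T_r]]\bigr).
\]

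Next, I would relate $R_\mf p$ to the homogeneous localization $R_{(\mf p)}$. Assuming $\mf p$ is a relevant prime (so that $R_{(\mf p)}$ is defined as a local ring), one can pick a degree-one element $t \notin \mf p$, since $R$ is generated in degree one and $R_{>0} \not\subseteq \mf p$. Inverting the multiplicative set $U$ of homogeneous elements outside $\mf p$ produces the $\mathbb{Z}$-graded ring $U^{-1}R = R_{(\mf p)}[t, t^{-1}]$, and a direct check shows $\mf p U^{-1}R = \mf p R_{(\mf p)} \cdot R_{(\mf p)}[t, t^{-1}]$, so that $R_\mf p$ is the further localization of $R_{(\mf p)}[t, t^{-1}]$ at this prime. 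This exhibits $R_{(\mf p)} \hookrightarrow R_\mf p$ as a faithfully flat local extension with $\dim R_\mf p = \dim R_{(\mf p)}$.

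Finally, this flat local extension of equal Krull dimension survives $(T_1, \ldots, T_r)$-adic completion, so the induced map $R_{(\mf p)}[[T_1, \ldots, T_r]] \to R_\mf p[[T_1, \ldots, T_r]]$ is again flat local of equal Krull dimension. Proposition~\ref{prop: faithfully flat extension} (with its $r$ taken to be zero) then yields
\[
\lm\bigl(R_\mf p[[T_1, \ldots, T_r]]\bigr) \;\geq\; \lm\bigl(R_{(\mf p)}[[T_1, \ldots, T_r]]\bigr),
\]
and chaining with the earlier bound completes the proof. The main technical point to verify carefully is that the contraction of $\mf p R_\mf p$ back to $R_{(\mf p)}$ is exactly $\mf p R_{(\mf p)}$; this guarantees both the faithful flatness and the Krull dimension equality $\dim R_\mf p = \dim R_{(\mf p)}$, and is precisely what allows Proposition~\ref{prop: faithfully flat extension} to apply without an additional power-series shift.
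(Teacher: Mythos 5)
Your proof is correct and takes essentially the same route as the paper: chain Theorem~\ref{thm: localization} with Proposition~\ref{prop: faithfully flat extension}, using a degree-one element $t \notin \mf p$ to exhibit $R_\mf p$ as a flat local extension of $R_{(\mf p)}$ of the same Krull dimension (what the paper phrases as $R_\mf p \cong R_{(\mf p)}(z)$). The paper first reduces to the case $r = 1$, whereas you carry all $r$ formal variables through the argument directly; this difference is purely cosmetic.
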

\begin{proof}
By a similar reduction as in the proof of Theorem~\ref{thm: localization}, we may assume that $r=\dim(R/\mf p)=1$. Recall that
$R_{(\mf p)}$ consists of equivalence classes of elements 
of the form $a/b$, where $a,s$ are homogeneous, $\deg a = \deg b$, and $b \notin \mf p$.
Thus, if $z \notin \mf p$ with $\deg z = 1$, then $R_{(\mf p)}$
is the localization of $A \coloneqq R[z^{-1}]_0$ at $\mf q = A \cap \mf p$.
Note that $R_z \cong A[z, z^{-1}]$ and $\mf q R_z=\mf p R_z$.
By Theorem~\ref{thm: localization}, $\lm (R_{\m})\geq \lm(R_{\mf p }[[T]])$.
Finally, since $R_{\mf p}\cong  A[z, z^{-1}]_{\mf q [z, z^{-1}]}\cong A_\mf q (z)$, by Proposition~\ref{prop: faithfully flat extension}, 
$$\lm(R_{\mf p }[[T]]) = \lm(A_\mf q (z)[[T]]) \geq \lm (A_\mf q [[T]]) = \lm (R_{(\mf p)}[[T]]).$$
This completes the proof.
\end{proof}

We also have the following corollary as a consequence of the results discussed so far. This corollary will allow us to reduce to the case of infinite residue field when studying the Lech--Mumford constant. 

\begin{corollary}
\label{cor: purely transcendental field extension}
Let $(R,\m)$ be a local ring and let $R(t)\coloneqq R[t]_{\m R[t]}$. Then $\lm(R) = \lm(R(t))$. 
\end{corollary}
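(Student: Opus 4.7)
The plan is to prove the equality by two inequalities, one from faithful flatness and the other from Lemma~\ref{lem: localization key}.

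For the easier direction $\lm(R) \leq \lm(R(t))$, I would simply verify that $R \to R(t)$ is a flat local extension of local rings preserving dimension. Indeed, $\m R(t)$ is the maximal ideal of $R(t)$ because $R(t)/\m R(t) \cong (R/\m)(t)$ is a field, so the closed fiber is zero-dimensional and the homomorphism is local. Applying Proposition~\ref{prop: faithfully flat extension} with $r = \dim R(t) - \dim R = 0$ yields $\lm(R(t)) \geq \lm(R)$.

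For the reverse inequality $\lm(R(t)) \leq \lm(R)$, the idea is to realize $R(t)$ as the localization of a larger local ring at a suitable prime and then apply Lemma~\ref{lem: localization key}. I would work with the auxiliary local ring $R' \coloneqq R[t]_{\m + (t)}$ of dimension $d+1$, together with its prime ideal $\mf p \coloneqq \m R'$. The key things to check are: $\hght \mf p + \dim(R'/\mf p) = d + 1 = \dim R'$; the element $t$ extends any system of parameters of $R$ to one of $R'$ (since $(\m + (t)) = \sqrt{(x_1,\dots,x_d,t)}$ in $R'$); and the image of $t$ generates the maximal ideal of the DVR $R'/\mf p \cong (R/\m)[t]_{(t)}$, hence is a system of parameters of $R'/\mf p$. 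With $r = 1$ and $x_1 = t$, Lemma~\ref{lem: localization key} then gives
\[
\lm(R(t)) = \lm(R'_{\mf p}) \leq \lm(R'/(t)) = \lm(R),
\]
where I use the natural identifications $R'_{\mf p} = R[t]_{\m R[t]} = R(t)$ and $R'/(t) \cong R$.

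Combining the two inequalities produces the desired equality. There is no substantial obstacle: all the real work has already been carried out in Lemma~\ref{lem: localization key}, and the only thing this corollary contributes is the observation that the configuration $(R',\mf p, t)$ fits the hypotheses of that lemma perfectly, turning the purely transcendental extension $R \to R(t)$ into a single localization-plus-quotient step.
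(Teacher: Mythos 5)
Your proof is correct and follows essentially the same route as the paper: both directions are handled identically, with Proposition~\ref{prop: faithfully flat extension} giving $\lm(R)\leq\lm(R(t))$ and Lemma~\ref{lem: localization key} applied to $S=R[t]_{\m+(t)}$, $P=\m S$, $x_1=t$ giving the reverse inequality. You simply spell out the routine verifications (that $\m R(t)$ is maximal, that $S_P=R(t)$ and $S/(t)\cong R$) that the paper states without comment.
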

\begin{proof}
Let $S \coloneqq R[t]_{\m + (t)}$ and let $P \coloneqq \m S$. Then clearly $\height(P)+\dim(S/P)=\dim(S)$ and $t$ is a parameter in $S$ whose image is a system of parameters on $S/P$. Thus by Lemma~\ref{lem: localization key}, $\lm(R(t))=\lm(S_P)\leq \lm(S/(t))=\lm(R)$. On the other hand, $\lm(R)\leq \lm(R(t))$ by Proposition~\ref{prop: faithfully flat extension}.
\end{proof}

Given Corollary~\ref{cor: purely transcendental field extension}, one might wonder whether the Lech--Mumford constant is preserved under arbitrary field extensions. Here we record an example showing that this is not always the case. 

\begin{example} \label{Ex: field matters}
Let $R=\overline{\mathbb{F}}_2[x,y,t]_\m/(x^2+ty^2)$ where $\m=(x,y,z)$. By Proposition~\ref{prop: BS for non normal limit} in Section~\ref{section: surface}, we have $\lm(R)=\lm(\widehat{R})=1$. It follows from Theorem~\ref{thm: localization} that $$\lm\left(\frac{\overline{\mathbb{F}}_2(t)[[x,y,z]]}{(x^2+ty^2)}\right)=\lm\left(\frac{\overline{\mathbb{F}}_2(t)[x,y]_{(x,y)}}{(x^2+ty^2)}[[z]]\right)=\lm(R_{(x,y)}[[z]])=1.$$ However, $\lm(\overline{\mathbb{F}_2(t)}[[x,y,z]]/(x^2+ty^2))=2$ by Proposition~\ref{prop: basic results on cLM}(\ref{part multiplicity bounds}).
\end{example}

On the other hand, we do not know whether the Lech--Mumford constant is preserved under separable field extensions.

\begin{question}
\label{question: field extension}
Let $(R,\m, k) \to (R', \m', k')$ be a flat local extension of complete local rings such that $\m'=\m R'$ and $k'$ is separable over $k$. Then is it true that $\lm(R)=\lm(R')$?
\end{question}

We want to point out that 
if we restrict to ideals of fixed colength, i.e., to the invariants 
\[
L_N (R) \coloneqq  \max_{\length(R/I)=N} \left\{ \frac{\eh (I)}{\dim(R)! N}\right\}, 
\]
then it is not true that $L_N(R)$ is invariant under separable field extensions (i.e., the analogous question has a negative answer). 

\begin{proposition}
    Consider a local ring of the form $R = \mathbb{F}_5[[x,y,z]]/(x^4 + y^4 + z^4 + g(x, y, z))$
    or $\mathbb{Q}[[x,y,z]]/(3x^3 + 4x^3 + 5x^3 + g(x,y,z))$, where $g$ consists of higher order terms. 
    Then 
    \[
    \frac {\eh (R)}4 = L_2 (R) < L_2(R \cotimes{k} \overline{k}),
    \]
    where $k$ is the corresponding coefficient field. 
\end{proposition}
\begin{proof}
    We observe that if $\length (R/I) = 2$ 
    then $\eh (I) > \eh (R)$ if and only if $I$
    is integrally closed. Therefore, 
    $L_2 (R) = \eh(R)/4$ if and only if there is 
    no integrally closed ideal $I$ of colength $2$.
    
    By the proof of \cite[Proposition~4]{MQSColengthII} (or taking $J = \m^n$ for $n \gg 0$ in the statement),
    integrally closed ideals of colength $2$ in a local ring $(S, \n, \ell)$
    are parametrized by $\ell$-rational points 
    in $\Proj \Gr_\n (S)$. In our cases, $\Proj \Gr_\m (R)$ does not have any $k$-rational point.
    For the curve $\Proj (\mathbb{F}_5[x,y,z]/(x^4 + y^4 + z^4))$ 
    this follows from the fact that $\{0, 1\}$ are the only forth powers in $\mathbb{F}_5$
    and for Selmer's curve $\Proj (\Q[x,y,z]/(3x^3 + 4y^3 + 5z^3))$, an elementary exposition can be found in \cite{ConradSelmer}.
    
    On the other hand, $\Proj (\Gr_\m (R \cotimes{k} \overline{k}))$ always has $\overline{k}$-rational points as $\overline{k}$ is algebraically closed. 
    Thus 
    \[
    L_2(R \cotimes{k} \overline{k}) \geq 
    \frac{\eh (R) + 1}{4}. \qedhere
    \]    
\end{proof}

We next record the following observation regarding the behavior of the Lech--Mumford constant under blowing up.

\begin{proposition}
\label{prop: blow up cLM}
Let $(R, \mf m)$ be a local domain and 
let $\pi \colon X \to \Spec (R)$ be the blowup of an ideal $I$.
For any closed point $x \in X$ we have $\lm(\sO_{X, x})  \leq \grlm(\Gr_{I}(R))$.
\end{proposition}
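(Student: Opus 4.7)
The plan is to chain together three results established earlier in this section: Proposition~\ref{prop: LM associated graded} (the associated-graded bound), Proposition~\ref{prop: LM multigraded} (which converts $\grlm$ of a polynomial ring over a local ring to $\lm$ of the corresponding power series), and Corollary~\ref{cor: graded localization} (the homogeneous localization inequality).

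First I would reduce to the case where $x$ lies on the exceptional divisor of the blowup. One may assume that $I$ is a proper nonzero ideal, since otherwise the statement is vacuous or trivial. As $\pi \colon X \to \Spec R$ is projective and $R$ is local, every closed point $x \in X$ lies over $\m$; combined with $I \subseteq \m$ this forces $I \sO_{X, x} \subsetneq \sO_{X, x}$, so $x$ lies on the exceptional divisor $E \cong \Proj(G)$, where $G \coloneqq \Gr_I(R)$. Let $\mf q$ be the homogeneous prime of $G$ corresponding to $x$; the standard stalk formula for $\Proj$ gives an isomorphism of local rings $\sO_{X, x}/I\sO_{X, x} = \sO_{E, x} \cong G_{(\mf q)}$.

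The key structural input is that $X = \Bl_I(\Spec R)$ is integral (since $R$ is a domain) and $I \sO_X = \sO_X(-E)$ is invertible, so $I \sO_{X, x} = (\pi)$ for some non-zerodivisor $\pi \in \sO_{X, x}$. Consequently the associated graded ring
\[
\Gr_{I\sO_{X, x}}(\sO_{X, x}) \;\cong\; G_{(\mf q)}[\bar\pi]
\]
is a polynomial ring in one variable over the local ring $G_{(\mf q)}$. Applying Proposition~\ref{prop: LM associated graded} to the (Noetherian) $I\sO_{X, x}$-adic filtration and then Proposition~\ref{prop: LM multigraded} to this polynomial ring yields
\[
\lm(\sO_{X, x}) \;\leq\; \grlm\bigl(G_{(\mf q)}[\bar\pi]\bigr) \;=\; \lm\bigl(G_{(\mf q)}[[T]]\bigr).
\]
Finally, Corollary~\ref{cor: graded localization} applied to $G$ (generated in degree one over the local ring $R/I$) at the homogeneous prime $\mf q$ gives $\lm(G_{(\mf q)}[[T]]) \leq \grlm(G) = \grlm(\Gr_I R)$, completing the chain.

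The main technical point to verify is the hypothesis of Corollary~\ref{cor: graded localization}, namely $\height(\mf q) + \dim(G/\mf q) = \dim(G)$ together with $r \coloneqq \dim(G/\mf q) = 1$. Since $x$ lies over $\m$ we have $\mf q \supseteq \m G$, so $G/\mf q$ is a one-dimensional graded domain over the field $R/\m$, giving $\dim(G/\mf q) = 1$. The equality $\height(\mf q) = \dim(G) - 1$ rests on the equidimensionality of $G$, which follows from the fact that the extended Rees algebra $R[IT, T^{-1}] \subseteq R[T, T^{-1}]$ is a $(d+1)$-dimensional domain and $G$ is its quotient by the non-zerodivisor $T^{-1}$, combined with the catenarity of $G$, which holds under mild standard assumptions on $R$ (e.g., via presenting $G$ as a quotient of a polynomial ring over $R/I$).
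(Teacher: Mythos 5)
Your proof is correct and follows essentially the same route as the paper's: identify $\Gr_{I\sO_{X,x}}(\sO_{X,x})\cong\sO_{E,x}[T]$ via the local generator $\pi$, apply Proposition~\ref{prop: LM associated graded} to pass to this associated graded ring, equate $\grlm(\sO_{E,x}[T])$ with $\lm(\sO_{E,x}[[T]])$, and finish with Corollary~\ref{cor: graded localization}. The only difference is that you spell out the dimension-theoretic hypothesis $\height\mf q + \dim(G/\mf q)=\dim(G)$ needed for Corollary~\ref{cor: graded localization}, which the paper leaves implicit; this is a fair point to flag, though your justification (equidimensionality of $G$ via the extended Rees algebra plus catenarity) tacitly requires $R$ to be universally catenary, a hypothesis the proposition does not state — a subtlety shared with the paper's own argument.
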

\begin{proof}
Since $R$ is local, every closed point $x\in X$ lies in the exceptional fiber $E \coloneqq \pi^{-1}(V(I)) = \Proj (\Gr_{I}(R))$. Let $\pi$ be a local generator of $I\sO_{X,x}$.  
Since $\pi$ is a nonzerodivisor on $\sO_{X,x}$, we have 
$\Gr_{(\pi)} (\sO_{X, x}) \cong (\sO_{X, x}/\pi\sO_{X, x})[T]=\sO_{E, x}[T]$. By Proposition~\ref{prop: LM associated graded}, we have that 
$$\lm(\sO_{X, x}) \leq \grlm(\Gr_{(\pi)} (\sO_{X, x}))=\grlm(\sO_{E,x}[T])=\lm(\sO_{E,x}[[T]]).$$
Since $\sO_{E, x}=(\Gr_{I}(R))_{(P)}$ for a homogeneous prime $P$ such that $\dim(\gr_I(R)/P)=1$, 
by Corollary~\ref{cor: graded localization}, we have
\[
\grlm(\Gr_{I}(R))\geq \lm(\sO_{E,x}[[T]]) \geq \lm(\sO_{X, x}).
\qedhere \]
\end{proof}

\begin{remark}
Tracing the proof of Proposition~\ref{prop: blow up cLM} and Corollary~\ref{cor: graded localization} more carefully, it applies to any numerical measure of singularities $f$ such that:
\begin{enumerate}
\item $f(R_{\mf p}) \leq f(R_{\mf q})$ whenever $\mf p \subseteq \mf q$ in $\Spec(R)$,
\item $f(R) \leq f(\Gr_{(\pi} (R))$ for a regular element $\pi\in R$,
\item $f(R) \leq f(R(t))$ where $R(t)=R[t]_{\m R[t]}$. 
\end{enumerate}
In particular, it applies to the Hilbert--Samuel and Hilbert--Kunz multiplicities. In the case of  Hilbert--Samuel multiplicity, our proof resembles  the argument of Orbanz in \cite{Orbanz} who generalized a classical result of Dade \cite{Dade}: the multiplicity does not increase when blowing up a regular equimultiple prime ideal
(i.e., $R/\mf p$ is regular and $\eh (R_\mf p) = \eh(R_\mf q)$ for any $\mf p \subseteq \mf q$), since these assumptions imply that $\eh(\Gr_{\mf p}(R)) \leq \eh(R_\mf p)$.
\end{remark}

We also point out that Proposition~\ref{prop: blow up cLM} might fail without passing to the associated graded ring. 

\begin{example}
Let $R = \CC[x,y,z]/(x^2 + y^3 + z^6)$. By Theorem~\ref{thm: minimal elliptic deg one} and Example~\ref{example: Laufer} in Section~\ref{section: surface},
$\lm(R) = 8/7$. After blowing up the closed point, the chart corresponding to $z \neq 0$
is defined  by the equation $a^2 + b^3z + z^4$. At the origin it has a simple elliptic singularity of degree $2$ (see Example~\ref{example: Laufer}), so the Lech--Mumford constant of the origin is $4/3$ by Theorem~\ref{thm: minimal elliptic irred exc} in Section~\ref{section: surface}. 
\end{example}

Finally, we study the behavior of the Lech--Mumford constant under finite extensions.
\begin{proposition}
\label{prop: finite extension}
Let $(R, \mf m, k)$ be a local domain and let $(S, \mf n, \ell)$ be a finite extension of $R$. Then $\rank_R(S) \lm (R) \geq \lm(S)$. In particular, if $(S,\mf n, \ell)$ is a finite birational extension of $R$, then $\lm(R)\geq \lm(S)$.\footnote{This appeared without proof in \cite[Proposition~3.10]{Mumford}, which asserts that $\lm(R)\geq\lm(S)$ when $S$ is birational and integral over $R$, we have not been able to verify this fact without mild assumptions.}
\end{proposition}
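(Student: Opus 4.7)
The plan is to associate to each $\n$-primary ideal $J$ of $S$ the contraction $I := J \cap R$, verify that it is $\m$-primary, and then compare the quotients $\eh(J)/\length_S(S/J)$ and $\eh(I)/\length_R(R/I)$ via a careful accounting of the residue-field degree $[\ell : k]$. First, since $R \hookrightarrow S$ is finite (hence integral) with both rings local, $\n \cap R = \m$, so $I$ is $\m$-primary and $IS$ is $\n$-primary, and $\dim S = \dim R =: d$. The inclusion $IS \subseteq J$ gives an $R$-module injection $R/I \hookrightarrow S/J$, and an $S$-composition series for $S/J$ (whose factors are copies of $\ell$, each of $R$-length $[\ell : k]$) yields
\[
\length_R(R/I) \;\leq\; \length_R(S/J) \;=\; [\ell : k] \cdot \length_S(S/J).
\]

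For the multiplicities, I plan to apply the associativity formula for $\eh_R(I, -)$ to the finite $R$-module $S$: since $R$ is a domain and $\dim S = d$, only the prime $\p = (0)$ contributes, giving
\[
\eh_R(I, S) \;=\; \length_{K(R)}(S \otimes_R K(R)) \cdot \eh_R(I) \;=\; \rank_R(S) \cdot \eh_R(I).
\]
On the other hand, $\length_R(S/I^n S) = [\ell : k] \cdot \length_S(S/I^n S)$ forces $\eh_R(I, S) = [\ell : k] \cdot \eh_S(IS)$; together with $\eh_S(IS) \geq \eh_S(J)$ (from $IS \subseteq J$), this yields $\rank_R(S) \cdot \eh_R(I) \geq [\ell : k] \cdot \eh_S(J)$. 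Dividing this multiplicity inequality by the colength bound (and by $d!$), the $[\ell : k]$ factors cancel, producing
\[
\rank_R(S) \cdot \frac{\eh_R(I)}{d!\, \length_R(R/I)} \;\geq\; \frac{\eh_S(J)}{d!\, \length_S(S/J)}.
\]
Taking the supremum over all $\n$-primary $J$ finishes the first inequality. For the ``in particular'' assertion, $S \otimes_R K(R) = K(S) = K(R)$ in the birational case forces $\rank_R(S) = 1$.

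I do not anticipate a serious technical obstacle; the main point of care is the consistent bookkeeping of $[\ell : k]$ so that it cancels cleanly at the end, together with the verifications that $I$ is $\m$-primary and $IS$ is $\n$-primary, both immediate from lying-over for integral local extensions. A secondary check is that the associativity formula is applied to $S$ viewed as a $d$-dimensional $R$-module, which is valid because $S$ is a finite $R$-module with $\dim S = \dim R$.
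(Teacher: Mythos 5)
Your argument is correct and follows essentially the same route as the paper: contract $J$ to $I = J \cap R$, bound $\length_R(R/I) \leq [\ell:k]\,\length_S(S/J)$ via an $S$-composition series, and compare multiplicities through the chain $\rank_R(S)\,\eh_R(I) = \eh_R(I,S) = [\ell:k]\,\eh_S(IS) \geq [\ell:k]\,\eh_S(J)$ using the associativity formula over the domain $R$. The bookkeeping of $[\ell:k]$ and the use of associativity are identical to the paper's proof, so there is nothing to flag.
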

\begin{proof}
Let $J$ be an $\mf n$-primary ideal in $S$ and denote $I \coloneqq J \cap R$. 
Then
\[
\length_R (R/I) \leq \length_R (S/J)
= \length_S (S/J) [\ell: k].
\]
On the other hand, since $R$ is a domain, we have 
$$\eh(I, R) \rank_R(S)= \eh_R(I, S)=\eh(IS, S)[\ell: k] \geq \eh(J, S)[\ell: k].$$
By combining these inequalities we obtain that 
\[
\frac{\eh(J)}{d!\length_S(S/J)} 
= \frac{\eh (J) [\ell: k]}{d!\length_S (S/J) [\ell: k]}
\leq \frac {\rank_R(S)\eh(I)}{d! \length_R (R/I)}
\leq \rank_R(S)\lm(R).
\]
Therefore, $\lm(S) \leq \rank_R(S)\lm(R)$ and the proposition follows.
\end{proof}

\subsection{Weak semicontinuity}
We now turn our attention to the behavior of Lech--Mumford constants in the fibers of a finite type morphism. We will work in the following setting and notation.

\begin{setting}\label{families setting}
Let $T \to R$ be a finite type flat homomorphism of rings and suppose there is an ideal $I\subseteq R$
such that the composition $T \to R \to R/I$ is an isomorphism (geometrically, $\Spec(R)\to \Spec(T)$ is a flat family together with a section $\sigma\colon \Spec(T)\to \Spec(R)$).  

For each $\p \in \Spec (T)$ we will denote 
$R(\p) \coloneqq R \otimes_{T} T_\p/\p T_\p$. 
Note that $R(\p)$ is a finitely generated algebra over the field $k(\p) \coloneqq T_\p/\p R_\p$
and $IR(\p)$ is a maximal ideal of $R(\p)$. 
We denote by $\overline{R(\mf p)}$ the local ring obtained by the localization of 
$R(\mf p) \otimes_{k(\mf p)} \overline{k(\mf p)}$
at $IR(\mf p)$. 
\end{setting}

We will study the function
$\p \mapsto \lm (\overline{R(\mf p)})$
culminating in a weak semicontinuity result of Proposition~\ref{prop: new weak semicontinuity}.
First we record the following lemma on equidimensionality of fibers, which should be well-known. 

\begin{lemma}\label{lemma: constant dimension}
In Setting~\ref{families setting} suppose that $T$ is a domain and $R(0)$ is equidimensional\footnote{Here $R(0)$ is a finite type $K$-algebra where $K$ is the fraction field of $T$, thus equidimensional means that each irreducible component of $R(0)$, as an affine domain over $K$, has the same dimension.} (e.g., $R$ is a domain). 
Then the function $\Spec(T) \ni \p \mapsto \height IR(\p)$ is constant.
\end{lemma}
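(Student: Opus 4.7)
The plan is to interpret $\height IR(\p)$ as the local fiber dimension of $f\colon \Spec R \to \Spec T$ at the section point $\sigma(\p)$, pin down its value at the generic fiber using equidimensionality, and then propagate constancy by appealing to the standard result that fiber dimension is locally constant for flat morphisms of finite type.

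The surjection $R \to R/I \cong T$, split by $T \to R$, yields a closed immersion section $\sigma\colon \Spec T \hookrightarrow \Spec R$. Using the splitting $R = \sigma(T) \oplus I$ of $R$ as a $T$-module, one checks that $\sigma(\p)$ is the prime $I + \p R$ of $R$, and that this is precisely the prime corresponding to the maximal ideal $IR(\p)$ of the fiber $R(\p)$. Since $R(\p)$ is a finite type, hence catenary, $k(\p)$-algebra, we have $\height IR(\p) = \dim R(\p)_{IR(\p)} = \dim_{\sigma(\p)} R(\p)$, the local dimension of the fiber of $f$ at $\sigma(\p)$. At the generic point $\p = (0)$, the fiber $R(0) = R \otimes_T K$ (where $K = \mathrm{Frac}\,T$) is a finite type $K$-algebra which is equidimensional of dimension $d \coloneqq \dim R(0)$ by hypothesis; in such a ring every maximal ideal has height $d$, so $\height IR(0) = d$.

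For the conclusion, I plan to invoke the standard theorem that for a flat morphism of finite type between Noetherian schemes, the function $x \mapsto \dim_x f^{-1}(f(x))$ is locally constant on the source (EGA IV$_3$, Corollary~14.2.6; see also Matsumura, \emph{Commutative Ring Theory}, Chapter~15). Since $\Spec T$ is irreducible, its continuous image $\sigma(\Spec T)$ is connected in $\Spec R$, and therefore this locally constant function is constant on $\sigma(\Spec T)$, taking everywhere the value $d$ it attains at $\sigma(0) = I$.

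The main obstacle in this plan is the upper bound $\height IR(\p) \leq d$. The matching lower bound $\height IR(\p) \geq d$ is elementary: concatenating a length-$d$ chain of primes of $R$ ending at $I$ (which exists since $\dim R_I = \dim T_{(0)} + \dim R(0)_{IR(0)} = d$ by the flatness dimension formula) with a length-$\dim T_\p$ chain from $I$ to $I + \p R$ (lifted from a saturated chain in $T_\p$ via $R/(I+\p R) \cong T/\p$) gives $\dim R_{I+\p R} \geq d + \dim T_\p$, and the flatness dimension formula $\dim R_{I + \p R} = \dim T_\p + \dim R(\p)_{IR(\p)}$ then yields $\dim R(\p)_{IR(\p)} \geq d$. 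The reverse inequality appears to resist analogous elementary chain manipulations because $T$ is not assumed to be universally catenary; accordingly, I expect to lean on the local constancy result above (or an equivalent flat-family statement) to pin down the upper bound.
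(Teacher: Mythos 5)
There is a genuine gap, and it is exactly where you suspected: the ``standard theorem'' you invoke for the upper bound is not true.

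You claim that for a flat morphism of finite type between Noetherian schemes the function $x \mapsto \dim_x f^{-1}(f(x))$ is locally constant on the source. This is false without further hypotheses. Take $T = k$ a field and $R = k[x,y,z]/(xy, xz)$, which is (trivially) flat and of finite type over $T$ and has connected, even irreducible-over-a-point, base. Here $\Spec R = V(x) \cup V(y,z)$ is a plane meeting a line, so $\dim_x \Spec R$ equals $1$ at the generic point of the line $V(y,z)$ but jumps to $2$ at the origin $(x,y,z)$; the fiber-dimension function is upper semicontinuous (Chevalley) but not locally constant. The result that actually exists --- and the one the paper cites, EGA~IV$_3$, Corollaire~14.2.2 --- requires the total space to be \emph{irreducible}: for an open dominant morphism $X \to Y$ of irreducible Noetherian schemes, the fibers are all equidimensional of the same dimension. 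The hypotheses of the lemma give only that the \emph{generic fiber} $R(0)$ is equidimensional, not that $R$ is a domain, so you cannot apply this directly. Indeed, the remark immediately after the lemma in the paper ($T = k[[t]]$, $R = k[[x,y,z]]/(xy,xz)$, $t \mapsto x+y$) shows the conclusion actually fails once the equidimensionality hypothesis on $R(0)$ is dropped, which rules out any argument that ignores it.

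Your elementary argument for the lower bound $\height IR(\p) \geq \height IR(0)$ is correct (concatenating chains through $I$ and applying the flat local dimension formula). The substance of the lemma is the opposite inequality, and that is exactly what the paper's proof spends its effort on: reduce to a DVR base via Krull--Akizuki; observe that flatness forces $\pi$ to avoid all minimal primes $P$ of $R$, so $R/P$ is $V$-torsion-free, hence $V$-flat and irreducible; apply EGA~IV$_3$, 14.2.2 to each $R/P$ separately; and finally use equidimensionality of $R(0)$ to see that all the components give the same answer. Without this reduction to the irreducible case, the appeal to a fiber-dimension constancy theorem does not go through.
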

\begin{proof}
    The proof follows the outline of 
    \cite[Corollaire 14.2.2]{EGAIV3} (see also \cite{ConradFiberdim}), which shows that if $X \to Y$ is an open dominant locally finite type morphism of irreducible Noetherian schemes, then all fibers $X_y$ are of pure dimension and $\dim X_y$ is constant for all $y\in Y$. 

    We will show that for each $0\neq \p \in \Spec (T)$, $\hght(IR(\p))=\hght(IR(0))$. We first reduce to the case where $T=(V,\pi,\kappa)$ is a DVR and $\p=(\pi)$. By the Krull--Akizuki theorem, there exists a DVR $(V,\pi,\kappa)$ over $T$ in the fraction field of $T$ whose center on $T$ is $\p$.  The generic fiber of $V \to R_V \coloneqq R \otimes_{T} V$ is the same as $R(0)$ while the special fiber is $R_V /\pi R_V  \cong R(\p) \otimes_{k(\p)} \kappa$. Since $k(\p)\to \kappa$ is a field extension, it is clear that $\height I R(\p) = \height I (R_V /\pi R_V)$. Thus we may replace $T$ by $V$ and $R$ by $R_V$ to assume that $T=V$. 
    
    Now we assume that $(V,\pi,\kappa) \to R$ is a finite type flat map. Observe that $\pi \notin P$ for any minimal prime $P$ of $R$ since $\pi$ is a regular element of $V$ and the map is flat. Since the generic fiber is obtained by inverting $\pi$, we have a one-to-one correspondence between minimal primes of $R$ and $R(0)$. Moreover, we have that $R/P$ is torsion-free and thus flat over $V$. Hence, by \cite[Corollaire 14.2.2]{EGAIV3}, for every minimal prime $P$ of $R$, $R/(P,\pi)$ is equidimensional (as a finite type $\kappa$-algebra) and $$\dim (R/(P, \pi)) = \dim (R/P)(0) = \dim (R(0)) = \hght(IR(0))$$
where the second equality is by equidimensionality of $R(0)$ and the third equality follows as $IR(0)$ is a maximal ideal of $R(0)$.  
Since $I(R/\pi R)$ is a maximal ideal of $R/\pi R$, $\n \coloneqq (I, \pi)$ is a maximal ideal of $R$ and we have 
$$\hght(I(R/\pi R))= \dim(R_{\n})-1 = \dim (R_{\n}/PR_{\n}) -1 =\dim(R_{\n} /(P,\pi)R_{\n})=\dim(R/(P,\pi)) $$
for some minimal prime $P$ of $R$, where the last equality above follows from the equidimensionality of $R/(P, \pi)$. It follows that $\hght(I(R/\pi R))= \hght(IR(0))$ as wanted.
\end{proof}

\begin{remark}
Consider the map $f \colon T \coloneqq k[[t]] \to R \coloneqq k[[x,y,z]]/(xy, xz)$ that sends $t \mapsto x+y$. Since $x + y$ is a regular element of $R$, $f$ is flat. Setting $I = (y, z)$, we have $R/I\cong T$ which produce a section of $f$. On the other hand, $\height (I(R/tR)) = 1 > 0 = \height (I (R[1/t]))$. This shows that the equidimensional assumption on $R(0)$ is necessary in Lemma~\ref{lemma: constant dimension}. 
\end{remark}

We next record some consequences of Lemma~\ref{lem: localization key},

\begin{corollary}
    \label{cor: inequality DVR base}
    Let $(D, \pi)$ be a DVR and $D \to R$ be a finite type flat homomorphism of rings with a section given by $P\subseteq R$. 
    Suppose that $\dim (R_P) = \height P(R/\pi R)$
    (for example, this holds if  $R[1/\pi]$ is equidimensional by Lemma~\ref{lemma: constant dimension}).
    Then $\lm (R_{P}) \leq \lm ((R/\pi R)_{(P, \pi)})$.
\end{corollary}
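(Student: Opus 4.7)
The plan is to reduce directly to an application of Lemma~\ref{lem: localization key} after localizing at a suitable maximal ideal.

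First I would observe that $(P, \pi)$ is a maximal ideal of $R$: indeed, since $R/P \cong D$ via the section, we have $R/(P, \pi) \cong D/\pi D$, which is a field. Set $S \coloneqq R_{(P,\pi)}$ and let $\mathfrak{n} \coloneqq (P,\pi)S$ be its maximal ideal. The section $R \to D$ induces a surjection $S \twoheadrightarrow D$ with kernel $PS$, so $S/PS \cong D$ is a one-dimensional regular local ring; in particular, $PS$ is a prime of $S$ and the image of $\pi$ in $S/PS$ generates its maximal ideal.

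Next I would verify the dimension hypothesis needed for Lemma~\ref{lem: localization key}. Since $D \to R$ is flat and $\pi$ is a regular element of $D$, it remains a regular element of $R$ and hence of $S$. Therefore
\[
\dim(S) = \dim(S/\pi S) + 1 = \dim\bigl((R/\pi R)_{(P,\pi)}\bigr) + 1 = \height\bigl(P(R/\pi R)\bigr) + 1.
\]
By the assumption $\dim(R_P) = \height(P(R/\pi R))$, this gives
\[
\height(PS) + \dim(S/PS) = \dim(R_P) + 1 = \dim(S),
\]
so $PS$ satisfies the height condition of Lemma~\ref{lem: localization key}. Moreover, $\pi$ is a parameter element of $S$ (being a nonzerodivisor in a local Noetherian ring that cuts the dimension by one), and as noted above its image in $S/PS \cong D$ is a system of parameters.

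With these verifications in place, applying Lemma~\ref{lem: localization key} to $S$, its prime $PS$, and the parameter $\pi$ (the case $r=1$) yields
\[
\lm(S_{PS}) \leq \lm(S/\pi S).
\]
Finally, I would identify $S_{PS} = R_P$ and $S/\pi S = (R/\pi R)_{(P,\pi)}$, which gives the desired inequality. The parenthetical ``for example'' assertion follows immediately from Lemma~\ref{lemma: constant dimension} applied to the flat family $D \to R[1/\pi]$ (equivalently, to $D \to R$ after inverting $\pi$, using that the section restricts to a section over the generic fiber). There is no serious obstacle here; the only point requiring care is checking that $\pi$ genuinely acts as a regular parameter on $S$, which follows from flatness of $D \to R$.
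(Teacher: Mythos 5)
Your proof is correct and follows essentially the same route as the paper: both localize at the maximal ideal $(P,\pi)$, verify the height condition of Lemma~\ref{lem: localization key} from the given dimension hypothesis using that $\pi$ is a nonzerodivisor (hence a parameter), and apply that lemma with $\pi$ as the single parameter element. Your version is somewhat more explicit about why $\pi$ is a parameter and about the identification of the localizations, but the underlying argument is identical to the paper's.
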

\begin{proof}
Note that $P$ is a prime ideal of $R$ and $(P, \pi)$ is a maximal ideal of $R$. The assumption implies that 
$$\hght P = \dim(R_{(P,\pi)})-1 = \dim(R_{(P,\pi)})- \dim(R_{(P,\pi)}/PR_{(P,\pi)}).$$ Now the conclusion follows immediately from Lemma~\ref{lem: localization key}.
\end{proof}

\begin{corollary}
\label{cor: grober degeneration technical form}
    Let $R=S/I$ where $S= k[x_1,\dots,x_n]$ is a polynomial ring over a field $k$ and let $\m=(x_1,\dots,x_n)$. Let $J \subseteq \m S[t]$ be an ideal such that 
    \begin{enumerate}
        \item $S[t, t^{-1}]/JS[t, t^{-1}] \cong R[t, t^{-1}]$, and 
        \item $t$ is a nonzerodivisor on $S[t]/J$.
    \end{enumerate}
    Then 
    $
    \lm (R_\m) \leq \lm ((S[t]/(t, J))_\m).
    $
\end{corollary}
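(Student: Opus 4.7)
The plan is to apply Corollary~\ref{cor: inequality DVR base} with the DVR $D \coloneqq k[t]_{(t)}$, uniformizer $\pi = t$, ring $R' \coloneqq A \otimes_{k[t]} D$ where $A \coloneqq S[t]/J$, and section given by the prime $P$ in $R'$ coming from $\mathfrak{P} \coloneqq \m S[t]/J \subseteq A$.

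I would first establish the basic hypotheses. The section exists because $A/\mathfrak{P} = k[t]$, which upon localization becomes $R'/P = D$. Finite type is clear. For flatness, $A$ embeds in $A[1/t] \cong R[t,1/t]$ by hypothesis~(2), and a top-coefficient comparison shows that every nonzero polynomial in $t$ is a nonzerodivisor on $R[t,1/t]$, hence on $A$; so $A$ is $k[t]$-torsion-free and therefore flat over the PID $k[t]$, whence $R'$ is flat over $D$.

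I would then identify the two Lech--Mumford constants produced by the corollary. Since $\mathfrak{P} \cap k[t] = 0$, we have $R'_P = A_\mathfrak{P}$; and since $t \notin \mathfrak{P}$, hypothesis~(1) gives $A_\mathfrak{P} \cong R[t, 1/t]_{\m R[t, 1/t]} = R_\m(t)$ in the notation of Corollary~\ref{cor: purely transcendental field extension}, so $\lm(R'_P) = \lm(R_\m)$. On the other side, $R'/\pi R' = A/tA = S[t]/(t,J)$, and $(R'/\pi R')_{(P, \pi)} = (S[t]/(t, J))_\m$.

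The main obstacle is verifying the dimension hypothesis $\dim R'_P = \hght P (R'/\pi R')$ of Corollary~\ref{cor: inequality DVR base}. Under the identifications above this reads $\dim R_\m = \dim (S[t]/(t, J))_\m$, i.e., that local dimension at $\m$ is preserved under the degeneration. This is precisely what Lemma~\ref{lemma: constant dimension} provides for the flat family $R'$ over $D$, given that the generic fiber is $R'[1/t] \cong R \otimes_k k(t)$, whose equidimensionality at the section prime reduces to the equidimensionality of $R_\m$; the general case can be handled by passing to $\unm{R_\m}$ via Proposition~\ref{prop: basic results on cLM}(\ref{part restrict to unmixed}). Alternatively, one may appeal directly to the dimension formula for flat morphisms applied at $\mathfrak{P}$ and at $\mathfrak{M} \coloneqq (\m S[t]+(t))/J$, since the flat local map $k[t]_{(t)} \to A_\mathfrak{M}$ gives $\dim A_\mathfrak{M} = 1 + \dim (S[t]/(t, J))_\m$ while $\dim A_\mathfrak{P} = \dim R_\m$.

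With all hypotheses verified, Corollary~\ref{cor: inequality DVR base} yields $\lm(R'_P) \leq \lm((R'/\pi R')_{(P, \pi)})$, which under the identifications above is precisely $\lm(R_\m) \leq \lm((S[t]/(t, J))_\m)$.
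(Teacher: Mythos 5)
Your overall strategy coincides with the paper's: apply Corollary~\ref{cor: inequality DVR base} to $k[t]_{(t)} \to (S[t]/J) \otimes_{k[t]} k[t]_{(t)}$ with the section coming from $\m$, identify $R'_P$ with $R_\m(t)$, and close via Corollary~\ref{cor: purely transcendental field extension}. The identifications and the flatness check are fine (flatness is in fact immediate from hypothesis~(2), since $k[t]_{(t)}$ is a DVR with uniformizer $t$).

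The gap is in the verification of the dimension hypothesis $\dim R_\m = \dim (S[t]/(t,J))_\m$. Your first route invokes Lemma~\ref{lemma: constant dimension}, which requires the generic fiber $R'[1/t] \cong R \otimes_k k(t)$ to be (globally) equidimensional, i.e.\ that $R$ is equidimensional as a finitely generated $k$-algebra; the corollary makes no such assumption, and passing to $\unm{R_\m}$ does not obviously repair this, since replacing $R_\m$ by $\unm{R_\m}$ changes $I$ and hence $J$, and it is unclear how the modified right-hand side would compare to $\lm((S[t]/(t,J))_\m)$. Your alternative route records $\dim A_{\mathfrak{M}} = 1 + \dim(S[t]/(t,J))_\m$ and $\dim A_{\mathfrak{P}} = \dim R_\m$, but to conclude you still need $\dim A_{\mathfrak{M}} = \dim A_{\mathfrak{P}} + 1$, which is precisely the nontrivial content: catenarity gives $\hght(\mathfrak{M}/\mathfrak{P}) = 1$, but $\hght \mathfrak{P} + \hght(\mathfrak{M}/\mathfrak{P}) = \dim A_{\mathfrak{M}}$ can fail when the ring is not equidimensional. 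The paper sidesteps equidimensionality entirely: since $t$ is a nonzerodivisor it is a \emph{parameter} of the catenary local ring $T \coloneqq (S[t]/J)_{(\m,t)}$, and a short argument (the Claim in the proof) shows $\dim T[t^{-1}] = \dim T - 1$ by reducing to a minimal prime of maximal coheight, which $t$ necessarily avoids; one then identifies $T[t^{-1}]$ with a localization of $R_\m[t,t^{-1}]$ at a multiplicative set that avoids $\m$ but meets every maximal ideal, giving $\dim T[t^{-1}] = \dim R_\m$. You should replace your dimension argument with something along these lines.
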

\begin{proof}
    First, we want to show that 
    \begin{equation}
    \label{eqn: chain of equalities on dimensions}
    \dim (R_\m) = \dim ((S[t]/(t, J))_\m) = \dim (S[t]/J)_{(\m, t)} - 1 = \dim\left((S[t]/J)_{(\m, t)}[t^{-1}]\right).
    \end{equation} 
    Note that the second equality holds since $t$ is a nonzerodivisor on $S[t]/J$, and the last equality holds by the following claim. 
    
    \begin{claim}
    Let $(T, \n)$ be a catenary local ring and $x \in \n$
    be a parameter. Then $\dim (T_x) = \dim(T) - 1$.
    \end{claim}
    \begin{proof}
        Clearly, $\dim (T_x) \leq \dim (T) - 1$, so it suffices to show the opposite inequality. Let $\p$ be a minimal prime of $T$ such that $\dim (T/\p) = \dim (T)$. Note that $x \notin \p$ because it is a parameter, so it suffices to prove the claim in the case when $T$ is a domain. 

        We may now extend $x$ to a system of parameters $x_1, \ldots, x_{d-1}, x_d = x$ and let $\q$ be a minimal prime of $(x_1, \ldots, x_{d-1})$. Since $T$ is a catenary local domain, we have $\dim (T_\q) = \dim(T)-1$. Since $x \notin \q$ by the construction, the claim follows. 
    \end{proof}

Therefore, to prove (\ref{eqn: chain of equalities on dimensions}) it remains to show that $\dim (R_\m) = \dim\left( (S[t]/J)_{(\m, t)}[t^{-1}]\right)$. We may present $(S[t]/J)_{(\m, t)}[1/t]$ as a localization of $R_\m[t, t^{-1}]$ at a multiplicative closed subset $W$ that avoids $\m$. Thus it suffices to show that $W$ intersects all maximal ideals 
    of $R_\m[t, t^{-1}]$. But those ideals can be lifted to ideals of the form $(\m, f(t))$ for some polynomial $f(t) \in S[t]/J$ 
    and it is clear that they are trivialized in $(S[t]/J)_{(\m, t)}[1/t]$.

    We now apply Corollary~\ref{cor: inequality DVR base} 
    to the map $k[t]_{(t)} \to (S[t]/J)\otimes_{k[t]}k[t]_{(t)}$ (with $P=\m$)
    and obtain by Corollary~\ref{cor: purely transcendental field extension} that 
    \[
    \lm(R_\m) =  \lm (R_\m(t)) \leq \lm ((S[t]/(t, J))_\m). \qedhere
    \]
\end{proof} 

We include two major applications of Corollary~\ref{cor: grober degeneration technical form}. We will use these results to compute examples in later sections. 

\begin{corollary}
\label{cor: grober degeneration}
Let $R=S/I$ where $S= k[x_1,\dots,x_n]$ is a polynomial ring over a field $k$ and let $\m=(x_1,\dots,x_n)$. Suppose we give weights to the variables such that $\deg(x_i)=d_i\in\mathbb{N}$ and let $\init(I)$ be the initial ideal of $I$ with respect to the weights. Then we have 
$$\lm(R_\m)\leq \lm((S/\init(I))_\m).$$
\end{corollary}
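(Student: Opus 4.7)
The plan is to apply Corollary~\ref{cor: grober degeneration technical form} to the standard Gr\"obner degeneration family associated with the weights $(d_1,\dots,d_n)$. For each $f = \sum_\alpha c_\alpha x^\alpha \in I$, set $w(\alpha) \coloneqq \sum_i d_i \alpha_i$ and $m(f) \coloneqq \min\{w(\alpha) : c_\alpha \neq 0\}$, and define the $t$-homogenization
\[
\tilde{f}(x,t) \coloneqq \sum_\alpha c_\alpha\, t^{w(\alpha) - m(f)} x^\alpha \in S[t],
\]
so that $\tilde{f}(x,1) = f$ and $\tilde{f}(x,0) = \init(f)$ is the smallest-weight component of $f$, matching the convention of Proposition~\ref{prop: LM multigraded}. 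Let $J \subseteq S[t]$ be the $t$-saturation of the ideal generated by all such $\tilde{f}$, so that $J$ is the $t$-homogenization $\tilde I$ of $I$.

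Next I would verify the hypotheses of Corollary~\ref{cor: grober degeneration technical form} for this $J$. First, since $I \subseteq \m$, each $f \in I$ is constant-term free, so each $\tilde f$ lies in $\m S[t]$; the $t$-saturation of an ideal contained in $\m S[t]$ remains in $\m S[t]$, so $J \subseteq \m S[t]$. Second, $t$ is a nonzerodivisor on $S[t]/J$ directly from the definition of the $t$-saturation. Third, the $k[t, t^{-1}]$-automorphism of $S[t, t^{-1}]$ sending $x_i \mapsto t^{d_i} x_i$ sends each $f \in I$ to $t^{m(f)} \tilde f$, and hence identifies $I S[t, t^{-1}]$ with $J S[t, t^{-1}]$; this yields $S[t, t^{-1}]/J S[t, t^{-1}] \cong R[t, t^{-1}]$. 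Therefore Corollary~\ref{cor: grober degeneration technical form} gives $\lm(R_\m) \leq \lm((S[t]/(t, J))_\m)$, and the proof is completed once one identifies $S[t]/(t, J) \cong S/\init(I)$.

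This last identification is the main technical point. The surjection $S[t]/(t, J) \twoheadrightarrow S/\init(I)$ is clear because each generator $\tilde f \bmod t$ equals $\init(f)$; the reverse inclusion is the classical fact that forming the $t$-saturation produces a family that is flat over $k[t]$ whose special fiber is the full initial ideal $\init(I)$. This flatness can be verified directly by a Hilbert function comparison using the isomorphism on the generic fiber established in the previous paragraph. Without passing to the saturation, the special fiber at $t=0$ could in general be strictly smaller than $\init(I)$, as happens when one starts from generators of $I$ that fail to form a Gr\"obner basis for a term order refining the weights; the saturation step is precisely the device that forces the special fiber to coincide with $\init(I)$.
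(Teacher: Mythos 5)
Your proposal is correct and is essentially the same argument as the paper's, which simply sets $J = \hom(I)$, the weighted homogenization of $I$ in $S[t]$ with $\deg(t) = -1$, checks the two hypotheses of Corollary~\ref{cor: grober degeneration technical form}, and observes that $(S[t]/(t, J))_\m = (S/\init(I))_\m$. Your write-up just makes explicit the routine verifications (the automorphism $x_i \mapsto t^{d_i}x_i$ identifying the generic fiber with $R[t,t^{-1}]$, flatness via $t$-saturation, and the identification of the special fiber with $\init(I)$) that the paper leaves to the reader.
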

\begin{proof}
Consider the $\mathbb{Z}$-graded ring $S[t]$ by assigning $\deg(t)=-1$. Let $J\coloneqq\hom(I)\subseteq S[t]$ be the homogenization of $I$ in $S[t]$. The conclusion follows from Corollary~\ref{cor: grober degeneration technical form} by noting that $S[t, t^{-1}]/JS[t, t^{-1}] \cong R[t, t^{-1}]$ and $(S[t]/(t, J))_\m=(S/\init(I))_\m$. 
\end{proof}

\begin{corollary}
\label{cor: LM monomial ideal}
Let $R=S/I$ where $S=k[x_1,\dots,x_n]$ is a standard graded polynomial ring over a field $k$ with $\m=(x_1,\dots,x_n)$, and $I\subseteq R$ is a homogeneous ideal. Fix a monomial order $\prec$ on $S$ and let $\init_\prec(I)$ be the initial ideal with respect to $\prec$. Then we have  
$\lm(R_\m)\leq \lm((S/\init_{\prec}(I))_\m)$.
\end{corollary}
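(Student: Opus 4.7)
The plan is to reduce Corollary~\ref{cor: LM monomial ideal} to Corollary~\ref{cor: grober degeneration} by realizing the monomial-order initial ideal as a weight initial ideal for a suitable integer weight vector in $\mathbb{N}_{>0}^n$.

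Fix a reduced Gr\"obner basis $g_1, \ldots, g_r$ of $I$ with respect to $\prec$; because $I$ is standard-homogeneous, each $g_i$ may be taken to be standard-homogeneous. Let $\alpha_i := \mathrm{lm}_\prec(g_i)$, so that $\init_\prec(I) = \langle \alpha_1, \ldots, \alpha_r \rangle$ by the defining property of a Gr\"obner basis. The key step is to produce $w \in \mathbb{N}_{>0}^n$ for which, for every $i$, $\alpha_i$ is the unique monomial of $g_i$ having smallest $w$-weight---this aligns with the convention used in Corollary~\ref{cor: grober degeneration}, where the assignment $\deg(t) = -1$ makes the fiber at $t = 0$ extract the lowest-$w$-weight component. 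To produce $w$, I would apply the classical Robbiano theorem (see, e.g., Prop.~1.11 in Sturmfels, \emph{Gr\"obner Bases and Convex Polytopes}) to the finite set of monomials appearing in $g_1, \ldots, g_r$, yielding some $u \in \mathbb{N}_{>0}^n$ such that $\alpha_i$ has strictly largest $u$-weight in each $g_i$, and then set $w := (C, \ldots, C) - u$ for any integer $C > \max_i u_i$. Since each $g_i$ is standard-homogeneous (all its monomials share the same total degree), the $w$-comparison on monomials of $g_i$ is the reverse of the $u$-comparison, which makes $\alpha_i$ the unique $w$-smallest monomial of $g_i$. Consequently $\init_w(g_i) = \alpha_i$, which gives the inclusion $\init_\prec(I) = \langle \alpha_i \rangle_i \subseteq \init_w(I)$.

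Equality $\init_\prec(I) = \init_w(I)$ then follows from a Hilbert-function comparison in the standard grading: both $S/\init_\prec(I)$ and $S/\init_w(I)$ are flat degenerations of $S/I$---the first by the classical Gr\"obner deformation, and the second via the flat family $S[t]/\hom(I)$ of Corollary~\ref{cor: grober degeneration}---and so both share the Hilbert function of $R$, forcing the above inclusion to be an equality. Applying Corollary~\ref{cor: grober degeneration} with weights $d_i := w_i$ now gives
\[
\lm(R_\m) \;\leq\; \lm((S/\init_w(I))_\m) \;=\; \lm((S/\init_\prec(I))_\m),
\]
as desired. The only delicate ingredient is the weight-flip $w := C \mathbf{1} - u$, which is what reconciles the ``largest-term'' convention of monomial orders with the ``lowest-weight-part'' convention of Corollary~\ref{cor: grober degeneration} and uses the standard-homogeneity of $I$ in an essential way.
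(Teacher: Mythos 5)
Your proof is correct and follows essentially the same route as the paper's: both realize $\init_\prec(I)$ as a weight-initial ideal $\init_w(I)$ via the standard Gr\"obner deformation and then invoke the paper's degeneration machinery (you apply Corollary~\ref{cor: grober degeneration} after the weight flip $w = C\mathbf{1}-u$, while the paper applies Corollary~\ref{cor: grober degeneration technical form} directly with the opposite sign convention on $\deg t$). Your explicit weight flip and Hilbert-function comparison spell out the convention reconciliation and the verification of $\init_w(I)=\init_\prec(I)$ that the paper delegates to the cited passage of Eisenbud.
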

\begin{proof}
We follow the standard construction as in \cite[15.16 and 15.17]{EisenbudBook}. We choose an appropriate weight function $w$ so that $\init_w(I)=\init_\prec(I)$. Let $J\coloneqq\hom(I)\subseteq S[t]$ be the $w$-homogenization of $I$ in $S[t]$. The desired inequality follows from Corollary~\ref{cor: grober degeneration technical form} by noting that $S[t, t^{-1}]/JS[t, t^{-1}] \cong R[t, t^{-1}]$ and $(S[t]/(t, J))_\m=(S/\init_{\prec}(I))_\m$. 
\end{proof}

To prove the weak semicontinuity result on Lech--Mumford constant in a flat family, we need a lemma that reduces a general field extension to an algebraic field extension.

\begin{lemma}
\label{lem: technical field bound}
    Let $k$ be a field and $R = k[x_1, \ldots, x_m]/I$ be a finitely generated $k$-algebra with $\m = (x_1, \ldots, x_m)$.
    Then for any field extension $k \subseteq \ell$ we have
    \[
    \lm ((R \otimes_k \ell)_\m) \leq \lm ((R \otimes_k \overline{k})_\m).
    \]
\end{lemma}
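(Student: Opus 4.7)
The plan is to proceed by induction on $r \coloneqq \mathrm{trdeg}_k(\ell)$, after first reducing to the case where $\ell$ is finitely generated over $k$. For the reduction, observe that any $\m$-primary ideal $I$ of $(R \otimes_k \ell)_\m$ is generated by finitely many elements, each involving only finitely many elements of $\ell$, so $I$ descends to an $\m$-primary ideal $I_0$ of $(R \otimes_k \ell_0)_\m$ for some finitely generated subfield $\ell_0 \subseteq \ell$. The induced map $(R \otimes_k \ell_0)_\m \to (R \otimes_k \ell)_\m$ is a faithfully flat local extension with residue field extension $\ell_0 \subseteq \ell$ and the maximal ideal preserved, so the formulas in the proof of Proposition~\ref{prop: faithfully flat extension} show that both $\eh(I_0) = \eh(I)$ and $\length(\cdot/I_0) = \length(\cdot/I)$. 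Taking supremum over $I$, it will suffice to prove the inequality when $\ell$ is finitely generated over $k$.

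The base case $r = 0$ is then immediate: since $\ell/k$ is algebraic, fix any embedding $\ell \hookrightarrow \bar{k}$, inducing a faithfully flat local extension $(R \otimes_k \ell)_\m \to (R \otimes_k \bar{k})_\m$ of equal dimension $\dim R_\m$; the inequality follows from Proposition~\ref{prop: faithfully flat extension}.

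For the inductive step $r \geq 1$, I would write $\ell = \mathrm{Frac}(A)$ with $A = k[\alpha_1, \ldots, \alpha_n]$, an affine $k$-domain of dimension $r$, and pick a height-one prime $\mathfrak{p}_1 \subset A$, so that $\mathrm{trdeg}_k \kappa(\mathfrak{p}_1) = r - 1$. Since the normalization of $A_{\mathfrak{p}_1}$ is module-finite (as $A$ is excellent), localizing it at a maximal ideal produces a DVR $(D, \pi, K_1)$ with $\mathrm{Frac}(D) = \ell$ and $K_1$ a finite extension of $\kappa(\mathfrak{p}_1)$; in particular $K_1$ is finitely generated over $k$ with $\mathrm{trdeg}_k K_1 = r - 1$. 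I would then apply Corollary~\ref{cor: inequality DVR base} to the finite-type flat extension $D \to R \otimes_k D$ with section $P \coloneqq \m(R \otimes_k D)$, for which $(R \otimes_k D)/P = D$. Two identifications are essential: first, $(R \otimes_k D)_P = (R \otimes_k \ell)_\m$, because the image of $\pi$ in $(R \otimes_k D)/P = D$ is nonzero, so $\pi \notin P$ is already inverted in the localization at $P$; second, the dimension hypothesis $\dim(R \otimes_k D)_P = \height P(R \otimes_k K_1)$, which holds since both sides equal $\dim R_\m$ by the fiber-dimension formula for faithfully flat local extensions. The corollary then yields
\[
\lm((R \otimes_k \ell)_\m) \leq \lm((R \otimes_k K_1)_\m),
\]
and applying the induction hypothesis to the extension $k \subseteq K_1$ concludes the argument.

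The main obstacle will be verifying the abstract setup of Corollary~\ref{cor: inequality DVR base}: identifying $(R \otimes_k D)_P$ with $(R \otimes_k \ell)_\m$ relies on the observation that the uniformizer $\pi$ becomes a unit in the section-localization, and the dimension equality follows from the fiber-dimension formula together with $\dim A = r = \mathrm{trdeg}_k \ell$ for affine $k$-domains. The DVR construction itself is standard but crucially uses the module-finite normalization available for excellent rings.
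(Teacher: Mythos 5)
Your proof is correct but takes a genuinely different route from the paper's. The paper first reduces to the case where $\ell$ is separably generated over $k$ (via a finite purely inseparable base change), writes $\ell = k(t_1, \dots, t_n)[x]/(f(x))$ with $f$ monic separable, locates an \'etale locus over the parameter ring $k[t_1, \ldots, t_n]$, and specializes all $n$ transcendental parameters at once to a closed point, applying Lemma~\ref{lem: localization key} to land directly in an algebraic extension of $k$. You instead induct on $\mathrm{trdeg}_k(\ell)$, lowering it one at a time: pick a height-one prime in an affine coordinate ring of $\ell$, build a DVR $(D, \pi, K_1)$ with $\mathrm{Frac}(D) = \ell$ and $\mathrm{trdeg}_k K_1 = r-1$, then apply Corollary~\ref{cor: inequality DVR base} to $D \to R \otimes_k D$ with section $P = \m(R\otimes_k D)$. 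Your route bypasses the separable-generation reduction and the \'etale machinery entirely; the dimension hypothesis of Corollary~\ref{cor: inequality DVR base} is automatic here since every closed fiber involved is a single field, making both sides equal $\dim R_\m$. The tradeoff is running the descent one transcendence degree at a time rather than in a single step, but each step is simpler to verify, and both approaches rest ultimately on the same localization inequality (Lemma~\ref{lem: localization key}, explicit in the paper and mediated through Corollary~\ref{cor: inequality DVR base} in yours).
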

\begin{proof}
First note that any $\m$-primary ideal $J$ of $\ell [x_1, \ldots, x_m]/I$  has finitely many generators, which are polynomials in $x_1,\dots, x_m$ with finitely many coefficients in $\ell$. Hence, we can find a finitely generated field extension $k \subset \ell'$ inside $\ell$ and an $\m$-primary ideal $J'\subseteq R' \coloneqq R\otimes_k\ell'$ such that $J = J' (R\otimes_k\ell)$. Since the multiplicities and the colengths of $J$ and $J'$ are equal, we may assume that $\ell$ is a finitely generated field extension of $k$. We may further assume that $\ell$ is separably generated over $k$: by \cite[\href{https://stacks.math.columbia.edu/tag/04KM}{Tag 04KM}]{stacks-project}, there exists a commutative diagram:
\[
\xymatrix{
k \ar[r]  \ar[d] & \ell \ar[d] \\
k' \ar[r] & \ell' 
}
\]
where $k \subseteq k'$ and $\ell \subseteq \ell'$ are finite purely inseparable and $k'\to \ell'$ is separably generated. We may then replace $k$ by $k'$, $R$ by $R\otimes_kk'$ and $\ell$ by $\ell'$: this will not change the right hand side and will only possibly increase the left hand side of the desired inequality. 

We now write $\ell = k(t_1, \ldots, t_n)[x]/(f(x))$
where $t_1, \ldots, t_n$ are a transcendental basis of $\ell$ over $k$ and $f(x)$ is a monic separable polynomial. 
    By collecting denominators of $f(x)$, we may assume that $f(x) \in k[t_1, \ldots, t_n, g^{-1}][x]$ for some $g\in k[t_1,\dots,t_n]$. Let $A \coloneqq k[t_1, \ldots, t_n]$. By the assumption, $k[t_1, \ldots, t_n, g^{-1}][x]/(f(x))$ is generically \'etale over $A$, so we may find $0 \neq h \in A$ such that $A_h [x]/(f(x))$ is \'etale over $A_h$ (we may assume that $g$ divides $h$ so that $f(x)$ is well-defined in $A_h[x]$). 
    Thus for any maximal ideal
    $h \notin \n \subseteq A$ the quotient ring 
    $(A_h/\n)[x]/(f(x)) \cong (A/\n)[x]/(f(x))$ is a finite \'etale extension of $A/\n$, 
    so it is a product of finite separable field extensions of $A/\n$. 
    
    By lifting a maximal ideal corresponding to one component, say $L$, of $(A_h/\n)[x]/(f(x))$, we find a maximal ideal $\mathfrak{N}$ of $R \otimes_k A_h[x]/(f(x))$ such that 
    the extension of $(\m, \n)$ to $S \coloneqq (R \otimes_k A_h[x]/({f}(x)))_{\mathfrak N}$ is the maximal ideal $\mf N$, $\m S$ is a prime ideal ($S/\m S$ is the localization of $A_h[x]/(f(x))$ at one of its maximal ideal, so it is a regular local ring), and $S/{\mf N} = L$ is an algebraic field extension of $k$.

    Let $g_1, \ldots, g_n \in A$ be elements whose images are minimal generators of $\n A_\n$. By Lemma~\ref{lem: localization key} applied to $S$ we have that 
    $
    \lm (S_{\m S}) \leq  \lm (S/(g_1, \ldots, g_n)).
    $
    Now, observe that 
    \[
    S_{\m S} \cong (R (t_1, \ldots, t_n)[x]/(f(x)))_\m
    \cong (R \otimes_k \ell)_\m
    \]
    while 
    \begin{align*}
    S/(g_1, \ldots, g_n) 
    &\cong \big(R \otimes_k A_h[x]/(f(x), g_1, \ldots, g_n)\big)_{\mf N}
    \cong 
     \big(R \otimes_k A_\n[x]/(f(x), g_1, \ldots, g_n) \big)_{\mf N}
    \\ &\cong 
    \big(R \otimes_k (A_\n/\n A_\n)[x]/(f(x)) \big)_{\mf N}
    \cong 
    (R \otimes_k L)_\m. 
    \end{align*}
Thus we have $\lm((R \otimes_k \ell)_\m)\leq \lm((R \otimes_k L)_\m)\leq \lm((R \otimes_k \overline{k})_\m)$ where the last inequality follows from the fact that $L$ is algebraic over $k$.  
\end{proof}

We can now prove the main result of this subsection.

\begin{proposition}\label{prop: new weak semicontinuity}
Let $T \to R$ be a finite type flat map of rings so that $T$ is a domain and the generic fiber $R(0)$ is equidimensional. Let $I\subseteq R$ be an ideal 
such that the composition $T \to R \to R/I$ is an isomorphism. Let $\mf p \subseteq \mf q$ be two prime ideals of $T$. Then we have 
$$\lm(\overline{R(\mf p)})\leq \lm(\overline{R(\mf q)}).$$
\end{proposition}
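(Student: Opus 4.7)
The plan is to reduce to a DVR base change and apply Corollary~\ref{cor: inequality DVR base}, exploiting that the Lech--Mumford constant of $\overline{R(\mf p)}$ can be approximated by the ratio for ideals defined over finite algebraic extensions of $k(\mf p)$. After replacing $T$ by $T_{\mf q}/\mf p T_{\mf q}$ and $R$ by $R\otimes_T T_{\mf q}/\mf p T_{\mf q}$, we may assume that $T$ is a local Noetherian domain with $\mf p = 0$ and $\mf q$ maximal; the new generic fiber is the original $R(\mf p)$, which remains equidimensional by the standard dimension-of-fibers result for flat finite type dominant morphisms with equidimensional generic fiber \cite[Corollaire 14.2.2]{EGAIV3}. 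Let $K$ be the fraction field of this new $T$. It suffices to prove $\lm(\overline{R(0)}) \leq \lm(\overline{R(\mf q)})$.

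Fix $\epsilon > 0$ and choose an $I\overline{R(0)}$-primary ideal $\widetilde J\subseteq\overline{R(0)}$ with
\[
\frac{\eh(\widetilde J)}{d!\length(\overline{R(0)}/\widetilde J)} \geq \lm(\overline{R(0)})-\epsilon.
\]
The finitely many generators of $\widetilde J$ involve only finitely many coefficients from $\overline K$, which all lie in some finite algebraic extension $L_0/K$; hence $\widetilde J$ is the extension of an ideal $J_0\subseteq A_0\coloneqq(R\otimes_T L_0)_{I(R\otimes_T L_0)}$. The induced map $A_0\to\overline{R(0)}$ is flat local with maximal ideal generated by $I$ on both sides, so it is unramified and preserves colengths and multiplicities. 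Thus $\lm(A_0)\geq\lm(\overline{R(0)})-\epsilon$.

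Next, construct a DVR $(W,\varpi)$ inside $L_0$ dominating $T$: first produce a DVR $V\subseteq K$ centered at $\mf q$ (as in the proof of Lemma~\ref{lemma: constant dimension}), then extend it to $L_0$ by taking the integral closure of $V$ in $L_0$ (which is Dedekind by Krull--Akizuki) and localizing at a prime above $\varpi V$. Let $\kappa=W/\varpi W$, a field extension of $k(\mf q)$, and $R_W\coloneqq R\otimes_T W$; its generic fiber $R(0)\otimes_K L_0$ is equidimensional, since tensoring with a finite field extension preserves equidimensionality. Lemma~\ref{lemma: constant dimension} verifies the height hypothesis of Corollary~\ref{cor: inequality DVR base}, yielding
\[
\lm((R_W)_{IR_W}) \leq \lm((R_W/\varpi R_W)_{(IR_W,\varpi)}).
\]
Since $\varpi\notin IR_W$, inverting $\varpi$ identifies $(R_W)_{IR_W}$ with $A_0$, so the left side is at least $\lm(\overline{R(\mf p)})-\epsilon$. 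The right side equals $\lm((R(\mf q)\otimes_{k(\mf q)}\kappa)_{IR(\mf q)})\leq\lm(\overline{R(\mf q)})$ by Lemma~\ref{lem: technical field bound}. Letting $\epsilon\to 0$ gives the desired inequality.

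The main obstacle I anticipate is the descent step: producing the near-optimal ideal on $\overline{R(\mf p)}$ and descending it to a finite algebraic extension $L_0/k(\mf p)$ without altering multiplicity or colength. This requires careful bookkeeping in the flat-local comparison $A_0\to\overline{R(\mf p)}$, relying on the fact that unramified flat local extensions preserve lengths of finitely generated modules (and therefore multiplicities of primary ideals). Once this is settled, the remaining arguments are direct applications of Corollary~\ref{cor: inequality DVR base} (via the dimension control from Lemma~\ref{lemma: constant dimension}) and Lemma~\ref{lem: technical field bound}.
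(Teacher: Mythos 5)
Your proof is correct and follows essentially the same strategy as the paper's: reduce to the case $\mf p = 0$, descend a near-optimal primary ideal of $\overline{R(\p)}$ to a finite extension $L_0/k(\p)$, construct a DVR $W$ with fraction field $L_0$ dominating $T$, apply Corollary~\ref{cor: inequality DVR base} (with the dimension hypothesis supplied by Lemma~\ref{lemma: constant dimension}), and bound the special fiber's Lech--Mumford constant by that of $\overline{R(\q)}$ via Lemma~\ref{lem: technical field bound}. The only organizational differences are cosmetic: the paper first reduces by induction to the case $\dim T = 1$ before invoking Krull--Akizuki, whereas you produce the dominating DVR $V \subseteq K$ directly and then pass to its integral closure in $L_0$; and the paper phrases the descent-to-a-finite-extension step as a supremum identity rather than an $\epsilon$-argument. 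Both formulations are valid and morally identical.
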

\begin{proof}
   By base change to $(T/\p)_\q$ and using induction on its dimension (via Lemma~\ref{lemma: constant dimension}), we may assume that $T$ is a one-dimensional local domain with maximal ideal $\q$ and fraction field $F$.
   Since every $I\overline{R(0)}$-primary ideal of $\overline{R(0)}$ is extended from $(R(0) \otimes_F L)_I$ for some finite field extension $L$ of $F$, it suffices to show that for any finite field extension $F \subseteq L$ we have 
   $\lm ((R(0) \otimes_F L)_I) \leq \lm(\overline{R(\mf q)})$. By localizing the integral closure of $T$ in $L$ 
   at a maximal ideal, we may find a DVR $(V, \pi)$ such that 
   the finite type flat map $V \to R' \cong R \otimes_T V$ with a section $IR'$ has generic fiber $R(0) \otimes_F L$.
   Therefore, by Corollary~\ref{cor: inequality DVR base}
    \[
    \lm ((R(0) \otimes_F L)_I) \leq \lm ((R'/\pi R')_{(\pi, I)}).
    \]
   Since $R'/\pi R' \cong R(\q) \otimes_{k(\q)} V/\pi V$, it follows from Lemma~\ref{lem: technical field bound} that 
   \[\lm ((R'/\pi R')_{(\pi, I)})= \lm \big((R(\q) \otimes_{k(\q)} V/\pi V)_{I}\big)\leq \lm \big((R(\q) \otimes_{k(\q)} \overline{k(\q)})_{I}\big)= \lm(\overline{R(\mf q)}). \qedhere \] 
   \end{proof}

\begin{remark}
We expect that the Lech--Mumford constant function in a flat family should be upper semicontinuous. More precisely, we suspect that, with notations as in Proposition~\ref{prop: new weak semicontinuity}, fix $\mathfrak{q}\in\Spec(T)$, then for any $\epsilon>0$, there exists $f\notin \mathfrak{q}$ so that 
$$\lm(\overline{R(\mathfrak{p})}) < \lm(\overline{R(\mathfrak{q})})+\epsilon$$
for all $\mathfrak{p}\in\Spec(T)$ that do not contain $f$.
\end{remark}

\newpage
\section{Examples: surface singularities}
\label{section: surface}

In this section, we study Lech-Mumford constant of certain two-dimensional singularities. 

\subsection{Lech-stable singularities} We aim to determine which surface singularities are Lech-stable. We begin with a key lemma.
\begin{lemma}
\label{lem: LM for two dimensional stable}
Let $(R, \mf m)$ be a Cohen--Macaulay local ring of dimension two with an infinite residue field. Suppose $I\subseteq R$ is an $\m$-primary ideal such that $I^2\subseteq J$ for some minimal reduction $J$ of $I$ (e.g., $I$ is stable). Then 
$$\eh(I)\leq (\type(R)+1)\length(R/I).$$
Furthermore, if every $\mf m$-primary integrally closed ideal is stable, then either $R$ is regular or $\lm(R) = \eh(R)/2$. 
\end{lemma}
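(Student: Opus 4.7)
The plan is to use Matlis duality on the Artinian local ring $A \coloneqq R/J$ to bound $\length(I/J)$. Since $J$ is a regular sequence of length two in the two-dimensional Cohen--Macaulay ring $R$, the quotient $A$ is a zero-dimensional CM local ring with $\omega_A \cong E_A(k)$ and $\type(A) = \type(R) =: \tau$, so $\omega_A$ is minimally generated by $\tau$ elements over $A$. Dualizing a minimal surjection $A^\tau \twoheadrightarrow \omega_A$ via $(-)^\vee = \Hom_A(-, E_A(k))$ (and using $A^\vee = \omega_A$ and $\omega_A^\vee = A$) produces an embedding $A \hookrightarrow \omega_A^\tau$. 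Applying $\Hom_A(A/I, -)$ and invoking Matlis duality once more yields
\[
(J:_R I)/J \;\cong\; \Hom_A(A/I, A) \;\hookrightarrow\; \Hom_A(A/I, \omega_A)^\tau,
\]
with $\length_A \Hom_A(A/I, \omega_A) = \length(A/I) = \length(R/I)$. The hypothesis $I^2 \subseteq J$ forces $I/J \subseteq (J:_R I)/J$, so $\length(I/J) \leq \tau \length(R/I)$. Since $J$ is a parameter ideal in a CM ring and a reduction of $I$, $\eh(I) = \eh(J) = \length(R/J) = \length(R/I) + \length(I/J) \leq (\type(R)+1)\length(R/I)$, proving the first inequality.

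For the moreover statement, assume every $\m$-primary integrally closed ideal of $R$ is stable. In particular $\m$ itself is stable: $\m^2 = J\m$ for some minimal reduction $J$, so $R$ has minimal multiplicity. In $R/J$ we then have $(\m/J)^2 = 0$, and provided $R$ is not regular (so $R/J \neq k$), this forces $\soc(R/J) = \m/J$, giving
\[
\type(R) \;=\; \type(R/J) \;=\; \length(\m/J) \;=\; \length(R/J) - 1 \;=\; \eh(R) - 1.
\]
By Proposition~\ref{prop: basic results on cLM}(\ref{part restrict to integrally closed}) we may restrict the supremum defining $\lm(R)$ to integrally closed $\m$-primary ideals, each of which is stable by assumption. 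The first part then gives $\eh(I)/(2\length(R/I)) \leq (\type(R)+1)/2 = \eh(R)/2$ for every such $I$, so $\lm(R) \leq \eh(R)/2$; the matching lower bound is obtained by evaluating the ratio at $I = \m$, and we conclude $\lm(R) = \eh(R)/2$.

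The main technical ingredient is the Matlis duality embedding $A \hookrightarrow \omega_A^{\type(R)}$; once this is in hand, the length bound on $(J:_R I)/J$ is essentially formal. The one delicate point in the second part is the socle identity $\soc(R/J) = \m/J$, which fails precisely when $R$ is regular (in which case $R/J = k$), explaining why the regular case must be excluded from the moreover conclusion.
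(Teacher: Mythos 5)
Your proof is correct and uses the same core ingredients as the paper's: the Artinian quotient $A=R/J$, the equality $\type(A)=\type(R)$, Matlis duality over $A$, and the observation that $I^2\subseteq J$ forces $I/J$ into the annihilator of $I$ in $A$. Where the paper bounds $\length(\omega_S)$ via the extension $0\to I\omega_S\to\omega_S\to\omega_S/I\omega_S\to 0$ (using that $\omega_S$ is a quotient of $S^{\type(S)}$), you bound the Matlis-dual quantity $\length(A)$ via $0\to I/J\to A\to A/I\to 0$ together with the embedding $A\hookrightarrow\omega_A^{\type(R)}$ — two renderings of the same duality argument — and your treatment of the ``moreover'' clause coincides with the paper's computation $\type(R)+1=\eh(R)$.
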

\begin{proof}
All our assumptions are preserved when passing from $R$ to $\widehat{R}$, so we may replace $R$ by $\widehat{R}$ to assume that $R$ has a canonical module $\omega_R$. Let $J$ be a minimal reduction of $I$ such that $I^2\subseteq J$. Now in $S \coloneqq R/J$ we have
\[
    \length (\omega_S) = \length (\omega_S/I\omega_S) + \length (I\omega_S) \leq 
 \length (\omega_S/I\omega_S) + \length (\Hom_S (S/I, \omega_S))
 \leq (\type(S) + 1) \length (S/IS).
\]
Since $S$ is Artinian, 
$\length(\omega_S) = \length(S)=\length(R/J)=\eh(J)=\eh(I)$
and putting these together yields that
\[\eh(I)\leq (\type(S) + 1) \length (S/IS)=(\type(R) + 1) \length (R/I).
\]

We now prove the second part. The proof of the first part and Proposition~\ref{prop: basic results on cLM}(\ref{part restrict to integrally closed}) show that $\lm(R) \leq (\type (R) + 1)/2$. Let $\mf q$ be a minimal reduction of $\m$ such that $\m^2\subseteq \mf q$. Since we may assume that $R$ is not regular, $\m \neq \mf q$ and thus we have $\m/\mf q=(\mf q : \m)/\mf q$. It follows that
\[
\eh(R)=\eh(\mf q)=\length(R/\mf q)=\length(\m/\mf q)+1=\length((\mf q : \m)/\mf q)+1=\type(R)+1.
\]
This shows that $\lm(R) = (\type(R) + 1)/2 = \eh(R)/2$.
\end{proof}

We can now compute the Lech--Mumford constant for two-dimensional pseudo-rational singularities. 

\begin{corollary}
\label{cor: RDP Lech stable}
Let $(R, \mf m)$ be a pseudo-rational local ring of dimension two. Then either $R$ is regular or $\lm(R)=\eh(R)/2$. In particular, if $R$ is Gorenstein and pseudo-rational of dimension two (i.e., $R$ is a rational double point), then $R$ is Lech-stable. 
\end{corollary}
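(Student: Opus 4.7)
The plan is to combine Theorem~\ref{thm: Lipman--Teissier} with the second half of Lemma~\ref{lem: LM for two dimensional stable} after first reducing to the case of infinite residue field. To carry out the reduction, I would set $R(t) \coloneqq R[t]_{\m R[t]}$ and invoke Corollary~\ref{cor: purely transcendental field extension} to get $\lm(R) = \lm(R(t))$. One easily checks that $\eh(R) = \eh(R(t))$, that $R$ is regular if and only if $R(t)$ is, and that the defining properties of pseudo-rationality transfer along the faithfully flat extension $R \to R(t)$. As an alternative that avoids touching the cohomological condition directly, one can use the ideal-theoretic characterization in Theorem~\ref{thm: Lipman--Teissier} itself: the identity $I\overline{I} = \overline{I^2}$ for all $\m$-primary $I$ is preserved under the flat base change to $R(t)$, since integral closure commutes with flat base change. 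Either way, we may assume the residue field of $R$ is infinite.

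With this assumption, Theorem~\ref{thm: Lipman--Teissier} says that every $\m$-primary integrally closed ideal of $R$ is stable. The second half of Lemma~\ref{lem: LM for two dimensional stable} then applies verbatim and produces the desired dichotomy: either $R$ is regular, or $\lm(R) = \eh(R)/2$. This gives the first statement.

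For the Gorenstein refinement, a Gorenstein pseudo-rational surface singularity is precisely a rational double point, and by the classical ADE classification (equivalently, by the fact that a Gorenstein rational surface singularity has multiplicity exactly two) we have $\eh(R) = 2$. Hence $\lm(R) = 2/2 = 1$, i.e., $R$ is Lech-stable. The only step I would flag as requiring genuine care is the verification that pseudo-rationality descends to $R(t)$; everything else is a direct application of the tools developed earlier in the paper.
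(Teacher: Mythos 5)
Your proposal follows essentially the same route as the paper's proof: reduce to an infinite residue field via $R(t)$ and Corollary~\ref{cor: purely transcendental field extension}, apply Theorem~\ref{thm: Lipman--Teissier} together with the second half of Lemma~\ref{lem: LM for two dimensional stable}, and invoke the fact that Gorenstein pseudo-rational surface singularities have multiplicity two. One small caveat on the alternative you float: not every $\m R(t)$-primary integrally closed ideal of $R(t)$ is extended from $R$, so verifying $I\overline{I}=\overline{I^2}$ for extended ideals does not directly give the same identity for all ideals of $R(t)$ — the main route, that pseudo-rationality passes to $R(t)$, is indeed what one needs and is exactly the step the paper also uses.
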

\begin{proof}
Suppose $R$ is pseudo-rational, then so is $R(t)=R[t]_{\m R[t]}$. Thus by Corollary~\ref{cor: purely transcendental field extension} we may assume that $R$ has an infinite residue field. Now the first statement follows from Theorem~\ref{thm: Lipman--Teissier} and Lemma~\ref{lem: LM for two dimensional stable}. The second statement follows from the first statement and the fact that two-dimensional Gorenstein pseudo-rational singularities of are hypersurfaces of multiplicity two, see \cite[Theorem 4.17]{Reid} or \cite[Lemma 4.1]{CRMPST}.
\end{proof}

Our goal in this subsection is to classify Lech-stable Cohen--Macaulay singularities in dimension two. We start with the normal case. In this case, the converse to Corollary~\ref{cor: RDP Lech stable} was originally obtained by Goto--Iai--Watanabe \cite[Proposition 7.5]{GotoIaiWatanabe}. We include a short proof here for completeness (and to emphasize that the result holds in arbitrary characteristic). We will use Kato's Riemann--Roch formula which connects colength and multiplicity using intersection theory. We provide a proof since we cannot find a good reference in the generality stated below.

\begin{theorem}[{\cite{Kato} or \cite[Theorem~2.8]{OWY}}]
\label{thm: Kato RR}
Let $(R, \mf m)$ be a two-dimensional excellent normal local ring admitting a dualizing complex.
Let $f \colon Y \to \Spec(R)$ be a resolution of singularities. Suppose $E$ is an exceptional effective divisor on $Y$ with $I \coloneqq \Gamma (Y, \sO_Y (-E)) \subseteq R$. Then
\[
\length(R/I) + h^1(Y, \sO_Y(-E))  = -\frac{E^2 + K_Y\cdot E}{2} + p_g(R),
\]
where $p_g(R) \coloneqq h^1(Y, \sO_Y)$.
\end{theorem}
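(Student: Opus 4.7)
The plan is to analyze the pushforward along $f\colon Y\to \Spec(R)$ of the standard short exact sequence
\[
0 \to \sO_Y(-E) \to \sO_Y \to \sO_E \to 0.
\]
First I would observe that since $R$ is normal and $f$ is a proper birational morphism from an integral scheme, $f_*\sO_Y=R$, while $f_*\sO_Y(-E)=I$ by definition. Because $f$ is an isomorphism off the closed point and has fibers of dimension at most one, $R^if_*\mathcal F=0$ for $i\geq 2$ and each $R^1f_*\mathcal F$ is supported at $\m$, hence of finite length over $R$. The long exact sequence will thus collapse to a five-term sequence of finite-length $R$-modules
\[
0 \to R/I \to H^0(E,\sO_E) \to H^1(Y,\sO_Y(-E)) \to H^1(Y,\sO_Y) \to H^1(E,\sO_E) \to 0.
\]

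Next I would take the alternating sum of lengths, which yields
\[
\length(R/I) + h^1(Y,\sO_Y(-E)) = \chi(\sO_E) + p_g(R),
\]
so the theorem reduces to showing $\chi(\sO_E) = -\tfrac12(E^2+K_Y\cdot E)$. Since $I$ annihilates $\sO_E$, the divisor $E$ is proper over the Artinian ring $R/I$, so the intersection theory for proper schemes over Artinian bases (\cite{KleimanNumericalTheoryAmpleness}, \cite{CutkoskyMontanoMultiplicities}) applies to coherent sheaves on $E$. Smoothness of $Y$ together with adjunction gives $\omega_E\cong \sO_Y(K_Y+E)\otimes\sO_E$, so that $\deg_E\omega_E = (K_Y+E)\cdot E$. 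Combining the Riemann--Roch identity $\chi(\omega_E)=\deg_E\omega_E+\chi(\sO_E)$ with Serre duality $\chi(\omega_E)=-\chi(\sO_E)$ on the proper curve $E$ produces
\[
-2\chi(\sO_E) = (K_Y+E)\cdot E,
\]
and substituting into the identity above completes the proof.

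The main technical obstacle is transferring adjunction, Riemann--Roch, and Serre duality from smooth projective surfaces over a field to the relative setting of a proper (possibly non-reduced and reducible) curve $E$ over the Artinian local ring $R/I$. The hypothesis that $R$ admits a dualizing complex is exactly what supplies a compatible dualizing sheaf on $E$, and once this is in place all three ingredients follow from the local intersection theory already invoked elsewhere in the paper. A secondary (but routine) point to verify carefully is that the degree of a line bundle on $E$ defined via Euler characteristics agrees with the self-intersection numbers $E^2$ and $K_Y\cdot E$ computed on $Y$; this compatibility is part of the standard package in \cite{KleimanNumericalTheoryAmpleness} and \cite{KollarBookRationalCurves}.
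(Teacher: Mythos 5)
Your argument is essentially identical to the paper's: the same pushforward of $0\to\sO_Y(-E)\to\sO_Y\to\sO_E\to 0$, the same reduction to $\chi(\sO_E)=-\tfrac12(E^2+K_Y\cdot E)$, and the same combination of adjunction, Riemann--Roch on the exceptional curve, and duality. The paper phrases the curve computation more tersely ($K_Y\cdot E+E^2=\deg\omega_E=\chi(E,\omega_E)-\chi(E,\sO_E)=-2\chi(E)$), but the content matches step for step.
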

\begin{proof}
We follow the approach of \cite[Lemma 23.1]{LipmanRationalSingularity}.
By taking the cohomology exact sequence associated with the short exact sequence
\[
0 \to \sO_Y(-E) \to \sO_Y \to \sO_E \to 0
\]
and noting that $\Gamma (Y, \sO_Y) = R$, we obtain that 
\[
\length (R/I) + h^1(Y, \sO_Y(-E)) = \chi(E) + p_g(R). 
\]
By adjunction and the Riemann--Roch for curves 
we obtain that 
$$K_Y\cdot E + E^2 = \deg(\omega_E) = \chi(E, \omega_E) - \chi(E, \sO_E) = -2\chi(E),$$ 
The conclusion follows by combining the two equations above.
\end{proof}

\begin{theorem}[Goto--Iai--Watanabe]
Let $(R,\m)$ be a two-dimensional complete normal local ring of dimension two. Then the following are equivalent: 
\begin{enumerate}
    \item $R$ is Lech-stable.
    \item $R$ is Gorenstein and pseudo-rational.
\end{enumerate}
\end{theorem}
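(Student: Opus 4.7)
Plan: The implication $(2)\Rightarrow (1)$ is exactly Corollary~\ref{cor: RDP Lech stable}, so I focus on $(1)\Rightarrow(2)$. Assume $R$ is Lech-stable. By Corollary~\ref{cor: purely transcendental field extension} and Proposition~\ref{prop: basic results on cLM}(\ref{part restrict to complete}), I may replace $R$ by $\widehat{R(t)}$ to arrange an infinite residue field, and I may assume $R$ is not regular. Let $\pi\colon Y\to\Spec(R)$ be the minimal resolution of singularities, with exceptional components $E_1,\dots,E_r$, and let $Z_{\mathrm{fund}}$ denote Artin's fundamental cycle. The strategy is to combine Lech-stability with Kato's Riemann--Roch formula, applied to multiples of $Z_{\mathrm{fund}}$, in order to pin down the exceptional configuration.

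For any effective antinef exceptional cycle $E$ with full support, the ideal $I_E\coloneqq \Gamma(Y,\sO_Y(-E))$ is $\m$-primary integrally closed with $I_E\sO_Y=\sO_Y(-E)$ and $\eh(I_E)=-E^2$. Kato's formula (Theorem~\ref{thm: Kato RR}) together with Lech-stability $\eh(I_E)\le 2\length(R/I_E)$ rearranges to
\[
K_Y\cdot E+2h^1(Y,\sO_Y(-E))\le 2p_g(R).
\]
Applied to $E=nZ_{\mathrm{fund}}$ and letting $n\to\infty$, this forces $K_Y\cdot Z_{\mathrm{fund}}\le 0$. On the minimal resolution, the adjunction identity $K_Y\cdot E_i=2p_a(E_i)-2-E_i^2$, together with negative definiteness of the intersection form and the absence of $(-1)$-curves, yields $K_Y\cdot E_i\ge 0$, with equality iff $E_i$ is a smooth rational $(-2)$-curve. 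Since every coefficient of $Z_{\mathrm{fund}}$ is at least $1$, I conclude $K_Y\cdot E_i=0$ for every $i$; thus each $E_i$ is a $(-2)$-curve, and negative definiteness on $2\times 2$ submatrices additionally forces $E_i\cdot E_j\le 1$ for $i\neq j$.

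Hence the dual graph is a simply-laced, connected, negative-definite graph, i.e.\ a Dynkin diagram of type $A_n$, $D_n$, or $E_{6,7,8}$. A direct computation on each such diagram gives $Z_{\mathrm{fund}}^2=-2$, and then $\chi(Z_{\mathrm{fund}})=-(K_Y\cdot Z_{\mathrm{fund}}+Z_{\mathrm{fund}}^2)/2=1$, so by Artin's rationality criterion $R$ is rational; in dimension two rational equals pseudo-rational. Since Lech-stability also forces $\eh(R)\le 2$ by Remark~\ref{rmk: mult bound for Lech-stable and semistable}, $R$ is a rational surface singularity of multiplicity two, which is a Du Val singularity and in particular Gorenstein.

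The main obstacle is extracting the rigid identities $K_Y\cdot E_i=0$ from the a priori inequality $K_Y\cdot E+2h^1\le 2p_g$. The device of testing against the unbounded family $E=nZ_{\mathrm{fund}}$ is what converts this bound into the strong constraint $K_Y\cdot Z_{\mathrm{fund}}=0$, and the ``$K_Y$-nefness'' of the minimal resolution then pushes this down to each exceptional component. Once the dual graph is identified as ADE, the identification of $R$ as a rational double point is a classical consequence of Artin's criterion together with the Du Val classification.
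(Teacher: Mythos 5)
Your proposal is correct in its main ideas, but it takes a genuinely different and more classification-heavy route than the paper, and the last step has a gap over non-algebraically-closed residue fields.

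\emph{Comparison with the paper.} Both arguments rest on Kato's Riemann--Roch formula together with the fact that the minimal resolution $Y$ has $K_Y$ nef over $\Spec(R)$. The paper establishes Gorensteinness \emph{first}, directly from $\eh(R)\leq 2$ and Abhyankar's inequality (no resolution needed), so that $K_X$ is Cartier and the dichotomy ``$K_{Y/X}=0$ or $K_{Y/X}<0$'' makes sense. In the first case pseudo-rationality follows from Grauert--Riemenschneider vanishing; in the second case the contradiction is extracted from Kato's formula applied to $I^n$ where $I$ is the ideal whose blow-up gives $Y$ (for this ideal the linear term $K_Y\cdot(-E)$ of $\length(R/I^n)$ is negative). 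You instead apply Kato's formula to $nZ_{\mathrm{fund}}$, extract $K_Y\cdot Z_{\mathrm{fund}}\leq 0$, and combine with $K_Y$-nefness and the full support of $Z_{\mathrm{fund}}$ to get $K_Y\cdot E_i=0$ for all $i$; this part is clever and correct, and the inequality $-E^2\leq\eh(I_E)$ means you do not need $\sO_Y(-nZ_{\mathrm{fund}})$ to be globally generated. Your route has the conceptual advantage of not needing $K_X$ to be $\Q$-Cartier at this stage.

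\emph{The gap.} The concluding classification step is not robust over arbitrary (even infinite) residue fields, and the theorem is stated for complete normal local rings without any algebraic closedness hypothesis. First, the assertion that $K_Y\cdot E_i=0$ forces each $E_i$ to be a smooth rational curve with $E_i^2=-2$ uses the adjunction formula in the normalization appropriate for geometrically connected curves; over a general residue field one has $K_Y\cdot E_i=-E_i^2-2\chi_k(\sO_{E_i})$, and $\chi_k(\sO_{E_i})$ can be $>1$ (e.g.\ if $E_i$ is a non-split conic, or if $H^0(\sO_{E_i})$ is a proper extension of $k$), so $E_i^2$ need not be $-2$, and the dual graph need not be a simply-laced Dynkin diagram. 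Second, even granting $K_Y\cdot Z_{\mathrm{fund}}=0$ and $Z_{\mathrm{fund}}^2=-2$, the implication $\chi(\sO_{Z_{\mathrm{fund}}})=1\Rightarrow h^1(\sO_{Z_{\mathrm{fund}}})=0$ used for Artin's criterion needs $h^0(\sO_{Z_{\mathrm{fund}}})=1$, which is automatic over an algebraically (or separably) closed residue field but not in general.

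\emph{A fix that collapses onto the paper's argument.} Establish Gorensteinness first from $\eh(R)\leq 2$ and Abhyankar, exactly as the paper does. Then your observation $K_Y\cdot E_i=0$ for all $i$ says that $K_{Y/X}=K_Y-\pi^*K_X$ is an exceptional $\Q$-divisor orthogonal to every exceptional component, hence $K_{Y/X}=0$ by non-degeneracy of the intersection form, and pseudo-rationality follows from Grauert--Riemenschneider vanishing. This dispenses with the ADE classification entirely and works over any residue field.
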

\begin{proof}
First note that $(2)\Rightarrow(1)$ is contained in Corollary~\ref{cor: RDP Lech stable} and we only need to prove $(1)\Rightarrow(2)$. 

So we assume that $R$ is Lech-stable, we know that $\eh(R)\leq 2$ by Remark~\ref{rmk: mult bound for Lech-stable and semistable}. It follows from Abhyankar's inequality that $\edim(R)\leq 3$, and thus $R$ is either regular or a hypersurface. In particular, $R$ is Gorenstein. Let $\pi\colon Y\to X\coloneqq\Spec(R)$ be the minimal resolution, we know that $K_{Y/X} \coloneqq K_Y-\pi^*K_X\leq 0$. If $K_{Y/X}=0$, then $R^1\pi_*\sO_Y\cong R^1\pi_*\omega_Y=0$ and thus $R$ is pseudo-rational. Thus we may assume that $K_{Y/X}<0$. Let $I\subseteq R$ be an $\m$-primary ideal such that $I\sO_Y=\sO_Y(-E)$ is $\pi$-ample (for example, any $I$ such that $Y\cong \Bl_I(R)$). By Theorem~\ref{thm: Kato RR} and noting that $h^1(Y,\sO_Y(-nE))=0$ for $n\gg0$ as $-E$ is $\pi$-ample, we have
$$\length(R/I^n) = \frac{-E^2}{2} n^2 + \frac{K_Y\cdot (-E)}{2} n + p_g(R)$$
for all $n\gg0$. Since $K_{Y/X}<0$ and $-E$ is $\pi$-ample, we know that $K_Y\cdot (-E)=K_{Y/X}\cdot (-E)<0$ and thus for $n\gg0$, we have  
$$2\length(R/I^n) < -E^2n^2 = -(-nE)^2 = \eh(I^n).$$
In particular, this shows that $\lm(R)>1$ contradicting that $R$ is Lech-stable.
\end{proof}

We next state our classification of Lech-stable Cohen--Macaulay surface singularities. 

\begin{theorem}
\label{thm: Lech-stable CM surface}
Let $(R, \mf m, k)$ be a Lech-stable complete Cohen--Macaulay local ring of dimension two such that $k$ is separably closed.  
Suppose that $\charc(k) = 0$ or $\charc (k) > 5$. 
Then $R$ is Lech-stable if and only if $R$ is either regular or a possibly non-normal ADE singularity in the following sense: there exists a complete regular local ring $S$ of dimension three with the maximal ideal $\mf n = (x,y,z)$ such that $\widehat{R} \cong S/fS$ with $f$ one of the following:
\[
\begin{cases}
\,\ A_n\colon f = x^2 + y^2 + z^{n + 1}, \quad n \geq 1, \\ 
\,\ D_n\colon f = x^2 + z(y^2 + z^{n - 2}),  \quad n\geq 4, \\
\,\ A_\infty\colon f = x^2 + y^2, \\
\,\ D_\infty\colon f = x^2 + zy^2, \\
\,\ E_6\colon f = x^2 + y^3 + z^4, \\
\,\ E_7\colon f = x^2 + y^3 + yz^3,\\
\,\ E_8\colon f = x^2 + y^3 + z^5.    
\end{cases}
\]
Moreover, these equations define Lech-stable singularities (even if $k$ is not separably closed).
\end{theorem}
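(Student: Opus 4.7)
The plan is to proceed by first reducing to the hypersurface case and then classifying the possible normal forms. By Remark~\ref{rmk: mult bound for Lech-stable and semistable} with $d = 2$, Lech-stability forces $\eh(R) \leq 2$. If $\eh(R) = 1$, then the Cohen--Macaulay ring $R$ is regular. Otherwise $\eh(R) = 2$, and Abhyankar's inequality together with the Cohen--Macaulay hypothesis forces $\edim(R) = 3$, so $\widehat{R} \cong S/(f)$ with $S = k[[x,y,z]]$ and $f \in \mf n^2 \setminus \mf n^3$. Proposition~\ref{prop: limit-stable reduced} (via the fact that Lech-stability implies lim-stability and that CM rings are unmixed) forces $R$ to be reduced, so $f$ is squarefree. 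Since $\charc(k) \neq 2$ and $k$ is separably closed, diagonalizing the quadratic part $f_2$ and applying the formal splitting (Morse) lemma reduces $f$, up to formal change of variables, to one of $x^2+y^2+z^2$ (when $\rank f_2 = 3$, giving $A_1$), $x^2+y^2+g(z)$ with $g \in (z^3)$ (when $\rank f_2 = 2$, giving $A_\infty$ if $g = 0$ and $A_n$ otherwise after absorbing a unit in $z$ via Hensel's lemma), or $x^2+g(y,z)$ with $g \in (y,z)^3$ squarefree (when $\rank f_2 = 1$).

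The core of the forward implication is the rank-one case, which constitutes the main technical obstacle. The plan is to show that Lech-stability pins $g$ down, up to formal change of coordinates, to one of $z(y^2+z^{n-2})$ ($D_n$), $y^2 z$ ($D_\infty$), $y^3+z^4$ ($E_6$), $y^3+yz^3$ ($E_7$), or $y^3+z^5$ ($E_8$). First rule out $\ord(g) \geq 4$: for any such $g$ one exhibits an $\m$-primary integrally closed ideal $I$ witnessing $\eh(I) > 2\length(R/I)$, typically constructed by contracting a weighted parameter ideal of $S$ and estimating the colength via $\gr_\m R$. Then, for $\ord(g) = 3$, analyze the binary cubic form $g_3$: over a separably closed field of characteristic $\neq 2, 3$ (guaranteed by the hypothesis $\charc(k) = 0$ or $>5$), $g_3$ has one of three orbit types (three distinct linear factors, a double factor, or a triple factor), and Arnold's classification, valid in these characteristics, further normalizes the higher-order terms into a discrete list. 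The normal forms outside the target list---notably the unimodal $X$-, $J$-, and $\widetilde{E}$-series and the $E_k$ with $k \geq 12$---are then excluded using Theorem~\ref{thm: Kato RR} applied to the minimal good resolution $\pi \colon Y \to \Spec R$: for each such form the geometric genus $p_g = h^1(Y, \sO_Y)$ of the normalization is strictly positive, and choosing the exceptional divisor $E$ to be a suitable multiple of the fundamental cycle produces an $\m$-primary ideal for which Riemann--Roch forces $\eh(I) - 2\length(R/I) \to K_Y\cdot E - 2p_g(R) > 0$ thanks to the $2p_g$ correction. Adapting this argument to possibly non-normal $R$ requires an independent control of the conductor's contribution to both $\eh$ and the colength.

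For the converse, the normal entries $A_n$, $D_n$ ($n \geq 4$), $E_6$, $E_7$, $E_8$ are precisely the Gorenstein rational double points, hence Gorenstein and pseudo-rational of dimension two, so Corollary~\ref{cor: RDP Lech stable} immediately gives $\lm(R) = \eh(R)/2 = 1$. For $A_\infty$ ($f = x^2+y^2$), the substitution $u = x+iy$, $v = x-iy$ identifies $R$ with $k[[u,v,z]]/(uv)$; a Mayer--Vietoris embedding $R \hookrightarrow k[[v,z]] \times k[[u,z]]$ with cokernel $k[[z]]$, combined with Lech's inequality in each two-dimensional regular factor and a careful bookkeeping of how integrally closed $\m$-primary ideals decompose along the two components, gives $\lm(R) \leq 1$, while the sequence $I = \m^n$ (for which $\eh(\m^n) = 2n^2 = 2\length(R/\m^n)$) attains equality. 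For $D_\infty$ ($f = x^2+zy^2$) an analogous direct computation, or a Gr\"obner-style degeneration (Corollary~\ref{cor: grober degeneration}) that reduces to the $A_\infty$ case, yields $\lm(R) = 1$. Finally, the assertion that these equations define Lech-stable singularities even without the separably closed hypothesis on $k$ follows from the dimension-preserving faithfully flat extension $R \to \widehat{R \otimes_k k^{\sep}}$: Proposition~\ref{prop: faithfully flat extension} (with $r = 0$) together with Proposition~\ref{prop: basic results on cLM}(\ref{part restrict to complete}) give $\lm(R) \leq \lm(\widehat{R \otimes_k k^{\sep}}) = 1$.
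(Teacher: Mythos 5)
Your skeleton matches the paper's in several places (the $\eh(R)\leq 2$ and $\edim(R)\leq 3$ reductions, the use of Corollary~\ref{cor: RDP Lech stable} for the normal ADE members, and the faithfully-flat descent to drop the separably-closed hypothesis), but the heart of the forward direction — pinning down $f$ in the rank-one case and ruling out the elliptic/degenerate-cusp forms — is handled quite differently and contains gaps.

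First, the normalization of $f$. The paper does not diagonalize $f_2$ and invoke Arnold: it follows \cite{CRMPST} step by step (their Lemma~3.1, Propositions~3.2, 3.3, 3.5--3.7, 3.9), which is written precisely to be characteristic-safe for $p>5$. Your appeal to ``Arnold's classification, valid in these characteristics'' would itself need an argument; in particular, the Hensel step you propose (absorbing a unit to normalize $g(z)=z^n$ in the rank-two case) requires extracting $n$-th roots of units, which is not generally possible over a separably closed field when $p\mid n$ — the correct move (used implicitly via \cite{CRMPST}) is to pull a square root of the leading coefficient into $x,y$, since $\charac\neq 2$ and a separably closed field has square roots.

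Second, and more seriously, the exclusion step. To rule out $\ord(g)\geq 4$ the paper shows $x$ is integral over $\m^2$, so $\length(R/\overline{\m^2})\leq 3$ and $\lm(R)\geq 8/6>1$; your plan (``contracting a weighted parameter ideal … estimating the colength via $\gr_\m R$'') is a placeholder, not an argument. To rule out the last surviving form $f=x^2+y^3+\eta_1 y z^4+\eta_2 z^6$ (which includes the non-normal $x^2+y^3$ when $\eta_1=\eta_2=0$), the paper proves Lemma~\ref{lem: Mumford useful lemma}: because $f-x^2\in(y,z^2)^3$, the ideal $(y,z^2)^2+x\m$ is integral over $(y^2,z^4)$, so $\length(R/\overline{(y^2,z^4)})\leq 7$ while $\eh(\overline{(y^2,z^4)})=16$, giving $\lm(R)\geq 8/7$. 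This is one uniform, elementary, characteristic-free argument covering both normal and non-normal members of the family. Your proposed route via Theorem~\ref{thm: Kato RR} and $p_g>0$ does work for the \emph{normal} elliptic forms (it is essentially the paper's Theorem~\ref{thm: minimal elliptic deg one}), but, as you concede, it does not handle the non-normal degenerations, and those arise on exactly the same branch of the classification. The admitted ``independent control of the conductor'' is precisely the missing step; the paper sidesteps it entirely by staying algebraic.

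Third, the converse for the non-normal entries. Your Mayer--Vietoris sketch for $A_\infty$ would need to be turned into a genuine estimate of $\eh(I)/\length(R/I)$ over all integrally closed $\m$-primary ideals, and your fallback of Gr\"obner degeneration for $D_\infty$ does not obviously work: with any positive weight vector, $\init(x^2+zy^2)$ is either $x^2$ or $zy^2$ (or the whole thing), neither of which is Lech-stable, so Corollary~\ref{cor: grober degeneration} gives an inequality in the useless direction. The paper instead proves Proposition~\ref{prop: BS for non normal limit}: by the Srinivas--Trivedi small-perturbation theorem (Theorem~\ref{thm: small perturbation}), $\gr_I(R)\cong\gr_I(S/(xy+z^N))$ (resp.\ $\gr_I(S/(x^2+y^2z+yz^N))$) for $N\gg 0$; these perturbed rings are rational double points, hence stable ideals have $\eh_2=0$, and Lemma~\ref{lem: LM for two dimensional stable} then gives $\lm=1$. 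That perturbation idea is the crucial ingredient your proposal lacks.
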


\begin{remark}
\begin{enumerate}
\item We expect that even when the residue characteristic is $2, 3, 5$, Lech-stable Cohen--Macaulay singularities are still the rational double points and their non-normal limits as classified by Artin \cite{Artin}.
\item We also expect that, without assuming $k$ is separably closed in Theorem~\ref{thm: Lech-stable CM surface}, the completion of the strict Henselization $\widehat{R^{\sh}}$ should be isomorphic to $S/fS$ as in the theorem. But our theorem does not imply this because it is not clear to us that $\widehat{R^{\sh}}$ is Lech-stable, see Question~\ref{question: field extension}. 
\item The Cohen--Macaulay assumption in Theorem~\ref{thm: Lech-stable CM surface} is necessary for having a viable classification since, by Proposition~\ref{prop: finite extension}, any finite birational extension of a Lech-stable singularity is still Lech-stable.
\end{enumerate}
\end{remark}

To prove Theorem~\ref{thm: Lech-stable CM surface}, we will first extend Corollary~\ref{cor: RDP Lech stable} from rational double points to their non-normal limits $A_\infty$ and $ D_\infty$. We shall do this by showing that a version of the Brian{\c c}on--Skoda theorem holds in these rings. 
The first ingredient is the characterization of stable ideals in Theorem~\ref{thm: Northcott} and 
the second is the $\mf m$-adic stability of the associated graded rings due to Srinivas--Trivedi (see \cite{SrinivasTrivedi}, and \cite{MaPhamSmirnovFilterRegularSequence} for more general results).

\begin{theorem}[{\cite[Theorem~3]{SrinivasTrivedi} or \cite[Main Theorem]{MaPhamSmirnovFilterRegularSequence}}]
\label{thm: small perturbation}
Let $(R, \mf m)$ be a local ring,
$I = (f_1, \ldots, f_k)$ be an ideal generated by a regular sequence,
and $J$ be an ideal such that $I + J$ is $\mf m$-primary. Then there exists $N$ such that
for any ideal $I' = (f_1 + g_1, \ldots, f_k + g_k)$ with $g_i \in \mf m^N$ we have
\[
\Gr_J( R/I) \cong \Gr_J (R/I').
\]
In particular, the Hilbert--Samuel functions of $J(R/I)$ and $J(R/I')$ are the same.
\end{theorem}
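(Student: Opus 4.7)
Both $\Gr_J(R/I)$ and $\Gr_J(R/I')$ are canonical quotients of $\Gr_J(R)$ by the graded ideal of $J$-adic initial forms of $I$, respectively $I'$. Hence it suffices to show $\init_J(I) = \init_J(I')$ as ideals of $\Gr_J(R)$ once $N$ is chosen large enough.

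First, I would verify that for $N$ sufficiently large, $(f_1 + g_1, \ldots, f_k + g_k)$ is again a regular sequence, so that $I'$ is generated by one and the Koszul complex furnishes a free resolution of $R/I'$. This follows by a Nakayama-type argument applied to the perturbed Koszul complex: the higher Koszul homologies of the original sequence vanish by regularity, and a small enough perturbation leaves them zero once $N$ dominates the Artin--Rees constants governing $I$ and $J$.

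The central step is to translate the $\m$-adic smallness of the $g_i$ into control over the $J$-adic initial forms. Since $I + J$ is $\m$-primary, there is $s$ with $\m^s \subseteq I + J$; iterating gives $\m^{sn} \subseteq I + J^n$ for each $n \geq 1$. Using the Koszul resolution, one argues (i) that $(f_1, \ldots, f_k)$ forms a standard basis of $I$ with respect to the $J$-adic filtration, so that $\init_J(I)$ is generated by $\init_J(f_1), \ldots, \init_J(f_k)$, and similarly for $I'$; (ii) that each $g_i$ admits a decomposition $g_i = h_i + j_i$ with $h_i \in \m I$ and $j_i$ lying in a power of $J$ that strictly exceeds the $J$-adic order of $f_i$. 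A Nakayama-type change of generators absorbs the $h_i$ into the generating set (leaving $I$ itself unchanged), while the $j_i$ do not affect initial forms; consequently $\init_J(f_i + g_i) = \init_J(f_i)$, and $\init_J(I) = \init_J(I')$ follows.

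The main obstacle is precisely this bridge between two topologies, accomplished by the containment $\m^{sn} \subseteq I + J^n$ together with the Koszul syzygies; once these are in place, the rest is careful bookkeeping with Artin--Rees constants (which can be made uniform in the perturbation because the Koszul complex depends on the generators in a controlled way). The Hilbert--Samuel statement is then immediate, since the Hilbert function of $\Gr_J(R/I)$ determines $\length(R/(I + J^n))$ for $n \gg 0$.
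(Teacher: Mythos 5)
Your reduction to showing $\init_J(I) = \init_J(I')$ as homogeneous ideals of $\Gr_J(R)$ is sound, and the Hilbert--Samuel consequence would follow from it. The gap is in step (i): a regular sequence $f_1, \ldots, f_k$ does \emph{not} in general form a standard basis for $I$ with respect to the $J$-adic filtration, so $\init_J(I)$ need not be generated by $\init_J(f_1), \ldots, \init_J(f_k)$. For a concrete counterexample, take $R = k[[x,y]]$, $J = \m = (x,y)$, $f_1 = x^2$, $f_2 = xy + y^3$. This is a regular sequence (a system of parameters) with $I + J = \m$, and $\init_J(f_1) = x^2$, $\init_J(f_2) = xy$ in $\Gr_J(R) = k[x,y]$; yet the relation $y f_1 - x f_2 + y^2 f_2 = y^5$ gives $y^5 \in I$, hence $y^5 \in \init_J(I)$, while $y^5 \notin (x^2, xy)$. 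What step (i) is tacitly invoking is the Valabrega--Valla/Rees criterion, namely that $\init_J(f_1), \ldots, \init_J(f_k)$ be a regular sequence in $\Gr_J(R)$ (here $x^2, xy$ is not, since $x \cdot xy \in (x^2)$), and that is a strictly stronger hypothesis than regularity of $f_1, \ldots, f_k$ in $R$; the Koszul resolution of $R/I$ does not supply it.

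Once (i) fails the rest of the argument does not close: establishing $\init_J(f_i + g_i) = \init_J(f_i)$ for each $i$ gives no conclusion about $\init_J(I)$ versus $\init_J(I')$, because the missing generators of $\init_J(I)$ arise from syzygies (like the $y^5$ above), and your argument gives no control over how the initial forms of those transform under perturbation. That is precisely where the real work of the theorem lies and what the cited references actually do: they combine the Koszul complex with uniform Artin--Rees type bounds to show directly that $\length(R/(I+J^n))$ and the relevant filtration data are stable under small $\m$-adic perturbations, rather than exhibiting a stable standard basis. A secondary issue in step (ii): the Nakayama replacement $f_i \mapsto f_i + h_i$ with $h_i \in \m I$ need not preserve the $J$-adic order of each generator, since $h_i \in \m I$ does not force $\nu_J(h_i) > \nu_J(f_i)$; but the failure of the standard-basis claim is already fatal.
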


We can now prove the promised Brian{\c c}on--Skoda theorem for $A_\infty$ and $D_\infty$ singularities.

\begin{proposition}
\label{prop: BS for non normal limit}
Let $(S, \m)$ be a complete regular local ring of dimension three with an infinite residue field and let $x,y,z$ be a regular system of parameters of $S$ (i.e., $\m = (x,y,z)$). Let $R = S/(f)$ where $f = xy$ or $f=x^2 + y^2z$.
Then for any integrally closed $\mf m$-primary ideal $I$ of $R$, we have $I^2 = JI$ for some minimal reduction $J$ of $I$. As a consequence, $R$ is Lech-stable. 
\end{proposition}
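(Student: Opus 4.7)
The plan is to transfer the containment $I^2 \subseteq J$ from a nearby rational double point via the Srinivas--Trivedi theorem (Theorem~\ref{thm: small perturbation}). Fix an integrally closed $\m$-primary ideal $I \subseteq R$ and a minimal reduction $J = (a,b)$ (which exists since the residue field is infinite). Let $\tilde J = (a,b) \subseteq S$ be a two-generated lift of $J$ and let $\tilde I \subseteq S$ be the preimage of $I$. For a positive integer $n$, set $f_n \coloneqq f + z^n$, $R_n \coloneqq S/(f_n)$, $I_n \coloneqq \tilde I R_n$, and $J_n \coloneqq \tilde J R_n = (a,b)R_n$. A key geometric input is that for $n$ large, $f_n$ defines a rational double point (type $A_{n-1}$ when $f = xy$, after the linear change $x, y \mapsto x\pm y$ in characteristic $\neq 2$; type $D_{n+1}$ when $f = x^2 + y^2z$), so that $R_n$ is a two-dimensional Gorenstein pseudo-rational domain.

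I would then apply Theorem~\ref{thm: small perturbation} three times to the regular sequence $(f)$ with the ideals $\tilde I$, $\tilde J$, and $\tilde J + \tilde I^2$ (each of which sums with $(f)$ to an $\m_S$-primary ideal, since $a,b,f$ is a system of parameters of $S$). This yields some $N$ such that for every $n \geq N$ the Hilbert--Samuel functions of each of these ideals agree in $R$ and in $R_n$. In particular,
\[
\length (R/J^k) = \length(R_n/J_n^k), \qquad \length (R/(J + I^2)^k) = \length(R_n/(J_n + I_n^2)^k)
\]
for all $k \geq 1$, and $\eh(I) = \eh(I_n)$, $\eh(J) = \eh(J_n)$.

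Since $J_n \subseteq I_n$, $\eh(I_n) = \eh(J_n)$, and $R_n$ is formally equidimensional (being a complete hypersurface domain), Rees's theorem gives $I_n \subseteq \overline{J_n}$, so $J_n$ is a minimal reduction of $I_n$. Applying Theorem~\ref{thm: Lipman--Teissier} to the two-generated parameter ideal $J_n$ in the pseudo-rational ring $R_n$ yields $\overline{J_n^2} = J_n \overline{J_n} \subseteq J_n$; and $\overline{J_n} = \overline{I_n}$ then forces $\overline{J_n^2} = \overline{I_n^2}$. Hence $I_n^2 \subseteq \overline{I_n^2} \subseteq J_n$, which gives $\length(R_n/(J_n + I_n^2)) = \length(R_n/J_n)$. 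Combined with the length equalities above, $\length(R/(J + I^2)) = \length(R/J)$, forcing $I^2 \subseteq J$. The remark following the definition of stable ideals then upgrades this to $I^2 = JI$, so $I$ is stable. Since $R$ is non-regular and every $\m$-primary integrally closed ideal is stable, Lemma~\ref{lem: LM for two dimensional stable} yields $\lm(R) = \eh(R)/2 = 1$, so $R$ is Lech-stable.

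The main obstacle is the geometric identification that $f_n$ defines a pseudo-rational singularity for $n$ large. In characteristic $0$ or sufficiently large this follows immediately from the ADE classification of rational double points; in small characteristics one may need either a more careful choice of perturbation (so that $R_n$ still lies on the list of rational double points in that characteristic) or a direct verification of the Brian{\c c}on--Skoda containment $\overline{J_n^2} \subseteq J_n$ inside $R_n$.
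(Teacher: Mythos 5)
Your strategy --- perturb to a rational double point via Srinivas--Trivedi and transfer the stability back --- is exactly the paper's strategy, and your more elaborate argument (fixing a minimal reduction $J$, lifting $\tilde J$, $\tilde I^2$ to $S$, applying Theorem~\ref{thm: small perturbation} to all three ideals, and using Rees's theorem to certify that $J_n$ remains a reduction of $I_n$ in $R_n$) is a correct and somewhat more elementary execution than the paper's. The paper instead applies Theorem~\ref{thm: small perturbation} only to $\tilde I$, obtains an isomorphism of associated graded rings $\gr_I(R) \cong \gr_{I_n}(R_n)$, reads off that all Hilbert--Samuel coefficients are preserved, and then invokes Theorem~\ref{thm: Northcott} ($\ell(R/I) = \eh_0(I) - \eh_1(I)$ characterizes stability) rather than chasing the containment $I^2 \subseteq J$ through lengths. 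Your route trades that shortcut for a more transparent hands-on verification.

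However, there is a genuine gap that you flag but do not close, and which the paper closes with a different perturbation. Your choice $f_n = f + z^n$ works for $f = xy$ (the $A$-type perturbation is rational in every characteristic), but for $f = x^2 + y^2z$ the surface $x^2 + y^2z + z^n$ is \emph{not} visibly on Artin's list of rational double points in characteristic $2$: rewriting it as $z^2 + x^2y + y^n$ after relabeling, this is not the $D^0$-form $z^2 + x^2y + xy^{n-1}$, and it is unclear that it is pseudo-rational in characteristic $2$. The paper sidesteps this precisely by perturbing with $yz^N$ instead of $z^n$, obtaining $x^2 + y^2z + yz^N$, which \emph{is} of $D^0$-type ($D^0_{2N}$ in Artin's notation) and hence a rational double point in all characteristics. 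Replacing your $f_n$ in the $D_\infty$ case by $f + yz^n$ and citing Artin's classification would fully close the argument.

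One more small point worth noting: when you apply Theorem~\ref{thm: small perturbation} with the ideal $\tilde J + \tilde I^2$, the resulting length equalities do transfer correctly because the image of $\tilde J + \tilde I^2$ in $R_n$ is exactly $J_n + I_n^2$; this is fine, but is worth stating explicitly since it is the crux of the backward transfer from $\length(R_n/(J_n + I_n^2)) = \length(R_n/J_n)$ to $I^2 \subseteq J$.
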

\begin{proof}
By Theorem~\ref{thm: small perturbation}, we know that there exists $N\gg0$ such that
\[
\gr_I(R) \cong \gr_I (S/(f + z^{N+1})) \text{ when $f=xy$,}
\]
\[
\gr_I(R) \cong \gr_I (S/(f + yz^N)) \text{ when $f=x^2+y^2z$.}
\]
Since $S/(xy + z^N)$ and $S/(x^2+y^2z+yz^N)$ are rational double points in all characteristics,\footnote{These are $A_N$ and $D^0_{2N}$ in Artin's list \cite{Artin}, and they are rational singularities in all characteristics.} we know that $\eh_2 (IS) = 0$ by Theorem~\ref{thm: Lipman--Teissier} and Theorem~\ref{thm: Northcott}.
It follows that $\eh_2(I) = 0$ and thus $I$ is stable by Theorem~\ref{thm: Northcott} again. The last assertion then follows from Lemma~\ref{lem: LM for two dimensional stable}.
\end{proof}

\begin{remark}
In the notation of Proposition~\ref{prop: BS for non normal limit},  set $R'=S/(xy + z^N)$ when $f=xy$ and $R'\coloneqq S/(x^2+y^2z+yz^N)$ when $f=x^2+y^2z$. Another way to see that $R$ is Lech-stable is to observe that 
$$\frac{\eh(I)}{2\length(R/I)}= \frac{\eh(IR')}{2\length(R'/IR')}\leq \lm(R')=1$$
where the first equality follows from Theorem~\ref{thm: small perturbation} and the last equality follows from Theorem~\ref{thm: Lipman--Teissier} as $R'$ has rational double point as in the proof of Proposition~\ref{prop: BS for non normal limit}. Since this is true for all $\m$-primary ideals $I\subseteq R$, we have $\lm(R)=1$ and thus $R$ is Lech-stable. This avoids the use of Theorem~\ref{thm: Northcott}, but does not shed light on the Brian{\c c}on--Skoda property of $A_\infty$ and $D_\infty$.
\end{remark}

Based on Theorem~\ref{thm: small perturbation} and Proposition~\ref{prop: BS for non normal limit}, it seems natural to ask whether the Lech--Mumford constant is stable under small perturbations. We do not know this even for hypersurfaces so we record it here as a question. 

\begin{question}
Let $(S,\m)$ be a complete regular local ring and $0\neq f\in S$. Then does there exist $N$ such that for all $g\in \m^N$, we have 
$\lm(S/f)=\lm(S/(f+g))$?
\end{question}

We next record a useful lemma, essentially due to Mumford, which will be needed in the proof of Theorem~\ref{thm: Lech-stable CM surface}.

\begin{lemma}
\label{lem: Mumford useful lemma}
Let $(S, \mf m)$ be a three-dimensional regular local ring and let $x,y,z$ be a regular system of parameters. Suppose $R = S/(x^2 + g)$ such that $g \in (y, z^2)^3$. Then the ideal $I =(y, z^2)^2 + x\mf m$ is integral over $(y^2, z^4)$.
\end{lemma}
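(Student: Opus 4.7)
\medskip

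\noindent\textbf{Proof plan.} The plan is to use the auxiliary ideal $K \coloneqq (y, z^2)R$ as a bridge between $J \coloneqq (y^2, z^4)R$ and $I$. First I would observe that
\[
K^2 = (y^2, yz^2, z^4)R = J + (yz^2)R,
\]
so $I = K^2 + x\mathfrak{m}$. Since $(yz^2)^2 = y^2 \cdot z^4 \in J^2$, the element $yz^2$ satisfies the integral equation $T^2 - y^2 z^4 = 0$ over $J$, hence $yz^2 \in \overline{J}$. This gives $K^2 \subseteq \overline{J}$, and combined with $J \subseteq K^2$, we conclude $\overline{K^2} = \overline{J}$. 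It therefore suffices to show $I \subseteq \overline{K^2}$, i.e.\ that each of $x^2$, $xy$, $xz$ lies in $\overline{K^2}$.

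The key observation is that $g \in (y, z^2)^3 = K^3$. Since $x^2 = -g$ in $R$, we immediately get $x^2 \in K^3 \subseteq K^2$. For $xy$, using $y \in K$ I would compute
\[
(xy)^2 = x^2 y^2 = -g y^2 \in K^3 \cdot K = K^4 = (K^2)^2,
\]
so that $xy$ satisfies $T^2 + gy^2 = 0$ with constant term in $(K^2)^2$, hence $xy \in \overline{K^2}$. The argument for $xz$ is identical but uses $z^2 \in K$ instead of $y \in K$:
\[
(xz)^2 = x^2 z^2 = -g z^2 \in K^3 \cdot K = K^4 = (K^2)^2,
\]
giving $xz \in \overline{K^2}$. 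Putting these together, $I = K^2 + x\mathfrak{m} \subseteq \overline{K^2} = \overline{J}$, which is exactly the desired integrality.

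\medskip
\noindent\textbf{Main obstacle.} The only subtle point is identifying the correct intermediate ideal. Working directly with $J = (y^2, z^4)$ and trying to express $(xz)^2 = -gz^2$ as an element of $J^2 = (y^4, y^2 z^4, z^8)$ fails at the monomial level (e.g.\ $y^3 z^2 \in gz^2$ but $y^3 z^2 \notin J^2$). Passing through $K = (y, z^2)$ dissolves this difficulty because $K^3 \cdot K = K^4 = (K^2)^2$ automatically, and the integral closure of $K^2$ equals that of $J$ by the easy argument above. No further ingredients beyond the definition of integral dependence are needed.
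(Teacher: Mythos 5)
Your proof is correct and follows the same route as the paper: reduce to showing $I$ is integral over $(y,z^2)^2$ (which has the same integral closure as $(y^2,z^4)$), then exhibit monic integral equations for the generators $x^2, xy, xz$ of $x\mathfrak{m}$. The one small difference is that for $xz$ the paper uses the degree-$4$ equation $(xz)^4 = g^2 z^4 \in (y,z^2)^8$, whereas you observe that the degree-$2$ equation $(xz)^2 = -gz^2 \in (y,z^2)^4$ already suffices because $z^2 \in (y,z^2)$ — a slight simplification, and both are valid.
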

\begin{proof}
It is enough to check that $I$ is integral over $(y, z^2)^2$. To see this, simply note that 
\[(xy)^2 = y^2g \in (y, z^2)^4, \text{ and } (xz)^4 = g^2z^4 \in (y, z^2)^{8}. \qedhere \]
\end{proof}

Now we are ready to prove Theorem~\ref{thm: Lech-stable CM surface}.

\begin{proof}[Proof of Theorem~\ref{thm: Lech-stable CM surface}]
We begin by proving the last claim: the singularities in the list are Lech-stable. It is well-known that the equations defining $A_n, D_n, E_6, E_7, E_8$ are rational double points when the residue characteristic is zero or larger than five (see \cite{CRMPST}). Thus by Corollary~\ref{cor: RDP Lech stable} they are Lech-stable. We have also shown in Proposition~\ref{prop: BS for non normal limit} that $A_\infty$ and $D_\infty$ are Lech-stable. 

It remains to show that if $R$ is Lech-stable, then $\widehat{R}$ is isomorphic to one of the singularities in the list. We may assume that $R$ is complete. Since $R$ is Lech-stable, we know that $\eh(R)\leq 2$ by Remark~\ref{rmk: mult bound for Lech-stable and semistable}. It follows from Abhyankar's inequality that $\edim(R)\leq 3$, and thus $R$ is either regular or a hypersurface of multiplicity two, i.e., $R=S/fS$ where $S$ is a complete regular local ring of dimension three. We will analyze the possible shape of $f$, following the strategy in \cite[Section 3]{CRMPST}. 

First of all, by \cite[Lemma~3.1]{CRMPST} we may write 
$f = x^2 + g$, $f = x^2 + y^2 + g$, or $f = x^2 + y^2 + z^2 + g$ with $g \in \mf n^3$. Next, following \cite[Proposition~3.2]{CRMPST}, the case that $f = x^2 + y^2 + z^2 + g$ can be transformed to $f = x^2 + y^2 + z^2$ after a change of generators. 
Similarly, following \cite[Proposition~3.3]{CRMPST}, the case $f = x^2 + y^2 + g$ can be transformed to $f = x^2 + y^2 + z^n$ for $n\geq 3$ or $f = x^2 + y^2$.

Now in the remaining case that $f=x^2+g$ where $g\in\n^3$, the argument in \cite[Lemma~3.5]{CRMPST} shows that after a change of generators we may assume that 
$g \in (y,z)^3$.  Furthermore, we must have $\ord_{\m}(g) \leq 3$, since otherwise $x$ is integral over $\mf m^2$, so $\length(R/\overline{\m^2})\leq 3$, and thus 
$$\lm(R)\geq \frac{\eh(\overline{\m^2})}{2\length(R/\overline{\m^2})}\geq \frac{8}{6} >1$$
contradicting that $R$ is Lech-stable. Therefore we may assume that 
$$f = x^2 + ay^3 + b y^2 z + c y z^2 + d z^3 + h$$ 
where $a,b,c, d$ are either units or zero (but not all of them are zero) and $h \in (y,z)^4$. Then by the same argument as in \cite[Lemma~3.5]{CRMPST}, we may reduce to the case that $f$ is one of the following three forms: 
$f = x^2 + y^2 z + z^3 + h$, $f = x^2 + yz^2 + h$, or $f = x^2 + z^3 + h$ with $h \in (y,z)^4$. 
In the first case, \cite[Proposition~3.6]{CRMPST} brings us to a $D_4$ singularity.
In the second case, the proof of \cite[Proposition~3.7]{CRMPST} shows that $f$ is either a $D_n$ singularity for some $n\geq 5$ or a $D_\infty$ singularity. 

Finally, we arrive at $f = x^2 + y^3 + h$ with $h \in (y,z)^4$. In this case, following the argument of \cite[Proposition 3.9]{CRMPST}, after a change of generators we may write $f=x^2+y^3+z^4$, $f=x^2+y^3+yz^3$, or $f=x^2+y^3+z^5$ unless there exist $\eta_1$ and $\eta_2$ such that 
$$f=x^2+y^3+\eta_1yz^4+\eta_2z^6,$$ 
see the last paragraph in the proof of \cite[Proposition 3.9]{CRMPST}. However, this last case cannot happen since in this case we have $f-x^2\in(y, z^2)^3$ and hence by Lemma~\ref{lem: Mumford useful lemma},  $\length(R/\overline{(y^2, z^4)})\leq \length(R/((y,z^2)^2+x\m))\leq 7$ and thus
$$\lm(R)\geq \frac{\eh(\overline{(y^2, z^4)})}{2\length(R/\overline{(y^2, z^4)})}\geq \frac{16}{14} >1$$
contradicting that $R$ is Lech-stable. This completes the proof.
\end{proof}

\begin{remark}
Via Theorem~\ref{thm: Lech-stable CM surface} and Proposition~\ref{prop: Mumford prop 3.5}, we obtain examples of Lech-stable singularities in higher dimensions that are not regular. For example, the ADE singularities are Lech-stable in all dimensions. One particular family of such examples are compound Du Val (cDV) singularities
\[
\widehat {R} \cong {\mathbb C}[[x,y,z,t]]/(f(x,y,z) + tg(x,y,z,t)),
\]
where $f$ is one of the ADE equations in Theorem~\ref{thm: Lech-stable CM surface} and $g$ is arbitrary. It was shown by Reid in \cite[1.1]{ReidC3f} that isolated cDV singularities are precisely Gorenstein terminal singularities in dimension three.
\end{remark}


\subsection{Minimally elliptic singularities}
Using the rich theory of surface singularities, we are able to compute the Lech--Mumford constant of singularities 
closest to rational singularities. We refer the reader to \cite{ArtinIsolatedRationalSurface, Laufer, Reid, IshiiBook, OWY} for additional background information.

\begin{definition}
Let $(R, \mf m)$ be a two-dimensional excellent normal local ring containing an algebraically closed field $k\cong R/\m$.
Let $X \to \Spec(R)$ be the minimal resolution with reduced exceptional divisor $E\coloneqq E_0+\cdots +E_r$ where each $E_i$ is irreducible. The {\it fundamental cycle} $Z_f$ is the unique nonzero minimal effective divisor on $X$ supported in $E$ such that $Z_fE_i\leq 0$ for all $i$ (the existence of $Z_f$ follows from the Hodge index theorem). 

We say that $R$ has a {\it minimally elliptic singularity} if $R$ is Gorenstein with geometric genus $p_g(R) \coloneqq \length(H^1(X, \sO_X)) = 1$. The number $e\coloneqq-Z_f^2$ is called the degree of $R$. 

If, in addition, the reduced exceptional divisor $E=E_0$ is a non-singular elliptic curve, then 
we say that $R$ is a {\it simple elliptic singularity}. In this case $Z_f=E_0$ and thus the degree of $R$ is $-E_0^2$.
\end{definition}

Laufer (\cite[Theorem~3.13]{Laufer}) showed that minimally elliptic singularities of degree at most three are hypersurfaces and 
listed the dual graphs of their minimal resolutions. We list some examples from \cite[Section 5, Tables 1-3]{Laufer}. 

\begin{example}
\label{example: Laufer}
In $\mathbb{C}[[x,y,z]]$, we have:
\begin{enumerate}
\item $z^2 + y^3 + x^6$ is a simple elliptic singularity of degree $1$; 
\item $z^2 + xy^3 + x^4$ is a simple elliptic singularity of degree $2$;
\item $z^3 + y^3 + x^3$ is a simple elliptic singularity of degree $3$;
\item $z^2 + y^3 + x^7$ is a minimally elliptic singularity of degree $1$, its exceptional set is a rational curve with a cusp singularity;
\item $z^2 + y^3 + x^8$ is a minimally elliptic singularity of degree $1$, its exceptional set consists of three non-singular rational
curves meeting transversely at the same point;
\item $z^2 + x(y+x)(y^2+x^3)$ is a minimally elliptic singularity of degree $2$, its exceptional set is a rational curve with a node singularity;
\item $xz^2 + y^3 + x^4$ is a minimally elliptic singularity of degree $3$, its exceptional set is a rational curve with a cusp singularity.
\end{enumerate}
\end{example}

Our techniques allow us to compute the Lech--Mumford constant for minimally elliptic singularities whose exceptional set is irreducible or whose degree is $1$. In particular, the list in Example~\ref{example: Laufer} is covered by our theorems.

\begin{theorem}
\label{thm: minimal elliptic irred exc}
Let $(R, \mf m)$ be a minimally elliptic singularity of degree $e$. 
Let $X \to \Spec(R)$ be the minimal resolution, and suppose that $E_0$ is the only irreducible exceptional divisor (e.g., $R$ is simple elliptic). Then, the following hold:
\[
\begin{cases}
\text{ If $e \geq 3$, then $\lm (R) = e/2$ and $\lm(R)$ is computed by $\m$.} \\
\text{ If $e = 2$, then $\lm(R) = 4/3$ and $\lm(R)$ is computed by $\overline{\mf m^2}$. } \\
\text{ If $e = 1$, then $\lm(R) = 8/7$.}
\end{cases}
\]
\end{theorem}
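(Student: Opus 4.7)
The approach is to analyze integrally closed $\m$-primary ideals via Kato's Riemann--Roch formula (Theorem~\ref{thm: Kato RR}) applied on the minimal resolution $\pi\colon X\to\Spec(R)$. By Proposition~\ref{prop: basic results on cLM}(\ref{part restrict to integrally closed}) we may restrict the supremum defining $\lm(R)$ to such ideals. Since $E_0$ is irreducible with $E_0^2=-e$ and arithmetic genus $p_a(E_0)=1$, adjunction gives $K_X\cdot E_0=e$. For the lower bound I consider the family $I_n\coloneqq \Gamma(X,\sO_X(-nE_0))$ of integrally closed $\m$-primary ideals, which satisfy $I_n\sO_X=\sO_X(-nE_0)$ and hence $\eh(I_n)=n^2e$. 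A key technical step is the vanishing $h^1(X,\sO_X(-nE_0))=0$ for every $n\geq 1$: by induction through the cohomology of $0\to \sO_X(-(n+1)E_0)\to\sO_X(-nE_0)\to \sO_{E_0}(-nE_0|_{E_0})\to 0$, using that the restriction map $H^1(X,\sO_X)\to H^1(E_0,\sO_{E_0})$ is an isomorphism (the defining property of a minimally elliptic singularity) and that $H^1$ vanishes for positive-degree line bundles on a curve of arithmetic genus one. Kato's formula then yields $\length(R/I_n)=en(n-1)/2+1$, and hence $\eh(I_n)/(2\length(R/I_n))=en^2/(en(n-1)+2)$; an elementary one-variable optimization identifies the stated maxima $e/2$ at $n=1$ for $e\geq 3$, $4/3$ at $n=2$ for $e=2$, and $8/7$ at $n=4$ for $e=1$. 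For $e\geq 2$, Laufer's theorem gives $\m\sO_X=\sO_X(-E_0)$, which identifies $I_1=\m$ and $I_2=\overline{\m^2}$ as the attaining ideals.

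For the upper bound, take an arbitrary integrally closed $\m$-primary $I$ and pick a log-resolution $f\colon Y\to\Spec(R)$ factoring as $Y\xrightarrow{\pi}X\to\Spec(R)$, on which $I\sO_Y=\sO_Y(-E)$ is invertible. Kato's formula on $Y$ reads
\[
2\length(R/I)=\eh(I)-K_Y\cdot E+2-2h^1(Y,\sO_Y(-E)),
\]
so the desired bound $\eh(I)/(2\length(R/I))\leq C$ rearranges (using the adjunction $K_Y\cdot E+E^2=2p_a(E)-2$) into an inequality of the shape
\[
\eh(I)+e\,p_a(E)+e\,h^1(Y,\sO_Y(-E))\leq 2e
\]
in the case $e\geq 3$, with closed-form analogues in the cases $e\in\{1,2\}$. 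To prove this I would decompose $E=\pi^*(aE_0)+H$ with $H$ supported on divisors $\pi$-exceptional over $X$, use negative-definiteness of the $\pi$-exceptional intersection form to obtain $\eh(I)=a^2e-H^2\geq a^2e$, and exploit non-negativity of the discrepancies $K_Y-\pi^*K_X$ together with the projection formula to bound $-K_Y\cdot E$ from below by $-K_X\cdot(aE_0)=-ae$ plus correction terms in $H$. This reduces the estimate for arbitrary $E$ on $Y$ to the case $Y=X$ and $E=aE_0$, which is treated by the same optimization as in the lower bound.

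The main obstacle is this reduction step: one must show that contributions from $H\neq 0$ never improve the ratio, i.e., that the excess $-H^2>0$ in $\eh(I)$ is strictly outweighed by the combined excess contributions to $-K_Y\cdot E$ and to $h^1(Y,\sO_Y(-E))$. Concretely, this requires an arithmetic-genus inequality for $E$ on $Y$ relative to $aE_0$ on $X$, together with a cohomological control on $h^1$ under the further blowups $Y\to X$. The hypothesis that $E_0$ is the \emph{only} exceptional divisor on the minimal resolution is essential here, as it localizes all geometric contributions to a single curve of arithmetic genus one and allows the one-variable optimization in $n$ (equivalently $a$) to pin down the supremum.
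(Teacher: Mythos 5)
Your proposal sets up the right framework (restrict to integrally closed ideals, Kato's Riemann--Roch on the minimal resolution, the graded family $I_n = \Gamma(X,\sO_X(-nE_0))$, the one-variable optimization of $en^2/(en(n-1)+2)$), and the lower bound is essentially the paper's argument. Two small remarks on this part: the paper obtains the vanishing $h^1(X,\sO_X(-nE_0))=0$ from Laufer/Reid's description of $\m\sO_X$ together with \cite[Proposition 2.5]{OWY}, rather than the inductive cohomology chase you describe (whose direction you should double-check: the exact sequence $0\to\sO_X(-(n+1)E_0)\to\sO_X(-nE_0)\to\sO_{E_0}(-nE_0|_{E_0})\to 0$ with the last $H^1$ vanishing gives a surjection $H^1(-(n+1)E_0)\twoheadrightarrow H^1(-nE_0)$, so you need the vanishing to propagate downward from large $n$, not upward from $n=1$); and the isomorphism $H^1(X,\sO_X)\cong H^1(E_0,\sO_{E_0})$ is a theorem of Laufer, not the definition used here (the paper's definition is Gorenstein with $p_g=1$).

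The upper bound is where there is a genuine gap, and you have diagnosed it yourself. Your rearranged inequality $\eh(I)+e\,p_a(E)+e\,h^1(Y,\sO_Y(-E))\leq 2e$ is the right target, but the reduction you sketch ("exploit non-negativity of the discrepancies $K_{Y/X}$ and negative-definiteness to outweigh the $H\neq 0$ terms") is precisely what you say you cannot complete, and it would not work as stated. Three ideas are missing. First, the paper dichotomizes on $h^1(Y,\sO_Y(-E))$, which by \cite[Theorem 3.1]{OWY} is either $0$ or $1$. When $h^1=1$ the ideal is a $p_g$-ideal, hence stable, and Lemma~\ref{lem: LM for two dimensional stable} gives the much cruder bound $\eh(I)\leq 2\length(R/I)$, which suffices because the target ratio $e/2$ (or $4/3$, $8/7$) exceeds $1$; trying to push the Kato formula directly in this case does not give a clean inequality. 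Second, when $h^1=0$, the crucial structure on the cycle decomposition $Z=f^*W+V$ ($W=f_*Z$, $V\geq 0$) is not just that $V$ is $f$-exceptional, but that $\sO_Y(-V)$ is $f$-relatively globally generated (because it comes from an ideal sheaf), hence $V$ is $f$-anti-nef; this is what yields $K_{Y/X}\cdot V\leq 0$, and combined with $V^2\leq 0$ and $K_X W=ne\geq 2$ it gives $V^2(K_XW-2)\leq 0\leq W^2(K_{Y/X}\cdot V)$, which is the reduction to $Y=X$, $E=nE_0$. Third, the $e=1$ case has a subcase your scheme cannot reach: when $W=E_0$ (so $n=1$), $K_XW-2=-1<0$ and the sign argument fails; the paper handles this by a Gr\"obner-type degeneration to the cusp $z^2+y^3$ and invokes Lemma~\ref{lem: LM cusp sing}, an entirely separate algebraic computation. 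As written, your proposal would not establish the upper bound in any of the three cases $e\geq 3$, $e=2$, $e=1$.
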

\begin{proof}
Let $I\subseteq R$ be an integrally closed $\mf m$-primary ideal. By choosing a resolution $Y \to \Spec(R)$ that dominates the blowup of $I$, we have that $I\sO_Y=\sO_Y(-Z)$ where $Z$ is an anti-nef effective exceptional divisor on $Y$ and $\Gamma(Y, \sO_Y(-Z))=I$ (as $I$ is integrally closed). 

By \cite[Theorem 3.1]{OWY}, we know that $h^1 (Y, \sO_Y(-Z))\leq p_g(R)=1$. If $h^1 (Y, \sO_Y(-Z)) = 1$, then $I$ is a $p_g$-ideal in the sense of \cite[Definition 3.2]{OWY} and thus by \cite[Corollary 3.6]{OWY}, $I$ is stable. Therefore by Lemma~\ref{lem: LM for two dimensional stable}, $\eh(I) \leq (\type(R)+1)\length(R/I)=2\length(R/I)$. In other words, in this case we have 
$$\frac{\eh(I)}{2\length(R/I)}\leq 1.$$

Now we suppose $h^1 (Y, \sO_Y(-Z)) = 0$. By Theorem~\ref{thm: Kato RR}, we have
\[
\length(R/I) = \frac{-Z^2 - K_YZ}{2} + 1.
\]
Since $X$ is the minimal resolution, we have a map $f \colon Y \to X$. Let $W \coloneqq f_* Z$. Since $E_0$ is the only irreducible exceptional divisor on $X$, we have that $W = nE_0$ for some $n>0$.
By \cite[Theorem 3.4]{Laufer} or \cite[Theorem 4.21]{Reid}, $-E_0=-Z_f$ is numerically equivalent to the canonical divisor $K_X$. 

We first consider the case $e\geq 2$. By \cite[Theorem 3.13]{Laufer} or \cite[Theorem 4.23]{Reid} we know that $\m\sO_X=\sO_X(-E_0)$. Thus by Theorem~\ref{thm: Kato RR}, we have
\[
1 = \length(R/\m) = \frac{-E_0^2 - K_XE_0}{2} + p_g(R) - h^1 (X, \sO_X(-E_0)) = 1 - h^1 (X, \sO_X(-E_0)).
\]
Therefore $h^1 (X, \sO_X(-E_0)) = 0$, and thus by \cite[Proposition~2.5]{OWY}, $h^1 (X, \sO_X(-W))=h^1 (X, \sO_X(-nE_0))=0$. Let $I_{W}\coloneqq \Gamma(X, \sO_X(-W))=\Gamma(X, \sO_X(-nE_0))=\overline{\m^n}$. By Theorem~\ref{thm: Kato RR}, we have 
\[
\length(R/I_W) = \frac{-W^2 - K_XW}{2} + 1.
\]
We next claim that 
\begin{equation}
\label{eqn: comparison to cycles on minimal resolution}
\frac{\eh(I)}{2\length(R/I)}\leq \frac{\eh(I_W)}{2\length(R/I_W)}.   
\end{equation}
Note that since $W=f_*Z$, we have $I\sO_X=\mathfrak{I}\cdot \sO_X(-W)$ for some ideal sheaf $\mathfrak{I}\subseteq \sO_X$ and 
$$\sO_Y(-Z)=I\sO_Y=\mathfrak{I}\cdot \sO_Y(-f^*W).$$
It follows that $Z=f^*W+V$ where $V\geq 0$ is $f$-anti-nef (since $\sO_Y(-V)=\mathfrak{I}\sO_Y$ is $f$-relatively globally generated). Write $K_{Y/X}=K_Y-f^*K_X$, then by Theorem~\ref{thm: Kato RR} 
\[
\frac{\eh(I)}{2\length(R/I)} = \frac{Z^2}{Z^2+K_YZ-2} = \frac{W^2 + V^2}{W^2 + V^2 + K_{Y/X}V + K_XW -2}
\]
Hence, to prove (\ref{eqn: comparison to cycles on minimal resolution}), it is enough to show that 
\[
\frac{V^2}{V^2 + K_{Y/X}V} \leq \frac{W^2}{W^2 + K_XW - 2} =\frac{\eh(I_W)}{2\length(R/I_W)}.
\]
This amounts to proving that 
$$(V^2)(K_X W - 2) \leq (W^2)(K_{Y/X}V).$$ 
To see this, note that $K_{Y/X}$ is $f$-exceptional and effective since $X$, $Y$ are non-singular. Since $V$ is $f$-anti-nef, it follows that $K_{Y/X}V\leq 0$ and thus the right-hand side is non-negative. On the other hand, we have $K_X W = n (-E_0^2) =ne \geq 2$ and thus the left-hand side is non-positive. This completes the proof of (\ref{eqn: comparison to cycles on minimal resolution}). It remains to optimize 
\[\frac{\eh(I_W)}{2\length(R/I_W)} = \frac{W^2}{W^2 + K_XW - 2} =\frac{(nE_0)^2}{(nE_0)^2 + n (-E_0^2) - 2} =\frac{n^2e}{n^2e - ne + 2}\]
for various $n>0$. Equivalently, we need to minimize $2/(n^2e) - 1/n$ for $n\geq 1$. 
By analyzing the derivative, 
we see that the function on positive real numbers $x \mapsto 2/(x^2e) - 1/x$  has its only local minimum at $x = 4/e$. It follows easily that we have 
\[
\begin{cases}
\text { for $e \geq 3$, the minimum is computed at $n=1$, so $\lm(R)=e/2$ is computed by $\m$;}\\
\text { for $e=2$, the minimum is computed at $n=2$, so $\lm(R)=4/3$ is computed by $\overline{\mf m^2}$.}
\end{cases}
\]
This completes the proof when $e\geq 2$.

Now we suppose $e=1$. By \cite[Lemma 4.23]{Reid}, the line bundle $\mathcal{L} = \sO_{X} (-nE_0)|_{E_0}$ is base point free when $n \geq 2$ and thus  
$\sO_{X} (-nE_0)$ is globally generated on $X$. This implies that $I_{nE_0}\coloneqq \Gamma(X, \sO_X(-nE_0))$ is represented on $X$ when $n\geq 2$
(see \cite[2.3, 2.6]{OWY}). Furthermore, by \cite[4.25, Corollary and Remark]{Reid}, we know $R$ is a hypersurface and we can choose a generating set $x,y,z$ of $\m$ such that $I_{2E_0} = (x^2, y, z)$ and $I_{3E_0}=(x^3, xy, y^2, z)$. In particular, we have $\length(R/I_{2E_0})=2$ and $\length(R/I_{3E_0})=4$. By Theorem~\ref{thm: Kato RR}, we have 
$$2 = \length(R/I_{2E_0}) = \frac{-(2E_0)^2 - 2K_XE_0}{2} + p_g(R) - h^1 (X, \sO_X(-2E_0)) = 1 +1 - h^1 (X, \sO_X(-2E_0));$$
$$4 = \length(R/I_{3E_0}) = \frac{-(3E_0)^2 - 3K_XE_0}{2} + p_g(R) - h^1 (X, \sO_X(-3E_0)) = 3 +1 - h^1 (X, \sO_X(-3E_0)).$$
Hence $h^1 (X, \sO_X(-2E_0)) =h^1 (X, \sO_X(-3E_0)) = 0$ and thus by \cite[Proposition 2.5]{OWY} (applied to the globally generated line bundle $\sO_X(-2E_0)$), we have $h^1 (X, \sO_X(-nE_0)) = 0$ for all $n \geq 2$.
 
Now if $W = f_*Z  =n E_0$ for some $n\geq 2$. Then we claim that 
$$\frac{\eh(I)}{2\length(R/I)}\leq \frac{\eh(I_W)}{2\length(R/I_W)}.$$
To see this, note that by the same notation as in the $e\geq 2$ case, it reduces to showing that $V^2 (K_X W - 2) \leq (W^2)(K_{Y/X}V)$. But as $n\geq 2$, $K_X W \geq -2 E_0^2 = 2$, the left-hand side above is non-positive while the right-hand side is non-negative (since $V$ is $f$-anti-nef and $K_{Y/X}$ is $f$-exceptional and effective). Thus we have 
\[\frac{\eh(I)}{2\length(R/I)}\leq \frac{\eh(I_W)}{2\length(R/I_W)} = \frac{W^2}{W^2 + K_XW - 2} =\frac{(nE_0)^2}{(nE_0)^2 + n (-E_0^2) - 2} =\frac{n^2}{n^2 - n + 2} \leq 8/7,\]
and it is easy to check that the maximum is achieved when $n=4$.

Finally, if $W=f_*Z=E_0$, then there exists $g\in I \setminus I_{2E_0}$. By Lech's inequality (Theorem~\ref{thm Lech}),
\[\frac{\eh(I)}{2 \length(R/I)} \leq \frac{\eh(I, R/(g))}{2 \length(R/I)} \leq {\eh(R/(g))}/2. 
\] 
By \cite[4.25, Theorem, Corollary and Remark]{Reid}, we can choose a generating set $x,y,z$ of $\m$ such that if we assign weights $1, 2, 3$ to $x,y,z$ respectively, the associated graded ring $G$ of the Noetherian graded family $\{I_{nE_0}\}_{n}$ coincides with the associated graded ring of this weight filtration. Moreover, $G\cong k[x,y,z]/(f)$ where $f=z^2+y^3+ h$ and $h$ is homogeneous of degree $6$ whose terms involve $x$. Since $I_{2E_0} = (x^2, y, z)$ and $g\notin I_{2E_0}$, the image of $g$ in $G$ agrees with $x$ up to units. It follows that $\eh(R/(g))= \eh(G) =2$ and thus $\eh(I)/2\length(R/I)\leq 1$ in this case. This completes the proof when $e=1$.
\end{proof}

It is natural to ask whether the Lech--Mumford constant of a normal surface singularity $R$ can always be computed from the minimal resolution. If this is the case, then the computation of $\lm (R)$ becomes a finite-dimensional optimization problem on the lattice of cycles on the minimal resolution.  Kei-ichi Watanabe pointed to us the following lemma that gives a partial affirmative answer to this question.

\begin{lemma}[Watanabe]
\label{lem: Watanabe}
Let $(R, \mf m)$ be a two-dimensional excellent normal local ring containing an algebraically closed field $k\cong R/\m$ and let $X\to \Spec(R)$ be the minimal resolution. Suppose $I\subseteq R$ is an integrally closed $\m$-primary ideal and $Y\to \Spec(R)$ is a resolution such that $I\sO_Y=\sO_Y(-Z)$ where $Z$ is an anti-nef effective divisor on $Y$. Let $f\colon Y\to X$ be the canonical map and let $W \coloneqq f_*Z$. Suppose $\sO_X(-W)$ is globally generated. Then we have
$$\frac{\eh(I)}{2\length(R/I)}\leq  \max \left \{1,  \frac{\eh(I_W)}{2\length(R/I_W)} \right \}$$
where $I_W \coloneqq \Gamma(X, \sO_X(-W))$.

In particular, if every anti-nef cycle on $X$ is globally generated, then $\lm(R)$ can be computed from the minimal resolution. 
\end{lemma}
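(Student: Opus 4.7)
The plan is to compare $\length(R/I)$ and $\eh(I)$ with the corresponding quantities for $I_W$ by means of Kato's Riemann--Roch formula (Theorem~\ref{thm: Kato RR}), following the template of the proof of Theorem~\ref{thm: minimal elliptic irred exc}. First I would decompose $Z = f^*W + V$ with $V \geq 0$ effective and $f$-exceptional. The projection formula gives $(f^*W)\cdot V = W\cdot f_*V = 0$ and $(f^*W)^2 = W^2$, while $K_Y = f^*K_X + K_{Y/X}$ with $K_{Y/X}$ effective (as $X$ is the minimal resolution) yields $K_Y\cdot Z = K_X\cdot W + K_{Y/X}\cdot V$. Since $Z$ is $\pi$-anti-nef and every $f$-exceptional curve is $\pi$-exceptional, $V$ is $f$-anti-nef, so $V^2 \leq 0$ and $K_{Y/X}\cdot V \leq 0$; in particular, Kato's formula expresses both $\length(R/I)$ and $\length(R/I_W)$ as explicit linear combinations of these intersection numbers and the corresponding $h^1$ terms.

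Next, the two cohomological deficits are related via the short exact sequence
$$0 \to \sO_Y(-Z) \to \sO_Y(-f^*W) \to \sO_V(-f^*W|_V) \to 0.$$
Since $f$ is birational between smooth surfaces, $R^1f_*\sO_Y = 0$, so the projection formula yields $H^i(Y,\sO_Y(-f^*W)) \cong H^i(X,\sO_X(-W))$ for all $i$. Because $f^*W\cdot C = W\cdot f_*C = 0$ on every component $C$ of $V$ and $f(V)$ is a finite set of points on $X$, the line bundle $\sO_V(-f^*W|_V)$ is trivial on $V$. The resulting long exact sequence (together with $H^2(Y,\sO_Y(-Z))=0$, since the fibers of $f$ are at most one-dimensional) gives the identity
$$\length_R(I_W/I) = \chi(V,\sO_V) - \left(h^1(Y,\sO_Y(-Z)) - h^1(X,\sO_X(-W))\right),$$
where by adjunction $\chi(V,\sO_V) = -(V^2+K_{Y/X}\cdot V)/2$.

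Finally, I would split into two cases. If $\eh(I)\leq 2\length(R/I)$ the lemma is immediate. Otherwise, substituting the Kato formulas into the goal $\eh(I)\length(R/I_W) \leq \eh(I_W)\length(R/I)$ and simplifying reduces it to the cohomological inequality
$$h^1(Y,\sO_Y(-Z)) - h^1(X,\sO_X(-W)) \leq -\frac{1}{2}K_{Y/X}\cdot V,$$
equivalently $\length_R(I_W/I) \geq -V^2/2$. The hard part will be establishing this bound, and it is precisely here that the global generation hypothesis enters in an essential way: the globally generated sheaf $\sO_Y(-f^*W) = f^*\sO_X(-W)$ must produce, via restriction, a subspace of $H^0(V,\sO_V)$ coming from $I_W$ of dimension at least $-V^2/2$. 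Once this bound is in hand, it also forces $\eh(I_W) > 2\length(R/I_W)$ in the nontrivial case (so the maximum on the right equals $\eh(I_W)/(2\length(R/I_W))$) and then yields the desired comparison. The ``in particular'' statement then follows because pushforward sends anti-nef effective cycles on $Y$ to anti-nef effective cycles on $X$, all globally generated by hypothesis, so the supremum defining $\lm(R)$ is computed over ideals arising from anti-nef cycles on the minimal resolution.
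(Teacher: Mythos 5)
Your setup mirrors the paper's: you write $Z = f^*W + V$, invoke the short exact sequence $0 \to \sO_Y(-Z) \to \sO_Y(-f^*W) \to \sO_V \to 0$, observe $\sO_V(-f^*W|_V)\cong\sO_V$, and correctly reduce the lemma (via the ``max'' trick $\tfrac{a+c}{b+c}\leq\max\{1,\tfrac ab\}$) to a single inequality, namely $\length_R(I_W/I)\geq -V^2/2$. But that inequality is precisely what the proof must establish, and you do not establish it: the sentence asserting that global generation ``must produce, via restriction, a subspace of $H^0(V,\sO_V)$ coming from $I_W$ of dimension at least $-V^2/2$'' is a restatement of the claim, not a mechanism. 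This is a genuine gap.

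The missing input is Lech's inequality for the two-dimensional regular local rings of $X$. Since $W=f_*Z$, one has $I\sO_X = \mathfrak{I}\cdot\sO_X(-W)$ for an ideal sheaf $\mathfrak{I}$ with finite-length cokernel, and $Y$ dominates the blowup of $\mathfrak{I}$ with $\mathfrak{I}\sO_Y=\sO_Y(-V)$, so that $-V^2 = \eh(\mathfrak{I},\sO_X)$. Lech's inequality (Theorem~\ref{thm Lech}) in the regular case gives $\eh(\mathfrak{I},\sO_X)\leq 2\length(\sO_X/f_*\sO_Y(-V)) \leq 2h^0(V,\sO_V)$, i.e.\ $h^0(V,\sO_V)\geq -V^2/2$. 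The paper then upgrades this to $\length(I_W/I)\geq -V^2/2$ by showing the exact identity $\length(I_W/I) = h^0(V,\sO_V)$. Your long exact sequence already exhibits the surjection $H^1(Y,\sO_Y(-Z)) \to H^1(Y,\sO_Y(-f^*W))$; the paper makes it an isomorphism by invoking \cite[Proposition~2.5]{OWY} (which requires $\sO_Y(-V)$ to have no fixed component, a consequence of $f$-relative global generation since all components of $V$ are $f$-exceptional) together with $H^1(V,\sO_V)=0$, deduced from $R^1f_*\sO_Y(-f^*W)=(R^1f_*\sO_Y)\otimes\sO_X(-W)=0$. Without identifying Lech's inequality as the numerical source and the OWY cohomology-decreasing lemma as the mechanism for the exact length identity, your reduction leaves the central estimate unproved.
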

\begin{proof}
First of all, since $W=f_*Z$, we have $I\sO_X=\mathfrak{I}\cdot \sO_X(-W)$ for some ideal sheaf $\mathfrak{I}\subseteq \sO_X$ so that $\sO_X/\mathfrak{I}$ is supported at points, and we have
$$\sO_Y(-Z)=I\sO_Y=\mathfrak{I}\cdot \sO_Y(-f^*W).$$
Thus $Z=f^*W+V$ where $V\geq 0$ is $f$-exceptional and $\sO_Y(-V)=\mathfrak{I}\sO_Y$ is $f$-relatively globally generated. It follows that 
\begin{equation}
\label{eqn: Lech inequality in Watanabe proof}
   -V^2= \eh(\mathfrak{I}, \sO_X) \leq 2h^0(V, \sO_V) 
\end{equation}
where the inequality follows from Lech's inequality (Theorem~\ref{thm Lech}) in the regular case.\footnote{Here the notation $\eh(\mathfrak{J},\sO_X)$ should be interpreted as $\sum_x\eh(\mathfrak{J}_x, \sO_{X,x})$ where $x$ runs over all (finitely many) points on $X$ that are the support of $f_*\sO_V$, and we are using Lech's inequality for each $\eh(\mathfrak{J}_x, \sO_{X,x})$ and noting that each $\sO_{X,x}$ is a two-dimensional regular local ring.}

Since $f_*\sO_V$ is supported at points, from the short exact sequence
    \[
    0 \to \sO_Y (-Z) \to \sO_Y(-f^*W) \to \sO_V(-f^*W)\cong \sO_V \to 0,
    \] 
we obtain that $R^1f_*\sO_Y(-f^*W)\twoheadrightarrow R^1f_*\sO_V\cong H^1(V,\sO_V)$. But we have 
$$R^1f_*\sO_Y(-f^*W)= (R^1f_*\sO_Y) \otimes \sO_X(-W) =0$$ 
since $X$, $Y$ are regular. Thus $H^1(V,\sO_V)=0$. Now after taking global sections, the short exact sequence above induces an exact sequence
    \[
    0 \to I \to I_W \to H^0(V, \sO_V) \to H^1(Y, \sO_Y (-Z)) \xrightarrow{\alpha} H^1(Y, \sO_Y(-f^*W))\to 0.
    \]
In particular, we have that $h^1(Y, \sO_Y(-Z))\geq h^1(Y, \sO_Y(-f^*W))$. On the other hand, since $\sO_Y(-V)$ is $f$-relatively globally generated, each $f$-exceptional prime divisor is not in the base locus of $\sO_Y(-V)$, and since $V$ is $f$-exceptional, (the strict transform of) each exceptional prime divisor on $X$ also cannot be in the base locus of $\sO_Y(-V)$. Thus $\sO_Y(-V)$ has no fixed component and thus $h^1(Y, \sO_Y(-Z))\leq h^1(Y, \sO_Y(-f^*W))$ by \cite[Proposition 2.5]{OWY}. It follows that the surjection $\alpha$ must be an isomorphism and we have 
\[
    0 \to I \to I_W \to H^0(V, \sO_V) \to 0.
    \]
Thus we have
$$\length(R/I)= \length(R/I_W) + h^0(V,\sO_V).$$
Since $\eh(I)=-W^2-V^2$ and $\eh(I_W)=-W^2$, we have 
$$\frac{\eh(I)}{2\length(R/I)}= \frac{\eh(I_W)-V^2}{2\length(R/I_W)+2h^0(V,\sO_V)}\leq \frac{\eh(I_W)-V^2}{2\length(R/I_W)-V^2}\leq  \max \left \{1,  \frac{\eh(I_W)}{2\length(R/I_W)} \right \}$$
where the first inequality follows from (\ref{eqn: Lech inequality in Watanabe proof}).
\end{proof}

One can use Lemma~\ref{lem: Watanabe} to give an alternative proof of Theorem~\ref{thm: minimal elliptic irred exc} when $e\geq 2$. The point is that, for minimally elliptic singularity of degree $e\geq 2$ whose exceptional divisor $E_0$ on the minimal resolution is irreducible, we know that $\sO_X(-nE_0)$ is globally generated for all $n\geq 0$ and thus Lemma~\ref{lem: Watanabe} tells us immediately that $\lm(R)$ can be computed from $\eh(I_{W})/2\length(R/I_W)$ where $W=nE_0$ for some $n$, and we come down to optimize $n^2e/(n^2e-ne+2)$ when $n$ varies, as in the proof of Theorem~\ref{thm: minimal elliptic irred exc}. In fact, we can use Lemma~\ref{lem: Watanabe} to compute $\lm(R)$ for all two-dimensional standard graded normal domains.

\begin{corollary}
\label{cor: Watanabe}
Let $(R,\m, k)$ be (the localization of) a two-dimensional standard graded normal domain over an algebraically closed field $k$. If $R$ is not regular, then $\lm(R)=\eh(R)/2$ and $\lm(R)$ is computed by $\m$.
\end{corollary}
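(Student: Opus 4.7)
The plan is to combine a direct lower bound with the machinery of Lemma~\ref{lem: Watanabe} applied to the minimal resolution, whose structure is fully explicit in terms of the curve $C \coloneqq \Proj(R)$. Since $R$ is a two-dimensional normal standard graded domain, $R$ is Cohen--Macaulay, and the natural map identifies $R_n \cong H^0(C, L^n)$ for every $n \geq 0$, where $L \coloneqq \sO_C(1)$. The curve $C$ is a smooth projective curve of some genus $g$, the line bundle $L$ is very ample because $R$ is generated in degree one, and $\eh(R) = \deg L$. The lower bound $\lm(R) \geq \eh(R)/2$ is immediate from the ideal $I = \m$, since $\eh(\m) = \eh(R)$ and $\length(R/\m) = 1$.

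For the upper bound, the blow-up of $\m$ realizes the minimal resolution $X \to \Spec(R)$ as the total space $\mathrm{Tot}(L^{-1}) = \Spec_C(\oplus_{n \geq 0} L^n)$, with the exceptional divisor $E_0 \cong C$ inserted as the zero section; here $E_0^2 = -\eh(R) \leq -2$ (since non-regularity forces $\eh(R) \geq 2$), so $X$ is indeed minimal. Every anti-nef effective cycle on $X$ has the form $W = nE_0$ with $n \geq 0$, and $\Gamma(X, \sO_X(-nE_0)) = R_{\geq n} = \overline{\m^n}$ has colength $H(n) \coloneqq \sum_{i=0}^{n-1} \dim_k R_i$. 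The hypothesis of Lemma~\ref{lem: Watanabe} asks for global generation of $\sO_X(-nE_0)$ for every $n$, which I plan to verify in two steps: at a point of $E_0$, the restriction is $L^n$, which is globally generated by $R_n = H^0(C, L^n) \subseteq R_{\geq n}$ since $L$ is very ample; away from $E_0$, we have $R_n \not\subseteq \p$ for every prime $\p \neq \m$, since otherwise $R_1 \subseteq \p$ would force $\p = \m$. Lemma~\ref{lem: Watanabe} then gives
\[
\lm(R) \leq \max\left\{1,\ \sup_{n \geq 1} \frac{n^2 \eh(R)}{2 H(n)}\right\}.
\]

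It remains to show $H(n) \geq n^2$ for every $n \geq 1$ with equality only at $n = 1$; the main obstacle is the positive genus case, where I rely on the sharp classical bound $\deg L \geq 2g + 1$ for very ample $L$ on a curve of genus $g \geq 1$ (together with $\eh(R) \geq 2$ in the $g=0$ case, which follows from $\edim(R) \geq 3$ once $R$ is not regular). Serre duality gives $h^1(C, L^i) = 0$ for every $i \geq 1$, since $\deg L^i \geq 2g+1 > 2g - 2$, and hence Riemann--Roch yields $\dim_k R_i = i \eh(R) + 1 - g$ for $i \geq 1$, so that $H(n) = 1 + \eh(R)\binom{n}{2} + (n-1)(1 - g)$. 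A short algebraic manipulation shows that the inequality $H(n) \geq n^2$ for $n \geq 2$ is equivalent to $\eh(R) \geq 2 + 2g/n$, and this follows from $\eh(R) \geq \max\{2,\, 2g+1\}$. Therefore the supremum above is attained at $n = 1$ and equals $\eh(R)/2$, matching the lower bound and showing that $\lm(R) = \eh(R)/2$ is computed by $\m$.
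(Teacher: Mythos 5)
Your overall strategy matches the paper's: both proofs reduce via Lemma~\ref{lem: Watanabe} to optimizing $\eh(\overline{\m^n})/(2\length(R/\overline{\m^n}))$ over cycles $nE_0$ on the minimal resolution, and both hinge on the estimate $\length(R/\m^n)\geq n^2$, equivalently $\dim_k R_i \geq 2i+1$ for all $i$. (Your global-generation check is fine: $H^0(X,\sO_X(-nE_0))=R_{\geq n}\twoheadrightarrow R_n=H^0(C,L^n)$, and $L^n$ is very ample.) Where you diverge from the paper is the proof of $\dim_k R_i \geq 2i+1$: the paper does this combinatorially via Macaulay's growth theorem for Hilbert functions, while you attempt a Riemann--Roch argument on $C=\Proj(R)$.

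There is a genuine gap in that step. The assertion ``the sharp classical bound $\deg L \geq 2g+1$ for very ample $L$ on a curve of genus $g\geq 1$'' is false. Very ample line bundles of degree well below $2g+1$ exist, and moreover give normal standard graded domains: for a smooth plane quartic $C$ (genus $g=3$), the very ample $L=\sO_C(1)$ has $\deg L = 4 < 7 = 2g+1$, and the homogeneous coordinate ring $R=k[x,y,z]/(f)$ is a two-dimensional normal standard graded domain. For this $R$ your Riemann--Roch formula $\dim_k R_i = i\eh(R)+1-g$ also fails, because $h^1(C,L)=h^0(K_C\otimes L^{-1})=h^0(\sO_C)=1\neq 0$ (here $K_C\cong L$): one computes $\dim_k R_1 = 3$, whereas $1\cdot 4 + 1 - 3 = 2$. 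Your chain of equivalences then needs $\eh(R)\geq 2+2g/n$, which at $n=2$ would require $\eh(R)\geq 5$, but $\eh(R)=4$. (The desired inequality $H(2)=4\geq 4$ is still true, it just does not follow from your estimate.) The fix that stays closest in spirit to your argument is Hopf's lemma (the base-point-free pencil trick) on the integral curve $C$: since $R$ is standard graded the multiplication $R_1\otimes R_{i-1}\to R_i$ is surjective, so $\dim_k R_i \geq \dim_k R_1 + \dim_k R_{i-1} - 1$; combined with $\dim_k R_1 = \edim(R)\geq 3$ (as $R$ is not regular), induction gives $\dim_k R_i\geq 2i+1$ without any use of Riemann--Roch or a degree bound on $L$.
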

\begin{proof}
The hypothesis implies that $X \coloneqq \Bl_\m(R)\to \Spec(R)$ is the minimal resolution, and moreover, the exceptional divisor $E$ on $X$ is irreducible (we have $E\cong \Proj(R)$). It is also easy to see that $\sO_X(-nE)=\m^n\sO_X$ is globally generated  and that $\m^n=\overline{\m^n}$ for all $n\geq 0$. By Lemma~\ref{lem: Watanabe}, $\lm(R)$ is computed from cycles on $X$, i.e., we only need to find the supremum of $\eh(\m^n)/2\length(R/\m^n)$ when $n$ varies. 

By taking $\m$, we see that $\lm(R) \geq \eh(R)/2$. Thus
it suffices to show that $\length (\m^n/\m^{n+1}) \geq 2n + 1$ for all $n \geq 0$, since this would imply that for all $n \geq 1$
\[
\frac{\length (R/\m^n)}{\eh(\m^n)} \geq \frac{1}{n^2\eh(R)} \left( \sum_{k = 0}^{n-1} (2k + 1) \right ) = \frac{1}{\eh(R)}
\]
and thus $\eh(\m^n)/2\length(R/\m^n) \leq \eh(R)/2$ as wanted. 
By contradiction, suppose $\length (\m^{n_0}/\m^{n_0+1}) \leq 2n_0$ for some $n_0$. Note that $\length (\m/\m^2) \geq 3$ because $R$ is singular, so $n_0 \geq 2$. Now by Macaulay's theorem  on Hilbert function (see \cite[Theorem 4.2.10]{BrunsHerzogBook}, which originates from \cite{Macaulay1927}), for any $d\in\mathbb{N}$, if we write
\[
\length (\m^{d}/\m^{d+1}) = \dim_k[R]_{d}= \binom{k_{d}}{d} + \binom{k_{d - 1}}{d-1} + \cdots + \binom{k_j}{j},
\]
where $d \geq j \geq 1$ and 
$k_{d} > k_{d - 1} > \cdots > k_j \geq j$,
then
\[
\length (\m^{d + 1}/\m^{d+2})=\dim_k[R]_{d+1} \leq 
\binom{k_{d} + 1}{d + 1} + \binom{k_{d - 1} + 1}{d} + \cdots + \binom{k_j + 1}{j + 1}.
\]
Now if $n_0=2$, then we can write 
$$2n_0=4=\binom{n_0+1}{n_0}+\binom{n_0-1}{n_0-1}$$ and it follows by induction that $$\length (\m^{n}/\m^{n + 1}) \leq \binom{n+1}{n}+\binom{n-1}{n-1}=n+2$$ and thus $\eh(R)=1$ contradicting that $R$ is singular. If $n_0\geq 3$, then we have
\[
2n_0 = \binom{n_0+1}{n_0} + \binom{n_0-1}{n_0-2} 
\]
and again it follows by induction that 
$$\length (\m^{n}/\m^{n + 1}) \leq\binom{n+1}{n} + \binom{n-1}{n-2}=2n.$$
Therefore, $\eh(R) = 2$ and thus $R$ is a hypersurface (of degree $2$) by Abhyankar's inequality. But then a direct computation shows that the Hilbert function of $R$ must be 
$\length (\m^n/\m^{n+1}) = 2n + 1$, a contradiction. 
\end{proof}

\begin{corollary}
Let $(R,\m,k)$ be a two-dimensional local ring so that $\Gr_\m(R)$ is geometrically normal. Then $\lm(R)=\eh(R)/2$ and $\lm(R)$ is computed by $\m$. 
\end{corollary}
\begin{proof}
First of all, we know $\lm(R)\geq \eh(R)/2$ by considering the maximal ideal. By Proposition~\ref{prop: LM associated graded} and Proposition~\ref{prop: LM multigraded}, we know that $\lm(R)\leq \lm(\Gr_\m(R)) \leq \lm(\Gr_\m(R) \otimes_k\overline{k})$, the latter is a two-dimensional standard graded normal domain by hypothesis and thus $\lm(\Gr_\m(R) \otimes_k\overline{k})= \eh(R)/2$ by Corollary~\ref{cor: Watanabe} (note that multiplicity does not change upon passing to the associated graded ring and base change the field).
\end{proof}

We next use a different argument to handle all minimally elliptic singularities of degree one: the key point is that we can compute the Lech--Mumford constant for the ``non-normal limit" for this family. We treat this case algebraically.

\begin{lemma}
\label{lem: LM cusp sing}
Let $(R,\m) = K[[x, y,z]]/(z^2 + y^3)$ where $K$ is a field. Then $\lm(R) = 8/7$, and $8/7$ is achieved only by ideals $\overline{(y^2, x^{4n})}$ where $n \geq 1$ is an integer.
\end{lemma}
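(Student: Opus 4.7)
The plan is to transfer the computation from $R$ to the minimally elliptic hypersurfaces $R_m \coloneqq K[[x,y,z]]/(z^2+y^3+x^m)$ for $m \geq 6$ using the small perturbation theorem (Theorem~\ref{thm: small perturbation}), and to exhibit the maximizers $\overline{(y^2, x^{4n})}$ explicitly through the normalization $R' \coloneqq K[[x,t]]$ given by $y = t^2$, $z = t^3$.

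For the upper bound, fix an integrally closed $\m$-primary ideal $I \subseteq R$. By Theorem~\ref{thm: small perturbation} applied to the regular element $z^2 + y^3$ and the ideal $I$, there is an $N_0 = N_0(I)$ such that for every $g \in \m^{N_0}$ one has $\Gr_I(R) \cong \Gr_I(K[[x,y,z]]/(z^2+y^3+g))$. Choosing $m \geq \max(N_0, 6)$ and $g = x^m$ yields $\length(R/I) = \length(R_m/I)$ and $\eh(I, R) = \eh(I, R_m)$. By Laufer's classification, $R_m$ is a minimally elliptic singularity of degree one for every $m \geq 6$; when $m \in \{6,7\}$ its minimal resolution has irreducible exceptional divisor and Theorem~\ref{thm: minimal elliptic irred exc} gives $\lm(R_m) = 8/7$, while for $m \geq 8$ the same Kato--Riemann--Roch analysis applies, with the identities $K_X \equiv -Z_f$, $-Z_f^2 = 1$ and $p_g = 1$ reducing the estimate to the numerical inequality $W^2 + 8 K_X W \leq 16$ for every anti-nef effective cycle $W$ on the minimal resolution. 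Either way $\eh(I)/(2\length(R/I)) \leq \lm(R_m) \leq 8/7$.

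For achievability, I would compute $\overline{(y^2, x^{4n})}$ directly. The extension $R \hookrightarrow R'$ is finite and birational, so multiplicities of $\m$-primary ideals agree by the associativity formula. Under $y = t^2$, $z = t^3$ we have $(y^2, x^{4n})R' = (t^4, x^{4n})$, a monomial complete intersection in a regular two-dimensional local ring whose Newton polygon consists of the single internal edge from $(4,0)$ to $(0,4n)$; the unique Rees valuation $v_n$ therefore satisfies $v_n(t) = n$, $v_n(x) = 1$, and pulls back to $(v_n(x), v_n(y), v_n(z)) = (1, 2n, 3n)$ on $R$. Proposition~\ref{prop: geometric rational powers} then yields
\[
\overline{(y^2, x^{4n})} = \{f \in R : v_n(f) \geq 4n\}.
\]
Enumerating monomials $x^a y^b z^c$ with $c \in \{0,1\}$ and $a + 2nb + 3nc < 4n$ gives $\length(R/\overline{(y^2, x^{4n})}) = 7n$, while $\eh(\overline{(y^2, x^{4n})}) = \eh((t^4, x^{4n}), R') = 16n$, so the ratio equals $16n/(14n) = 8/7$.

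For the characterization of the maximizers, if an integrally closed $\m$-primary ideal $I$ attains $8/7$ then its image in $R_m$ after perturbation also attains $\lm(R_m) = 8/7$; the uniqueness half of the proof of Theorem~\ref{thm: minimal elliptic irred exc} (and its extension to the reducible case) forces $I = I_{4W}$ for $W$ numerically proportional to $Z_f$, which by the correspondence above must be one of the $\overline{(y^2, x^{4n})}$. The main technical obstacle is establishing $\lm(R_m) \leq 8/7$ uniformly for $m \geq 8$, where the reducibility of the exceptional divisor on the minimal resolution requires a more delicate cycle-optimization than in Theorem~\ref{thm: minimal elliptic irred exc}.
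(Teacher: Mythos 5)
Your route is genuinely different from the paper's, but as written it is circular. The paper proves this lemma by a direct Newton--polytope and Pick's-theorem analysis in the normalization $K[[x,t]]$ precisely so that the result can later be fed into Theorem~\ref{thm: minimal elliptic deg one}: there, the bound $\lm(R)\leq 8/7$ for an arbitrary minimally elliptic singularity $R$ of degree one is obtained by degenerating $R$ through $\Gr_{J_\bullet}(R)$ and then $\Gr_{(x)}$ down to $k[x,y,z]/(z^2+y^3)$ and invoking the present lemma. Your plan runs the other direction, deducing the present lemma from $\lm(R_m)\leq 8/7$ for $R_m=K[[x,y,z]]/(z^2+y^3+x^m)$. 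Theorem~\ref{thm: minimal elliptic irred exc} gives this for $m=6,7$ (irreducible exceptional divisor), but for $m\geq 8$ the exceptional set on the minimal resolution is reducible, and the only result in the paper covering that case is Theorem~\ref{thm: minimal elliptic deg one} itself. Since the constant $N_0$ from Theorem~\ref{thm: small perturbation} depends on $I$ and is not uniformly bounded (the singular locus of $R$ is one-dimensional, so deeper ideals force larger $m$), you cannot avoid the regime $m\geq 8$.

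The ``more delicate cycle-optimization'' you defer to the end is therefore not a loose end to be tidied but the entire content of the statement. Even granting the Kato--Riemann--Roch reduction, you would still need to control $h^1$-terms, treat cycles $W$ with $f_*Z$ small separately (as in the $W=E_0$ case of the proof of Theorem~\ref{thm: minimal elliptic irred exc}), and handle ideals not realized on the minimal resolution (the role played by Lemma~\ref{lem: Watanabe}); none of this is automatic once $\red{E}$ is reducible. Your computation that $\overline{(y^2,x^{4n})}$ achieves $8/7$ is correct and agrees with the paper's, but the characterization of maximizers depends on the upper-bound machinery and so inherits the same gap.
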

\begin{proof}
First of all, it is straightforward to see that $\eh(\overline{(y^2, x^{4n})})=16n$ and $\length(R/\overline{(y^2, x^{4n})})=7n$ (for example, using integral dependence as in Lemma~\ref{lem: Mumford useful lemma}) and thus we achieved $8/7$. Next we note that $R \cong K[[x, t^2, t^3]]$. By Proposition~\ref{prop: LM multigraded} $\lm(R)$ can be computed from monomial ideals $I \subset R$, thus it suffices to show that 
\[\frac{\eh(I)}{2\length(R/I)} \leq 8/7\]
for all integrally closed $\m$-primary monomial ideals $I \subset R$. 

Let $S \coloneqq K[[x, t]]$ be the integral closure of $R$. We will transfer the computation to $S$. Set $J \coloneqq \overline{IS}$, then $J$ is an integrally closed $\m$-primary monomial ideal of $S$ and $I=J\cap R$ by \cite[Proposition 1.6.1]{SwansonHuneke}. Since the difference between $R$ and $S$ is the vector space spanned by monomials $\{tx^n\}_{n \geq 0}$, we have $\eh(I) = \eh(J)$ and 
\[\length(S/J)=\length(R/I)+\length(S/(J:t)+(t)).\]
Our goal is to show that
\begin{equation}
\label{eqn: 8/7 goal}
\eh(J)\leq \frac{16}{7} \big(\length(S/J)- \length(S/(J:t)+(t))\big).
\end{equation}

At this point, we analyze both sides of (\ref{eqn: 8/7 goal}) using data of the Newton polytope associated to $J$. More specifically, recall that $\eh(J)$ is twice the volume of the complement of the Newton polytope, $\length(S/J)$ is the number of integer points in the complement of the Newton polytope, and $\length(S/(J:t)+(t))$ is the number of integer points in the complement of the Newton polytope whose $t$-coordinate is $1$. 

Suppose $\{(a_i, b_i)\}_{i=0}^{i=k}$ are all integer points on the boundary of the Newton polytope associated to $J$ so that $a_0=0$, $b_0=\length(S/J+(t))$ and $a_k=\length(S/J+(x))$, $b_k=0$. By dividing the complement of the Newton polytope by the lines $t=a_i$ we may write it as a union of $k$ trapezoids $T_i$, $0\leq i\leq k-1$, where $T_i$ is defined by the end points $(a_i, 0), (a_i, b_i), (a_{i + 1}, b_{i + 1}), (a_{i+1}, 0)$. 

\vspace{2mm}
  \begin{tikzpicture}
    \centering
    \begin{axis}[
    ylabel={x},
    xmin=0, xmax=27,
    ymin=0, ymax=6,
    grid=both,
    xticklabels={\,,$a_1$,\,,\,,\, ,$a_2$,\,,\,,$a_3$,\,,\,,\,,$t$},
    xtick =     {2 , 4   , 6, 8, 10, 12  ,14,16, 18, 20, 22, 24, 26},
    yticklabels={\, ,$b_3$,$b_2$,\, , \, ,$b_1$, \, ,\,, \, ,\,, $b_0$},
    ytick =     {0.5, 1   , 1.5 , 2 , 2.5, 3   , 3.5, 4, 4.5, 5, 5.5},
    ]
    \addplot[
    color=blue,
    mark=ball,
    thick, 
    ]
    coordinates {
    (0,5.5)(4,3)(12, 1.5)(18, 1)(45, 0)
    };
    \draw[color = black, very thick] (0,0) -- (0,5.5);
    \draw[color = black, very thick] (4,0) -- (4,3);
    \draw[color = black, very thick] (12,0) -- (12,1.5);
    \draw[color = black, very thick] (18,0) -- (18,1);
    \draw[color = black, very thick, mark = ball] (3, 6);

    \node[label={90:{$T_0$}}, inner sep=2pt] at (axis cs: 1.5,2) {};
    \node[label={90:{$T_1$}}, inner sep=2pt] at (axis cs: 7.8,0.9) {};
    \node[label={90:{$T_2$}}, inner sep=2pt] at (axis cs: 15,0.3) {}; 
    \node[label={90:{$T_3$}}, inner sep=2pt] at (axis cs: 21,0.0) {}; 
    
    \end{axis}
    \end{tikzpicture}

Since $\eh(J)$ is twice the volume of the complement of the Newton polytope, we have
\[
\eh(J) = 2 \sum_{i = 0}^{k-1} \vol (T_i) = \sum_{i = 0}^{k - 1} (b_{i + 1} + b_i)(a_{i+1} - a_i).
\]
Since $\length(S/J)$ is the total number of integer points inside the union of all the $T_i$'s, and by our assumption, $k$ is the number of integer points on the boundary of the Newton polytope, by Pick's theorem we have
$$\frac{\eh(J)}{2}= \big(\length(S/J)-b_0-a_k+1\big) + \frac{b_0+a_k+k}{2} -1.$$
It follows that 
$$\length(S/J) = \frac 12 \big(\eh(J) + b_0+ a_k -k\big).$$
Moreover, it is easy to see that the number of integer points on the intersection of the line $t=1$ and the complement of the Newton polytope is 
$$\length(S/(J:t)+(t)) = \lfloor b_0 - \frac{b_0 - b_1}{a_1 - a_0} \rfloor.$$
Thus we may rewrite (\ref{eqn: 8/7 goal}) as
\begin{equation}\label{eqn: unrefined goal}
    7 \eh (J) \leq 8\big(\eh(J) + b_0 + a_k - k\big) - 16 b_0 + 16 \lceil \frac{b_0 - b_1}{a_1 - a_0} \rceil,
\end{equation}
and since $a_k\geq k$, it is enough to show that
\begin{equation}
\label{eqn: refined goal in 8/7}
8b_0 \leq \eh(J) +  16 \cdot \frac{b_0 - b_1}{a_1 - a_0} = \sum_{i=0}^{k -1} (b_{i + 1} + b_i)(a_{i + 1} - a_i)
+  16  \cdot \frac{b_0 - b_1}{a_1 - a_0} .
\end{equation}

Next we note that, since $J=\overline{IS}$ with $I\subseteq R$, none of the vertices defining the Newton polytope of $J$ has the $t$-coordinate $1$. Thus if $a_1=1$, then $(a_2, b_2)$ must be on the same edge, so that 
\[\frac{b_0 - b_1}{a_1 - a_0} = \frac{b_0 - b_2}{a_2 - a_0}.\]
Therefore, we can omit the point $(a_1, b_1)$ when $a_1=1$\footnote{We caution the readers that, omitting the point $(a_1, b_1)$ will decrease $k$, however, this will not affect (\ref{eqn: refined goal in 8/7}) by the fact that $(b_0 - b_1)(a_2-a_0)=(b_0-b_2)(a_1-a_0)$.} to assume that $a_1\geq 2$. Now we set $x_i = a_{i + 1} - a_i$, note that each $x_i$ is a positive integer and $x_0\geq 2$ (as $a_1\geq 2$). Since the Newton polytope is convex, for all $i$ we have 
\begin{equation}
\label{eqn: convexity}
\frac{b_i - b_{i+1}}{x_i} = \frac{b_i - b_{i+1}}{a_{i + 1} - a_i} \geq \frac{b_{i + 1} - b_{i+2}}{a_{i+2}- a_{i+1}} =
\frac{b_{i + 1} - b_{i+2}}{x_{i+1}}.
\end{equation}
Thus our goal becomes proving
\[
8b_0 \leq b_0x_0 + \sum_{i = 1}^{k-1} b_i (x_{i-1}+x_i)
+   16 \cdot\frac{b_0 - b_1}{x_0}
\]
under the condition (\ref{eqn: convexity}) and that $x_0\geq 2$. We rewrite the above as
\begin{equation*}
0 \leq 
b_0 \left(x_0 + \frac{16}{x_0}-8\right) +  b_1\left(x_0 + x_1 - \frac{16}{x_0} \right) + \sum_{i = 2}^{k-1} b_i (x_{i-1}+x_i).
\end{equation*}
Using the quadratic formula, we further rewrite the above as
\begin{equation}
\label{eqn: refined linear optimization in 8/7}
0 \leq \frac{b_0}{x_0} (x_0 - 4)^2 + 
 b_1\left(x_0 + x_1 - \frac{16}{x_0} \right) +
\sum_{i = 2}^{k-1} b_i (x_{i-1}+x_i)
\end{equation}

Now if $x_0\geq 4$, then the coefficient $x_1 + x_0- 16/x_0$ at $b_1$ is non-negative and thus (\ref{eqn: refined linear optimization in 8/7}) follows. Observe that none of these ideals will attain $8/7$. Namely, for the equality to hold in  (\ref{eqn: refined linear optimization in 8/7}) we must take $x_0 = 4$
and $b_1 = 0$, otherwise $x_1 > 0$. 
But then $\eh(J) = 4b_0$ and (\ref{eqn: refined goal in 8/7})
has strict inequality.

We still have two cases remaining: $x_0 = 2$ or $x_0=3$. Note that by (\ref{eqn: convexity}), 
\[
\frac{b_0}{x_0} \geq \frac{b_1}{x_0} + \frac{b_1}{x_1} - \frac{b_2}{x_1}.
\]
By plugging this into (\ref{eqn: refined linear optimization in 8/7}) and rearranging the terms, it is enough to show that
\begin{align}
\label{eqn: refined further linear optimization in 8/7}
0 \leq & \,\ b_1 \left(x_0 + x_1 - \frac{16}{x_0} + \frac{(x_0 - 4)^2}{x_0} + \frac{(x_0 - 4)^2}{x_1} \right) \\
& + b_2 \left(x_1 + x_2 - \frac{(x_0 - 4)^2}{x_1}\right) 
+ 
\sum_{i = 3}^{k-1} b_i (x_{i-1}+x_i). \notag
\end{align}

Suppose that $x_0 = 3$. Then the coefficient at $b_1$ is 
$x_1 + 1/x_1 -2 \geq 0$ and at $b_2$ is $x_1 + x_2 - 1/x_1 \geq 0$. 
Hence (\ref{eqn: refined further linear optimization in 8/7}) holds 
and becomes equality if and only if $x_1 = 1, x_2 = 0$.
In this case, (\ref{eqn: convexity}) shows that $b_0 > 4b_1$. Note that the equality is not possible, otherwise the integer point $(2b_1, 2)$ 
is on the line through $(b_0, 0)$ and $(b_1, 3)$
contradicting the definition of $b_1$.
Thus (\ref{eqn: refined goal in 8/7}) is a strict inequality:
\[
\eh(J) + 16 \frac{b_0- b_1}{a_1 - a_0}
= 4b_0 + 3b_1 + 16 \frac{b_0 - b_1}{3} 
> 8b_0.
\]

Last assume that $x_0 = 2$. 
The first possibility for the equality to hold in (\ref{eqn: refined further linear optimization in 8/7}) is to have $b_1 = 0$.
In this case, $\eh(J) = 2b_0$ and (\ref{eqn: refined goal in 8/7}) is
always strict. 
Now, $b_1 > 0$ and 
the coefficient of $b_1$ in (\ref{eqn: refined further linear optimization in 8/7}) is $x_1 + 4/x_1 -4 \geq 0$.
If $b_2 = 0$, then the equality in (\ref{eqn: refined further linear optimization in 8/7}) may only happen when $x_1 = 2$. 
We have $\eh (J) = 2 b_0 + 4b_1$
and $b_0 \geq 2b_1$ by (\ref{eqn: convexity}). 
First, suppose that $b_0 - b_1$ is odd. 
Then 
\[
8(\eh(J) + b_0 + a_k-k) - 16b_0 + 
8 \left \lceil 
\frac{b_0 - b_1}{2}
\right \rceil
= 16b_0 + 24b_1 + 8 + 8(a_k - k)
\geq 14b_0 + 28b_1 + 8, 
\]
and we have a strict inequality in (\ref{eqn: unrefined goal}).
Second, if $b_0 - b_1$ is even, then, in fact, $b_0 > 2b_1$,  
otherwise $(b_0, 0), (b_1, 2), (0, 4)$ are on the same line and 
we would have an uncounted integer point $(b_1/2, 3)$. 
Hence, in this case (\ref{eqn: refined goal in 8/7}) is a strict inequality.

Thus, we arrived to the case where $x_0 = 2$ and $b_2 > 0$.
Now, the coefficient of $b_2$ in (\ref{eqn: refined further linear optimization in 8/7}) is 
$x_1 + x_2 - 4/x_1$.
Since $x_1$ is a positive integer, this coefficient is always non-negative unless $x_0=2$ and $x_1=1$. 
If the coefficients are non-negative, we must make them $0$, so 
$x_1 = 2$ and $x_2 = 0$, but this would contradict $b_2 > 0$.

Hence, we now proceed to analyze the last case where $x_0 = 2$ and $x_1 = 1$. Summing up, the coefficient at 
$b_1$ in (\ref{eqn: refined further linear optimization in 8/7}) 
is $1$ and the coefficient of $b_2$ is 
$x_2-3$. By (\ref{eqn: convexity}) again, we have 
$$b_1 =\frac{b_1}{x_1} \geq \frac{b_2}{x_1} + \frac{b_2}{x_2}  -\frac{b_3}{x_2} =b_2 + \frac{b_2}{x_2}  -\frac{b_3}{x_2} .$$ 
Plugging these into (\ref{eqn: refined further linear optimization in 8/7}), it is enough to show that 
\[
b_2+ \frac{b_2}{x_2} -\frac{b_3}{x_2} + b_2(x_2-3) +  \sum_{i = 3}^{k-1} b_i (x_{i-1} + x_i)
= b_2(x_2 + \frac{1}{x_2} -2) + b_3(x_2 -\frac{1}{x_2} + x_3) + \sum_{i = 4}^{k-1} b_i (x_{i-1} + x_i)
\]
is nonnegative. This is always true since the coefficients of all $b_i$'s above are non-negative. 
It remains to note that the equality in (\ref{eqn: refined further linear optimization in 8/7}) 
now forces that $x_2 = 1$ and $b_3 = 0$. 
We compute $\eh(J) = 2b_0 + 3b_1 + 2b_2$ and observe that 
if any of the convexity inequalities (\ref{eqn: convexity}) is strict, then
\[
\eh(J) + 8(b_0 - b_1) = 
8b_0 + (2b_0 - 5b_1+ 2b_2) > 8b_0,
\]
showing that (\ref{eqn: refined goal in 8/7}) is a strict inequality.
Hence, we may assume that all those points lay on the same line, 
giving that $b_0 = 4b_2$ and $b_1 = 2b_2$. 
\end{proof}

\begin{theorem}
\label{thm: minimal elliptic deg one}
Let $(R, \mf m)$ be a minimally elliptic singularity of degree $1$. Then we have $\lm(R) = 8/7$.
\end{theorem}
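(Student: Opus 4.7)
After replacing $R$ by $\widehat{R}$ via Proposition~\ref{prop: basic results on cLM}(\ref{part restrict to complete}), Laufer's classification \cite[Theorem~3.13 and Section~5]{Laufer} shows that a minimally elliptic singularity of degree one is a hypersurface and that, up to analytic isomorphism, either (i) the exceptional divisor on the minimal resolution is irreducible (simple elliptic case, or rational curve with a cusp), or (ii) the exceptional consists of three smooth rational curves meeting transversally at a single point, realized by $\widehat{R}\cong k[[x,y,z]]/(f)$ with $f = z^2 + y^3 + h$, where every monomial of $h$ is divisible by $x$ and has $(1,2,3)$-weighted order at least $8$. In case (i), Theorem~\ref{thm: minimal elliptic irred exc} applied with $e=1$ gives $\lm(R)=8/7$ directly, so I concentrate on case (ii).

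\emph{Upper bound.} In case (ii) every monomial of $h$ has $(1,2,3)$-weight strictly greater than $6$, so the $(1,2,3)$-weight filtration $\{J_n\}$ on $R$ is a Noetherian graded family with associated graded ring $\gr_w(R)\cong k[x,y,z]/(z^2+y^3)$. By Proposition~\ref{prop: LM associated graded},
\[
\lm(R)\;\leq\;\grlm(\gr_w R)\;=\;\lm\bigl(k[[x,y,z]]/(z^2+y^3)\bigr)\;=\;\tfrac{8}{7},
\]
where the middle equality uses Proposition~\ref{prop: basic results on cLM}(\ref{part restrict to complete}) to pass to the completion at the homogeneous maximal ideal, and the last equality is Lemma~\ref{lem: LM cusp sing}.

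\emph{Lower bound.} For the reverse inequality I would exhibit an ideal in $R$ whose ratio $\eh/(2\length)$ equals $8/7$. Let $\pi\colon X\to\Spec R$ be the minimal resolution and $Z_f$ the fundamental cycle, so that $Z_f^2 = -1$ and $K_X$ is numerically $-Z_f$ by \cite[Theorem~4.21]{Reid}. On a resolution $Y\to X$ dominating $X$ on which the ideal sheaf $\sO_X(-4Z_f)\cdot\sO_Y$ is invertible and $\m$-primary (arranged by finitely many blow-ups of points in the base locus, which does not affect intersection numbers), set $I\coloneqq \Gamma(Y,\sO_Y(-4Z_f))\cap R$. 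Theorem~\ref{thm: Kato RR} applied to $E = 4Z_f$ yields $\eh(I) = -(-4Z_f)^2 = 16$ and
\[
\length(R/I)\;=\;-\frac{(4Z_f)^2 + K_Y\cdot (4Z_f)}{2}+p_g(R) - h^1(Y,\sO_Y(-4Z_f))\;=\;7 - h^1.
\]
Provided the vanishing $h^1(Y,\sO_Y(-4Z_f))=0$ holds, the ratio is $16/14 = 8/7$, proving $\lm(R)\geq 8/7$.

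\emph{Main obstacle.} The hardest step is establishing the cohomology vanishing $h^1(X,\sO_X(-nZ_f))=0$ for $n\geq 2$ in the three-curve configuration, together with the $\m$-primality of the ideal $I_{4Z_f}$. I would proceed as in the proof of Theorem~\ref{thm: minimal elliptic irred exc} at $e=1$: read off from the explicit Reid normal form that $\length(R/I_{2Z_f}) = 2$ and invoke Theorem~\ref{thm: Kato RR} in reverse to deduce $h^1(X,\sO_X(-2Z_f)) = 0$, and then propagate the vanishing to all $n\geq 2$ using global generation of $\sO_X(-2Z_f)$ combined with \cite[Proposition~2.5]{OWY}. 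The required $\pi$-ampleness of $\sO_X(-nZ_f)$ in the three-curve setting is checked by computing $-Z_f\cdot E_i$ on each of the three exceptional components and verifying these intersection numbers are positive. An alternative approach that bypasses the geometric verification is to work intrinsically with the ideals $\overline{(y^2,x^{4n})}\subseteq R$ and show, by exploiting the structure of the Rees valuations of $R$ relative to those of the cusp $k[[x,y,z]]/(z^2+y^3)$, that the equality $\eh(\overline{(y^2,x^{4n})}_R) = 16n$ and $\length(R/\overline{(y^2,x^{4n})}_R) = 7n$ transfers from Lemma~\ref{lem: LM cusp sing}.
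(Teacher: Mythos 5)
Your proposal is correct and lands on the same two pillars as the paper's proof: the upper bound by degeneration to the cusp $k[[x,y,z]]/(z^2+y^3)$ via Proposition~\ref{prop: LM associated graded} and Lemma~\ref{lem: LM cusp sing}, and the lower bound by computing $\length(R/J_4)=7$, $\eh(J_4)=16$ on the minimal resolution using Theorem~\ref{thm: Kato RR} plus the vanishing $h^1(X,\sO_X(-4Z_f))=0$. The organization, however, differs in a way worth noting. You case-split on Laufer's classification of the exceptional configuration and, in the reducible case, appeal to an explicit normal form $z^2+y^3+h$ with $h$ of weighted order $\geq 8$, so that a single $(1,2,3)$-weight degeneration already yields $z^2+y^3$; the irreducible cases are outsourced to Theorem~\ref{thm: minimal elliptic irred exc}. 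The paper instead avoids the case split entirely: Reid's theorem gives $\Gr_{J_\bullet}(R)\cong k[x,y,z]/(z^2+y^3+xf)$ uniformly for \emph{all} degree-one minimally elliptic singularities (where $J_n=\Gamma(X,\sO_X(-nZ_f))$), and a second degeneration $\Gr_{(x)}$ then kills $xf$. This two-step argument handles simple elliptic ($f\neq 0$), cuspidal, and reducible configurations at once without needing the finer classification, and it never invokes Theorem~\ref{thm: minimal elliptic irred exc}. For the lower bound, the hedging about passing to a dominating resolution $Y\to X$ and about $\m$-primarity of the ideal is superfluous: $X$ is already smooth with $\sO_X(-nZ_f)$ invertible, and Reid's Lemma~4.23 gives its global generation for $n\geq 2$ directly on $X$. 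The steps you sketch afterward (deduce $h^1(X,\sO_X(-2Z_f))=0$ from Kato's formula using $\length(R/J_2)=2$, then propagate to $n=4$ via \cite[Proposition~2.5]{OWY}) are exactly what the paper does, so your acknowledged gap closes as you predicted.
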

\begin{proof}
Let $X\to \Spec(R)$ be the minimal resolution with the fundamental cycle  $Z_f$, and let $J_n\coloneqq \Gamma(X, \sO_{X}(-nZ_f))$. By \cite[4.25, Theorem, Corollary, and Remark]{Reid}, $\{J_n\}_n$ is a graded family and we have $\Gr_{J_\bullet}(R)\cong k[x,y,z]/(z^2+y^3+xf(x,y,z))$ where $z^2+y^3+xf(x,y,z)$ is homogeneous if we assign $x$, $y$, $z$ weights $1$, $2$, $3$ respectively. It follows that $\Gr_{(x)}\Gr_{J_\bullet}(R)\cong k[x,y,z]/(z^2+y^3)$. 
Thus applying Proposition~\ref{prop: LM associated graded} twice (and using Proposition~\ref{prop: LM multigraded}), we have
$$\lm(R)\leq \lm(k[[x,y,z]]/(z^2+y^3)) =8/7$$
where the last equality follows from Lemma~\ref{lem: LM cusp sing}. 

It remains to find an $\m$-primary ideal of $R$ such that $8/7$ is achieved. Note that by \cite[Lemma 4.23]{Reid}, we know that $\sO_X(-nZ_f)$ is globally generated for all $n\geq 2$ and that by \cite[Corollary 4.25]{Reid}, $J_2= \Gamma(X, \sO_X(-2Z_f))$ satisfies $\length(R/J_2)=2$. By Theorem~\ref{thm: Kato RR}, we have 
\[
2 + h^1(X, \sO_X(-2Z_f))  = -\frac{4Z_f^2 + 2K_X Z_f}{2} + 1 = -\frac{-4+2}{2} +1=2,
\]
where we used that $K_X$ is numerically equivalent to $-Z_f$ and $-Z_f^2=1$. It follows that $h^1(X, \sO_X(-2Z_f))=0$ and hence $h^1(X, \sO_X(-4Z_f))=0$ by \cite[Proposition 2.5]{OWY}. Finally, we apply Theorem~\ref{thm: Kato RR} to $J_4=\Gamma(X, \sO_X(-4Z_f))$, we obtain that 
\[
\length(R/J_4) = -\frac{16Z_f^2 + 4K_X Z_f}{2} + 1 = -\frac{-16+4}{2} +1=7.
\]
Since $\eh(J_4)=-(4Z_f)^2=16$, we have 
$$\frac{\eh(J_4)}{2\length(R/J_4)}=16/14=8/7.$$
This completes the proof of the theorem.
\end{proof}

\subsection{Asymptotic Lech's inequality} For most surface examples studied in the previous two subsections, the Lech--Mumford constant is attained (i.e., the supremum in the definition is actually a maximum). We expect the following more general conjecture in this direction.

\begin{conjecture}
\label{conj: asymptotic Lech consequence}
Let $(R,\m)$ be a local ring. Suppose $\widehat{R}$ has an isolated singularity, i.e.,  $\widehat{R}_P$ is regular for all $P\in\Spec(\widehat{R})-\{\m\}$. If $\lm(R)>1$, then the supremum in the definition of $\lm(R)$ is attained. In particular, $\lm(R)\in \Q$. 
\end{conjecture}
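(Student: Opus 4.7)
The plan is to reduce to the complete case and exploit the isolated singularity hypothesis in two stages: an asymptotic Lech-type bound forcing maximizing sequences to stay shallow in $\m$, followed by a compactness/finiteness argument that produces a limiting ideal attaining the supremum.

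By Proposition~\ref{prop: basic results on cLM}(\ref{part restrict to complete}) we may replace $R$ by $\widehat{R}$ and assume $R$ is complete with isolated singularity. By Proposition~\ref{prop: basic results on cLM}(\ref{part restrict to integrally closed}) pick a sequence $\{I_n\}$ of integrally closed $\m$-primary ideals with $\eh(I_n)/(d!\length(R/I_n)) \to \lm(R) > 1$. The heart of the argument---and the main obstacle---is to establish the following asymptotic Lech inequality: for every $\epsilon > 0$ there exists $N = N(\epsilon)$ such that every integrally closed $\m$-primary ideal $I \subseteq \m^N$ satisfies
\[
\frac{\eh(I)}{d!\length(R/I)} \leq 1 + \epsilon.
\]
The motivation is that for $\overline{\m^n}$ itself the ratio already tends to $1$: one has $\eh(\m^n) = n^d \eh(R)$ and $d!\length(R/\m^n) = \eh(R)n^d + O(n^{d-1})$, so the ratio is $1 + O(1/n)$. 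Under isolated singularity, this behavior should propagate to all integrally closed ideals living deep in $\m$, because the ``singular'' contributions at $\m$ become dominated by the generic (regular) behavior of $R$ off $\m$ as $I$ shrinks. A concrete attack would use the intersection-theoretic formula $\eh(I) = -(-E)^d$ on a birational model $Y \to \Spec(R)$, combined with Kato's Riemann--Roch (Theorem~\ref{thm: Kato RR}) for $\length(R/I)$, taking advantage of the fact that the higher cohomology terms $h^i(Y,\sO_Y(-E))$ should be uniformly bounded once the only singularity is at $\m$.

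Granting this bound, set $\epsilon = (\lm(R) - 1)/2$ and let $N$ be the corresponding threshold. Then all but finitely many $I_n$ satisfy $I_n \not\subseteq \m^N$, and after passing to a subsequence we may assume this for every $n$. The isolated singularity hypothesis should then imply that the set of integrally closed $\m$-primary ideals not contained in $\m^N$ is parametrized (up to integral equivalence) by a finite-dimensional discrete space: each such $I$ is determined by finitely many divisorial valuation multiplicities on a common log resolution of $\m^N$, and the Rees valuations must be drawn from a fixed finite pool. On this parameter space the functional $I \mapsto \eh(I)/(d!\length(R/I))$ is continuous when extended to rational powers via Proposition~\ref{prop: geometric rational powers}, so a maximizing subsequence should cluster at a point realizing the supremum, yielding an ideal $I$ with $\lm(R) = \eh(I)/(d!\length(R/I)) \in \mathbb{Q}$ and proving the ``in particular'' clause automatically. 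The main obstacle is the asymptotic Lech inequality in the previous paragraph; once it is established, the compactness step should be relatively routine.
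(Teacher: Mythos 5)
Note first that the statement you are trying to prove is labelled a \emph{conjecture} in the paper; it is not proved in general. What the paper actually proves is Proposition~\ref{prop: LM attained iso sing}, the special case in which $R$ contains a field of characteristic $p>0$ and $R/\m$ is perfect, and the input there is precisely the asymptotic Lech bound of \cite[Corollary~4.4]{HMQS}: for any $\epsilon>0$ there is $N$ so that $\eh(I)/(d!\length(R/I))\leq\lm(R)-\epsilon$ whenever $\length(R/I)>N$. You have correctly identified a bound of this kind as the crux, and you are right that without it the conjecture is out of reach. The paper also proves a two-dimensional Gorenstein normal case by establishing a version of the asymptotic bound directly via a uniform Brian\c con--Skoda statement (Lemma~\ref{lem: uniform BS dim 2}).

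However, both your formulation of the asymptotic bound and your handling of the remaining ``shallow'' ideals are problematic. Your threshold is phrased as $I\subseteq\m^N$, not $\length(R/I)>N$, and these conditions are not interchangeable. Already in $k[[x,y]]$ the integrally closed ideals $(x,y^n)$ have unbounded colength yet none of them lies in $\m^2$. So even granting your bound, the ideals it does \emph{not} control (those not contained in $\m^N$) still have unbounded colength and unbounded multiplicity, and your proposed compactness step must deal with that infinite family. The colength formulation is the one that pairs correctly with the second half of the argument.

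The second half of your argument has a genuine gap as well. The claim that integrally closed $\m$-primary ideals not contained in $\m^N$ are parametrized, up to integral equivalence, by a finite-dimensional discrete space, with Rees valuations drawn from a fixed finite pool, is simply false. In a regular local ring the ideals $(x,y^n)$ are pairwise non-integrally-equivalent and their Rees valuations are pairwise distinct (the monomial valuation with $v(x)=n$, $v(y)=1$), so there is no finite pool; the hypothesis of isolated singularity at $\m$ puts no constraint on these shallow ideals. Likewise, the ``continuity on the parameter space'' you invoke is not meaningful: the set of ideals is discrete, and extending to rational powers does not change that.

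The correct second step, as in the paper's proof of Proposition~\ref{prop: LM attained iso sing}, is elementary and requires no parametrization or compactness. Once the asymptotic bound (in colength form) forces the supremum to be achieved on $\{I:\length(R/I)\leq N\}$, Lech's inequality gives $\eh(I)\leq d!\eh(R)\length(R/I)\leq d!\eh(R)N$ on that set. Both $\eh(I)$ and $\length(R/I)$ are then positive integers lying in fixed bounded ranges, so the ratio $\eh(I)/(d!\length(R/I))$ takes only finitely many rational values. The supremum of a finite set of rationals is attained and rational. The finiteness is of \emph{values}, not of ideals; this is a purely arithmetic observation and is the part of the argument you should replace your compactness step with.
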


\begin{remark}
\begin{enumerate}
    \item Without the assumption that $\lm(R)>1$, the first conclusion of Conjecture~\ref{conj: asymptotic Lech consequence} is false: consider $R=\mathbb{C}[[x,y,z,w]]/(x^2+y^2+z^2+w^2)$, then $R$ has an isolated singularity, but as $\lm(R/w)=1$ by Theorem~\ref{thm: Lech-stable CM surface}, Proposition~\ref{prop: Mumford prop 3.5} implies that $\lm(R)=1$ and the supremum in the definition of $\lm(R)$ is not attained. 

    \item Without the isolated singularity assumption, the first conclusion of Conjecture~\ref{conj: asymptotic Lech consequence} is also false. Let $A$ be any Artinian local ring that is not a field and let $R \coloneqq A[[T_1,\dots,T_d]]$. By Theorem~\ref{thm: uniform Lech} or Proposition~\ref{prop: basic results on cLM}(\ref{part multiplicity bounds}), $\lm(R)=\eh(R)>1$ for every $d$. But the supremum in the definition of $\lm(R)$ is not attained by Proposition~\ref{prop: Mumford prop 3.5} for $d\geq 2$.
\end{enumerate}
\end{remark}

On the other hand, we do not know any example that $\lm(R)>1$ and $\widehat{R}$ is reduced, but the supremum in the definition of $\lm(R)$ is not attained. Nor do we have an example where $\lm(R)$ is irrational. However, beyond the isolated singularity case, we have limited evidence. In fact, if we examine the descending chain (\ref{eqn: chain of inequalities}): 
$$\lm(R)\geq \lm(R[[T_1]]) \geq \lm(R[[T_1, T_2]]) \geq \cdots \geq \lm(R[[T_1,\dots, T_n]])\geq \cdots,$$
we observe that as long as this chain is not strictly decreasing, 
by Proposition~\ref{prop: Mumford prop 3.5}
there must exist some $n$ such that the supremum in the definition of
$\lm(R[[T_1,\dots, T_n]])$ is not attained. Thus, one way to produce or to seek examples where $\lm(R[[T_1,\dots, T_n]])>1$ and is not attained is to start with a non-lim-stable singularity $R$ and compute the chain (\ref{eqn: chain of inequalities}) above. However, while there are plenty of non-lim-stable singularities (for example, see Theorem~\ref{thm: dimension one semistable} in Section~\ref{section: Estimates on higher LM} or Theorem~\ref{thm: lim-stable implies log canonical general} in Section~\ref{section: MMP}), computing these higher Lech--Mumford constants seems extremely difficult. We will see in Example~\ref{example: double drop} that even producing an example $(R,\m)$ with $\lm(R)>\lm(R[[T_1]]) >\lm(R[[T_1, T_2]])$ requires nontrivial effort. 

We also do not have an example of a local ring $(R,\m)$ with $\lm(R[[T_1, \ldots, T_n]]) = 1$ for some $n \geq 2$ but $R$ is not semistable. In fact, as we mentioned earlier, we do not even know an example of a lim-stable but not semistable singularity.

Despite the difficulty discussed above, we point out the following result providing strong evidence towards Conjecture~\ref{conj: asymptotic Lech consequence}. 

\begin{proposition}
\label{prop: LM attained iso sing}
Conjecture~\ref{conj: asymptotic Lech consequence} holds if $R$ contains a field of characteristic $p>0$ and $R/\m$ is perfect.
\end{proposition}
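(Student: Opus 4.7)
The plan is to reduce to the complete $F$-finite case, normalize a maximizing sequence using Frobenius, and then extract an attaining ideal via a compactness argument on a finite-dimensional parameter space of ideals.

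First I would reduce to the complete $F$-finite case. By Proposition~\ref{prop: basic results on cLM}(\ref{part restrict to complete}) we may replace $R$ by $\widehat{R}$ and so assume $R$ is complete with perfect residue field $k$ of characteristic $p > 0$. Cohen's structure theorem then presents $R$ as a quotient of $k[[x_1,\ldots,x_n]]$, and since $k$ is perfect the latter (and hence $R$) is $F$-finite. The isolated singularity hypothesis is preserved. Next choose a maximizing sequence of integrally closed $\m$-primary ideals $\{I_n\}$ with
\[
r_n \coloneqq \frac{\eh(I_n)}{d!\length(R/I_n)} \longrightarrow \lm(R) > 1.
\]

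The heart of the proof is to replace $\{I_n\}$ with a sequence $\{J_n\}$ whose colengths are \emph{uniformly bounded}, while preserving the maximizing property $\eh(J_n)/(d!\length(R/J_n)) \to \lm(R)$. The plan is to construct $J_n$ using Frobenius-type operations (for instance, by combining Frobenius pullbacks with the saturation operation $(-)\colon \m^\infty$). Here the isolated singularity and $F$-finiteness hypotheses enter crucially: they imply that $R$ has a strong test ideal which is $\m$-primary, the $F$-signature on the punctured spectrum is uniformly positive, and the Hilbert--Kunz function $q \mapsto \length(R/I^{[q]})$ admits uniform error estimates of the form $\length(R/I^{[q]}) = \ehk(I)q^d + O(q^{d-1})$ with constants controllable across the sequence. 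These uniform estimates should let us rescale each $I_n$ (via $n$-dependent Frobenius powers together with an $\m$-adic cutoff) to obtain $\{J_n\}$ lying in a single colength stratum, without degrading $r_n$ in the limit.

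Once the sequence is normalized, I would invoke compactness: the set of $\m$-primary ideals of $R$ of colength at most $C$ is parametrized by a closed subscheme of the Grassmannian of $k$-subspaces of $R/\m^{C+1}$ (note each such ideal contains $\m^{C+1}$), which is a projective $k$-variety. Passing to a subsequence, $\{J_n\}$ converges to some $\m$-primary ideal $J$ of colength at most $C$. Upper semi-continuity of Hilbert--Samuel multiplicity in flat families (applied to the universal family on the parameter scheme) together with the colength stratification then gives $\eh(J)/(d!\length(R/J)) \geq \lm(R)$, and the reverse inequality is automatic, so $\lm(R)$ is attained by $J$. The rationality $\lm(R) \in \mathbb{Q}$ then follows since $\eh(J)$ and $\length(R/J)$ are positive integers.

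The main obstacle is the Frobenius normalization step: one must show that an arbitrary maximizing sequence can indeed be transformed, via explicit operations in characteristic $p$, into one with bounded colength while preserving the limit of the Lech--Mumford ratios. This step depends on a sharp interplay between Hilbert--Samuel and Hilbert--Kunz data, and is where the hypotheses on residue field perfectness and isolated singularity are genuinely used; in particular, dropping either hypothesis breaks the uniform error estimates needed. A secondary but manageable task is to verify that the limit ideal in the parameter scheme remains of finite colength, which one arranges by working inside the open locus of $\m$-primary ideals.
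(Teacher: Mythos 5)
Your strategy diverges from the paper's at the key step, and unfortunately the divergence hides a genuine gap. The paper's proof is much shorter: it cites \cite[Corollary~4.4]{HMQS}, which (under exactly your hypotheses of isolated singularity, positive characteristic, perfect residue field) gives a \emph{uniform asymptotic Lech inequality}: for any $\epsilon>0$ there is an $N$ so that $\eh(I)/(d!\length(R/I)) \leq \lm(R)-\epsilon$ whenever $\length(R/I)>N$. With that in hand there is no need for a limiting argument at all --- for $\length(R/I)\leq N$ the ratio $\eh(I)/(d!\length(R/I))$ is a rational number with bounded numerator (by Lech's inequality, $\eh(I)\leq d!\,\eh(R)\,\length(R/I)\leq d!\,\eh(R)\,N$) and bounded denominator, so it takes only finitely many values, and the supremum is a maximum.

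Your proposal instead tries to normalize an arbitrary maximizing sequence $\{I_n\}$ into one $\{J_n\}$ with \emph{uniformly bounded colength}, and this is the step you cannot make work. You say you would rescale via ``$n$-dependent Frobenius powers together with an $\m$-adic cutoff,'' but Frobenius powers $I^{[q]}$ have colength growing like $q^d$ --- they go the wrong direction --- and more fundamentally the Hilbert--Samuel multiplicity $\eh(I^{[q]})$ is not controlled by $\eh(I)$ for a non-parameter ideal $I$; the clean Frobenius formulas you are appealing to (isolated singularity $\Rightarrow$ uniform $O(q^{d-1})$ error term, strong test ideal, etc.) pertain to the Hilbert--Kunz functional $q\mapsto\length(R/I^{[q]})$, not to Hilbert--Samuel multiplicity. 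There is no operation you can perform that takes a deep ideal $I_n$ and produces a bounded-colength ideal $J_n$ while preserving the Lech--Mumford ratio even approximately --- indeed, the entire content of \cite[Corollary~4.4]{HMQS} is precisely that deep ideals \emph{cannot} have ratios close to $\lm(R)$, so no such normalization exists when $\lm(R)>1$. Your compactness-and-semicontinuity endgame (parametrize ideals of colength $\leq C$ by a projective scheme, pass to a limit, apply upper semicontinuity of $\eh$ along the flat universal family) is a fine idea that could be made rigorous, but it is also unnecessary: once you know the ratio is bounded away from $\lm(R)$ for deep ideals, the finiteness-of-rationals argument is immediate. The honest diagnosis is that you identified the right hypotheses (isolated singularity, char $p$, perfect residue field) and the right intuition (deep ideals should not matter) but did not locate the precise external result that encodes that intuition, and the Frobenius machinery you reached for instead does not apply to Hilbert--Samuel multiplicity of general ideals.
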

\begin{proof}
Since $\lm(R)>1$, we can fix an $\epsilon>0$ such that $\lm(R)-\epsilon >1$. By \cite[Corollary 4.4]{HMQS}, we know that there exists $N>0$ such that 
$$\frac{\eh(I)}{d!\length(R/I)} \leq \lm(R)-\epsilon$$
for all $\m$-primary ideals $I\subseteq R$ such that $\length(R/I)>N$. On the other hand, if $\length(R/I)\leq N$, then by Lech's inequality (Theorem~\ref{thm Lech}), $\eh(I)\leq d!\eh(R)\length(R/I)$. It follows that $\eh(I)/(d!\length(R/I))$ can only take values in finitely many rational numbers when $\length(R/I)\leq N$ and thus $\lm(R)$ is attained. 
\end{proof}

Proposition~\ref{prop: LM attained iso sing} is essentially a consequence of \cite[Corollary 4.4]{HMQS}. For non-isolated singularities, one cannot expect the conclusion of \cite[Corollary 4.4]{HMQS} (or the statement of \cite[Conjecture 1.2 (a)]{HMQS}) to hold, as already observed in \cite[Example 4.8]{HMQS}. In fact, Lemma~\ref{lem: LM cusp sing} also exhibits an example that the Lech--Mumford constant is achieved by sufficiently deep ideals. 

Note that, when $R$ is singular, $R[[T]]$ never has an isolated singularity. In this case we do not know whether $\lm(R[[T]])$ can always be approximated (or even achieved) by sufficiently deep ideals. 

\begin{question}
\label{question: asymptotic Lech R[[T]]}
Let $(R,\m)$ be a local ring of dimension $d$. Is it always true that 
    \[
    \lm(R[[T]]) = \lim_{N \to \infty} \sup_{\substack{\sqrt{I}=(\m, T)\\
    I\subseteq (\m, T)^N}} \left\{\frac{\eh(I)}{(d+1)!\length (R[[T]]/I)} \right\}?
    \]
\end{question}

\begin{remark} 
\begin{enumerate}
    \item Question~\ref{question: asymptotic Lech R[[T]]} has an affirmative answer whenever $\lm(R[[T]])$ is computed by $(J, T)$ for some $\m$-primary ideal $J\subseteq R$. By \cite[Lemma 5.10]{HMQS}, it is enough to show that 
    $$\lim_{N \to \infty} \sup_{\substack{\sqrt{I}=(\m, T)\\
    \length(R[[T]]/I)>N}} \left\{\frac{\eh(I)}{(d+1)!\length (R[[T]]/I)} \right\} = \frac{\eh(J)}{(d+1)!\length(R/J)}.
    $$
But this is straightforward by considering $I= (J, T^{N+1})$ for each $N$.
    \item On the other hand, it rarely happens that $\lm(R[[T]])$ can be computed by $(J,T)$ (this fails in simple examples, see Lemma~\ref{lem: LM cusp sing}). In fact, if $\lm(R[[T]])$ is computed by $(J, T)$, then set $\rho$ to be the Rees period of $J$ and consider the ideal $\overline{(J,T^\rho)}$. By Corollary~\ref{cor: integral closures of powers of I+T^m}, we find that 
    $$\phantom{mmm}\frac{\rho \eh(J)}{(d+1)!\sum_{j=1}^\rho\length(R/J^{\frac{j}{\rho}})}=\frac{\eh(\overline{(J,T^\rho)})}{(d+1)!\length(R[[T]]/\overline{(J,T^\rho)})} \leq \lm(R[[T]]) = \frac{\eh(J)}{(d+1!)\length(R/J)}.$$
    It follows that $\sum_{j=1}^\rho\length(R/J^{\frac{j}{\rho}})
    \geq \rho\cdot \length(R/J)$ and thus we must have $J=J^{\frac{j}{\rho}}$ for each $1\leq j\leq \rho$. But since $J^{\frac{1}{\rho}}=\m$, we see that $J=\m$. Thus $\lm(R[[T]])$ is computed by the maximal ideal $(\m, T)$ and $\lm(R[[T]])=\eh(R)/(d+1)!$. 
\end{enumerate}
\end{remark}

We end this subsection by proving that Conjecture~\ref{conj: asymptotic Lech consequence} holds for Gorenstein normal rings of dimension two, in arbitrary characteristic. This includes most of the examples studied in the previous two subsections.

We will need the following lemma, which is well-known to experts (see \cite[Corollary~4.2]{MSTWWAdjoint} and \cite[Corollary~3.16]{EinIshiiMustata} for more general results in all dimensions). We include a short and characteristic-free proof for completeness.

\begin{lemma}
\label{lem: uniform BS dim 2}
Let $(R,\m)$ be a two-dimensional complete normal local domain. Then there exists $N>0$ such that for any $\m$-primary ideal $I\subseteq R$, we have $\m^N I^2 \subseteq J$ for any reduction $J$ of $I$.
\end{lemma}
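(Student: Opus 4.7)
The plan is to reduce the claim to a cohomological length bound on $\overline{I^2}/J\overline{I}$, then establish that bound using a resolution of singularities.

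First, I would perform a sequence of reductions. By Corollary~\ref{cor: purely transcendental field extension}, the faithfully flat extension $R\to R(t)=R[t]_{\m R[t]}$ preserves both the hypothesis and the conclusion (ideal containments descend from $R(t)$ to $R$), so we may assume the residue field of $R$ is infinite. Since $R$ is normal of dimension two, $R$ is automatically Cohen--Macaulay, so every minimal reduction of an $\m$-primary ideal is a parameter ideal $J=(a_1,a_2)$, and every reduction contains a minimal one. Combined with $I^2\subseteq\overline{I^2}$, it suffices to produce $N$ such that $\m^N\overline{I^2}\subseteq J$ for every integrally closed $\m$-primary ideal $I$ and every minimal reduction $J\subseteq I$.

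The key observation is that $J\overline{I}\subseteq J\cdot R=J$, and any $R$-module of length $\ell$ is annihilated by $\m^\ell$ (by a repeated application of Nakayama's lemma). Hence a uniform bound $\length(\overline{I^2}/J\overline{I})\leq C$ depending only on $R$ immediately yields $\m^C\overline{I^2}\subseteq J\overline{I}\subseteq J$, so we may take $N=C$. The claim is that $C=p_g(R)\coloneqq\length(R^1\pi_*\sO_Y)$ works for any resolution $\pi\colon Y\to X\coloneqq\Spec R$; this length is finite since $R$ being normal of dimension two means it is regular on the punctured spectrum, so $R^1\pi_*\sO_Y$ is supported at $\m$.

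For the cohomological bound, choose a resolution $Y$ on which both $I\sO_Y$ and $J\sO_Y$ are invertible; then, since $J$ is a reduction of $I$, the identity $JI^k=I^{k+1}$ forces $J\sO_Y=I\sO_Y=\sO_Y(-Z)$ for some effective divisor $Z$. The surjection $(a_1,a_2)\colon\sO_Y^{\oplus 2}\to\sO_Y(-Z)$ has locally free kernel (at each point one of $a_1,a_2$ generates the invertible ideal $\sO_Y(-Z)$, making the map locally split), identified with $\sO_Y(Z)$ via a determinant computation. Twisting by $\sO_Y(-Z)$ and applying $\pi_*$ (using $R^2\pi_*\sO_Y=0$ as the fibres have dimension at most one) yields the long exact sequence whose relevant piece reads
\[
\overline{I}^{\oplus 2}\xrightarrow{(a_1,a_2)}\overline{I^2}\to R^1\pi_*\sO_Y,
\]
where the image of the first arrow is exactly $J\overline{I}$. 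This produces the desired injection $\overline{I^2}/J\overline{I}\hookrightarrow R^1\pi_*\sO_Y$ and the bound $\length(\overline{I^2}/J\overline{I})\leq p_g(R)$. The main obstacle I expect is verifying that this image is precisely $J\overline{I}$ rather than some larger submodule; this relies critically on choosing the resolution so that $J\sO_Y=I\sO_Y$, forcing the sheaf-level surjection to be built from the generators of $J$ themselves.
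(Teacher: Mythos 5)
Your proof is correct and follows essentially the same route as the paper: reduce to an infinite residue field and a minimal reduction $J=(x,y)$, choose a resolution $\pi\colon Y\to\Spec R$ with $I\sO_Y=J\sO_Y=\sO_Y(-E)$, push forward the Koszul sequence $0\to\sO_Y\to\sO_Y(-E)^{\oplus 2}\to\sO_Y(-2E)\to 0$ to obtain $\overline{I}^{\oplus 2}\to\overline{I^2}\to R^1\pi_*\sO_Y$, and invoke the resolution-independence and finite length of $R^1\pi_*\sO_Y$. The final concern you raise is not an obstacle (the pushforward map is $(a,b)\mapsto ax+by$, so its image is $J\overline{I}$ by construction); just note that $R(t)$ is not complete, so you should either pass to its completion as the paper does or observe that $R(t)$ is still excellent, which suffices for the existence of a resolution of surface singularities.
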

\begin{proof}
We may replace $R$ by the completion of $R(t)$ to assume $R$ has an infinite residue field. It is enough to prove the result when $J=(x,y)$ is a minimal reduction of $I$. Now let $\pi \colon Y\to X\coloneqq\Spec(R)$ be a resolution of singularities such that $I\sO_Y=(x,y)\sO_Y=\sO_Y(-E)$. We have an exact Koszul complex $0\to \sO_Y\to \sO_Y(-E)^{\oplus 2}\xrightarrow{\cdot(x,y)} \sO_Y(-2E)\to 0.$ Pushforward to $X$ and note that $\pi_*\sO_Y(-nE)=\overline{I^n}$, we obtain that
$\overline{I}^{\oplus 2}\xrightarrow{\cdot (x,y)}\overline{I^2}\to R^1\pi_*\sO_Y$. Since $R$ is normal and $\dim(R)=2$, $R^1\pi_*\sO_Y$ is independent of the resolution and is annihilated by $\m^N$ for some fixed $N>0$. It follows that $\m^N I^2 \subseteq\m^N\overline{I^2}\subseteq (x,y)=J$ as wanted.
\end{proof}

\begin{proposition}
Let $(R, \mf m)$ be a two-dimensional complete Gorenstein normal domain. Then we have
\[
\lim_{N \to \infty} \sup_{\substack{\sqrt{I}=\m \\ \length(R/I)> N}} \left \{\frac{\eh(I)}{2\length(R/I)} \right \} = 1.
\]
As a consequence, if $\lm(R)>1$, then the supremum in the definition of $\lm(R)$ is attained. In particular, $\lm(R)\in\Q$.
\end{proposition}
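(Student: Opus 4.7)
The plan is to combine Kato's Riemann--Roch formula (Theorem~\ref{thm: Kato RR}) with an intersection-theoretic estimate on the minimal resolution. By Proposition~\ref{prop: basic results on cLM}(\ref{part restrict to integrally closed}), I would first restrict to integrally closed $\m$-primary ideals $I\subseteq R$. Let $\pi_0\colon X\to \Spec(R)$ be the minimal resolution, with irreducible exceptional curves $E_1,\ldots,E_r$; this exists since $R$ is complete, two-dimensional, and normal. For each such $I$, pick a smooth resolution $f\colon Y\to \Spec(R)$ dominating both $X$ and the blowup of $I$, so that $I\sO_Y=\sO_Y(-E)$ is invertible; then $\eh(I)=-E^2$, and combining Theorem~\ref{thm: Kato RR} with the inequality $h^1(Y,\sO_Y(-E))\leq p_g(R)$ from \cite{OWY} (invoked in the proof of Theorem~\ref{thm: minimal elliptic irred exc}) yields
\[
\eh(I)-2\length(R/I)\leq K_Y\cdot E.
\]

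Next, I would push this bound down to $X$. Let $\pi\colon Y\to X$, set $W:=\pi_*E$, and $F:=E-\pi^*W$, which is $\pi$-exceptional. The projection formula shows $W\cdot E_i=E\cdot\pi^*E_i\leq 0$ for each $E_i$ (since $-E$ is $f$-nef and the components of $\pi^*E_i$ are $f$-exceptional effective), so $W$ is anti-nef on $X$. From $f$-nefness of $-E$ and $\pi$-triviality of $\pi^*W$, $-F$ is $\pi$-nef, and negative-definiteness of the intersection form on $\pi$-exceptional curves forces $F\geq 0$. Since $X$ and $Y$ are smooth, Zariski's factorization presents $\pi$ as a composition of point blowups, so $K_{Y/X}\geq 0$; combined with $-F$ being $\pi$-nef, this gives $K_{Y/X}\cdot F\leq 0$. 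Writing $K_Y=\pi^*K_X+K_{Y/X}$ and using the projection formula,
\[
K_Y\cdot E=K_X\cdot W+K_{Y/X}\cdot F\leq K_X\cdot W,
\]
while $E^2=W^2+F^2\leq W^2$ gives $-W^2\leq \eh(I)$.

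The key estimate is to bound $K_X\cdot W$ by $O(\sqrt{\eh(I)})$. Since $R$ is Gorenstein, $K_X=\sum a_iE_i$ with $a_i\in\mathbb{Z}$; setting $w_i:=-E_i\cdot W\geq 0$, we have $K_X\cdot W=-\sum a_iw_i\leq (\max_i|a_i|)\sum_i w_i=(\max_i|a_i|)(-E^*\cdot W)$ with $E^*:=\sum_iE_i$. Cauchy--Schwarz applied to the positive definite form $-(\cdot,\cdot)$ on the exceptional lattice gives $-E^*\cdot W\leq\sqrt{-E^{*2}}\sqrt{-W^2}\leq\sqrt{-E^{*2}}\sqrt{\eh(I)}$. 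Combining with Lech's inequality $\eh(I)\leq 2\eh(R)\length(R/I)$ yields a constant $C=C(R)$ with
\[
\eh(I)-2\length(R/I)\leq C\sqrt{\length(R/I)}.
\]
Hence $\eh(I)/(2\length(R/I))\leq 1+C/(2\sqrt{\length(R/I)})$, so the supremum over $\length(R/I)>N$ is at most $1+C/(2\sqrt{N})\to 1$. Since powers $\overline{J^n}$ of any $\m$-primary $J$ satisfy $\eh(\overline{J^n})/(2\length(R/\overline{J^n}))\to 1$, the supremum is also at least $1-\epsilon$ for every $\epsilon>0$, so the limit equals $1$.

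For the consequence, if $\lm(R)>1$, set $\epsilon:=(\lm(R)-1)/2>0$ and use the limit to find $N$ such that $\eh(I)/(2\length(R/I))<1+\epsilon<\lm(R)$ whenever $\length(R/I)>N$. Then $\lm(R)$ must be attained (or approximated) only among $I$ with $\length(R/I)\leq N$; Lech's inequality bounds $\eh(I)\leq 2\eh(R)N$ for such $I$, so the values $\eh(I)/(2\length(R/I))$ form a finite set of rationals, forcing the supremum to be attained and rational. The main obstacle is the geometric setup in the push-down step---particularly verifying $F\geq 0$, that $W$ is anti-nef on $X$, and $K_{Y/X}\geq 0$---but each of these follows cleanly from negative-definiteness of the exceptional intersection form and smoothness of $X$ and $Y$.
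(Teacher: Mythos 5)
Your proposal is essentially correct, but it takes a genuinely different route from the paper's. The paper's proof works purely algebraically: it fixes a uniform Brian{\c c}on--Skoda exponent $K$ from Lemma~\ref{lem: uniform BS dim 2}, passes to the Artinian Gorenstein quotient $S = R/J$ by a minimal reduction $J$ of $I$, uses Gorenstein duality to bound $\length(IS) \leq \length(S/\m^K I S)$, and finally invokes the estimate $\mu(I)/\length(R/I) \to 0$ from \cite[Corollary~3.5]{HMQS}. Your argument instead works on a resolution: Kato's Riemann--Roch plus the OWY bound $h^1(Y,\sO_Y(-E)) \leq p_g(R)$ give $\eh(I) - 2\length(R/I) \leq K_Y \cdot E$, and pushing down to the minimal resolution together with Cauchy--Schwarz on the negative-definite exceptional lattice bounds this by $C\sqrt{\length(R/I)}$. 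Both approaches use the Gorenstein hypothesis but in different places (the paper for duality in $S$; you for the integrality of the discrepancies $a_i$, though in fact boundedness suffices, so your method is potentially applicable to any two-dimensional complete normal domain). Your route also hands you an explicit $O(N^{-1/2})$ convergence rate, which the paper's argument does not make visible. The geometric push-down steps you flag as the main concern ($W$ anti-nef, $F \geq 0$, $K_{Y/X} \geq 0$, $K_{Y/X}\cdot F \leq 0$) are all correct.

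Two points in your write-up are glossed over and should be fixed. First, the inequality $h^1(Y, \sO_Y(-E)) \leq p_g(R)$ from \cite[Theorem 3.1]{OWY} is proved under the assumption that the residue field is algebraically closed (at minimum, the underlying Bertini-type argument needs an infinite residue field); since the proposition assumes only that $R$ is complete, you must first replace $R$ by the completion of $R(t)$, exactly as the paper does in its own proof. Second, the opening step \emph{``I would first restrict to integrally closed $\m$-primary ideals''} does not directly carry over to the truncated supremum $\sup_{\length(R/I) > N}$, because $\length(R/\overline{I})$ can drop below $N$ when one passes to $\overline{I}$. One can salvage the reduction: for any $\epsilon > 0$ pick $M$ with $C/(2\sqrt{M}) < \epsilon$; if $\length(R/\overline{I}) > M$ the bound $\eh(I)/(2\length(R/I)) \leq \eh(\overline{I})/(2\length(R/\overline{I})) \leq 1 + C/(2\sqrt{M})$ applies, while if $\length(R/\overline{I}) \leq M$ then $\eh(I) = \eh(\overline{I}) \leq 2\eh(R)M$ by Lech's inequality, so $\eh(I)/(2\length(R/I)) < \eh(R)M/N$, which is $< 1 + \epsilon$ for $N$ large. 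Both patches are routine, but the proof as written does not close these gaps.
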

\begin{proof}
We replace $R$ by the completion of $R(t)$ to assume that $R$ has an infinite residue field. We fix $K > 0$ given by Lemma~\ref{lem: uniform BS dim 2}. Now let $I$ be an $\m$-primary ideal and let $J=(x,y)$ be a minimal reduction of $I$. In $S \coloneq R/J$  we have
by Lemma~\ref{lem: uniform BS dim 2} that 
\[
\eh(I) = \length(R/J)= \length (S) = \length (S/IS) + \length (IS)
\leq \length (S/IS) + \length (\Ann_S\m^KI).
\]
Since $S$ if Gorenstein, we get a further bound  
\[
\eh(I) \leq \length (R/I) + \length (\Hom_S (S/\m^KIS, S)) = \length (R/I) + \length (S/\m^K IS) 
\leq 2\length (R/I) +  \length (I/\m^K I).
\]
Filtering now gives us the inequality
\[
\length(I/\m^K I) = \sum_{n = 0}^{K-1} \length \left (\frac{\m^{n}I}{\m^{n+1} I} \right )
\leq \sum_{n = 0}^{K-1} \mu(\m^n)\mu(I)
= (1+ \mu(\m)+ \cdots + \mu(\m^{K-1}))\mu(I).
\]
By writing $R$ as a quotient of a complete regular local ring $A$
one obtains from \cite[Corollary~3.5]{HMQS} that $\mu(I)/\length(R/I)$
tends to $0$ when $\length(R/I)\to\infty$.
It follows that
$$
\lim_{N \to \infty} \sup_{\substack{\sqrt{I}=\m \\ \length(R/I)> N}} \left \{ \frac{\length(I/\m^K I)}{\length(R/I)}\right \} = 0.
$$
Therefore we have
$$
\lim_{N \to \infty}\sup_{\substack{\sqrt{I}=\m \\ \length(R/I)> N}} \left \{\frac{\eh(I)}{2\length(R/I)}  \right \} \leq \lim_{N \to \infty} \sup_{\substack{\sqrt{I}=\m \\ \length(R/I)> N}} \left \{\frac{\length (R/I) + \length (R/I)}{2\length(R/I)}  \right \} =1.
$$
The rest of the assertions follow from the same argument as in Proposition~\ref{prop: LM attained iso sing}.
\end{proof}

\newpage
\section{Estimates on higher Lech--Mumford constants}
\label{section: Estimates on higher LM}

\subsection{Lower bound on $\limlm(R)$ via graded families of ideals} 
To obtain estimates for $\limlm(R)$, we use the following elementary construction, which transforms a graded family of ideals of $R$ into a controllable graded family of ideals in the ring of power series. This construction is applicable in arbitrary dimensions, and the results presented in this subsection will be also utilized in Section~\ref{section: MMP}.

\begin{lemma}
\label{lem: introduce power family}
Let $(R, \mf m)$ be a local ring and $\{I_n\}_n$ be a graded family of $\m$-primary ideals where we set $I_0=R$. 
Fix non-negative integers $k, N$ and consider the following $\m+(t_1,\dots,t_r)$- primary ideal in $R[[t_1, \ldots, t_r]]$
\[
J_{N, k} \coloneqq \sum_{n = 0}^{N} I_{N - n} (t_1, \ldots, t_r)^{kn}.
\]
Then we have
\begin{align*}
    \length (R[[t_1,\dots,t_r]]/J_{N, k}) &= \sum_{j = 0}^{N - 1} \length (I_{j}/I_{j+1}) \binom{k(N - j) + r - 1}{r}\\
    &= 
    \sum_{n = 0}^{N-1}  \length(R/I_{N-n}) \left(\binom{k(n+1) + r - 1}{r} - \binom{kn + r - 1}{r} \right).
\end{align*}
\end{lemma}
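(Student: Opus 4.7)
The plan is to exploit the direct-sum decomposition of $S := R[[t_1,\ldots,t_r]]$ as an $R$-module along the monomial basis $\{t^\alpha\}_{\alpha\in\mathbb{N}^r}$. Setting $\mathfrak{n} := (t_1,\ldots,t_r)$, each ideal $\mathfrak{n}^\ell$ splits along this decomposition as the ``product'' over $|\alpha|\geq \ell$ of $R\cdot t^\alpha$, and hence so does each summand of $J_{N,k}$: we have $I_{N-n}\mathfrak{n}^{kn} = \prod_{|\alpha|\geq kn} I_{N-n}\cdot t^\alpha$. Taking the sum over $n = 0,\ldots,N$ and using that $I_j$ is decreasing in $j$, the dominant contribution at each $t^\alpha$ comes from the largest $n$ with $kn \leq |\alpha|$, yielding the decomposition
\[
J_{N,k} \;=\; \prod_{\alpha\in\mathbb{N}^r}\, I_{N-\lfloor |\alpha|/k\rfloor}\cdot t^\alpha,
\]
with the convention $I_j = R$ for $j\leq 0$.

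Since $J_{N,k}\supseteq \mathfrak{n}^{kN}$, the quotient $S/J_{N,k}$ is supported on the finitely many monomials with $|\alpha|<kN$, and on each such $t^\alpha$ it contributes a copy of $R/I_{N-\lfloor |\alpha|/k\rfloor}$. Grouping the $\alpha$'s by the value $n:=\lfloor |\alpha|/k\rfloor$ and counting via the hockey-stick identity (the number of $\alpha\in\mathbb{N}^r$ with $kn\leq |\alpha|<k(n+1)$ is $\binom{k(n+1)+r-1}{r}-\binom{kn+r-1}{r}$), we immediately obtain the second displayed formula of the lemma.

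The first displayed formula then follows by an Abel-style rearrangement: using $\length(R/I_{N-n}) = \sum_{j=0}^{N-n-1}\length(I_j/I_{j+1})$ and swapping the order of summation, the inner sum over $n$ telescopes to $\binom{k(N-j)+r-1}{r}$, giving the coefficient of $\length(I_j/I_{j+1})$. The only step requiring any care is justifying the ``monomial'' direct-sum decomposition of $J_{N,k}$: this presents no real obstacle because each individual generating family $I_{N-n}\mathfrak{n}^{kn}$ already splits along $\{t^\alpha\}$ (the ideals $I_{N-n}\subseteq R$ act as scalars), and so does their sum; the completion in $S$ is harmless because only finitely many $t$-degrees contribute to the length.
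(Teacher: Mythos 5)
Your proof is correct and follows the same approach as the paper: decompose $R[[t_1,\ldots,t_r]]/J_{N,k}$ along the monomial grading in the $t_i$'s, count which $R/I_j$ appears at each $t^\alpha$ via $\lfloor |\alpha|/k\rfloor$, and then rearrange the sum (Abel summation) to pass between the two displayed formulas.
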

\begin{proof}
    The second formula easily follows after noting that the colength of a graded ideal (with the monomial grading on $t_i$) can be computed in each component, and that the number of monomials in $r$ variables of degree at most $d$ is $\binom{d+r}{r}$.
The first formula follows by reordering the terms in the following way 
\begin{align*}
\length (R[[t_1,\dots,t_r]]/J_{N, k}) &= \sum_{n = 0}^{N-1}  \length(R/I_{N-n}) \left(\binom{k(n+1) + r - 1}{r} - \binom{kn + r - 1}{r} \right)\\ &= \sum_{n = 0}^{N-1} \left(\sum_{j=0}^{N - n - 1} \length (I_{j}/I_{j+1}) \right) \left(\binom{k(n+1) + r - 1}{r} - \binom{kn + r - 1}{r} \right) \\
&= \sum_{j = 0}^{N-1} \length (I_{j}/I_{j+1}) \sum_{n=0}^{N - j - 1} \left(\binom{k(n+1) + r - 1}{r} - \binom{kn + r - 1}{r} \right)  \\
 &=  \sum_{j = 0}^{N - 1} \length (I_{j}/I_{j+1}) \binom{k(N - j) + r - 1}{r}.  
\end{align*}
\end{proof}

We next observe that the multiplicity of the ideal $J_{N,k}$ constructed in Lemma~\ref{lem: introduce power family} can be controlled under rather mild assumptions on the graded family $\{I_n\}_n$.

\begin{lemma}
\label{lem: formula power multiplicity}
Let $(R, \mf m)$ be a local ring of dimension $d \geq 1$ and let $\{I_n\}_n$ be a graded family of $\m$-primary ideals. If there exists $A\in \mathbb{R}$ such that
$\length(R/I_n) = \frac{A}{d!} n^d + O(n^{d-1})$, 
then for the ideal $J_{N, k} \subseteq R[[t_1, \ldots, t_r]]$
constructed in Lemma~\ref{lem: introduce power family}, 
we have 
$$\eh (J_{N,k}) \geq A k^r N^{r + d}. $$
\end{lemma}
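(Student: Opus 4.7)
The plan is to relate $\eh(J_{N,k})$ to the colengths of the family $\{J_{mN,k}\}_{m}$, which we can compute explicitly via Lemma~\ref{lem: introduce power family}. The key observation is the containment
\[
J_{N,k}^m \subseteq J_{mN,k}.
\]
Indeed, any generator of $J_{N,k}^m$ has the form $\prod_{i=1}^m a_i \tau_i$ with $a_i \in I_{N-n_i}$ and $\tau_i$ a monomial in $t_1,\dots,t_r$ of degree $kn_i$ for some $0 \leq n_i \leq N$. Since $\{I_n\}$ is a graded family, $\prod_i a_i \in I_{mN - n}$ with $n \coloneqq \sum_i n_i \leq mN$, and $\prod_i \tau_i$ is a monomial of degree $kn$; thus the product lies in $I_{mN-n}(t_1,\dots,t_r)^{kn}$, a summand of $J_{mN,k}$. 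Since $\dim R[[t_1,\dots,t_r]] = d + r$, this containment implies
\[
\eh(J_{N,k}) \;=\; \lim_{m\to\infty} \frac{(d+r)!}{m^{d+r}}\length(R[[t_1,\dots,t_r]]/J_{N,k}^m) \;\geq\; \lim_{m\to\infty} \frac{(d+r)!}{m^{d+r}}\length(R[[t_1,\dots,t_r]]/J_{mN,k}).
\]

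Next, I would apply the second formula of Lemma~\ref{lem: introduce power family} (with $N$ replaced by $mN$) and substitute $s = mN - n$:
\[
\length(R[[t_1,\dots,t_r]]/J_{mN,k}) = \sum_{s = 1}^{mN} \length(R/I_s)\left(\binom{k(mN - s + 1) + r - 1}{r} - \binom{k(mN - s) + r - 1}{r}\right).
\]
The binomial difference is a polynomial in $mN - s$ of degree $r - 1$ with leading coefficient $k^r/(r-1)!$, while the hypothesis gives $\length(R/I_s) = \frac{A}{d!}s^d + O(s^{d-1})$. Inserting these expansions and recognizing a Riemann sum for $\int_0^{mN} s^d (mN - s)^{r-1}\,ds$, which equals $(mN)^{d+r} B(d+1, r) = (mN)^{d+r}\frac{d!(r-1)!}{(d+r)!}$, yields
\[
\length(R[[t_1,\dots,t_r]]/J_{mN,k}) = \frac{A k^r N^{d + r}}{(d + r)!}\, m^{d+r} + O(m^{d + r - 1}).
\]
Dividing by $m^{d+r}/(d+r)!$ and taking the limit then produces $\eh(J_{N,k}) \geq A k^r N^{r+d}$, as required.

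The main technical point is controlling the error terms in the Riemann sum argument, both from the $O(s^{d-1})$ remainder of $\length(R/I_s)$ and from the lower-order terms in the polynomial expansion of the binomial difference; one must verify that each such contribution is $O(m^{d+r-1})$ uniformly in $s$, which follows from straightforward but careful polynomial bookkeeping (using that $s, mN - s \leq mN$ throughout the summation range). Nothing deeper is required, since the inequality $J_{N,k}^m \subseteq J_{mN,k}$ already converts the multiplicity question into an asymptotic length computation with an explicit formula.
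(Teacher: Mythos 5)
Your proof is correct and follows essentially the same route as the paper: both hinge on the containment $J_{N,k}^m \subseteq J_{mN,k}$ (which the paper derives from observing that $\{J_{N,k}\}_N$ is a graded family), then invoke the second colength formula from Lemma~\ref{lem: introduce power family} and extract the leading term. The only cosmetic difference is in the final asymptotic evaluation: you recognize the sum as a Riemann sum for the Beta integral $\int_0^{mN} s^d (mN-s)^{r-1}\,ds = (mN)^{d+r}\frac{d!(r-1)!}{(d+r)!}$, whereas the paper does the same computation via the combinatorial convolution identity $\sum_{i=0}^{N}\binom{a+i}{a}\binom{b+N-i}{b}=\binom{a+b+N+1}{a+b+1}$ (from the proof of Proposition~\ref{prop: Mumford 4.3}, alternatively citing Mumford); these are equivalent. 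Your remark about error-term bookkeeping (uniform $O(m^{d+r-1})$ from both the $O(s^{d-1})$ remainder and the lower-order binomial terms) is exactly the point that needs care and is correctly identified.
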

\begin{proof}
Since $\{I_n\}_n$ is a graded family of ideals, it is easy to check that for each fixed $k$,
$\{J_{N, k}\}_N$ form a graded family of $\m+(t_1,\dots,t_r)$-primary ideals in $R[[t_1,\dots,t_r]]$. In particular, we have $J_{N, k}^m \subseteq J_{Nm, k}$ for every $m$. It follows  
from Lemma~\ref{lem: introduce power family}
that
\begin{align*}
\length(R[[t_1,\dots,t_r]]/J_{N,k}^m) 
& \geq \sum_{n = 0}^{Nm-1} \length(R/I_{Nm-n}) \left( \binom{k(n+1)+r-1}{r}-\binom{kn+r-1}{r}\right).
\end{align*}
Now we estimate that 
\begin{align*}
\binom{k(n+1)+r-1}{r}-\binom{kn+r-1}{r}  = & \,\ \frac{1}{r!}(kn)^r+ \frac{2k+r-1}{2(r-1)!}(kn)^{r-1} \\ 
& - \frac{1}{r!}(kn)^r - \frac{r-1}{2(r-1)!}(kn)^{r-1} + O(n^{r-2})\\
= & \,\ \frac{k^r}{(r-1)!} n^{r-1} + O(n^{r-2}).
\end{align*}
Putting these together, we obtain that
\begin{align*}
\length(R[[t_1,\dots,t_r]]/J_{N,k}^m)  & \geq \sum_{n = 0}^{Nm-1}  \left(\frac{A}{d!}(Nm - n)^d+ O((Nm-n)^{d-1})\right) \left(\frac{k^r}{(r-1)!} n^{r-1}+ O(n^{r-2})\right) \\
& = \frac{Ak^r}{d!(r-1)!}\sum_{n = 0}^{Nm-1}  \left((Nm - n)^d+ O((Nm-n)^{d-1})\right) \left(n^{r-1}+ O(n^{r-2})\right) \\
& = \frac{Ak^r(Nm)^{d +r}}{(d + r)!} + O(m^{d +r -1})
\end{align*}
where the last equality follows, for example, by \cite[Lemma 4.5]{Mumford} (or see the combinatorial identity at the end of the proof of Proposition~\ref{prop: Mumford 4.3}). It is now clear that we have 
$\eh (J_{N,k}) \geq A k^r N^{r + d}$. 
\end{proof}

The following is our key proposition, providing an estimate on $\limlm(R)$.

\begin{proposition}
\label{prop: exponential bound}
Let $(R, \mf m)$ be a local ring of dimension $d \geq 1$.
Suppose that $\{I_n\}_n$ is a graded family of $\m$-primary ideals and we set $I_0=R$. Suppose there exists $A\in \mathbb{R}$ such that
\[
\length (R/I_n) = \frac{A}{d!}n^{d} + O(n^{d-1}).  
\]
Then for every $x\in\mathbb{R}_{>0}$, we have
\[
\limlm(R) \geq \frac{A}{x^d\sum_{j =0}^{\infty} \length (I_j/I_{j+1})e^{-jx}}.
\]
\end{proposition}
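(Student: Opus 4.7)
The plan is to apply Lemmas~\ref{lem: introduce power family} and~\ref{lem: formula power multiplicity} to the graded family $\{I_n\}$ inside $R[[t_1,\ldots,t_r]]$, with the free parameters $N, k$ chosen as functions of $r$ so that the resulting combinatorial ratio converges, as $r \to \infty$, to the desired exponential expression. Concretely, fix $x > 0$ and, for each positive integer $r$, set $N_r \coloneqq \lceil r/x \rceil$ and $k_r \coloneqq r^2$. Then Lemma~\ref{lem: formula power multiplicity} and the definition of $\lm$ yield
\[
\lm(R[[t_1,\ldots,t_r]]) \geq \frac{A k_r^r N_r^{r+d}}{(d+r)!\sum_{j=0}^{N_r-1}\length(I_j/I_{j+1})\binom{k_r(N_r - j)+r-1}{r}}.
\]

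The heart of the argument is to estimate the binomial expression. We factor
\[
\frac{\binom{k_r(N_r-j)+r-1}{r}}{\binom{k_r N_r+r-1}{r}} = \prod_{i=0}^{r-1}\left(1 - \frac{k_r j}{k_r N_r+i}\right) \leq \left(1 - \frac{j}{N_r + (r-1)/k_r}\right)^r,
\]
using that each factor is bounded above by $1 - j/(N_r+(r-1)/k_r)$. Separately, we rewrite
\[
\frac{(d+r)!\binom{k_r N_r+r-1}{r}}{k_r^r N_r^{r+d}} = \frac{(r+1)\cdots(r+d)}{N_r^d}\prod_{i=0}^{r-1}\left(1 + \frac{i}{k_r N_r}\right),
\]
and observe that, as $r \to \infty$, the first factor tends to $x^d$ (since $N_r \sim r/x$) while the product is bounded above by $(1+x/r^2)^r \to 1$, so the whole expression tends to $x^d$.

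Finally, for each fixed $j$, the limit $r/(N_r + (r-1)/k_r) \to x$ together with the inequality $1-u \leq e^{-u}$ gives $(1 - j/(N_r + (r-1)/k_r))^r \to e^{-jx}$, and for all sufficiently large $r$ one has the uniform bound $(1 - j/(N_r + (r-1)/k_r))^r \leq e^{-j r/(N_r + (r-1)/k_r)} \leq e^{-jx/2}$. Since the hypothesis on $\length(R/I_n)$ implies $\length(I_j/I_{j+1}) = O(j^{d-1})$, the series $\sum_j \length(I_j/I_{j+1})e^{-jx/2}$ converges, and dominated convergence then yields
\[
\sum_{j=0}^{N_r-1}\length(I_j/I_{j+1})\left(1 - \frac{j}{N_r + (r-1)/k_r}\right)^{r} \longrightarrow \sum_{j=0}^{\infty}\length(I_j/I_{j+1})\,e^{-jx}.
\]
Combining all pieces and passing to the limit $r \to \infty$ gives the desired inequality. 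The main technical obstacle is balancing the growth rates of $N_r$ and $k_r$: one needs $N_r/r \to 1/x$ to produce the factors $1/x^d$ and $e^{-jx}$, while $k_r$ must grow faster than $r$ to make the normalization $\prod(1 + i/(k_r N_r))$ negligible; fortunately the very same condition provides the uniform bound required to apply dominated convergence to the $j$-sum.
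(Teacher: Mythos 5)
Your proof is correct and uses the same two lemmas as the paper's, but organizes the limits differently. The paper's proof of Proposition~\ref{prop: exponential bound} first sends $k \to \infty$ with $N$ and $r$ fixed, so that $\binom{k(N-j)+r-1}{r}/k^r \to (N-j)^r/r!$ collapses the binomials to monomials; it then uses the elementary bound $(1-y/r)^r \leq e^{-y}$ to obtain $(1-jx/r)^r \leq e^{-jx}$, which makes the $j$-sum bounded above by the full infinite series $\sum_j \length(I_j/I_{j+1})e^{-jx}$ \emph{independently of $r$}; the only remaining limit is the prefactor $r^d r!/(r+d)! \to 1$. You instead tie $k_r = r^2$ and $N_r = \lceil r/x\rceil$ to $r$ and take a single limit, at the price of having to control the extra slack $(r-1)/k_r$ inside the base of the $r$-th power; because this forces your exponent $r/(N_r+(r-1)/k_r)$ to lie \emph{below} $x$ rather than above it, you cannot bound the partial sum pointwise by the target series the way the paper does, and you correctly compensate by invoking dominated convergence (with the observation $\length(I_j/I_{j+1}) = O(j^{d-1})$ and the dominating function $\length(I_j/I_{j+1})e^{-jx/2}$). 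Both routes are sound; the paper's order of limits avoids dominated convergence entirely, which is slightly cleaner, while your approach is a legitimate, more hands-on variant.
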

\begin{proof}
Let $S \coloneqq R[[t_1, \ldots, t_r]]$ and consider the $\m+(t_1,\dots,t_r)$-primary ideal 
\[
J_{N, k} =  \sum_{n = 0}^{N} I_{N-n} (t_1, \ldots, t_r)^{kn} 
\]
constructed in Lemma~\ref{lem: introduce power family}. By Lemma~\ref{lem: introduce power family} and Lemma~\ref{lem: formula power multiplicity}, we may bound
\begin{align*}
\lm(S) &\geq \lim_{k \to \infty} 
\frac{\eh(J_{N, k})}{(r+d)! \length(S/J_{N,k})} \\
& \geq 
\lim_{k \to \infty} 
\frac{A k^rN^{r+d}/(r+d)!}{\sum_{j = 0}^{N-1} \length (I_j/I_{j+1})\binom{k(N-j) + r -1}{r} }
\\ &\geq \frac{A N^{r + d}/(r + d)! }
{\sum_{j = 0}^{N-1} \length (I_j/I_{j+1})(N-j)^r/r!}
\end{align*}
for any $N$. Setting $N = r/x$ yields
\begin{align*}
\lm(S) &\geq
\frac{r!}{(r+d)!}\frac{Ar^{r+d}}
{x^{r+d}\sum_{j = 0}^{r/x-1} \length (I_j/I_{j+1}) \left (\frac{r}{x}-j \right)^r }\\
& = \frac{r^d r!}{(r+d)!}\frac{A}
{x^d\sum_{j = 0}^{r/x-1} \length (I_j/I_{j+1}) \left (1-\frac{jx}{r} \right)^r }
\end{align*}
At this point, we note that $\left (1 - \frac{y}{r} \right )^r \leq e^y$ for all $0\leq y \leq r$. Thus we have
\[
\lm(S) \geq 
\frac{r^d r!}{(r+d)!}\frac{A}
{x^d\sum_{j = 0}^{r/x-1} \length (I_j/I_{j+1}) e^{-jx}  }
\geq \frac{r^d r!}{(r+d)!}\frac{A}
{x^d\sum_{j = 0}^{\infty} \length (I_j/I_{j+1}) e^{-jx} }
\]
and the conclusion follows by letting $r \to \infty$.
\end{proof}

\subsection{Semistability in dimension one}
Now we prove our main theorem in dimension one. Our result is a refinement and generalization of Mumford's characterization of (semi)stability in dimension one (see \cite[Proposition 3.14]{Mumford}). 

Following \cite[1.41]{KollarSingularitiesMMP}, we say that a local ring $R$ of dimension one is a {\it node} if $R\cong S/(f)$ where $(S,\m)$ is a regular local ring of dimension two, $f\in \m^2$, and the image of $f$ in $\m^2/\m^3$ is not a square.

\begin{theorem}
\label{thm: dimension one semistable}
Let $(R, \mf m, k)$ be a local ring of dimension one. Then the following conditions are equivalent:
\begin{enumerate}
    \item[(1)] $R$ is semistable,
    \item[(2)] $R$ is lim-stable,
    \item[(3)] $\unm{\widehat{R}}$ is either a regular local ring or a node,
    \item[(4)] $\unm{\widehat{R}}$ is Gorenstein and seminormal.
\end{enumerate}
\end{theorem}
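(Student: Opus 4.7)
The plan is to prove the equivalences through the chain $(1)\Rightarrow(2)\Rightarrow(3)\Leftrightarrow(4)\Rightarrow(1)$. By Proposition~\ref{prop: basic results on cLM}(\ref{part restrict to complete}) and (\ref{part restrict to unmixed}), every condition depends only on $\unm{\widehat R}$, so the first move is to replace $R$ by $\unm{\widehat R}$ and assume $R$ is complete and unmixed, hence Cohen--Macaulay of dimension one. Then $(1)\Rightarrow(2)$ is immediate from chain~(\ref{eqn: chain of inequalities}). For $(3)\Leftrightarrow(4)$, both a regular local ring and a node are Gorenstein (a node being a hypersurface of multiplicity two) and seminormal (with conductor equal to $\bigcap_i\overline{\mathfrak m}_i$); conversely, if $R$ is Gorenstein and seminormal with $\overline R = \prod_{i=1}^r R_i$, then $\mathfrak c = \bigcap_i\overline{\mathfrak m}_i$ and the identity $\length(\overline R/R) = \length(R/\mathfrak c)$ gives $2\length(R/\mathfrak c) = \sum_i[k_i:k]$, and an analysis of $R/\mathfrak c\hookrightarrow\prod_ik_i$ as a $k$-subalgebra forces $r\leq 2$ with the node structure (or $r=1$ and $R=\overline R$, i.e., regular). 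For $(3)\Rightarrow(1)$, the regular case is trivial; for a node, one uses Proposition~\ref{prop: faithfully flat extension} to pass to the separable closure of $k$ (which can only increase $\lm$ of $R[[T]]$), reducing to $R\cong k[[x,y]]/(xy)$ after completion, so that $R[[T]]\cong k[[x,y,T]]/(xy)$ is an $A_\infty$ singularity, Lech-stable by Proposition~\ref{prop: BS for non normal limit}.

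The core of the argument is $(2)\Rightarrow(3)$. Assuming $R$ is lim-stable, first Proposition~\ref{prop: limit-stable reduced} gives $R$ reduced. The next step is to apply Proposition~\ref{prop: exponential bound} with $I_n=\mathfrak m^n$: setting $h_j = \length(\mathfrak m^j/\mathfrak m^{j+1})$ (so that $h_j=\eh(R)$ for $j\gg 0$) and $\delta_j = \eh(R) - h_j$, a Taylor expansion at $x=0^+$ yields
\[
x\sum_{j\geq 0}h_j e^{-jx} \;=\; \eh(R) + \Big(\tfrac{\eh(R)}{2} - \textstyle\sum_{j\geq 0}\delta_j\Big)x + O(x^2).
\]
The lim-stability bound $\limlm(R)\geq \eh(R)\big/\bigl[x\sum_jh_je^{-jx}\bigr]$ being $\leq 1$ forces the linear coefficient $\eh(R)/2 - \sum_j\delta_j$ to be non-negative, hence $\sum_j\delta_j\leq \eh(R)/2$. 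Since $\delta_0 = \eh(R)-1$, this gives $\eh(R)\leq 2$. If $\eh(R)=1$, $R$ is a discrete valuation ring. If $\eh(R)=2$, Abhyankar forces $\edim(R)=2$, so $R\cong S/(f)$ with $(S,\mathfrak n)$ a $2$-dimensional regular local ring and $\ord(f)=2$. If the leading form $\bar f\in\mathfrak n^2/\mathfrak n^3$ is not a square, $R$ is a node by definition.

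The hard part is to rule out the leading form being a square. In that case $R$ fails to be seminormal, and the Gorenstein-conductor balance from the first paragraph gives the strict inequality $\delta > A/2$, where $\delta := \length(\overline R/R)$ and $A := \sum_i[k_i:k]$ (strictly, because $\mathfrak c$ is then a proper subideal of $\bigcap_i\overline{\mathfrak m}_i$, so $\length(\overline R/\mathfrak c) > A$). The remedy is to replace $\mathfrak m^n$ by the valuation family $J_n := R\cap\bigcap_i\overline{\mathfrak m}_i^n$: for $n$ beyond the conductor one has $J_n = \bigcap_i\overline{\mathfrak m}_i^n\subseteq\mathfrak c\subseteq R$, hence $\length(R/J_n) = An-\delta$, and a straightforward telescoping gives $\sum_{j\geq 0}(A-\length(J_j/J_{j+1})) = \delta$. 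The same Taylor expansion applied to $\{J_n\}_n$ now has linear coefficient $A/2 - \delta < 0$, so $x\sum_j\length(J_j/J_{j+1})e^{-jx} < A$ for small $x>0$; Proposition~\ref{prop: exponential bound} then gives $\limlm(R) > 1$, contradicting lim-stability. Hence $\bar f$ is not a square, and $R$ is a node. The principal technical verification is the identity $\sum_j(A-\length(J_j/J_{j+1})) = \delta$ together with the strict inequality $\delta > A/2$ in the Gorenstein non-seminormal case, both of which reduce to standard conductor computations; a secondary technicality in $(3)\Rightarrow(1)$ is handling non-separably-closed residue fields (especially characteristic $2$), which is addressed by passing to the separable closure of $k$ where any non-degenerate binary quadratic form becomes $xy$.
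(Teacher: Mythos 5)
Your overall decomposition $(1)\Rightarrow(2)\Rightarrow(3)\Leftrightarrow(4)\Rightarrow(1)$ and the initial reduction to $\unm{\widehat R}$ match the paper. Two of the implications, however, deserve comment.

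\textbf{On $(2)\Rightarrow(3)$.} Your argument is correct but takes a genuinely different route from the paper's. Both proofs apply Proposition~\ref{prop: exponential bound} to the same filtration $J_n = R\cap\bigcap_i \overline{\mf m}_i^n$ coming from the normalization (the paper writes this as $(\varpi^n)\cap R$ or $(y^n)R^{\mathrm N}\cap R$ in its two cases), but they analyze the resulting bound differently. The paper plugs specific values of $x$ into the estimate and grinds out explicit numerical lower bounds on $\limlm(R)$ (e.g.\ $>1.1559$, $>1.0696$) after a case-by-case bookkeeping of $\length(J_n/J_{n+1})$. You instead Taylor-expand the denominator at $x\to 0^+$ and observe that the linear coefficient is $\frac{A}{2}-\delta$, which must be nonnegative under lim-stability. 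Coupled with the Gorenstein identity $\length(\overline R/R)=\length(R/\mf c)$, which forces $\delta\geq A/2$ with equality exactly when $\mf c=\bigcap_i\overline{\mf m}_i$, this pins down seminormality cleanly. Your version is more conceptual and avoids ad hoc constants; it also makes the structural role of the conductor transparent. Both are valid, but yours reads as a genuine simplification.

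\textbf{On $(3)\Rightarrow(1)$: a gap in characteristic $2$.} Your reduction to $k^{\sep}[[x,y]]/(xy)$ by passing to the separable closure breaks down for precisely the nodes that make characteristic $2$ delicate. If $\bar f = x^2+\lambda y^2$ with $\lambda$ a non-square in $k$ (and hence in $k^{\sep}$, since taking square roots is purely inseparable in characteristic $2$), this is a node by definition, but $R\otimes_k k^{\sep}$ is the \emph{domain} $k^{\sep}[[x,y]]/(x^2+\lambda y^2)$, not $k^{\sep}[[x,y]]/(xy)$. A further base change to $k^{\sep}(\sqrt\lambda)$ is useless: it yields $(x+\sqrt\lambda y)^2$, a non-reduced ring with $\lm \geq 2$, and Proposition~\ref{prop: faithfully flat extension} only gives inequalities in the unhelpful direction. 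The paper has to do real extra work here: after reducing to $\Gr_{\mf m}(R)\cong k[x,y]/(x^2+\lambda y^2)$, it invokes the Srinivas--Trivedi perturbation theorem (Theorem~\ref{thm: small perturbation}) to replace $R[[t]]$ by $R_n=k[x,y,z]/(x^2+\lambda y^2 + z^n)$ for $n\gg 0$, proves a Claim by an explicit blowup induction that each $R_n$ is normal and pseudo-rational in all characteristics, and concludes via Corollary~\ref{cor: RDP Lech stable}. This is a genuine missing step in your proposal, not a ``secondary technicality.''

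The direct argument you sketch for $(3)\Leftrightarrow(4)$ via conductors, rather than citing Greco--Traverso as the paper does, looks fine.
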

\begin{proof}
We first note that by Proposition~\ref{prop: basic results on cLM} (\ref{part restrict to complete}) and (\ref{part restrict to unmixed}), we know that $\lm(R)=\lm(\unm{\widehat{R}})$ and $\lm(R[[T_1,\dots,T_r]])=\lm(\unm{\widehat{R}}[[T_1,\dots, T_r]])$ for all $r>0$. Hence, to prove the theorem, we can replace $R$ by $\unm{\widehat{R}}$, thereby assuming that $R$ is a Cohen--Macaulay complete local ring of dimension one for the remainder of the argument.

We already know $(1)\Rightarrow(2)$ from (\ref{eqn: chain of inequalities}). Moreover, $(3)\Leftrightarrow(4)$ follows from \cite[Theorem 8.1 (ii)$\Leftrightarrow$(viii)]{GrecoTraversoSeminormal}. 

We next show that $(3)\Rightarrow(1)$. Suppose $R$ is a node. By \cite[Theorem 8.1 (x)]{GrecoTraversoSeminormal}, we know that (the localization of) ${\Gr_\m(R)}$ is still a node and it is enough to show that the latter is semistable by Proposition~\ref{prop: LM associated graded} and Proposition~\ref{prop: LM multigraded}. Thus we may assume that $R$ contains a field. Now by \cite[Theorem 8.1 (xii)]{GrecoTraversoSeminormal} or \cite[1.41]{KollarSingularitiesMMP}, when $\charac(k)\neq 2$, $R$ is an ordinary double point, i.e., $R$ is a simple normal crossing after an \'etale extension. In this case, the completion of the strict henselization of $R$ is isomorphic to $k^{\sep}[[x,y]]/(xy)$. By \cite[Proposition 3.14 (2)]{Mumford} (see also Proposition~\ref{prop: BS for non normal limit} for an alternative treatment), we have $\lm(k^{\sep}[[x,y,z]]/(xy)) = 1$. It follows that $R$ is semistable since $R\to k^{\sep}[[x,y]]/(xy)$ is faithfully flat (for example, see Proposition~\ref{prop: faithfully flat extension}). Now if $\charac(k)=2$, then by \cite[Proposition 8.4]{GrecoTraversoSeminormal} and the discussion above, we may assume that 
$$R\cong \frac{k[x,y]_{(x,y)}}{x^2+\lambda y^2}$$
where $\lambda\in k$ does not have a square root in $k$. To show such $R$ is semistable, we may also assume $k$ is infinite by Corollary~\ref{cor: purely transcendental field extension}. We claim the following: 
\begin{claim}
\label{clm: claim in main dim one}
With notations as above, for all integers $n\geq 1$ the ring
$$R_n \coloneqq \frac{k[x,y,z]}{x^2+\lambda y^2 + z^n}$$
is pseudo-rational.
\end{claim}
\begin{proof}[Proof of Claim]
We can assign $x,y$ degree $n$ and $z$ degree $2$ to make $R_n$ an $\mathbb{N}$-graded ring over $k$. With this grading, $a(R_n)=-2$. Thus to show $R_n$ is pseudo-rational, it is enough to show that $R_n$ is normal.\footnote{This is well-known in characteristic zero, for example see \cite[Theorem 5.1]{KSSW09} (which originates from \cite{WatanabeRationalSingularity}). But the characteristic zero assumption in those references is only used to guarantee resolution of singularities. In our context, we can take $f\colon X\to \Spec(R_n)$ where $X=\Proj(R \oplus R_{\geq 1}t \oplus R_{\geq 2}t^2 \oplus \cdots)$. It is easy to check that, if $R_n$ is normal, then $X$ has pseudo-rational singularities (see \cite[Proof of Lemma 5.4]{KSSW09}). Thus to see $R_n$ is pseudo-rational, it is enough to observe that $R^1f_*\sO_X\cong [H_\m^2(R_n)]_{\geq 0}=0$. } Now clearly, $R$ satisfies $(\textnormal{S}_2)$ as $R$ is a hypersurface. Since $R$ is $\mathbb{N}$-graded and $k$ is infinite, the defining ideal of the nonsingular locus of $R$ is homogeneous and thus to show $R_n$ is $(\textnormal{R}_1)$, it is enough to show that $(R_n)_\m$ satisfies Serre's condition $(\textnormal{R}_1)$ where $\m=(x,y,z)$. 

We use induction on $n$. When $n=1$, $R_1\cong k[x,y]$ is regular and the conclusion is obvious. Consider the blowup $\Bl_{\m}(R_n)$. It has three affine charts isomorphic to 
$$\Spec\left(\frac{k[x,y',z']}{1+\lambda y'^2 + x^{n-2}z'^n}\right), \hspace{1em} \Spec\left(\frac{k[x',y,z']}{x'^2+\lambda + y^{n-2}z'^n}\right), \hspace{1em} \Spec\left(\frac{k[x',y',z]}{x'^2+\lambda y'^2 + z^{n-2}}\right).$$
We are aiming to show $(R_n)_\m$ is $(\textnormal{R}_1)$. It suffices to prove that the three charts are normal at points that map to $\m$. Now the third chart is normal by our inductive hypothesis (when $n=2$, it is easy to see that this chart is regular). In the first chart, $\m=(x)$ and thus it is enough to observe that after modulo $(x)$, the ring becomes $k[y', z']/(1+\lambda y'^2)$ which is regular as $\lambda$ does not have a square root in $k$. Similarly the second chart is regular after modulo $\m=(y)$ and we are done.
\end{proof}

Now we complete the proof of $(3)\Rightarrow(1)$. For any $(x,y,t)$-primary ideal $I\subseteq R[[t]]$, we invoke Theorem~\ref{thm: small perturbation} to see that $\Gr_I(R[[t]])\cong \Gr_I(R_n)$ for some $n\gg0$ (here we still use $I$ to denote the image of the ideal in $R_n$). In particular, $\length(R[[t]]/I)=\length(R_n/I)$ and $\eh(I, R[[t]])=\eh(I, R_n)$. Since $R_n$ is Gorenstein and pseudo-rational by Claim~\ref{clm: claim in main dim one}, Corollary~\ref{cor: RDP Lech stable} implies that $\lm(R_n)=1$. It follows that 
$$\frac{\eh(I, R[[t]])}{2\length(R[[t]]/I)} = \frac{\eh(I, R_n)}{2\length(R_n/I)}\leq \lm(R_n)=1.$$
Thus $\lm(R[[t]])=1$ and $R$ is semistable.

It remains to prove $(2)\Rightarrow(3)$. Suppose $R$ is lim-stable, we first claim that $\eh(R)\leq 2$. By Proposition~\ref{prop: exponential bound} applied to the graded family $\{I_n=\m^n\}_n$, we find that 
\[
\limlm(R) \geq \frac{\eh(R)}{x\sum_{j \geq 0} \length (\mf m^j/\mf m^{j+1})e^{-jx}}.
\]
Setting $x=1$ above and noting that $\length(\m^j/\m^{j+1})\leq \eh(R)$ for all $j\geq 1$, since $R$ is Cohen--Macaulay, we have the bound 
\[
\limlm(R) \geq \frac{\eh(R)}{\sum_{j \geq 1} \length (\mf m^j/\mf m^{j+1})e^{-j}+1} \geq \frac{\eh(R)}{\eh(R)/(e-1)+1} = \frac{1}{1/(e - 1) + 1/\eh(R)}.
\]
If $\eh(R)\geq 3$, then $1/(e - 1) + 1/\eh(R) < $, contradicting that $R$ is lim-stable. Thus, $\eh(R)\leq 2$. Since $R$ is a one-dimensional Cohen--Macaulay ring, by Abhyankar's inequality we must have that $\edim(R)\leq \eh(R)+1-1 \leq 2$.

If $\edim(R)=1$, then $R$ is regular. So we suppose $\edim(R)=2$ and $\eh(R)=2$. Then $R$ is a hypersurface, i.e., we have $R\cong S/(f)$ where $(S, \mf m)$ is a complete regular local ring of dimension two
and $f\in \m^2-\m^3$. We need to show that the image of $f$ in $\m^2/\m^3$ is not a square. Suppose on the contrary that $f=z^2+g$ where $z$ is part of a regular system of parameters of $S$ and $g\in \m^3$. Let $z,y$ be a full regular system of parameters of $S$. We want to construct a graded family of ideals and apply Proposition~\ref{prop: exponential bound}. There are two cases to consider:

\begin{enumerate}
    \item[(a)] $R$ is a domain. We let $V \coloneqq R^{\text{N}}$ be the normalization of $R$. Then $(V,\varpi, k')$ is a DVR, and we set $I_n \coloneqq (\varpi^n)\cap R$. 
    \item[(b)] $R$ is not a domain. Then we can write $f=(z+g_1)(z+g_2)$ where $g_1,g_2\in \m^2$. We have $R^{\text{N}}\cong S/(z+g_1) \times S/(z+g_2)$ and we let $I_n\coloneqq(y^n)R^{\text{N}}\cap R$. 
\end{enumerate}

In case (a), note that $2=\eh(R)=\eh_R(\m, V)=[k':k]\eh(\m V, V),$
thus we have two scenarios: either $\m V=(\varpi)$ and $[k': k]=2$, or $\m V=(\varpi^2)$ and $V/(\varpi)\cong k$. In the first scenario, $z\in (\varpi^2)$ and $yV=(\varpi)$. It follows that 
$\length(R/I_1)=\length(I_1/I_2)=1$ and that $\length(I_n/I_{n+1})\leq 2$  for all $n$.
By Proposition~\ref{prop: exponential bound}, we then have a bound
$$\limlm(R) \geq \frac{2}{x(1+e^{-x}+2\sum_{j \geq 2}e^{-jx})} =\frac{2(1-e^{-x})}{x+xe^{-2x}}$$
contradicting that $R$ is lim-stable since 
\[
\sup_{x\in\mathbb{R}_{>0}} \left \{ \frac{2(1-e^{-x})}{x+xe^{-2x}}  \right \} > 1.1559.
\]
In the second scenario, we have $I_1=I_2=\m$. It follows that we have
$$\length(R/I_1)=1, \length(I_1/I_2)=0, \text{ and } \length(I_n/I_{n+1})\leq 1 \text{ for all $n$.} $$
By Proposition~\ref{prop: exponential bound}, we then have
$$\limlm(R) \geq \frac{1}{x(1+\sum_{j \geq 2}e^{-jx})} =\frac{1-e^{-x}}{x-xe^{-x}+xe^{-2x}}. $$
This bound contradicts again that $R$ is lim-stable since 
\[
\sup_{x\in\mathbb{R}_{>0}} \left \{ \frac{1-e^{-x}}{x-xe^{-x}+xe^{-2x}}  \right \} > 1.0696.
\]

Finally, in case (b), note that in both $S/(z+g_1)$ and $S/(z+g_2)$, we have that $z\in (y^2)$ since $g_1,g_2 \in \m^2$. Thus $I_1=\m$ and $I_2=(y^2,z)$. It follows that 
$$\length(R/I_1)=\length(I_1/I_2)=1, \text{ and } \length(I_n/I_{n+1})\leq 2 \text{ for all $n$.} $$
Thus by exactly the same argument as in the first scenario of case (a), we have $\limlm(R)>1.1559$ contradicting that $R$ is lim-stable. This completes the proof of $(2)\Rightarrow(3)$.
\end{proof}

\begin{corollary}
\label{cor: lim-stable implies demi-normal}
If $(R,\m)$ is equidimensional, $(\textnormal{S}_2)$, and lim-stable, then $R$ is demi-normal.
\end{corollary}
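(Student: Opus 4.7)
The plan is to reduce demi-normality to the one-dimensional classification of Theorem~\ref{thm: dimension one semistable} by localizing at codimension-one primes. Recall that $R$ being demi-normal means that $R$ is $(\textnormal{S}_2)$ (given) and at every prime $\p$ with $\hght(\p)=1$ the localization $R_\p$ is either regular or a node (the latter being a property detected by the completion, per the definition from \cite[1.41]{KollarSingularitiesMMP}). So it suffices to analyze $R_\p$ for each height-one prime $\p$.

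First I would fix such a $\p$ and verify that the hypothesis of Theorem~\ref{thm: localization} holds, namely $\hght(\p) + \dim(R/\p) = \dim(R)$. This follows from the combination of equidimensionality and $(\textnormal{S}_2)$: since $R$ is $(\textnormal{S}_2)$ it has no embedded primes, and for any minimal prime $\q \subseteq \p$ we have $\dim(R/\q) = \dim(R)$ by equidimensionality, which forces the dimension formula at codimension one (passing to the completion and invoking Ratliff's theorem if a cleaner catenary argument is needed). With this in hand, Theorem~\ref{thm: localization} gives $\limlm(R_\p) = 1$, i.e.\ $R_\p$ is lim-stable. Moreover, $(\textnormal{S}_2)$ forces $R_\p$ to be Cohen--Macaulay of dimension one, and hence $\widehat{R_\p}$ is Cohen--Macaulay and therefore unmixed, so $\unm{\widehat{R_\p}} = \widehat{R_\p}$.

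At this point, Theorem~\ref{thm: dimension one semistable} applied to $R_\p$ shows that $\widehat{R_\p} = \unm{\widehat{R_\p}}$ is either a regular local ring or a node. In particular $R_\p$ itself is either regular (the property descends from $\widehat{R_\p}$) or a node (by definition of node at a codimension-one point). Since this holds at every height-one prime of $R$, demi-normality is established.

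The main obstacle is the catenarity/dimension-formula step: equidimensionality together with $(\textnormal{S}_2)$ does not \emph{a priori} imply the identity $\hght(\p) + \dim(R/\p) = \dim(R)$ without some further input. I would resolve this either (i) by arguing directly that for a height-one prime containing a nonzerodivisor in an $(\textnormal{S}_2)$ equidimensional ring this identity holds, or (ii) by first showing via Proposition~\ref{prop: limit-stable reduced} that $R$ is reduced, passing to $\widehat{R}$ (which inherits $(\textnormal{S}_2)$ and, together with the reduced hypothesis, inherits equidimensionality), and invoking Ratliff's theorem to obtain universal catenarity. The remaining steps are then direct applications of the two theorems cited above.
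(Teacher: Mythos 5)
Your proof follows the same route as the paper's: localize at height-one primes via Theorem~\ref{thm: localization}, then apply the one-dimensional classification of Theorem~\ref{thm: dimension one semistable}; the catenarity concern you flag is addressed in the paper only by the parenthetical assertion that equidimensional together with $(\textnormal{S}_2)$ implies catenary, which is precisely the dimension formula you need. The extra bookkeeping you supply — noting that $(\textnormal{S}_2)$ makes $R_\p$ Cohen--Macaulay of dimension one so that $\widehat{R_\p}=\unm{\widehat{R_\p}}$ — is exactly what the paper leaves implicit when citing Theorem~\ref{thm: dimension one semistable}.
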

\begin{proof}
By definition (\cite[Definition~5.1]{KollarSingularitiesMMP}), {\it demi-normality} requires Serre's condition $(\textnormal{S}_2)$ and that codimension one points are either regular or nodes. These properties follow from Theorem~\ref{thm: dimension one semistable} and Theorem~\ref{thm: localization} (note that equidimensional and $(\textnormal{S}_2)$ implies catenary).
\end{proof}

\subsection{Strict complete intersections and rings of large multiplicity}
In this subsection, we collect some consequences of Proposition~\ref{prop: exponential bound} in higher dimensions.  First of all, we show that for strict complete intersections -- i.e., local rings $(R,\m)$ such that $\Gr_\m (R)$ is a complete intersection (this includes all hypersurfaces and all standard graded complete intersections) -- various stability conditions force the defining equations to have small orders. This result serves as one of our motivations for studying the relationship between stability of local rings and singularities in the Minimal Model Program (MMP), and will be significantly generalized in Section~\ref{section: MMP}.

\begin{corollary}
\label{cor: strict complete intersections}
Let $(R,\m, k)$ be a strict complete intersection and write 
$$\Gr_\m(R) = k[x_1, \ldots, x_n]/(f_1, \ldots, f_c)$$ 
with $\deg f_i = D_i$. Then the following hold:
\begin{enumerate}
    \item if $R$ is lim-stable, then $\sum_{i=1}^c D_i \leq n$; 
    \item if $R$ is Lech-stable, then $\sum_{i=1}^c D_i <n$.
\end{enumerate}
\end{corollary}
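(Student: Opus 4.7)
The plan is to apply Proposition~\ref{prop: exponential bound} for part (1) and the direct inequality $\lm(R) \geq \eh(\m^m)/(d!\length(R/\m^m))$ for part (2), both with the $\m$-adic filtration $I_n = \m^n$. Since $R$ is a strict complete intersection, the Hilbert series of $\Gr_\m(R)$ takes the form
\[
\sum_{j \geq 0} H(j)\, t^j = \frac{\prod_{i=1}^c (1-t^{D_i})}{(1-t)^n} = \frac{P(t)}{(1-t)^d},
\]
where $P(t) \coloneqq \prod_{i=1}^c (1+t+\cdots+t^{D_i-1})$ and $d = n - c = \dim R$. In particular, $\eh(R) = P(1) = \prod_i D_i$ and $P'(1)/P(1) = \sum_i (D_i-1)/2$. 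We may assume $d \geq 1$, since otherwise Proposition~\ref{prop: Artinian case} forces $R$ to be a field, making $n=c=0$ and both statements vacuous.

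For part (1), Proposition~\ref{prop: exponential bound} applied to $I_n = \m^n$ with $A = \eh(R) = P(1)$ yields
\[
\limlm(R) \geq \frac{P(1)(1-e^{-x})^d}{x^d\, P(e^{-x})}
\]
for every $x > 0$. Using the Taylor expansions $(1-e^{-x})/x = 1 - x/2 + O(x^2)$ and $P(e^{-x})/P(1) = 1 - (P'(1)/P(1))\,x + O(x^2)$, the right-hand side equals
\[
1 + \left(\frac{P'(1)}{P(1)} - \frac{d}{2}\right) x + O(x^2) = 1 + \frac{\sum_i D_i - n}{2}\, x + O(x^2).
\]
If $\sum_i D_i > n$, this expression exceeds $1$ for all sufficiently small $x>0$, contradicting $\limlm(R)=1$. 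Hence $\sum_i D_i \leq n$.

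For part (2), expanding $\length(R/\m^m) = \sum_{j=0}^{m-1} H(j)$ via the Hilbert series above gives
\[
d!\length(R/\m^m) = P(1)\,m^d + \frac{d\bigl((d-1)P(1) - 2P'(1)\bigr)}{2}\, m^{d-1} + O(m^{d-2}),
\]
while $\eh(\m^m) = m^d P(1)$ holds exactly. Dividing yields
\[
\frac{\eh(\m^m)}{d!\length(R/\m^m)} = 1 - \frac{d\bigl(n - 1 - \sum_i D_i\bigr)}{2m} + O(m^{-2}).
\]
If $\sum_i D_i \geq n$, then the right-hand side exceeds $1$ for $m \gg 0$, contradicting Lech-stability. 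Thus $\sum_i D_i \leq n - 1 < n$.

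The main obstacle is purely bookkeeping: accurately computing the first subleading coefficient in each of the two expansions above. Once those terms are identified, the conclusion comes down to the elementary identity $2P'(1)/P(1) - d = \sum_i D_i - n$.
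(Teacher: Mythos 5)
Your proof is correct and takes essentially the same route as the paper: part~(1) applies Proposition~\ref{prop: exponential bound} to the $\m$-adic filtration and reads off the derivative of the bounding function at $x=0$, and part~(2) computes the second coefficient of the Hilbert--Samuel polynomial from the Laurent expansion of the Hilbert series at $t=1$. The only cosmetic difference is that you factor $h_{\Gr_\m(R)}(t)$ as $P(t)/(1-t)^d$ before expanding, whereas the paper expands numerator and denominator of $\prod(1-t^{D_i})/(1-t)^n$ separately, but the resulting identity $2P'(1)/P(1)-d=\sum_i D_i - n$ is the same in both.
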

\begin{proof}
We first prove (1). We note that $$\sum_{j=0}^\infty\length(\m^j/\m^{j+1})t^j=h_{\Gr_\m(R)}(t)=\frac{\prod_{i=1}^c(1-t^{D_i})}{(1-t)^n},$$ where the latter is the Hilbert series of the standard graded complete intersection $\Gr_\m(R)$. Thus by applying Proposition~\ref{prop: exponential bound} to $\{\m^n\}_n$, we obtain that
\[
\limlm(R) \geq L(x)\coloneqq \frac{\prod_{i=1}^c D_i}{x^{n-c}\sum_{j = 0}^{\infty} h_{\Gr_\m(R)}(e^{-x})}
=\frac{\prod_{i=1}^c D_i (1 - e^{-x})^{n}}{x^{n-c}\prod_{i = 1}^c (1 - e^{-D_ix})}
\]
for all $x\in\mathbb{R}_{>0}$. Using that
$$\frac{1 - e^{-x}}{x} = 1 - \frac{x}{2} + O(x^2),$$ 
we further estimate that
\begin{align*}
L(x) &= \frac{(1 - \frac{x}{2} + O(x^{2}))^{n}}{\prod_{i = 1}^c (1 - \frac{D_i}{2}x + O(x^{2}))} \\
& = \left(1 - \frac{n}{2}x + O(x^{2}) \right) \left(1 + \frac{\sum_{i=1}^c D_i}{2}x + O(x^{2}) \right)
\\&= 1 + \frac{\sum_{i=1}^c D_i - n }{2}x + O(x^{2}).
\end{align*}
Thus if $\sum_{i=1}^c D_i > n$, then $\limlm(R)\geq L(x_0)>1$ for any $x_0$ sufficiently small. It follows that if $R$ is lim-stable, then $\sum_{i=1}^c D_i\leq n$ as wanted. 

We next prove (2). This is easier and we just need to rewrite $h_{\Gr_\m(R)}(t)$ via Laurent series in $(1-t)$:
$$\frac{\prod_{i=1}^c(1-t^{D_i})}{(1-t)^n}=\frac{\prod_{i=1}^c D_i}{(1-t)^{n-c}}- \frac{\frac{1}{2}(\prod_{i=1}^c D_i)(\sum_{i=1}^c D_i -c)}{(1-t)^{n-c+1}}+ \cdots$$
Note that this series is finite since $1$ is the only pole of $h_{\Gr_\m(R)}(t)$. From this we obtain that 
\begin{align*}
  \length(R/\m^j) &= (\prod_{i=1}^c D_i)\binom{n-c+j-1}{n-c} - \frac{1}{2}(\prod_{i=1}^c D_i)(\sum_{i=1}^c D_i -c) \binom{n-c+j-2}{n-c-1}+ O(j^{n-c-2})\\
  &= \frac{(\prod_{i=1}^c D_i)}{(n-c)!}j^{n-c} + (\prod_{i=1}^c D_i)\frac{n-1-\sum_{i=1}^c D_i}{2(n-c-1)!}j^{n-c-1} + O(j^{n-c-2})
\end{align*}
Since $\eh(\m^j)=(\prod_{i=1}^c D_i)j^{n-c}$, our estimate for $\length(R/\m^j)$ shows that whenever $\sum_{i=1}^c D_i\geq n$, we must have $\eh(\m^j)>(n-c)!\length(R/\m^j)$ for all $j\gg0$. Thus if $R$ is Lech-stable, then $\sum_{i=1}^c D_i <n$ as wanted.
\end{proof}

\begin{remark}
\label{rmk: lim LM of x^3+y^3}
Given an explicit strict complete intersection, we may get a lower bound for $\limlm(R)$ by optimizing $L(s)$ in Corollary~\ref{cor: strict complete intersections}. For example, one easily obtains that $\limlm(k[[x,y]]/(x^3 + y^3)) \geq 1.26$.
\end{remark}

We next prove that Cohen--Macaulay local rings of sufficiently large multiplicity cannot be lim-stable.

\begin{corollary}\label{cor: large multiplicity is lim-unstable}
    For any positive integer $d$ there is a constant $C = C(d)$ with the following property: if $(R, \m)$ is a Cohen--Macaulay local ring of dimension $d$
    such that $\lm(R) \geq C$ then $\limlm(R) > 1$. In particular, if $\eh(R) \geq d! C$, then $\limlm(R) > 1$.
\end{corollary}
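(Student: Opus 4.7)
The plan is to apply Proposition~\ref{prop: exponential bound} to the graded family of powers of the maximal ideal and to combine it with a Hilbert-function estimate coming from the Cohen--Macaulay hypothesis.

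Two reductions make the statement workable. By Lech's inequality (Theorem~\ref{thm Lech}) one always has $\lm(R)\le \eh(R)$, so $\lm(R)\ge C$ implies $\eh(R)\ge C$; conversely, plugging $I=\m$ into the definition of $\lm(R)$ gives $\lm(R)\ge \eh(R)/d!$, which handles the ``in particular'' claim directly from the main one. It therefore suffices to find $C'(d)$ depending only on $d$ such that $\eh(R)\ge C'(d)$ forces $\limlm(R)>1$.

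To this end, pass to $R(t)$ so $R$ has infinite residue field (which leaves both $\eh(R)$ and $\limlm(R)$ unchanged, by Corollary~\ref{cor: purely transcendental field extension}), and pick a minimal reduction $J=(y_1,\ldots,y_d)$ of $\m$; since $R$ is Cohen--Macaulay, $J$ is generated by a regular sequence. An induction on $d$ based on reducing modulo a superficial element $\xi\in\m$ (for which $R/\xi R$ is Cohen--Macaulay of dimension $d-1$ with $\eh(R/\xi R)=\eh(R)$) and using the surjection $\m^j/\m^{j+1}\twoheadrightarrow (\m/\xi R)^j/(\m/\xi R)^{j+1}$, whose kernel has length at most $\length(\m^{j-1}/\m^j)$, yields the Hilbert-function bound
\[
\length(\m^j/\m^{j+1})\le \eh(R)\binom{j+d-1}{d-1}\qquad(j\ge 1);
\]
the base case $d=1$ is the classical fact that the Hilbert function of $\Gr_\m R$ for a $1$-dimensional Cohen--Macaulay local ring is non-decreasing with limit $\eh(R)=\length(R/\xi R)$. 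Together with $\length(R/\m)=1$ this gives
\[
\sum_{j=0}^\infty \length(\m^j/\m^{j+1})\,e^{-jx}\le 1+\eh(R)\bigl((1-e^{-x})^{-d}-1\bigr),
\]
so Proposition~\ref{prop: exponential bound} (applied with $I_n=\m^n$, $A=\eh(R)$) yields, for every $x>0$,
\[
\limlm(R)\ge \frac{\eh(R)}{x^d\bigl[1+\eh(R)\bigl((1-e^{-x})^{-d}-1\bigr)\bigr]}.
\]

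Finally, choosing $x=\log \eh(R)$ one has $(1-1/\eh(R))^{-d}-1\sim d/\eh(R)$, so the denominator is asymptotic to $d(\log \eh(R))^d$ while the numerator is $\eh(R)$; the bound therefore exceeds $1$ once $\eh(R)$ surpasses a threshold $C'(d)$ depending only on $d$, completing the proof. The main technical step is the inductive Hilbert-function bound above; the final optimization is elementary but crude. For $d=1$ the choice $x=1$ already recovers $C(1)=3$ (since $\eh(R)/(1+\eh(R)/(e-1))>1$ exactly when $\eh(R)\ge 3$), while the sharper value $C(2)=17$ recorded in Remark~\ref{rmk: values of E(d)} likely requires applying Proposition~\ref{prop: exponential bound} to a graded family built from an ideal nearly realizing $\lm(R)$ rather than from $\m$ itself.
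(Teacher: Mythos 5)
Your overall strategy---apply Proposition~\ref{prop: exponential bound} to a graded family and optimize over $x$---is the same as the paper's, but you deviate in two places, one of which buys less and one of which opens a gap.

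First, the paper does not reduce to bounding $\eh(R)$. It applies the Rossi--Valla--Vasconcelos Hilbert-series bound to the family $\{I^n\}$ for an \emph{arbitrary} $\m$-primary ideal $I$, obtaining
\[
\limlm(R)\ \ge\ \frac{(1-e^{-x})^d}{x^d\left(\dfrac{\length(R/I)}{\eh(I)}(1-e^{-x})+e^{-x}\right)},
\]
and then lets $I$ approach the supremum defining $\lm(R)$, so that $\length(R/I)/\eh(I)\to 1/(d!\,\lm(R))$. This is what produces constants for $\lm(R)$ directly (and the sharp $C(2)\le 8.375$, etc., of Remark~\ref{rmk: values of E(d)}). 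Your reduction ``$\lm(R)\ge C\Rightarrow \eh(R)\ge C$'' is logically sound, but it discards the information that $\lm(R)$ is large while $\eh(R)$ could be much larger; consequently the thresholds you would extract are far weaker. You correctly anticipate this at the end of your write-up.

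Second, and more importantly, the Hilbert-function bound
\[
\length(\m^j/\m^{j+1})\ \le\ \eh(R)\binom{j+d-1}{d-1}\qquad(j\ge1)
\]
is true for Cohen--Macaulay $R$ (it is a weakening of the RVV bound the paper invokes), but the inductive proof you sketch has an unjustified step. The kernel of the surjection $\m^j/\m^{j+1}\twoheadrightarrow \overline{\m}^j/\overline{\m}^{j+1}$, where $\overline{\m}=\m/\xi R$, is isomorphic to $(\m^j:\xi)/(\m^{j+1}:\xi)$, and the claim that this has length at most $\length(\m^{j-1}/\m^j)$ is \emph{not} automatic for a superficial element. Unwinding it, the claim is equivalent to $H_R(j)-H_R(j-1)\le H_{R/\xi R}(j)$ for all $j$, which would follow if $\xi^*$ were a nonzerodivisor on $\Gr_\m(R)$, i.e.\ if $\Gr_\m(R)$ had positive depth---but Cohen--Macaulayness of $R$ does not pass to $\Gr_\m(R)$. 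The exact sequence of graded modules only gives $\ker\supseteq\operatorname{im}(\xi^*)$, a lower bound on the kernel, not the upper bound you need. To make your proof rigorous you should replace this step by a direct citation of the RVV bound (as the paper does), or by a correct argument establishing the Hilbert-function inequality. Your final optimization with $x=\log\eh(R)$ is fine (up to a harmless constant: the denominator is asymptotically $(d+1)(\log\eh(R))^d$, not $d(\log\eh(R))^d$), and your $d=1$ check recovering $C(1)=3$ is correct.
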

\begin{proof}
    By Theorem~\ref{thm: dimension one semistable} and Theorem~\ref{thm: uniform Lech}, we may take $C(1) = 3$. Thus we may assume that $d \geq 2$. Since $R$ is Cohen--Macaulay, by \cite[Corollary~2.6]{RVV}
    the Hilbert series of $\Gr_I (R)$ is bounded above by 
    $(\length (R/I) + (\eh(I) - \length (R/I)t)(1 - t)^{-d}$.
    By Proposition~\ref{prop: exponential bound} applied to $\{I^n\}_n$ 
    we thus estimate that 
    \[
        \limlm(R) \geq \frac{(1 - e^{-x})^d}{x^d \left( \frac{\length (R/I)}{\eh(I)} (1 - e^{-x}) + e^{-x}\right) }. 
    \]
    It follows that 
    \[
        \limlm(R) \geq \frac{(1 - e^{-x})^d}{x^d \left (\frac {1 - e^{-x}}{d!\lm(R)}  + e^{-x}\right )}. 
    \]
    Since $e^x (1- e^{-x})^d x^{-d}$ grows without a bound, it remains to fix any $x_0 > 0$ such that $e^{x_0} (1 - e^{-x_0})^d x_0^{-d} > 1$ and note that 
    \[
        \lim_{B \to \infty} \frac{(1 - e^{-x_0})^d}{x_0^d ((1 - e^{-x_0})/B + e^{-x_0})}
        = \frac{(1 - e^{-x_0})^d}{x_0^{d}e^{-x_0}} > 1.
    \]
The last statement follows from the fact that if $\eh(R)\geq d!C$, then $\lm(R)\geq C$. 
\end{proof}

\begin{remark}
\label{rmk: values of E(d)}
While we know that $C(1) = 3$ by Theorem~\ref{thm: dimension one semistable}, in higher dimensions we can only estimate that
\[
C(d) \leq \inf \left \{\frac{B}{d!} \mathrel{\Big|} \sup_{x > 0}\left\{ \frac{(1 - e^{-x})^d}{x^d ((1 - e^{-x})/B - e^{-x})} \right\} >1 \right\}.
\]
The values on the right-hand side grow quite fast, we have $C(2) \leq 8.375$, $C(3) \leq 77.664$,
$C(4) \leq 754.48$, etc. As a consequence, we have that a degree $n$ Veronese subring of $k[x_1, \ldots, x_d]$ is not lim-stable if $n \geq 17$ and $d = 2$, $n \geq 22$ and $d = 3$, and $n \geq 27$ and $d = 4$, etc.
\end{remark}

\begin{corollary}\label{cor: lim unstable dimension 2}
Let $(R, \mf m)$ be a two-dimensional Cohen--Macaulay local ring.
If $R$ has minimal multiplicity and $\eh(R) \geq 17$ or 
if $R$ does not have minimal multiplicity and $\eh(R) \geq 13$, 
then $\limlm(R) > 1$.
\end{corollary}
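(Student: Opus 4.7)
Our plan is to apply Proposition~\ref{prop: exponential bound} to the graded family $\{\mf m^n\}_{n\geq 0}$ with $d=2$ and $A=e\coloneqq \eh(R)$, reducing the problem to obtaining a sharp upper bound on the Hilbert--Poincar\'e series $h_{\gr_{\mf m}(R)}(e^{-x})$ at a well-chosen $x>0$. The minimal multiplicity case already follows from Corollary~\ref{cor: large multiplicity is lim-unstable} together with Remark~\ref{rmk: values of E(d)}; alternatively, one substitutes the exact Hilbert series $(1+(e-1)t)/(1-t)^2$ (forced by minimal multiplicity via Theorem~\ref{thm: Northcott}) into Proposition~\ref{prop: exponential bound} at $x=2$ and combines the numerical check at $e=17$ with the easy monotonicity in $e$ of the expression $e(1-e^{-2})^2/(4(1+(e-1)e^{-2}))$.

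The non-minimal multiplicity case is the real content and requires a strictly sharper bound. After passing to an infinite residue field via Corollary~\ref{cor: purely transcendental field extension}, fix a minimal reduction $J\subseteq \mf m$ (with two generators) and consider the $h$-vector $h_i\coloneqq \dim_k(\gr_{\mf m}(R)/J^*\gr_{\mf m}(R))_i$. These are non-negative integers satisfying $h_0=1$, $\sum_i h_i=\length(R/J)=e$, and $h_1=\edim(R)-2\leq e-2$ --- the last inequality is precisely the failure of minimal multiplicity. Applying the graded Nakayama lemma to the finitely generated $k[J^*]$-module $\gr_{\mf m}(R)$ produces a graded surjection $\bigoplus_i k[J^*](-i)^{h_i}\twoheadrightarrow \gr_{\mf m}(R)$, which yields the coefficient-wise Hilbert series estimate
\[
h_{\gr_{\mf m}(R)}(t)\leq \frac{Q(t)}{(1-t)^2},\qquad Q(t)\coloneqq \sum_{i\geq 0}h_i t^i.
\]

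To bound $Q(e^{-x})$ subject to these constraints, use $e^{-ix}\leq e^{-2x}$ for $i\geq 2$ and $x>0$; the resulting linear function $h_1\mapsto 1+h_1 e^{-x}+(e-1-h_1)e^{-2x}$ has positive slope $e^{-x}-e^{-2x}$, so its maximum over $h_1\in[0,e-2]$ is attained at $h_1=e-2$, giving $Q(e^{-x})\leq 1+(e-2)e^{-x}+e^{-2x}$. Substituting into Proposition~\ref{prop: exponential bound} at $x=1$ produces $\limlm(R)\geq H(e)$ with
\[
H(e)\coloneqq \frac{e(1-e^{-1})^2}{1+(e-2)e^{-1}+e^{-2}}.
\]
The algebraic identity $(e+1)(1+(e-2)b+b^2)-e(1+(e-1)b+b^2)=(1-b)^2$ with $b=e^{-1}$ shows that $H$ is strictly increasing in $e$, and a direct numerical check gives $H(13)>1$, so $\limlm(R)>1$ for every $e\geq 13$.

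The key technical point is the graded Nakayama upper bound on $h_{\gr_{\mf m}(R)}(t)$, which crucially works without requiring $\gr_{\mf m}(R)$ to be Cohen--Macaulay (the $h$-vector is only defined relative to a fixed minimal reduction, and its non-negativity comes from $\gr_{\mf m}(R)/J^*\gr_{\mf m}(R)$ being an Artinian graded $k$-algebra of length $e$). Everything else is a one-parameter numerical optimization at the critical thresholds $e=17$ and $e=13$, and one can verify that both bounds are sharp in the sense that $G(16)<1<G(17)$ and $H(12)<1<H(13)$ for the respective expressions.
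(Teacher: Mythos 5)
Your treatment of the minimal-multiplicity case is fine and matches the paper's (and your alternative argument via Theorem~\ref{thm: Northcott} is also valid, since minimal multiplicity forces the exact Hilbert series $\tfrac{1+(\eh(R)-1)t}{(1-t)^2}$). However, your derivation of the sharper bound for the non-minimal-multiplicity case has a genuine gap at the claim $\sum_i h_i = \length(R/J) = e$.

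The quantity $\sum_i h_i$ is by definition $\length\bigl(\gr_\m(R)/J^*\gr_\m(R)\bigr)$, and this is only $\geq \length(R/J) = e$, with equality if and only if $J^*$ is a regular sequence in $\gr_\m(R)$, i.e.\ if and only if $\gr_\m(R)$ is Cohen--Macaulay. The natural map $\gr_\m(R)/J^*\gr_\m(R) \to \gr_{\bar\m}(R/J)$ is a surjection but not an isomorphism in general: the degree-$n$ kernel is $\bigl(\m^{n+1}+(J\cap\m^n)\bigr)/\bigl(\m^{n+1}+J\m^{n-1}\bigr)$, which vanishes exactly under the Valabrega--Valla condition $J\cap\m^n = J\m^{n-1}$. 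Since the corollary does not assume $\gr_\m(R)$ Cohen--Macaulay (indeed the statement needs to apply to all Cohen--Macaulay local rings $R$), there exist $R$ with $\sum_i h_i > e$, and then your optimization $Q(e^{-x}) \leq 1 + (e-2)e^{-x} + e^{-2x}$ is not justified. Your coefficient-wise inequality $h_{\gr_\m(R)}(t) \leq Q(t)/(1-t)^2$ from graded Nakayama is valid, but without the constraint $\sum h_i = e$ it is too weak to give the threshold $e \geq 13$. The paper's proof instead cites \cite[Theorem~3.3]{RVV}, which is precisely the nontrivial content you are trying to replace: it gives the coefficient-wise bound $h_{\gr_\m(R)}(t) \leq \tfrac{1+(e-2)t+t^2}{(1-t)^2}$ for any two-dimensional Cohen--Macaulay local ring that does not have minimal multiplicity, \emph{without} any hypothesis on $\gr_\m(R)$. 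Your downstream algebra (the monotonicity identity $(e+1)(1+(e-2)b+b^2)-e(1+(e-1)b+b^2)=(1-b)^2$ and the numerical check $H(12)<1<H(13)$ at $x=1$) is correct, so once the Hilbert series bound is supplied by the cited theorem rather than by Nakayama alone, your argument goes through and agrees with the paper's.
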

\begin{proof}
The first statement follows from Corollary~\ref{cor: large multiplicity is lim-unstable} (see Remark~\ref{rmk: values of E(d)}). If $R$ does not have minimal multiplicity, then \cite[Theorem~3.3]{RVV} implies that
    \[
        \sum_{n \geq 0} \length (\m^n/\m^{n+1}) t^n \leq \frac{1 + (\eh (R) - 2)t + t^2}{(1 - t)^2}.
    \]
Thus we obtain from Proposition~\ref{prop: exponential bound} that 
\[
\limlm(R) \geq 
\frac{\eh(R) (1 - e^{-x})^2}{x^2 (1 + (\eh (R) - 2)e^{-x} + e^{-2x})}.
\]
Setting $x = 1$, it is easy to check that the right-hand side above is greater than one when $\eh (R) \geq 13$.
\end{proof}

Finally, we prove the following result that extends \cite[Corollary~7.6 and Theorem~2]{Shah} from semistable 
to lim-stable singularities. When $R$ is excellent and normal, this result will be revisited in Section~\ref{section: MMP}. 

\begin{proposition}\label{prop: Shah associated graded is CM and reduced}
 Let $(R, \m)$ be a two-dimensional Cohen--Macaulay local ring. 
    Suppose $R$ is lim-stable and $\eh(R) = \edim(R)$. Then $\Gr_\m (R)$ is Cohen--Macaulay and reduced. 
\end{proposition}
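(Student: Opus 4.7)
By Corollary~\ref{cor: purely transcendental field extension} we may pass to $R(t)$ and assume the residue field of $R$ is infinite. Set $e \coloneqq \eh(R) = \edim(R)$ and $a_n \coloneqq \length(\m^n/\m^{n+1})$. Because $\edim(R) = e$ while minimal multiplicity would require $\eh = \edim - 1$, $R$ is not of minimal multiplicity, so \cite[Theorem~3.3]{RVV} (invoked in Corollary~\ref{cor: lim unstable dimension 2}) gives the coefficient-wise bound
\[
\sum_n a_n t^n \;\leq\; h_{\max}(t) \;\coloneqq\; \frac{1 + (e-2)t + t^2}{(1-t)^2};
\]
equivalently, $a_n \leq en$ for all $n \geq 1$.

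The central step is to upgrade this bound to equality $h_{\Gr_\m(R)}(t) = h_{\max}(t)$. Apply Proposition~\ref{prop: exponential bound} to the $\m$-adic filtration: lim-stability yields $\sum_n a_n e^{-nx} \geq e/x^2$ for every $x > 0$. Since $R$ is two-dimensional Cohen--Macaulay, $a_n = en + (e - \eh_1(\m))$ for $n \gg 0$, and combined with $a_n \leq en$ this forces $\eh_1(\m) \geq e$. Writing $\sum_n a_n e^{-nx} = e\cdot e^{-x}/(1-e^{-x})^2 + \sum_n (a_n - en) e^{-nx}$ and expanding around $x = 0^+$ via $e^{-x}/(1-e^{-x})^2 = 1/x^2 - 1/12 + O(x^2)$ and $1/(1-e^{-x}) = 1/x + 1/2 + O(x)$, one finds that the $1/x$ coefficient on the left side equals $e - \eh_1(\m)$. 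Requiring $\sum a_n e^{-nx} \geq e/x^2$ to persist as $x \to 0^+$ therefore forces $\eh_1(\m) \leq e$, hence $\eh_1(\m) = e$. The constant term then simplifies to $1 - e/12 - \sum_{n \geq 2}(en - a_n)$, whose non-negativity gives $\sum_{n \geq 2}(en - a_n) \leq 1 - e/12 < 1$. Since the summands are non-negative integers, the sum must vanish, so $a_n = en$ for all $n \geq 2$ and $h_{\Gr_\m(R)}(t) = h_{\max}(t)$ exactly.

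Cohen--Macaulayness of $\Gr_\m(R)$ now follows from classical results of Sally and Rossi--Valla on extremal Hilbert series: in our situation, $h = h_{\max}$ corresponds to $\eh_1(\m) = e$, $\eh_2(\m) = 1$, and reduction number $r_J(\m) = 2$ with $\m^3 = J\m^2$ for any minimal reduction $J$, so the Valabrega--Valla criterion produces a regular sequence of linear forms in $\Gr_\m(R)$. For reducedness, I plan to argue by contradiction: a nilpotent of positive degree in $\Gr_\m(R)$ produces $x \in \m^n \setminus \m^{n+1}$ and $k \geq 2$ with $x^k \in \m^{nk+1}$, so the rational power $\m^{(nk+1)/k}$ strictly contains $\m^{n+1}$ and the Rees period $\rho$ of $\m$ exceeds $1$. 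Applying Proposition~\ref{prop: exponential bound} to the graded family $\{\m^{j/\rho}\}_j$ of rational powers -- whose Hilbert function is a quasi-polynomial of period $\rho$ by Proposition~\ref{prop: rational powers are quasi-polynomial} -- and comparing carefully to the $\m$-adic expansion above, the additional colength contributed by the strict inclusions $\m^{j/\rho} \supsetneq \m^{\lceil j/\rho \rceil}$ should push the asymptotic inequality at $x \to 0^+$ past the slack $1 - e/12$ and force $\limlm(R) > 1$. Carrying out this last quantitative comparison is the main technical obstacle: the contributions of non-integer rational powers to the exponential sum must be tracked sharply through the period-$\rho$ quasi-polynomial structure, paralleling but refining the computation that pinned down $h_{\Gr_\m(R)}$.
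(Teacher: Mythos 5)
Your treatment of the Cohen--Macaulay statement is correct and takes a genuinely different route from the paper. The paper's proof of CM is a contradiction argument: \emph{assume} $\Gr_\m(R)$ is not CM, invoke Sally to get $a_n \leq n\eh(R) - 1$ eventually, apply Proposition~\ref{prop: exponential bound}, and compute that the resulting lower-bounding function $f(x)$ has $f(0) = 1$ but $f'(0) = 1/\eh(R) > 0$, contradicting lim-stability. Your argument instead extracts the full Laurent expansion of $\sum a_n e^{-nx} - \eh(R)/x^2$ unconditionally, reads off $\eh_1(\m) = \eh(R)$ from the $1/x$ coefficient, uses the constant term $1 - \eh(R)/12 + \sum_{n\geq 2}(a_n - \eh(R) n) \geq 0$ combined with integrality of $\eh(R) n - a_n$ to pin $a_n = \eh(R) n$ exactly for all $n\geq 2$, and only then applies Sally's theorem in the contrapositive to deduce CM. This is a legitimate reorganization that is arguably more informative, as it identifies the Hilbert function first. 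Two caveats: the invocation of the Valabrega--Valla criterion and reduction number is not needed and not cleanly justified; the right thing to say is simply that if $\Gr_\m(R)$ were not CM the Hilbert polynomial would be bounded by $\eh(R) n - 1$, contradicting $a_n = \eh(R) n$ for large $n$. Also, for the constant term argument to imply $\sum_{n\geq 2}(\eh(R) n - a_n) = 0$, you implicitly need $\eh(R) \leq 12$; this does hold by Corollary~\ref{cor: lim unstable dimension 2}, but it should be stated.

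The reducedness part, however, contains a genuine gap, and you partly acknowledge it yourself. Two concrete problems. First, a nilpotent element of $\Gr_\m(R)$ gives $x\in\m^n\setminus\m^{n+1}$ with $x^k\in\m^{nk+1}$, so $x\in\m^{(nk+1)/k}$, but this does \emph{not} imply the Rees period exceeds $1$: if $x\in\overline{\m^{n+1}}\setminus\m^{n+1}$ you only learn that $\m^{n+1}$ is not integrally closed, and the Rees period could still be $1$. The paper handles this in two separate stages: first it shows $\m^n = \overline{\m^n}$ for all $n$ by applying Proposition~\ref{prop: exponential bound} to the family $\{\overline{\m^n}\}_n$, and only then shows the Rees period is $1$ via the family $\m^{(n\rho - c)/\rho}$; both cases must be ruled out. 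Second, and more seriously, both stages in the paper hinge on the \emph{strong Rees property} of $\m^n$ (from \cite[Corollary~3.6]{PWYStrongReesProperty}), which gives the strict inequality $\length(I/\m I) < \length(\m^n/\m^{n+1}) = \eh(R) n$ whenever $\m^n\subsetneq I$, and hence $\length(I_n/I_{n+1}) \leq \eh(R) n - 1$ for the relevant graded families. Without this drop of one, the exponential-sum comparison has no teeth: the contributions of the rational-power or integral-closure families would not produce a strictly positive $f'(0)$, and the ``slack'' $1 - \eh(R)/12$ cannot be overcome by a naive colength comparison alone. Your plan to ``compare carefully to the $\m$-adic expansion'' does not replace this ingredient, and you correctly flag it as the obstacle; as written, the reducedness claim is unproved.
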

\begin{proof}
    If $\Gr_\m(R)$ is not Cohen--Macaulay, then by \cite[Theorem (i) and (ii)]{Sally3} (or \cite[Theorem~3]{Shah}), the Hilbert polynomial of $\Gr_\m (R)$ is at most $n \eh(R) - 1$. Combining this with \cite[Theorem~3.3]{RVV}, we obtain that there exists $N \geq 0$ such that $\length (\m^n/\m^{n+1}) \leq n \eh(R) - 1$ for all $n \geq N$ and $\length (\m^n/\m^{n+1}) \leq n\eh(R)$ for $1 \leq n < N$.
    Therefore by Proposition~\ref{prop: exponential bound} we have the bound
    \begin{align*}
    \limlm(R) \geq f(x) \coloneqq & \,\ \frac{\eh(R)}{x^2(1+ \sum_{j=1}^{N-1} j\eh(R)e^{-jx} + \sum_{j=N}^\infty (j\eh(R)-1)e^{-jx})}\\
    = & \,\  \frac{\eh(R) (1 - e^{-x})^2/x^2}{1 + (\eh (R) - 2)e^{-x} + e^{-2x} - e^{-Nx} + e^{-(N+1)x}}.
    \end{align*}
    Since $\lim_{x \to 0} f(x) = 1$ and, by a straightforward computation, $\lim_{x \to 0} f'(x)=\frac 1 {\eh(R)}>0$, we obtain that $f(x) > 1$ for all positive sufficiently small $x$. Therefore $\limlm(R)>1$ which contradicts our assumption that $R$ is lim-stable. We have proved that $\gr_{\m}(R)$ is Cohen--Macaulay. 
    
    Next, by \cite[Theorem 3 (ii)]{Shah} or by combining \cite[Theorem~3.3, Lemma~3.4]{SallySuper} with \cite[Theorem~2.11]{Huneke},
    the Hilbert function of $R$ is $\length(\m^n/\m^{n+1})=n\eh(R)$ for all $n \geq 1$. Now 
    by \cite[Corollary~3.6]{PWYStrongReesProperty}, $\m^n$ has the strong Rees property for all $n$, i.e., $\length (\m^n/\m^{n+1}) > \length (I/\m I)$ for any ideal $\m^n \subsetneq I$. Thus for any $n$, we have that
\[
\length (\overline{\m^n}/\overline{\m^{n+1}}) \leq 
\length (\overline{\m^n}/\m \overline{\m^{n}})\leq 
\begin{cases}
\length (\m^n/\m^{n+1}) = n\eh(R),  &\text{ if $\overline{\m^n}=\m^n$.} \\
\length (\m^n/\m^{n+1})-1  = n\eh(R)-1, &\text{ if $\overline{\m^n}\neq \m^n$.}
\end{cases}
\] 
    Without loss of generality, we may assume that $R$ is complete.
    By Proposition~\ref{prop: limit-stable reduced}, $R$ is reduced, hence $\lim_{n \to \infty} \length (R/\overline{\m^n})/n^{2} = \eh(R)/2$.
    Now if $\m^N \neq \overline{\m^{N}}$ for some $N$, then by applying Proposition~\ref{prop: exponential bound} to the graded family $\{\overline{\m^n}\}_n$ and using a similar argument as above, we obtain that 
    \begin{equation*}
        \limlm(R) \geq g(x) \coloneqq \frac{\eh(R) (1 - e^{-x})^2/x^2}{1 + (\eh (R) - 2)e^{-x} + e^{-2x} - e^{-Nx}(1 - e^{-x})^2}.  
    \end{equation*}
    In a neighborhood of $0$ we may expand the numerator as 
    $g_1(x) = \eh(R)(1 - x + 7x^2/12) + o(x^2)$
    and the denominator as $g_2(x) = \eh(R)(1 - x + x^2/2) + o(x^2)$.
    Thus we see that $\lim_{x \to 0} g(x) = 1$ and $g'(x) > 0$ for all $x>0$ sufficiently small. Hence $\limlm(R) > 1$, which contradicts $R$ being lim-stable. Thus $\m^n$ is integrally closed for all $n$, i.e., $\m$ is normal. 

    Suppose the Rees period of $\m$ is $\rho > 1$.
    Fix any $0 < c < \rho$ and let $I_n = \m^{\frac{n\rho - c}{\rho}}$. If $I_n \supsetneq \m^n$, then we have
    \[
    \length (I_n/I_{n+1}) \leq 
    \length (I_n/\m I_{n}) \leq n\eh(R) - 1 
    \]
    by the strong Rees property of $\m^n$. 
    Thus, due to Proposition~\ref{prop: rational powers are quasi-polynomial}, we may apply Proposition~\ref{prop: exponential bound} to the graded family $\{I_n\}_n$ and repeat the previous argument to arrive at a contradiction. Hence we have $\m^{\frac{n\rho - c}{\rho}}= \m^n$ for all $0<c<\rho$ and all $n$. But this contradicts the definition of Rees period. Therefore we must have $\rho=1$ and thus $\m^\alpha= \m^{\lfloor\alpha \rfloor}$ for all $\alpha\in\Q_{\geq 0}$. It follows that $\Gr_\m(R)$ is reduced by \cite[Theorem 10.5.6]{SwansonHuneke}.
\end{proof}

\newpage
\section{Connections with singularities in MMP}
\label{section: MMP}

In this section we will establish a connection between stability of local rings and singularities arising in the Minimal Model Program (MMP). Our results can be seen as a generalization of Mumford's heuristic conjecture that semistable surface singularities are a restricted class of rational and strongly elliptic singularities and their non-normal limits (see \cite[Page 77]{Mumford}).   

\subsection{Singularities in the minimal model program} We start by recalling some basic concepts in birational geometry and MMP singularities, following \cite{KollarMori} and \cite{KollarSingularitiesMMP}. 

Let $(R,\m)$ be an excellent local ring admitting a dualizing complex. A \emph{pair} $(X, \Delta)$ (over $\Spec(R)$) is 
a normal integral scheme $X$ of essentially of finite type over $R$ and an effective $\mathbb{Q}$-divisor $\Delta\geq 0$ on $X$. Note that $X$ admits a dualizing complex $\omega_X^\bullet=\pi^!\omega^\bullet_R$ where $\pi \colon X\to \Spec(R)$, in particular $X$ admits a canonical sheaf $\omega_X=H^{-\dim X}\omega_X^\bullet$. The canonical divisor $K_X$ is a divisor on $X$ so that $\sO_X(K_X)\cong \omega_X$, this is well-defined up to linear equivalence.

Let $(X,\Delta)$ be a pair. Let $E$ be a divisor over $X$, that is, there exists a projective birational morphism $f \colon Y\to X$ with $Y$ normal such that $E$ is a prime divisor on $Y$. The divisor $E$ is called \emph{exceptional} over $X$ if the center of $E$ on $X$ has codimension at least two. Suppose $K_X+\Delta$ is $\Q$-Cartier. We define the discrepancy of $E$, $a(E; X, \Delta)$, as the coefficient of $E$ in $K_Y -f^*(K_X+\Delta)$, where we always assume that the canonical divisors $K_X$ and $K_Y$ are chosen so that $f_*K_Y=K_X$.
When $X=\Spec(R)$, we write $a(E; R,\Delta)$ for $a(E; X,\Delta)$.

\begin{definition}
Let $(X,\Delta)$ be a pair such that the coefficients of $\Delta$ are $\leq 1$ and $K_X+\Delta$ is $\Q$-Cartier. We say that $(X,\Delta)$ is
\begin{enumerate}
\item {\it terminal} if $a(E; X,\Delta)> 0$ for all exceptional divisors $E$ over $X$;
\item {\it canonical} if $a(E; X,\Delta)\geq 0$ for all exceptional divisors $E$ over $X$;
\item {\it log terminal} if $a(E; X,\Delta)>-1$ for all divisors $E$ over $X$;
\item {\it log canonical} if $a(E; X,\Delta)\geq -1$ for all divisors $E$ over $X$.
\end{enumerate}
When $X=\Spec(R)$, we will also say $(R,\Delta)$ is terminal (resp., canonical, log terminal, log canonical) if $(X,\Delta)$ is so. We will suppress $\Delta$ from the notation if $\Delta = 0$.
\end{definition}

\begin{remark}
It is well-known that if $X$ is normal, $\Q$-Gorenstein and $X$ admits a log resolution $Y\to X$, then $X$ is terminal (resp., canonical) if and only if $a(E; X)>0$ (resp. $a(E; X) \geq 0$) for all exceptional prime divisors $E$ on $Y$. Similarly, if
$(X,\Delta)$ is a pair such that $K_X+\Delta$ is $\Q$-Cartier and $(X,\Delta)$ admits a log resolution $Y\to (X,\Delta)$, then $(X,\Delta)$ is log terminal (resp., log canonical) if and only if $a(E; X, \Delta)>-1$ (resp., $a(E; X, \Delta) \geq -1$) for all prime divisors $E$ on $Y$. 
\end{remark}

Log canonical modifications will play a critical role in our proofs.

\begin{definition}
Let $(X,\Delta)$ be a pair such that all coefficients of $\Delta$ are $\leq 1$. We say that a projective birational morphism $f\colon Y\to (X,\Delta)$ is a log canonical modification of $(X,\Delta)$ if $Y$ is normal and we have
\begin{itemize}
    \item $(Y,f_*^{-1}\Delta + \red{E})$ is log canonical,
    \item $K_Y+f_*^{-1}\Delta + \red{E}$ is $f$-ample,
\end{itemize}
where $\red{E}$ denotes the reduced exceptional divisor, i.e., the sum of all $f$-exceptional prime divisors with coefficient $1$.
\end{definition}

It is well-known that the existence of log canonical modifications follows from the full log minimal model program (including the abundance conjecture), and when a log canonical modification of $(X,\Delta)$ exists, then it is unique, see \cite{OdakaXu,KollarSingularitiesMMP} for further discussions. We will use the following celebrated result of Odaka--Xu. 

\begin{theorem}[{\cite[Theorem 1.1]{OdakaXu} and \cite[Theorem 1.32]{KollarSingularitiesMMP}}]
\label{thm: Odaka-Xu}
Let $(X,\Delta)$ be a pair such that $X$ is essentially of finite type over a field of characteristic zero and all coefficients of $\Delta$ are $\leq 1$. If $K_X+\Delta$ is $\Q$-Cartier, then $(X,\Delta)$ admits a log canonical modification. 
\end{theorem}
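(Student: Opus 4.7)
The plan is to realize the log canonical modification as the relative ample model of a perturbed log resolution, via MMP with scaling and BCHM. First I would take a log resolution $\pi\colon Y\to X$ of $(X,\Delta)$, with reduced exceptional divisor $E=\sum E_i$, and form the log smooth pair $(Y,\Gamma)$ where $\Gamma=\pi_*^{-1}\Delta+E$. Writing $K_Y+\Gamma=\pi^*(K_X+\Delta)+\sum(a_i+1)E_i$ with $a_i=a(E_i;X,\Delta)$, the exceptional prime divisors split into the non-log-canonical places $\{E_i:a_i<-1\}$, which should be preserved by the modification, and the remaining places $\{E_i:a_i\geq-1\}$, which should be contracted. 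Heuristically, the log canonical modification should be the relative ample model of $(Y,\Gamma)$ over $X$; the issue is that $(Y,\Gamma)$ is only log canonical, not klt, and $K_Y+\Gamma$ need not be effective over $X$, so BCHM does not apply directly.

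To produce the modification, I would perturb to a klt pair: for small rational $\epsilon>0$ set $\Gamma_\epsilon=(1-\epsilon)\Gamma$, so that $(Y,\Gamma_\epsilon)$ is klt (log smooth with all coefficients strictly less than one). Fixing an effective $\pi$-ample divisor $A$ on $Y$, I would run the relative $(K_Y+\Gamma_\epsilon)$-MMP over $X$ with scaling of $A$. By the relative form of BCHM applied to the klt pair $(Y,\Gamma_\epsilon+tA)$, which has $\pi$-big boundary for small $t>0$, this MMP terminates after finitely many steps and the scaling parameter can be driven to $0$. The outcome is a relative minimal model $\phi_\epsilon\colon Y_\epsilon\to X$ on which $K_{Y_\epsilon}+(\phi_\epsilon)_*\Gamma_\epsilon$ is $\phi_\epsilon$-nef, and after passing to the associated relative ample model using finite generation of the relative log canonical ring, we obtain a projective birational $Y_\epsilon\to X$. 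A discrepancy computation identifies the divisors contracted by $Y\dashrightarrow Y_\epsilon$: these are the $E_i$ on which the coefficient $(a_i+1)-\epsilon\cdot(\text{coeff of }\Gamma\text{ in }E_i)$ is positive, which, for $\epsilon$ smaller than $\min\{a_i+1:a_i>-1\}$, stabilizes to exactly $\{E_i:a_i>-1\}$.

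Finally I would verify that the stable model $Y_{\mathrm{lc}}$ carries the pair $(Y_{\mathrm{lc}},(\phi_\epsilon)_*^{-1}\Delta+\red{E_{Y_{\mathrm{lc}}}})$ as log canonical with $K_{Y_{\mathrm{lc}}}+$\,boundary being $\phi_\epsilon$-ample; the log canonicity follows because the surviving exceptional divisors are precisely those with $a_i\leq -1$ (kept with coefficient $1$), and the ampleness is immediate from the ample model construction. The main obstacle is the MMP step: termination and finite generation when the boundary is only $\pi$-big rather than globally big, which is precisely the content of BCHM in its relative form and is where the characteristic zero hypothesis enters essentially. A secondary technicality is independence of the limit model from $\epsilon$, handled by noting that the birational contractions performed are determined by the signs of the coefficients of $K_Y+\Gamma_\epsilon$ along each $E_i$, which are independent of $\epsilon$ once $\epsilon$ is small enough.
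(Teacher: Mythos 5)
Your perturbation $\Gamma_\epsilon=(1-\epsilon)\Gamma$ goes the wrong way on the log canonical places, i.e.\ the exceptional $E_i$ with $a_i\coloneqq a(E_i;X,\Delta)=-1$, and contracting precisely these is where the real content of the theorem lies. On such an $E_i$ the coefficient in $K_Y+\Gamma_\epsilon-\pi^*(K_X+\Delta)$ is $a_i+1-\epsilon=-\epsilon<0$, so the relative MMP for the perturbed klt pair has no reason to contract $E_i$; your own discrepancy computation predicts that the surviving exceptionals are $\{E_i:a_i\le -1\}$. But the negativity lemma forces a log canonical modification to retain only the $E_i$ with $a_i<-1$: if a surviving $E_j$ had $a_j=-1$, then $\sum_i(a_i+1)E_i$ (which differs from $K+f_*^{-1}\Delta+\red{E}$ by an $f$-numerically trivial divisor) would omit $E_j$, and pairing against a curve $C\subset E_j$ contracted by $f$ gives $\sum_{i\ne j}(a_i+1)(E_i\cdot C)\le 0$, contradicting $f$-ampleness. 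A concrete failure of your scheme: take $X$ the affine cone over a smooth elliptic curve with $\Delta=0$, so $X$ is log canonical and is its own log canonical modification. On the blowup $\pi\colon Y\to X$ of the vertex, the exceptional curve $E_0$ has $a(E_0;X)=-1$, and $K_Y+\Gamma_\epsilon=\pi^*K_X-\epsilon E_0$ is already $\pi$-ample because $E_0^2<0$. Your MMP with scaling does nothing, so your stable model is $Y$ with boundary $E_0$ --- but $K_Y+E_0=\pi^*K_X$ is $\pi$-numerically trivial, not $\pi$-ample. The ``ampleness immediate from the ample model construction'' holds for the perturbed boundary $(1-\epsilon)\Gamma$, not for the coefficient-$1$ boundary you actually need, and this example shows the two ampleness statements are genuinely different.

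What the standard proof (Odaka--Xu; Koll\'{a}r's book) does instead is run the relative MMP for the dlt pair $(Y,\Gamma)$ directly --- invoking existence and termination of flips for dlt pairs over $X$, a nontrivial extension of BCHM due to Birkar and Hacon--Xu --- and then pass to the relative ample model of $K_Y+\Gamma$ itself (not of a perturbation), where the negativity-lemma argument above shows that every exceptional with $a_i\ge -1$ is contracted, including those with $a_i=-1$ on which the nef divisor restricts to zero. Your secondary worries about termination of the scaled MMP and about stabilizing the model in $\epsilon$ are real but manageable; the missing ingredient is an MMP framework that actually contracts, rather than preserves, the discrepancy-$(-1)$ places.
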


Our full results in characteristic zero require some generalizations of log canonical singularities beyond the $\Q$-Gorenstein setting. Though we will only require the numerically $\Q$-Gorenstein case (see below) in our applications, we discuss this theory in the more general setting of \cite{deFernexHacon}, see also \cite{BdFFU,HashizumeSingularityArbitraryPair}. Let $f \colon Y\to X$ be a projective birational morphism of normal integral schemes. The two types of $m$th limiting relative canonical divisors are defined as
$$K^-_{m,Y/X}\coloneqq K_Y - \frac{1}{m} f^{\natural}(mK_X) \text{ and } K^+_{m,Y/X}\coloneqq K_Y + \frac{1}{m} f^{\natural}(-mK_X),$$
where $f^{\natural}(D)=\text{div}(\sO_X(-D)\cdot \sO_Y)$ for a divisor $D$ on $X$. It is not difficult to see that 
$$K^-_{m,Y/X} \leq K^-_{mn,Y/X} \leq K^+_{mn,Y/X}\leq K^+_{m,Y/X}$$
for all $m, n$, with equalities throughout if $mK_X$ is Cartier. After taking limsup (resp., liminf) of $K^-_{m,Y/X}$ (resp., $K^+_{m,Y/X}$), we obtain $\mathbb{R}$-divisors $K^-_{Y/X}$ and $K^+_{Y/X}$. 

For any prime divisor $E$ on $Y$, we write $a_m(E;X)$ for the coefficient of $E$ in $K^-_{m, Y/X}$ and $a(E;X)$ for the coefficient of $E$ in $K^-_{Y/X}$. Note that $a(E;X)=\limsup a_m(E;X)$ for every divisor $E$ over $X$. 

\begin{definition}
    We say that
\begin{itemize}
    \item $X$ is of {\it log canonical type} if there exists $m$ such that $a_m(E;X)\geq -1$ for all divisors $E$ over $X$, see \cite[Definition 7.1]{deFernexHacon}.
    \item $X$ is {\it pseudo-log canonical} if $a(E;X)\geq -1$ for all divisors $E$ over $X$, see \cite[Definition 4.1, Proposition 4.7 and Theorem 4.8]{HashizumeSingularityArbitraryPair}. 
\end{itemize}
\end{definition}

It is clear that log canonical type implies pseudo-log canonical and that they both agree with log canonical when $K_X$ is $\Q$-Cartier. On the other hand, there are examples of pseudo-log canonical singularities that are not of log canonical type \cite[Example 4.11]{HashizumeSingularityArbitraryPair}.

When $X$ is essentially of finite type over a field of characteristic zero, for every divisor $E$ over $X$ and every $m\geq 2$, there exists an effective $\Q$-divisor $\Delta\geq 0$ on $X$ such that $m\Delta$ is integral, $K_X+\Delta$ is $\Q$-Cartier, and $a_m(E;X)=a(E; X,\Delta)$, as shown in \cite[Theorem 5.4]{deFernexHacon}. It follows that, in this case, we have 
$$a(E;X)=\sup\{a(E;X,\Delta) \mid \Delta\geq 0 \text{ and } K_X+\Delta \text{ is $\Q$-Cartier}\}.$$
Hence, $X$ is of log canonical type if and only if there exists a pair $(X,\Delta)$ such that $K_X+\Delta$ is $\Q$-Cartier and $(X,\Delta)$ is log canonical.

A $\Q$-divisor $D$ on $X$ is called {\it numerically $\Q$-Cartier} if, locally on $X$, there is a regular alteration $\pi \colon Z\to X$ and a $\pi$-numerically trivial $\Q$-divisor $G$ on $Z$ such that $\pi_*G=D$. We call $X$ {\it numerically $\Q$-Gorenstein} if $K_X$ is numerically $\Q$-Cartier.  Note that, if $X$ is numerically $\Q$-Gorenstein and $f\colon Y\to X$ is any projective birational morphism with $Y$ normal, then we have a well-defined $\Q$-divisor $f_{\num}^*K_X$ such that $K_Y-f_{\num}^*K_X$ is $f$-exceptional and $(f_{\num}^*K_X)\cdot F^{d-1}=0$ for all Cartier divisors $F$ on $Y$ that is $f$-exceptional. Explicitly, one may set $f_{\num}^*K_X\coloneqq g_*K$, where $g\colon Z\to Y$ is a sufficiently large regular alteration\footnote{Here we assume the existence of regular alterations, which holds when $X$ is essentially of finite type over a field or over a complete DVR, or when $\dim X\leq 3$.} and $K$ is the numerically trivial $\Q$-divisor on $Z$ with $(f\circ g)_*K=K_X$. The property $(f_{\num}^*K_X)\cdot F^{d-1}=0$ follows by the projection formula. 

Since we can take $G=\pi^*D$, any $\Q$-Cartier divisor is numerically $\Q$-Cartier, in particular, $\Q$-Gorenstein singularities are numerically $\Q$-Gorenstein. More generally, we have 
\begin{itemize}
    \item If $\dim X=2$, then any $\Q$-divisor is numerically $\Q$-Cartier (in particular, $X$ is numerically $\Q$-Gorenstein), see \cite[Example 5.7]{BdFFU} and \cite[Example 3.4]{TakagiFinitisticTestIdeal}. 
    \item If $X$ is essentially of finite type over a field of characteristic zero, then $X$ is numerically $\Q$-Gorenstein if and only if $K^-_{Y/X}=K^+_{Y/X}$ for every resolution of singularities $Y\to X$, see \cite[Proposition 5.9]{BdFFU}. In particular, when $X$ is numerically $\Q$-Gorenstein, $K^-_{Y/X}=K^+_{Y/X}$ is a $\Q$-divisor.
\end{itemize}

We record the following result on log canonicity for numerically $\Q$-Gorenstein varieties.
\begin{theorem}[{\cite[Proposition 3.5]{ZhangIsolatedSingularitiesNonInvertibleEndo}}, see also {\cite[Corollary 5.2 and Lemma 5.4]{HashizumeSingularityArbitraryPair}}]
\label{thm: num Q-Gor lc is Q-Gor lc}
Let $X$ be a normal integral scheme essentially of finite type over a field of characteristic zero. Suppose $X$ is numerically $\Q$-Gorenstein and pseudo-log canonical. Then $X$ is $\Q$-Gorenstein and log canonical. 
\end{theorem}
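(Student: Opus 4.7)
The strategy is to first show that $K_X$ is $\Q$-Cartier; once $X$ is $\Q$-Gorenstein the log canonical property follows immediately from the pseudo-log canonical hypothesis, since $a(E;X,0)=a(E;X)\geq -1$ for every divisor $E$ over $X$. I would reduce to the affine case $X=\Spec R$ and fix a log resolution $f\colon Y\to X$. The characterization of numerically $\Q$-Gorenstein in characteristic zero recalled in the excerpt gives $K^-_{Y/X}=K^+_{Y/X}$ as a single $\Q$-divisor $D$ on $Y$, supported on the $f$-exceptional locus, whose coefficients on exceptional primes are exactly the discrepancies $a(E;X)$, all of which are $\geq -1$ by pseudo-log canonicity.

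Next I would apply de Fernex--Hacon theory (as recalled before Theorem~\ref{thm: num Q-Gor lc is Q-Gor lc}) to produce, for each sufficiently divisible $m$, an effective $\Q$-divisor $\Delta_m\geq 0$ on $X$ with $m\Delta_m$ integral, $K_X+\Delta_m$ being $\Q$-Cartier, and $a(E;X,\Delta_m)=a_m(E;X)$ for all $f$-exceptional prime divisors $E$. Because the numerical $\Q$-Gorenstein hypothesis forces $K^-_{m,Y/X}$ and $K^+_{m,Y/X}$ to squeeze to the same limit $D$, and the coefficients of $D$ are $\geq -1$, the pair $(X,\Delta_m)$ is log canonical for all $m$ sufficiently large. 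By the Odaka--Xu theorem (Theorem~\ref{thm: Odaka-Xu}) the log canonical modification $g_m\colon Z_m\to X$ of $(X,\Delta_m)$ exists. Since $(X,\Delta_m)$ is already log canonical, $g_m$ cannot extract any exceptional divisor with negative discrepancy, so $g_m$ is small.

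To finish, I would show $g_m$ is in fact an isomorphism. On $Z_m$, $g_m$-ampleness of $K_{Z_m}+g_{m,*}^{-1}\Delta_m$ together with the numerical $\Q$-Gorenstein hypothesis---which gives that the numerical pullback $g_m^{*,\mathrm{num}}K_X$ has zero intersection with every $g_m$-exceptional curve---and the fact that $g_m^*\Delta_m$ is an honest $\Q$-Cartier pullback (also numerically trivial over $g_m$) forces any $g_m$-contracted curve $C$ to satisfy $(K_{Z_m}+g_{m,*}^{-1}\Delta_m)\cdot C=0$, contradicting $g_m$-ampleness. Hence $g_m$ has no exceptional curves; being proper birational into a normal base with no positive-dimensional fibers, it must be an isomorphism. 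Thus $(X,\Delta_m)$ is genuinely log canonical with $K_X+\Delta_m$ $\Q$-Cartier. A final limit argument, exploiting that $m\Delta_m$ is integral while the coefficients of $\Delta_m$ viewed on $Y$ are squeezed to zero by $K^\pm_{m,Y/X}\to D$, shows that the support of $\Delta_m$ is contained in a fixed finite set of prime divisors (determined by the Cartier-index obstruction of $K_X$) and forces $\Delta_m=0$ for some $m$. Hence $K_X$ itself is $\Q$-Cartier, and log canonical follows.

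The main obstacle will be the negativity/ampleness argument showing $g_m$ is an isomorphism: it requires careful juggling between the $\Q$-Cartier pullback of $K_X+\Delta_m$ and the purely numerical pullback of $K_X$, plus a version of the negativity lemma tailored to the non-$\Q$-Gorenstein setting. A secondary technical point is the passage from ``$\Delta_m$ has small coefficients as $m\to\infty$'' to ``$\Delta_m=0$ for some $m$'', which should be handled by the integrality of $m\Delta_m$ together with a finiteness statement on which prime divisors of $X$ can appear in any $\Delta_m$ making $m(K_X+\Delta_m)$ Cartier.
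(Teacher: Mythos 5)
The critical gap is in step 4, where you assert that $(X,\Delta_m)$ is log canonical for $m$ sufficiently large. Pseudo--log canonicity is the statement that $a(E;X)=\limsup_m a_m(E;X)\geq -1$, and the limiting discrepancies $a_m(E;X)$ increase toward $a(E;X)$ from below along a cofinal chain of $m$'s. If $a(E;X)=-1$ exactly and the convergence does not terminate, then $a_m(E;X)<-1$ for every finite $m$, so no choice of $\Delta_m$ yields a log canonical pair. This is precisely the distinction between being pseudo--log canonical and being of log canonical type --- as noted just before the statement, \cite[Example 4.11]{HashizumeSingularityArbitraryPair} shows these differ in general --- and the entire content of the theorem is that numerical $\Q$-Gorensteinness bridges the gap. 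You assert the bridge but do not build it; the cited proofs use nontrivial MMP inputs to extract on a suitable birational model the divisors with $a(E;X)<-1$ and derive a contradiction. In this paper that machinery is packaged in Theorem~\ref{thm: Hashizume}: under the pseudo-lc hypothesis it has no exceptional divisors (any $E_i$ with $a(E_i;X)<-1$ would contradict pseudo-lc), condition (4) then forces $h$ to be finite birational hence an isomorphism, and condition (3) gives $K_X$ $\Q$-Cartier and $X$ log canonical. That short argument is essentially what stands behind the citation; your steps 5--8 take a detour that does not close.

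Two secondary issues. If $(X,\Delta_m)$ were log canonical, its log canonical modification is the identity morphism by uniqueness (the pair already meets the defining conditions with empty exceptional locus), so steps 6--7 are vacuous --- the negativity/ampleness argument you flag as the main obstacle never arises, but it also tells you nothing about $K_X$. In step 8, the convergence $K^\pm_{m,Y/X}\to D$ controls discrepancies of $f$-exceptional divisors, not the coefficients of the strict transform of $\Delta_m$; moreover $m\Delta_m$ integral is perfectly compatible with a coefficient equal to $1/m$ for every $m$, so ``$\Delta_m=0$ for some $m$'' does not follow, and even a small nonzero $\Delta_m$ would not yield $K_X$ $\Q$-Cartier without $\Delta_m$ itself being $\Q$-Cartier, which you do not have. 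Finally, note the paper does not reprove this result --- it is cited from Zhang and Hashizume, so there is no in-paper proof to compare against.
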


We end this subsection by proving the following slight generalization of \cite[Theorem 1.1]{HashizumeSingularityArbitraryPair} for numerically $\Q$-Gorenstein rings that we will need later. We refer to \cite{KollarMori}, \cite{BCHM}, and \cite{KollarSingularitiesMMP} for basic language of MMP.

\begin{theorem}
\label{thm: Hashizume}
Let $(R,\m)$ be a numerically $\Q$-Gorenstein normal local domain essentially of finite type over a field of characteristic zero. Then there exists a projective birational morphism $h$: $W\to X\coloneqq\Spec(R)$ such that 
\begin{enumerate}
    \item $a(E_i,X)<-1$ for any $h$-exceptional prime divisor $E_i$;
    \item each $h$-exceptional prime divisor $E_i$ on $W$ is $\Q$-Cartier;
    \item $K_W+\red{E}$ is $\Q$-Cartier and the pair $(W,\red{E})$ is log canonical;
    \item there exists an $h$-ample exceptional Cartier divisor on $W$.
\end{enumerate}
\end{theorem}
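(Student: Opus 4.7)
The strategy is to construct $h\colon W\to X$ as a relative minimal model over $X$ extracting precisely the divisorial valuations $E$ with $a(E;X)<-1$, adapting the pseudo-lc modification construction of \cite{HashizumeSingularityArbitraryPair} to the numerically $\Q$-Gorenstein setting. First, take a log resolution $f\colon Y\to X$ whose exceptional divisor $\sum_i E_i$ is snc. Because $R$ is numerically $\Q$-Gorenstein in characteristic zero, by \cite[Proposition 5.9]{BdFFU} the numerical pullback $f_{\num}^*K_X$ is a genuine $\Q$-divisor and each discrepancy $a(E_i;X)$ is rational. Partition the exceptional primes into $F=\sum_j F_j$ (those with $a(F_j;X)<-1$) and $G=\sum_k G_k$ (those with $a(G_k;X)\geq-1$).

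Next, choose rational $0<\delta\ll 1$ with $a(F_j;X)+1<-\delta$ for every $j$. The pair $(Y,B_\delta)$ with $B_\delta=(1-\delta)(F+G)$ is klt, so by \cite{BCHM} the relative $(K_Y+B_\delta)$-MMP over $X$ with scaling of an $f$-ample $\Q$-divisor terminates. Since $f_{\num}^*K_X$ is numerically $h$-trivial, one has
\[
K_Y+B_\delta\equiv_X\sum_j(a(F_j;X)+1-\delta)F_j+\sum_k(a(G_k;X)+1-\delta)G_k,
\]
which is negative on each $F_j$ (bounded away from zero) and non-negative on each $G_k$ with $a(G_k;X)>-1$. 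Running the MMP and then letting $\delta\to 0$ through the piecewise-constant behavior of log canonical models on the boundary coefficients---following the strategy of \cite{HashizumeSingularityArbitraryPair}---every $G_k$, including the borderline ones with $a(G_k;X)=-1$, is contracted while every $F_j$ is preserved. The resulting model $h\colon W\to X$ has exceptional locus exactly $\{F_j\}$, and $K_W+\red{E}$ is $h$-ample as the output of a relative log canonical model.

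Properties (1)--(4) now follow. Property (1) holds by construction. For (3), $(W,\red{E})$ is log canonical as the relative log canonical model of the snc pair $(Y,\sum_i E_i)$, and $K_W+\red{E}$ is $\Q$-Cartier since it is $h$-ample (\cite[Theorem 4.14]{KollarSingularitiesMMP}). For (2), since divisorial contractions and flips preserve $\Q$-factoriality and $Y$ is smooth, the MMP output $W$ is $\Q$-factorial, so each exceptional prime $F_j$ is $\Q$-Cartier. For (4), the divisor $-K_W-\red{E}$ is $h$-ample and numerically equivalent over $X$ to $\sum_j(-a(F_j;X)-1)F_j$, an effective $\Q$-divisor supported on the exceptional locus with strictly positive coefficients; combined with (2), a sufficiently divisible integer multiple yields the required $h$-ample exceptional Cartier divisor.

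The main obstacle is ensuring the MMP extracts \emph{exactly} the set $\{F_j\}$, and in particular contracts every $G_k$ with $a(G_k;X)=-1$ rather than preserving it. This is the technical heart of Hashizume's analysis in the non-$\Q$-Gorenstein setting; its adaptation to the numerically $\Q$-Gorenstein case crucially uses that $f_{\num}^*K_X$ is a genuine $\Q$-divisor (a consequence of $K^-_{Y/X}=K^+_{Y/X}$ in characteristic zero), which makes the limit $\delta\to 0$ behave as cleanly as in the $\Q$-Gorenstein case of Odaka--Xu.
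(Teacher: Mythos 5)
You identify the overall shape of the argument---log resolution, partition by discrepancy, relative MMP---but your proposal skips over precisely the three technical points that make the paper's proof work, and the klt-with-$\delta$ setup you use makes at least two of them unrecoverable as stated.

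First, the $\delta\to 0$ limit does not contract the borderline divisors. In your numerical identity, a $G_k$ with $a(G_k;X)=-1$ appears with coefficient exactly $-\delta<0$ for every $\delta>0$. Such a divisor lies in the ``negative part'' of $K_Y+B_\delta$ over $X$ and is therefore preserved by the $(K_Y+B_\delta)$-MMP and by the passage to the relative log canonical model. Since the chamber containing small $\delta>0$ is open and the ample model is constant on it, taking $\delta\to 0$ simply returns that same model, with the $a=-1$ divisors still present. The paper avoids this by giving the divisors with $a\geq-1$ boundary coefficient exactly $1$ (making the relevant coefficient $a+1\geq 0$, with equality precisely at the borderline), and the variable coefficients $t_i<1$ only to the divisors with $a<-1$; the pair is then dlt, not klt, so plain BCHM no longer applies and one instead invokes \cite[Theorem 3.5]{HashizumeSingularityArbitraryPair} to get a good minimal model, after which the ample model genuinely contracts the coefficient-zero divisors.

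Second, your argument for property (2) conflates the $\Q$-factorial MMP output with the log canonical model. The minimal model produced by the MMP with scaling is $\Q$-factorial, but $W$ in your construction is the ample model obtained by contracting via the semi-ample $K+B$, and that contraction can be small and destroy $\Q$-factoriality. The paper instead proves that each exceptional $E_i$ is $\Q$-Cartier by varying the coefficients $\underline{t}$ inside a rational polytope and using finiteness of ample models \cite[Theorem 5.1]{MengZhuangMMPlocallystablefamily}; this is a genuinely different argument, not a corollary of $\Q$-factoriality.

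Third, your claim that $(W,\red{E})$ is log canonical ``as the relative log canonical model of the snc pair $(Y,\sum_i E_i)$'' is not what your construction delivers: $W$ is the log canonical model of $(Y,(1-\delta)(F+G))$, which only guarantees that $(W,(1-\delta)\red{E})$ is klt. Increasing the boundary coefficients to $1$ can cross the log canonical threshold. The paper closes this gap by showing $\mathrm{lct}(W_{\underline{t}},\sum E_i)\geq 1-\epsilon$ for $t_i$ close to $1$ and invoking the ACC for log canonical thresholds \cite[Theorem 1.1]{HMXACC} to produce a $\underline{t}$ with $\mathrm{lct}=1$. None of these three ingredients---the dlt good-minimal-model input replacing BCHM, the ample-model polytope argument, and the ACC argument---appears in your proposal, and without them the proof does not go through.
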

\begin{proof}
We follow the approach from \cite[Proof of Theorem 4.14]{HashizumeSingularityArbitraryPair}. Let $f \colon Y\to X$ be a log resolution. We divide the exceptional prime divisors based on their discrepancies with respect to 
$X$. Specifically, let:
\begin{enumerate}
    \item $F_j$ be the exceptional divisors on $Y$ such that $a(F_j;X)\geq -1$,
    \item $G_i$ be the exceptional divisors on $Y$ such that $a(G_i;X)<-1$.
\end{enumerate}
Set $F=\sum F_j$ and consider the pair $(Y, F+\sum t_iG_i)$ where each $t_i\in \Q$ and $0\leq t_i <1$. 

We now run a $(K_Y+F+\sum t_iG_i)$-MMP over $X$ with scaling of an ample divisor. After finitely many steps, we obtain a $\Q$-factorial dlt model $f' \colon (Y', F'+\sum t_iG_i')\to X$ where $K_{Y'}+F'+\sum t_iG_i'$ is the limit of movable divisors over $X$. The divisors $F', G_i'$ are the birational transformations of $F$ and $G_i$ on $Y'$. Moreover, $a(E'; X)\leq -1$ for any $f'$-exceptional prime divisor $E'$ on $Y'$ (see \cite[Step 2 in Proof of Theorem 4.14]{HashizumeSingularityArbitraryPair} or \cite[Theorem 3.1]{ZhangIsolatedSingularitiesNonInvertibleEndo}). 

We next claim that $(Y',  F'+\sum t_iG_i')$ admits a good minimal model over $X$. To establish this, it is enough to check that this pair satisfies the conditions of \cite[Theorem 3.5]{HashizumeSingularityArbitraryPair}. Since $X$ is numerically $\Q$-Gorenstein, we know by construction that $K_{Y'}-f'^*_{\num}K_X = -F' - \sum a_iG_i'$ where each $a_i>1$. Thus $$K_{Y'}+F'+\sum a_iG_i' = (K_{Y'} +F' + \sum t_iG_i') + \sum (a_i-t_i)G_i'$$ is $f'$-numerically trivial. In particular, the divisor $-(K_{Y'} +F' + \sum t_iG_i')$ is $f'$-numerically equivalent to the effective divisor $\sum (a_i-t_i)G_i'$ since $a_i>1$ and $t_i<1$. Since no stratum of $\sum G_i'$ could be an log canonical center of the pair $(Y',  F'+\sum t_iG_i')$, it follows that the pullback of $-(K_{Y'} +F' + \sum t_iG_i')$ to the normalization of any log canonical center of $(Y',  F'+\sum t_iG_i')$ is pseudo-effective. Thus $(Y',  F'+\sum t_iG_i')$ admits a good minimal model $(Y'', F''+\sum t_iG_i'')$ over $X$ by \cite[Theorem 3.5]{HashizumeSingularityArbitraryPair}.

At this point, we let $Y''\to W_{\underline{t}}$ be the contraction over $X$ induced by the semi-ample divisor $K_{Y''}+F''+\sum t_iG_i''$. It is not hard to show that this contracts all exceptional prime divisors $E''$ with $a(E'';X)=-1$ (see \cite[Step 4 in Proof of Theorem 4.14]{HashizumeSingularityArbitraryPair}). Therefore, $(G_i)_{W_{\underline{t}}}$, the birational transformation of $G_i''$, are all the exceptional prime divisors on $W_{\underline{t}}$. By \cite[Theorem 5.1]{MengZhuangMMPlocallystablefamily}, there are only finitely many ample models $W_{\underline{t}}$ as we vary $t_i$'s inside a rational polytope. Since each $K_{W_{\underline{t}}} + \sum t_i (G_i)_{W_{\underline{t}}}$ is $\Q$-Cartier, when we vary the $t_i$'s in a fixed $P^\circ$ (the interior so that the ample model $W_{\underline{t}}$ is constant for all $\underline{t}\in P^\circ$), we see that each $(G_i)_{W_{\underline{t}}}$ is $\Q$-Cartier. We also note that the pair $(W_{\underline{t}}, \sum t_i (G_i)_{W_{\underline{t}}})$ is log canonical. Thus, for $1-\epsilon <t_i$ for all $i$, the pair $(W_{\underline{t}}, (1-\epsilon) \sum (G_i)_{W_{\underline{t}}})$ is also log canonical, note that  $K_{W_{\underline{t}}}+ (1-\epsilon)\sum (G_i)_{W_{\underline{t}}}$ is $\Q$-Cartier (because $K_{W_{\underline{t}}} + \sum t_i (G_i)_{W_{\underline{t}}}$ and each $(G_i)_{W_{\underline{t}}}$ is $\Q$-Cartier). Letting each $t_i\to 1$, we have that $\text{lct}(W_{\underline{t}}, \sum (G_i)_{W_{\underline{t}}})\geq 1-\epsilon$,  and approaching to $1$ as $\min \{t_i\} \to 1$. By the ACC for log canonical thresholds \cite[Theorem 1.1]{HMXACC}, we can find a $\underline{t}$ so that $\text{lct}(W_{\underline{t}}, \sum (G_i)_{W_{\underline{t}}})=1$, i.e., the pair $(W_{\underline{t}}, \sum (G_i)_{W_{\underline{t}}})$ is log canonical. 

Finally, set $W\coloneqq W_{\underline{t}}$ chosen as above with $h \colon W\to X$, and let $E_i \coloneqq (G_i)_{W_{\underline{t}}}$ (these are all the $h$-exceptional prime divisors on $W$). We have already seen that $a(E_i;X)<-1$, that each $E_i$ is $\Q$-Cartier, and that the pair $(W, \red{E}=\sum E_i)$ is log canonical. Moreover, we know that $K_W + \sum t_iE_i$ is $h$-ample. It follows that $K_W - h^*_{\num}K_X + \sum t_iE_i$ is $h$-exceptional, in particular $\Q$-Cartier and thus $h$-ample. Hence there exists an $h$-ample exceptional Cartier divisor on $W$. This completes the proof of the theorem.
\end{proof}

\begin{remark} In \cite[Theorem 1.1]{HashizumeSingularityArbitraryPair}, it was shown that for any pair $(X,\Delta)$, where $X$ is essentially of finite type over a field of characteristic zero and the coefficients of $\Delta$ are $\leq 1$, there exists a projective birational morphism $h \colon W\to X$ such that: $a(E_i;X,\Delta)<-1$\footnote{Here, $a(E_i;X,\Delta)\coloneqq\sup\{a(E_i; X, \Delta+\Delta') \mid \Delta'\geq 0 \text{ and } K_X+\Delta+\Delta' \text{ is $\Q$-Cartier}\}$.} for any $h$-exceptional prime divisor $E_i$, $\red{E}=\sum E_i$ is $\Q$-Cartier, and $(W,h_*^{-1}\Delta+\red{E})$ is log canonical. The same result holds for $\mathbb{R}$-coefficients.

Theorem~\ref{thm: Hashizume} slightly improves this result by ensuring that each $E_i$ is $\Q$-Cartier, assuming $X$ is numerically $\Q$-Gorenstein. In fact, the proof above can be generalized (adapting \cite[Step 3 and Step 4 in Proof of Theorem 4.14]{HashizumeSingularityArbitraryPair}) to obtain a full generalization of \cite[Theorem 1.1]{HashizumeSingularityArbitraryPair}, also for $\mathbb{R}$-coefficients. Specifically, for any pair $(X,\Delta)$, where $X$ is essentially of finite type over a field of characteristic zero and the coefficients of $\Delta$ are $\leq 1$, there exists a projective birational morphism $h \colon W\to X$ such that: $a(E_i;X,\Delta)<-1$ for any $h$-exceptional prime divisor $E_i$, each $E_i$ is $\Q$-Cartier, and $(W,h_*^{-1}\Delta+\red{E})$ is log canonical.  We leave the details to the interested reader.
\end{remark}
\begin{remark}
 The morphism $W\to X$ constructed in Theorem~\ref{thm: Hashizume} (and also in \cite[Theorem 1.1]{HashizumeSingularityArbitraryPair}) is not necessarily a log canonical modification of $X$. In \cite[Theorem 3.6]{ZhangIsolatedSingularitiesNonInvertibleEndo}, the existence of log canonical modification $Y\to X$ was established under the assumption that $X$ is a normal integral numerically $\Q$-Gorenstein scheme essentially of finite type over a field of characteristic zero (this generalizes Theorem~\ref{thm: Odaka-Xu}). However, it is unclear whether $Y$ admits an ample exceptional Cartier divisor in this setup.
\end{remark}

\subsection{Asymptotic Riemann-Roch formula and its consequence} In this subsection, we recall a well-known version of the asymptotic Riemann--Roch theorem. We provide a proof, as we have been unable to find a reference that covers the generality we require. We will follow notation of \cite[Chapter VI.2]{KollarBookRationalCurves}, and more generally \cite{FultonBookIntersection}.

\begin{theorem}
\label{thm: asymptotic RR}
Let $X$ be a projective scheme over an Artinian local ring with $\dim X=d$ and let $L$ be a line bundle on $X$.  
\begin{enumerate}
    \item If $X$ is equidimensional and $(\textnormal{S}_1)$, then we have 
    \[
\chi (X, L^{\otimes n}) = \frac{L^d}{d!} n^d - \frac{([\omega_X]-[\sO_X])\cdot L^{d-1}}{2(d-1)!}n^{d-1} + O (n^{d-2})
\]
where $[\omega_X]-[\sO_X]$ is the class in $K_{d-1}(X)$ following \cite[VI, Definition 2.1]{KollarBookRationalCurves}.
\item If $X$ is $(\textnormal{S}_2)$ and $(\textnormal{G}_1)$ and $D$ is a Weil divisor that is principal in codimension one, then we have
\[
\chi (X, \sO_X(D)\otimes L^{\otimes n}) = \frac{L^d}{d!} n^d - \frac{(K_X-2D)\cdot L^{d-1}}{2(d-1)!}n^{d-1} + O (n^{d-2}).
\]
\end{enumerate}
\end{theorem}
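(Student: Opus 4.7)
The plan is to combine the Snapper--Kleiman theory of intersection numbers on a proper scheme over an Artinian local ring \cite[Chapter VI.2]{KollarBookRationalCurves} with Serre duality for the dualizing complex. In both parts, the coefficient $L^d/d!$ is the definition of the intersection number, so the task reduces to identifying the subleading $n^{d-1}$ term.

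For part~(1), set $P(n) \coloneqq \chi(X, L^{\otimes n})$ and $Q(n) \coloneqq \chi(X, \omega_X \otimes L^{\otimes n})$. Because $X$ is equidimensional and $(\textnormal{S}_1)$, the sheaf $\omega_X = \mathcal{H}^{-d}(\omega_X^\bullet)$ has the same length as $\sO_X$ at every (top-dimensional) generic point of $X$, so $[\omega_X] - [\sO_X]$ lies in $K_{d-1}(X)$. The Snapper--Kleiman theory gives
\[
Q(n) - P(n) \;=\; \chi\bigl(X, (\omega_X - \sO_X)\otimes L^{\otimes n}\bigr) \;=\; \frac{([\omega_X]-[\sO_X])\cdot L^{d-1}}{(d-1)!}\, n^{d-1} + O(n^{d-2}).
\]
Serre duality yields $P(n) = (-1)^d \chi(X, \omega_X^\bullet \otimes L^{-n})$. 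The $(\textnormal{S}_1)$ hypothesis ensures that $\dim \supp \mathcal{H}^{-i}(\omega_X^\bullet) \leq i-1 \leq d-2$ for $i<d$, so the lower cohomology sheaves of $\omega_X^\bullet$ contribute only $O(n^{d-2})$ to $\chi(X,\omega_X^\bullet \otimes L^{-n})$. Writing $P(n) = a_d n^d + a_{d-1}n^{d-1} + O(n^{d-2})$ and $Q(n)= a_d n^d + b_{d-1} n^{d-1} + O(n^{d-2})$, Serre duality gives $a_{d-1} = -b_{d-1}$, and the difference formula gives $b_{d-1} - a_{d-1} = ([\omega_X]-[\sO_X])\cdot L^{d-1}/(d-1)!$. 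Solving yields $a_{d-1} = -\frac{([\omega_X]-[\sO_X])\cdot L^{d-1}}{2(d-1)!}$, proving part~(1).

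For part~(2), under $(\textnormal{S}_2)$ and $(\textnormal{G}_1)$ the sheaf $\sO_X(D)$ is reflexive of generic rank one, so the leading term of $\chi(X, \sO_X(D)\otimes L^{\otimes n})$ is again $L^d/d!$. Decomposing
\[
\chi(X, \sO_X(D)\otimes L^{\otimes n}) \;=\; \chi(X, L^{\otimes n}) + \chi\bigl(X, (\sO_X(D)-\sO_X)\otimes L^{\otimes n}\bigr),
\]
the first term is controlled by part~(1). The key remaining identifications are
\[
([\omega_X]-[\sO_X])\cdot L^{d-1} = K_X\cdot L^{d-1}, \qquad ([\sO_X(D)]-[\sO_X])\cdot L^{d-1} = D\cdot L^{d-1}.
\]
Both follow from the same mechanism: on the open locus where $D$ (respectively $K_X$) is Cartier, whose complement has codimension $\geq 2$ by $(\textnormal{G}_1)$ together with the principal-in-codimension-one assumption, the difference in $K$-theory is represented by the structure sheaf of the divisor (via the standard exact sequence $0 \to \sO_X \to \sO_X(D_+) \to \sO_{D_+} \to 0$ after reducing to the effective case by writing $D = D_+ - D_-$); and intersections with $L^{d-1}$ are insensitive to closed subsets of codimension $\geq 2$. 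Substituting into the expression from part~(1) and combining the $n^{d-1}$ terms,
\[
-\frac{K_X\cdot L^{d-1}}{2(d-1)!} + \frac{D\cdot L^{d-1}}{(d-1)!} \;=\; -\frac{(K_X - 2D)\cdot L^{d-1}}{2(d-1)!},
\]
gives the claimed formula.

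The main technical step is the $K$-theoretic bookkeeping that identifies the intersection $([\omega_X]-[\sO_X])\cdot L^{d-1}$ (and its analogue for $\sO_X(D)$) with $K_X \cdot L^{d-1}$ (respectively $D\cdot L^{d-1}$); this is where the $(\textnormal{S}_1)$, $(\textnormal{S}_2)$, and $(\textnormal{G}_1)$ hypotheses are used to suppress lower-dimensional contributions from both the non-Cartier locus and the non-top cohomology of the dualizing complex.
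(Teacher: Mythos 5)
Your part (1) follows the paper's approach: Grothendieck duality (applied to the normalized dualizing complex) plus the $(\textnormal{S}_1)$ truncation to get the symmetry $a_{d-1}=-b_{d-1}$, then the Snapper--Kleiman difference formula to identify $b_{d-1}-a_{d-1}$. One caveat: with $\omega_X^\bullet$ normalized so $\omega_X$ sits in degree $-d$, Grothendieck duality reads $\chi(X,F)=\chi(X,R\underline{\Hom}(F,\omega_X^\bullet))$ with \emph{no} sign; the $(-1)^d$ appears only afterwards when you replace $\omega_X^\bullet$ by $\omega_X[d]$ up to codimension-two terms. Your displayed formula $P(n)=(-1)^d\chi(X,\omega_X^\bullet\otimes L^{-n})$ has that sign placed on the wrong side, and following it literally would force $a_d=-a_d$ when $d$ is odd. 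The relation $a_{d-1}=-b_{d-1}$ you then assert is correct, so the conclusion survives, but the intermediate identity should be fixed.

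For part (2) you take a genuinely different route. The paper applies duality again to $\sO_X(D)$, uses the $(\textnormal{G}_1)$ hypothesis to identify $R\underline{\Hom}(\sO_X(D),\omega_X[d]\otimes L^{\otimes n})$ with $\sO_X(K_X-D)\otimes L^{\otimes n}[d]$ in codimension one, deduces $\chi(\sO_X(D)\otimes L^n)-\chi(\sO_X(K_X-D)\otimes L^n)=2Bn^{d-1}+O(n^{d-2})$, and then evaluates that difference directly via the effective-divisor computation $\chi(X,L^n)-\chi(X,\sO_X(-E)\otimes L^n)=\chi(E,L^n|_E)$. You instead take part (1) as a black box, decompose $\chi(\sO_X(D)\otimes L^n)=\chi(L^n)+\chi((\sO_X(D)-\sO_X)\otimes L^n)$, and reduce to the two $K$-theoretic identities $([\omega_X]-[\sO_X])\cdot L^{d-1}=K_X\cdot L^{d-1}$ and $([\sO_X(D)]-[\sO_X])\cdot L^{d-1}=D\cdot L^{d-1}$, justified by passing to the Cartier locus and writing $D=D_+-D_-$. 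Both routes carry out essentially the same bookkeeping, but organized differently: the paper's argument is uniform over the two parts (a single duality computation with $D$ arbitrary, specialized to $D=0$ for part (1)), while yours is more modular at the cost of the extra $K$-theoretic reduction. That reduction is sound under the stated hypotheses, but since it is exactly where $(\textnormal{G}_1)$ and the principal-in-codimension-one assumption are used, it deserves to be written out (tracking the line-bundle twists in $[\sO_X(D)]-[\sO_X]=[\sO_X(-D_-)]\cdot[\sO_{D_+}(D_+)]-[\sO_{D_-}]$, etc.) rather than being sketched.
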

\begin{proof}
We will prove both (1) and (2) simultaneously. Let $D$ be a Weil divisor on $X$ that is principal in codimension one (in case (1), we simply take $D=0$). By \cite[VI, Theorem 2.13]{KollarBookRationalCurves}, for any coherent sheaf $G$ on $X$ (and thus for any $G\in D^b_{coh}(X)$), $\chi (X, G\otimes L^{\otimes n})$ is a polynomial in $n$ of degree at most the dimension of the support of $G$. Thus we can write 
\begin{equation}
\label{eqn: eqn in proof of asym RR}
\chi (X, \sO_X(D)\otimes L^{\otimes n}) = A n^d +Bn^{d-1} + O (n^{d-2}). 
\end{equation}
It is well-known (see \cite[VI, Corollary 2.14]{KollarBookRationalCurves}) that $A=L^d/d!$. We will compute $B$.

Let $\omega_X^\bullet$ be the normalized dualizing complex of $X$. Using Grothendieck duality, we have 
\begin{align*}
\chi(X, R\underline{\Hom}(\sO_X(D), \omega_X^\bullet\otimes L^{\otimes n})) & = \chi(X,\sO_X(D)\otimes L^{\otimes -n}).
\end{align*}
We now truncate $\omega_X^\bullet$. Consider now the exact triangle
$$R\underline{\Hom}(\sO_X(D),\omega_X[d]\otimes L^{\otimes n}) \to R\underline{\Hom}(\sO_X(D),\omega_X^\bullet\otimes L^{\otimes n})\xrightarrow{} R\underline{\Hom}(\sO_X(D),\tau^{>-d}\omega_X^\bullet\otimes L^{\otimes n})\xrightarrow{+1} .$$
Since $X$ is $(\textnormal{S}_1)$, we know that $\tau^{>-d}\omega_X^\bullet$ is supported in codimension at least two. As a result,  
$$\chi(X, R\underline{\Hom}(\sO_X(D),\tau^{>-d}\omega_X^\bullet\otimes L^{\otimes n}))= \chi(X, R\underline{\Hom}(\sO_X(D),\tau^{>-d}\omega_X^\bullet)\otimes L^{\otimes n}) = O(n^{d-2}).$$
Therefore, from (\ref{eqn: eqn in proof of asym RR}) and the duality
we compute that 
\begin{equation}\label{eqn: second eqn in proof of asym RR}
\begin{split}
\chi (X, R\underline{\Hom}(\sO_X(D),\omega_X[d]\otimes L^{\otimes n}))
&= \chi(X, R\underline{\Hom}(\sO_X(D), \omega_X^\bullet\otimes L^{\otimes n})) + O(n^{d-2})  
\\ &= (-1)^d  \left( An^d - Bn^{d-1} + O(n^{d-2}) \right). 
\end{split}
\end{equation}

We now analyze the cases. In case (1), we have $\sO_X(D)=\sO_X$, and thus by (\ref{eqn: eqn in proof of asym RR}) and (\ref{eqn: second eqn in proof of asym RR}) we compute that 
\begin{align*}
\chi (X, \sO_X\otimes L^{\otimes n}) - \chi(X,\omega_X\otimes L^{\otimes n})
&= \chi (X, \sO_X\otimes L^{\otimes n}) - (-1)^d \chi(X,\omega_X[d] \otimes L^{\otimes n})
\\ &= 2Bn^{d-1}+ O(n^{d-2}).
\end{align*}
It thus follows from \cite[VI, Theorem 2.13]{KollarBookRationalCurves} that 
$$B=- \frac{([\omega_X]-[\sO_X])\cdot L^{d-1}}{2(d-1)!}.$$

Finally, in case (2), since $D$ is principal in codimension one, $R\underline{\Hom}(\sO_X(D),\omega_X[d]\otimes L^{\otimes n})$ agrees with $\omega_X[d]\otimes \sO_X(-D)\otimes L^{\otimes n}$ in codimension one. It follows from (\ref{eqn: second eqn in proof of asym RR}) that 
\begin{align*}
\chi(X,\sO_X(K_X-D)\otimes L^{\otimes n}) & =  (-1)^d \chi(X,\omega_X[d]\otimes \sO_X(-D)\otimes L^{\otimes n}) \\
& = An^d - Bn^{d-1} + O(n^{d-2}).
\end{align*}
After combining this formuma with (\ref{eqn: eqn in proof of asym RR}), we immediately express 
\[
\chi (X, \sO_X(D)\otimes L^{\otimes n}) - \chi(X,\sO_X(K_X-D)\otimes L^{\otimes n}) = 2Bn^{d-1}+ O(n^{d-2}).
\]
On the other hand, we also have the formula
$$\chi (X, \sO_X(D)\otimes L^{\otimes n}) - \chi(X,\sO_X(K_X-D)\otimes L^{\otimes n}) = \frac{(2D-K_X)\cdot L^{d-1}}{(d-1)!} n^{d-1} + O(n^{d-2}), $$
because for any effective divisor $E$ one has that 
$$\chi(X,L^n)-\chi(X, \sO_X(-E)\otimes L^{\otimes n}) = \chi(E,L|_E^{\otimes n}) = \frac{E\cdot L^{d-1}}{(d-1)!} n^{d-1} + O(n^{d-2}).$$
By comparing the two computations of
$\chi (X, \sO_X(D)\otimes L^{\otimes n}) - \chi(X,\sO_X(K_X-D)\otimes L^{\otimes n})$ we immediately get the desired formula
$$B=- \frac{(K_X-2D)\cdot L^{d-1}}{2(d-1)!}.$$ 
This completes the proof.
\end{proof}

We will need the following consequence of Theorem~\ref{thm: asymptotic RR} on the asymptotic behavior of colengths of certain graded family of ideals. Note that we do not assume that $R$ is normal.

\begin{proposition}
\label{prop: consequence asymptotic RR}
Let $(R, \mf m)$ be an excellent local domain of dimension $d\geq 2$ admitting a dualizing complex, and let $f \colon Y \to X\coloneqq \Spec(R)$ be a projective birational map such that $Y$ is $(\textnormal{S}_2)$ and $(\textnormal{G}_1)$, and $f_*\sO_Y=\sO_X$. Suppose $f$ is an isomorphism on the punctured spectrum, and there exists an effective Cartier divisor $E$ such that $-E$ is $f$-ample and $f$-exceptional. Let $F$ be an $f$-exceptional Weil divisor on $Y$ and set $I_n \coloneqq  \Gamma(Y, \sO_Y (-nE-F))\subseteq R$.
Then we have
\[
\length (R/I_{n}) = -\frac{(-E)^d}{d!} n^d + \frac{(K_Y+2F) \cdot (-E)^{d-1}}{2(d-1)!}n^{d-1} + O(n^{d-2}).
\]
\end{proposition}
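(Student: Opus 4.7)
\medskip

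The plan is to reduce the colength computation to an Euler characteristic computation on $Y$, and then to extract the first two coefficients via an asymptotic Riemann--Roch argument. First, since $-E$ is $f$-ample, relative Serre vanishing gives $R^if_*\sO_Y(-nE-F)=0$ for all $i\geq 1$ and all $n\gg 0$; combined with the Leray spectral sequence on the affine $X$, this says $H^0(Y,\sO_Y(-nE-F))=I_n$ with higher cohomology vanishing. Plugging into the short exact sequence $0\to \sO_Y(-nE-F)\to \sO_Y\to Q_n\to 0$ (where $Q_n$ is set-theoretically supported on the exceptional fiber, since $f$ is an isomorphism on the punctured spectrum), one obtains $f_*Q_n\cong R/I_n$ and $R^if_*Q_n\cong R^if_*\sO_Y$ for $i\geq 1$; the latter are of finite length because $f$ is an isomorphism on the punctured spectrum. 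Therefore
\[
\length(R/I_n)=\chi(Y,Q_n)+C,\qquad C\coloneqq\sum_{i\geq 1}(-1)^{i-1}\length R^if_*\sO_Y,
\]
a constant that can be absorbed into the $O(n^{d-2})$ error.

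Next, I filter $Q_n$ using the chain $\sO_Y\supset \sO_Y(-E-F)\supset\cdots\supset \sO_Y(-nE-F)$. Because $\sO_Y(-E)$ is a line bundle, each successive quotient is $\sO_Y(-kE-F)|_E$, a coherent sheaf on the Cartier divisor $E\subset Y$; and because $f(E)=\{\m\}$ set-theoretically, $E$ is projective over an Artinian quotient of $R$ with $\dim E=d-1$. Additivity of $\chi$ gives
\[
\chi(Y,Q_n)=\sum_{k=0}^{n-1}\chi\bigl(E,\,\mathcal{L}^{\otimes k}|_E\otimes \mathcal{F}|_E\bigr),\qquad \mathcal{L}=\sO_Y(-E),\ \mathcal{F}=\sO_Y(-F).
\]
Each summand is a polynomial in $k$ of degree $\leq d-1$ by \cite[VI, Theorem~2.13]{KollarBookRationalCurves}, so the plan is to compute its leading and subleading coefficients. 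The leading coefficient is $\alpha=\mathcal{L}|_E^{d-1}/(d-1)!=-(-E)^d/(d-1)!$ (via $\mathcal{L}|_E^{d-1}=(-E)^{d-1}\cdot E=-(-E)^d$ on $Y$, using that $\mathcal{F}$ has generic rank one along top-dimensional components of $E$). For the subleading coefficient, I apply Theorem~\ref{thm: asymptotic RR}(2) with $L=\mathcal{L}|_E$ and $D=-F|_E$ and use adjunction $K_E=(K_Y+E)|_E$ to express it as
\[
\beta=-\frac{(K_E+2F|_E)\cdot \mathcal{L}|_E^{d-2}}{2(d-2)!}=-\frac{(K_Y+E+2F)|_E\cdot \mathcal{L}|_E^{d-2}}{2(d-2)!}.
\]
Finally, summing using Faulhaber's formula $\sum_{k=0}^{n-1}k^{j}=n^{j+1}/(j+1)-n^{j}/2+O(n^{j-1})$ gives the coefficient of $n^{d-1}$ as $\beta/(d-1)-\alpha/2$; a direct manipulation of intersection numbers on $Y$ (the $E|_E\cdot\mathcal{L}|_E^{d-2}=(-E)^d$ contribution from $\beta$ cancels the $-\alpha/2$ contribution from the $k^{d-1}$ sum) shows this equals $(K_Y+2F)\cdot(-E)^{d-1}/(2(d-1)!)$, yielding the claimed formula.

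The main technical obstacle is the appeal to Theorem~\ref{thm: asymptotic RR}(2) on $E$: while $E$ is $(\textnormal{S}_1)$ as a Cartier divisor on the $(\textnormal{S}_2)$ scheme $Y$, it need not itself be $(\textnormal{S}_2)$ or $(\textnormal{G}_1)$, so one cannot simply restrict the hypotheses. The workaround, following the strategy of the proof of Theorem~\ref{thm: asymptotic RR}, is to perform the Grothendieck--Serre duality and dualizing-complex truncation argument one level up on the ambient $Y$, which does satisfy $(\textnormal{S}_2)+(\textnormal{G}_1)$, so that the truncation error $\tau^{>-d}\omega_Y^\bullet$ is supported in codimension $\geq 2$ on $Y$ and contributes only to the $O(k^{d-3})$ term. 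Adjunction along the Cartier divisor $E$ then transfers the computation back to $E$ with the correct $K_Y$-dependence, accounting for the adjustment by $E|_E$ that ultimately produces the $K_Y+2F$ (rather than $K_Y+E+2F$) in the final answer.
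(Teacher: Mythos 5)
Your overall strategy — reduce to an Euler characteristic on the exceptional fiber, filter by the Cartier divisor $E$, then sum a polynomial — is close in spirit to the paper's, but your decomposition differs in a way that creates a genuine gap. The paper first splits $\sO_{nE+F}$ into $\sO_F(-nE)$ and $\sO_{nE}$ via $0\to\sO_F(-nE)\to\sO_{nE+F}\to\sO_{nE}\to 0$. The $\sO_F(-nE)$ piece contributes only to the $n^{d-1}$ coefficient, which is a bare Snapper computation requiring no duality at all; the iterated pieces $\sO_E(-kE)$ coming from $\sO_{nE}$ are \emph{line bundles} on $E$, so Theorem~\ref{thm: asymptotic RR}(1) applies directly (it only needs $E$ to be equidimensional and $(\textnormal{S}_1)$, which holds because $E$ is Cartier on the $(\textnormal{S}_2)$ scheme $Y$). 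Your single filtration keeps $F$ inside each quotient, so each piece is $\sO_Y(-kE-F)|_E = \sO_Y(-F)|_E\otimes \mathcal{L}|_E^{\otimes k}$, and you then invoke Theorem~\ref{thm: asymptotic RR}(2) on $E$.

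This is where the gap lies, and it is deeper than the $(\textnormal{S}_2)+(\textnormal{G}_1)$ issue you flag. Theorem~\ref{thm: asymptotic RR}(2) applies to sheaves of the form $\sO_X(D)$ for a Weil divisor $D$ on $X$ that is principal in codimension one. The sheaf $\sO_Y(-F)|_E$ is not of this form on $E$: $F$ and $E$ generally share components (both are $f$-exceptional), so "$F|_E$" does not make sense as a Weil divisor, and even choosing a representative, the restriction of a reflexive rank-one sheaf to a Cartier divisor need not be of the form $\sO_E(D)$ nor even reflexive. Your proposed workaround — running the truncation-and-duality argument "one level up on $Y$" and then using adjunction — does not address this: duality on $Y$ would compare $\sO_Y(-kE-F)$ with $\omega_Y\otimes\sO_Y(kE+F)$, but the restriction-to-$E$ step required to get an honest Euler-characteristic polynomial in $k$ reintroduces exactly the problematic sheaf $\sO_Y(\pm F)|_E$, and you supply no argument that the discrepancy is absorbed into $O(k^{d-3})$. (There is also a minor bookkeeping slip: the filtration should begin at $\sO_Y(-F)$, not $\sO_Y$, with the top quotient $\sO_Y/\sO_Y(-F)$ contributing a constant; this is harmless but the chain as written does not have the quotients you claim.) Your Faulhaber-summation computation of the $n^{d-1}$ coefficient, and the cancellation of the $(-E)^d$ terms, are correct once $\beta$ is granted, but they rest on the unjustified formula for $\beta$. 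The paper's decomposition is designed precisely so that duality is only ever invoked for line bundles on $E$, and I would recommend adopting that splitting.
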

\begin{proof}
First note that, since $-E$ is $f$-ample and $f$-exceptional, $Y$ is the blowup of $X$ along some $\m$-primary ideal $I$ (for example, one can take $I=\Gamma(Y,\sO_Y(-mE))$ for $m$ sufficiently divisible). Taking cohomology of the short exact sequence 
$$0\to \sO_Y(-nE-F) \to \sO_Y \to  \sO_{nE+F} \to 0$$
we obtain that 
\[
\length(R/I_n) = \chi(\sO_{nE+F}) + \sum_{i=1}^{d-1} (-1)^i\big(h^i(Y, \sO_Y(-nE-F)) - h^i(Y, \sO_Y)\big).
\]
Since $-E$ is $f$-ample, we have $H^i(Y, \sO_Y(-nE-F))\cong R^if_*\sO_Y(-nE-F)=0$ for all $i\geq 1$ and $n\gg 0$. Thus, we have asymptotic comparison $\length(R/I_n) = \chi(\sO_{nE+F}) + O(n^{d-2}).$

We now proceed to calculate $\chi(\sO_{nE+F})$. 
From the short exact sequence
$$0\to \sO_F(-nE) \to \sO_{nE+F} \to  \sO_{nE} \to 0,$$
we get that 
$\chi(\sO_{nE+F}) = \chi(\sO_{nE}) + \chi(F, \sO_F(-nE)).$
By \cite[VI, Corollary 2.14]{KollarBookRationalCurves} 
we compute $\chi(F, \sO_F(-nE)) = \frac{(-E|_F)^{d-1}}{(d-1)!}n^{d-1} + O(n^{d-2})$. Therefore it suffices to show that 
\begin{equation}\label{eqn: eqn 5.2.2 to do}
\chi(\sO_{nE})= -\frac{(-E)^d}{d!} n^d + \frac{K_Y \cdot (-E)^{d-1}}{2(d-1)!}n^{d-1} + O(n^{d-2}).    
\end{equation}

Now note that for every $k\geq 1$ there is a short exact sequence 
$$0 \to \sO_E(-kE) \to \sO_{(k+1)E} \to \sO_{kE}\to 0.$$
Taking Euler characteristic gives
$\chi(\sO_{(k+1)E}) = \chi(\sO_{kE}) + \chi(E, \sO_E(-kE))$. 
By induction on $k$ formula (\ref{eqn: eqn 5.2.2 to do}) will 
follow once we prove the formula 
\[
\chi(E, \sO_E(-kE)) = -\frac{(-E)^d}{(d-1)!} k^{d-1} + \frac{K_Y \cdot (-E)^{d-1} - (-E)^d}{2(d-2)!}k^{d-2} + O(k^{d-3}).
\]

On the other hand, since $Y$ satisfies $(\textnormal{S}_2)$, $E$ is equidimensional and $(\textnormal{S}_1)$, allowing us to apply the asymptotic Riemann--Roch formula of Theorem~\ref{thm: asymptotic RR}. This yields 
\[
\chi(E, \sO_E(-kE)) = \left(\frac{(-E|_E)^{d-1}}{(d-1)!} k^{d-1} - \frac{([\omega_E]-[\sO_E]) \cdot (-E|_E)^{d-2}}{2(d-2)!}k^{d-2} + O(k^{d-3})\right).
\]
We now analyze the term $([\omega_E]-[\sO_E]) \cdot (-E|_E)^{d-2}$.
To do so, consider the exact sequence
$$0\to \omega_Y \to \omega_Y(E) \to \omega_E \to h^{-(d-1)}(\omega_Y^\bullet). $$
Since $Y$ satisfies Serre's $(\textnormal{S}_2)$ condition, $h^{-(d-1)}(\omega_Y^\bullet)$ has codimension at least three in $Y$. Therefore, its restriction to $E$  has codimension at least two in $E$. It follows that 
$$([\omega_Y(E)] - [\sO_Y])|_E = [\omega_E] - [\sO_E] \text{ in $K_{d-2}(E)/K_{d-3}(E)$}.$$
Therefore, essentially by the definition of intersection theory, we have 
\begin{align*}
([\omega_E] - [\sO_E]) \cdot (-E|_E)^{d-2} & = -([\omega_Y(E)] - [\sO_Y])\cdot (-E)^{d-1} \\
& = -(K_Y+E) \cdot (-E)^{d-1} \\
& = -K_Y \cdot (-E)^{d-1} + (-E)^d.
\end{align*}
Substituting back, we obtain (\ref{eqn: eqn 5.2.2 to do}) as desired. The proof is now complete.
\end{proof}

\begin{remark}
Note that the assumptions of Proposition~\ref{prop: consequence asymptotic RR} are satisfied if $R$ is normal and $Y$ is the normalized blowup of $R$ along an $\m$-primary ideal (this will be our main case of interest in the next subsection). We also note that, when $R$ is essentially of finite type over a field and $F$ is principal in codimension one, then one can also prove Proposition~\ref{prop: consequence asymptotic RR} by compactifying $X$ and applying Theorem~\ref{thm: asymptotic RR} (2). However, we do not know how to use this approach in a more general situation (e.g., the mixed characteristic case). 
\end{remark}

The following immediate consequence of Proposition~\ref{prop: consequence asymptotic RR} relates some MMP singularities with Hilbert coefficients of $\m$-primary ideals. 

\begin{corollary}
Let $(R, \mf m)$ be an excellent $\Q$-Gorenstein normal local domain of dimension $d\geq 2$ that admits a dualizing complex. 
Let $I$ be an arbitrary $\mf m$-primary ideal and let $\overline{\eh}_1(I)$ be the first normal Hilbert coefficient.
The following hold:
\begin{enumerate}
    \item If $R$ is canonical, then $(d-1)\eh(I) \geq  2\overline{\eh}_1(I),$
    \item if $R$ is terminal, then $(d-1)\eh(I) >  2\overline{\eh}_1(I)$.
\end{enumerate}
\end{corollary}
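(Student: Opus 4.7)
The plan is to realize $(d-1)\eh(I) - 2\overline{\eh}_1(I)$ as an intersection number on a normalized blowup and then control its sign using the $\Q$-Gorenstein singularity assumption. First, I would let $f\colon Y \to X = \Spec(R)$ be the normalized blowup of $I$. Since $I$ is $\m$-primary and $R$ is normal, $Y$ is normal (hence $(\textnormal{S}_2)$ and $(\textnormal{G}_1)$), $f$ is an isomorphism on the punctured spectrum, and there is an effective Cartier divisor $E$ on $Y$ with $I\sO_Y = \sO_Y(-E)$ such that $-E$ is $f$-ample and $f$-exceptional. By normality, $\Gamma(Y, \sO_Y(-nE)) = \overline{I^n}$ for every $n\geq 0$.

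Next, I would apply Proposition~\ref{prop: consequence asymptotic RR} with $F = 0$ to obtain
\[
\length(R/\overline{I^n}) = \frac{\eh(I)}{d!} n^d + \frac{K_Y \cdot (-E)^{d-1}}{2(d-1)!} n^{d-1} + O(n^{d-2}).
\]
Expanding the normal Hilbert polynomial $\overline{\eh}_0(I)\binom{n+d-1}{d} - \overline{\eh}_1(I)\binom{n+d-2}{d-1} + \cdots$ in powers of $n$ and comparing coefficients (recall $\overline{\eh}_0(I) = \eh(I)$) yields the key identity
\[
K_Y \cdot (-E)^{d-1} = (d-1)\eh(I) - 2\overline{\eh}_1(I).
\]
Thus it suffices to show $K_Y\cdot (-E)^{d-1} \geq 0$ (respectively $>0$) when $R$ is canonical (respectively terminal).

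Since $R$ is $\Q$-Gorenstein, we may decompose $K_Y = f^*K_X + K_{Y/X}$, where $f^*K_X$ is $\Q$-Cartier and $K_{Y/X} = \sum_i a_i E_i$ is the relative canonical $\Q$-Weil divisor, with $E_i$ ranging over the $f$-exceptional prime divisors and $a_i = a(E_i; R)$. By the projection formula, $f^*K_X \cdot (-E)^{d-1} = K_X \cdot f_*((-E)^{d-1})$, and the latter vanishes since $(-E)^{d-1}$ is a $1$-cycle whose support is contained in the exceptional locus, which maps to the closed point of $X$. Therefore $K_Y\cdot (-E)^{d-1} = \sum_i a_i \bigl(E_i \cdot (-E)^{d-1}\bigr)$. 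For each exceptional prime $E_i$, the restriction $(-E)|_{E_i}$ is an ample Cartier divisor on the projective scheme $E_i$ over the residue field, so $E_i \cdot (-E)^{d-1} = \bigl((-E)|_{E_i}\bigr)^{d-1} > 0$. When $R$ is canonical, each $a_i \geq 0$, and (1) follows. When $R$ is terminal, each $a_i > 0$; since $d \geq 2$ forces the normalized blowup to possess at least one exceptional prime divisor, (2) is strict.

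The main technical point is simply to ensure the intersection number $K_Y \cdot (-E)^{d-1}$ is well-defined despite $K_Y$ not being $\Q$-Cartier in general; this is handled by the $K$-theoretic formalism from \cite{KollarBookRationalCurves} already employed in Proposition~\ref{prop: consequence asymptotic RR}, combined with the decomposition above, which reduces everything to intersecting the $\Q$-Cartier class $f^*K_X$ with $(-E)^{d-1}$ (which vanishes by the projection formula) and intersecting effective Weil divisors $E_i$ with the Cartier class $(-E)^{d-1}$ (which is computed by restriction to $E_i$).
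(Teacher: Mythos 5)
Your proof is essentially identical in structure to the paper's: construct the normalized blowup, apply Proposition~\ref{prop: consequence asymptotic RR} with $F=0$ to identify $(d-1)\eh(I)-2\overline{\eh}_1(I)$ with the intersection number $K_Y\cdot(-E)^{d-1}$, rewrite it as $(K_Y-f^*K_X)\cdot(-E)^{d-1}$ using the $\Q$-Gorenstein hypothesis, and conclude via the discrepancy signs and $f$-ampleness of $-E$. You spell out a couple of details that the paper leaves implicit: the projection-formula argument that $f^*K_X\cdot(-E)^{d-1}=0$, the decomposition $K_Y\cdot(-E)^{d-1}=\sum_i a_i\bigl((-E)|_{E_i}\bigr)^{d-1}$ with each factor strictly positive by ampleness, and the observation that $d\geq2$ forces at least one exceptional prime divisor (since a non-principal $\m$-primary ideal in a domain cannot pull back to the unit ideal, and the Cartier divisor $E$ on the normal scheme $Y$ has pure codimension one), which is exactly what makes the inequality in part (2) strict. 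These are the right things to say and they are the gaps a careful reader would want filled.
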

\begin{proof}
Let $f\colon Y\to X\coloneqq\Spec(R)$ be the normalized blowup of $I$. Then $I\sO_Y=\sO_Y(-E)$ is an $f$-ample and $f$-exceptional Cartier divisor and we have $\Gamma(Y, \sO_Y(-nE))=\overline{I^n}$. By the definition of normal Hilbert coefficient, we have 
\begin{align*}
\length(R/\overline{I^n}) & =\eh(I)\binom{n+d-1}{d} - \overline{\eh}_1(I)\binom{n+d-2}{d-1} + O(n^{d-2}) \\
& = \frac{\eh(I)}{d!}n^d + \frac{(d-1)\eh(I)-2\overline{\eh}_1(I)}{2(d-1)!}n^{d-1} + O(n^{d-2}).
\end{align*}
Therefore, Proposition~\ref{prop: consequence asymptotic RR} provides the formula:
\[
(d-1)\eh(I)-2\overline{\eh}_1(I) = K_Y\cdot (-E)^{d-1}= (K_Y-f^*K_X)\cdot (-E)^{d-1}.
\]
Now when $R$ is canonical (resp., terminal), we have $a(F; X) \geq 0$ (resp., $a(F; X) > 0$) for all exceptional prime divisors $F$ on $Y$, so $K_Y-f^*K_X \geq 0$ (resp., $K_Y-f^*K_X > 0$). 
Since $-E$ is $f$-ample, it follows immediately that 
$(K_Y-f^*K_X)\cdot (-E)^{d-1} \geq 0$ (resp., $> 0$). 
\end{proof}

The second application of Proposition~\ref{prop: consequence asymptotic RR} improves Proposition~\ref{prop: rational powers are quasi-polynomial} by providing a formula for the second term of the Hilbert--Samuel quasi-polynomial for rational powers of ideals.

\begin{corollary}\label{cor: Riemann-Roch for rational powers}
Let $(R,\m)$ be an excellent local domain of dimension $d\geq 2$ admitting a dualizing complex. Let $f$: $Y=\Bl_I(R)\to X\coloneqq\Spec(R)$ be the blowup of an $\m$-primary ideal $I\subseteq R$. Suppose $X$ and $Y$ are $(\textnormal{S}_2)$ and $(\textnormal{G}_1)$, and that the generic point of each exceptional prime divisor on $Y$ is regular. Let $I\sO_Y=\sO_Y(-E)$ and let $\rho$ be the Rees period of $I$. Then we have
\begin{align*}
\length (R/I^{\frac{k\rho + h}{\rho}}) 
&= -\frac{(-E)^d}{d!}k^d + \left( \frac{K_{Y} \cdot (-E)^{d-1} }{2(d-1)!} + \frac{\lceil \frac h \rho E \rceil \cdot (-E)^{d-1}}{(d-1)!} \right)k^{d-1} + O(k^{d-2}).
\end{align*}
\end{corollary}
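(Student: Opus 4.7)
The plan is to reduce directly to Proposition~\ref{prop: consequence asymptotic RR}. Set $F \coloneqq \lceil (h/\rho) E \rceil$, which is an effective $f$-exceptional Weil divisor on $Y$: since the generic point of each exceptional prime divisor is regular, $(h/\rho)$ times the coefficient of $E$ on such a component has a well-defined ceiling in $\mathbb{Z}$, so $F$ is a bona fide Weil divisor. By Proposition~\ref{prop: geometric rational powers} applied to $f\colon Y = \Bl_I(R) \to X$, we have for every $k \geq 0$ the equality
\[
I^{(k\rho + h)/\rho} = \Gamma(Y, \sO_Y(-kE - F)) \cap R.
\]

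Next, I will verify that the hypotheses of Proposition~\ref{prop: consequence asymptotic RR} are satisfied in the present setting; the only nonobvious one is $f_*\sO_Y = \sO_X$. Since $I$ is $\m$-primary, $f$ is an isomorphism over the punctured spectrum, so $f_*\sO_Y$ and $\sO_X$ agree away from the closed point. Moreover $S \coloneqq \Gamma(Y, \sO_Y)$ is a finite $R$-submodule of $\mathrm{Frac}(R)$ containing $R$, and $S/R$ is supported only at $\m$. The long exact sequence of local cohomology attached to $0 \to R \to S \to S/R \to 0$ yields $H^0_\m(S/R) \hookrightarrow H^1_\m(R)$, and the right-hand side vanishes because $X$ is $(\textnormal{S}_2)$ and $\dim R \geq 2$; hence $H^0_\m(S/R) = S/R = 0$, so $f_*\sO_Y = \sO_X$ and the intersection with $R$ in the display above is automatic. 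The remaining hypotheses are immediate: $-E$ is $f$-ample since $Y$ is the blowup of $I$, it is $f$-exceptional since $I$ is $\m$-primary, and $Y$ is $(\textnormal{S}_2)$ and $(\textnormal{G}_1)$ by assumption.

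Applying Proposition~\ref{prop: consequence asymptotic RR} to the $f$-exceptional Weil divisor $F$ with parameter $k$ then yields
\[
\length(R/I^{(k\rho + h)/\rho}) = -\frac{(-E)^d}{d!} k^d + \frac{(K_Y + 2F)\cdot (-E)^{d-1}}{2(d-1)!} k^{d-1} + O(k^{d-2}),
\]
and splitting $(K_Y + 2F) \cdot (-E)^{d-1}$ into $K_Y \cdot (-E)^{d-1} + 2 F \cdot (-E)^{d-1}$ and substituting $F = \lceil (h/\rho) E \rceil$ produces exactly the formula in the statement. I do not anticipate a substantive obstacle: the corollary is essentially a direct translation of Proposition~\ref{prop: consequence asymptotic RR} via the geometric description of rational powers in Proposition~\ref{prop: geometric rational powers}, with the only mild subtlety being the verification above that global sections on $Y$ already lie in $R$.
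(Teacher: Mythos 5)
Your proof is correct and follows essentially the same route as the paper's: identify $I^{(k\rho+h)/\rho}$ with $\Gamma(Y, \sO_Y(-kE - F))$ via Proposition~\ref{prop: geometric rational powers}, verify $f_*\sO_Y = \sO_X$, and then invoke Proposition~\ref{prop: consequence asymptotic RR} with the $f$-exceptional Weil divisor $F = \lceil (h/\rho) E \rceil$. One small remark on your local-cohomology verification: the claimed injection $H^0_\m(S/R) \hookrightarrow H^1_\m(R)$ also requires the vanishing $H^0_\m(S) = 0$ to kill the preceding term in the long exact sequence; this is automatic (and immediate) since $S \subseteq \mathrm{Frac}(R)$ is torsion-free, but is worth stating. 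The paper's own verification is terser -- $C \coloneqq f_*\sO_Y / \sO_X$ has finite length, and $(\textnormal{S}_2)$ with $\dim \geq 2$ forces $C = 0$ -- and amounts to the same thing.
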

\begin{proof}
First of all, we have $0\to \sO_X\to f_*\sO_Y \to C \to 0$. Since $I$ is $\m$-primary, $C$ has finite length. But as $R$ and $S$ are $(\textnormal{S}_2)$, $C=0$ and thus $f_*\sO_Y=\sO_X$. Let $E_1, \ldots, E_n$ be all the exceptional prime divisors of $f$ and 
write $E = \sum^n_{i = 1} c_i E_i$. Note that $\rho = \lcm \{c_1, \ldots, c_n\}$,
and thus if we let $F = \lceil \frac{h}{\rho} E \rceil$, then by Proposition~\ref{prop: geometric rational powers}
\[I^{\frac{k\rho + h}{\rho}} = \Gamma (Y, \sO_Y(-kE - F))\cap R = \Gamma (Y, \sO_Y(-kE - F)).\]
Thus we may apply Proposition~\ref{prop: consequence asymptotic RR}
to obtain that, for all $k\gg 0$, we have
\[
\length (R/I^{\frac{k\rho + h}{\rho}}) = -\frac{(-E)^d}{d!} k^d + \frac{(K_{Y}+2F) \cdot (-E)^{d-1}}{2(d-1)!}k^{d-1} + O(k^{d-2})
\]  
which is easily seen to be equivalent to what we want to show.
\end{proof}

\subsection{Lim-stable and log canonical singularities}
We now prove our main result: lim-stable (and in particular, semistable) singularities are log canonical in the $\Q$-Gorenstein case, see Corollary~\ref{cor: lim-stable implies log canonical Q-Gor}. We also present generalizations beyond the $\Q$-Gorenstein setting, which are discussed in Theorem~\ref{thm: lim-stable implies log canonical general}. We begin with the following lemma which is well-known to experts (for example, see \cite[Proof of Theorem 1.2, Step 2]{Odaka}). 

\begin{lemma}
\label{lem: Odaka discrepancy lemma}
Let $X$ be a  numerically $\Q$-Gorenstein normal integral scheme. Let $\pi\colon Y=\Bl_{\mathcal{I}}X\to X$ be the blowup along an ideal sheaf $\mathcal{I}\subseteq \sO_X$ locally of height at least two, and suppose $Y$ is normal with exceptional prime divisors $\{E_i\}_{i=1}^n$. Set $\mathcal{I}\sO_Y=\sO_Y(-\sum_{i=1}^nc_iE_i)$. For any positive integer $m$ such that $b_i=m/c_i$ is an integer for all $i$, let $\pi' \colon Y'\to X'\coloneqq X\times \mathbb{A}^1$ be the normalized blowup of $X'$ along the ideal sheaf $\mathcal{I}+(t^m)\subseteq \sO_X[t]$. Then 
there is a bijection between exceptional prime divisors on $Y'$ and on $Y$ such that for corresponding divisors $E'_i$ on $Y'$ and $E_i$ on $Y$ we have $a(E_i'; X')=b_i(a(E_i;X)+1)$. 

In particular, if $a(E_i;X)<-1$ for all $i$, then $a(E_i'; X') < 0$ for all $m$ sufficiently divisible.
\end{lemma}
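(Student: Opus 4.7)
The plan is to argue locally at the codimension-two generic point $\eta_i$ of $E_i\times\{0\}$ in $Y\times\mathbb{A}^1$, where the normalized blowup of $\mathcal{I}+(t^m)$ reduces to a weighted blowup of a two-dimensional regular local ring, and then to transfer the computation to $X'$ via pullback of $K_Y-\pi^*_{\mathrm{num}}K_X=\sum_j a(E_j;X)E_j$.

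First I would identify the exceptional prime divisors of $\pi'$ with the Rees valuations of $\mathcal{I}+(t^m)$ via Proposition~\ref{prop: geometric rational powers}. Since $\mathcal{I}\cdot\sO_Y=\sO_Y(-\sum c_iE_i)$ is invertible, I pass through $Y\times\mathbb{A}^1$: at the generic point $\eta_i$ of $E_i\times\{0\}$, the local ring $\sO_{Y\times\mathbb{A}^1,\eta_i}$ is two-dimensional regular with parameters $(z_i,t)$, where $z_i$ is a local uniformizer of $E_i$, and the extended ideal becomes $(z_i^{c_i},t^{b_ic_i})$. Its normalized blowup is the $(b_i,1)$-weighted blowup in $(z_i,t)$, contributing a single exceptional divisor whose associated monomial valuation $v'_i$ satisfies $v'_i(z_i)=b_i$ and $v'_i(t)=1$; equivalently, $v'_i|_{K(X)}=b_i v_i$ where $v_i$ is the valuation of $E_i$. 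Any Rees valuation of $\mathcal{I}+(t^m)$ has center on $Y\times\mathbb{A}^1$ contained in $\bigcup_j E_j\times\{0\}$, and the only codimension-two generic points of that locus are the $\eta_j$, so the $v'_j$'s exhaust the exceptional divisors of $\pi'$, giving the bijection $E_j\leftrightarrow E'_j$.

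Next, I would compute $a(E'_i;X')$ via the transformation rule
\[
a(E'_i;X')=a(E'_i;Y\times\mathbb{A}^1)+v'_i\bigl(K_{Y\times\mathbb{A}^1}-\tilde\pi^*_{\mathrm{num}}K_{X'}\bigr),
\]
where $\tilde\pi\colon Y\times\mathbb{A}^1\to X'$ is the induced map; this identity is valid at the generic point of $E'_i$ on $Y'$. The first term equals $b_i$ by the standard weighted-blowup discrepancy formula (sum of weights minus one) applied to the regular local ring $\sO_{Y\times\mathbb{A}^1,\eta_i}$. For the second term, pulling back $K_Y-\pi^*_{\mathrm{num}}K_X=\sum_j a(E_j;X)E_j$ along the projection $Y\times\mathbb{A}^1\to Y$ and using $K_{\mathbb{A}^1}\equiv 0$ gives
\[
K_{Y\times\mathbb{A}^1}-\tilde\pi^*_{\mathrm{num}}K_{X'}=\sum_j a(E_j;X)(E_j\times\mathbb{A}^1),
\]
and since $v'_i(z_j)=0$ for $j\ne i$ (as $z_j$ is a unit at $\eta_i$) while $v'_i(z_i)=b_i$, this evaluates to $b_i a(E_i;X)$. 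Summing the two contributions yields $a(E'_i;X')=b_i+b_i a(E_i;X)=b_i(a(E_i;X)+1)$.

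The main obstacle I anticipate is justifying this discrepancy additivity along $Y'\to Y\times\mathbb{A}^1\to X'$, given that $K_Y$ and the individual $E_j$'s fail in general to be $\mathbb{Q}$-Cartier on $Y$. This is resolved by restricting to the generic point $\eta_i$, where $Y\times\mathbb{A}^1$ is regular and all Weil divisors are locally Cartier, so that numerical pullback agrees with ordinary pullback and the evaluation of $v'_i$ on the pulled-back divisor is unambiguous. The last assertion is then immediate: if $a(E_i;X)<-1$ for every $i$, then $a(E_i;X)+1<0$, and multiplication by the positive integer $b_i$ preserves this sign, so $a(E'_i;X')<0$ whenever $m$ is divisible by $\lcm\{c_i\}$.
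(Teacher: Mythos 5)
Your argument is correct and follows essentially the same route as the paper's: localize at the generic point $\eta_i$ of $E_i\times\{0\}$, where $\sO_{Y\times\mathbb{A}^1,\eta_i}$ is a two-dimensional regular local ring and the ideal becomes $(z_i^{c_i},t^m)$; identify the normalized blowup there with the $(b_i,1)$-weighted blowup (equivalently, the blowup of $(z_i,t^{b_i})$); and add the weighted-blowup discrepancy $b_i$ to $v_i'\bigl(\sum_j a(E_j;X)(E_j\times\mathbb{A}^1)\bigr)=b_i\,a(E_i;X)$. One small point worth being aware of: your justification of the bijection notes that Rees-valuation centers lie in $\bigcup_j E_j\times\{0\}$ and that the $\eta_j$ are its only codimension-two generic points, but this by itself does not exclude Rees valuations centered at points of codimension $\geq 3$; excluding them requires, e.g., the observation that $(f,t^m)$ is a complete intersection of height two along its zero locus, so the blowup has fiber dimension at most one and every exceptional prime divisor must dominate one of the $E_i\times\{0\}$ --- a point the paper's proof also leaves implicit.
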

\begin{proof}
The computation of the discrepancy $a(E_i;X)$ and $a(E_i';X')$ is local along each $E_i$. Thus after localizing at the generic point of $E_i$, we may assume that $Y=\Spec(V)$ is a DVR with uniformizer $s$, $E_i=\text{div}(s)$, and $\mathcal{I}\sO_Y=(s^{c_i})$. The map $\pi'$ then factors through $g\colon Y'\to Y\times \mathbb{A}^1\cong \Spec(V[t])$, which is the normalized blowup along the ideal $(s^{c_i}, t^m)$. Since $b_i=m/c_i$ is an integer, it is easy to see that this normalized blowup is isomorphic to the blowup along the ideal $(s, t^{b_i})$. This blowup only creates one exceptional prime divisor (corresponding to $E_i'$), and by a direct computation, we have $K_{Y'/Y\times\mathbb{A}^1}=b_iE_i'$ and $g^*E_i=b_iE_i' + \widetilde{E}_i$ where $\widetilde{E}_i$ is the strict transform of $E_i$. It follows that the coefficient of $E_i'$ in 
\begin{align*}
K_{Y'}-\pi'^*_{\num}K_{X'} & =  K_{Y'/Y\times\mathbb{A}^1} + g^*\big(K_{Y\times\mathbb{A}^1} - (\pi\times\text{id})^*_{\num}K_{X\times\mathbb{A}^1}\big) \\
& = K_{Y'/Y\times\mathbb{A}^1} + g^*\big(a(E_i;X)E_i\big)
\end{align*}
is equal to $b_i + b_ia(E_i;X)$, that is, $a(E_i'; X')=b_i(a(E_i;X)+1)$ as claimed.
\end{proof}

We next record the following consequence of Proposition~\ref{prop: exponential bound}. 

\begin{theorem}
\label{thm: criterion for lim unstable via derivative}
Let $(R, \mf m)$ be a local ring of dimension $d \geq 1$.
Suppose $\{I_n\}_{n}$ is a Noetherian graded family of $\m$-primary ideals such that $I_1 = \m$. Set $I_0 = R$. 
Then the Hilbert series of $\Gr_{I_\bullet}(R)$ can be written as
\[
h(t) \coloneqq \sum_{n \geq 0} \length (I_n/I_{n+1})t^n = \frac{f(t)}{(1-t)^{d}},
\]
where $f(t)$ is a rational function with $f(1)\neq 0$ and all poles of $f(t)$ have order at most $d$. Furthermore, if $f'(1) > \frac{d}{2} f(1)$, then we have $\limlm(R) > 1$. 
\end{theorem}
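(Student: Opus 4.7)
The plan is to separate the theorem into two parts: the structural claim about the shape of $h(t)$, and the quantitative consequence for $\limlm(R)$ that follows from Proposition~\ref{prop: exponential bound}.

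For the first part, since $\{I_n\}_n$ is a Noetherian graded family of $\m$-primary ideals with $I_1 = \m$, the associated graded ring $\Gr_{I_\bullet}(R)$ is a finitely generated $\mathbb{N}$-graded algebra over the field $R/\m$ of Krull dimension $d$ (as recalled in the discussion before Proposition~\ref{prop: LM associated graded}). By the Hilbert--Serre theorem, $h(t)$ is a rational function whose poles are roots of unity, and the pole at $t = 1$ has order exactly $d$. A partial fraction decomposition combined with the fact that $\length(I_n/I_{n+1})$ is eventually a quasi-polynomial of degree $d-1$ forces every pole of $h(t)$ at a root of unity $\zeta \neq 1$ to have order at most $d$. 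Writing $h(t) = f(t)/(1-t)^d$ then yields $f(1) \neq 0$ and that all poles of $f(t)$ have order at most $d$.

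For the second part, I will apply Proposition~\ref{prop: exponential bound} to $\{I_n\}_n$. Since $\length(I_n/I_{n+1}) = \frac{f(1)}{(d-1)!}\, n^{d-1} + O(n^{d-2})$ (by expanding $f(t)/(1-t)^d$), summation gives $\length(R/I_n) = \frac{f(1)}{d!}\, n^d + O(n^{d-1})$, so we may take $A = f(1)$. Recognizing
\[
\sum_{j \geq 0} \length(I_j/I_{j+1})\, e^{-jx} \;=\; h(e^{-x}) \;=\; \frac{f(e^{-x})}{(1-e^{-x})^d},
\]
Proposition~\ref{prop: exponential bound} specializes to
\[
\limlm(R) \;\geq\; \phi(x) \coloneqq \frac{f(1)\,(1-e^{-x})^d}{x^d\, f(e^{-x})} \qquad \text{for all } x > 0.
\]

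To conclude, it then suffices to exhibit some $x > 0$ with $\phi(x) > 1$. A direct computation shows $\phi(x) \to 1$ as $x \to 0^+$, so the key is to analyze the sign of $\phi'(0^+)$. Taking logarithms and using the expansion $(1-e^{-x})/x = 1 - x/2 + O(x^2)$ together with $f(e^{-x}) = f(1) - f'(1)\, x + O(x^2)$, a short calculation gives
\[
(\log \phi)'(0^+) \;=\; \frac{f'(1)}{f(1)} - \frac{d}{2}.
\]
Hence the hypothesis $f'(1) > (d/2) f(1)$ yields $\phi'(0^+) > 0$, so $\phi(x) > 1$ for all sufficiently small $x > 0$, producing $\limlm(R) > 1$. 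The argument is essentially routine once Proposition~\ref{prop: exponential bound} is in hand; the main (minor) technical point is the Taylor expansion at $x = 0$, and in particular verifying that $d\, e^{-x}/(1-e^{-x}) - d/x \to -d/2$ as $x \to 0^+$.
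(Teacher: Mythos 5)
Your proof is correct and follows essentially the same route as the paper: express $h(t)$ in the form $f(t)/(1-t)^d$ via finite generation of $\Gr_{I_\bullet}(R)$ (the paper does this by Noether normalization, giving $h(t) = p(t)/\prod_i(1-t^{a_i})$ directly, whereas you deduce the pole-order bound from the quasi-polynomial degree of $\length(I_n/I_{n+1})$; both are standard and equivalent), then apply Proposition~\ref{prop: exponential bound} and the Taylor expansion of $\frac{f(1)(1-e^{-x})^d}{x^d f(e^{-x})}$ near $x=0$ to detect $\limlm(R)>1$ from $f'(1) > \frac{d}{2}f(1)$. Your logarithmic-derivative phrasing of the final step is a cosmetic variant of the paper's direct Taylor expansion.
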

\begin{proof}
Our assumptions imply that $\Gr_{I_\bullet}(R)$ is a finitely generated $\mathbb{N}$-graded ring over $k$. Hence there exist $z_1,\dots,z_d$ with $\deg(z_i)=a_i$ such that $\Gr_{I_\bullet}(R)$ is finite over $k[z_1,\dots,z_d]$. It follows that we can write
\[
h(t)= \frac{p(t)}{(1-t^{a_1})(1-t^{a_2})\cdots (1-t^{a_d})}
\]
where $p(t)\in \mathbb{Z}[t]$ and the first claim follows. 

We now prove the second part. From the expression of $h(t)$, it is easy to check that
$$\length(R/I_n)=\frac{f(1)}{d!}n^d + O(n^{d-1}).$$ 
Thus, applying Proposition~\ref{prop: exponential bound} to $\{I_n\}_n$ yields, for any $x \in \mathbb{R}_{> 0}$, the bound 
\begin{equation}\label{eq: expo bound}
        \limlm (R) \geq \frac{f(1)(1 - e^{-x})^d}{x^d f(e^{-x})}.
    \end{equation}
    Now, if we let $x \to 0$ and use that 
    $f(e^{-x}) = f(1) - f'(1) x + O(x^2)$, 
    then (\ref{eq: expo bound}) becomes 
    \[
        \limlm(R) \geq f(1) \cdot  \frac{1 - \frac{d}{2}x + O(x^2)}{f(1) - f'(1) x  + O(x^2)} 
        =  1+ \left(\frac{f'(1)}{f(1)} -\frac{d}{2}\right)x + O(x^2),
    \] 
    and the conclusion follows. 
\end{proof}

As a corollary of Theorem~\ref{thm: criterion for lim unstable via derivative}, we generalize \cite[Theorem~3]{Shah} 
and partially recover Proposition~\ref{prop: Shah associated graded is CM and reduced}.

\begin{corollary}\label{cor: lim stable embdim}
Let $(R,\m)$ be a Cohen--Macaulay ring of dimension two. If $R$ is lim-stable, then either $R$ has minimal multiplicity or $R$ has almost minimal multiplicity and $\Gr_\m (R)$ is Cohen-Macaulay.
\end{corollary}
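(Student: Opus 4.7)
The plan is to combine Theorem~\ref{thm: criterion for lim unstable via derivative} (applied to the $\m$-adic filtration) with a classical lower bound for $\eh_1(\m)$, and then invoke Proposition~\ref{prop: Shah associated graded is CM and reduced} in the almost minimal multiplicity case. As a preliminary step, by Proposition~\ref{prop: faithfully flat extension} and Corollary~\ref{cor: purely transcendental field extension}, the passage $R \to R(t)$ preserves $\eh(R)$, $\edim(R)$, and the entire chain of Lech--Mumford constants of power series extensions, so we may assume the residue field of $R$ is infinite. Applying Theorem~\ref{thm: criterion for lim unstable via derivative} to the Noetherian graded family $\{\m^n\}_n$ and writing the Hilbert series of $\Gr_\m(R)$ as $P(t)/(1-t)^2$, one has $P(1) = \eh(R)$ and $P'(1) = \eh_1(\m)$, so lim-stability forces the upper bound $\eh_1(\m) \leq \eh(R)$.

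For the matching lower bound, I would choose a minimal reduction $J = (a,b)$ of $\m$ with $a^*, b^* \in \m/\m^2$ linearly independent (possible since the residue field is infinite); this guarantees in particular $J \cap \m^2 = J\m$. A direct computation along the filtration $J \subseteq J + \m^2 \subseteq \m$ then yields
\[
\length(\m/J) = \eh(R) - 1 \quad \text{and} \quad \length(\m^2/J\m) = \eh(R) - \edim(R) + 1.
\]
Applying the classical Huckaba--Marley inequality $\eh_1(\m) \geq \sum_{n \geq 1} \length(\m^n/J\m^{n-1})$, valid for any Cohen--Macaulay local ring of positive dimension with infinite residue field, and keeping just its first two terms gives $\eh_1(\m) \geq 2\eh(R) - \edim(R)$.

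Combining the two bounds yields $\eh(R) \leq \edim(R)$. Since Abhyankar's inequality gives $\eh(R) \geq \edim(R) - 1$, we must have $\eh(R) \in \{\edim(R) - 1, \edim(R)\}$. If $\eh(R) = \edim(R) - 1$ then $R$ has minimal multiplicity; if $\eh(R) = \edim(R)$ then $R$ has almost minimal multiplicity, and Proposition~\ref{prop: Shah associated graded is CM and reduced} applies directly to conclude that $\Gr_\m(R)$ is Cohen--Macaulay. The only nontrivial input from outside the paper is the Huckaba--Marley bound; the main obstacle in a fully self-contained argument would be deriving this inequality, which can be accomplished through an analysis of superficial elements and the passage from $\Gr_\m(R)$ to $\Gr_{\m/(a)}(R/aR)$ but essentially reproduces their argument.
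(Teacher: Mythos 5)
Your overall route is valid and genuinely differs from the paper's. The paper, after extracting $\eh_1(\m) \leq \eh(R)$ from Theorem~\ref{thm: criterion for lim unstable via derivative}, combines this with Northcott's inequality $\eh_1 \geq \eh - 1$ and then appeals to the classical characterizations of the boundary cases: Huneke's theorem that $\eh_1 = \eh-1$ iff $R$ has minimal multiplicity, and Sally's theorem that $\eh_1 = \eh$ iff $R$ has almost minimal multiplicity with $\Gr_\m(R)$ Cohen--Macaulay. You instead convert $\eh_1(\m) \leq \eh(R)$ into the embedding-dimension bound $\eh(R) \leq \edim(R)$, apply Abhyankar, and dispose of the almost-minimal case by invoking Proposition~\ref{prop: Shah associated graded is CM and reduced} (which appears earlier, so there is no circularity). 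This trades the external Sally input for a heavier hammer from within the paper, which is a reasonable trade.

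However, there is a flaw in the step where you obtain $\eh_1(\m) \geq 2\eh(R) - \edim(R)$. The Huckaba--Marley inequality goes in the \emph{opposite} direction: for a Cohen--Macaulay local ring of dimension $d \geq 1$ with infinite residue field and a minimal reduction $J$ of $I$, one has $\eh_1(I) \leq \sum_{n\geq 1} \length(I^n/JI^{n-1})$, with equality if and only if $\depth \Gr_I(R) \geq d-1$. The lower bound $\eh_1(I) \geq \sum_{n\geq 1}\length(I^n/JI^{n-1})$ you cite is false in dimension $\geq 2$: for instance, in $k[[x,y]]$ take $I = (x^4, x^3y, xy^3, y^4)$ and $J = (x^4, y^4)$; then $\eh_1(I) = 6$ while $\length(I/J) + \length(I^2/JI) = 5 + 2 = 7$. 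Fortunately, the partial-sum lower bound you actually need, $\eh_1(\m) \geq \length(\m/J) + \length(\m^2/J\m)$, does hold for $I = \m$ with a general minimal reduction, but it must be derived differently: pass to the one-dimensional quotient $\bar{R} = R/(a_1)$ by a superficial element $a_1 \in J\setminus\m^2$; superficiality preserves $\eh_1$, in dimension one one has the \emph{exact} formula $\eh_1(\bar\m) = \sum_{n\geq 1}\length(\bar\m^n/\bar a_2\bar\m^{n-1})$, and the conditions $a_1 \notin \m^2$ and $J\cap\m^2 = J\m$ ensure that the $n = 1,2$ terms of this sum agree with $\length(\m/J)$ and $\length(\m^2/J\m)$; the remaining terms are nonnegative. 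You gesture at something like this reduction in your final sentence, but as stated the appeal to Huckaba--Marley is in the wrong direction and would need to be replaced by this argument (or a correctly attributed reference for $\eh_1(\m) \geq 2\eh(R) - \edim(R)$ in the Cohen--Macaulay case).
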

\begin{proof}
    By Theorem~\ref{thm: criterion for lim unstable via derivative} applied to the graded family $\{\m^n\}_n$, 
    $R$ is not lim-stable whenever $\eh_1 (R) = f'(1) > f(1) = \eh(R)$. 
    By Northcott's inequality, $\eh_1 (R) \geq \eh(R) - 1$ and it was shown by \cite[Theorem 2.1]{Huneke} that the equality holds if and only if $R$ has minimal multiplicity. 
    Essentially by \cite[Theorem~3.3]{SallySuper} (the statement gives the Hilbert function, but the base case concerns only $\eh_1$, see also \cite[Exercise~4.4]{Rossi}) $\eh_1(R) = \eh(R)$ if and only if $R$ has almost minimal multiplicity and the associated graded ring $\gr_\m(R)$ is Cohen-Macaulay. 
\end{proof}

We are now ready to state and prove the main technical result.

\begin{theorem}
\label{thm: lim-stable main technical result}
Let $(R,\m)$ be an excellent numerically $\Q$-Gorenstein normal local domain admitting a dualizing complex. Suppose there exists a projective birational map $\pi$: $Y\to X\coloneqq\Spec(R)$ such that
\begin{itemize}
    \item there exists a $\pi$-ample $\pi$-exceptional Cartier divisor, and
    \item $a(F;X)<-1$ for all exceptional prime divisors $F$ on $Y$.
\end{itemize}
If $R$ is lim-stable, then $R$ is $\Q$-Gorenstein and log canonical. 
\end{theorem}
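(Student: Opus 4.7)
The plan is to derive a contradiction: assuming $R$ is lim-stable alongside the existence of such a $\pi$, I will construct a graded family of $\m$-primary ideals whose asymptotic colength forces $\limlm(R) > 1$. The stated conclusion then follows because the hypotheses are incompatible with lim-stability; in the setting where Theorem~\ref{thm: Hashizume} applies, the non-existence of such a $\pi$ characterizes $\Q$-Gorenstein log canonical singularities.

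First I would replace $Y$ by its normalization (this preserves the hypotheses and ensures $(\textnormal{S}_2)$ and $(\textnormal{G}_1)$), and write $D = \sum_i c_i F_i$ for the given $\pi$-ample $\pi$-exceptional effective Cartier divisor, with exceptional prime divisors $F_i$ and integer coefficients $c_i \geq 1$. Set $\red{D} = \sum_i F_i$ and define the graded family $L_0 \coloneqq R$ together with
\[
L_n \coloneqq \Gamma\bigl(Y, \sO_Y(-(n-1)D - \red{D})\bigr) \quad \text{for } n \geq 1.
\]
The containment $L_a \cdot L_b \subseteq L_{a+b}$ follows from $D - \red{D} = \sum_i (c_i - 1)F_i \geq 0$, and each $L_n$ is $\m$-primary because $\red{D}$ is supported in the $\pi$-exceptional locus, which maps into $\{\m\}$. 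Applying Proposition~\ref{prop: consequence asymptotic RR} with $E = D$, $F = \red{D}$ at index $n-1$ and expanding $(n-1)^d$ and $(n-1)^{d-1}$ yields
\[
\length(R/L_n) = \frac{A}{d!}n^d + \frac{B'}{2(d-1)!}n^{d-1} + O(n^{d-2}),
\]
where $A = -(-D)^d$ and $B' = -2A + (K_Y + 2\red{D}) \cdot (-D)^{d-1}$, with the $-2A$ contribution arising from the index shift.

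Next I would apply Proposition~\ref{prop: exponential bound} to $\{L_n\}$. Using the Abel summation identity $h(e^{-x}) = (e^x - 1) \sum_{k \geq 1} \length(R/L_k) e^{-kx}$ together with the standard asymptotic $\sum_{k \geq 1} k^j e^{-kx} = j!/x^{j+1} + O(1)$ as $x \to 0^+$, I obtain
\[
\limlm(R) \geq \frac{A}{x^d h(e^{-x})} = 1 - \frac{A + B'}{2A}\,x + O(x^2),
\]
so $\limlm(R) > 1$ for small $x > 0$ as soon as $A + B' < 0$. To verify this sign, I use that $R$ numerically $\Q$-Gorenstein allows one to write $K_Y = \pi^{*}_{\num} K_X + \sum_i a(F_i; X) F_i$; the projection formula gives $\pi^{*}_{\num} K_X \cdot (-D)^{d-1} = 0$, because $(-D)^{d-1}$ is a one-cycle supported over $\m$ on which $\pi^{*}_{\num} K_X$ is numerically trivial. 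This produces the decisive identity
\[
A + B' = (K_Y + 2\red{D} - D) \cdot (-D)^{d-1} = \sum_i \bigl(a(F_i; X) + 2 - c_i\bigr)\bigl(F_i \cdot (-D)^{d-1}\bigr).
\]
Since $a(F_i; X) + 2 - c_i < 1 - c_i \leq 0$ and $F_i \cdot (-D)^{d-1} > 0$ by $\pi$-ampleness of $-D$, every summand is strictly negative, hence $A + B' < 0$ and the contradiction with lim-stability is established.

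The main obstacle is the correct design of the graded family. The naive choice $\{\Gamma(Y, \sO_Y(-nD))\}_n$ produces the sign condition $\sum_i (a(F_i; X) + c_i)\bigl(F_i \cdot (-D)^{d-1}\bigr) < 0$, which can easily fail whenever some $c_i > 1$, because the positive $c_i$ then dominates the negative $a(F_i; X) + 1$. The crucial insight is the shift by $\red{D}$ in the definition of $L_n$: this shift, together with the resulting $-2A$ contribution from the expansion of $(n-1)^d$, replaces the problematic $a_i + c_i$ with the always-negative $a_i + 2 - c_i$ under the standing hypotheses $a_i < -1$ and $c_i \geq 1$. Calibrating these two contributions so that they exactly absorb the potentially large coefficients $c_i$ is the heart of the argument.
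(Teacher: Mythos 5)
Your strategy (asymptotic Riemann--Roch plus the exponential bound from Proposition~\ref{prop: exponential bound}, applied to a carefully shifted graded family to control the subleading coefficient) is the right shape of argument, and the closing sign analysis $a_i + 2 - c_i < 1 - c_i \leq 0$ is a genuinely clever observation. However, the proposal has two gaps.

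\emph{The shifted family is not a graded family.} You claim $L_a \cdot L_b \subseteq L_{a+b}$ follows from $D - \red{D} \geq 0$, but the sign is backwards. Writing $A_n = (n-1)D + \red{D}$ so that $L_n = \Gamma(Y, \sO_Y(-A_n))$, the containment $L_a L_b \subseteq L_{a+b}$ requires $A_a + A_b \geq A_{a+b}$; a direct computation gives $A_a + A_b - A_{a+b} = \red{D} - D$, which is $\leq 0$ under your standing hypothesis $c_i \geq 1$, with equality only when $D$ is reduced. Concretely, if $Y$ is the blowup of a surface at a point with exceptional curve $E$ and you take $D = 2E$, then $L_n = \Gamma(\sO_Y(-(2n-1)E)) = \m^{2n-1}$, so $L_1^2 = \m^2 \not\subseteq \m^3 = L_2$. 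Thus Proposition~\ref{prop: exponential bound}, which needs a genuine graded family, cannot be applied. The paper circumvents exactly this issue by working with the rational powers $\{I^{n/\rho}\}_n$ (with $\rho$ the Rees period), which do form a Noetherian graded family; the ``shift'' effect that you want is produced indirectly, by comparing the Hilbert series of the rational powers with the colengths of $\overline{(I + T^m)^n}$ in $R[[T]]$ and using Lemma~\ref{lem: Odaka discrepancy lemma} to transport the discrepancy bound, or more directly as in Theorem~\ref{thm: lim-stable main technical result slc} via Corollary~\ref{cor: Riemann-Roch for rational powers} and Lemma~\ref{lemma: generating function 2}.

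\emph{The $\m$-primary and punctured-spectrum hypotheses are not established.} You assert that each $L_n$ is $\m$-primary because the exceptional locus ``maps into $\{\m\}$,'' but the theorem's hypotheses do not give this: a $\pi$-ample $\pi$-exceptional Cartier divisor forces the exceptional set to be pure codimension one, but it may be contracted to a positive-dimensional center in $X$, in which case $L_1 = \Gamma(Y, \sO_Y(-\red{D}))$ is contained in a non-maximal prime and cannot be $\m$-primary. The same issue obstructs the use of Proposition~\ref{prop: consequence asymptotic RR}, which explicitly requires $\pi$ to be an isomorphism on the punctured spectrum. The paper handles this by first showing the conclusion follows from pseudo-log canonicity (Theorem~\ref{thm: num Q-Gor lc is Q-Gor lc}), then choosing a prime $Q$ of minimal height where $R_Q$ fails to be pseudo-log canonical and localizing (via Theorem~\ref{thm: localization}); after this reduction $\pi$ is an isomorphism away from the closed point and is the blowup of an $\m$-primary ideal. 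Your proof needs this reduction step as well; without it, neither the colength asymptotics nor the exponential bound apply.
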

\begin{proof}
Let $-E$ be a $\pi$-ample and $\pi$-exceptional Cartier divisor on $Y$. By the negativity lemma \cite[Lemma 3.39]{KollarMori} we know that $E$ is effective. If a curve $C$ is contracted under $\pi$, then $C\cdot (-E) >0$. This implies that $C$ lies in the support of $E$, and consequently, the exceptional set is of pure codimension one. As a result $\pi$ is an isomorphism over the locus where $X$ is $\Q$-Gorenstein and log canonical. This is because none of the exceptional prime divisors can have centers on this locus, given the assumption on the discrepancy.

By Theorem~\ref{thm: num Q-Gor lc is Q-Gor lc}, it suffices to show that $R$ is pseudo-log canonical. If this is not the case, we may choose a prime ideal $Q\in \Spec(R)$ of minimal height such that $R_Q$ is not pseudo-log canonical. By Theorem~\ref{thm: localization}, we can replace $R$ by $R_Q$ and apply Theorem~\ref{thm: num Q-Gor lc is Q-Gor lc} again, which allows us to assume that $R$ is $\Q$-Gorenstein and log canonical on the punctured spectrum. After this replacement, both assumptions are preserved and $\pi$ is an isomorphism over the punctured spectrum of $R$. It follows that $\pi$ is the blowup of some $\m$-primary ideal $I\subseteq R$. 

We now consider the graded family of ideals $I_n \coloneqq \{I^{\frac{n}{\rho}}\}_n$, 
where $\rho$ is the Rees period of $I$. This is a Noetherian graded family of $\m$-primary ideals with $I_1=\m$ by the definition of rational powers. By Theorem~\ref{thm: criterion for lim unstable via derivative}, we thus have
$$h(t) \coloneqq \sum_{j\geq 0} \length(I_j/I_{j+1})t^j = \frac{f(t)}{(1-t)^d}$$
where $f(t)$ is a rational function with $f(1)\neq 0$ and all poles of $f(t)$ have order at most $d\coloneqq\dim(R)$. 
Thus $\length(R/I_n)= ({f(1)}/{d!})n^d + O(n^{d-1})$ and 
it follows from 
Proposition~\ref{prop: rational powers are quasi-polynomial} that
$f(1) = \eh(I)/\rho^d$. 

We next show that $f'(1)>\frac{d}{2}f(1)$. We will deduce this by comparing the multiplicity and the colength of $R[[T]]$-ideals  $\overline{(I+T^m)^n}$, where $m$ is a sufficiently divisible and $n$ a sufficiently large positive integer. Note that by Corollary~\ref{cor: integral closures of powers of I+T^m}, we know that 
\begin{align*}
  \frac{\eh(\overline{(I+T^m)^n})}{(d+1)!\length(R[[T]]/\overline{(I+T^m)^n})} =  \frac{\eh(I) mn^{d+1}}{(d+1)!\frac{m}{\rho}\sum_{j=1}^{n\rho}\length(R/I_{j})} 
  = \frac{(n\rho)^{d+1} f(1)}{{(d+1)!}\sum_{j=1}^{n\rho}\length(R/I_{j})}.
\end{align*}

At this point, we observe that $\length(R/I_j)$ is the coefficient of $t^{j-1}$ in $h(t)/(1-t)$, and that $\sum_{j=1}^n\length(R/I_j)$ is the coefficient of $t^{n-1}$ in $h(t)/(1-t)^2=f(t)/(1-t)^{d+2}$. We can write 
$$\frac{f(t)}{(1-t)^{d+2}}=\frac{f(1)}{(1-t)^{d+2}} -\frac{f'(1)}{(1-t)^{d+1}} + g(t)$$
where all poles of $g(t)$ have order at most $d$. We expand the first two terms above and obtain that the coefficient of $t^{n-1}$ in $f(t)/(1-t)^{d+2}$ is
$$f(1)\binom{d+n}{d+1} - f'(1)\binom{d+n-1}{d} = \frac{f(1)}{(d+1)!}n^{d+1} + \frac{\frac{d}{2}f(1)-f'(1)}{d!} n^d+ O(n^{d-1}).$$
It follows that for $n\gg0$, 
\begin{align}
\label{equation 1 in lim-stable main result}
    \frac{\eh(\overline{(I+T^m)^n})}{(d+1)!\length(R[[T]]/\overline{(I+T^m)^n})} & = 
    \frac{(n\rho)^{d+1} f(1)}{(n\rho)^{d+1}f(1)+ (d+1)\cdot (\frac{d}{2}f(1)-f'(1))\cdot (n\rho)^d + O(n^{d-1})} \notag \\
    & =  \frac{1}{1+ (d+1)(\frac{d}{2} -\frac{f'(1)}{f(1)})\cdot \frac{1}{n\rho} + O(\frac{1}{n^2})}.
\end{align}

On the other hand, we know that $a(F;X)<-1$ for all exceptional prime divisors $F$ on $Y$ by assumption. Let $\pi'\colon Y'\to X'\coloneqq \Spec(R[[T]])$ be the normalized blowup of $I+T^m$
(recall that $m$ is sufficiently divisible). By Lemma~\ref{lem: Odaka discrepancy lemma}, $a(F', X')<0$ for all exceptional prime divisors $F'$ on $Y'$. Moreover, it is clear that $\pi'$ is an isomorphism on the punctured spectrum of $R[[T]]$ and $(I+T^m)\sO_{Y'}=\sO_{Y'}(-E')$. In particular $-E'$ is a $\pi'$-ample $\pi'$-exceptional Cartier divisor on $Y'$. We now apply Proposition~\ref{prop: consequence asymptotic RR} to $Y'\to X'$ and note that $\Gamma(Y',\sO_{Y'}(-nE'))=\overline{(I+T^m)^n}$. Thus we obtain that  
\begin{align*}
\length(R[[T]]/\overline{(I+T^m)^n}) & =-\frac{(-E')^{d+1}}{(d+1)!}n^{d+1}+ \frac{K_{Y'}\cdot (-E')^{d}}{2d!}n^{d}+O(n^{d-1}) \\
& =-\frac{(-E')^{d+1}}{(d+1)!}n^{d+1}+ \frac{(K_{Y'}-\pi'^*_{\num}K_{X'})\cdot (-E')^{d}}{2d!}n^{d}+O(n^{d-1}).
\end{align*}
Since $\{\overline{(I+T^m)^n}\}_n$ is a Noetherian graded family, it follows easily from the expression above that $\eh(\overline{(I+T^m)^n})=-(-E')^{d+1}n^{d+1}$. 
Combining the two expressions yields the formula:
\begin{align}
\label{equation 2 in lim-stable main result}
\frac{\eh(\overline{(I+T^m)^n})}{(d+1)!\length(R[[T]]/\overline{(I+T^m)^n})} & = \frac{-(-E')^{d+1}n^{d+1}}{-(-E'^{d+1})n^{d+1}+ \frac{(d+1)}{2}(K_{Y'}-\pi'^*_{\num}K_{X'})(-E')^dn^d+ O(n^{d-1})} \notag\\   
& = \frac{1}{1+ \frac{d+1}{-(-E')^{d+1}}\cdot \frac{(K_{Y'}-\pi'^*_{\num}K_{X'})(-E')^d}{2}\cdot \frac{1}{n} + O(\frac{1}{n^2})}.
\end{align}
By comparing the coefficients of $1/n$ in the denominators of (\ref{equation 1 in lim-stable main result}) and (\ref{equation 2 in lim-stable main result}), we see that
$$\frac{d}{2} -\frac{f'(1)}{f(1)} = \frac{\rho}{-(-E')^{d+1}}\cdot \frac{(K_{Y'}-\pi'^*_{\num}K_{X'})(-E')^d}{2}.$$
Note that by definition, 
\[
(K_{Y'}-\pi'^*_{\num}K_{X'})\cdot (-E')^{d}
= \sum_{F'} a(F'; X')F' \cdot (-E')^{d}
= \sum_{F'} a(F'; X')  (-E'|_{F'})^{d}.
\]
Since $-E'|_{F'}$ is ample on $F'$ and $a(F'; X') < 0$, 
we have that $(K_{Y'}-\pi'^*_{\num}K_{X'})\cdot (-E')^{d} < 0.$
It follows that $f'(1)>\frac{d}{2}f(1)$ and tbus $R$ is not lim-stable by Theorem~\ref{thm: criterion for lim unstable via derivative}, a contradiction. 
\end{proof}

The following is an immediate consequence of Theorem~\ref{thm: lim-stable main technical result} and the existence of log canonical modification in the $\Q$-Gorenstein setting (Theorem~\ref{thm: Odaka-Xu}).

\begin{corollary}
\label{cor: lim-stable implies log canonical Q-Gor}
Let $(R,\m)$ be a $\Q$-Gorenstein normal local ring essentially of finite type over a field of characteristic zero. If $R$ is lim-stable (e.g., $R$ is semistable), then $R$ is log canonical.
\end{corollary}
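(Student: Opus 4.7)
The plan is to apply Theorem~\ref{thm: Odaka-Xu} to produce a log canonical modification of $X = \Spec(R)$ and then directly invoke Theorem~\ref{thm: lim-stable main technical result}. Suppose for contradiction that $R$ is not log canonical. Since $R$ is $\Q$-Gorenstein and essentially of finite type over a field of characteristic zero, Theorem~\ref{thm: Odaka-Xu} applied to $(X,0)$ supplies a log canonical modification $\pi\colon Y \to X$, which must be nontrivial (otherwise $R$ would already be log canonical) and therefore has at least one $\pi$-exceptional prime divisor. By the construction of the log canonical modification in the non-log-canonical case, every $\pi$-exceptional prime divisor $F$ satisfies the strict discrepancy bound $a(F;X) < -1$ (these are precisely the non-log-canonical places being extracted; the analogous property was made explicit in Hashizume's refinement, Theorem~\ref{thm: Hashizume}, in the numerically $\Q$-Gorenstein setting).

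Next I would verify the remaining hypothesis of Theorem~\ref{thm: lim-stable main technical result}: the existence of a $\pi$-ample $\pi$-exceptional Cartier divisor on $Y$. Consider
\[
D \coloneqq K_Y + \red{E} - \pi^*K_X,
\]
which is $\Q$-Cartier since $K_Y + \red{E}$ is $\Q$-Cartier (by log canonicity of $(Y,\red{E})$) and $K_X$ is $\Q$-Cartier (by $\Q$-Gorensteinness of $R$). It is $\pi$-ample because $K_Y + \red{E}$ is $\pi$-ample by the definition of the log canonical modification, while $\pi^*K_X$ is $\pi$-numerically trivial. It is $\pi$-exceptional because $\pi_*K_Y = K_X$ and $\pi_*\red{E} = 0$. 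A sufficiently divisible multiple $mD$ is then a $\pi$-ample $\pi$-exceptional Cartier divisor on $Y$. Finally, being $\Q$-Gorenstein, $R$ is automatically numerically $\Q$-Gorenstein.

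With both hypotheses of Theorem~\ref{thm: lim-stable main technical result} in place and $R$ lim-stable, that theorem forces $R$ to be log canonical, contradicting our standing assumption. Hence $R$ is log canonical, as desired.

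The only genuinely delicate point is the strict discrepancy inequality $a(F;X) < -1$ for every $\pi$-exceptional prime divisor $F$. Although the weaker bound $a(F;X) \leq -1$ follows from the negativity lemma applied to the $\pi$-exceptional, $\pi$-nef divisor $-D$, the upgrade to strict inequality reflects the stronger $\pi$-ampleness of $K_Y + \red{E}$ and is the content of the standard construction of the log canonical modification extracting exactly the non-log-canonical places; in practice I would cite \cite{OdakaXu} (or the cleaner formulation via Theorem~\ref{thm: Hashizume}) for this property rather than reprove it here.
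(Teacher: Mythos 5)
Your proposal is correct and follows the paper's proof essentially verbatim: extract the log canonical modification via Theorem~\ref{thm: Odaka-Xu}, observe that $D = K_Y + \red{E} - \pi^*K_X$ is a $\pi$-ample $\pi$-exceptional $\Q$-Cartier divisor (so a sufficiently divisible multiple is the required $\pi$-ample $\pi$-exceptional Cartier divisor), note that $R$ is automatically numerically $\Q$-Gorenstein, and apply Theorem~\ref{thm: lim-stable main technical result}. One small slip worth fixing: you write that the negativity lemma is applied to ``the $\pi$-exceptional, $\pi$-nef divisor $-D$,'' but $-D$ is $\pi$-anti-ample, not $\pi$-nef; the lemma is applied to the $\pi$-exceptional divisor $-D$ using that its negative $D$ is $\pi$-nef, which yields $-D \geq 0$, i.e.\ $a(F;X) \leq -1$, and then the stronger $\pi$-ampleness of $D$ shows every $\pi$-contracted curve pairs negatively with $-D$ and so lies in $\Supp(-D)$, forcing every exceptional prime divisor into $\Supp(-D)$ and giving the strict bound $a(F;X) < -1$ directly, without needing a separate citation for this step as you suggest.
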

\begin{proof}
This follows from Theorem~\ref{thm: lim-stable main technical result} and Theorem~\ref{thm: Odaka-Xu}: if $f$: $Y\to X$ is the log canonical modification, then $K_Y+\red{E}-f^*K_X$ is $f$-ample $f$-exceptional and $a(F;X)<-1$ for all exceptional prime divisors $F$ on $Y$ by the negativity lemma \cite[Lemma 3.39]{KollarMori} so that Theorem~\ref{thm: lim-stable main technical result} can be applied. 
\end{proof}

Beyond the $\Q$-Gorenstein setting, we have the following extension of Corollary~\ref{cor: lim-stable implies log canonical Q-Gor}. This is our main result on lim-stable singularities.

\begin{theorem}
\label{thm: lim-stable implies log canonical general}
Let $(R,\m)$ be an excellent normal local domain admitting a dualizing complex. Suppose one of the following holds: 
\begin{enumerate}
    \item $\dim(R)\leq 2$, or
    \item $R$ is essentially of finite type over a field of characteristic zero that is numerically $\Q$-Gorenstein.
\end{enumerate}
If $R$ is lim-stable, then $R$ is $\Q$-Gorenstein and log canonical. 
\end{theorem}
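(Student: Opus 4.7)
The overall strategy is to reduce both cases to Theorem~\ref{thm: lim-stable main technical result}, which converts lim-stability into $\Q$-Gorensteinness and log canonicity once one exhibits a projective birational map $\pi \colon Y \to X = \Spec(R)$ carrying a $\pi$-ample $\pi$-exceptional Cartier divisor and with every exceptional prime divisor having discrepancy strictly less than $-1$. The numerical $\Q$-Gorenstein hypothesis built into that technical result holds automatically in hypothesis (1) when $\dim(R) = 2$, since every Weil divisor on a two-dimensional normal scheme is numerically $\Q$-Cartier, and is part of hypothesis (2).

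Case (2) is essentially immediate: $R$ is numerically $\Q$-Gorenstein and essentially of finite type over a characteristic-zero field, so Theorem~\ref{thm: Hashizume} produces a model $h \colon W \to X$ that provides exactly the input required by Theorem~\ref{thm: lim-stable main technical result}; the additional log canonicity of $(W, \red{E})$ and $\Q$-Cartierness of the exceptional prime divisors coming from Hashizume's construction are not needed. Feeding this $W$ into Theorem~\ref{thm: lim-stable main technical result} gives the conclusion.

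For case (1), the low-dimensional subcases are trivial: if $\dim R = 0$ then $R$ is a field, and if $\dim R = 1$ then $R$ is an excellent normal one-dimensional local domain, hence a DVR. It remains to handle $\dim R = 2$. Here I would mimic the proof of Theorem~\ref{thm: Hashizume} using only two-dimensional tools, which are available in every characteristic: take a resolution $f \colon Z \to X$ (guaranteed by Lipman's theorem), partition the exceptional prime divisors into $F_j$ with $a(F_j; X) \geq -1$ and $G_i$ with $a(G_i; X) < -1$ (using the well-defined numerical pullback of $K_X$ on the surface $Z$), and run the two-dimensional analog of a $(K_Z + \sum F_j + \sum t_i G_i)$-MMP over $X$ for $t_i \in \Q \cap [0,1)$ close to $1$. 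Two-dimensional MMP over $X$ is controlled by the negative-definiteness of the exceptional intersection form and Artin's contraction criterion, so it both terminates and produces contractions in any characteristic. Choosing the $t_i$'s so that the output model $W \to X$ has contracted precisely the $F_j$'s yields a birational morphism with all exceptional discrepancies $< -1$; an ample exceptional Cartier divisor on $W$ is then extracted from an appropriate $f$-numerically-negative combination of the remaining $E_i$'s via the intersection form (possibly after passing to a suitable multiple to ensure integrality). Theorem~\ref{thm: lim-stable main technical result} then applies.

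The main obstacle is the dimension-two case under hypothesis (1): Hashizume's argument invokes BCHM and the ACC for log canonical thresholds, both of which are characteristic-zero statements, so one must verify that each step of the construction can be replaced by a classical surface-MMP argument that works uniformly in all characteristics. The careful point is to compute discrepancies consistently using $f^{*}_{\text{num}} K_X$ throughout (as $R$ need not be $\Q$-Gorenstein at this stage), and to upgrade the resulting ample $\Q$-Cartier exceptional divisor to an honest Cartier divisor, as required by the hypothesis of Theorem~\ref{thm: lim-stable main technical result}.
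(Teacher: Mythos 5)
Your proposal for case (2) is correct and matches the paper: Theorem~\ref{thm: Hashizume} supplies items (1) and (4) of its output, which are exactly the hypotheses of Theorem~\ref{thm: lim-stable main technical result}, and items (2) and (3) of Hashizume's statement are indeed internal to its proof and not re-used.

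For case (1) in dimension two, however, there is a genuine gap. You propose to re-derive a Hashizume-style model $W \to X$ by running a two-dimensional MMP, and then to produce the required ample exceptional Cartier divisor ``from an appropriate $f$-numerically-negative combination of the remaining $E_i$'s via the intersection form, possibly after passing to a suitable multiple to ensure integrality.'' This last step presupposes that the exceptional curves (or some positive $\Q$-combination of them) are $\Q$-Cartier on $W$ --- that is, that $W$ is $\Q$-factorial --- and this is not automatic. A normal surface singularity need not be $\Q$-factorial: for example, the cone over a smooth elliptic curve with respect to an ample line bundle has infinitely generated class group, so log canonical surface singularities in general fail $\Q$-factoriality. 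The negative-definiteness of the exceptional intersection form lets you find the numerical coefficients $c_i$ with $-\sum c_i E_i$ relatively nef, but it does not let you conclude that $\sum c_i E_i$ is $\Q$-Cartier. The paper confronts exactly this obstruction and resolves it by a separate argument: starting from the log canonical modification $f \colon Y \to X$ (which exists by the $2$-dimensional MMP over excellent bases), it shows that $a(F;Y) > -1$ for all divisors $F$ over $Y$ by a negativity/connectedness argument using numerical pullbacks and the fact that the strict transform of $\red{E}$ meets every exceptional fibre; this numerical log terminality then yields $R^1g_*\sO_Z(\lceil K_Z - g^*_{\num}K_Y\rceil) = 0$ by relative vanishing, which forces $\sO_Y \to Rg_*\sO_Z$ to split, so $Y$ is pseudo-rational, and Lipman's theorem then gives $\Q$-factoriality. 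Without an argument of this kind, your construction cannot produce the Cartier (or $\Q$-Cartier) ample exceptional divisor that Theorem~\ref{thm: lim-stable main technical result} requires.
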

\begin{proof}
We just need to verify that the hypotheses of Theorem~\ref{thm: lim-stable main technical result} are satisfied. In case $(2)$, this  follows immediately from Theorem~\ref{thm: Hashizume}.  

In case (1), we let $f\colon Y\to X\coloneqq\Spec(R)$ be the log canonical modification (which exists by the full log MMP in dimension two, see \cite{TanakaMMPexcellentsurface}). Note that both $X$ and $Y$ are numerically $\Q$-Gorenstein by \cite[Example 3.4]{TakagiFinitisticTestIdeal}. We need to show the existence of an ample exceptional Cartier divisor on $Y$. Let $g\colon Z\to Y$ be a log resolution that is an isomorphism over $Y\backslash\red{E}$ (the latter is isomorphic to $X\backslash\{\m\}$ which is regular). We will show that $Y$ is numerically log terminal in the sense that $a(F;Y)>-1$ for all $g$-exceptional prime divisors $F$ on $Z$. 

To prove the claim, we start by writing 
$$K_Z+g_*^{-1}\red{E}-g^*(K_Y+\red{E})=\sum a_iF_i,$$
where $a_i\geq -1$ for all $i$. This is possible since $(Y,\red{E})$ is log canonical. We also write 
$K_Z-g_{\num}^*K_Y=\sum b_iF_i.$
It follows that $\sum(a_i-b_i)F_i$ is $g$-numerically equivalent to $g_*^{-1}\red{E}$. In particular, $\sum(a_i-b_i)F_i$ is $g$-nef. By the negativity lemma \cite[Lemma 3.39]{KollarMori}, we have $a_i-b_i\leq 0$ for all $i$. Now suppose that $a_i=b_i$ for some $i$ and let $y \coloneqq g(F_i)$. Note that $y\in \red{E}$ since $Z\to Y$ is an isomorphism over $Y\backslash\red{E}$. We have 
$$0\leq F_i \cdot \sum_j(a_j-b_j)F_j= \sum_{j\neq i}(a_j-b_j)(F_i\cdot F_j)\leq 0,$$
so we must have equalities throughout. Hence $a_j=b_j$ for all $j$ such that $F_i\cap F_j\neq \emptyset$. Since the exceptional set $f^{-1}(y)$ is connected, repeating this procedure we obtain that $a_i=b_i$ for all $i$ such that $F_i\in f^{-1}(y)$. But then 
$$0 = f^{-1}(y) \cdot \sum(a_i-b_i)F_i = f^{-1}(y) \cdot g_*^{-1}\red{E} $$
which is a contradiction to $y\in \red{E}$. Therefore we must have $b_i>a_i\geq -1$ for all $i$, i.e., $a(F;Y)>-1$ for all $g$-exceptional prime divisors $F$. This implies $\lceil K_Z-g_{\num}^*K_Y\rceil$ is effective and thus $g_*\sO_Z(\lceil K_Z-g_{\num}^*K_Y\rceil)=\sO_Y$. Since $g_{\num}^*K_Y$ is $g$-numerically trivial by definition, by relative vanishing in dimension two (for example, see \cite[Theorem 3.4]{TanakaMMPexcellentsurface}), we have that $R^1g_*\sO_Z(\lceil K_Z-g_{\num}^*K_Y\rceil)=0$. Thus the composition map 
$$\sO_Y\to Rg_*\sO_Z\to Rg_*\sO_Z(\lceil K_Z-g_{\num}^*K_Y\rceil)\cong \sO_Y$$
is the identity. In particular, $\sO_Y\to Rg_*\sO_Z$ splits in $D(Y)$ and by duality $Y$ has pseudo-rational singularities. This in turn implies that $Y$ is $\Q$-factorial by \cite[Proposition 17.1]{LipmanRationalSingularity}. It follows that $K_Y-f^*_{\num}K_X+\red{E}$ is an $f$-ample $f$-exceptional $\Q$-Cartier divisor, and that $a(G;X)<-1$ for all exceptional prime divisors $G$ on $Y$ by the negativity lemma again. Thus all hypotheses of Theorem~\ref{thm: lim-stable main technical result} are satisfied and the result follows.
\end{proof}

\begin{remark}
\label{rmk: more lim-stable implies log canonical}
There are some other situations in which Theorem~\ref{thm: lim-stable main technical result} can be applied in low dimensions. Specifically, if $(R,\m)$ is essentially of finite type over a perfect field of characteristic $p>5$, and $R$ is normal and $\Q$-Gorenstein of dimension three, then we claim that log canonical modification of $X\coloneqq\Spec(R)$ exists. Hence  Corollary~\ref{cor: lim-stable implies log canonical Q-Gor} still holds. 

To see the claim, note that by \cite[Corollary 9.21]{BMPSTWW1}, which builds on \cite{HX}, \cite{BirkarExistenceFlipsThreefolds}, we can find a $\mathbb{Q}$-factorial dlt modification $f \colon Z \to X$, i.e.,
$(Z, \red{E})$ is dlt and $K_{Z}+\red{E}$ is $f$-nef. By \cite[Theorem~1.4 (2)]{HNT} (see also \cite[Theorem~1.1]{Waldron}),
$K_{Z}+\red{E}$ is $f$-semi-ample.
Thus the contraction over $X$ induced by $K_{Z}+\red{E}$ is the log canonical modification $Y$ of $X$. In fact, provided that the analog of \cite[Theorem 1.1]{Waldron} holds, this argument applies more generally to any three-dimensional excellent local ring $(R,\m)$ with $\charac(R/\m)>5$ such that $R$ is normal, $\Q$-Gorenstein, and admits a dualizing complex. It is our understanding that the proof in \cite{Waldron} goes through by replacing the use of positive characteristic MMP by \cite{BMPSTWW1} and using the gluing argument of \cite[Theorem 3.1]{PosvaAbundanceSLCsurface}, which works in mixed characteristic with minimal changes.  
\end{remark}

It is natural to expect that lim-stable singularities in characteristic zero are pseudo-log canonical (without assuming numerical $\Q$-Gorensteinness). In fact, we expect the following stronger statement. 

\begin{conjecture}
Let $(R,\m)$ be a normal local domain essentially of finite type over a field of characteristic zero. Suppose $R$ is lim-stable (e.g., $R$ is semistable), then $R$ is of log canonical type (in particular, $R$ is pseudo-log canonical).
\end{conjecture}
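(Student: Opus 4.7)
The plan is to reduce the conjecture to a non-$\Q$-Gorenstein generalization of Theorem~\ref{thm: lim-stable main technical result}, using the full form of the Hashizume-type modification stated in the remark following Theorem~\ref{thm: Hashizume}. By contradiction, suppose $R$ is lim-stable but not of log canonical type. Applying the generalized Hashizume construction with $\Delta = 0$, we obtain a projective birational morphism $h \colon W \to X = \Spec(R)$ such that $(W, \red{E})$ is log canonical, every $h$-exceptional prime divisor $E_i$ is $\Q$-Cartier, and $a(E_i; X) < -1$ for each $i$. Since $\red{E} = \sum E_i$ is $\Q$-Cartier and $(W, \red{E})$ is log canonical, $K_W$ itself is $\Q$-Cartier, so $W$ is $\Q$-Gorenstein. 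A relative MMP on $(W, \sum t_i E_i)$ with rational $t_i \nearrow 1$, paralleling the argument in the proof of Theorem~\ref{thm: Hashizume}, should then produce an $h$-ample $h$-exceptional Cartier divisor on $W$, providing the data required to mimic Theorem~\ref{thm: lim-stable main technical result}.

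With this data, one attempts to execute the proof of Theorem~\ref{thm: lim-stable main technical result}. Form the normalized blowup $\pi' \colon Y' \to X' = \Spec(R[[T]])$ of $I + T^m$ for $m$ sufficiently divisible, where $I \subseteq R$ is an $\m$-primary ideal with $\Bl_I(R) = W$. Proposition~\ref{prop: consequence asymptotic RR} applies verbatim to give
\[
\length(R[[T]]/\overline{(I + T^m)^n}) = -\frac{(-E')^{d+1}}{(d+1)!}\, n^{d+1} + \frac{K_{Y'} \cdot (-E')^d}{2\, d!}\, n^d + O(n^{d-1}),
\]
and Theorem~\ref{thm: criterion for lim unstable via derivative} applied to the graded family of rational powers of $I$ supplies the matching expansion on the $R$-side. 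Comparing coefficients, lim-stability forces $K_{Y'} \cdot (-E')^d \geq 0$; the task is then to prove strict negativity. In the numerically $\Q$-Gorenstein case this was done by substituting $K_{Y'} - \pi'^{*}_{\num} K_{X'}$ for $K_{Y'}$ (valid since the correction is $\pi'$-numerically trivial) and invoking Lemma~\ref{lem: Odaka discrepancy lemma} to conclude $a(E'_i; X') < 0$ with $-E'|_{E'_i}$ ample on each exceptional prime.

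The main obstacle is supplying a substitute for $\pi'^{*}_{\num} K_{X'}$ when $R$ itself fails to be numerically $\Q$-Gorenstein. The crucial observation is that Lemma~\ref{lem: Odaka discrepancy lemma} is proved by localizing at the generic point of each $E_i$, where everything reduces to a DVR computation and only $\Q$-Cartierness of $E_i$ is needed; global numerical $\Q$-Gorensteinness of $X$ enters only to glue these local discrepancies into a single divisor $\pi'^{*}_{\num} K_{X'}$. Executing the argument one $E_i$ at a time should yield a well-defined local contribution $a_m(E'_i; X') \cdot (-E'|_{E'_i})^d$ with the correct sign, and summing over $i$ should recover the intersection number $K_{Y'} \cdot (-E')^d$ up to terms killed by $(-E')^d$. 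Making this rigorous --- in particular constructing a global $\Q$-Weil divisor on $Y'$ that intrinsically encodes the de Fernex--Hacon $m$-th limiting discrepancies and behaves well under intersection with $-E'$ --- is the principal technical hurdle. Once pseudo-log canonicity of $R$ is established, promoting it to $R$ being of log canonical type should follow from the de Fernex--Hacon approximation together with the extended Hashizume construction, by exhibiting an effective $\Q$-divisor $\Delta$ such that $K_X + \Delta$ is $\Q$-Cartier and $(X, \Delta)$ is log canonical.
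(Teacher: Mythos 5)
The statement you are trying to prove is stated in the paper as an open \emph{conjecture}, not a theorem; the paper does not have a proof to compare against, and the authors explicitly flag the difficulties you run into. So this is really an evaluation of whether your sketch could be upgraded to a proof. It cannot, at least not as written: you yourself acknowledge two unresolved holes, and there is a third one you do not mention.

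First, the claim that a relative MMP on $(W,\sum t_iE_i)$ ``should'' produce an $h$-ample $h$-exceptional Cartier divisor is precisely what fails without numerical $\Q$-Gorensteinness. In the proof of Theorem~\ref{thm: Hashizume}, the $h$-ample exceptional divisor is $K_W - h^{*}_{\num}K_X + \sum t_iE_i$: its ampleness comes from the MMP, but its \emph{exceptionality} comes from the identity $K_W - h^{*}_{\num}K_X = \sum a(E_i;X)E_i$, which requires the existence of $h^*_{\num}K_X$. The MMP alone only gives you $K_W + \sum t_iE_i$ as $h$-ample $\Q$-Cartier, and this is not $h$-exceptional unless you can subtract a pullback of $K_X$. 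The paper's remark after Theorem~\ref{thm: Hashizume} says exactly that ``it is unclear whether $Y$ admits an ample exceptional Cartier divisor'' outside the numerically $\Q$-Gorenstein setting; your sketch does not resolve this, and without it Proposition~\ref{prop: consequence asymptotic RR} and Theorem~\ref{thm: criterion for lim unstable via derivative} cannot even be set up, since $\pi$ is not the blowup of an $\m$-primary ideal.

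Second, even granting that data, the coefficient of $n^d$ is $K_{Y'}\cdot(-E')^d$, and showing this is negative crucially uses the substitution $K_{Y'}\cdot(-E')^d = (K_{Y'}-\pi'^*_{\num}K_{X'})\cdot(-E')^d$, valid because $\pi'^*_{\num}K_{X'}\cdot F^d=0$ for $\pi'$-exceptional Cartier $F$. You correctly observe that Lemma~\ref{lem: Odaka discrepancy lemma} is local, but that is exactly the wrong direction: the intersection number $K_{Y'}\cdot(-E')^d$ is a \emph{global} invariant, and there is no mechanism by which local DVR computations at the generic points of the $E'_i$ reassemble into that global quantity unless you already have a global $\Q$-divisor to subtract whose top self-intersection against $-E'$ vanishes. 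That is the content of numerical $\Q$-Cartierness, not a bookkeeping convenience. You label this ``the principal technical hurdle'' and leave it unaddressed, which means the argument does not close.

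Third, your final step --- deducing ``of log canonical type'' from ``pseudo-log canonical'' --- is also a gap, and one the paper itself warns about: it cites \cite[Example 4.11]{HashizumeSingularityArbitraryPair} as an example of a pseudo-log canonical singularity that is \emph{not} of log canonical type. The de Fernex--Hacon approximation produces, for each fixed divisor $E$ over $X$ and each $m$, some boundary $\Delta = \Delta_{E,m}$ realizing $a_m(E;X)$, but it gives no uniform $\Delta$ working for all divisors simultaneously. Passing from a family of boundaries to a single boundary with $K_X+\Delta$ $\Q$-Cartier and $(X,\Delta)$ log canonical is exactly the assertion you are trying to prove; it does not follow formally from pseudo-log canonicity, and lim-stability would need to be invoked again (in some as-yet-unknown way) to establish it.

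In summary, what you have written is a reasonable account of \emph{why} the conjecture is plausible and which generalizations of the paper's techniques would be needed, but none of the three genuine obstructions is overcome. The honest conclusion is that this remains open, as the authors state.
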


We end this subsection by collecting some properties of lim-stable surface singularities, which strengthen some of the results obtained by Mumford and Shah, see \cite[Proposition 3.20]{Mumford} and \cite[Theorem~2]{Shah}. The proof proceeds by combining our results in dimension two and the following consequence of the classification of log canonical surface singularities, see \cite[Remark 4-6-29]{Matsuki} in characteristic zero and \cite[Example 3.27 and 3.39]{KollarSingularitiesMMP} in arbitrary characteristic.

\begin{theorem}
\label{thm: lc but not rational surface}
Let $(R,\m)$ be a two-dimensional excellent normal local ring with a dualizing complex, with $R/\m$ algebraically closed. If $R$ is log canonical and not a rational singularity, then $R$ is either a simple elliptic or a cusp singularity, in particular $R$ is Gorenstein.
\end{theorem}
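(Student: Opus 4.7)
The plan is to work on the minimal resolution $\pi\colon Y\to X\coloneqq\Spec(R)$, whose existence in this generality is due to Lipman. Since $\dim R=2$, every Weil $\Q$-divisor on $X$ is numerically $\Q$-Cartier (see \cite[Example~5.7]{BdFFU}), so we may form the numerical pullback $\pi_{\num}^{*}K_X$ and write $K_Y=\pi_{\num}^{*}K_X+\sum_{i} a_iE_i$, where $E_1,\dots,E_r$ are the exceptional prime divisors. The log canonical hypothesis forces $a_i\geq -1$ for every $i$.

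I would first reduce to the \emph{strictly} log canonical case, i.e.\ that some $a_{i_0}=-1$. Indeed, if all $a_i>-1$ then $R$ is (numerically) klt, and a classical theorem identifies a two-dimensional klt singularity with a quotient singularity, which is necessarily rational. (See \cite[Theorem~9.6]{KollarMori} in characteristic zero; the adaptation to our excellent setting, using only dualizing complexes, is carried out in \cite[Chapter~3]{KollarSingularitiesMMP}, and makes crucial use of the residue field being algebraically closed.) This would contradict the non-rationality hypothesis, so some $a_{i_0}$ equals $-1$.

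The next step is to invoke the classification of strictly log canonical surface singularities (\cite[Remark~4-6-29]{Matsuki} in characteristic zero; \cite[Examples~3.27 and~3.39]{KollarSingularitiesMMP} in arbitrary characteristic): the exceptional locus $\red{E}$ of the minimal resolution must be either (i) a single smooth elliptic curve of negative self-intersection, yielding a simple elliptic singularity, or (ii) a cycle of smooth rational curves, or a single rational nodal curve, yielding a cusp singularity. In either configuration, a direct discrepancy computation on the dual graph forces $a_i=-1$ for \emph{every} exceptional $E_i$, so $K_Y+\red{E}=\pi_{\num}^{*}K_X$. Since $Y$ is regular and $\red{E}$ is either a smooth elliptic curve or has at worst nodal singularities (hence is Cartier on $Y$ in a neighborhood of itself), $K_Y+\red{E}$ is Cartier on $Y$. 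Because $\pi$ is an isomorphism over the punctured spectrum, we conclude that $K_X$ is Cartier on all of $X$, so $R$ is Gorenstein.

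The main obstacle is Step~2: the classification is the core geometric content and proceeds via a technical intersection-theoretic analysis of weighted dual graphs subject to the constraints $a_i\geq -1$ and the negative semidefiniteness of the intersection form. Our strategy is to cite the existing treatments in \cite{Matsuki} and \cite{KollarSingularitiesMMP} rather than reproduce the full combinatorial case analysis; the remaining steps (Step~1 and Step~3) are elementary consequences of the discrepancy formulas in dimension two.
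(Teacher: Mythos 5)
The paper itself gives no proof of this statement; it cites it as a consequence of the classification of log canonical surface singularities in Matsuki and in Koll\'ar's book, so your sketch is being measured against what that classification actually says.

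The gap is in your Step~2. The classification of \emph{strictly} log canonical surface singularities does not stop at simple elliptic and cusp: it also contains the $\mathbb{Z}/n$-quotients of simple elliptic singularities (for $n=2,3,4,6$) and the $\mathbb{Z}/2$-quotients of cusps. On the minimal resolution of these, the reduced exceptional divisor is \emph{not} a single elliptic curve or a cycle of rational curves; it is a chain or star-shaped configuration of rational curves, some of whose components carry discrepancy strictly greater than $-1$. Consequently, once you have reduced (correctly, via the klt $\Rightarrow$ quotient $\Rightarrow$ rational chain) to the strictly lc case, you cannot simply assert that $\red{E}$ falls into your cases (i) or (ii). The quotient singularities are strictly lc but have $p_g=0$ (the cyclic group acts on the one-dimensional $H^0(E,\omega_E)$ by a nontrivial character, so $p_g(X/G)\leq \dim H^0(E,\omega_E)^G = 0$), hence they \emph{are} rational. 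So the ``not rational'' hypothesis must be used a second time to rule them out; as written, you have already spent it in Step~1. Your Step~3 also implicitly assumes you are already in case (i) or (ii), so the ``direct discrepancy computation'' you invoke would not even be triggered in the quotient cases, and the proof as structured would erroneously imply that every strictly lc surface singularity is Gorenstein --- which is false.

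A secondary issue is Step~4. Knowing that $\pi^*_{\num}K_X = K_Y + \red{E}$ is an integral Cartier divisor on $Y$ does not by itself yield that $K_X$ is Cartier on $X$: the Cartier index of $K_X$ is detected by linear, not numerical, equivalence over $X$, and for minimally elliptic singularities $R^1\pi_*\sO_Y\neq 0$, so the obvious pushforward/vanishing argument does not apply. The standard route is via adjunction: $(K_Y+\red{E})|_{\red{E}}\cong \omega_{\red{E}}\cong\sO_{\red{E}}$ for an elliptic curve or a nodal cycle, and one then deduces triviality of $\sO_Y(K_Y+\red{E})$ on a formal neighborhood via Laufer's criterion (or cites the theorem that minimally elliptic singularities are Gorenstein). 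Your sentence ``Because $\pi$ is an isomorphism over the punctured spectrum, we conclude that $K_X$ is Cartier'' conflates the $\Q$-Cartier and Cartier questions, and you should replace it by a citation or the adjunction argument.
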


\begin{proposition}\label{prop: on lim-stable surfaces}
Let $(R, \mf m)$ be a two-dimensional lim-stable Cohen--Macaulay local ring.
Then $\eh(R)\leq \edim(R)$ (i.e.,  either $R$ has minimal multiplicity or almost minimal multiplicity)  and we have the following.
\begin{enumerate}
    \item If $R$ has minimal multiplicity, then $\Gr_\mf m(R)$ is Cohen--Macaulay and its defining equations as a quotient of a polynomial ring, are quadratic.
    \item If $\eh(R) = \edim(R)$, then $\Gr_\mf m(R)$ is Cohen--Macaulay and reduced and its defining equations as a quotient of a polynomial ring are quadratic or cubic.
\end{enumerate}
Furthermore, if $R$ is excellent normal with a dualizing complex, and $R/\m$ is algebraically closed, then \begin{enumerate}
\item[(3)] $R$ is $\Q$-Gorenstein and log canonical;
\item[(4)] $R$ has minimal multiplicity if and only if $R$ has rational singularity; 
\item[(5)] If $\eh(R) = \edim(R)$, then $\Gr_\mf m(R)$ (and thus $R$) is Gorenstein and either $\Gr_\mf m(R)$ is a hypersurface of degree $3$, or its defining equations as a quotient of a polynomial ring are quadratic.
\end{enumerate}
\end{proposition}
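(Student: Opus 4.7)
The bound $\eh(R) \leq \edim(R)$ and the dichotomy between minimal and almost minimal multiplicity (with $\Gr_\m(R)$ Cohen--Macaulay in both cases) is precisely Corollary~\ref{cor: lim stable embdim}. For part (1), Cohen--Macaulayness of $\Gr_\m(R)$ at minimal multiplicity is classical via stability of $\m$ (Theorem~\ref{thm: Northcott}); a direct computation then gives Hilbert series $(1 + (\eh(R)-1)t)/(1-t)^2$, so the Castelnuovo--Mumford regularity of $\Gr_\m(R)$ equals one. Since we are presenting $\Gr_\m(R)$ as a quotient of a polynomial ring on $\edim(R)$ variables there are no linear relations, so the regularity bound forces the defining ideal to be generated by quadrics. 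Part (2) is analogous using the Hilbert series $(1 + (\eh(R)-2)t + t^2)/(1-t)^2$ extracted from the proof of Proposition~\ref{prop: Shah associated graded is CM and reduced}; regularity is two, giving defining equations of degree at most three, and reducedness is exactly Proposition~\ref{prop: Shah associated graded is CM and reduced}. Part (3) follows immediately from Theorem~\ref{thm: lim-stable implies log canonical general}(1).

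For part (4), the implication ``rational $\Rightarrow$ minimal multiplicity'' is Artin's classical formula $\edim(R) = -Z_f^2 + 1$ for rational surface singularities. For the converse I would combine (3) with Theorem~\ref{thm: lc but not rational surface} to deduce that $R$ is rational, simple elliptic, or a cusp. Minimally elliptic singularities with $-Z_f^2 \geq 2$ satisfy $\eh(R) = \edim(R)$ by the Laufer/Reid description (see Example~\ref{example: Laufer} and the computation in Theorem~\ref{thm: minimal elliptic irred exc}), placing them in the almost minimal case. The only remaining possibility is minimally elliptic of degree one, where $R$ is a hypersurface with $\Gr_\m(R) \cong k[x,y,z]/(z^2)$, which is not reduced. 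I would rule this out by extending the normality argument of Proposition~\ref{prop: Shah associated graded is CM and reduced} to the minimal multiplicity setting: the strong Rees property of $\m^n$ still applies since $\Gr_\m(R)$ is Cohen--Macaulay, giving $\length(\overline{\m^n}/\m\overline{\m^n}) \leq n\,\eh(R)$ whenever $\m^n \neq \overline{\m^n}$. Applying Proposition~\ref{prop: exponential bound} to $\{\overline{\m^n}\}$, or to a rational power filtration $\{\m^{n/\rho}\}$ when the Rees period $\rho$ exceeds one, together with the derivative estimate at $x = 0$ of Theorem~\ref{thm: criterion for lim unstable via derivative}, should force $\limlm(R) > 1$, contradicting lim-stability. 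Hence $\m$ is normal with Rees period one, so $\Gr_\m(R)$ is reduced by \cite[Theorem~10.5.6]{SwansonHuneke}, excluding the degree-one case. For part (5), the symmetric $h$-vector $(1, \eh(R)-2, 1)$ combined with Cohen--Macaulayness of $\Gr_\m(R)$ forces $\Gr_\m(R)$ Gorenstein by Stanley's criterion, and the standard transfer of Gorensteinness from the associated graded ring to $R$ itself yields that $R$ is Gorenstein. Passing to the Artinian reduction $A = k[x_1,\dots,x_{\eh(R)-2}]/J$ with Hilbert function $(1, \eh(R)-2, 1)$: when $\eh(R) = 3$, we have $A = k[x]/(x^3)$ and thus $\Gr_\m(R)$ is a cubic hypersurface; when $\eh(R) \geq 4$, a direct Macaulay inverse system analysis (using that the one-dimensional socle in degree two forces the multiplication $J_2 \otimes \m_1 \to S_3$ to be surjective) gives that $J$, and hence the defining ideal of $\Gr_\m(R)$, is generated entirely by quadrics.

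The main obstacle will be the extension of the reducedness argument to the minimal multiplicity case used in part (4). The estimates in Proposition~\ref{prop: Shah associated graded is CM and reduced} balance the strong Rees bound against the exponential bound quite tightly, so carrying through the analogous analysis with the Hilbert function $\length(\m^n/\m^{n+1}) = \eh(R)\,n + 1$ (in place of $\eh(R)\,n$ in the almost minimal case) will require an explicit first-derivative calculation at $x = 0$ paralleling Theorem~\ref{thm: criterion for lim unstable via derivative}, with delicate tracking of lower-order terms and of the interaction with the Rees period.
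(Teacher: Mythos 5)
For parts (1)--(3) your proposal is essentially equivalent to the paper's proof (citing Sally's theorem vs.\ arguing via regularity are trivially interchangeable here, and both rely on Corollary~\ref{cor: lim stable embdim} and Proposition~\ref{prop: Shah associated graded is CM and reduced}).

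For part (4) there is a concrete error in your case analysis. You claim that minimally elliptic singularities with $-Z_f^2 \geq 2$ satisfy $\eh(R) = \edim(R)$, but this fails at degree $2$: a simple elliptic (or cusp) singularity with $-Z_f^2 = 2$ (e.g.\ $z^2 + xy^3 + x^4$) is a hypersurface of multiplicity $2$, so $\eh(R) = 2 = \edim(R) - 1$ and it has \emph{minimal} multiplicity. Your dichotomy ``degree one is the only remaining case'' therefore misses $e = 2$ entirely. Moreover, the fallback you propose --- extending the normality argument of Proposition~\ref{prop: Shah associated graded is CM and reduced} via Proposition~\ref{prop: exponential bound} --- does not close the gap. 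In the minimal multiplicity case $\length(\m^n/\m^{n+1}) = 2n+1$ and, for the filtrations $\{\m^n\}$, $\{\overline{\m^n}\}$, and the weight filtration $\{J_n\}$, a direct computation gives $f(t) = 1+t$, $f(t) = 1+t^2$, and $f(t) = 1 - t + t^2$ respectively; in all three cases $f'(1) = \tfrac{d}{2}f(1)$ exactly, i.e.\ the borderline for Theorem~\ref{thm: criterion for lim unstable via derivative} where the bound from Proposition~\ref{prop: exponential bound} expands as $1 - cx^2 + O(x^3)$ with $c > 0$ and is thus $< 1$ near $x = 0$. So the ``delicate first-derivative analysis'' you anticipate cannot succeed; a qualitatively different idea is needed. (The paper's own one-line dispatch of (4) is itself terse on exactly this converse direction.)

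For part (5) your route through Stanley's criterion has a gap the paper explicitly flags. You want to conclude $\Gr_\m(R)$ Gorenstein from the symmetric $h$-vector $(1, \eh(R)-2, 1)$ together with Cohen--Macaulayness, but Stanley's theorem (equivalently \cite[Corollary~4.4.6(c)]{BrunsHerzogBook}) requires $\Gr_\m(R)$ to be a \emph{domain}, and Proposition~\ref{prop: Shah associated graded is CM and reduced} only guarantees it is reduced. The remark immediately following the proposition in the paper discusses precisely this obstruction. The paper's proof instead runs in the opposite direction: by Theorem~\ref{thm: lc but not rational surface}, a non-rational log canonical normal surface singularity is Gorenstein, and then Sally's results on Gorenstein rings of almost minimal multiplicity supply the Gorensteinness of $\Gr_\m(R)$ and the degree bound on the defining equations. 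Your implication ``$\Gr_\m(R)$ Gorenstein $\Rightarrow$ $R$ Gorenstein'' is thus not available here without first securing the Gorensteinness of $R$ itself, which requires the classification result.
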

\begin{proof}
The bound on the embedding dimension was obtained in Corollary~\ref{cor: lim stable embdim}. Note that $\eh(R)\geq \edim(R)-1$ by Abhyankar's inequality. If $\eh(R)=\edim(R)-1$ then $R$ has minimal multiplicity and it is well-known that $\Gr_\m(R)$ is Cohen--Macaulay with quadratic defining equations, see \cite[Theorem~1]{Sally1}. If $\eh(R)=\edim(R)$, then the conclusion follows from Proposition~\ref{prop: Shah associated graded is CM and reduced}
and \cite[Theorem~3]{Shah}. 

Next, (3) follows from Theorem~\ref{thm: lim-stable implies log canonical general} and (4) follows from Theorem~\ref{thm: lc but not rational surface} (note that rational surface singularities have minimal multiplicity). For (5), note that if $R$ has almost minimal multiplicity, then it does not have rational singularity and thus by Theorem~\ref{thm: lc but not rational surface} $R$ is Gorenstein. Now we apply \cite[3.1, 3.3]{Sally3} to obtain that, if $\eh(R)\geq 4$, then $\Gr_\mf m(R)$ is Gorenstein with quadratic defining equations; otherwise $\eh(R)=\edim(R)=3$ and thus $\Gr_\m(R)$ is a hypersurface of degree $3$. 
\end{proof}

\begin{remark}
We believe it should be possible to prove directly that if $(R,\m)$ is a semistable (or merely lim-stable) Cohen-Macaulay surface singularity such that $\eh(R) = \edim(R)$, then $\gr_\m (R)$ is Gorenstein. Proposition~\ref{prop: on lim-stable surfaces} shows that this is true when $R$ is normal, but we do not see how to prove this without using the classification result (Theorem~\ref{thm: lc but not rational surface}). 

From \cite[Theorem~3 (ii)]{Shah} we know that the Hilbert series of $\gr_\m(R)$ is given by
$$h_{\gr_\m(R)}(t) = \frac{1 + (\eh(R) - 1)t + t^2}{(1-t)^2}.$$ Moreover, by \cite[Corollary~4.4.6(a)]{BrunsHerzogBook} we know that $h_{\gr_\m(R)}(t)=h_{\omega_{\gr_\m(R)}}(t)$, meaning that the Hilbert series $\gr_\m(R)$ and the graded canonical module $\omega_{\gr_\m(R)}$ coincide. Thus it remains to show that there is a graded injection $\gr_\m(R) \to \omega_{\gr_\m(R)}$. In particular, it is immediate that $\gr_\m(R)$ is Gorenstein if it is a domain (see \cite[Corollary~4.4.6 (c)]{BrunsHerzogBook}). However, in general, $\gr_\m (R)$ is only reduced, which is not sufficient to apply \cite[Corollary~4.4.6 (c)]{BrunsHerzogBook}. 
\end{remark}

\subsection{Lech-stable and canonical singularities}
We now present a connection between Lech-stable and canonical singularities. 

\begin{theorem}
\label{thm: Lech-stable implies canonical}
Let $(R,\m)$ be an excellent normal local domain admitting a dualizing complex. Suppose one of the following holds: 
\begin{enumerate}
    \item $\dim(R)\leq 2$, or 
    \item $R$ is essentially of finite type over a field of characteristic zero, is numerically $\Q$-Gorenstein, and has canonical singularities on the punctured spectrum (e.g., $R$ has an isolated singularity).
\end{enumerate}
If $R$ is Lech-stable, then $R$ is $\Q$-Gorenstein and canonical. 
\end{theorem}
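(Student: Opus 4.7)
The plan is to mirror the proof of Theorem~\ref{thm: lim-stable main technical result}, replacing the log canonical modification by a \emph{canonical} modification and avoiding the passage to $R[[T]]$ entirely, since Lech-stability itself already bounds $\lm(R)$. The low-dimensional cases are immediate from Proposition~\ref{prop: Artinian case} and Proposition~\ref{prop: dimension one Lech-stable}, so I assume $d \coloneqq \dim R \geq 2$.

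First, since Lech-stability implies lim-stability, Theorem~\ref{thm: lim-stable implies log canonical general} applies under either hypothesis and yields that $R$ is already $\Q$-Gorenstein and log canonical; in particular $K_X$ is $\Q$-Cartier. In case (1) with $d = 2$, normality automatically forces $R$ to be regular (hence canonical) on the punctured spectrum, while in case (2) this is given. Assume for contradiction that $R$ is not canonical. Then a nontrivial canonical modification $f \colon Y \to X \coloneqq \Spec R$ exists---in case (2) by \cite{BCHM}, in case (1) by classical surface MMP for excellent surface rings with a dualizing complex. By the defining property of the canonical modification, $Y$ has canonical singularities (so $K_Y$ is $\Q$-Cartier), $K_Y$ is $f$-ample, and $a(E_i; X) < 0$ for every $f$-exceptional prime divisor $E_i$. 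Since $f$ is an isomorphism over the canonical locus of $X$, it is an isomorphism on the punctured spectrum, hence $f$ is the blow-up of some $\m$-primary ideal $I$, with $I\sO_Y = \sO_Y(-E)$ and $-E$ an $f$-ample, $f$-exceptional Cartier divisor.

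I would then apply Proposition~\ref{prop: consequence asymptotic RR} with $F = 0$ directly on $X$. Setting $A \coloneqq -(-E)^d = \eh(I) > 0$ and $B \coloneqq K_Y \cdot (-E)^{d-1}$, it gives
\[
d!\,\length(R/\overline{I^n}) = A\,n^d + \tfrac{d}{2}\,B\,n^{d-1} + O(n^{d-2}), \qquad \eh(\overline{I^n}) = A\,n^d,
\]
so
\[
\frac{\eh(\overline{I^n})}{d!\,\length(R/\overline{I^n})} \;=\; \frac{1}{1 + \tfrac{dB}{2An} + O(n^{-2})}.
\]
Since $-E$ is $f$-exceptional, the projection formula yields $f^*K_X \cdot (-E)^{d-1} = 0$, and hence
\[
B \;=\; (K_Y - f^*K_X) \cdot (-E)^{d-1} \;=\; \sum_i a(E_i; X) \cdot \big((-E)|_{E_i}\big)^{d-1}.
\]
For each $i$, $(-E)|_{E_i}$ is an ample Cartier divisor on the projective variety $E_i$, so $((-E)|_{E_i})^{d-1} > 0$; combined with $a(E_i; X) < 0$ and the non-triviality of $f$, this gives $B < 0$. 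Consequently the ratio above exceeds $1$ for $n \gg 0$, forcing $\lm(R) > 1$ and contradicting Lech-stability.

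The main conceptual ingredient is the strict negativity of the exceptional discrepancies on a canonical modification (as opposed to merely $\leq -1$ on a log canonical modification, which in the lim-stable case forced the Odaka-type perturbation of Lemma~\ref{lem: Odaka discrepancy lemma} together with the passage to $R[[T]]$). This strict negativity is precisely what allows the contradiction to be extracted from the second coefficient of $\length(R/\overline{I^n})$ directly on $X$, and it is the only place where the stronger hypotheses --- Lech-stable rather than lim-stable, together with canonicity on the punctured spectrum in case (2) --- are used in an essential way. Beyond verifying this sign, the remaining work is routine, since the existence of the canonical modification with the required ample exceptional Cartier divisor is already ensured by the construction in both cases.
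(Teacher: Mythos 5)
Your proposal is correct and follows essentially the same route as the paper's proof: establish $\Q$-Gorensteiness via Theorem~\ref{thm: lim-stable implies log canonical general}, take the canonical modification $f\colon Y\to X$, note that $f$ is an isomorphism on the punctured spectrum and hence the blowup of an $\m$-primary ideal, and then apply Proposition~\ref{prop: consequence asymptotic RR} with $F=0$ directly on $X$, using the negativity of $(K_Y-f^*K_X)\cdot(-E)^{d-1}$ (from $f$-ampleness, the negativity lemma, and ampleness of $-E|_{E_i}$) to force $\lm(R)>1$. The only minor differences are cosmetic: you make the projection-formula step $f^*K_X\cdot(-E)^{d-1}=0$ explicit where the paper absorbs it into the displayed formula, and you spell out why in case~(1) the punctured spectrum of a two-dimensional normal domain is automatically canonical.
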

\begin{proof}
First of all, in both cases, we know that $R$ is lim-stable and thus $R$ is $\Q$-Gorenstein by Theorem~\ref{thm: lim-stable implies log canonical general}. Let $f \colon Y \to X\coloneqq \Spec(R)$ be a canonical modification (see \cite[Theorem~1.31]{KollarSingularitiesMMP}, which is based on \cite{BCHM}). This means $Y$ has canonical singularities and $K_Y$ is $f$-ample. It follows from $\Q$-Gorensteiness of $R$ that $K_Y-f^*K_X$ is an $f$-ample and $f$-exceptional $\Q$-Cartier divisor. Now since $f$ is an isomorphism on the punctured spectrum by the uniqueness of canonical modification, we know that $f$ is the blowup of some $\m$-primary ideal $I\subseteq R$ and we set $I\sO_Y=\sO_Y(-E)$. 

The rest of the argument follows the proof of Theorem~\ref{thm: lim-stable main technical result}. By Proposition~\ref{prop: consequence asymptotic RR} applied to $f$, and noting that $\Gamma(Y, \sO_Y(-nE))=\overline{I^n}$, we may express that 
\[
\length(R/\overline{I^n})= \frac{-(-E)^d}{d!}n^d + \frac{(K_Y-f^*K_X)\cdot (-E)^{d-1}}{2(d-1)!}n^{d-1} + O(n^{d-2}), 
\]
where $d\coloneqq\dim(R)$. Next, we write $K_Y-f^*K_X=\sum_F a(F;X)F$ where the sum runs over all exceptional prime divisors $F$ on $Y$. Since $K_Y-f^*K_X$ is $f$-amply and by the negativity lemma, we know that $a(F;X)<0$ for all such $F$. It follows that 
$$(K_Y-f^*K_X)\cdot (-E)^{d-1}=\sum_F a(F;X)F\cdot (-E)^{d-1} = \sum_F a(F;X)(-E|_F)^{d-1}<0$$ 
where the last inequality follows since $-E|_F$ is ample and $a(F;X)<0$. Therefore, for sufficiently large $n$, we have $d!\length(R/\overline{I^n})<\eh(\overline{I^n})$,  which implies that $\lm(R)>1$. This contradicts our assumption that $R$ is Lech-stable.
\end{proof}

\begin{remark}
\label{rmk: Lech-stable but not canonical}
Unlike Theorem~\ref{thm: lim-stable main technical result} and Theorem~\ref{thm: lim-stable implies log canonical general}, in Theorem~\ref{thm: Lech-stable implies canonical} one cannot drop the assumption that $R$ has canonical singularities on the punctured spectrum, even when $R$ is Gorenstein. As an example, consider the flat family
$$\mathbb{C}[t] \to \frac{\mathbb{C}[t, x,y,z,w]}{xyz+ t(x^3+y^3+z^3)}.$$
We know by Proposition~\ref{prop: new weak semicontinuity} that 
$$\lm\bigg(\frac{\overline{\mathbb{C}(t)}[x,y,z,w]_{\m}}{xyz+ t(x^3+y^3+z^3)}\bigg) \leq \lm\bigg(\frac{\mathbb{C}[x,y,z,w]_{\m}}{xyz}\bigg)=1,$$
where $\m=(x,y,z,w)$ and the equality above follows from Theorem~\ref{thm: snc is semistable} in Section~\ref{section: example}. It follows that 
$R\coloneqq\overline{\mathbb{C}(t)}[x,y,z,w]_{\m}/(xyz+t(x^3+y^3+z^3))$ is a normal hypersurface that is Lech-stable. However, it is easy to see that $R$ is not canonical.
\end{remark}

\begin{remark}
The converse to Theorem~\ref{thm: Lech-stable implies canonical} does not hold, even for Gorenstein rings. 
For $d \geq 3$, the degree $d$ Veronese subring $R$ of $\mathbb{C}[x_1, \ldots, x_d]$ is an example of a Gorenstein canonical singularity such that $\eh (R) = d^{d-1} > d!$, such $R$ cannot be Lech-stable by Remark~\ref{rmk: mult bound for Lech-stable and semistable}. On the other hand, we do not know whether there are complete intersection canonical singularities that are not Lech-stable (conjecturally, such singularities have multiplicity at most $2^{d-1}$, see \cite{HMTWFthreshold} and \cite{ShibataMultiplicity} for related results).  
\end{remark}

\subsection{Semi-log canonical singularities} In this subsection, we will partially generalize our results to the non-normal setting. We first recall that an $(\textnormal{S}_2)$ and $(\textnormal{G}_1)$ scheme $X$ is called {\it semi-log canonical} if $X$ is demi-normal, $K_X$ is $\Q$-Cartier and the pair $(X', D')$ is log canonical where $X'$ is the normalization of $X$ and $D'$ is the effective divisor defined by the conductor ideal, see \cite[Chapter 5.2]{KollarSingularitiesMMP} (note that $K_X$ is $\Q$-Cartier implies $K_{X'}+D'$ is $\Q$-Cartier).

The main result will use the following variant of Theorem~\ref{thm: lim-stable main technical result} in the non-normal $\Q$-Gorenstein setting. Its proof relies on a careful analysis using rational powers of ideals and might also lead to an alternative and more algebraic approach of Theorem~\ref{thm: lim-stable main technical result}.

\begin{theorem}
\label{thm: lim-stable main technical result slc}
Let $\pi \colon Y \to X\coloneqq \Spec(R)$ be a projective birational map of $(\textnormal{S}_2)$ and $(\textnormal{G}_1)$ schemes, where $(R, \mf m)$ is an excellent local domain of dimension $d\geq 2$ admitting a dualizing complex. Suppose the following conditions hold: 
\begin{enumerate}
    \item $\pi$ is the blowup of some $\m$-primary ideal $I\subseteq R$;
    \item $R$ is $\Q$-Gorenstein and $K_{Y}-\pi^*K_X=\sum_i a_i E_i$ with $a_i<-1$, where $E_i$ runs over all $\pi$-exceptional prime divisors on $Y$;
    \item the generic point of each $E_i$ is regular.
\end{enumerate} 
Then $\limlm(R)>1$. 
\end{theorem}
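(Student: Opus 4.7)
The plan is to adapt the strategy of Theorem~\ref{thm: lim-stable main technical result}, but to carry out the entire asymptotic analysis directly on $Y$ rather than passing through a normalized blowup of $R[[T]]$, which would be problematic without normality of $R$. A first observation is that since $X$ and $Y$ are both $(\textnormal{S}_2)$ and $\dim R \geq 2$, the short exact sequence $0 \to R \to \pi_*\sO_Y \to Q \to 0$ (with $Q$ of finite length, as $\pi$ is the blowup of an $\m$-primary ideal and hence an isomorphism on the punctured spectrum) forces $Q = 0$ by a depth chase, so $\pi_*\sO_Y = \sO_X$. Together with regularity at the generic points of the $E_i$, this places us squarely in the setup of Proposition~\ref{prop: consequence asymptotic RR} and Proposition~\ref{prop: geometric rational powers}.

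Let $\rho$ be the Rees period of $I$ and write $I\sO_Y = \sO_Y(-E)$ with $E = \sum_i c_i E_i$. Pick $m$ divisible by $\rho$. By Corollary~\ref{cor: Riemann-Roch for rational powers}, for $0 \leq h < \rho$,
\[
\length\!\left(R/I^{\frac{k\rho+h}{\rho}}\right) = \frac{\eh(I)}{d!}k^d + \frac{\bigl(K_Y + 2\lceil (h/\rho)E\rceil\bigr)\cdot(-E)^{d-1}}{2(d-1)!}k^{d-1} + O(k^{d-2}),
\]
and by Corollary~\ref{cor: integral closures of powers of I+T^m}, $\length(R[[T]]/\overline{(I,T^m)^n}) = \tfrac{m}{\rho}\sum_{j=0}^{n\rho}\length(R/I^{j/\rho})$. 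The core computation is to sum the expansion above over $0 \leq k < n$ and $0 \leq h < \rho$, add the $j = n\rho$ contribution, and extract the coefficient of $n^d$. Using the elementary identity $\sum_{h=0}^{\rho-1}\lceil hc_i/\rho\rceil = \tfrac{\rho c_i + \rho - 2c_i}{2}$ (which holds since $c_i \mid \rho$), one obtains $\sum_{h=0}^{\rho-1}\lceil (h/\rho)E\rceil = \tfrac{(\rho-2)E + \rho\,\red{E}}{2}$; intersecting with $(-E)^{d-1}$ and tracking all terms, the contributions involving $(-E)^d$ cancel exactly, leaving
\[
\sum_{j=0}^{n\rho}\length\!\left(R/I^{j/\rho}\right) = \frac{\rho\,\eh(I)}{(d+1)!}n^{d+1} + \frac{\rho\,(K_Y + \red{E})\cdot(-E)^{d-1}}{2 \cdot d!}n^d + O(n^{d-1}).
\]

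Combined with $\eh(\overline{(I,T^m)^n}) = m\,\eh(I)\,n^{d+1}$ (since $\eh(I + T^m) = m\,\eh(I)$), the ratio becomes
\[
\frac{\eh(\overline{(I,T^m)^n})}{(d+1)!\,\length(R[[T]]/\overline{(I,T^m)^n})} = 1 - \frac{(d+1)(K_Y + \red{E})\cdot(-E)^{d-1}}{2\,\eh(I)}\cdot\frac{1}{n} + O\!\left(\tfrac{1}{n^2}\right).
\]
To conclude $\lm(R[[T]]) > 1$, and therefore $\limlm(R) > 1$, it suffices to show $(K_Y + \red{E})\cdot(-E)^{d-1} < 0$. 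The projection formula gives $\pi^*K_X \cdot (-E)^{d-1} = 0$ (since $(-E)^{d-1}$ is a 1-cycle supported on the exceptional locus), so this quantity equals $\sum_i (a_i + 1)(-E|_{E_i})^{d-1}$; the hypothesis $a_i < -1$ combined with the ampleness of $-E|_{E_i}$ on $E_i$ (inherited from $\pi$-ampleness of $-E$) makes each summand negative. The main subtlety I anticipate is carrying out the combinatorial bookkeeping cleanly and verifying that all hypotheses of Proposition~\ref{prop: consequence asymptotic RR} (in particular $\pi_*\sO_Y = \sO_X$ and the existence of a $\pi$-ample $\pi$-exceptional Cartier divisor) remain available in this non-normal $\Q$-Gorenstein setting.
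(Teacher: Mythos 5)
Your computation of the second-order term is correct and recovers the paper's geometric quantity, but the final logical step breaks: showing that $\eh(\overline{(I,T^m)^n})/\bigl((d+1)!\,\length(R[[T]]/\overline{(I,T^m)^n})\bigr) \to 1^+$ proves $\lm(R[[T]]) > 1$, and you then assert this implies $\limlm(R) > 1$. That implication is false. The chain of inequalities $\lm(R) \geq \lm(R[[T_1]]) \geq \lm(R[[T_1,T_2]]) \geq \cdots \geq \limlm(R) \geq 1$ goes the \emph{wrong} way for you: $\lm(R[[T]])>1$ is the \emph{weakest} of these conditions, and in principle all the deeper Lech--Mumford constants could still equal $1$. (Whether $\lm(R[[T]])>1$ actually does imply $\limlm(R)>1$ is precisely the open question of whether lim-stable but not semistable rings exist, which the paper explicitly flags as unknown.) So your proposal, as written, proves the \emph{wrong} statement.

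The fix is close at hand, and it is what the paper actually does. The information you have extracted --- the coefficient of $n^d$ in $\sum_{j=0}^{n\rho}\length(R/I^{j/\rho})$ --- is exactly the ``second coefficient'' $f'(1)$ in the Hilbert series $h(t) = f(t)/(1-t)^d$ of the Noetherian graded family $\{I^{n/\rho}\}_n$. The criterion you should invoke is Theorem~\ref{thm: criterion for lim unstable via derivative}: $f'(1) > \tfrac{d}{2}f(1)$ forces $\limlm(R) > 1$, because behind that criterion sits Proposition~\ref{prop: exponential bound}, which builds the destabilizing ideals in $R[[t_1,\dots,t_r]]$ for $r\to\infty$ --- there is no way to avoid passing to unboundedly many auxiliary variables. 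Concretely: feed the quasi-polynomial expansion from Corollary~\ref{cor: Riemann-Roch for rational powers} into Lemma~\ref{lemma: generating function 2} to express $f'(1) - \tfrac{d}{2}f(1)$ directly, and observe (as you already did, via the identity for $\sum_h \lceil hc_i/\rho\rceil$ and the projection formula $\pi^*K_X\cdot(-E)^{d-1}=0$) that the coefficient of each $E_i$ in the resulting divisor is $-a_i - 1 > 0$, whence positivity follows from $\pi$-ampleness of $-E|_{E_i}$. Your intermediate bookkeeping, including the shifted identity $\sum_{h=0}^{\rho-1}\lceil hc_i/\rho\rceil = \tfrac{\rho c_i + \rho - 2c_i}{2}$ and the cancellation of the $(-E)^d$ terms, is fine; only the last sentence needs replacing.
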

\begin{proof}
The hypothesis implies that $\pi_*\sO_Y=\sO_X$ (see the proof of Corollary~\ref{cor: Riemann-Roch for rational powers}). Write $I\sO_Y=\sO_Y(-E)$. By Proposition~\ref{prop: geometric rational powers}, we know that $\Gamma(Y,\sO_Y(-\lceil\frac{n}{\rho}E\rceil))=I^{\frac{n}{\rho}}$ where $\rho$ is the Rees period of $I$. Consider the Noetherian graded family $I_n\coloneqq\{I^\frac{n}{\rho}\}_n$. By Theorem~\ref{thm: criterion for lim unstable via derivative},
$$h(t) \coloneqq \sum_{j\geq 0} \length(I_j/I_{j+1})t^j = \frac{f(t)}{(1-t)^d}$$
where $f(t)$ is a rational function with $f(1)\neq 0$ and all poles of $f(t)$ have order at most $d$. By Theorem~\ref{thm: criterion for lim unstable via derivative}, it is enough to show that $f'(1)>\frac{d}{2}f(1)$.

We next express $f(t)$ in a different way. Observe that 
$$\alpha(t)\coloneqq\sum_{n\geq 1} \length(R/I_n)t^{n-1} = \frac{f(t)}{(1-t)^{d+1}}$$
and that by Corollary~\ref{cor: Riemann-Roch for rational powers}, when $n = k\rho + h$, we have
    \[
        \length (R/I_n)= - \frac{(-E)^d}{d!}k^d + \frac{K_Y \cdot (-E)^{d-1}}{2 (d-1)!}k^{d-1} + \frac{\lceil \frac{h}{\rho}E \rceil (-E)^{d-1}}{(d-1)!} k^{d-1} + O(n^{d-2}).
    \]
    Note that this formula still holds for $h = \rho$. Thus 
    the function $a(n) \coloneqq \length (R/I_{n+1})$ satisfies the assumptions 
    of Lemma~\ref{lemma: generating function 2} below: 
    we may set $A = -(-E)^d$
    and 
    \begin{align*}
        B(n) = B(h) = \frac{d+1}{2} (-E)^d + \frac{K_Y \cdot (-E)^{d-1}}{2}
        + \left \lceil \frac {(h+1)}{\rho} E \right \rceil \cdot (-E)^{d-1}.
    \end{align*}
Now, by Lemma~\ref{lemma: generating function 2} the desired inequality $f’ (1) > \frac{d}{2} f(1)$ is equivalent to showing that 
    \[
    \left(E - K_Y -  \frac {2}{\rho}\sum_{h = 1}^{\rho} \left \lceil \frac{h}{\rho}E \right \rceil \right) \cdot (-E)^{d-1} > 0. 
    \]
    In order to prove this, it suffices to verify that the coefficient of each $E_i$ in 
	\[
		E - K_Y +\pi^*K_X -  \frac {2}{\rho}\sum_{h = 1}^{\rho} \left \lceil \frac{h}{\rho} E \right \rceil
	\]
	is positive. We write $E=\sum c_iE_i$ and note that $c_i = \rho/d_i$ where $d_i\in\N$. 
    Thus 
    \[
    \sum_{h = 1}^{\rho} \left \lceil \frac{hc_i}{\rho} \right \rceil
    = \sum_{h = 1}^{\rho} \left \lceil \frac{h}{d_i} \right \rceil
    = \sum_{j = 1}^{c_i} j d_i
    = d_i \binom{c_i+1}{2}= \frac{\rho (c_i + 1)}{2}.
    \]
    Thus the coefficient of $E_i$ in $E - K_Y +\pi^*K_X -  \frac {2}{\rho}\sum_{h = 1}^{\rho} \left \lceil \frac{hZ}{\rho} \right \rceil$ is 
    \[
    c_i - a_i - c_i - 1 = - a_i - 1 > 0. \qedhere
    \]
\end{proof}

The following lemma is used in the proof of Theorem~\ref{thm: lim-stable main technical result slc} above. 

\begin{lemma}\label{lemma: generating function 2}
Suppose that $a(n) \in \mathbb{C}$ is a quasi-polynomial sequence
such that there is a constant $A$ and a periodic sequence
$B(n)$ with period $\rho$
such that 
\[
a(n) = A\binom{\lfloor n/\rho \rfloor +d}{d} + B(n) \binom{\lfloor n/\rho \rfloor + d-1}{d-1} + O(n^{d-2}).
\]
Then the generating function $\alpha(T) \coloneqq \sum_{n = 0}^\infty a(n)T^n$ can be written as 
$$\alpha(T) = f(T)/(1 - T)^{d+1}$$
where $f(T)$ is a rational function such that $f(1) = A/\rho^d$, $f'(1) \neq 0$,
and 
\[
f'(1) - \frac d 2 f(1) = \frac{-1}{\rho^{d-1}} \left ( \frac {d}{2} A + \frac 1 \rho \sum_{h = 0}^{\rho-1} B(h) \right).
\]
\end{lemma}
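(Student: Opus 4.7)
The strategy is a direct generating function computation, with no real geometric input. Write $n = k\rho + r$ with $0 \le r < \rho$, so that $\lfloor n/\rho \rfloor = k$ and $B(n) = B(r)$, and split the generating function $\alpha(T) = \sum_{n \geq 0} a(n) T^n$ into three pieces according to the decomposition of $a(n)$. Using the standard identity $\sum_{k \geq 0} \binom{k+m}{m} T^{k\rho} = (1-T^\rho)^{-m-1}$ to sum over each residue class mod $\rho$, and setting $\phi(T) = 1 + T + \cdots + T^{\rho-1}$ so that $1 - T^\rho = (1-T)\phi(T)$, one obtains
\[
\alpha(T) = \frac{A \phi(T)}{(1-T^\rho)^{d+1}} + \frac{\sum_{r=0}^{\rho-1} B(r) T^r}{(1-T^\rho)^d} + R(T) = \frac{A}{(1-T)^{d+1} \phi(T)^d} + \frac{\sum_{r=0}^{\rho-1} B(r) T^r}{(1-T)^d \phi(T)^d} + R(T),
\]
where $R(T)$ is the generating function of the error term. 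Since an $O(n^{d-2})$ quasi-polynomial sequence is of degree at most $d-2$ and of period $\rho$, $R(T)$ is a rational function with pole at $T=1$ of order at most $d-1$.

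Multiplying through by $(1-T)^{d+1}$ gives the candidate
\[
f(T) = \frac{A + (1-T)\sum_{r=0}^{\rho-1} B(r) T^r}{\phi(T)^d} + (1-T)^{d+1} R(T),
\]
and the key observation is that the last summand vanishes to order at least $2$ at $T=1$, so it contributes nothing to $f(1)$ or $f'(1)$. From $\phi(1) = \rho$ and $\phi'(1) = 1 + 2 + \cdots + (\rho-1) = \rho(\rho-1)/2$, one reads off immediately that $f(1) = A/\rho^d$.

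For the derivative, apply the quotient rule to the first summand with numerator $u(T) = A + (1-T)\sum_{r=0}^{\rho-1} B(r)T^r$ and denominator $v(T) = \phi(T)^d$. Then $u(1) = A$, $u'(1) = -\sum_{r=0}^{\rho-1} B(r)$, $v(1) = \rho^d$, and $v'(1) = d\rho^{d-1}\phi'(1) = d\rho^d(\rho-1)/2$, yielding
\[
f'(1) = -\frac{1}{\rho^d}\sum_{h=0}^{\rho-1} B(h) - \frac{Ad(\rho-1)}{2\rho^d}.
\]
Subtracting $(d/2)f(1) = Ad/(2\rho^d)$ and using $d(\rho-1)/2 + d/2 = d\rho/2$ collapses the $A$-contribution to $Ad/(2\rho^{d-1})$, producing exactly the claimed formula. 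The only (minor) obstacle is justifying that the $O(n^{d-2})$ remainder does not interfere with the computation at $T=1$; this is handled once and for all by the pole-order bound on $R(T)$ noted above.
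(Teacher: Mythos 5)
Your proof is correct and follows essentially the same route as the paper's: split $\alpha(T)$ into the three pieces coming from the $A$-term, the $B$-term, and the $O(n^{d-2})$ remainder, sum over residue classes mod $\rho$ to get factors of $(1-T^\rho)^{-1}$, kill the remainder's contribution to $f(1)$ and $f'(1)$ by the pole-order bound, and then differentiate at $T=1$. Your only cosmetic deviation is combining the first two terms over a common denominator and applying the quotient rule once, where the paper differentiates them separately; also, the claim that the remainder has period exactly $\rho$ is unneeded (its period could be a proper multiple of $\rho$), since the pole-order bound depends only on the degree $d-2$, not the period.
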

\begin{proof}
We may rewrite the generating function for the first term as 
\begin{align*}
\sum_{n \geq 0} \binom{\lfloor n/\rho \rfloor+d}{d} T^{n}
&= \sum_{k = 0}^\infty \sum_{h = 0}^{\rho - 1} 
\binom{k+d}{d} T^{k\rho + h}
= \frac{\left(1+T+ \cdots + T^{\rho-1} \right)}{(1 - T^\rho)^{d+1}}
\\
&= \frac{1}{(1 - T)^{d+1}} \times \frac{ 1}{(1 +T+ \cdots + T^{\rho-1})^{d}}. 
\end{align*}
We then treat the second term similarly, noting first that $B(k \rho + h) = B(h)$ since $B(n)$ is a periodic function with period $\rho$. Thus
\begin{align*}
\sum_{n \geq 0} B(n) \binom{\lfloor n/\rho \rfloor + d-1}{d-1} &= 
\sum_{k = 0}^\infty \sum_{h = 0}^{\rho - 1} 
B(h) \binom{k+d-1}{d-1} T^{k\rho + h}
= \frac{\sum_{h = 0}^{\rho - 1} B(h) T^h}{(1 - T^\rho)^{d}}.
\end{align*}
Since the $O(n^{d-2})$ term is given by a quasi-polynomial, by \cite[Proposition~4.4.1]{StanleyBook}, the pole order of its generating function at $T = 1$ is a most $d - 1$. Combining the three parts, we now may express
\[
\alpha(T) = 
\frac{A}{(1- T)(1 - T^\rho)^{d}} 
+ \frac{\sum_{h = 0}^{\rho-1} B(h) T^{h} }{(1 - T^\rho)^{d}}
+ \frac{r(T)}{(1 - T)^{d-1}}, 
\]
where $r(T)$ has no pole at $T = 1$. 
Therefore
\[
f(T) = 
\frac{A}{(1 + T+ \cdots +  T^{\rho-1})^{d}} 
+ (1 - T)
\frac{\sum_{h = 0}^{\rho-1} B(h) T^{h} }{(1 + T+ \cdots +  T^{\rho-1})^{d}}
+ (1 - T)^2 r(T).
\]
It is now clear that  $f(1) = A/\rho^{d}$ and we may further see that 
\begin{align*}
f'(1) &= \left ( \frac{A}{ (1 + T+ \cdots +  T^{\rho-1})^{d}}  \right)'|_{T = 1} - \frac{1}{\rho^d} \sum_{h = 0}^{\rho-1} B(h)
\\ &= \left ( \frac{-dA \sum_{i = 1}^{\rho-1} iT^{i-1} }{ (1 + T+ \cdots +  T^{\rho-1})^{d+1}}  \right)|_{T = 1} - \frac{1}{\rho^d} \sum_{h = 0}^{\rho-1} B(h)
\\ &= \frac{-dA \binom \rho 2}{\rho^{d+1}} - \frac{1}{\rho^d} \sum_{h = 0}^{\rho-1} B(h)\\
& = - \frac 1{\rho^d} \left (\frac{dA (\rho-1)}{2} + \sum_{h = 0}^{\rho-1} B(h) \right ). 
\end{align*}
Thus it follows by a direct computation that 
\[
f'(1) - \frac{d}{2} f(1) = 
- \frac 1{\rho^d} \left (\frac{dA (\rho-1)}{2} + \sum_{h = 0}^{\rho-1} B(h) \right ) - \frac{dA}{2\rho^{d}}
 = \frac{-1}{\rho^{d-1}} 
 \left ( 
 \frac{dA}{2} + \frac{1}{\rho}\sum_{h = 0}^{\rho-1} B(h)
 \right). \qedhere
\]
\end{proof}

We now prove our main result in this subsection, relating lim-stable and semi-log canonical singularities in the $\Q$-Gorenstein setting. 

\begin{theorem}
\label{thm: lim-stable implies slc}
Let $(R,\m)$ be an excellent local domain that admits a dualizing complex and satisfies $(\textnormal{S}_2)$ and $(\textnormal{G}_1)$. Suppose that $R$ is $\Q$-Gorenstein and lim-stable. In addition, assume one of the following holds:
\begin{enumerate}
    \item $\dim(R) = 2$ and $\charac(R/\m)\neq 2$,
    \item $R$ is essentially of finite type over a field of characteristic zero.
\end{enumerate}
Then $R$ is semi-log canonical.
\end{theorem}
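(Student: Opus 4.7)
The plan is to argue by contradiction: if the pair $(X', D')$ coming from the normalization of $R$ fails to be log canonical, I would construct a projective birational map $\pi \colon W \to X \coloneqq \Spec R$ satisfying the hypotheses of Theorem~\ref{thm: lim-stable main technical result slc}, yielding $\limlm(R) > 1$ and contradicting lim-stability of $R$.

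First I would set up the normalization and reduce the problem to log canonicity of a pair on the normal scheme. Since $R$ is a domain (hence equidimensional) satisfying $(\textnormal{S}_2)$, Corollary~\ref{cor: lim-stable implies demi-normal} gives that $R$ is demi-normal. Let $\nu \colon X' \to X$ be the normalization and $D'$ the reduced conductor divisor on $X'$. Since $K_R$ is $\Q$-Cartier, adjunction yields $K_{X'} + D' = \nu^* K_X$, so $R$ is semi-log canonical if and only if $(X', D')$ is log canonical. Assuming for contradiction that $(X', D')$ is not lc, I would localize at a minimal prime of its non-lc locus, using Theorem~\ref{thm: localization} (together with Proposition~\ref{prop: Artinian case} and Theorem~\ref{thm: dimension one semistable} to rule out failures of slc in dimension $\leq 1$) to reduce to the case where the non-lc locus of $(X', D')$ lies above the maximal ideal $\m$.

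Next I would invoke the existence of a log canonical modification $f \colon W \to X'$ of $(X', D')$: in case (2) this is Theorem~\ref{thm: Odaka-Xu}, while in case (1) it follows from the MMP for excellent surface pairs (Tanaka et al.). The composition $\pi \coloneqq \nu \circ f \colon W \to X$ is an isomorphism over the punctured spectrum (as $\nu$ is finite and $f$ is an isomorphism away from the non-lc locus), hence $\pi$ is the blowup of some $\m$-primary ideal. The scheme $W$ is normal, so it satisfies $(\textnormal{S}_2)$ and $(\textnormal{G}_1)$, and generic points of its prime divisors are regular. For each $\pi$-exceptional prime divisor $E_i$ on $W$, the defining property of the log canonical modification makes $K_W + f_*^{-1}D' + \red{E} - f^*(K_{X'} + D')$ an $f$-exceptional and $f$-ample $\Q$-divisor; the negativity lemma then forces $a(E_i; X', D') < -1$, and since $K_{X'} + D' = \nu^* K_X$ this equals $a(E_i; X)$. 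Applying Theorem~\ref{thm: lim-stable main technical result slc} to $\pi$ yields $\limlm(R) > 1$, the desired contradiction.

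The main obstacle will be the dimension two case in positive characteristic: constructing and analyzing the log canonical modification of the pair $(X', D')$ on an excellent normal surface, and in particular justifying the hypothesis $\charac(R/\m) \neq 2$. This restriction presumably enters because in characteristic $2$ the codimension-one nodes of a demi-normal ring can be inseparable (as already seen in the classification underlying Theorem~\ref{thm: dimension one semistable}), which can disrupt the reduced-conductor adjunction $K_{X'} + D' = \nu^* K_X$ that drives the entire reduction to $(X', D')$-log canonicity. A secondary technical point is the precise identification of $\pi$-exceptional discrepancies against $X$ with log discrepancies against $(X', D')$, which must be checked carefully in the non-normal setting to ensure Theorem~\ref{thm: lim-stable main technical result slc} applies verbatim.
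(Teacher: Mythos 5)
Your proposal has the right high-level shape (contradiction via Theorem~\ref{thm: lim-stable main technical result slc}, localization via Theorem~\ref{thm: localization}, demi-normality via Corollary~\ref{cor: lim-stable implies demi-normal}), but there is a genuine gap in the middle. You build the map $\pi = \nu \circ f\colon W \to X$ by composing the log canonical modification $f\colon W \to X'$ of the normalized pair $(X', D')$ with the normalization $\nu\colon X' \to X$, and then assert that $\pi$ is an isomorphism over the punctured spectrum because ``$\nu$ is finite and $f$ is an isomorphism away from the non-lc locus.'' This is false: finiteness of $\nu$ does not make it an isomorphism, and since $R$ is demi-normal but (in the interesting case) \emph{not} normal, the conductor locus $D'$ has codimension one and is nonempty after localization, so $\nu$ identifies branches along a codimension-one subset of the punctured spectrum. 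Consequently $\pi$ is not an isomorphism away from $\m$, hence is \emph{not} the blowup of an $\m$-primary ideal, and hypothesis (1) of Theorem~\ref{thm: lim-stable main technical result slc} fails. The theorem cannot be applied verbatim to $\pi$, and the proof does not go through as written.

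The paper circumvents this by never leaving the non-normal world: it constructs a \emph{semi-log canonical modification} $f\colon Y \to X$ — a projective birational map from a \emph{demi-normal} (not normal) scheme $Y$, with $(Y, \red{E})$ slc and $K_Y + \red{E}$ $f$-ample. By uniqueness this $f$ is an isomorphism over the slc locus, i.e.\ over the punctured spectrum after localization, so it \emph{is} the blowup of an $\m$-primary ideal, and Theorem~\ref{thm: lim-stable main technical result slc} (which only asks for $(\textnormal{S}_2)$ and $(\textnormal{G}_1)$, not normality) applies with $K_Y - \pi^* K_X$ having coefficients $<-1$ by the negativity lemma. Producing $Y$ requires, in effect, taking your $W \to X'$ \emph{and then regluing $W$ along (the strict transform of) the conductor} to descend to a demi-normal $Y$ birational to $X$. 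This gluing step is exactly what Odaka--Xu's Corollary~1.2 provides in case (2), and what the relative version of Posva's gluing theorem \cite[Theorem~4.1.1]{PosvaGluingSLCsurface} (combined with Tanaka's excellent-surface MMP) provides in case (1) — and the hypothesis $\charc(R/\m) \neq 2$ enters here, in the gluing/quotient theory for surface pairs, rather than primarily in the adjunction $K_{X'} + D' = \nu^* K_X$ as you suggest. Your proposal identifies the right discrepancy computation and the right contradiction, but it is missing the gluing construction that is the actual technical heart of the argument.
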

\begin{proof}
In both cases, $R$ is equidimensional, satisfies the condition $(\textnormal{S}_2)$, and lim-stable. Therefore by Corollary~\ref{cor: lim-stable implies demi-normal}, $R$ is demi-normal. We next claim that, in both cases, $R$ admits a semi-log canonical modification, i.e., a projective birational map $f \colon Y\to X\coloneqq\Spec(R)$ such that the pair $(Y,\red{E})$ is semi-log canonical and $K_Y+\red{E}$ is $f$-ample. 

In case (2) this follows directly from \cite[Corollary 1.2]{OdakaXu}. In case (1), essentially the same argument as in \cite[Corollary 1.2]{OdakaXu} applies: the existence of log canonical modifications is guaranteed by the full log MMP for excellent surfaces \cite{TanakaMMPexcellentsurface}, and we use \cite[Theorem 4.1.1]{PosvaGluingSLCsurface} for the gluing argument in dimension two. Here we note that \cite[Theorem 4.1.1]{PosvaGluingSLCsurface} can be generalized to the relative setting as follows: for any excellent local ring $R$ admitting a dualizing complex with residue characteristic $\neq 2$, the normalization map provides a one-to-one correspondence between semi-log canonical surface pair $(S,\Delta)$ proper over $\Spec(R)$ and log canonical surface pairs $(\overline{S}, \overline{D}+\overline{\Delta})$ proper over $\Spec(R)$, with an involution on $(\overline{D}^n, \text{Diff}_{\overline{D}^n}\overline{\Delta})$
that is generically fixed point free on every component. Moreover, $K_S+\Delta$
is ample if and only if  $K_{\overline{S}}+\overline{D}+\overline{\Delta}$ is ample. The proof is essentially the same, except that we replace \cite[Proposition 3.4.1]{PosvaGluingSLCsurface} with \cite[Theorem 1.4]{WitaszekKeelBasePointFree} to argue that the geometric quotient exists (the characteristic zero fiber does not present any issues by \cite[Theorem 5.13]{KollarSingularitiesMMP}).

Now suppose $R$ is not semi-log canonical. We choose a prime ideal $Q\in\Spec(R)$ of minimal height such that $R_Q$ is not semi-log canonical. By Theorem~\ref{thm: localization}, we may replace $R$ by $R_Q$ to assume $R$ is semi-log canonical on the punctured spectrum of $R$ (this step is only necessary in case (2)). 

In both cases, let $\pi \colon Y \to X$ be the semi-log canonical modification. By the uniqueness of semi-log canonical modification $\pi$ is an isomorphism on the punctured spectrum. One easily checks that $Y\to X$ satisfies the assumptions of Theorem~\ref{thm: lim-stable main technical result slc}: 
\begin{itemize}
    \item $K_Y-\pi^*K_X+\red{E}$ is $\pi$-ample $\pi$-exceptional $\Q$-Cartier divisor (so $Y$ is the blowup of some $\m$-primary ideal of $R$); 
    \item the coefficient of each exceptional prime divisor in $K_{Y}-\pi^*K_X$ is $<-1$ by the negativity lemma; 
    \item the generic point of each exceptional prime divisor is regular, since this holds on the corresponding log canonical modification. 
\end{itemize}
Thus, by Theorem~\ref{thm: lim-stable main technical result slc} $R$  cannot be lim-stable, which is a contradiction.
\end{proof}

\newpage
\section{Examples: simple normal crossings and determinantal rings}
\label{section: example}

In this section we present several refinements of Lech's inequality and show how they can be used
to compute the Lech--Mumford constant for special classes of rings.

\subsection{Inclusion-exclusion formulas}
As a technical tool we will need two weaker, diluted, versions of the inclusion-exclusion principle.

\begin{proposition}
\label{prop: combinatorial incl/excl}
Let $S$ be a finite set and let $S_1, \ldots, S_d$ be subsets of $S$ such that $S = \cup_{i = 1}^dS_i$.
Then we have
\begin{align*}
|S| &= \sum_{i = 1}^d |S_i| - \sum_{1 \leq i < j\leq d} |S_i \cap S_j| + \cdots + (-1)^{d+1} |S_1 \cap \cdots \cap S_d|.
\end{align*}
Furthermore, we have inequalities
\[
|S| > \sum_{k = 1}^d \frac{(-1)^{k+1}}{2^k} \sum_{1\leq j_1 < \cdots < j_k \leq d} |S_{j_1} \cap \cdots \cap S_{j_k}|
\geq \sum_{k = 1}^d \frac{(-1)^{k+1}}{k+1} \sum_{1\leq j_1 < \cdots < j_k \leq d} |S_{j_1} \cap \cdots \cap S_{j_k}| \geq \frac{|S|}{2}.
\]
\end{proposition}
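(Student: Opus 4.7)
The plan is to reduce both the identity and the two inequalities to a purely pointwise statement indexed by $x\in S$. For each $x$ set $d(x)\coloneqq\#\{i : x\in S_i\}$, which is at least $1$ because $S=\bigcup_i S_i$. Expanding each intersection as a sum of indicators and exchanging summations, for any coefficients $c_1,\dots,c_d$ one has
\[
\sum_{k=1}^d\frac{(-1)^{k+1}}{c_k}\sum_{1\le j_1<\cdots<j_k\le d}|S_{j_1}\cap\cdots\cap S_{j_k}|=\sum_{x\in S}F_c(d(x)),\qquad F_c(n)\coloneqq\sum_{k=1}^n\frac{(-1)^{k+1}}{c_k}\binom{n}{k}.
\]
Thus the whole proposition reduces to evaluating $F_c(n)$ for integers $n\geq 1$ in three specific cases.

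For the exact inclusion-exclusion identity take $c_k=1$: the binomial theorem gives $F_1(n)=1-(1-1)^n=1$, so summing over $x$ recovers $|S|$. For $c_k=2^k$ the same device yields $F_{2^{\bullet}}(n)=1-(1-\tfrac12)^n=1-2^{-n}$. For $c_k=k+1$, the classical identity $\frac{1}{k+1}\binom{n}{k}=\frac{1}{n+1}\binom{n+1}{k+1}$, the reindexing $j=k+1$, and the vanishing $\sum_{j=0}^{n+1}(-1)^j\binom{n+1}{j}=0$ together give $F_{k+1}(n)=\frac{n}{n+1}$.

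Summing the corresponding values of $F_c(d(x))$ over $x\in S$, the two inequalities in the proposition translate into the pointwise chain, valid for every integer $n\geq 1$,
\[
1>1-2^{-n}\geq\frac{n}{n+1}\geq\frac12.
\]
Here the strict inequality holds because $2^{-n}>0$ (with the implicit nondegeneracy assumption that $S\neq\emptyset$, else all three sides vanish); the middle inequality rearranges to $2^n\geq n+1$, which follows by easy induction; and the last bound is trivial. The only genuine obstacle is to carry out the evaluation $F_{k+1}(n)=n/(n+1)$ cleanly via the $\binom{n}{k}/(k+1)=\binom{n+1}{k+1}/(n+1)$ trick; once this is in hand the whole statement assembles immediately.
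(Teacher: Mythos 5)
Your proposal is correct and takes essentially the same route as the paper: you sum over elements $x$ with multiplicity $d(x)=n$, whereas the paper groups elements into multiplicity classes $a_k$, but these are trivially the same decomposition, and both arguments reduce to the same binomial identities and the pointwise chain $1 > 1-2^{-n} \geq n/(n+1) \geq 1/2$ for $n\geq 1$. The small caveat you flag (strict inequality requires $S\neq\emptyset$) is implicit in the paper too.
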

\begin{proof}
The first part is well-known: let $a_k$ be the number of elements in $S$ that are contained in exactly $k$ subsets $S_i$, then
$|S| = \sum_{i = 1}^d a_i$, $\sum_{i = 1}^d |S_i| = \sum_{k = 1}^d ka_k$, $\sum_{1\leq i<j\leq d} |S_i \cap S_j| = \sum_{k = 2}^d \binom{k}{2} a_k$, etc., and the conclusion follows from the formula $\sum_{i = 1}^s (-1)^{i+1} \binom{s}{i} = 1$. 

We now analyze the dilutions in a similar manner. First of all, the identity
\[
\sum_{k = 1}^s \left( \frac{-1}{2} \right)^{k+1} \binom{s}{k} = 1 - \left (1 - \frac 12 \right)^{s} = 1 - 1/2^s,
\]
can be applied to obtain the first diluted formula:
\begin{align*}
    A &\coloneqq \sum_{k = 1}^d \frac{(-1)^{k+1}}{2^k} \sum_{1\leq j_1 <\cdots < j_k\leq d} |S_{j_1} \cap \cdots \cap S_{j_k}| =
    \sum_{k = 1}^d \frac{(-1)^{k+1}}{2^k} \sum_{i = k}^d \binom{i}{k} a_i \\ &= \sum_{i = 1}^d a_i \sum_{k = 1}^i \frac{(-1)^{k+1}}{2^k} \binom i k
    = \sum_{i = 1}^d (1 - 1/2^i)a_i.
\end{align*}
Secondly, we also have the identity
\[
\sum_{i = 1}^s \frac{(-1)^{i+1}}{i+1}x^{i+1} \binom{s}{i} = \sum_{i = 1}^s (-1)^{i+1}x^{i+1} \frac{1}{s+1} \binom{s+1}{i+1}
= \frac{1}{s+1} \left((1 - x)^{s+1} - 1\right) + x.
\]
It follows that 
$$\sum_{i = 1}^s \frac{(-1)^{i+1}}{i+1} \binom{s}{i} = 1 - \frac {1}{s+1}.$$
Therefore
\[
B: = \sum_{k = 1}^d \frac{(-1)^{k+1}}{k+1} \sum_{1\leq j_1 <\cdots < j_k\leq d} |S_{j_1} \cap \cdots \cap S_{j_k}| =
\sum_{i = 1}^d \left(1 - \frac{1}{i+1}\right)a_i.
\]
Now clearly, we have $|S| = \sum_{i = 1}^d a_i> A \geq B \geq \sum_{i = 1}^d a_i/2 = |S|/2$.
\end{proof}

\begin{corollary}
\label{cor: monomial inc/exc}
Let $R = K[[x_1, \ldots, x_n]]$, $I$ be an $\m$-primary monomial ideal, and $P_1, \ldots, P_s$ be monomial prime ideals. Then in $S = R/I$ we have 
\small
\begin{align*}
\length\left(\frac{S}{P_1\cap \cdots \cap P_s}\right)
&= \sum_{k=1}^s \sum_{1\leq j_1 < \cdots < j_k \leq s} (-1)^{k+1} \length\left(\frac{S}{P_{j_1}+\cdots +P_{j_k}}\right)
\\&=
\sum_{i = 1}^s \length\left(\frac{S}{P_i}\right)  - \sum_{1\leq i < j\leq s} \length\left(\frac{S}{P_i+P_j}\right) + \cdots + (-1)^{s + 1} \length\left(\frac{S}{P_1+ \cdots + P_s}\right).
\end{align*}
\normalsize
\end{corollary}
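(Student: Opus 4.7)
The plan is to pass to the monomial combinatorics of $R$ and reduce the identity to the classical inclusion--exclusion formula from Proposition~\ref{prop: combinatorial incl/excl}. Since both $I$ and the $P_i$ are monomial ideals, the quotient $S = R/I$ admits a canonical $K$-basis $T$ consisting of all monomials of $R$ lying outside $I$. For each $i$ I would set $U_i \subseteq T$ to be the subset of monomials which do \emph{not} belong to $P_i$, so that $U_i$ is a $K$-basis of $S/P_i$ and in particular $|U_i| = \length(S/P_i)$.

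Next I would verify the two basic dictionaries between ideal operations and set operations on monomials. First, because each $P_i$ is generated by a subset of the variables $x_1,\dots,x_n$, the sum $P_{j_1}+\cdots+P_{j_k}$ is the monomial ideal generated by the union of those variable sets; therefore a monomial in $T$ lies in $P_{j_1}+\cdots+P_{j_k}$ if and only if it lies in at least one of the $P_{j_l}$'s. Equivalently, $U_{j_1} \cap \cdots \cap U_{j_k}$ is exactly the set of monomials in $T$ outside $P_{j_1}+\cdots+P_{j_k}$, so it forms a $K$-basis of $S/(P_{j_1}+\cdots+P_{j_k})$ and
\[
|U_{j_1}\cap\cdots\cap U_{j_k}| = \length\left(\frac{S}{P_{j_1}+\cdots+P_{j_k}}\right).
\]
Second, the intersection $P_1\cap\cdots\cap P_s$ is also a monomial ideal, and a monomial belongs to it iff it belongs to every $P_i$; hence the complement in $T$ of the basis of $(P_1\cap\cdots\cap P_s)S$ is precisely $U_1 \cup \cdots \cup U_s$, which is thus a $K$-basis of $S/(P_1\cap\cdots\cap P_s)$, giving
\[
|U_1\cup\cdots\cup U_s| = \length\left(\frac{S}{P_1\cap\cdots\cap P_s}\right).
\]

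With these two translations in hand, I would apply the classical inclusion--exclusion formula (the first assertion of Proposition~\ref{prop: combinatorial incl/excl}) to the finite sets $U_1,\dots,U_s$ viewed as subsets of their own union $U_1\cup\cdots\cup U_s$. The resulting identity of cardinalities transcribes, via the two dictionaries above, directly into the claimed formula. There is no significant obstacle here; the argument is essentially bookkeeping, and the only subtle point is that the roles of sums and intersections of ideals are \emph{swapped} on the two sides of the correspondence (sums of $P_i$'s correspond to intersections of $U_i$'s, and the intersection of the $P_i$'s corresponds to the union of the $U_i$'s).
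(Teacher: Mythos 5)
Your proposal is correct and follows essentially the same route as the paper: both identify the monomials of $R$ outside $(I,P_j)$ as a $K$-basis for $S/P_j$, observe that intersections of these sets correspond to sums of the $P_j$'s and their union to the intersection $P_1\cap\cdots\cap P_s$, and then invoke the inclusion--exclusion identity from Proposition~\ref{prop: combinatorial incl/excl}. Your write-up simply spells out the two dictionaries more explicitly than the paper does.
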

\begin{proof}
If we consider the sets $S_j \coloneqq \{\text{monomials in $R$ not contained in } (I, P_j) \}$, then  
\[
\length\left(\frac{S}{P_1\cap \cdots \cap P_s}\right) 
= |\cup_{j=1}^s S_j|
\text{ and }
\length\left(\frac{S}{P_{j_1}+ \cdots + P_{j_c}}\right) = |\cap_{i = 1}^c S_{j_i}|.
\]
The conclusion now follows from the inclusion-exclusion principle in Proposition~\ref{prop: combinatorial incl/excl}.
\end{proof}

We now derive a sharper version of Lech's inequality for a power series ring over a Lech-stable ring.
\begin{theorem}
\label{thm: main Lech incl/excl}
Let $(S, \mf m)$ be a Lech-stable local ring  (i.e., $\lm(S)=1$) of dimension at least one, $R = S[[x_1, \ldots, x_n]]$ for some $n \geq 0$,
and $I$ be an $(\mf m, x_1, \ldots, x_n)$-primary ideal in $R$. Suppose also that $I$ is generated by monomials (i.e., $s \prod x_i^{a_i}$ for some $s \in S$).
Then the inequality 
\begin{align*}
\eh(I) &\leq \dim(R)! \left(\length(R/I) + \sum_{k = 1}^n \frac{(-1)^k}{2^k} \sum_{1\leq j_1 < \cdots < j_k \leq n} \length\left(\frac{R}{I+(x_{j_1}, \ldots, x_{j_k})}\right)\right)
\end{align*}
holds. Moreover, the inequality is strict if $\dim(R)\geq 3$.
\end{theorem}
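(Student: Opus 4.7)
The plan is to proceed by induction on $n$. The base case $n = 0$ is precisely the Lech-stability hypothesis $\eh(I) \leq \dim(S)!\length(S/I)$, which is automatically strict when $\dim S \geq 2$ by Theorem~\ref{thm: uniform Lech}. For the inductive step, write $R = R'[[x_n]]$ with $R' = S[[x_1, \ldots, x_{n-1}]]$ and expand
\[
I = I_0 + I_1 x_n + \cdots + I_{N-1} x_n^{N-1} + (x_n^N),
\]
where $I_0 \subseteq I_1 \subseteq \cdots \subseteq I_{N-1}$ is an ascending chain of $(\m, x_1, \ldots, x_{n-1})$-primary monomial ideals in $R'$. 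Let $A(J)$ and $A'(J)$ denote the diluted inclusion--exclusion expressions in the theorem applied to monomial ideals of $R$ and $R'$ respectively.

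Splitting the index subsets $T \subseteq \{1, \ldots, n\}$ appearing in $A(I)$ by whether or not $n \in T$, and using the direct-sum identifications
\[
R/(I+x_T) \;\cong\; \bigoplus_{k=0}^{N-1}\bigl(R'/(I_k+x_T)\bigr)\cdot x_n^k \quad (n \notin T), \qquad R/(I+x_T) \;\cong\; R'/(I_0+x_{T\setminus\{n\}}) \quad (n \in T),
\]
a direct calculation yields the key combinatorial identity
\[
A(I) \;=\; \sum_{k=0}^{N-1} A'(I_k) - \tfrac{1}{2} A'(I_0).
\]
The inductive step therefore reduces to proving the refined multiplicity bound
\[
\eh(I) \;\leq\; (d+1)!\Bigl(\sum_{k=0}^{N-1} A'(I_k) - \tfrac{1}{2} A'(I_0)\Bigr), \qquad d = \dim R'.
\]

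To establish this bound I would apply Proposition~\ref{prop: Mumford 4.3} to the $x_n$-expansion of $I$ and combine it with the inductive hypothesis $\eh(I_k) \leq d!\,A'(I_k)$. The naive estimate from the proof of Proposition~\ref{prop: Mumford prop 3.5} only gives $\eh(I) \leq (d+1)\sum_{k=0}^{N-1}\eh(I_k)$, which after applying the induction yields $(d+1)!\sum_k A'(I_k)$ but lacks the subtracted correction $\tfrac{(d+1)!}{2}A'(I_0)$. I plan to extract this missing term either by comparing $I$ with the sub-ideal $I_0 + (x_n^N) \subseteq I$, whose multiplicity is exactly $N\eh(I_0)$, or by interpolating via Proposition~\ref{prop: Mumford 4.3} with a nontrivial partition such as $r_0 = 0, r_1 = 1, r_2 = N$ and carefully controlling the mixed multiplicities $\eh(I_0^{[i]}|I_1^{[d-i]})$. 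Strictness when $\dim R \geq 3$ is propagated through the induction from the strictness of Lech's inequality in dimension $\geq 2$ (Theorem~\ref{thm: uniform Lech}) and the strict middle inequality already present in the proof of Proposition~\ref{prop: Mumford prop 3.5}.

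The principal obstacle will be producing the refined multiplicity bound with its $-\tfrac{(d+1)!}{2}A'(I_0)$ correction: the inductive hypothesis only provides an upper bound $\eh(I_k) \leq d!\,A'(I_k)$, so one cannot simply subtract $\tfrac{(d+1)!}{2}A'(I_0)$ from a weaker estimate without a tighter, coupled argument that treats $I_0$ separately from the larger ideals $I_1, \ldots, I_{N-1}$ in the chain.
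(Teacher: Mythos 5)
Your inductive skeleton — expanding $I$ along powers of $x_n$, establishing the diluted inclusion--exclusion identity $A(I) = \sum_{k=0}^{N-1}A'(I_k) - \tfrac{1}{2}A'(I_0)$, and then feeding the inductive hypothesis $\eh(I_k)\leq d!\,A'(I_k)$ into a suitable multiplicity bound — is exactly the route the paper takes, and the combinatorial identity you derive is correct (it is what the paper obtains in display~(\ref{eqn: eqn in main Lech incl/excl}) by recombining terms). You also correctly pinpoint the crux: everything reduces to a multiplicity estimate of the form $\eh(I) \leq \frac{d+1}{2}\eh(I_0) + (d+1)\sum_{k=1}^{N-1}\eh(I_k)$, whose distinctive feature is the coefficient $\frac{d+1}{2}$ on $\eh(I_0)$ instead of $d+1$.

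Neither of your two suggested patches produces this estimate. The sub-ideal comparison with $I_0R + (x_n^N)$ gives only $\eh(I)\leq N\eh(I_0)$, which discards all information about $I_1,\dots,I_{N-1}$ and is incomparable with the required bound. The coarse partition $r_0=0,r_1=1,r_2=N$ in Proposition~\ref{prop: Mumford 4.3} collapses everything past $I_1$ into a single term $(N-1)\eh(I_1)$, again losing the $\sum_{k\geq 1}\eh(I_k)$ structure. The missing ingredient is to use the \emph{finest} partition $r_k=k$ together with the Teissier--Rees--Sharp Minkowski inequality for mixed multiplicities and then AM--GM: Proposition~\ref{prop: Mumford 4.3} gives
\[
\eh(I) \leq \sum_{k=0}^{N-1}\sum_{j=0}^d \eh\bigl(I_k^{[j]}\mid I_{k+1}^{[d-j]}\bigr),
\]
and then $\eh(I_k^{[j]}\mid I_{k+1}^{[d-j]}) \leq \eh(I_k)^{j/d}\eh(I_{k+1})^{(d-j)/d} \leq \frac{j}{d}\eh(I_k)+\frac{d-j}{d}\eh(I_{k+1})$, so that summing over $j$ yields $\frac{d+1}{2}\bigl(\eh(I_k)+\eh(I_{k+1})\bigr)$ for each $k<N-1$, while the $k=N-1$ term contributes just $\eh(I_{N-1})$ because $I_N=R'$. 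When you now sum over $k$, each interior $\eh(I_k)$ (for $1\leq k\leq N-2$) appears in two consecutive brackets with total coefficient $d+1$, whereas $\eh(I_0)$ appears in only one bracket and receives coefficient $\frac{d+1}{2}$ — exactly the asymmetry you needed. Moreover $\eh(I_{N-1})$ receives coefficient $\frac{d+1}{2}+1=\frac{d+3}{2}\leq d+1$, with strict inequality precisely when $d\geq 2$; that last comparison, not the strict middle inequality in Proposition~\ref{prop: Mumford prop 3.5} or Theorem~\ref{thm: uniform Lech}, is the actual source of the strictness for $\dim(R)\geq 3$ propagated through the induction.
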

\begin{proof}
We use induction on $n$. In the base case $n = 0$, $R=S$ is Lech-stable and the conclusion is just a reformulation of $\lm(R) = 1$.

We next consider the inductive step. 
Suppose that $x_n^{c+1} \in I$ but $x_n^{c} \notin I$.
Denote 
$$I_k \coloneqq \{m \in S[[x_1, \ldots, x_{n-1}]]\mid m x_n^k \in I  \}.$$
Note that for $0\leq k\leq c$, each $I_k$ is an $(\mf m, x_1, \ldots, x_{n-1})$-primary monomial ideal in $R'\coloneqq S[[x_1, \ldots, x_{n-1}]]$.
By Proposition~\ref{prop: Mumford 4.3} and the Minkowski inequality for mixed multiplicities (\cite{Teissier, ReesSharp}), for $d \coloneqq \dim(S) + n-1 = \dim(R)-1$ we have
\[
\eh(I) \leq \sum_{k = 0}^c \sum_{j = 0}^d \eh (I_k^{[j]} \mid I_{k+1}^{[d-j]})
\leq \eh(I_c) + \sum_{k = 0}^{c-1} \sum_{j = 0}^d \eh(I_k)^{j/d} \eh(I_{k+1})^{1 - j/d}.
\]
Interpreting the right-hand side as a geometric mean, we further obtain that
\[
\eh(I) \leq \eh(I_c) + \sum_{k = 0}^{c-1} \frac{d+1}{2} \left(\eh(I_{k}) + \eh(I_{k+1})\right )
\leq \frac{d+1}2 \eh(I_0) + (d+1)\sum_{k = 1}^{c} \eh(I_k),
\]
where the second inequality is strict when $d\geq 2$ (i.e., $\dim(R)\geq 3$). 
Since $I$ is generated by monomials, it is easy to see that $\length(R'/I_0) = \length(R/(I, x_n))$.
By the inductive hypothesis
\begin{align}
\label{eqn: eqn in main Lech incl/excl}
\eh(I) \leq (d+1)! &
\sum_{j = 0}^{c}
 \left(\length(R'/I_j) + \sum_{k = 1}^{n-1} \frac{(-1)^k}{2^k} \sum_{1 \leq j_1 < \cdots < j_k \leq n-1} \length\left(\frac{R'}{I_j + (x_{j_1}, \ldots, x_{j_k})}\right) \right) \notag
\\ &- \frac{(d+1)!}{2}\left(\length(R/(I, x_n)) + \sum_{k = 1}^{n-1} \frac{(-1)^k}{2^k} \sum_{1 \leq j_1 < \cdots < j_k \leq n-1} \length\left(\frac{R}{I + (x_{j_1}, \ldots, x_{j_k}, x_n)}\right) \right). 
\end{align}
\normalsize
At this point, it remains to note that, since our ideals are monomial, we have the formula
\[
\length(R/I+(x_{j_1}, \ldots, x_{j_k})) = 
\sum_{j = 0}^{c} \length(R'/I_j + (x_{j_1}, \ldots, x_{j_k})).
\]
The conclusion of the theorem follows directly from (\ref{eqn: eqn in main Lech incl/excl}) by recombining the sums. 
\end{proof}

\begin{corollary}
\label{cor: weak Lech incl/excl}
Let $(R,\m) \coloneqq  K[[x_0, \ldots, x_d]]$ where $K$ is a field and let $I\subseteq R$ be an $\m$-primary monomial ideal.
Then we have the inequality 
\[
\eh(I) \leq (d + 1)! \left(\length(R/I) + \sum_{k = 1}^d \frac{(-1)^k}{k+1} \sum_{1 \leq j_1 <  \cdots < j_k \leq d} \length\left(\frac{R}{I+(x_{j_1}, \ldots, x_{j_k})}\right) \right)
\]
and it is strict if $d\geq 2$. 
\end{corollary}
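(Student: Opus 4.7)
The plan is to deduce this corollary from the already-established Theorem~\ref{thm: main Lech incl/excl} by replacing its $1/2^k$ dilution with the weaker $1/(k+1)$ dilution supplied by the second inequality in Proposition~\ref{prop: combinatorial incl/excl}.

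First, I would apply Theorem~\ref{thm: main Lech incl/excl} with $S = K[[x_0]]$ (a regular local ring of dimension one, hence Lech-stable by Proposition~\ref{prop: dimension one Lech-stable}), $R = S[[x_1,\ldots,x_d]]$, and $n = d$, regarding $I$ as an ideal generated by monomials in the sense of that theorem. Since $\dim(R) = d+1$, this produces the bound
\[
\eh(I) \leq (d+1)!\,\bigl(\length(R/I) - A_I\bigr), \qquad A_I \coloneqq \sum_{k=1}^d \frac{(-1)^{k+1}}{2^k} c_k,
\]
where $c_k \coloneqq \sum_{1 \leq j_1 < \cdots < j_k \leq d} \length\bigl(R/(I+(x_{j_1},\ldots,x_{j_k}))\bigr)$, and this inequality is already strict as soon as $d+1 \geq 3$, i.e., $d \geq 2$.

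Second, I would introduce the sets $S_j \coloneqq \{\text{monomials of } R \text{ not in } I \text{ and not divisible by } x_j\}$ for $j=1,\dots,d$. Exactly as in the proof of Corollary~\ref{cor: monomial inc/exc}, $|S_{j_1}\cap\cdots\cap S_{j_k}| = \length\bigl(R/(I+(x_{j_1},\ldots,x_{j_k}))\bigr)$, so summing over all $k$-subsets recovers $c_k$. Substituting these cardinalities into the second diluted inequality of Proposition~\ref{prop: combinatorial incl/excl} yields the key comparison
\[
A_I \;\geq\; B_I \coloneqq \sum_{k=1}^d \frac{(-1)^{k+1}}{k+1} c_k.
\]

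Chaining these gives $\eh(I) \leq (d+1)!(\length(R/I) - A_I) \leq (d+1)!(\length(R/I) - B_I)$, which is precisely the statement of the corollary once one rewrites $-B_I = \sum_{k=1}^d \frac{(-1)^k}{k+1} c_k$; strictness for $d \geq 2$ is inherited from the first step. There is no serious obstacle here: the matching between intersection counts of the sets $S_j$ and colengths of the monomial quotients $R/(I+(x_{j_1},\ldots,x_{j_k}))$ is just the standard combinatorial interpretation of colengths of monomial ideals, and the rest is bookkeeping with signs.
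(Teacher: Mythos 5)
Your proof is correct and matches the paper's own argument: apply Theorem~\ref{thm: main Lech incl/excl} with $S = K[[x_0]]$ and then weaken the $1/2^k$ weights to $1/(k+1)$ via Proposition~\ref{prop: combinatorial incl/excl}. One point worth flagging (which the paper also leaves implicit): the sets $S_j$ need not cover all monomials outside $I$ (a monomial divisible by every $x_1,\dots,x_d$ lies in none of them), so the covering hypothesis of Proposition~\ref{prop: combinatorial incl/excl} fails as literally stated; but the middle inequality you invoke is proved there purely in terms of the multiplicities $a_i$ with $i \geq 1$ and never uses $S = \cup_j S_j$, so the comparison $A_I \geq B_I$ remains valid.
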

\begin{proof}
By using the sets corresponding to $P_i = (x_i)$, $1\leq i\leq d$, in the proof of Corollary~\ref{cor: monomial inc/exc},
it follows from Proposition~\ref{prop: combinatorial incl/excl}
that the estimate in Theorem~\ref{thm: main Lech incl/excl} (applied to $S=K[[x_0]]$) is stronger than the conclusion.
\end{proof}

In dimension two, Corollary~\ref{cor: weak Lech incl/excl} recovers a result of Mumford.

\begin{corollary}[{\cite[Lemma 3.15]{Mumford}}]
\label{cor: Mumford Lemma 3.15}
Let $(R,\m)\coloneqq K[[x,y]]$ where $K$ is a field and let $I\subseteq R$ be an $\m$-primary ideal. Then we have
$$\eh(I)\leq 2\length(R/I) - \length(R/I+(x)).$$
\end{corollary}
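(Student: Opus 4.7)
\medskip

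\noindent\textbf{Proof proposal.} The plan is to view $R = K[[x,y]]$ as $K[[y]][[x]]$, decompose $I$ along powers of $x$, and apply Proposition~\ref{prop: Mumford 4.3} with $T = x$ over the one-dimensional base $K[[y]]$. For each $n \geq 0$ I would set
\[
    I_n \coloneqq \{a \in K[[y]] \mid a x^n \in I\}.
\]
These form an ascending chain of ideals in $K[[y]]$ and, since $I$ is $\m$-primary, there is a smallest $N$ with $x^N \in I$, so $I_n = K[[y]]$ for $n \geq N$ and
\[
    I = I_0 + I_1 x + \cdots + I_{N-1} x^{N-1} + (x^N).
\]
Moreover each $I_n$ with $n < N$ contains $I_0$, and since $\length(K[[y]]/I_0) = \length(R/(I,x))$ is finite, every such $I_n$ is $(y)$-primary.

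Next I would apply Proposition~\ref{prop: Mumford 4.3} with $d = \dim K[[y]] = 1$ and the choice $r_k = k$ for $k = 0, 1, \ldots, N$. In dimension one the mixed multiplicities reduce to
\[
    \eh(I_k^{[0]} \mid I_{k+1}^{[1]}) = \eh(I_{k+1}), \qquad \eh(I_k^{[1]} \mid I_{k+1}^{[0]}) = \eh(I_k),
\]
so Proposition~\ref{prop: Mumford 4.3} yields
\[
    \eh(I) \leq \sum_{k=0}^{N-1}\bigl(\eh(I_k) + \eh(I_{k+1})\bigr) = \eh(I_0) + 2\sum_{k=1}^{N-1} \eh(I_k),
\]
where I used $\eh(I_N) = \eh(K[[y]]) = 0$. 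For $k < N$ the ideal $I_k$ is $(y)$-primary in the one-dimensional ring $K[[y]]$, so $\eh(I_k) = \length(K[[y]]/I_k)$.

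Finally I would compute both colengths on the right-hand side of the desired inequality by slicing along the $x$-adic decomposition: directly from $I = \bigoplus_n I_n x^n$ (as a $K$-module) one has
\[
    \length(R/I) = \sum_{n=0}^{N-1} \length(K[[y]]/I_n), \qquad \length(R/(I,x)) = \length(K[[y]]/I_0),
\]
and combining these with the estimate above gives exactly $\eh(I) \leq 2\length(R/I) - \length(R/(I,x))$. The argument is essentially a direct invocation of Proposition~\ref{prop: Mumford 4.3}; there is no genuine obstacle, only the bookkeeping of identifying the mixed multiplicities in dimension one and recognizing the telescoping of the colength sum.
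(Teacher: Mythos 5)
Your computation is valid precisely when $I$ is a monomial ideal, and in that case it essentially reproduces the paper's argument (the paper routes the monomial case through Corollary~\ref{cor: weak Lech incl/excl}, itself a consequence of Proposition~\ref{prop: Mumford 4.3}, so the content is the same). The gap is the reduction from a general $\m$-primary ideal to that case. For non-monomial $I$, setting $I_n = \{a \in K[[y]] \mid a x^n \in I\}$ gives only the inclusion $\bigoplus_n I_n x^n \subseteq I$, and Proposition~\ref{prop: Mumford 4.3} is stated only for ideals of the special form $I_0 + I_1 T + \cdots + T^N$; worse, the colength identity $\length(R/(I,x)) = \length(K[[y]]/I_0)$ fails. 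Take $I = (y+x,\,x^2)$. Writing an element of $I$ as $f\cdot(y+x)+gx^2$ and forcing every positive power of $x$ to cancel makes $f(0,y)\in(y)$, so the $x$-free part $y\,f(0,y)$ lies in $(y^2)$; thus $I_0 = I\cap K[[y]] = (y^2)$ and $\length(K[[y]]/I_0)=2$, whereas $\length(R/(I,x)) = \length(K[[x,y]]/(x,y)) = 1$. Moreover $y+x\in I$ but $y+x\notin I_0 + I_1 x + (x^2) = (y^2,xy,x^2) = \m^2$, so the claimed decomposition of $I$ genuinely breaks, and plugging this $I$ into your last display would yield a false identity.

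What is missing is a Gr\"obner degeneration that passes to the monomial case: fix a monomial order $\prec$, use $\eh(I) \leq \eh(\init_\prec(I))$ and $\length(R/I) = \length(R/\init_\prec(I))$ as in Proposition~\ref{prop: LM multigraded}, and observe that $\init_\prec(I)+(x) \subseteq \init_\prec(I+(x))$ gives $\length(R/(\init_\prec(I),x)) \geq \length(R/(I,x))$, which is the inequality needed (with the correct sign) to transfer the monomial-case bound back to $I$. This is exactly the step the paper's proof supplies and your proposal omits; with it in place, your invocation of Proposition~\ref{prop: Mumford 4.3} correctly finishes the monomial case.
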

\begin{proof}
First note that if $I$ is a monomial ideal, then the conclusion is the $d=1$ case of Corollary~\ref{cor: weak Lech incl/excl}. In general, let $\prec$ be the lexicographic order with $x\prec y$. We see that 
$$\length(R/I+(x))= \min\{n \mid y^n + f(x,y) \in I, \text{ where } x|f(x,y)\}= \length(R/\init_{\prec}(I))+(x)).$$
Since $\eh(I)\leq \eh(\init_{\prec}(I)$ and  $\length(R/I)=\length(R/\init_{\prec}(I))$ as discussed in Proposition~\ref{prop: LM multigraded}, the conclusion follows from the monomial case established above.
\end{proof}

\subsection{Simple normal crossings and determinantal rings}
We will now prove main results of this section. We start with the case of simple normal crossing singularities.

\begin{theorem}
\label{thm: snc is semistable}
Let $R \coloneqq K[[x_1, \ldots, x_d]]/(x_1\cdots x_d)$ where $K$ is a field. Then 

\begin{itemize}
\item if $d=1$, then $R$ is Lech-stable; 
\item if $d=2$, then $R$ is semistable but not stable;
\item if $d\geq 3$, then $R$ is stable but not Lech-stable.
\end{itemize}

\end{theorem}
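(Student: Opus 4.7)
The plan is to dispatch the cases $d=1$ and $d=2$ using results already established and focus the main work on $d\geq 3$. For $d=1$ the ring $R=K$ is a field, hence Lech-stable by Proposition~\ref{prop: Artinian case}. For $d=2$, $R_2=K[[x_1,x_2]]/(x_1x_2)$ is a node, and Proposition~\ref{prop: BS for non normal limit} (the $f=xy$ case) says directly that $R_2[[T]]=K[[x_1,x_2,T]]/(x_1x_2)$ is Lech-stable, so $R_2$ is semistable. It will fail to be stable because the supremum $\lm(R_2[[T]])=1$ is attained at the maximal ideal $(x_1,x_2,T)$, where $\eh=2=2!\length(R_2[[T]]/(x_1,x_2,T))$.

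For $d\geq 3$ I need to show that $R$ is not Lech-stable and that $R$ is stable. To see the first, I would test on the ideals $\m^n$. By associativity of multiplicity applied to the minimal primes $(x_i)$, $\eh(\m^n,R)=dn^{d-1}$; a direct binomial identity gives $\length(R/\m^n)=\binom{n+d-1}{d}-\binom{n-1}{d}$, and expanding yields
\[
\frac{\eh(\m^n,R)}{(d-1)!\length(R/\m^n)}=1+\frac{d-1}{2n}+O(n^{-2})>1 \quad\text{for }n\gg 0,
\]
so $\lm(R)>1$. To prove stability, that is, $\lm(R[[T]])=1$ with supremum not attained, by Proposition~\ref{prop: LM multigraded} and Corollary~\ref{cor: compute LM by homogeneous} it will suffice to establish the strict inequality $\eh(I)<d!\length(R[[T]]/I)$ for every $(\m,T)$-primary monomial ideal $I\subseteq R[[T]]$; combined with $\lm(R[[T]])\geq 1$ from reducedness (Proposition~\ref{prop: basic results on cLM}(\ref{part multiplicity bounds})), this will force both conclusions at once.

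Setting $A=K[[x_1,\ldots,x_d,T]]$ and using the associativity formula for the minimal primes of $R[[T]]$, I would decompose $\eh(I,R[[T]])=\sum_{i=1}^d\eh(I,A/(x_i))$. Each $A/(x_i)$ is a regular local ring of dimension $d\geq 3$, so Corollary~\ref{cor: weak Lech incl/excl} applies with $T$ as the unsubtracted variable and gives the strict bound
\[
\eh(I,A/(x_i))<d!\sum_{m=1}^{d}\frac{(-1)^{m-1}}{m}\sum_{\substack{S\ni i\\ |S|=m}}\length(A/(I,x_S)).
\]
Summing over $i$, each subset $S\subseteq\{1,\ldots,d\}$ of size $m$ is counted exactly $m$ times, so the $1/m$ weights collapse to $(-1)^{m-1}$ and yield $\eh(I,R[[T]])<d!\sum_{m=1}^d(-1)^{m-1}\sigma_m$, where $\sigma_m=\sum_{|S|=m}\length(A/(I,x_S))$. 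The proof is then finished by the monomial identity $(x_1\cdots x_d)=(x_1)\cap\cdots\cap(x_d)$ together with Corollary~\ref{cor: monomial inc/exc} applied in $A$ to the primes $(x_1),\ldots,(x_d)$, which gives exactly $\length(R[[T]]/I)=\sum_{m=1}^d(-1)^{m-1}\sigma_m$. The main delicate point I expect is this combinatorial bookkeeping: the $1/(k+1)$ weights in Corollary~\ref{cor: weak Lech incl/excl} are precisely tuned so that summing over $i$ reproduces the inclusion--exclusion expansion of $\length(R[[T]]/I)$, whereas the sharper $1/2^k$ weights in Theorem~\ref{thm: main Lech incl/excl} would already in dimension four leave residual terms of indeterminate sign.
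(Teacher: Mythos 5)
Your proof is correct, and the central argument for $d \geq 3$ (decomposing $\eh(I, R[[T]])$ by the associativity formula over the components $A/(x_i)$, applying Corollary~\ref{cor: weak Lech incl/excl} to each branch, and observing that the $1/(k+1)$ dilution weights collapse to $1$ on summation over $i$, matching the inclusion--exclusion formula of Corollary~\ref{cor: monomial inc/exc} term by term) is exactly the paper's argument; your closing remark that the $1/2^k$ weights of Theorem~\ref{thm: main Lech incl/excl} would fail to collapse from $m=4$ on is an accurate diagnosis of why the diluted version of the inclusion--exclusion estimate is the one needed here. For the peripheral claims you substitute valid alternatives for the paper's citations: Proposition~\ref{prop: BS for non normal limit} in place of the paper's observation that the same inclusion--exclusion bound already yields $\leq$ at $d=2$; a direct check that $\m$ attains the supremum in place of Proposition~\ref{prop: dimension one Lech-stable}; and an explicit second-order expansion of $\length(R/\m^n)$ (which your binomial formula and asymptotics compute correctly) in place of Corollary~\ref{cor: strict complete intersections}(2), which internally carries out the same computation in slightly greater generality.
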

\begin{proof}
If $d=1$, then $R=K$ is a field and the conclusion is clear (see Proposition~\ref{prop: Artinian case}). We next show that when $d\geq 2$ (resp., $d\geq 3$), then $R$ is semistable (resp., stable).
By Proposition~\ref{prop: LM multigraded}, it is enough to prove that
for any $\mf m$-primary monomial ideal $I\subseteq S \coloneqq K[[x_0, \ldots, x_d]]$
\[
\eh(I, S/(x_1 \cdots x_d)) \leq d! \length\left(\frac{S}{I+(x_1 \cdots x_d)}\right),
\]
and the inequality is strict if $d\geq 3$. 

At this point, we note that $\eh(I, S/(x_1\cdots x_d)) = \sum_{i = 1}^{d} \eh(I, S/(x_i))$, so we may bound
\begin{align*}
\sum_{i = 1}^{d} \eh(I, S/(x_i)) 
\leq & \sum_{i = 1}^d
d! \left(\length(S/(I, x_i))  +  \sum_{k = 1}^{d-1} \frac{(-1)^k}{k+1} \sum_{\substack {1 \leq j_1  < \cdots < j_k \leq d\\ j_1, \ldots, j_k \neq i}} \length\left(\frac{S}{I+(x_i, x_{j_1},\dots,x_{j_k})}\right) \right)
\\ 
= & d! \left( \sum_{i = 1}^d \length(S/(I, x_i)) + \sum_{k = 2}^{d} (-1)^{k + 1}\sum_{1 \leq j_1 < \cdots <j_{k} \leq d}
\length\left(\frac{S}{I+(x_{j_1},\dots,x_{j_k})}\right) \right) \\
= & d! \length\left(\frac{S}{I+(x_1 \cdots x_d)}\right),
\end{align*}
where the first inequality follows from Corollary~\ref{cor: weak Lech incl/excl} (and this inequality is strict when $d\geq 3$) and the last equality follows from Corollary~\ref{cor: monomial inc/exc} applied to $P_i=(x_i)$ for $1\leq i\leq d$ (also note that for the equality in the second line above, the coefficient disappeared since each $\length(R/I+(x_{j_1},\dots,x_{j_k}))$ appeared in the original summation $k$ times: one for each $S/(x_{j_i})$). 

It remains to prove that $R$ is not stable when $d=2$ by 
Proposition~\ref{prop: dimension one Lech-stable} and 
and is not Lech-stable when $d\geq 3$ by Corollary~\ref{cor: strict complete intersections}. 
\end{proof}

As an immediate consequence, we see that generic determinantal hypersurfaces are Lech-stable (we will later show a stronger result that generic determinantal rings of maximal minors are Lech-stable).

\begin{corollary}
\label{cor: determinantal hypersurface}
Let $X = \{x_{ij}\}_{1\leq i, j \leq n}$ be an $n\times n$ matrix of indeterminates. Then $R \coloneqq K[[X]]/\det(X)$ is Lech-stable.
\end{corollary}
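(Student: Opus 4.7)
The plan is to Gröbner-degenerate $\det(X)$ to its leading monomial and then use Theorem~\ref{thm: snc is semistable}. When $n=1$ the assertion is trivial since $R=K$, so we may assume $n\geq 2$.

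First I would reduce to the polynomial/localization setting by Proposition~\ref{prop: basic results on cLM}(\ref{part restrict to complete}), so that $\lm(R)$ coincides with $\lm((K[X]/\det(X))_{(X)})$ where $S \coloneqq K[\{x_{ij}\}_{1\leq i,j\leq n}]$ is the standard graded polynomial ring. Under the lexicographic monomial order with $x_{11}>x_{12}>\cdots>x_{nn}$, inspection of the Leibniz expansion $\det(X)=\sum_{\sigma\in S_n}\operatorname{sgn}(\sigma)\prod_i x_{i,\sigma(i)}$ shows the leading term to be realized by the identity permutation, so $\init_\prec(\det(X)) = x_{11}x_{22}\cdots x_{nn}$. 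Applying Corollary~\ref{cor: LM monomial ideal} to the homogeneous principal ideal $(\det(X))\subseteq S$, we obtain
\[
\lm(R) \;=\; \lm\bigl((S/\det(X))_{(X)}\bigr) \;\leq\; \lm\bigl((S/(x_{11}x_{22}\cdots x_{nn}))_{(X)}\bigr).
\]

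Next, observe that after completion at the maximal ideal,
\[
K[[X]]/(x_{11}x_{22}\cdots x_{nn}) \;\cong\; A\bigl[[\{x_{ij}\}_{i\neq j}]\bigr], \qquad \text{where } A \coloneqq K[[x_{11},\dots,x_{nn}]]/(x_{11}x_{22}\cdots x_{nn}),
\]
so by Proposition~\ref{prop: basic results on cLM}(\ref{part restrict to complete}) we only need to bound $\lm(A[[T_1,\dots,T_{n^2-n}]])$. By Theorem~\ref{thm: snc is semistable}, $A$ is semistable (in fact stable for $n\geq 3$), i.e.\ $\lm(A[[T_1]]) = 1$. Since $n^2-n\geq 2\geq 1$ for $n\geq 2$, the descending chain $(\dagger)$ in (\ref{eqn: chain of inequalities}) gives $\lm(A[[T_1,\dots,T_{n^2-n}]]) \leq \lm(A[[T_1]]) = 1$. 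Combining the two inequalities yields $\lm(R)\leq 1$, and the reverse inequality $\lm(R)\geq 1$ is automatic from Proposition~\ref{prop: basic results on cLM}(\ref{part multiplicity bounds}), so $R$ is Lech-stable.

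I do not anticipate serious obstacles: the entire argument is a two-step reduction (Gröbner deformation followed by the power-series extension bound), and both steps are furnished by the general tools developed earlier. The only point requiring care is the identification of the leading monomial of $\det(X)$, which is classical.
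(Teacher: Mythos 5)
Your proof is correct and follows essentially the same route as the paper's: a lexicographic Gr\"obner degeneration of $\det(X)$ to $x_{11}x_{22}\cdots x_{nn}$ via Corollary~\ref{cor: LM monomial ideal}, followed by recognizing the degenerate ring as a power series extension of the simple normal crossings ring and invoking Theorem~\ref{thm: snc is semistable} together with the chain~(\ref{eqn: chain of inequalities}). The only difference is that you spell out the identification of the leading monomial and the power-series reduction in more detail, which the paper compresses into a single sentence.
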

\begin{proof}
By Corollary~\ref{cor: LM monomial ideal} and Proposition~\ref{prop: LM multigraded} (applied to the lexicographic term order of the variables), we know that 
$$\lm(R)\leq \lm(K[[X]]/(x_{11}x_{22}\cdots x_{nn})) =1$$
where the second equality follows from Theorem~\ref{thm: snc is semistable} and (\ref{eqn: chain of inequalities}). Therefore $\lm(R)=1$ and thus $R$ is Lech-stable.
\end{proof}

Our approach also allows us to compute the Lech--Mumford constant for simple normal crossing in dimension two.

\begin{proposition}
\label{prop: LM of xyz}
Let $R=K[[x,y,z]]/(xyz)$. Then we have $\lm(R) = 3/2$.
\end{proposition}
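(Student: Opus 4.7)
The plan is to prove both inequalities $\lm(R) \geq 3/2$ and $\lm(R) \leq 3/2$. The lower bound is immediate: taking $I = \m = (x,y,z)R$, the associativity formula applied over the three minimal primes $(x),(y),(z)$ gives $\eh(\m, R) = \eh(\m, K[[y,z]]) + \eh(\m, K[[x,z]]) + \eh(\m, K[[x,y]]) = 3$, while $\length(R/\m) = 1$, so $\eh(\m)/(2\length(R/\m)) = 3/2$.

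For the upper bound, I would use Proposition~\ref{prop: LM multigraded} (applied to the $\mathbb{N}^3$-grading on the polynomial version $K[x,y,z]/(xyz)$) to reduce to showing $\eh(I, R) \leq 3\length(R/I)$ for every monomial $\m$-primary ideal $I$. Applying Mumford's refined Lech inequality (Corollary~\ref{cor: Mumford Lemma 3.15}) in each coordinate plane, and averaging the two symmetric choices of the slicing variable, gives
$$\eh(IK[[y,z]]) \leq 2\length(K[[y,z]]/I) - \tfrac{1}{2}\bigl(\length(K[[y]]/I) + \length(K[[z]]/I)\bigr),$$
and similarly for the other two planes. Summing via the associativity formula produces $\eh(I, R) \leq 2T - A$, where $T$ denotes the sum of colengths on the three coordinate planes and $A$ the sum of colengths on the three coordinate axes.

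Since $I$ is monomial, counting monomials (equivalently, Corollary~\ref{cor: monomial inc/exc} applied to $P_i = (x),(y),(z)$ in $R$) yields the inclusion-exclusion identity $\length(R/I) = T - A + 1$. Substituting gives $\eh(I, R) \leq 2\length(R/I) + A - 2$, so it remains to show $A - 2 \leq \length(R/I)$. If $a, b, c$ are the axis lengths of $I$ (so that $x^{a-1}, y^{b-1}, z^{c-1} \notin I$), then $\{1, x, \ldots, x^{a-1}, y, \ldots, y^{b-1}, z, \ldots, z^{c-1}\}$ consists of $a+b+c - 2 = A - 2$ distinct nonzero monomials in $R/I$, giving the required bound. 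Combining, $\eh(I, R) \leq 3\length(R/I)$.

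The only bookkeeping challenge is correctly tracking the axis contributions through both Mumford's inequality and inclusion-exclusion; the averaging trick is essential because using Mumford's inequality asymmetrically (say, always slicing by $y$) would only subtract axis colengths along $x$ and $z$, leaving $A$ unbalanced. No deeper technology is needed.
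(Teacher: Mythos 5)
Your proof is correct and follows the same main architecture as the paper's: split $\eh(I)$ over the three coordinate planes by associativity, apply Corollary~\ref{cor: Mumford Lemma 3.15} to each plane, use Corollary~\ref{cor: monomial inc/exc} to write $\length(R/I) = T - A + 1$ (with $T$ the sum of the three plane colengths and $A$ the sum of the three axis colengths), and thereby reduce to proving $\length(R/I) \geq A - 2$.

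Two remarks on where you diverge. First, the averaging is not actually necessary: the paper simply makes a cyclic choice of slicing variable ($y$ on the $(y,z)$-plane, $z$ on the $(x,z)$-plane, $x$ on the $(x,y)$-plane), which hits each coordinate axis exactly once and already yields $\eh(I) \leq 2T - A$; any balanced assignment works, so ``essential'' is an overstatement, though the averaged form is a pleasant way to avoid making a choice. Second, your final step is genuinely different and simpler: the paper applies Corollary~\ref{cor: monomial inc/exc} once more within each coordinate plane to get $\length(R/I+(x)) \geq \length(R/I+(x,y)) + \length(R/I+(x,z)) - 1$ and its cyclic permutations, hence $T \geq 2A - 3$ and $\length(R/I) = T - A + 1 \geq A - 2$, whereas you exhibit $A - 2$ explicit nonzero monomials $\{1, x, \ldots, x^{a-1}, y, \ldots, y^{b-1}, z, \ldots, z^{c-1}\}$ in $R/I$ — correct, since pure powers are not divisible by $xyz$ and lie outside $I$ by definition of the axis lengths. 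Your direct count is a clean shortcut specific to this example; the paper's argument is more uniform with the inclusion-exclusion machinery it develops for the higher-dimensional normal-crossing and determinantal cases, but both are sound.
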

\begin{proof}
First of all, it is clear that $\eh(\m)/2!\length(R/\m) = 3/2$. Thus all we need to show is that $\eh(I)\leq 3\length (R/I)$ for every $\mf m$-primary ideal $I\subseteq R$.
By Proposition~\ref{prop: LM multigraded}, we may assume that $I$ is a monomial ideal. 
By Corollary~\ref{cor: Mumford Lemma 3.15}, we have that 
\begin{align*}
\eh(I) =& \,\ \eh(I, R/(x)) + \eh(I, R/(y)) + \eh(I, R/(z)) \\
  \leq & \,\ 2\length(R/I+(x))  + 2\length(R/I+(y)) + 2\length(R/I+(z)) \\
   & - \length(R/I+(x, y))- \length(R/I+(y, z)) - \length(R/I+(x, z))\\
= & \,\ 2\length(R/I) + \length(R/I+(x, y))+ \length(R/I+(x, z))+  \length(R/I+(y, z)) -2.
\end{align*}
Thus it suffices to show that 
\begin{equation}\label{eq: 6.2.3 to show}
    \length(R/I) \geq \length(R/I+(x, y))+ \length(R/I+(x, z))+  \length(R/I+(y, z)) -2 .
\end{equation}

Now, by Corollary~\ref{cor: monomial inc/exc}, we have that  
\begin{align*}
\length(R/I)  =& \,\ \length(R/I+(x)) + \length(R/I+(y)) + \length(R/I+(z))  \\
& - \length(R/I+(x, y)) - \length(R/I+(x, z)) - \length(R/I+(y, z)) +1 
\end{align*}
and then we use Corollary~\ref{cor: monomial inc/exc}: $I(R/(x))$ is a monomial ideal in $K[[y,z]]$, so 
$$\length(R/I+(x)) \geq \length(R/I+(x, yz)) = \length(R/I+(x, y)) + \length(R/I+(x, z)) - 1,$$
and similarly,  
$$\length(R/I+(y)) \geq \length(R/I+(y, xz)) = \length(R/I+(y, x)) + \length(R/I+(y, z)) - 1,$$
$$\length(R/I+(z)) \geq \length(R/I+(z, xy)) = \length(R/I+(z, x)) + \length(R/I+(z, y)) - 1.$$
Combining these (\ref{eq: 6.2.3 to show}) follows.
\end{proof}

\begin{remark}
A similar approach can be taken for higher-dimensional simple normal crossings.
Namely, following the proof of Theorem~\ref{thm: snc is semistable}
with the inequality from Theorem~\ref{thm: main Lech incl/excl}, we obtain that for an $\mf m$-primary ideal $I$ in
$R = K[[x_1, \ldots, x_d]]/(x_1\cdots x_d)$,
\[
\eh(I)/(d-1)! \leq \sum_{i = 1}^d \length(R/I+ (x_i)) + \sum_{k = 2}^{d-1} (k-1) \left(\frac{-1}{2} \right)^{k-1} \sum_{1 \leq j_1 < \cdots < j_k \leq d} \length\left(\frac{R}{I+(x_{j_1}, \ldots, x_{j_k})}\right).
\]
It remains to compare the right-hand side with $\length(R/I)$. One may obtain crude estimates using the approach in the proof of Proposition~\ref{prop: combinatorial incl/excl}. We expect that $\lm(R)$ should be attained by a power of the maximal ideal in this case, but we do not see how to show this.
\end{remark}

We end this subsection by proving that generic determinantal rings of maximal minors are Lech-stable.

\begin{theorem}
\label{thm: max minors are Lech-stable}
Let $X = \{x_{ij}\}_{1\leq i\leq n, 1\leq j \leq m}$ be an $n\times m$ matrix of indeterminates with $n\leq m$. Let $I_n(X)$ be the ideal generated by the $n\times n$ minors of $X$. Then $R\coloneqq  K[[X]]/I_n(X)$ is Lech-stable.
\end{theorem}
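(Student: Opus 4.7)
The plan is to reduce the problem to a monomial one via Gröbner degeneration, then exploit the fine combinatorial structure of the resulting Stanley--Reisner ring using the inclusion-exclusion Lech inequality from earlier in the paper. Concretely, by Corollary~\ref{cor: LM monomial ideal}, for any monomial order $\prec$ on $S=K[X]$ we have
\[
\lm(R_\m)\leq \lm\bigl((S/\init_\prec(I_n(X)))_\m\bigr).
\]
I would take $\prec$ to be a diagonal term order, so that the leading term of each maximal minor $[j_1,\ldots,j_n]$ is $x_{1,j_1}x_{2,j_2}\cdots x_{n,j_n}$. Classically (Sturmfels, Caniglia--Guccione--Guccione) the maximal minors then form a universal Gröbner basis, so $\init_\prec(I_n(X))$ is the squarefree monomial ideal
\[
J_{n,m}\coloneqq\bigl(x_{1,j_1}x_{2,j_2}\cdots x_{n,j_n}\,:\, 1\leq j_1<j_2<\cdots<j_n\leq m\bigr).
\]
It thus suffices to prove that $T\coloneqq K[X]/J_{n,m}$ is Lech-stable at its irrelevant maximal ideal.

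Next, by Proposition~\ref{prop: LM multigraded}, I need only bound $\eh(I)/d!\,\length(T/I)\leq 1$ for $\m$-primary monomial ideals $I$ in $T$, where $d=\dim T=nm-m+n-1$. The scheme $\Spec T$ is Cohen--Macaulay of pure dimension $d$, and the minimal primes $P_1,\ldots,P_r$ of $J_{n,m}$ are each generated by $m-n+1$ variables, so each $T/P_i$ is a polynomial ring of dimension $d$, hence Lech-stable. By the associativity formula $\eh(I,T)=\sum_i\eh(I,T/P_i)$, and I would estimate each $\eh(I,T/P_i)$ using the sharper inclusion-exclusion Lech inequality (Theorem~\ref{thm: main Lech incl/excl}) applied by splitting off one variable of $T/P_i$ as the base ring $S$ and the remaining variables as the power-series variables. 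On the colength side, Corollary~\ref{cor: monomial inc/exc} applied to the primes $P_1,\ldots,P_r$ rewrites $\length(T/I)$ as the alternating sum
\[
\length(T/I)=\sum_{k\geq 1}(-1)^{k+1}\sum_{1\leq j_1<\cdots<j_k\leq r}\length\bigl(T/(I+P_{j_1}+\cdots+P_{j_k})\bigr).
\]
The proof then reduces to a purely combinatorial inequality comparing the two alternating expressions.

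The main obstacle will be the combinatorial bookkeeping: one must organize the contributions of the higher intersections $P_{j_1}+\cdots+P_{j_k}$ and match them, term by term, against the corrections arising from Theorem~\ref{thm: main Lech incl/excl} inside each $T/P_i$. The key point to establish is that the negative correction terms $(-1)^k/2^k$ appearing in the sharper Lech bound for each polynomial-ring component exactly compensate for the overcounting inherent in $\sum_i \length(T/(I+P_i))\geq \length(T/I)$. I expect this to succeed because the same mechanism already produces the sharp bound for simple normal crossings in Theorem~\ref{thm: snc is semistable}: the diagonal ideal $J_{n,m}$ can in fact be viewed as an iterated generalization of the normal-crossing ideal $(x_1\cdots x_d)$, where the minimal primes correspond to "monotone transversals" in the $n\times m$ grid rather than single coordinate hyperplanes. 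Carrying out this reduction rigorously---very likely by induction on $m$ (or on $n$), using the recursive structure of $J_{n,m}$ obtained by specializing the last column---is the heart of the argument.
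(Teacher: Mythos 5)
Your opening moves match the paper's: the Gröbner degeneration to the diagonal initial ideal $J$, the reduction to monomial ideals via Proposition~\ref{prop: LM multigraded}, and the plan to exploit Theorem~\ref{thm: main Lech incl/excl} together with Corollary~\ref{cor: monomial inc/exc} are all present in the paper's argument. But there is a genuine gap in the step you flag as ``the heart of the argument.'' You propose to use the full minimal-prime decomposition $J = P_1 \cap \cdots \cap P_r$, apply associativity $\eh(I) = \sum_i \eh(I, T/P_i)$, and then match the resulting inclusion-exclusion term by term against $\length(T/I)$. For simple normal crossings this works because every minimal prime has height one, the intersection lattice is Boolean, and any $k$ distinct primes sum to a height-$k$ coordinate prime; none of that holds for the determinantal initial ideal, where the minimal primes have height $m-n+1$, the number of primes grows, and the sums $P_{j_1}+\cdots+P_{j_k}$ form an irregular lattice. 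The ``same mechanism'' does not transplant, and nothing in your sketch explains how the correction terms from Theorem~\ref{thm: main Lech incl/excl} (one set per minimal prime, $\binom{m}{n-1}$ of them) would line up against the colength inclusion-exclusion; this is precisely the point the proof cannot leave to expectation.

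The paper avoids this entirely with a different, much smaller decomposition: it writes $J = (x_{11}, J_1) \cap J_2$ as a \emph{two-term} intersection, where $J_1$ is the initial ideal of the maximal minors of the $n\times(m-1)$ matrix (first column deleted) and $J_2$ that of the $(n-1)\times(m-1)$ matrix (first row and column deleted). The associated short exact sequence gives $\eh(I, S[[t]]/J) = \eh(I, S[[t]]/(x_{11},J_1)) + \eh(I, S[[t]]/J_2)$ and a single correction term in the colength, both against the two smaller determinantal ideals rather than against all minimal primes simultaneously. One then applies Theorem~\ref{thm: main Lech incl/excl} to each of the two pieces using the inductive hypothesis (the induction is on $n$ and on $m-n$, with base case $n=m$ furnished by Corollary~\ref{cor: determinantal hypersurface}), and the inclusion $J_1 + (x_{22}\cdots x_{nn}) \subseteq J_2$ closes the estimate. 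If you want to salvage your plan, the key missing idea is to replace the all-at-once inclusion-exclusion with this two-term recursive decomposition; your intuition that the last column/row should be ``specialized'' is pointing in the right direction, but you need the explicit identity $J = (x_{11}, J_1)\cap J_2$ and the resulting short exact sequence to make the bookkeeping finite.
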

\begin{proof}
If $n=1$, then $R=K$ is a field and the conclusion is obvious. In what follows, we will assume $n\geq 2$. Under the standard lexicographic monomial order, the maximal minors form a Gr{\"o}bner basis of $I_n(X)$ (see \cite{Sturmfels}), the initial ideal of $I_n(X)$ under this term order is 
\[
J = (x_{1i_1}\cdots x_{ni_n} \mid 1 \leq i_1 < \cdots < i_n \leq m).
\]
By Corollary~\ref{cor: LM monomial ideal}, it is enough to show that $K[[X]]/J$ is Lech-stable. Now since $n\geq 2$, it is easy to see that $x_{n1}$ is a free variable in $K[[X]]/J$ and it is enough to show that the local ring
$K[[x_{j{i_j}} \mid 1 \leq i_1 < \cdots < i_n \leq m]]/J$
is semistable. Furthermore, by Proposition~\ref{prop: LM multigraded}, it is enough to consider only monomial ideals.

We will now use induction on $n$ and $m - n$. For the base case we will use $n = m$ from Corollary~\ref{cor: determinantal hypersurface}.
To set the induction we observe that $J = (x_{11}, J_1) \cap J_2$,
where $J_1$ is the initial ideal of the maximal minors of the $n \times (m-1)$ matrix obtained by removing the first column,
and $J_2$ is the initial ideal of the maximal minors of the $(n-1)\times (m-1)$ matrix obtained by removing the first row and column.
Set $S\coloneqq  K[[x_{j{i_j}} \mid 1 \leq i_1 < \cdots < i_n \leq m]]$ and note that $\hght(x_{11}, J_1)= m-n =\hght J_2$. It follows that 
$\dim(S/(x_{11}, J_1))=\dim(S/J_2) =: d$. From the short exact sequence
\[
0 \to S[[t]]/J \to S[[t]]/(x_{11}, J_1) \oplus S[[t]]/J_2 \to S[[t]]/(x_{11} + J_2) \to 0
\]
we thus obtain that 
\begin{equation}
\label{eqn: eqn multiplicity in determinantal}
\eh(I, S[[t]]/J) = \eh(I, S[[t]]/(x_{11}, J_1)) + \eh(I, S[[t]]/(J_2)) 
\end{equation}
and that
\begin{equation}
\label{eqn: eqn colength in determinantal}
\length\left(\frac{S[[t]]}{I +J}\right) \geq \length\left(\frac{S[[t]]}{I + (x_{11})+ J_1}\right) + \length\left(\frac{S[[t]]}{I + J_2}\right) - \length\left(\frac{S[[t]]}{I + (x_{11})+ J_2}\right).
\end{equation}
We will now analyze $S[[t]]/(x_{11}, J_1)$ and $S[[t]]/(J_2))$ separately.  

We start with $S[[t]]/(x_{11}, J_1)$. 
Observe that $x_{22}, \ldots, x_{nn}$ are free variables in $S/(x_{11}, J_1)$, i.e., we can write $S/(x_{11}, J_1)\cong (S'/J_1)[[x_{22},\dots,x_{nn}]]$ where $S'=K[[x_{j{i_j}} \mid 2 \leq i_1 < \cdots < i_n \leq m]]$.
Thus, by the induction hypothesis, we know that $S'/J_1$ is semistable. Hence by Theorem~\ref{thm: main Lech incl/excl} applied to the Lech-stable ring $(S'/J_1)[[t]]$, we see that for any monomial ideal $I\subseteq S[[t]]$ of finite colength, we have
\begin{align}
\label{eqn: eqn 1 in determinantal}
\frac{\eh(I, S[[t]]/(x_{11}, J_{1}))}{(d+1)!} \leq & \,\ \length\left(\frac{S[[t]]}{I + (x_{11})+ J_1}\right) \notag \\
& + \sum_{k = 1}^{n-1} \frac{(-1)^k}{2^k} \sum_{2 \leq j_1 < \cdots < j_k \leq n} \length\left(\frac{S[[t]]}{I+ (x_{11}) + J_1 + (x_{j_1j_1}, \ldots, x_{j_kj_k})}\right).
\end{align}
By Corollary~\ref{cor: monomial inc/exc} (applied to the prime ideals $(x_{22}), \dots, (x_{nn})$), we have 
\[
\length\left(\frac{S[[t]]}{I+ (x_{11}) +J_1+ (x_{22}\cdots x_{nn})}\right)
= \sum_{k = 1}^{n-1} (-1)^{k+1} \smashoperator{\sum_{\substack{j_1 < \cdots < j_k\\ 2\leq j_i \leq n}}} \length\left(\frac{S[[t]]}{I+ (x_{11}) + J_1 + (x_{j_1j_1}, \ldots, x_{j_kj_k})}\right).
\]
Hence, using the lower bound from Proposition~\ref{prop: combinatorial incl/excl}, we obtain from \ref{eqn: eqn 1 in determinantal} that 
\begin{equation}\label{eqn: final eqn 1 in determinantal}
\frac{\eh(I, S[[t]]/(x_{11}, J_{1}))}{(d+1)!} \leq 
\length\left(\frac{S[[t]]}{I + (x_{11})+ J_1}\right) - \frac 12 
\length\left(\frac{S[[t]]}{I+ (x_{11}) +J_1+ (x_{22}\cdots x_{nn})}\right).   
\end{equation}

Now for $S/J_2$, we note that $x_{11}$ is a free variable. Thus by a similar (and easier) analysis using the inductive hypothesis and Theorem~\ref{thm: main Lech incl/excl}, we have that 
\begin{equation}
\label{eqn: eqn 2 in determinantal}
\frac{\eh(I, S[[t]]/J_2)}{(d+1)!} \leq \length\left(\frac{S[[t]]}{ I+ J_2}\right) - \frac 12 \length\left(\frac{S[[t]]}{I+ J_2+ (x_{11})}\right).
\end{equation}

By combining (\ref{eqn: eqn multiplicity in determinantal}), (\ref{eqn: eqn colength in determinantal}),
(\ref{eqn: final eqn 1 in determinantal}), and (\ref{eqn: eqn 2 in determinantal})
we obtain the bound 
\begin{align*}
\frac{\eh(I, S[[t]]/J)}{(d+1)!} \leq \length\left(\frac{S[[t]]}{I +J}\right)
- \frac 12 
\length\left(\frac{S[[t]]}{I+ (x_{11}) +J_1+ (x_{22}\cdots x_{nn})}\right)
+ \frac 12 \length\left(\frac{S[[t]]}{I+ J_2+ (x_{11})}\right).
\end{align*}
Finally, since $J_1 + (x_{22}\cdots x_{nn}) \subseteq J_2$, we have 
$$\length\left(\frac{S[[t]]}{I+(x_{11}) +J_1+ (x_{22}\cdots x_{nn})}\right) \geq \length\left(\frac{S[[t]]}{I + (x_{11})+ J_2}\right)$$
and the desired inequality follows.
\end{proof}

Given Theorem~\ref{thm: max minors are Lech-stable}, it is natural to ask the following question.

\begin{question}
Let $X = \{x_{ij}\}_{1\leq i\leq n, 1\leq j \leq m}$ be an $n\times m$ matrix of indeterminates  and let $I_t(X)$ be the ideal generated by the $t\times t$ minors of $X$. Then is $R\coloneqq  K[[X]]/I_t(X)$ Lech-stable (or at least semistable)?
\end{question}

\subsection{Rational triple points} 
In \cite{Shah}, Shah established a list of candidates for semistable surface singularities among triple points of embedding dimension four. They all happen to have rational singularities. Shah's list contains several infinite families and several exceptional cases. We record here that our methods can be used to prove semistability for at least one of the infinite families. Following Tjurina's terminology (\cite{Tjurina})
\[A_{l, m, n} = \begin{bmatrix}
x & y & w^{n+1}\\
w^{l+1} & y + w^{m+1} & z
\end{bmatrix},
\]
i.e., the singularity $A_{l, m, n}$ is defined in $k[[x,y,z,w]]$ by the $2\times 2$ minors of the above matrix. Up to change of coordinates, one can check that this is symmetric on $l, m, n$ and thus we may assume that $l\leq m\leq n$. Note that $A_{0,0,0}$ is isomorphic to the (completion of the) cone of the twisted cubic curve. 

The following proposition contains the 
key computation for showing semistability of these singularities. The proposition is a special case of Proposition~\ref{prop: Mumford rational cones} in Section~\ref{section: Lech's inequality revisited}, however we present it here since it follows directly from Theorem~\ref{thm: main Lech incl/excl}.

\begin{proposition}
\label{prop: cone of twisted cubic semistable}
Let $R = k[[a,b,c,d]]/(ab, ac, cd)$. Then $R$ is stable. As a consequence, the (completion of the) cone of the twisted cubic $k[[s^3, s^2t, st^2, t^3]]$ is stable.
\end{proposition}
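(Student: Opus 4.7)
The plan is to show $\eh(I,R[[T]]) < 6\length(R[[T]]/I)$ strictly for every monomial $\m$-primary ideal $I \subset R[[T]]$, which yields both $\lm(R[[T]])=1$ and non-attainment of the supremum, i.e.\ stability of $R$; the case of the twisted cubic cone will then follow by Gröbner degeneration. Since $R$ is $\mathbb{N}$-graded, Proposition~\ref{prop: LM multigraded} reduces us to monomial ideals $I$. The minimal primes of $R[[T]]$ are $P_1=(a,c)$, $P_2=(b,c)$, $P_3=(a,d)$, each of height $2$, and the quotients $R[[T]]/P_i$ are three-dimensional regular local rings of multiplicity one (namely $k[[b,d,T]]$, $k[[a,d,T]]$, $k[[b,c,T]]$). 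The associativity formula gives $\eh(I,R[[T]])=\sum_i \eh(I,R[[T]]/P_i)$, and Theorem~\ref{thm: main Lech incl/excl}, applied to each quotient with the Lech-stable base $S=k[[T]]$ and the two non-$T$ coordinates, yields a strict upper bound on each term since $\dim R[[T]]/P_i = 3 \geq 3$.

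The combinatorial core is then to combine the three diluted bounds and simplify. The pairwise sums of the primes are $P_1+P_2=(a,b,c)$, $P_1+P_3=(a,c,d)$, $P_2+P_3 = P_1+P_2+P_3 = \m$, so Corollary~\ref{cor: monomial inc/exc} gives
\[
\sum_{i=1}^3 \length(R[[T]]/(I+P_i)) = \length(R[[T]]/I) + \length(R[[T]]/(I+(a,b,c))) + \length(R[[T]]/(I+(a,c,d))).
\]
The six first-order correction terms $P_i+(x)$ reduce to the four triples $(a,b,c),(a,c,d),(b,c,d),(a,b,d)$ with multiplicities $2,2,1,1$, respectively, and each of the three second-order terms $P_i+(x,y)$ equals $\m$. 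After collecting coefficients the contributions at $(a,b,c)$ and $(a,c,d)$ cancel exactly, leaving the strict bound
\[
\eh(I,R[[T]]) < 6\length(R[[T]]/I) - 3\length(R[[T]]/(I+(b,c,d))) - 3\length(R[[T]]/(I+(a,b,d))) + \tfrac{9}{2}\length(R[[T]]/(I+\m)).
\]
Since both $(a,b,d)$ and $(b,c,d)$ are contained in $\m$, each of the two middle colengths is at least $\length(R[[T]]/(I+\m))$, so the residual is bounded by $-\tfrac{3}{2}\length(R[[T]]/(I+\m)) \leq 0$. This establishes $\eh(I,R[[T]]) < 6\length(R[[T]]/I)$ for every monomial ideal, proving $R$ is stable.

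For the cone of the twisted cubic $S=k[[a,b,c,d]]/(ac-b^2,\,ad-bc,\,bd-c^2)$, a direct check on the three $S$-polynomials shows that the three generators form a Gröbner basis under the lex order $a>b>c>d$, with initial ideal $(ac,ad,bd)$. The linear substitution $a\mapsto a,\ b\mapsto d,\ c\mapsto b,\ d\mapsto c$ converts $(ac,ad,bd)$ into $(ab,ac,cd)$, so applying Corollary~\ref{cor: LM monomial ideal} inside $k[a,b,c,d,T]$ yields $\lm(S[[T]]) \leq \lm(R[[T]])=1$. Non-attainment transfers as well: the strict inequality just proved, applied to the initial ideal $\init(I)$ of any monomial $\m$-primary $I\subset S[[T]]$, gives $\eh(I,S[[T]])\leq \eh(\init(I),R[[T]]) < 6\length(S[[T]]/I)$, so the supremum is never attained and $S$ is stable.

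The main obstacle is the combinatorial bookkeeping in the second paragraph. The primes $P_1,P_2,P_3$ do not form a simple normal crossing configuration: the pairs $(P_1,P_2)$ and $(P_1,P_3)$ meet in codimension-one coordinate subspaces, while $P_2\cap P_3$ meets only in $\m$. This asymmetry means that, unlike in the proof of Theorem~\ref{thm: snc is semistable}, the inclusion-exclusion coefficients do not vanish by symmetry; instead one must verify by hand that the surviving terms at $(a,b,d)$ and $(b,c,d)$ absorb the $\m$-term with the right sign, which is the content of the elementary domination argument above.
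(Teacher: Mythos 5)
Your proof is correct and takes essentially the same approach as the paper's: the same three minimal primes, the same application of Theorem~\ref{thm: main Lech incl/excl} to each quotient, and the same inclusion-exclusion cancellation leading to the same residual inequality. The only cosmetic differences are that you use the standard presentation $(ac-b^2,ad-bc,bd-c^2)$ of the twisted cubic with the lex order $a>b>c>d$ followed by a variable substitution, whereas the paper uses the permuted presentation $(ab-d^2,cd-b^2,ac-bd)$ with $a>c>b>d$ so the initial ideal is directly $(ab,ac,cd)$; you also spell out the transfer of non-attainment under degeneration a bit more explicitly than the paper does.
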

\begin{proof}
We need to show that $\lm(R[[T]])=1$ and that the supremum is not attained, that is, $\eh(I, R[[T]])< 6\length(R[[T]]/I)$ for every $(a,b,c,d,T)$-primary ideal $I\subseteq R[[T]]$. By Proposition~\ref{prop: LM multigraded}, we may assume that $I$ is generated by monomials. By Corollary~\ref{cor: monomial inc/exc} applied to $P_1=(a, c)$, $P_2=(a, d)$ and $P_3=(b, c)$ in $k[[a, b, c, d, T]]$, we have
\begin{multline*}
 \length(R[[T]]/I) =  \,\ \length\left(\frac{R[[T]]}{I+(a, c)}\right) + \length\left(\frac{R[[T]]}{I+(a, d)}\right) + \length\left(\frac{R[[T]]}{I+(b, c)}\right) \\
 - \length\left(\frac{R[[T]]}{I+(a, c, d)}\right) - \length\left(\frac{R[[T]]}{I+(a, b, c)}\right).   
\end{multline*}
Furthermore, by applying Theorem~\ref{thm: main Lech incl/excl} to each $R[[T]]/(a,c), R[[T]]/(a,d), R[[T]]/(b,c)$, we estimate that 
\begin{align*}
\frac{1}{6} \eh(I, R[[T]]) 
&= \frac{1}{6} \eh(I, R[[T]]/(a, c)) + \frac{1}{6} \eh(I, R[[T]]/(a, d)) + \frac{1}{6} \eh(I, R[[T]]/(b, c)) \\
&< \length\left( \frac{R[[T]]}{I+(a,c)} \right) + \length\left( \frac{R[[T]]}{I+(a,d)} \right) + \length\left( \frac{R[[T]]}{I+(b,c)} \right) \\
&\quad - \length\left( \frac{R[[T]]}{I+(a, c, d)} \right) - \length\left( \frac{R[[T]]}{I+(a, b, c)} \right) \\
&\quad - \frac{1}{2} \length\left( \frac{R[[T]]}{I+(b, c, d)} \right) - \frac{1}{2} \length\left( \frac{R[[T]]}{I+(a, b, d)} \right) \\
&\quad + \frac{3}{4} \length\left( \frac{R[[T]]}{I+(a, b, c, d)} \right) \\
&< \length\left( \frac{R[[T]]}{I+(a,c)} \right) + \length\left( \frac{R[[T]]}{I+(a,d)} \right) + \length\left( \frac{R[[T]]}{I+(b,c)} \right) \\
&\quad - \length\left( \frac{R[[T]]}{I+(a, c, d)} \right) - \length\left( \frac{R[[T]]}{I+(a, b, c)} \right) \\
&= \length(R[[T]]/I).
\end{align*}
\normalsize
This completes the proof of the first result. For the second claim, note that
$$k[[s^3, s^2t, st^2, t^3]]\cong k[[a,b,c,d]]/(ab-d^2, cd-b^2, ac-bd)\coloneqq S.$$ 
Consider the term order $a>c>b>d$, the initial ideal of the defining ideal of $S$ under this term order is $(ab, ac, cd)$ which is the defining ideal of $R$. Thus by Corollary~\ref{cor: LM monomial ideal}, we have $\lm(S[[T]])\leq \lm(R[[T]]) = 1$ and the supremum in $\lm(S[[T]])$ is not attained. Thus $S$ is stable.
\end{proof}

\begin{proposition}\label{prop: triple A}
For all $0 \leq m, n \leq \infty$, the singularities
$A_{0, m, n}$ are stable (thus by symmetry, $A_{m, 0, n}$ and $A_{m, n, 0}$ are stable). 
\end{proposition}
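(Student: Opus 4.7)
The plan is to use a Gröbner degeneration, via Corollary~\ref{cor: grober degeneration}, to reduce stability of $A_{0,m,n}$ to the stability of the ring $k[[a,b,c,d]]/(ab,ac,cd)$ already established in Proposition~\ref{prop: cone of twisted cubic semistable}.

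First I would set up the weight assignment $(\mathrm{wt}(x), \mathrm{wt}(y), \mathrm{wt}(z), \mathrm{wt}(w)) = (2, 1, 1, 2)$ on the polynomial ring $k[x,y,z,w]$ containing the defining ideal $I_{m,n}$ of $A_{0,m,n}$. A direct computation of the three generators shows that the initial form of $xy + xw^{m+1} - yw$ is $y(x-w)$ (since $xy$ and $yw$ have weight $3$ while $xw^{m+1}$ has weight $2m+4 \geq 4$), the initial form of $xz - w^{n+2}$ is $xz$ (weight $3$ versus $2n+4$), and the initial form of $yz - yw^{n+1} - w^{n+m+2}$ is $yz$ (weight $2$ versus $2n+3$ and $2n+2m+4$). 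Crucially, these computations remain valid when any exponent is treated as infinite (meaning the corresponding term is absent from the matrix). Hence $\init(I_{m,n}) \supseteq J \coloneqq (y(x-w), xz, yz)$ uniformly for every $0 \leq m, n \leq \infty$.

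The ideal $J$ is precisely what is needed to invoke Proposition~\ref{prop: cone of twisted cubic semistable}: under the change of variables $a=y$, $b=x-w$, $c=z$, $d=x$, one has a ring isomorphism $k[[x,y,z,w]] \xrightarrow{\sim} k[[a,b,c,d]]$ sending $J$ to $(ab, ac, cd)$. Thus $R' \coloneqq k[[x,y,z,w]]/J$ is stable; in fact, the proof of Proposition~\ref{prop: cone of twisted cubic semistable} (combined with Proposition~\ref{prop: LM multigraded} to pass from monomial ideals to arbitrary ideals via initial ideals, which preserve colengths and cannot decrease multiplicity) establishes the strict inequality $\eh(I) < 6 \length(R'[[T]]/I)$ for every $\m$-primary ideal $I$ of $R'[[T]]$.

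Finally I would apply Corollary~\ref{cor: grober degeneration} in $S = k[x,y,z,w,T]$ with the weight vector $(2,1,1,2,1)$ (since the generators of $I_{m,n}$ do not involve $T$, the initial ideal is unchanged by this extension). Combined with Proposition~\ref{prop: basic results on cLM}(\ref{part LM of big image}) applied to the surjection $R'[[T]] \twoheadrightarrow (S/\init(I_{m,n}))_\m$ (both of dimension three, so the hypothesis is satisfied), this yields the chain
\[
\lm(A_{0,m,n}[[T]]) \leq \lm\bigl((S/\init(I_{m,n}))_\m\bigr) \leq \lm(R'[[T]]) = 1,
\]
giving semistability. To conclude stability, one must track the strict inequality through both passages. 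The hard part will be this bookkeeping: one needs to verify that the strict bound $\eh(I) < 6\length$ in $R'[[T]]$ transfers along both the Gröbner degeneration (through the DVR argument in the proof of Corollary~\ref{cor: grober degeneration technical form} and Lemma~\ref{lem: localization key}) and the quotient map (through Proposition~\ref{prop: basic results on cLM}(\ref{part LM of big image})). Both operations preserve strict inequalities for free because they only potentially decrease multiplicities while keeping colengths intact, so unpacking this carefully finishes the proof.
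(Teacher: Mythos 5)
Your proof is correct and takes essentially the same approach as the paper: degenerate to the monomial ideal $(ab,ac,cd)$ of the cone over the twisted cubic (Proposition~\ref{prop: cone of twisted cubic semistable}) via a Gr\"obner degeneration, and transfer stability back through Corollary~\ref{cor: grober degeneration} and Proposition~\ref{prop: basic results on cLM}(\ref{part LM of big image}). The only cosmetic difference is that the paper first passes to the $\m$-adic associated graded ring to reduce to the three cases $(m,n)\in\{(\infty,\infty),(0,\infty),(0,0)\}$ and then applies case-specific monomial orders, while your single weight vector $(2,1,1,2)$ produces the initial ideal $(y(x-w),xz,yz)\cong(ab,ac,cd)$ uniformly for all $0\le m,n\le\infty$ in one step, which is a small but genuine tidying-up of the case analysis.
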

\begin{proof}
If $m, n > 0$, then we may pass to the associated graded ring via  Proposition~\ref{prop: LM associated graded} and reduce to the case where $m = n = \infty$. In the latter case, we have
\[
A_{0, \infty, \infty} = 
\begin{bmatrix}
x & y & 0\\
w & y & z
\end{bmatrix} 
\]    
whose defining ideal is $(xy-yw, xz, yz)$. Under the monomial order $x<y<z<w$, the initial ideal is $(yw, xz, yz)$ and thus the singularity is stable by Corollary~\ref{cor: LM monomial ideal}, Proposition~\ref{prop: LM multigraded}, and Proposition~\ref{prop: cone of twisted cubic semistable}. 

It remains to study the case where more than one of $l, m, n$ is $0$. The defining ideal of $A_{0, 0, \infty}$ is $(x(y + w) - yw, xz, yz)$ and under the  monomial order $x < y < z < w$ its initial ideal is $(yw, xz, yz)$. The defining ideal of $A_{0, 0, 0}$ is $(x (y + w) - yw, yz - (y + w)w, xz - w^2)$ and under the monomial order $x < y < w < z$ its initial ideal is $(yw, yz, xz)$. Thus in both cases, the singularities are stable by Corollary~\ref{cor: LM monomial ideal}, Proposition~\ref{prop: LM multigraded}, and Proposition~\ref{prop: cone of twisted cubic semistable}. 
\end{proof}

\begin{remark}
In \cite{Shah}, Shah proved that $A_{l,m,n}$ is unstable if $\frac{1}{l + 1} + \frac{1}{m+1} + \frac{1}{n+1} < 1$. Thus, to fully understand this family, it remains to determine the (semi)stability of $A_{1,1, n}$ for $1\leq n\leq \infty$ along with the exceptional cases $A_{1,2,2}, A_{1,2,3}, A_{1,3,3}, A_{2,2,2}$. 

We also want to highlight a peculiar connection between the non-normal limit $A_{1,1, \infty}$ and the $A_3$-singularity. The defining ideal of $A_{1,1,\infty}$ is
\[(x(y + w^2) - yw^2, yz, xz)
= (x,y)\cap (z, xy + (x-y)w^2),
\]
which becomes isomorphic to $(x + w^2, y + w^2) \cap (z, xy + w^4)$ after a change of variables.
\end{remark}

\newpage
\section{Lech's inequality revisited}
\label{section: Lech's inequality revisited}

In this section we investigate optimal versions of Lech's inequality (see \cite{HSV} for closely related developments). We establish a sharp version of Lech's inequality for regular local rings of dimension three: equality is attained for powers of the maximal ideal. This result can be also viewed as an extension of the sharp two-dimensional inequality in \cite[Lemma 3.15]{Mumford}. As an application, we show that elliptic and rational polygonal $n$-cones for $n\leq 6$ are semistable -- results which were stated by Mumford in \cite[Propositions~3.18 and 3.19]{Mumford} without proofs. We then propose a general conjecture for the optimal form of Lech’s inequality in higher dimensions, provide partial evidence, and explore its connections with the results and conjectures in \cite{HSV}. 

\subsection{Optimal Lech's inequality in dimension three} 
The main ingredient of the proof of the sharp Lech-type inequality in dimension three is an extension of Mumford's two-dimensional Lech's inequality (see Corollary~\ref{cor: Mumford Lemma 3.15}) to mixed multiplicities. 

\begin{lemma}
\label{lem: mixed multiplicity dim two}
Let $(R,\m)=k[[x,y]]$ and let $I\subseteq J \subseteq R$ be two $\m$-primary monomial ideals. Then we have
\[\eh(I | J) \leq \length(R/I) +  \length(R/J) - \length(R/I+(x)).\]
\end{lemma}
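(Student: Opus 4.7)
The plan is first to reduce to the case in which both $I$ and $J$ are integrally closed monomial ideals.  Since mixed multiplicities depend only on integral closures, $\eh(I\mid J) = \eh(\overline{I}\mid \overline{J})$.  For the right-hand side, for any $\m$-primary ideal $L \subseteq R$ the natural surjection $\overline{L}/L \twoheadrightarrow (\overline{L}+(x))/(L+(x))$ (whose kernel is $(\overline{L}\cap (x)+L)/L$) gives
\[
\length(R/L) - \length(R/\overline{L}) \;\geq\; \length(R/L+(x)) - \length(R/\overline{L}+(x)).
\]
Applying this with $L = I$ and combining with $\length(R/J) \geq \length(R/\overline{J})$, the right-hand side of the desired inequality does not decrease when we replace $(I,J)$ by $(\overline{I}, \overline{J})$.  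Hence it suffices to treat integrally closed monomial $I$ and $J$.

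After this reduction, I would translate everything into lattice geometry of Newton polyhedra.  For an integrally closed monomial $L$, write $P(L) \subseteq \mathbb{R}_{\geq 0}^2$ for the complement of the Newton polyhedron of $L$; this is a convex lattice polygon with $\eh(L) = 2\vol(P(L))$, with $\length(R/L+(x)) = a(L)$ equal to the $y$-intercept of the staircase, and with $\length(R/L)$ determined by Pick's theorem in terms of $\vol(P(L))$ and the boundary-lattice-point data of $P(L)$.  The mixed multiplicity becomes the Minkowski mixed area, $\eh(I\mid J) = 2V(P(I), P(J))$, so the inequality becomes a purely combinatorial statement about two convex lattice polygons $P(I) \supseteq P(J)$ in the first quadrant.

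I then plan to argue by induction on $\length(R/I) - \length(R/J) \geq 0$.  The base case $I = J$ is exactly Mumford's two-dimensional inequality, Corollary~\ref{cor: Mumford Lemma 3.15}.  For the inductive step with $I \subsetneq J$, I would pick a monomial $f \in J \setminus I$ at a concave corner of the staircase of $I$ (so that $f\m \subseteq I$), set $I' = I + (f)$, and use the inductive hypothesis for $(I', J)$ together with the key increment estimate
\[
\eh(I\mid J) - \eh(I'\mid J) \;\leq\; 1 - \bigl(\length(R/I+(x)) - \length(R/I'+(x))\bigr),
\]
so that $\length(R/I) = \length(R/I')+1$ propagates the bound from $(I',J)$ to $(I,J)$.

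The main obstacle is precisely this increment estimate.  Writing $f = x^p y^q$, the analysis splits according to whether $p = 0$ (so $f = y^{a(I)-1}$ and $\length(R/I+(x)) - \length(R/I'+(x)) = 1$, in which case one must show that the mixed area is unaffected) or $p \geq 1$ (so $\length(R/I+(x)) = \length(R/I'+(x))$ and one must show $\eh(I\mid J) - \eh(I'\mid J) \leq 1$).  Geometrically, passing from $I$ to $I'$ cuts off a small lattice triangle from $P(I)$ near the corner $(p,q)$, and the effect on $V(P(I), P(J))$ can be read off from the support-function expression $2V(K, P(J)) = \sum_{e \subset \partial P(J)} h_K(\nu_e)\,|e|$, in which only edges of $P(J)$ whose outward normal lies in the angular wedge determined by the cut contribute.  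As a backup I would also consider an ``ambient'' approach: apply Proposition~\ref{prop: Mumford 4.3} to the ideal $L = I + JT + T^2 \subseteq R[[T]]$ to obtain $\eh(L) \leq \eh(I) + \eh(I\mid J) + 2\eh(J)$, and combine this with the sharp three-dimensional inclusion--exclusion inequality of Theorem~\ref{thm: main Lech incl/excl} applied to $R[[T]] = k[[y]][[x, T]]$ over the Lech-stable base $k[[y]]$, together with Corollary~\ref{cor: Mumford Lemma 3.15} for $I$ and $J$ separately, to extract the desired bound on $\eh(I\mid J)$.
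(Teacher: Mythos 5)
Your outline has a genuine and explicitly acknowledged gap: you state that ``the main obstacle is precisely this increment estimate'' and do not prove it, so the proposal is not yet a proof. The proposed backup does not close the gap either, because both Proposition~\ref{prop: Mumford 4.3} applied to $L = I + JT + T^2$ and Theorem~\ref{thm: main Lech incl/excl} applied to $L$ are \emph{upper} bounds on $\eh(L)$; to extract an upper bound on $\eh(I\mid J)$ from $\eh(L) \leq \eh(I) + \eh(I\mid J) + 2\eh(J)$ one would need a \emph{lower} bound on $\eh(L)$ of the right shape, which you never produce. Moreover the inequality from Proposition~\ref{prop: Mumford 4.3} is generally strict (e.g.\ for $I = (x^2,y)$, $J=(x,y)$ one has $\eh(L) = 4$ while $\eh(I) + \eh(I\mid J) + 2\eh(J) = 5$), so one cannot hope to promote it to an equality and then substitute. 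A smaller wrinkle: your surjection $\overline{L}/L \twoheadrightarrow (\overline{L}+(x))/(L+(x))$ actually shows that the right-hand side can only \emph{decrease} (or stay the same) when you pass from $(I,J)$ to $(\overline{I},\overline{J})$, which is fortuitously the direction needed for the reduction, but the opposite of what you claim.

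The paper's proof is considerably more direct and avoids both integral closures and induction. Writing $I = (y^{r_0}, y^{r_1}x, \dots, y^{r_c}x^c, x^{c+1})$ and $J = (y^{s_0}, \dots, y^{s_c}x^c, x^{c+1})$ with $r_j \geq s_j \geq 0$, one uses the identity $\eh(I\mid J) = \length(R/IJ) - \length(R/I) - \length(R/J)$ and computes the monomial colength $\length(R/IJ) = \sum_{t} \min\{r_i + s_j \mid i+j = t\}$. Choosing for each $t$ the particular representative given by the alternating pattern $(r_0+s_0), (s_0+r_1), (r_1+s_1), (s_1+r_2), \dots$, each $s_j$ and each $r_i$ with $i\geq 1$ is counted twice while $r_0 = \length(R/I+(x))$ is counted once, yielding $\length(R/IJ) \leq 2\length(R/I) + 2\length(R/J) - \length(R/I+(x))$ and hence the statement. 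If you want to pursue your inductive scheme, the increment estimate is where all the work is, and you will need to actually carry out the Newton-polygon / mixed-area calculation sketched in the $p=0$ and $p\geq 1$ cases; absent that, the direct colength computation above is the simpler route.
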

\begin{proof}
Write
$I=(y^{r_0}, y^{r_1}x, \ldots, y^{r_c}x^c, x^{c+1})$, where $r_0 \geq r_1 \geq \cdots \geq r_c > 0$. Since $I \subseteq J$, we may write
$J=(y^{s_0}, y^{s_1}x, \ldots,  y^{s_c}x^c, x^{c+1})$ with $r_j \geq s_j \geq 0$. For convenience, we set $r_j=s_j=0$ if $j\geq c+1$. 
We will use the formula $\eh(I | J)= \length(R/IJ) - \length(R/I)- \length(R/J)$ given in \cite[Exercise 17.8]{SwansonHuneke}.
Thus it suffices to show that
\begin{equation*}
\length(R/IJ)\leq 2\length(R/I)+2\length(R/J)-\length(R/I+(x)).
\end{equation*}

By our way of writing $I$ and $J$, we know that $\length(R/I)=\sum^c_{i = 0} r_i$, $\length(R/I)=\sum^c_{j = 0} s_j$, and $\length(R/I+(x))=r_0$. It is now a straightforward computation that
\begin{align*}
\length(R/IJ) & =\sum_{t=0}^{2c+1} \min \{r_i+s_j \mid i+j=t\} \\
& \leq (r_0+s_0) + (s_0+r_1) + (r_1+s_1) + (s_1+r_2) + \cdots .
\end{align*}
In the sum above, each $s_j$ appears twice and each $r_i$ for $i \geq 1$ appears twice, while $r_0$ appears only once. Therefore we have
\[
\length(R/IJ) \leq 2\sum^c_{i = 0} r_i + 2\sum^c_{i = 0} s_j  - r_0 = 2\length(R/I) +2\length(R/J) - \length(R/I+(x)),
\]
which is precisely what we want to show.
\end{proof}

We now present the Lech-type inequality for power series rings of dimension three which is sharp for powers of the maximal ideal.

\begin{theorem}
\label{thm: optimal Lech dim three}
Let $(S,\m)=k[[x_0, x_1, x_2]]$ and let $I\subseteq S$ be an $\mf m$-primary ideal. Then 
\[\eh(I) \leq 6\length(S/I) -3\length(S/I+(x_1)) -3 \length(S/I+(x_2)) + \length(S/I+(x_1,x_2)),\]
and equality holds when $I=\m^n$.
\end{theorem}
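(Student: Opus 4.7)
My plan is to first reduce to the case that $I$ is a monomial ideal. By applying Proposition~\ref{prop: LM multigraded} (or a Gr\"obner degeneration argument in the spirit of Corollary~\ref{cor: LM monomial ideal}) to the standard $\mathbb{N}^3$-grading on $S = k[x_0, x_1, x_2]$, the four colengths on the right-hand side are preserved and $\eh(I)$ can only increase under the degeneration, so it suffices to prove the inequality for monomial $\m$-primary ideals.

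With $I$ a monomial ideal, I would slice along $x_0$ by writing
\[
I = I_0 + I_1 x_0 + \cdots + I_{N-1}x_0^{N-1} + (x_0^N), \qquad I_k \subseteq R \coloneqq k[[x_1,x_2]],
\]
where each $I_k$ is an $\m_R$-primary monomial ideal with $I_0 \subseteq I_1 \subseteq \cdots \subseteq I_{N-1} \subsetneq I_N = R$ and $N$ chosen minimally. A direct computation in the monomial case gives the additive identities $\length(S/I) = \sum_{k=0}^{N-1}\length(R/I_k)$, $\length(S/I+(x_i)) = \sum_{k=0}^{N-1}\length(R/I_k+(x_i))$ for $i = 1, 2$, and $\length(S/I+(x_1,x_2)) = N$. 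My strategy is then to apply Proposition~\ref{prop: Mumford 4.3} to bound $\eh(I)$ by a sum of two-dimensional mixed multiplicities $\eh(I_k^{[i]}\mid I_{k+1}^{[2-i]})$ in the power series ring $R$, and to bound each $\eh(I_k)$ via Corollary~\ref{cor: Mumford Lemma 3.15} and each $\eh(I_k\mid I_{k+1})$ via Lemma~\ref{lem: mixed multiplicity dim two}, symmetrizing over the two choices of eliminated variable $x_1$ or $x_2$.

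The main technical difficulty will be matching the exact coefficients $6, -3, -3, +1$. A naive application of Proposition~\ref{prop: Mumford 4.3} with the finest partition $r_k = k$ recovers the correct bound at $I = \m^n$ (where that proposition and both 2D estimates are simultaneously tight, giving the telescoping sum $\sum_{k=0}^{n-1}\bigl[(n-k)^2 + (n-k)(n-k-1) + (n-k-1)^2\bigr] = n^3$), but is too loose for parameter-type examples such as $I = (x_1, x_2, x_0^2)$, where the coarsest partition $0 < N$ is instead sharp and gives $\eh(I) \leq N \eh(I_0) = 2$. My plan is therefore to employ an adaptive partition of Proposition~\ref{prop: Mumford 4.3} keyed to the indices where $I_k$ strictly increases in the chain $I_0 \subsetneq I_{k_1} \subsetneq \cdots \subsetneq I_{k_\ell} = R$, which interpolates between these two regimes. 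The combinatorial crux is verifying that after telescoping the 2D estimates across this adaptive partition, the boundary contributions from the first and last slices combine to produce exactly the correction $+\length(S/I+(x_1,x_2)) = +N$ with the correct coefficient.

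Finally, equality at $I = \m^n$ follows from a routine Hilbert-function calculation: with $\length(S/\m^n) = \binom{n+2}{3}$, $\length(S/\m^n+(x_i)) = \binom{n+1}{2}$ for $i = 1, 2$, and $\length(S/\m^n+(x_1,x_2)) = n$, the right-hand side evaluates to $6\binom{n+2}{3} - 6\binom{n+1}{2} + n = n^3 = \eh(\m^n)$, confirming sharpness.
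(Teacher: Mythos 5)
Your proposal has the right ingredients (Proposition~\ref{prop: Mumford 4.3}, Corollary~\ref{cor: Mumford Lemma 3.15}, Lemma~\ref{lem: mixed multiplicity dim two}) and the correct verification of equality at $\m^n$, but two structural choices derail it.

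\textbf{Slicing direction.} You slice along $x_0$, so $R = k[[x_1, x_2]]$. But the target inequality involves only $x_1$ and $x_2$; slicing along $x_0$ makes $\length(R/I_0) = \length(S/(I, x_0))$ the colength of the $0$th slice, and this quantity does not appear in the target at all. The paper instead slices along $x_2$ (so $R = k[[x_0, x_1]]$), projects to $x_1 = 0$ in the 2D estimates, and then $\length(R/I_0) = \length(S/(I, x_2))$, $\sum_j \length(R/(I_j, x_1)) = \length(S/(I, x_1))$, and $\length(R/(I_0, x_1)) = \length(S/(I, x_1, x_2))$ produce exactly the four terms in the claim. The example you flag, $I = (x_1, x_2, x_0^2)$, is a symptom of the wrong slicing, not a reason to abandon the finest partition: with $x_2$-slicing one has $I_0 = (x_1, x_0^2)$, $I_1 = R$, and the finest partition gives $\eh(I) \leq \eh(I_0) \leq 2\cdot 2 - 2 = 2$, matching the target exactly. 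With your $x_0$-slicing the finest-partition Mumford bound is $4 > 2$, and the same defect persists in examples where no choice of partition helps. For instance, $I = (x_0^3, x_0^2 x_1, x_1^2, x_2)$ has $\eh(I) = 6$ and target RHS $= 9$, but your $x_0$-slicing gives $I_0 = I_1 = (x_1^2, x_2) \subsetneq I_2 = (x_1, x_2) \subsetneq R$, and already the raw Mumford 4.3 bound (with the finest \emph{or} the adaptive partition) is $\eh(I) \leq 11 > 9$. No choice of downstream 2D estimates can then recover the theorem.

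\textbf{Symmetrizing the projection.} Averaging the 2D estimates over $x_1$ and $x_2$ dilutes the projection coefficient from $-3$ to $-3/2$. Tracking the bookkeeping with $x_0$-slicing, averaged projections, and the finest partition gives
\[
\eh(I) \leq 6\length(S/I) - \tfrac{3}{2}\length(S/(I,x_1)) - \tfrac{3}{2}\length(S/(I,x_2)) - 3\length(S/(I,x_0)) + (\text{boundary terms}),
\]
which is a valid but different inequality and does not imply the statement. In fact, if you slice along $x_0$ and project to $x_1$ \emph{only} (no symmetrization), the same telescoping argument proves the cyclically permuted version $\eh(I) \leq 6\length(S/I) - 3\length(S/(I,x_0)) - 3\length(S/(I,x_1)) + \length(S/(I,x_0,x_1))$, which is equivalent to the theorem after relabeling variables. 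So a single projection is not just sufficient but essential for the coefficient to come out as $-3$; symmetrizing destroys sharpness.

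\textbf{Reduction to monomial ideals.} Your claim that ``the four colengths on the right-hand side are preserved'' under a Gr\"obner degeneration is not correct. One has $\length(S/(I, x_i)) \leq \length(S/(\init_\prec(I), x_i))$, which is the right direction for the terms carrying coefficient $-3$, but $\length(S/(I, x_1, x_2)) \leq \length(S/(\init_\prec(I), x_1, x_2))$ goes the \emph{wrong} direction for the term carrying coefficient $+1$. The paper handles this by choosing the lexicographic order $x_2 \prec x_1 \prec x_0$, for which the leading term of any $x_0^n + f$ with $f \in (x_1, x_2)$ is $x_0^n$, forcing $\length(S/(\init_\prec(I), x_1, x_2)) = \length(S/(I, x_1, x_2))$. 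This step needs to be made explicit.

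In summary: slice along $x_2$ rather than $x_0$, use only the $x_1$-projection in the 2D estimates, and the finest partition in Proposition~\ref{prop: Mumford 4.3} already yields the claim for monomial ideals; no adaptive partition or symmetrization is needed, and the passage from monomial to general ideals requires a specific lex order rather than the automatic preservation you asserted.
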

\begin{proof}
It is straightforward to verify the equality for $I=\m^n$. Therefore we only need to prove the inequality. We first handle the case when $I$ is a monomial ideal.
The strategy will be similar to the proof of Theorem~\ref{thm: main Lech incl/excl}. We set $R\coloneqq k[[x_0, x_1]]$ and 
$I_j\coloneqq  \{m \in R \mid mx_2^j\in I \}$, 
a monomial ideal in $R$. By Proposition~\ref{prop: Mumford 4.3} (note that we are in dimension two), we can bound
\begin{align*}
\eh(I) \leq \eh(I_0) + 2\eh(I_1) + \cdots + 2\eh(I_c) + \eh(I_0 | I_1) + \cdots + \eh(I_{c-1} | I_c).  
\end{align*}
We now estimate the right-hand side in the following way. By Corollary~\ref{cor: Mumford Lemma 3.15}, we bound 
\begin{align*}
\eh(I_0) + 2\eh(I_1) + \cdots + 2\eh(I_c) &\leq     
2\length(R/I_0)  - \length(R/(I_0, x_1)) + 2\sum_{j=1}^c\big(2\length(R/I_j) - \length(R/(I_j, x_1))\big)
\\&\leq 
4 \length (S/I) - 2\length(R/I_0) - 2 \length (S/(I, x_1)) + \length (R/(I_0, x_1)), 
\end{align*}
where we are using that $\sum \length(R/I_j)=\length(S/I)$ and $\sum \length(R/(I_j, x_1))=\length(S/(I, x_1))$. 
Similarly, using Lemma~\ref{lem: mixed multiplicity dim two} we obtain the bound 
\begin{align*}
\eh(I_0 | I_1) + \cdots + \eh(I_{c-1} | I_c)
&\leq 2 \length (S/I) - \length (R/I_0) - \length (R/I_c)
- \length (S/(I, x_1)) + \length (R/(I_c, x_1))
\\ &\leq 2 \length (S/I) - \length (R/I_0) - \length (S/(I, x_1)),
\end{align*} 
where we have used that $\length(R/I_c) \geq \length(R/I_c+(x_1))$ (note that the equality may happen, namely when $I_c$ contains $x_1$).
By combining the two multiplicity estimates 
and noting that $\length(R/I_0) =\length(S/(I,x_2))$ and $\length(R/(I_0, x_1))= \length(S/(I, x_1,x_2))$, we obtain that 
\begin{align*}
\eh(I) &\leq 6\length(S/I) - 3\length(R/I_0) -3\length(S/(I,x_1)) + \length(R/(I_0,x_1))
\\
&\leq 6\length(S/I) - 3\length(S/(I, x_1)) - 3\length(S/(I, x_2)) + \length(S/(I, x_1, x_2)).
\end{align*}
This completes the proof when $I\subseteq S$ is an $\m$-primary monomial ideal.

We now prove the general case. We first note that $\eh(I) \leq \eh(\init_{\prec}(I))$, $\length (S/I) = \length (S/\init_{\prec}(I))$, 
and $\length (S/(\init_{\prec}(I), x_i) ) \geq 
\length (S/\init_{\prec}(I, x_i)) = \length (S/(I,x_i))$
for any monomial order $\prec$ since 
we have a containment $\init_{\prec}(I) \subseteq \init_{\prec}(I, x_i)$. 
We next claim that if $\prec$ is the lexicographic order with $x_2 \prec x_1 \prec x_0$ then 
$
\length(S/(I, x_1,x_2))  = \length(S/\init_{\prec}(I)+(x_1,x_2)).
$
To see this, note that 
$$\length(S/(I, x_1,x_2)) = \min \{n \mid x_0^n + f(x_0, x_1,x_2) \in I, \text{ where } f(x_0, x_1, x_2)\in (x_1, x_2)\}.$$ 
But since the $\prec$-leading term of 
$x_0^n + f(x_0, x_1,x_2)$, for $f(x_0, x_1, x_2)\in (x_1, x_2)$, is precisely $x_0^n$, we see that 
$\length(S/(I, x_1,x_2))  = \length(S/\init_{\prec}(I)+(x_1,x_2))$ as wanted. Therefore, we bound
\begin{align*}
\eh(I) & \leq \eh(\init_{\prec}(I)) \\
& \leq 6\length(S/\init_{\prec}(I)) - 3\length(S/\init_{\prec}(I) + (x_1)) - 3\length(S/\init_{\prec}(I) + (x_2)) + \length(S/\init_{\prec}(I) + (x_1, x_2))\\
& \leq 6\length(S/I) - 3\length(S/I + (x_1)) - 3\length(S/I + (x_2)) + \length(S/I + (x_1, x_2)),
\end{align*}
where the second inequality follows from the already established monomial case. This completes the proof of the theorem.
\end{proof}
%

\begin{remark}
If we compare Theorem~\ref{thm: main Lech incl/excl} (for $S=k[[x_0]]$ and $n=2$) with Theorem~\ref{thm: optimal Lech dim three}, we see that there are two improvements: first, Theorem~\ref{thm: main Lech incl/excl} only applies to monomial $\m$-primary ideals while Theorem~\ref{thm: optimal Lech dim three} applies to all $\m$-primary ideals; secondly, the upper bound in Theorem~\ref{thm: main Lech incl/excl} is
$$6\length(S/I) -3\length(S/I+(x_1)) -3 \length(S/I+(x_2)) + \frac{3}{2}\length(S/I+(x_1,x_2))$$
while the bound in Theorem~\ref{thm: optimal Lech dim three} is 
$$6\length(S/I) -3\length(S/I+(x_1)) -3 \length(S/I+(x_2)) + \length(S/I+(x_1,x_2))$$
which is sharper. This latter improvement will be crucial in our applications later. 
\end{remark}

\subsection{Elliptic and Rational polygonal cones}
In this subsection, we provide purely algebraic proofs for \cite[Proposition~3.18 and Proposition~3.19]{Mumford} in arbitrary characteristic. We first introduce definitions following \cite[3.17 and 3.18]{Mumford}. First, in $\mathbb{P}^{n-1}$ take a generic $n$-gon $\cup_{i=0}^n\overline{P_iP_{i+1}}$  where $P_{n+1}=P_0$, its affine cone in $\mathbb{A}^{n}$ is called an \emph{elliptic polygonal $n$-cone}. Similarly, take $n$ generic line segments $\cup_{i=0}^{n-1}\overline{P_iP_{i+1}}$ in $\mathbb{P}^{n}$, its affine cone in $\mathbb{A}^{n+1}$ is called a \emph{rational polygonal $n$-cone}.\footnote{It seems to the authors that there are some notational inaccuracies in \cite[3.18]{Mumford}, when defining rational polygonal $n$-cones.}

\begin{proposition}
\label{prop: Mumford elliptic cones}
Elliptic polygonal $n$-cones are semistable if and only if $1 \leq n \leq 6$. Specifically, these are the singularities:
\[
\begin{cases}
\text{ $R = k[[x,y,z]]/(x^2 - xyz + y^3)$ if $n = 1$,} \\
\text{ $R = k[[x,y,z]]/(x^2-xyz)$ if $n =2$,} \\
\text{ $R =  k[[x_1, \ldots, x_n]]/(P_1 \cap \cdots \cap P_n)$ where }  \\
\text{ \hspace{2em} $P_1= (x_1, \ldots, x_{n-2}), P_2 = (x_2, \ldots, x_{n-1}), \ldots, P_n=(x_n, x_1, \ldots, x_{n-3})$ if $n\geq 3$.} 
\end{cases}
\]
In fact, when $1\leq n\leq 5$ these singularities are stable, while if $n=6$, the singularity is semistable but not stable.
\end{proposition}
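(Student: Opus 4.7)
The plan is to split by cases. I would handle $n \geq 3$ uniformly via the sharp three-dimensional Lech inequality (Theorem~\ref{thm: optimal Lech dim three}) applied to each minimal prime, treat $n = 2$ analogously using its two minimal primes, reduce $n = 1$ to $n = 2$ by a Gröbner degeneration, and rule out $n \geq 7$ with a single test-ideal computation.

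For $n \geq 3$, I would first invoke Proposition~\ref{prop: LM multigraded} to reduce to monomial $(\mathfrak{m}, T)$-primary ideals $I \subset R[[T]]$: since $\bigcap P_i$ is a squarefree monomial ideal, $R$ is Stanley--Reisner and carries the fine $\mathbb{N}^n$-grading, so $R[T]$ has $\mathbb{N}^{n+1}$-grading and multi-homogeneous ideals are monomial. Then by additivity of multiplicity over minimal primes, $\eh(I, R[[T]]) = \sum_i \eh(I, R[[T]]/P_i R[[T]])$, where each summand lives in the regular three-dimensional ring $R[[T]]/P_iR[[T]] \cong k[[x_{i-1}, x_{i+n-2}, T]]$. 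Applying Theorem~\ref{thm: optimal Lech dim three} with the two ``edge'' directions $x_{i-1}$ and $x_{i+n-2}$ as auxiliary variables, and using the identities $P_i + (x_{i+n-2}) = P_i + P_{i+1}$, $P_i + (x_{i-1}) = P_{i-1} + P_i$, and $P_i + (x_{i-1}, x_{i+n-2}) = \mathfrak{m}$, summation over $i$ yields
\[
\eh(I, R[[T]]) \;\leq\; 6\sum_i L_i \;-\; 6\sum_{\text{edges}} L_{i, i+1} \;+\; n\, L_\mathfrak{m},
\]
where $L_J \coloneqq \length(R[[T]]/(I + \textstyle\sum_{j \in J} P_j))$. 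The companion inclusion-exclusion via Corollary~\ref{cor: monomial inc/exc}, combined with the facts that any two non-adjacent $P_i, P_j$ span $\mathfrak{m}$ and any triple sum also equals $\mathfrak{m}$, collapses to $\length(R[[T]]/I) = \sum_i L_i - \sum_\text{edges} L_{i, i+1} + L_\mathfrak{m}$. Comparing,
\[
\eh(I, R[[T]]) \;\leq\; 6\length(R[[T]]/I) + (n - 6)\, L_\mathfrak{m}.
\]
This gives semistability for $n \leq 6$; since $L_\mathfrak{m} \geq 1$ for every $(\mathfrak{m}, T)$-primary $I$, the bound is strict when $n \leq 5$, so those are stable. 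At the boundary $n = 6$, the test ideal $(\mathfrak{m}, T)$ has colength one and multiplicity $\eh(R) = 6$, so it attains the bound: semistable but not stable. For $n \geq 7$, the same test ideal already gives $\lm(R[[T]]) \geq n/6 > 1$, so $R$ is not semistable.

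For $n = 2$, the ring $R = k[[x,y,z]]/(x(x-yz))$ has minimal primes $P_1 = (x)$ and $P_2 = (x - yz)$ and both quotients $R[[T]]/P_iR[[T]]$ are regular of dimension three. Running the same program with auxiliary variables $y, z$, the edge-direction ideals collapse symmetrically ($P_1 + (y) = P_2 + (y) = (x,y)$, and similarly for $z$), and expanding $\length(R[[T]]/I)$ through the two Mayer--Vietoris sequences attached to $P_1 \cap P_2 = 0$ in $R$ and $(x,y)\cap(x,z) = (x,yz) = P_1 + P_2$ (neither of which requires monomiality) produces the strict bound $\eh(I) \leq 6\length(R[[T]]/I) - 4L_\mathfrak{m}$, whence stability. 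For $n = 1$, assigning weights $(4, 3, 1)$ to $(x, y, z)$ makes $x^2$ and $xyz$ of weight $8$ while $y^3$ has weight $9$, so the initial form of $x^2 - xyz + y^3$ is exactly $x(x - yz)$; Corollary~\ref{cor: grober degeneration} applied to $R[[T]]$ then gives $\lm(R_{n=1}[[T]]) \leq \lm(R_{n=2}[[T]]) = 1$. Since the degeneration sends each ideal of $R_{n=1}[[T]]$ to a specific ideal of $R_{n=2}[[T]]$ with comparable ratios, the already-strict bound transfers and $R_{n=1}$ is stable as well.

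The main obstacle I anticipate is that the bookkeeping is only tight enough because Theorem~\ref{thm: optimal Lech dim three} has the sharp coefficient $1$ in front of $\length(S/(I,x_1,x_2))$; with the weaker $\tfrac{3}{2}$ coming from Theorem~\ref{thm: main Lech incl/excl}, the cancellation would already fail before $n = 6$, and the cases $n = 5, 6$ would be unreachable by this approach.
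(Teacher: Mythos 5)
Your treatment of the cases $3 \leq n \leq 6$ and $n \geq 7$ is correct and is essentially the paper's argument: reduce to monomial ideals via Proposition~\ref{prop: LM multigraded} (possible since $R$ is Stanley--Reisner once $n \geq 3$), apply Theorem~\ref{thm: optimal Lech dim three} to each $R[[T]]/P_i$ with its two free variables, sum, and compare against the exact inclusion--exclusion expansion of $\length(R[[T]]/I)$ from Corollary~\ref{cor: monomial inc/exc}, yielding $\eh(I) \leq 6\length(R[[T]]/I) + (n-6)\length(R[[T]]/(I + (x_1,\ldots,x_n)))$. The Gr\"obner degeneration from $n = 1$ to $n = 2$ with weights $(4,3,1)$ is also the paper's move.

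However, your argument for $n = 2$ has a genuine gap, which then propagates to $n = 1$. Write $L_J \coloneqq \length(R[[T]]/(I+J))$ and $L_\m \coloneqq \length(R[[T]]/(I + (x,y,z)))$. Because $R_{n=2} = k[[x,y,z]]/(x^2 - xyz)$ is not Stanley--Reisner, you cannot reduce to monomial $I$, and for a general $(\m,T)$-primary ideal $I$ the two tensored Mayer--Vietoris sequences each give only a one-sided inequality: right-exactness yields $\length(R[[T]]/I) \geq L_{(x)} + L_{(x-yz)} - L_{(x,yz)}$ and $L_{(x,yz)} \geq L_{(x,y)} + L_{(x,z)} - L_\m$. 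Applying Theorem~\ref{thm: optimal Lech dim three} to the two regular quotients (using $(x-yz,y)=(x,y)$, etc.) and substituting the first Mayer--Vietoris estimate lands you at
\[
\eh(I) \leq 6\length(R[[T]]/I) + 6\bigl(L_{(x,yz)} - L_{(x,y)} - L_{(x,z)}\bigr) + 2L_\m,
\]
and to reach $\eh(I) \leq 6\length(R[[T]]/I) - 4L_\m$ you would need $L_{(x,yz)} \leq L_{(x,y)} + L_{(x,z)} - L_\m$, the \emph{reverse} of what the second Mayer--Vietoris supplies. This reverse inequality genuinely fails for non-monomial $I$: with $I = (x, T, y-z, y^2)$ one has $L_{(x,y)} = L_{(x,z)} = L_\m = 1$ but $L_{(x,yz)} = \length\bigl(k[[y,z]]/(y-z,y^2)\bigr) = 2$. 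The paper's proof avoids this by applying Theorem~\ref{thm: optimal Lech dim three} not to the images of $I$ but to the initial ideals $\init_\prec(I+(x))$ and $\init_\prec(I+(x-yz))$, each contained in $\init_\prec(I+(x,yz))$, so that the relevant differences of lengths have the correct sign; the inclusion--exclusion expansion of $L_{(x,yz)}$ is then carried out at the initial-ideal level, and a careful choice of the lexicographic order $x \prec y \prec z \prec T$ forces all the $(y,z)$-collapsed terms to equal $L_\m$ exactly, closing the case with $6\length(R[[T]]/I) - \eh(I) \geq 4L_\m > 0$. Without this initial-ideal bookkeeping, the $n=2$ bound does not follow, and your obstacle analysis (which anticipates difficulties only at $n=5,6$) misses this.
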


\begin{remark}
\label{rmk: Mumford equation of elliptic cones}
In the degenerate cases $n=1$ corresponds to glueing a pair of transversal lines in a plane together and $n=2$ corresponds to glueing two planes to each other along a pair of transversal lines. For $n=1$, the equation is $x^2 - xyz + y^3$, which, when $\charac(k)\neq 2$, is equivalent to the equation $x^2-y^3-y^2z^2$ after a change of variables. Similarly, for $n=2$, the equation is $x^2-xyz$, which, when $\charac(k)\neq 2$, is equivalent to the equation $x^2-y^2z^2$ after a change of variables. These latter equations are Mumford's descriptions of these singularities over $\mathbb{C}$, see \cite[3.17]{Mumford}.
\end{remark}

\begin{proof}[Proof of Proposition~\ref{prop: Mumford elliptic cones}]
First of all, if $n\geq 7$, then $\eh(R)=n\geq 7$ and thus $R$ is unstable by Remark~\ref{rmk: mult bound for Lech-stable and semistable}. 
For $n = 1$, we may assign weights $\deg(x) = 4$, $\deg(y) = 3$, $\deg(z) = 1$, its initial ideal is $(x^2-xyz)$. Thus by Corollary~\ref{cor: grober degeneration}, the $n=1$ case reduces to the $n=2$ case. 

Now we suppose $n = 2$. Let $I\subseteq R[[T]]$ be an $(x,y,z, T)$-primary ideal. By tensoring with $R[[T]]/I$ the short exact sequence 
$$
0\to R[[T]]\to R[[T]]/(x) \oplus R[[T]]/(x-yz) \to R[[T]]/(x, yz)\to 0
$$
we obtain the inequality 
\begin{align*}
\length(R[[T]]/I)  \geq & \,\ \length\left(\frac{R[[T]]}{(I,x)}\right) + \length\left(\frac{R[[T]]}{I + (x-yz)}\right) - \length\left(\frac{R[[T]]}{I + (x,yz)}\right).
\end{align*}

For now on, let $\prec$ be any monomial order  on $R[[T]]=k[[x,y,z,T]]$.
By using the inclusion-exclusion formula of Corollary~\ref{cor: monomial inc/exc}, we have that 
\begin{align*}
\length\left(\frac{R[[T]]}{I + (x,yz)}\right)
= & \,\ \length\left(\frac{R[[T]]}{\init_{\prec} (I + (x,yz))}\right) \\
= & \,\ \length\left(\frac{R[[T]]}{\init_{\prec} (I + (x,yz)) + (y)}\right) +
\length\left(\frac{R[[T]]}{\init_{\prec} (I + (x,yz)) + (z)}\right) \\
& -\length\left(\frac{R[[T]]}{\init_{\prec} (I + (x,yz)) + (y,z)}\right).
\end{align*}
Next, note that $\eh (I, R[[T]]) = \eh(I, R[[T]]/(x- yz)) + \eh(I, R[[T]]/(x))$. Since $R[[T]]/(x- yz)$ and $R[[T]]/(x)$ are both isomorphic to $k[[y,z,T]]$, by applying Theorem~\ref{thm: optimal Lech dim three} to the initial ideal of the image of $I$ inside $R[[T]]/(x- yz)$ and $R[[T]]/(x)$ respectively, we bound 
\begin{align*}
    \eh(I, R[[T]]/(x- yz))
    \leq & \,\ 6\length\left(\frac{R[[T]]}{I + (x-yz)}\right) 
    - 3\length\left(\frac{R[[T]]}{\init_{\prec}(I + (x-yz)) + (y)}\right)  \\
    &  - 3\length\left(\frac{R[[T]]}{\init_{\prec}(I + (x-yz)) + (z)}\right)  + \length\left(\frac{R[[T]]}{\init_{\prec}(I + (x-yz)) + (y,z)}\right)
\end{align*}
\begin{align*}
    \eh(I, R[[T]]/(x))
    \leq & \,\ 6\length\left(\frac{R[[T]]}{I + (x)}\right) 
    - 3\length\left(\frac{R[[T]]}{\init_{\prec}(I + (x)) + (y)}\right)  
    - 3\length\left(\frac{R[[T]]}{\init_{\prec}(I + (x)) + (z)}\right)  \\
    & + \length\left(\frac{R[[T]]}{\init_{\prec}(I + (x)) + (y,z)}\right)
\end{align*}
By collecting all these bounds, we find that 
\begin{align*}
\frac{6\length (R[[T]]/I) - \eh (I, R[[T]])}{3}
\geq &\,\ \length\left(\frac{R[[T]]}{\init_{\prec}(I + (x-yz)) + (y)}\right) - \length\left(\frac{R[[T]]}{\init_{\prec} (I + (x,yz)) + (y)}\right)
\\ &+ 
\length\left(\frac{R[[T]]}{\init_{\prec}(I + (x-yz)) + (z)}\right) - \length\left(\frac{R[[T]]}{\init_{\prec} (I + (x,yz)) + (z)}\right)
\\ &+ 
\length\left(\frac{R[[T]]}{\init_{\prec}(I + (x)) + (y)}\right) - \length\left(\frac{R[[T]]}{\init_{\prec} (I + (x,yz)) + (y)}\right)
\\ 
&+ 
\length\left(\frac{R[[T]]}{\init_{\prec}(I + (x)) + (z)}\right) - \length\left(\frac{R[[T]]}{\init_{\prec} (I + (x,yz)) + (z)}\right)
\\
&+ 2\length\left(\frac{R[[T]]}{\init_{\prec} (I + (x,yz)) + (y,z)}\right) 
\\ &- \frac 13 \length\left(\frac{R[[T]]}{\init_{\prec}(I + (x-yz)) + (y,z)}\right) 
-  \frac 13\length\left(\frac{R[[T]]}{\init_{\prec}(I + (x)) + (y,z)}\right).
\end{align*}
Since $\init_{\prec}(I + (x-yz)) \subseteq \init_{\prec} (I + (x,yz))$ and $\init_{\prec}(I + (x))\subseteq \init_{\prec} (I + (x,yz))$ for any monomial order $\prec$, we get that 
the terms paired in the first four lines of the formula 
are all nonnegative. Therefore we obtain that 
\begin{align*}
6\length (R[[T]]/I) - \eh (I, R[[T]])
\geq & \,\ 6 \length\left(\frac{R[[T]]}{\init_{\prec} (I + (x,yz)) + (y,z)}\right) - \length\left(\frac{R[[T]]}{\init_{\prec}(I + (x-yz)) + (y,z)}\right) 
\\ 
& -  \length\left(\frac{R[[T]]}{\init_{\prec}(I + (x)) + (y,z)}\right).
\end{align*}

At this point, we take $\prec$ to be the lexicographic order with $x \prec y \prec z \prec T$. By the same argument as in the proof of Theorem~\ref{thm: optimal Lech dim three},
this monomial order forces that 
$$\init_{\prec} (I + (x,yz)) + (y,z) = \init_{\prec}(I + (x-yz)) + (y,z) = \init_{\prec}(I + (x)) + (y,z) = I+(x,y,z).$$ 
It follows that
$6\length (R[[T]]/I) - \eh (I, R[[T]]) \geq 4\length\left(R[[T]]/(I + (x,y,z))\right) > 0$, so $R$ is stable.

Finally, we suppose that $3 \leq n \leq 6$. Since the defining ideal is a monomial ideal, by Proposition~\ref{prop: LM multigraded}, in order to show $R$ is stable or semistable, it is enough to consider monomial ideals $I\subseteq R[[T]]$.
Note that $P_i + P_j = (x_1, \ldots, x_n)$ if $j \neq i-1, i, i + 1$, thus by Corollary~\ref{cor: monomial inc/exc}, we obtain the formula
\begin{equation}\label{eqn: 7.2.1 length formula}
\length(R[[T]]/I)= \sum_{i = 1}^n \length(R[[T]]/(I+P_i)) - \sum_{i = 1}^n \length(R[[T]]/(I+P_i+ P_{i+1})) + \length(R[[T]]/I+(x_1,\dots,x_n)),    
\end{equation}
where for convenience we denote $P_{n+1} \coloneqq P_1$.

On the other hand, each $R[[T]]/P_i$ is isomorphic to a power series ring over $k$ in three variables. Thus by Theorem~\ref{thm: optimal Lech dim three}, we bound
\begin{align*}
    \eh(I, R[[T]]/P_i) \leq & \,\ 6\length(R[[T]]/I+P_i) -3\length(R[[T]]/I+P_i+ P_{i+1}) -3 \length(R[[T]]/I+P_i+ P_{i-1}) \\
    & + \length(R[[T]]/I+(x_1,\dots,x_n)).
\end{align*}
Summing these up, we obtain the inequality
\begin{equation*}
\begin{split}
\eh(I, R[[T]]) = & \,\ \sum_{i=1}^n \eh(I, R[[T]]/P_i) \\
\leq & \,\ 6\sum_{i=1}^n\length\left (\frac{R[[T]]}{I+P_i}\right) -6\sum_{i=1}^n\length\left(\frac{R[[T]]}{I+P_i+ P_{i+1}} \right) 
 + n\length\left(\frac{R[[T]]}{I+(x_1,\dots,x_n)} \right).
\end{split}    
\end{equation*}
After comparing with (\ref{eqn: 7.2.1 length formula}) we immediately 
see that $\eh(I, R[[T]]) < 6 \length (R[[T]])$ when $n \leq 5$, so $R$ is stable in this case. On the other hand, when $n = 6$
we only get that $\eh(I, R[[T]]) \leq 6 \length (R[[T]])$, 
showing that $R$ is semistable. 
Since equality holds for the maximal ideal, we see that $R$ is not stable for $n = 6$.
\end{proof}

The result on rational  polygonal $n$-cones is proved similarly. 

\begin{proposition}
\label{prop: Mumford rational cones}
Rational polygonal $n$-cones are semistable if and only if $2 \leq n \leq 6$. Specifically, these are  
$R =  k[[x_1, \ldots, x_{n+1}]]/(P_1 \cap \cdots \cap P_{n}),$
where $P_1= (x_1, \ldots, x_{n-1}), P_2 = (x_2, \ldots, x_{n}), \ldots, P_{n}=(x_{n},x_{n+1}, x_1, \ldots, x_{n-3})$. Moreover,  these singularities are stable if and only if $2\leq n\leq 5$.
\end{proposition}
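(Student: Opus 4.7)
The plan is to follow the three-regime strategy of Proposition~\ref{prop: Mumford elliptic cones}. Since $\dim R = 2$ and $\eh(R) = n$ (by the associativity formula for multiplicities, noting that each $R/P_i\cong k[[u,v]]$ has multiplicity one), Remark~\ref{rmk: mult bound for Lech-stable and semistable} immediately rules out semistability whenever $n\geq 7$. For the remaining cases $2\leq n\leq 6$, Proposition~\ref{prop: LM multigraded} reduces the problem to showing $\eh(I, R[[T]]) \leq 6\length(R[[T]]/I)$ for every monomial $(\m,T)$-primary ideal $I\subseteq R[[T]]$, with strict inequality when $n\leq 5$.

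Set $\ell_I(J)\coloneqq \length(R[[T]]/(I+J))$ and $\nu\coloneqq \ell_I((x_1,\ldots,x_{n+1}))$. The crucial combinatorial feature distinguishing the rational case from the elliptic one is that the chain $P_1,\ldots, P_n$ is not cyclic: a direct check gives $P_i+P_j = (x_1,\ldots,x_{n+1})$ for every pair with $|i-j|\geq 2$, \emph{including} the pair $\{1,n\}$. Consequently, Corollary~\ref{cor: monomial inc/exc} combined with the identity $-\binom{n-1}{2} + \sum_{k=3}^{n}(-1)^{k+1}\binom{n}{k} = 0$ shows that all contributions of $\nu$ cancel in the inclusion--exclusion, yielding the particularly clean length formula
\[
\length(R[[T]]/I) = \sum_{i=1}^{n}\ell_I(P_i) - \sum_{i=1}^{n-1}\ell_I(P_i+P_{i+1}).
\]

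On the multiplicity side, $\eh(I, R[[T]]) = \sum_{i=1}^{n}\eh(I, R[[T]]/P_i)$ and each $R[[T]]/P_i$ is a power series ring in three variables over $k$. For an interior prime $P_i$ with $2\leq i\leq n-1$, the two adjacent primes $P_{i-1}$ and $P_{i+1}$ cut out exactly the two distinguished variables of Theorem~\ref{thm: optimal Lech dim three}, giving
\[
\eh(I, R[[T]]/P_i) \leq 6\ell_I(P_i) - 3\ell_I(P_{i-1}+P_i) - 3\ell_I(P_i+P_{i+1}) + \nu.
\]
For the endpoints $P_1$ and $P_n$ only one adjacent prime is available, and I would apply Theorem~\ref{thm: optimal Lech dim three} using the adjacent prime together with any remaining free variable of $R[[T]]/P_i$. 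This introduces error terms $A_1$ and $A_n$ of the form $\ell_I(P_i + (y))$, where $y$ is a free variable in $R[[T]]/P_i$ distinct from the one cut out by the adjacent prime, in place of the missing adjacent-pair length. Summing these bounds over all $i$ and substituting the length formula, the adjacent-pair contributions collapse to exactly $6\sum_{i=1}^{n-1}\ell_I(P_i+P_{i+1})$, leaving
\[
\eh(I, R[[T]]) \leq 6\length(R[[T]]/I) - 3(A_1+A_n) + n\nu.
\]

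The finish is the trivial containment $A_1, A_n\geq \nu$ (immediate from $P_i+(y)\subseteq (x_1,\ldots,x_{n+1})$), which converts the bound to $\eh(I,R[[T]])\leq 6\length(R[[T]]/I) + (n-6)\nu$. For $n\leq 5$ the bound is strict since $\nu>0$ for any $(\m,T)$-primary ideal, so $R$ is stable; for $n=6$ equality is attained at $I=\m_{R[[T]]}$ (where $A_1=A_n=\nu=1$ and $\eh(R)=6$), so $R$ is semistable but not stable. The main technical point will be orchestrating the endpoint bounds so that the asymmetry between the open chain (rational) and the closed chain (elliptic) is absorbed exactly by the inequalities $A_1, A_n\geq \nu$; this is what pins down the sharp threshold $n\leq 6$ and, together with the $\nu$-cancellation in the length formula, makes the bookkeeping work out uniformly for $2\leq n\leq 6$.
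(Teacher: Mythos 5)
Your proof is correct and follows essentially the same approach as the paper: reduce to monomial ideals via Proposition~\ref{prop: LM multigraded}, obtain the clean length formula via Corollary~\ref{cor: monomial inc/exc} (using the open-chain structure to cancel the $\nu$-contributions), apply Theorem~\ref{thm: optimal Lech dim three} to each $R[[T]]/P_i$, and sum. The only point to make fully explicit is that at the endpoints $P_1$ and $P_n$ the variable $y$ you pair with the adjacent generator must be the remaining $x$-variable (not $T$); otherwise $P_i+(y)\subseteq (x_1,\ldots,x_{n+1})$ fails, so the fourth term of Theorem~\ref{thm: optimal Lech dim three} would not reduce to $\nu$ and the estimate $A_1,A_n\ge\nu$ would not follow.
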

\begin{proof}
Since $\eh(R)=n$, $R$ is unstable whenever $n \geq 7$ by Remark~\ref{rmk: mult bound for Lech-stable and semistable}. It remains to prove that $R$ is stable for $2\leq n \leq 5$ and is semistable but not stable when $n=6$.

We now assume that $2\leq n\leq 6$. The strategy is similar to the proof of Proposition~\ref{prop: Mumford elliptic cones}. First of all, since the defining ideal is a monomial ideal, by Proposition~\ref{prop: LM multigraded}, in order to show $R$ is stable, it is enough to consider monomial ideals $I\subseteq R[[T]]$. Note that $P_i + P_j = (x_1, \ldots, x_{n+1})$ if $j \neq i-1, i, i + 1$, thus by Corollary~\ref{cor: monomial inc/exc}, we have that
\begin{align*}
\length(R[[T]]/I) 
& = \sum_{i = 1}^{n} \length(R[[T]]/I + P_i) 
- \sum_{i = 1}^{n-1} \length(R[[T]]/I + P_i+ P_{i+1}) \\
& = \sum_{i = 1}^{n} \length(R[[T]]/I + P_i)
- \sum_{i \neq n, n-1} \length (R[[T]]/I + (x_1, \dots, \widehat{x_i},\dots, x_{n+1})), 
\end{align*}
since the terms equal to $\length(R[[T]]/I+ (x_1, \ldots, x_{n+1}))$ cancel each other. 

Next, each $R[[T]]/P_i$ is isomorphic to a power series ring over $k$ in three variables. Thus by Theorem~\ref{thm: optimal Lech dim three}, we have that 
\begin{align*}
\eh(I, R[[T]]) = & \,\ \sum_{i=1}^{n} \eh(I, R[[T]]/P_i) \\
\leq & \,\ 6 \sum_{i = 1}^{n} \length(R[[T]]/I + P_i) - 6 \sum_{i \neq n, n-1} \length\left(\frac{R[[T]]}{I+(x_1, \dots, \widehat{x_i}, \dots,x_{n+1})}\right)\\
& - 3 \length\left(\frac{R[[T]]}{I+(x_1, \ldots, x_{n-1}, x_{n+1})}\right) - 3\length\left(\frac{R[[T]]}{I+(x_1, \ldots, x_{n-2}, x_{n}, x_{n+1})}\right) 
\\ &  + n\length\left(\frac{R[[T]]}{I+(x_1, \ldots, x_{n+1})}\right). 
\end{align*}
Putting these together, we obtain that 
\begin{align*}
\eh(I, R[[T]]) - 6\length(R[[T]]/I) \leq n\length\left(\frac{R[[T]]}{I+(x_1, \ldots, x_{n+1})}\right) - 3 \length\left(\frac{R[[T]]}{I+(x_1, \ldots, x_{n-1}, x_{n+1})}\right) \\ - 3\length\left(\frac{R[[T]]}{I+(x_1, \ldots, x_{n-2}, x_{n}, x_{n+1})}\right).
\end{align*}
Immediately, we obtain that $\eh(I, R[[T]]) - 6\length(R[[T]]/I) < 0$ if $n\leq 5$, showing that $R$ is stable in this case. On the other hand, for $n = 6$ we only get that $\eh(I, R[[T]]) \leq  6\length(R[[T]]/I)$. Indeed, equality holds for the maximal ideal of $R[[T]]$, so $R$ is semistable but not stable.
\end{proof}

\begin{example}
\label{example: Veronese in two variables}
Let $V_n\coloneqq k[x,y]^{(n)}$ be the $n$th Veronese subring of $k[x,y]$ and let $\widehat{V_n}$ denote the completion of $V_n$ at its unique homogeneous maximal ideal. Then
\[
\text{$\widehat{V_n}$ is }
\begin{cases}
\text{Lech-stable if $n\leq 2$,} \\
\text{stable but not Lech-stable if $3\leq n\leq 5$,} \\
\text{semistable but not stable if $n=6$,}\\
\text{not semistable if $n\geq 7,$} \\
\text{not lim-stable if $n\geq 17$.}
\end{cases}
\]
\end{example}
\begin{proof}
First of all, if $n\leq 2$, then $\widehat{V_n}$ is a rational double point and thus Lech-stable by Corollary~\ref{cor: RDP Lech stable}. Secondly, for $3\leq n\leq 6$, note that the defining ideal of $V_n$ is the two by two minors of the matrix
\[
\begin{bmatrix}
z_1 & z_2 & \cdots & z_n\\
z_2 & z_3 & \cdots & z_{n+1}
\end{bmatrix}.
\]
It is easy to see that, under the lexicographic monomial order $z_1>z_2>\cdots > z_{n+1}$, the initial ideal defines the rational polygonal $n$-cone up to a change of variables. Therefore by Corollary~\ref{cor: LM monomial ideal} and Proposition~\ref{prop: Mumford rational cones}, we know that $\widehat{V_n}$ is stable when $3\leq n\leq 5$ and is semistable but not stable when $n=6$. Finally, if $n\geq 7$ then $\eh(\widehat{V_n})=n\geq 7$ so $\widehat{V_n}$ is not semistable by Remark~\ref{rmk: mult bound for Lech-stable and semistable}, while if $n\geq 17$, then $\eh(\widehat{V_n})=n\geq 17$ so $\widehat{V_n}$ is not lim-stable by Corollary~\ref{cor: lim unstable dimension 2}.    
\end{proof}

\begin{remark}
We do not know whether $\widehat{V_n}$ is lim-stable when $7\leq n\leq 16$, nor do we know the values of $\limlm(\widehat{V_n})$ for any $n\geq 7$. We do not know a good method to compute or estimate the (higher) Lech--Mumford constants for quotient singularities in general.
\end{remark}

\subsection{A more general conjecture}\label{subsect: Stirling}
The discussed sharp Lech-type inequalities in dimension two and three 
suggest that in general there should be a sharp improvement,  in terms of the colengths of the projections, of Lech's inequality for monomial ideals in power series rings. The symmetry further implies that the coefficients for projections of the same dimension should be equal, leading us naturally to Conjecture~\ref{conj: best Lech}. 

Let us start with notation. Let $(x)_d\coloneqq x(x-1)\cdots (x-d+1)$. The \emph{unsigned Stirling numbers of the first kind} $\stirlingI{d}{k}$ and the \emph{Stirling numbers of the second kind} $\stirlingII {d}{k}$ arise from the decomposition of  
$(x)_d$ in the standard basis $x^k$ and vice versa: 
\[
(x)_d = \sum_{k = 1}^d (-1)^{d-k}\stirlingI{d}{k} x^k
\text{ and } x^d = \sum_{k = 1}^d \stirlingII {d}{k} (x)_k.
\]

\begin{conjecture}
\label{conj: best Lech}
Let $(R,\m)=k[[x_0, \ldots, x_d]]$ and $I\subseteq R$ be an $\m$-primary monomial ideal. Then 
\begin{align*}
\eh(I) &\leq \sum_{k = 0}^{d} (-1)^k\frac{(d+1-k)!}{\binom{d}{k}}\stirlingII{d+1}{d+1-k} \sum_{1 \leq i_1 < \cdots < i_k \leq d} \length\left(\frac{R}{I+(x_{i_1}, \ldots, x_{i_k})}\right)\\
&= \sum_{k = 0}^{d} (-1)^{d-k}\frac{(k+1)!}{\binom{d}{k}}\stirlingII{d+1}{k+1} \sum_{1 \leq i_1 < \cdots < i_{d-k} \leq d} \length\left(\frac{R}{I+(x_{i_1}, \ldots, x_{i_{d-k}})}\right).
\end{align*}
\end{conjecture}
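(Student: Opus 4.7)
The approach is induction on $d$. The base case $d = 1$ is Corollary~\ref{cor: Mumford Lemma 3.15} and the case $d = 2$ is Theorem~\ref{thm: optimal Lech dim three}; equality at $I = \m^n$ is already forced by the classical Stirling identity
\[
n^{d+1} = \sum_{j = 1}^{d+1}(-1)^{d+1-j} \stirlingII{d+1}{j}\, n^{\overline{j}},
\qquad n^{\overline{j}} = n(n+1)\cdots(n+j-1),
\]
and an elementary computation recasts this as the conjectured formula applied to $\m^n$, so the coefficients on the right-hand side are pinned down uniquely. For the inductive step I would follow the framework of Theorem~\ref{thm: optimal Lech dim three}: set $R' \coloneqq k[[x_0, \ldots, x_{d-1}]]$, let $c$ be the smallest nonnegative integer with $x_d^{c+1}\in I$, and define $I_j \coloneqq \{m \in R' \mid m x_d^j \in I\}$, which is a monomial $\m'$-primary ideal of $R'$ for $0 \leq j \leq c$. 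Proposition~\ref{prop: Mumford 4.3} applied with $r_k = k$ then gives
\[
\eh(I) \leq \sum_{k = 0}^{c - 1} S(I_k, I_{k+1}) + \eh(I_c),
\qquad S(I, J) \coloneqq \sum_{i = 0}^{d} \eh(I^{[i]} \mid J^{[d-i]}).
\]
The inductive hypothesis applied to $\eh(I_c)$ produces a combination of colengths $\length(R'/I_c + (x_{i_1}, \ldots, x_{i_k}))$ with $1 \leq i_1 < \cdots < i_k \leq d-1$; under the identity $\length(R/(I + (x_{i_1}, \ldots, x_{i_k}, x_d))) = \length(R'/I_0 + (x_{i_1}, \ldots, x_{i_k}))$ these correspond exactly to the subsets $\{i_1, \ldots, i_k, d\}$ in the target combination.

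The main technical input is therefore a sharp upper bound for $S(I, J)$ when $I \subseteq J$ are monomial $\m'$-primary ideals of $R'$. In dimension two the bound
\[
S(I, J) \leq 3\length(R'/I) + 3\length(R'/J) - 2\length(R'/I + (x_1)) - \length(R'/J + (x_1))
\]
follows by combining Lemma~\ref{lem: mixed multiplicity dim two} with Mumford's two-variable inequality, and a direct telescoping check shows that, once combined with $\eh(I_c) \leq 2\length(R'/I_c) - \length(R'/I_c + (x_1))$ and summed over $k = 0, \ldots, c-1$, the result matches the $d = 2$ case of the conjecture up to the nonnegative surplus $\length(R'/I_c) - \length(R'/I_c + (x_1)) \geq 0$. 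In general the required shape is
\[
S(I, J) \leq \sum_{k=0}^{d-1} \alpha_{d,k}\, L_k'(I) + \sum_{k=0}^{d-1}\beta_{d,k}\, L_k'(J),
\qquad L_k'(H) \coloneqq \smashoperator{\sum_{1 \leq i_1 < \cdots < i_k \leq d-1}} \length(R'/H + (x_{i_1}, \ldots, x_{i_k})),
\]
with the coefficients $\alpha_{d,k}, \beta_{d,k}$ chosen so that summing and adding the inductive bound on $\eh(I_c)$ telescopes into precisely the Stirling combination of the conjecture. The coefficients are forced by imposing the equality case $I = J = (\m')^n$, whose evaluation against the rising-factorial Stirling identity yields a system with a unique solution.

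The hard part will be proving this $S(I, J)$ bound, since $\sum_i \eh(I^{[i]} \mid J^{[d-i]})$ has no obvious interpretation as a single colength (unlike $\sum_i \binom{d}{i}\eh(I^{[i]} \mid J^{[d-i]})$, which controls $\length(R'/IJ)$), and the Minkowski--AM-GM estimate used in Theorem~\ref{thm: main Lech incl/excl} is too weak to be sharp at $\m^n$. A plausible direct strategy mirrors the proof of Lemma~\ref{lem: mixed multiplicity dim two}: expand $\length(R'/I^n J^m)$ monomial-by-monomial in terms of the Newton polytope and extract the diagonal sum $S(I, J)$ from the top-degree bihomogeneous part; the two-variable case succeeded because of the explicit ``staircase'' description of monomial ideals. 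An alternative is a ``double Mumford'' approach: feed the ideal $I + T J + (T^{2}) \subseteq R'[[T]]$ (or a suitable $\rho$-step analogue) into the inductive form of the conjecture in the $(d+1)$-variable ring, observe that the leading-order contribution of Proposition~\ref{prop: Mumford 4.3} to its Hilbert--Samuel multiplicity is $S(I, J)$, and translate the resulting colength estimates back to $R'$. In either approach the Stirling-number coefficients should emerge automatically from the iterated recursion, and their correctness can be verified on $I = \m^n$, where every step becomes an equality.
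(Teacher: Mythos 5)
This statement is explicitly labeled a conjecture in the paper (Conjecture~\ref{conj: best Lech}) and is left open: the authors prove only the case $d = 2$ (which is Theorem~\ref{thm: optimal Lech dim three}) and derive consequences such as Theorem~\ref{thm: conj best implies conj HSV} \emph{assuming} it. So there is no paper proof for you to be compared against, and what you have written is correctly presented as a plan, not a proof.

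Your plan faithfully reproduces the skeleton of the paper's $d = 2$ argument: slicing a monomial ideal along the last variable into $I_0 \supseteq I_1 \supseteq \cdots \supseteq I_c$, invoking Proposition~\ref{prop: Mumford 4.3} with consecutive $r_k = k$ so that the resulting bound is
\[
\eh(I) \leq \sum_{k=0}^{c-1} S(I_k, I_{k+1}) + \eh(I_c), \qquad S(I,J) = \sum_{i=0}^{d} \eh(I^{[i]}\mid J^{[d-i]}),
\]
and then attempting to telescope into a linear combination of colengths. Your reconstruction of the $d = 2$ ingredients is correct: combining Corollary~\ref{cor: Mumford Lemma 3.15} and Lemma~\ref{lem: mixed multiplicity dim two} gives exactly the asymmetric bound $S(I,J) \leq 3\length(R'/I) + 3\length(R'/J) - 2\length(R'/I+(x_1)) - \length(R'/J+(x_1))$ you quote, and the Stirling identity $n^{d+1} = \sum_j (-1)^{d+1-j}\stirlingII{d+1}{j}\, n^{\overline{j}}$ does show that equality at $I=\m^n$ pins down the target coefficients. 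All of this checks out.

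The gap you flag is, however, exactly where the conjecture remains genuinely open: you need, for each $d$, a sharp colength bound on $S(I,J)$ for nested monomial ideals $I \subseteq J$ in a $d$-variable power series ring that telescopes correctly, and you have no proof of it beyond $d = 1$ (which is the two-variable Lemma~\ref{lem: mixed multiplicity dim two}). Two caveats about your suggested routes. First, the ``Newton polytope'' strategy that succeeds in two variables relies on the totally ordered staircase structure of a monomial ideal in the plane; already in three variables the combinatorics of the boundary of a monomial ideal is substantially harder and there is no obviously analogous term-by-term pairing. Second, the ``double Mumford'' strategy as you describe it risks circularity: feeding $I + TJ + (T^2)$ into the conjecture for ambient dimension one higher would use the conjecture in a higher dimension to prove it in a lower one, which inverts the induction; you would need to make precise why the relevant piece of the $(d+1)$-dimensional statement is already available. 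Finally, note that the system determining $\alpha_{d,k}, \beta_{d,k}$ from ``equality at $\m^n$'' alone is underdetermined for $d \geq 2$ (it gives $d+1$ constraints for $2d$ unknowns); the telescoping requirement and the boundary terms at $I_0$ and $I_c$ supply additional constraints, and you should verify explicitly, before investing effort in proving the $S(I,J)$ bound, that these are mutually consistent and that the resulting candidate inequality is at least not falsified by small examples with $I \neq J$.
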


\begin{remark}
We note that in dimension three (i.e., $d=2$), Conjecture~\ref{conj: best Lech} holds, and this is exactly Theorem~\ref{thm: optimal Lech dim three}. We also point out that, when $d\geq 3$, the coefficients in Conjecture~\ref{conj: best Lech} are not necessarily integers. For example, if $d = 3$, then the Stirling numbers are $1,7,6,1$. Thus the conjecture becomes
\[
\eh(I) \leq 24 \length(R/I) - 12 \sum_{i = 1}^3 \length(R/I+(x_i)) + \frac {14}3 \sum_{1 \leq i < j\leq 3} \length(R/I+(x_i, x_j))  - \length(R/I+(x_1,x_2,x_3)).
\]    
\end{remark}

Our conjecture can be compared to the refinement of Lech's inequality that was conjectured in \cite{HSV}.
\begin{conjecture}[{\cite[Conjecture 2.4]{HSV}}]
\label{conj: HSV}
Let $k$ be an infinite field and $(R, \m)=k[[x_0,\dots,x_d]]$. Set $R_i = R/(\ell_1, \ldots, \ell_i)$ for general linear forms $\ell_1, \ldots, \ell_i \in \m$. Then for all $\m$-primary ideals $I\subseteq R$, we have
\[
\sum_{i = 0}^{d-1} \stirlingI{d+1}{d+1-i} \eh (IR_i) \leq (d+1)!\length (R/I).
\]
\end{conjecture}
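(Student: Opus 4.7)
The plan is to argue by induction on $d$, using the dimension-two and dimension-three cases (Corollary~\ref{cor: Mumford Lemma 3.15} and Theorem~\ref{thm: optimal Lech dim three}) as the base. That these are precisely the $d=1$ and $d=2$ instances of the conjectured formula follows from a direct unpacking of the Stirling coefficients $\stirlingII{2}{1}=1$, $\stirlingII{2}{2}=1$ and $\stirlingII{3}{1}=1,\stirlingII{3}{2}=3,\stirlingII{3}{3}=1$. As a sanity check on the target formula, one verifies equality for $I=\m^n$ by combining $\length(R/\m^n+(x_{i_1},\dots,x_{i_k})) = \binom{n+d-k}{d+1-k}$ with the classical identity $n^{d+1}=\sum_{j=1}^{d+1}(-1)^{d+1-j}\stirlingII{d+1}{j}n^{\overline{j}}$ (the Stirling expansion of $n^{d+1}$ in rising factorials), which is exactly what the right-hand side collapses to once the sums are rearranged by $j=d+1-k$.

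For the inductive step, I would slice in the last variable: write $I=\sum_{k=0}^{c} I_k x_d^{k}+(x_d^{c+1})$, where $I_k\coloneqq\{m\in R'\mid m x_d^k\in I\}$ is an ascending chain of monomial ideals in $R'=k[[x_0,\dots,x_{d-1}]]$, with $I_{c+1}=R'$. Applying Proposition~\ref{prop: Mumford 4.3} with the fine sequence $r_k=k$ yields
\[
\eh(I) \;\leq\; \sum_{k=0}^{c}\sum_{i=0}^{d}\eh\bigl(I_k^{[i]}\mid I_{k+1}^{[d-i]}\bigr).
\]
To finish, one needs a sharp mixed-multiplicity refinement of the conjecture: for any pair $J\subseteq J'$ of $\m'$-primary monomial ideals in $R'$, a bound of the shape
\[
\sum_{i=0}^{d}\eh(J^{[i]}\mid J'^{[d-i]}) \;\leq\; \alpha_d\length(R'/J)+\beta_d\length(R'/J')+\sum_{k=1}^{d-1}\gamma_k\smashoperator{\sum_{|S|=k}}\bigl[\length(R'/J+(x_S))+\ldots+\length(R'/J'+(x_S))\bigr],
\]
with the coefficients $\alpha_d,\beta_d,\gamma_k$ chosen so that telescoping $k$ over $0\le k\le c$ reproduces the Stirling coefficients of the conjecture after applying Corollary~\ref{cor: monomial inc/exc} (compare the role of Lemma~\ref{lem: mixed multiplicity dim two} in the proof of Theorem~\ref{thm: optimal Lech dim three}). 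The coefficients $\alpha_d,\beta_d,\gamma_k$ should be uniquely forced by matching the equality case $J=\m'^{n},\ J'=\m'^{n+1}$, which once again is governed by the rising-factorial expansion of $n^{d+1}$.

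The main obstacle is precisely this intermediate sharp mixed-multiplicity inequality in $d$ variables. In dimension two it is a short lattice-point count on the Newton staircase in two coordinates, but in higher dimensions naive Minkowski-type estimates (as in the proof of Theorem~\ref{thm: main Lech incl/excl}) only yield the weaker dyadic coefficients $(-1)^k/2^k$, which are strictly larger than the conjectural Stirling coefficients for $d\geq 3$. A promising route is to bypass mixed multiplicities altogether and argue geometrically on the Newton polyhedron $P(I)\subset\mathbb{R}^{d+1}_{\ge 0}$: realize $\eh(I)$ as $(d+1)!$ times the volume of the complement $\mathbb{R}^{d+1}_{\ge 0}\smallsetminus P(I)$ and realize $\length(R/I+(x_{i_1},\dots,x_{i_k}))$ as the count of lattice points of this complement on the corresponding codimension-$k$ coordinate face. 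An Euler--Maclaurin-type expansion comparing volume to face lattice-point counts naturally introduces Bernoulli-like coefficients; after applying the Stirling conversion between powers and falling factorials one should recover exactly the stated formula, with equality for simplicial cones dual to powers of the maximal ideal. Making this Euler--Maclaurin argument honest on an arbitrary Newton polyhedron (rather than a lattice simplex) will be the technical crux, and one probably has to proceed face-by-face via inclusion-exclusion, which is why the Stirling numbers of the second kind, the combinatorial generators of surjections onto chains of faces, appear in the final answer.
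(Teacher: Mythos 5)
You are addressing the wrong statement. The statement posed to you is Conjecture~\ref{conj: HSV} (the Stirling-weighted Lech inequality of \cite{HSV} involving multiplicities of generic linear slices $\eh(IR_i)$, for all $\m$-primary ideals), but your proposal is explicitly framed and labeled as a proof of Conjecture~\ref{conj: best Lech} — a different, monomial-ideal inequality involving colengths of coordinate projections $\length(R/I+(x_{i_1},\dots,x_{i_k}))$. These are not the same assertion. The paper records in Theorem~\ref{thm: conj best implies conj HSV} that Conjecture~\ref{conj: best Lech} implies Conjecture~\ref{conj: HSV}, but the implication is nontrivial: it requires (a) the reduction of Conjecture~\ref{conj: HSV} to monomial ideals, carried out in \cite[Section~3]{HSV} and not in this paper, and (b) an averaging argument over coordinate hyperplanes that translates $\eh(IR_i)$ into the coordinate colengths. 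Your proposal invokes neither, so even if it succeeded it would still not establish the statement you were given.

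Setting the mistargeting aside, Conjecture~\ref{conj: HSV} is genuinely open — the paper establishes it only in dimension three, as a corollary of the sharp Theorem~\ref{thm: optimal Lech dim three} — and your sketch does not close the gap. You correctly identify the $d=1,2$ base cases (Corollary~\ref{cor: Mumford Lemma 3.15} and Theorem~\ref{thm: optimal Lech dim three}), verify equality at $\m^n$, and set up the slicing via Proposition~\ref{prop: Mumford 4.3}, all of which mirror the paper's dimension-three proof. You also diagnose the obstacle accurately: the inductive step needs a sharp mixed-multiplicity bound in $k[[x_0,\dots,x_{d-1}]]$ whose telescoping over the slices $I_k$ regenerates the Stirling coefficients, in the way Lemma~\ref{lem: mixed multiplicity dim two} does for $d=2$. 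But you leave that inequality entirely unproved, and you observe yourself that the Minkowski-type estimate underlying Theorem~\ref{thm: main Lech incl/excl} yields only the weaker dyadic coefficients $(-1)^k/2^k$, which are insufficient for $d\geq 3$. The Euler--Maclaurin / Newton-polyhedron suggestion is a plausible and genuinely new direction not pursued in the paper, but as written it is a research plan, not an argument. In sum: a well-informed sketch of why a \emph{different} conjecture is hard, with the key technical lemma absent and the bridge to the stated conjecture missing.
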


\begin{remark}
In \cite{HSV}, the conjecture was stated for all local rings, but it was shown in \cite[Section 3]{HSV} that it reduces to the case of polynomial rings (or power series rings) over an infinite field. 
\end{remark}

We will next show that Theorem~\ref{thm: optimal Lech dim three}
implies Conjecture~\ref{conj: HSV} in dimension three, thus recovering the main result of \cite{HSV}.

\begin{theorem}
\label{thm: conj best implies conj HSV}
Conjecture~\ref{conj: best Lech} implies Conjecture~\ref{conj: HSV}. In particular, Conjecture~\ref{conj: HSV} holds in dimension three.
\end{theorem}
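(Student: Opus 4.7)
The overall plan is to apply Conjecture~\ref{conj: best Lech} symmetrically in $R$ and in every lower-dimensional fiber $R/(x)_S$, and combine so that all ``projection colength'' terms cancel. First I would reduce to the case that $I$ is monomial by a Gröbner degeneration (cf.~Corollary~\ref{cor: LM monomial ideal}): replacing $I$ by $\init(I)$ preserves $\length(R/I)$, and for linear forms $\ell_1,\ldots,\ell_i$ generic for both $I$ and $\init(I)$, upper semicontinuity of multiplicity in the flat family gives $\eh(IR_i)\le\eh(\init(I)R_i)$. Now assume $I$ is monomial in $R=k[[x_0,\ldots,x_d]]$ and set $T_k\coloneqq\sum_{|S|=k,\,S\subseteq\{0,\ldots,d\}}\length(R/I+(x)_S)$. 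By symmetry, Conjecture~\ref{conj: best Lech} holds after excluding any one variable; averaging the $d+1$ resulting inequalities gives
\[
\eh(I)\le\sum_{k=0}^{d}(-1)^k\,\frac{(d+1-k)!}{\binom{d+1}{k}}\stirlingII{d+1}{d+1-k}\,T_k.
\]
Iterating Conjecture~\ref{conj: best Lech} inside each fiber $R/(x)_S$ and combining with $\eh(IR_i)\le\eh(IR/(x)_S)$ for $|S|=i$ and averaging over $S$ should yield inequalities of the shape $\stirlingI{d+1}{d+1-i}\eh(IR_i)\le\sum_{k\ge i}\alpha_{d,i,k}T_k$ with explicit rationals $\alpha_{d,i,k}$. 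Summing over $i=0,\ldots,d-1$ the coefficient of every $T_k$ with $k\ge 1$ should vanish by the inverse Stirling relation $\sum_j(-1)^{n-j}\stirlingI{n}{j}\stirlingII{j}{m}=\delta_{n,m}$, leaving Conjecture~\ref{conj: HSV}.

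In dimension three ($d=2$) I can execute this explicitly. The averaged form of Theorem~\ref{thm: optimal Lech dim three} reads $\eh(I)\le 6\length(R/I)-2T_1+\tfrac{1}{3}T_2$. For each $j$, Mumford's two-dimensional inequality (Corollary~\ref{cor: Mumford Lemma 3.15}) in $R/(x_j)\cong k[[y_0,y_1]]$, averaged over the two choices of further projection, yields $\eh(IR/(x_j))\le 2\length(R/I+(x_j))-\tfrac{1}{2}\sum_{k\ne j}\length(R/I+(x_j,x_k))$; summing in $j$ and using $3\eh(IR_1)\le\sum_{j}\eh(IR/(x_j))$ gives $3\eh(IR_1)\le 2T_1-T_2$. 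Finally $\eh(IR_2)\le\length(R/I+(x_j,x_k))$ for each of the three pairs, so averaging yields $2\eh(IR_2)\le\tfrac{2}{3}T_2$. Adding the three estimates, the coefficients of $T_1$ and $T_2$ cancel ($-2+2=0$ and $\tfrac{1}{3}-1+\tfrac{2}{3}=0$), and we obtain $\eh(I)+3\eh(IR_1)+2\eh(IR_2)\le 6\length(R/I)$, which is exactly Conjecture~\ref{conj: HSV} in dimension three since $\stirlingI{3}{3}=1$, $\stirlingI{3}{2}=3$, $\stirlingI{3}{1}=2$.

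The hard part will be the bookkeeping in general dimension: in dimension three the cancellation is an immediate arithmetic check, but for larger $d$ one has to compute the explicit $\alpha_{d,i,k}$ arising from iterated averaging and verify they satisfy the required Stirling-number identity. A secondary (routine) point is justifying $\eh(IR_i)\le\eh(IR/(x)_S)$ for $|S|=i$ when $I$ is monomial, by classical upper semicontinuity of multiplicity under specialization of linear forms.
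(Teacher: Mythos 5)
Your overall strategy—apply Conjecture~\ref{conj: best Lech} in each coordinate quotient $R/(x)_S$, average over $S$, multiply by Stirling coefficients, sum over $i$, and verify cancellation via a Stirling inversion identity—is the same as the paper's. Your explicit execution of the $d=2$ case (which settles the ``in particular'' part of the theorem) is correct: the averaged form $\eh(I)\le 6\length(R/I)-2T_1+\tfrac13 T_2$ of Theorem~\ref{thm: optimal Lech dim three}, the estimate $3\eh(IR_1)\le 2T_1-T_2$ from Corollary~\ref{cor: Mumford Lemma 3.15}, the trivial $2\eh(IR_2)\le\tfrac23 T_2$, and the termwise cancellation are all verified correctly, yielding the (slightly strengthened) inequality $\eh(I)+3\eh(IR_1)+2\eh(IR_2)\le 6\length(R/I)$.

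Two points to flag. First, a minor slip: in the general-case sketch you write ``summing over $i=0,\ldots,d-1$,'' but for the coefficients of the lower-dimensional colength terms to vanish you must include $i=d$ (the one-dimensional quotient), exactly as you do in your $d=2$ calculation; this gives a strengthening of Conjecture~\ref{conj: HSV} whose sum only goes to $d-1$. The relevant identity is that the coefficient of $(x)_{k+1}$ in $(x)_{d+1}=\sum_i(-1)^i\stirlingI{d+1}{d+1-i}\sum_k\stirlingII{d+1-i}{k+1}(x)_{k+1}$ is $\delta_{k,d}$, and for a fixed $k$ the inner sum over $i$ runs from $0$ to $d-k$, so in aggregate you need $i$ up to $d$. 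Second, and more substantively, you leave the general-dimension bookkeeping as a gap, explicitly flagging it as the ``hard part.'' The paper carries it out: after averaging over the $\binom{d}{i}$ choices of $\{a_1,\ldots,a_i\}\subset\{1,\ldots,d\}$ (the paper keeps $x_0$ fixed as the distinguished variable rather than symmetrizing over all $d+1$, but the two bookkeeping schemes are equivalent), the coefficient on each colength term is $\frac{(k+1)!}{\binom dk}\sum_{i=0}^{d-k}(-1)^{d-i-k}\stirlingI{d+1}{d+1-i}\stirlingII{d+1-i}{k+1}$, which vanishes for $k<d$ by precisely the inversion formula you quote. So you have correctly identified both the structure and the key identity; what's missing is the routine verification that the averaged coefficients assemble into exactly that form. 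Your Gröbner-degeneration reduction to monomial ideals, via $\init(I)^n\m^m\subseteq\init(I^n\m^m)$ and hence $\eh(I^{[j]}\mid\m^{[i]})\le\eh(\init(I)^{[j]}\mid\m^{[i]})$, is a valid route (the paper instead cites the reduction in [HSV, Section~3]); just be aware that ``upper semicontinuity of $\eh(IR_i)$'' needs to be unpacked through mixed multiplicities rather than treated as a black-box statement about flat families.
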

\begin{proof}
Conjecture~\ref{conj: HSV} was reduced to monomial ideals in \cite[Section 3]{HSV}. Henceforth we assume that $I\subseteq R$ is an $\m$-primary
monomial ideal and we work with the left-hand side of Conjecture~\ref{conj: HSV}.
Since $\ell_1,\dots, \ell_i$ are general linear forms, we know that
$$\eh(I, R/(\ell_1, \ldots, \ell_i)) \leq \eh(I, R/(x_{a_1}, \ldots, x_{a_i}))$$ 
for any choice of $1 \leq a_1 < \cdots < a_i \leq d$.
By Conjecture~\ref{conj: best Lech}, we then have
\begin{align*}
\eh(I, &R/(\ell_1, \ldots, \ell_i)) \leq
\sum_{k = 0}^{d-i} (-1)^{d-i-k} \frac{\stirlingII {d+1-i}{k+1}(k+1)!}{\binom{d-i}k } \sum_{b_j} \length\left(\frac{R}{I+ (x_{a_1}, \ldots, x_{a_i}, x_{b_1}, \ldots, x_{b_{d-i-k}})}\right),
\end{align*}
where $b_j$ take values in $\{1, \ldots, d\}\setminus\{a_1, \ldots, a_i\}$.
Averaging these inequalities over various choices of $a_i$ yields that
\begin{align*}
\eh(I, R/(\ell_1, \ldots, \ell_i)) & \leq
\sum_{k = 0}^{d-i} (-1)^{d-i-k} \frac{\binom{d-k}{i}}{\binom{d}{i}}\frac{\stirlingII {d+1-i}{k+1}(k+1)!}{ \binom{d-i}k } \sum_{\substack{i_1 <\cdots <i_{d-k}\\ 1 \leq i_a \leq d}} \length\left(\frac{R}{I+(x_{i_1}, \ldots, x_{i_{d-k}})}\right) \\
& = \sum_{k = 0}^{d-i} (-1)^{d-i-k} \frac{\stirlingII {d+1-i}{k+1}(k+1)!}{ \binom{d}{k} } \sum_{\substack{i_1 <\cdots <i_{d-k}\\ 1 \leq i_a \leq d}} \length\left(\frac{R}{I+(x_{i_1}, \ldots, x_{i_{d-k}})}\right),
\end{align*}
where the equality above used the combinatorial identity $\binom{d}{i}\binom{d-i}{k} = \binom{d-k}{i}\binom{d}{k}$. Therefore
\begin{align*}
\sum_{i = 0}^{d} &\stirlingI{d+1}{d+1-i}  \eh(I, R/(\ell_1, \ldots, \ell_i)) \\
\leq &\sum_{i = 0}^{d} \stirlingI{d+1}{d+1-i}
\sum_{k = 0}^{d-i} (-1)^{d-i-k} \frac{\stirlingII {d+1-i}{k+1}(k+1)!}{ \binom{d}k } \sum_{\substack{i_1 <\cdots <i_{d-k}\\ 1 \leq i_a \leq d}} \length\left(\frac{R}{I+(x_{i_1}, \ldots, x_{i_{d-k}})}\right) \\
= &
\sum_{k = 0}^{d} \frac{(k+1)!}{\binom{d}k}\sum_{\substack{i_1 <\cdots <i_{d-k}\\ 1 \leq i_a \leq d}}  \left (
\sum_{i = 0}^{d-k} (-1)^{d-i-k} \stirlingI{d+1}{d+1-i} \stirlingII {d+1-i}{k+1}
\right )
\length\left(\frac{R}{I+(x_{i_1}, \ldots, x_{i_{d-k}})}\right).
\end{align*}
We need to show that this last expression is equal to $(d+1)!\length(R/I)$,
which amounts to show that the coefficient in the parenthesis is zero for $k < d$. But by the definition of the Stirling numbers we have the identity
\begin{align*}
(x)_{d+1} &= \sum_{i = 0}^{d}  (-1)^{i} \stirlingI{d+1}{d+1-i} x^{d+1-i} = \sum_{i = 0}^{d} (-1)^{i} \stirlingI{d+1}{d+1-i} \sum_{k = 0}^{d-i} \stirlingII{d+1-i}{ k+1} (x)_{k+1} \\
&= \sum_{k = 0}^d \left( \sum_{i = 0}^{d-k} (-1)^{i} \stirlingI{d+1}{d+1-i} \stirlingII{d+1-i}{ k+1} \right) (x)_{k+1}.
\end{align*}
Since $\{(x)_n\}_n$ are linearly independent, it follows that in the last expression above, the coefficient of $(x)_{k+1}$ is zero for $k < d$, which is exactly what we need. The last conclusion then follows from Theorem~\ref{thm: optimal Lech dim three}.
\end{proof}

An application of Theorem~\ref{thm: main Lech incl/excl}
gives us the following weaker version of Conjecture~\ref{conj: HSV}. 


\begin{proposition}
Let $k$ be an infinite field and $(R, \m)=k[[x_0,\dots,x_d]]$. Let $R_1 = R/(\ell_1)$ for a general linear form $\ell_1 \in \m$. Then for all $\m$-primary ideals $I\subseteq R$, we have
\[
\eh(I) + \stirlingI{d+1}{d} \eh(IR_1) \leq (d+1)! \length(R/I).
\]
Moreover, the inequality is strict if $d \geq 2$.
\end{proposition}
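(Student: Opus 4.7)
The plan is to reduce to the monomial case via Gröbner degeneration and then apply the averaged form of Theorem~\ref{thm: main Lech incl/excl} both to $\eh(I)$ in $R$ and to $\eh(I,R/(x_i))$ in each coordinate hyperplane. For the reduction, I would invoke Corollary~\ref{cor: grober degeneration}: for a suitable weight $w$, the initial monomial ideal $\init_{w}(I)$ satisfies $\length(R/I)=\length(R/\init_{w}(I))$, $\eh(I)\leq\eh(\init_{w}(I))$, and, by upper semicontinuity of multiplicity in the family, $\eh(I,R/(x_i))\leq\eh(\init_{w}(I),R/(x_i))$ for each $i$. Since $\eh(IR_1)\leq\eh(I,R/(x_i))$ by upper semicontinuity in $\ell$, we may pass to the case where $I$ is monomial.

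Next, I would apply Theorem~\ref{thm: main Lech incl/excl} with $S=k[[x_i]]$ for each $i\in\{0,\ldots,d\}$ and average. Using the invariance of colengths of monomial ideals under permutation of variables, the averaged inequality takes the symmetric form
\[
\eh(I)\leq (d+1)!\length(R/I)+d!\sum_{k=1}^{d}\frac{(-1)^{k}(d+1-k)}{2^{k}}S_{k},
\]
where $S_k=\sum_{|S|=k,\,S\subseteq\{0,\ldots,d\}}\length(R/(I,x_S))$. For $\eh(IR_1)$, the bound $\binom{d+1}{2}\eh(IR_1)\leq\tfrac{d}{2}\sum_{i}\eh(I,R/(x_i))$, combined with the same averaging applied to each $\eh(I,R/(x_i))$ inside $R/(x_i)\cong k[[x_0,\ldots,\widehat{x_i},\ldots,x_d]]$, produces a companion expansion in terms of the same $S_k$'s.

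When these two bounds are added, the size-one projection terms cancel exactly and the combined error collapses to $\sum_{k\geq 2}c_{k}S_{k}$ with $c_{k}=\tfrac{(-1)^{k-1}(d+1-k)(k-1)d!}{2^{k}}$ of alternating sign (negative at even $k$, positive at odd $k$). The main obstacle is to establish $\sum_{k\geq 2}c_{k}S_{k}\leq 0$. The principal combinatorial input is the inequality $kS_{k}\leq (d+2-k)S_{k-1}$, obtained by pairing each size-$k$ subset $T$ with its $k$ size-$(k-1)$ sub-subsets; equivalently, the normalized averages $T_{k}=S_{k}/\binom{d+1}{k}$ are non-increasing, and one additionally uses $T_{k}\geq 1$ for $\m$-primary $I$. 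For $d\leq 5$ this already telescopes the alternating $c_{k}$'s pairwise into a manifestly non-positive quantity. For larger $d$ I envision an Abel summation on $(T_{k})$ together with the sharp identity $\sum_{k=2}^{d}c_{k}\binom{d+1}{k}=(d+1)!\bigl(\tfrac{d+1}{2^{d}}-1\bigr)<0$, which resolves the extremal case $I=\m$ and supplies the slack required to dominate the single positive partial sum appearing in the Abel expansion. Finally, the strict inequality for $d\geq 2$ is inherited from the strict form of Theorem~\ref{thm: main Lech incl/excl} (valid in dimension $\geq 3$), which contributes an additional strictly negative term proportional to $\sum_{|S|=2}\length(R/(I,x_S))>0$.
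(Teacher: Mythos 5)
Your overall strategy — degenerate to a monomial ideal, bound each $\eh(I,R/(x_i))$ by Theorem~\ref{thm: main Lech incl/excl}, combine — is the same as the paper's, but you take a different averaging scheme, and that choice is where your argument runs into trouble.

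The paper keeps the variable $x_0$ distinguished throughout. It applies Theorem~\ref{thm: main Lech incl/excl} with $S=k[[x_0]]$ both to bound $\eh(I)$ (projections onto subsets of $\{x_1,\dots,x_d\}$) and to each $R/(x_i)$ for $i=1,\dots,d$ only; after summing, every length $\length(R/(I,x_{j_1},\dots,x_{j_k}))$ that appears has indices in $\{1,\dots,d\}$. The residual sum $\sum_{k}\frac{(-1)^k(1-k)}{2^k}S_k$ is then shown to be $\leq 0$ by the $a_i$-decomposition from Proposition~\ref{prop: combinatorial incl/excl}: one takes $S_j$ to be the set of monomials outside $(I,x_j)$ for $j=1,\dots,d$, writes $S_k=\sum_{i\geq k}\binom{i}{k}a_i$ with $a_i\geq 0$, and the coefficient of each $a_i$ is the closed binomial sum $\frac{i+1}{2^i}-1\leq 0$. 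No monotonicity hypothesis or extremal case analysis is needed; the non-positivity is termwise and immediate.

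You instead average over all $d+1$ variables symmetrically, so your $S_k$ ranges over subsets of $\{0,\dots,d\}$. This is an aesthetically natural choice, but it destroys the clean $a_i$-decomposition argument, and you are then forced to supply a substitute for the inequality $\sum_{k\geq 2}c_kS_k\leq 0$. This is where there is a genuine gap: you verify the elementary monotonicity $kS_k\leq(d+2-k)S_{k-1}$ and observe it suffices for $d\leq 5$, but for general $d$ you only ``envision'' an Abel summation together with the extremal identity at $I=\m$; you neither identify which partial sums $A_j=\sum_{k=2}^{j}c_k\binom{d+1}{k}$ are positive nor show that the slack from $A_d<0$ dominates them. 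Since the $c_k$ alternate in sign, this sign analysis is exactly the hard part and cannot be waved away. (As a smaller issue, your reduction step asserts ``$\eh(IR_1)\leq\eh(\init_w(I),R/(x_i))$ by upper semicontinuity''; the needed comparison is $\eh(IR_1)\leq\eh(IR/(x_i))$, which holds because $\ell_1$ is general, and then $\eh(I,R/(x_i))\leq\eh(\init_w(I),R/(x_i))$ for a well-chosen $w$ — but this is the content of the reduction in \cite[Section 3]{HSV}, and should be cited rather than folded into a vague semicontinuity claim.) If you want to salvage your symmetric averaging, you should try to write the symmetric $S_k$'s in terms of a similar $a_i$-decomposition over all $d+1$ variables and evaluate the resulting binomial sums directly, rather than leaning on monotonicity plus Abel summation.
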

\begin{proof}
By the same argument as in \cite[Section 3]{HSV}, we may assume that $I\subseteq R$ is an $\m$-primary monomial ideal.
Since $R_1=R/(\ell_1)$ for a general linear form $\ell_1$, we know that $\eh(IR_1) \leq \eh (I, R/(x_i))$ for every $i$. Thus, after
applying Theorem~\ref{thm: main Lech incl/excl} to $R/(x_i)$ for $i = 1, \ldots, d$, we obtain that
\[
\eh(IR_1) \leq
d! \Bigg(\length(R/I+(x_i)) + \sum_{k = 1}^{d-1}\frac{(-1)^{k}}{2^{k}}
\sum_{\substack{1 \leq j_1 < \cdots < j_k \leq d \\j_n \neq i}} \length\left(\frac{R}{I+(x_i, x_{j_1}, \ldots, x_{j_k})}\right)\Bigg).
\]
Summing these up and noting that each $\length(R/I+(x_{j_1}, \ldots, x_{j_k}))$ will appear exactly $k$ times (one for each $R/(x_{j_n})$) we obtain the inequality 
\begin{equation}\label{eqn: 7.3.6 bound}
d \eh(IR_1) \leq d! \left(\sum_{k = 1}^{d} \frac{(-1)^{k-1}k}{2^{k-1}}
\sum_{1 \leq j_1 < \cdots < j_k \leq d} \length\left(\frac{R}{I+(x_{j_1}, \ldots, x_{j_k})}\right) \right)    
\end{equation}
Since $\stirlingI{d+1}{d} = \binom{d+1}{2}$, we may combine  
$(\ref{eqn: 7.3.6 bound})$ 
with the bound on $\eh(I)$ given by Theorem~\ref{thm: main Lech incl/excl} to derive the inequality  
\begin{align*}
\frac{\eh(I) + \stirlingI{d+1}{d} \eh(IR_1)}{(d+1)!}
\leq 
\length(R/I) + \sum_{k = 1}^{d}\left(\frac{-1}{2}\right)^{k}(1-k)\mspace{-25mu}\sum_{\substack{\quad \\ 1 \leq j_1 < \dots < j_k \leq d}} \mspace{-10mu} \length\left(\frac{R}{I+(x_{j_1}, \ldots, x_{j_k})}\right).
\end{align*}
Since this inequality is strict if $d\geq 2$ by Theorem~\ref{thm: main Lech incl/excl}, it remains to show that the right-hand side of the inequality is no greater than $\length(R/I)$. We will use the method of Proposition~\ref{prop: combinatorial incl/excl}. Since 
\[
\sum_{k = 1}^s \frac {k(-1)^{k-1}}{2^k} \binom sk = \frac s2 \sum_{k = 1}^s \left(\frac{-1}{2} \right)^{k-1} \binom{s-1}{k-1} = \frac {s}{2^s},
\]
it is easy to deduce that 
$$\sum_{k = 1}^s (-1)^{k}\frac {1-k}{2^k}\binom{s}{k} = \frac{s + 1}{2^s} - 1 \leq 0.$$ 
By the corresponding inclusion-exclusion formula (see the proof of Proposition~\ref{prop: combinatorial incl/excl}) we may now show that
$$\sum_{k = 1}^{d}\frac{(-1)^{k}}{2^k}(1-k)\sum_{1\leq j_1 < \cdots < j_k \leq d} \length\left(\frac{R}{I+(x_{j_1}, \ldots, x_{j_k})}\right) \leq 0.$$
This completes the proof.
\end{proof}

\subsection{Three points in $\mathbb{P}^2$} 
We believe that there exists a local ring $R$ such that 
\[
\lm(R[[T_1, \ldots, T_n]]) > \lm(R[[T_1, \ldots, T_{n+1}]]) \text{ for all $n$}.
\]
As an evidence, we will show that the coordinate ring $R$ of three points in $\mathbb{P}^2$ is a one-dimensional local ring $(R,\m)$ such that $\lm(R) > \lm(R[[U]]) > \lm(R[[U, V]])$. It turns out that even in this seemingly simple case, estimating $\lm(R[[U, V]])$ is not easy and requires nontrivial new estimates on the mixed multiplicities in a power series ring in two variables. We have not been able to progress this example further due to the lack of such estimates in higher dimensions. 

We begin with the following variation of Lemma~\ref{lem: mixed multiplicity dim two}.

\begin{lemma}
\label{lem: another mixed multiplicity dim two}
Let $(R,\m)=k[[x,y]]$ and let $I\subseteq J \subseteq R$ be two $\m$-primary monomial ideals. Then we have
    \[
        \eh (I|J) \leq \frac 1 2 \length (R/I) + 3 \length (R/J).
    \]
\end{lemma}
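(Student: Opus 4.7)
The inequality can be obtained by combining three standard ingredients and, interestingly, requires neither the monomial hypothesis nor the containment $I \subseteq J$. My plan is therefore to give a short proof that uses the structure of $R$ only through its being a two-dimensional regular local ring, and to observe at the end that the argument in fact yields a slightly sharper bound than the one stated.

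The starting point is the Teissier--Rees--Sharp Minkowski inequality for mixed multiplicities (see \cite{Teissier, ReesSharp}), which in dimension two specializes to
\[
\eh(I\mid J)^{2} \;\leq\; \eh(I)\,\eh(J).
\]
Since $R = k[[x,y]]$ is a two-dimensional regular local ring we have $\eh(R) = 1$, and thus Lech's inequality (Theorem~\ref{thm Lech}) gives $\eh(I) \leq 2\length(R/I)$ and $\eh(J) \leq 2\length(R/J)$. Combining these bounds yields
\[
\eh(I\mid J) \;\leq\; 2\sqrt{\length(R/I)\,\length(R/J)}.
\]

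To conclude, I would apply the AM-GM inequality to the pair $\tfrac{1}{2}\length(R/I)$ and $2\length(R/J)$:
\[
2\sqrt{\length(R/I)\,\length(R/J)}
\;=\; 2\sqrt{\tfrac{1}{2}\length(R/I)\cdot 2\length(R/J)}
\;\leq\; \tfrac{1}{2}\length(R/I) + 2\length(R/J)
\;\leq\; \tfrac{1}{2}\length(R/I) + 3\length(R/J),
\]
which is the desired inequality.

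There is essentially no obstacle here: the argument is a three-line chain of classical estimates. In fact, the same calculation gives the slightly stronger bound $\eh(I\mid J) \leq \tfrac{1}{2}\length(R/I) + 2\length(R/J)$, and neither monomiality nor the containment $I \subseteq J$ is used, so presumably the constant $3$ in the statement is chosen for convenience in the subsequent application rather than for optimality. As an alternative, one could imitate the combinatorial approach of Lemma~\ref{lem: mixed multiplicity dim two}: writing $I=(y^{r_0},y^{r_1}x,\dots,x^{c+1})$, $J=(y^{s_0},y^{s_1}x,\dots,x^{c+1})$ with $r_j\geq s_j\geq 0$ and using $\length(R/IJ)=\sum_{t}\min\{r_i+s_j : i+j=t\}$, one could bound each antidiagonal minimum by $r_{\lfloor t/2\rfloor}+s_{\lceil t/2\rceil}$ and sum; this would yield $\length(R/IJ)\leq 2\length(R/I)+2\length(R/J)$, which is enough to conclude the lemma when $\length(R/I)\leq 4\length(R/J)$, and one could then patch the remaining range by Minkowski. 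However, the uniform Minkowski--Lech--AM-GM argument above is cleaner, so I would take that route.
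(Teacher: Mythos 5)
Your proof is correct, and it takes a genuinely different route from the paper's. You combine the Teissier--Rees--Sharp Minkowski inequality $\eh(I\mid J)^2\leq\eh(I)\eh(J)$ with Lech's inequality ($\eh(I)\leq 2\length(R/I)$ in a two-dimensional regular local ring) and AM--GM, yielding the sharper bound $\eh(I\mid J)\leq\tfrac12\length(R/I)+2\length(R/J)$ with no need for the monomial or containment hypotheses. The paper instead reduces, via the identity $\eh(I\mid J)=\length(R/IJ)-\length(R/I)-\length(R/J)$ for monomial ideals in $k[[x,y]]$, to the estimate $\length(R/IJ)\leq\tfrac32\length(R/I)+4\length(R/J)$, which it then proves by an explicit (and somewhat ad hoc) grouping of the antidiagonal minima $\min\{r_i+s_j:i+j=t\}$ in the Newton-polytope data of $I$ and $J$. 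Your argument is cleaner and more robust; the paper's combinatorial approach is in the same spirit as its Lemma~\ref{lem: mixed multiplicity dim two}, where a similar Newton-polytope estimate is needed and where no cheap Minkowski-plus-Lech shortcut seems available (the quantity being bounded there involves $\length(R/I+(x))$, which Minkowski cannot see). The constant $3$ in the statement is, as you surmise, just the output of the paper's particular grouping rather than an optimized value; note however that your improved constant, if propagated into Proposition~\ref{prop: skew Lech} and then Example~\ref{example: double drop}, changes the coefficient balance in the inclusion-exclusion there (the $\length(S/I+(x,y,z))$ terms no longer cancel exactly), so the $3$ is also convenient, if not essential, downstream.
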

\begin{proof}
By \cite[Exercise 17.8]{SwansonHuneke}, we know that $\eh(I | J)= \length(R/IJ) - \length(R/I)- \length(R/J)$, so 
it suffices to show that
    $\length (R/IJ) \leq \frac{3}2 \length (R/I) + 4 \length (R/J)$.
We now follow the notation of the proof of Lemma~\ref{lem: mixed multiplicity dim two} and express 
    \[
        \length (R/IJ) = \sum_{t = 0}^{2c+1}
        \min \{r_i + s_j \mid i + j = t\}
        \leq  \sum_{t = 0}^{c}
        \min \{r_i + s_j \mid i + j = t\}
        + \sum_{j = 0}^c s_j.
    \]
    Therefore it suffices to show that 
    \[
    L \coloneqq \sum_{t = 0}^{c}
        \min \{r_i + s_j \mid i + j = t\} 
        \leq \frac 3 2 \sum_{j = 0}^c r_j
        + 3 \sum_{j = 0}^c s_j.
    \]
    We demonstrate this inequality using the elementary inequality $\min\{a + b, c+ d \} \leq (a+b + c + d)/2$ and grouping the terms in the following way: 
    \begin{align*}
   L &= \sum_{t = 0}^{c}  \min \{r_i + s_j \mid  i + j = t\} \\
   &\leq  r_0 + s_0 + \frac{r_0 + s_1 + r_1 + s_0}{2}
    + \frac{r_1 + s_1 + r_2 + s_0}{2}
    + \frac{r_1 + s_2 + r_2 + s_1}{2}
    \\ &\phantom{<} + \frac{r_2 + s_2 + r_3 + s_1}{2}
    + \frac{r_3 + s_2 + r_4 + s_1}{2}
    + \frac{r_3 + s_3 + r_4 + s_2}{2}
    + \frac{r_4 + s_3 + r_5 + s_2}{2}
    \\&\phantom{<} + \frac{r_5 + s_3 + r_6 + s_2}{2}
    + \frac{r_5 + s_4 + r_6 + s_3}{2}
    + \frac{r_6 + s_4 + r_7 + s_3}{2}
    + \frac{r_7 + s_4 + r_8 + s_3}{2} + \cdots.
    \end{align*}
    Direct inspection shows that each $r_i$ appears with coefficient at most $3/2$ and each $s_i$ appears with coefficient at most $3$. Thus $L \leq \frac 3 2 \sum_{j = 0}^c r_j
        + 3 \sum_{j = 0}^c s_j$ as desired. 
\end{proof}

We have the following variant of Theorem~\ref{thm: optimal Lech dim three}.

\begin{proposition}
\label{prop: skew Lech}
    Let $(S,\m) = k[[x,y,z]]$ and let $I\subseteq S$ be an $\m$-primary ideal. Then
    \[
        \eh(I) \leq \frac {15} 2\length (S/I) - 5 \length (S/I+(z)).
    \]
\end{proposition}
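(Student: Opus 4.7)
The proposition is a skew analogue of Theorem~\ref{thm: optimal Lech dim three}, and the plan is to follow that blueprint while replacing the symmetric Lemma~\ref{lem: mixed multiplicity dim two} with the skew estimate of Lemma~\ref{lem: another mixed multiplicity dim two}. The latter is engineered precisely so that, after summing, the right-hand side involves only $\length(S/I)$ and $\length(S/I+(z))$.

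First I would reduce to the case when $I$ is a monomial ideal. Pick any monomial order $\prec$ and set $I'=\init_\prec(I)$. Then $\eh(I)\leq \eh(I')$ and $\length(S/I)=\length(S/I')$; moreover, the containment $\init_\prec(I)+(z)\subseteq \init_\prec(I+(z))$ gives
\[
\length(S/I+(z)) = \length(S/\init_\prec(I+(z))) \leq \length(S/I'+(z)).
\]
Since the coefficient $-5$ in front of the $(z)$-projection goes in the favorable direction, the monomial inequality for $I'$ implies the general inequality for $I$.

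Next, assuming $I$ monomial, set $R=k[[x,y]]$, let $c\geq 0$ be the largest integer with $z^c\notin I$, and define $I_j\coloneqq\{m\in R\mid mz^j\in I\}$. Then $I_0\subseteq I_1\subseteq \cdots \subseteq I_c$ are $\m_R$-primary monomial ideals and $I_{c+1}=R$. Applying Proposition~\ref{prop: Mumford 4.3} with $d=\dim R=2$, isolating the $k=c$ term (where $I_{c+1}=R$ collapses the mixed multiplicities to just $\eh(I_c)$) and estimating the remaining sums via the Minkowski inequality $\eh(I_k\mid I_{k+1})\leq \sqrt{\eh(I_k)\eh(I_{k+1})}\leq \tfrac 12(\eh(I_k)+\eh(I_{k+1}))$, exactly as in the proof of Theorem~\ref{thm: optimal Lech dim three}, yields
\[
\eh(I) \leq \eh(I_0) + 2\sum_{j=1}^c \eh(I_j) + \sum_{k=0}^{c-1} \eh(I_k\mid I_{k+1}).
\]

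Finally, I would bound each term separately. Lech's inequality (Theorem~\ref{thm Lech}) applied inside the regular ring $R$ (where $\eh(R)=1$) gives $\eh(I_j)\leq 2\length(R/I_j)$, and Lemma~\ref{lem: another mixed multiplicity dim two} applied to the pair $I_k\subseteq I_{k+1}$ gives $\eh(I_k\mid I_{k+1})\leq \tfrac 12 \length(R/I_k) + 3\length(R/I_{k+1})$. Using the monomial identities $\sum_{j=0}^{c}\length(R/I_j)=\length(S/I)$ and $\length(R/I_0)=\length(S/I+(z))$, the first sum bounds
\[
\eh(I_0)+2\sum_{j=1}^c \eh(I_j) \leq 2\length(R/I_0)+4\sum_{j=1}^c \length(R/I_j) = 4\length(S/I) - 2\length(S/I+(z)),
\]
while the mixed sum bounds, after collecting coefficients and discarding the nonnegative term $\tfrac 12 \length(R/I_c)$,
\[
\sum_{k=0}^{c-1}\eh(I_k\mid I_{k+1}) \leq \tfrac 12 \length(S/I) + 3\bigl(\length(S/I)-\length(S/I+(z))\bigr) = \tfrac 72 \length(S/I)-3\length(S/I+(z)).
\]
Adding the two estimates produces $\eh(I)\leq \tfrac{15}{2}\length(S/I) - 5\length(S/I+(z))$, as desired. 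The main obstacle is not technical but accounting: Lemma~\ref{lem: another mixed multiplicity dim two} already does the heavy lifting, and the only mild subtlety is verifying that the Minkowski step and the monomial reduction line up correctly with the asymmetric coefficients $\tfrac 12$ and $3$ so that no stray terms involving $\length(R/I_j+(x))$ or $\length(R/I_j+(y))$ appear in the final bound.
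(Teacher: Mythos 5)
Your argument is correct and follows the paper's proof essentially step for step: Mumford's Proposition~\ref{prop: Mumford 4.3} applied to the filtration $I_j=\{m\in R\mid mz^j\in I\}$, Lech's inequality for the $\eh(I_j)$ terms, Lemma~\ref{lem: another mixed multiplicity dim two} for the $\eh(I_k\mid I_{k+1})$ terms, and the same initial-ideal reduction (which the paper does last rather than first, a purely cosmetic difference). One small inaccuracy in the write-up: you say the Minkowski inequality is used to arrive at the intermediate bound $\eh(I)\leq \eh(I_0)+2\sum_{j=1}^c\eh(I_j)+\sum_{k=0}^{c-1}\eh(I_k\mid I_{k+1})$, but that display retains the mixed multiplicities unchanged and in fact falls out of Proposition~\ref{prop: Mumford 4.3} directly (the only extra observation needed is that the $k=c$ term collapses because $I_{c+1}=R$); Minkowski plays no role here, nor in the paper's proof of Theorem~\ref{thm: optimal Lech dim three}.
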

\begin{proof}
The strategy is completely similar to the proof of Theorem~\ref{thm: optimal Lech dim three}. We first handle the case that $I$ is a monomial ideal. We set $R\coloneqq k[[x, y]]$ and consider monomial ideals 
$I_j\coloneqq  \{m \in R \mid mz^j\in I \}.$
By Proposition~\ref{prop: Mumford 4.3}, we have 
\begin{align}\label{eq for 6.4.2}
\eh(I) \leq \eh(I_0) + 2\eh(I_1) + \cdots + 2\eh(I_c) + \eh(I_0 | I_1) + \cdots + \eh(I_{c-1} | I_c).  
\end{align}
Using Lemma~\ref{lem: another mixed multiplicity dim two} for the mixed multiplicity terms and Lech's inequality (Theorem~\ref{thm Lech}) for the usual multiplicity terms, we bound the right-hand side of (\ref{eq for 6.4.2}) by
    \begin{align*}
    \eh(I) \leq \text{RHS} &\leq 2\length(R/I_0) + 4 \sum_{j=1}^c \length(R/I_j) + \sum_{j=0}^c \big(\frac{1}{2}\length(R/I_j) + 3\length(R/I_{j+1}) \big) \\
    &\leq \frac{5}{2} \length(R/I_0) + \frac{15}{2} \sum_{j=1}^c\length(R/I_j) = \frac{15}{2} \sum_{j=0}^c\length(R/I_j) - 5\length(R/I_0) \\
    &\leq \frac{15}{2} \length(S/I) - 5\length(S/I +(z)) .
    \end{align*}
This completes the proof when $I$ is a monomial ideal. 

To prove the general case, we note that for any monomial order $\prec$ we have the containment $\init_{\prec}(I) \subseteq \init_{\prec}(I, z)$ and thus
$\eh(I) \leq \eh(\init_{\prec}(I))$, $\length (S/I) = \length (S/\init_{\prec}(I))$, 
and $\length (S/(\init_{\prec}(I), z) ) \geq 
\length (S/\init_{\prec}(I, z)) = \length (S/(I,z))$. It follows that
\begin{align*}
\eh(I) \leq \eh(\init_{\prec}(I)) \leq \frac{15}{2} \length(S/\init_{\prec}(I)) - 5 \length(S/\init_{\prec}(I) +(z))\leq  \frac {15} 2\length (S/I) - 5 \length (S/I+(z))
\end{align*}
where the second inequality follows from the already established monomial case. 
\end{proof}

We are now ready to study the (higher) Lech--Mumford constants of the coordinate ring of three points in $\mathbb{P}^2$.

\begin{example}
\label{example: double drop}
Let $(R,\m) = k[[x,y,z]]/(x,y)\cap (y,z) \cap (x,z)$. Then we have 
\[
\begin{cases}
\lm(R)=3; \\
\lm(R[[U]]) = 3/2; \\
\lm(R[[U, V]]) \leq 5/4.
\end{cases}
\]
\end{example}
\begin{proof}
Since $\dim(R)=1$, $\lm(R)=\eh(R)=3$ follows from Proposition~\ref{prop: basic results on cLM}(\ref{part LM dim 1}). Next  we verify that $\lm(R[[U]]) = 3/2$. 
Taking $I = (x,y,z,U)$ shows that $\lm(R[[U]]) \geq 3/2$, 
so it suffices to verify that 
$\eh (I) \leq 3 \length (R[[U]]/I)$ for all monomial $\m+(U)$-primary ideals by Proposition~\ref{prop: LM multigraded}.
Now by Corollary~\ref{cor: monomial inc/exc} and Corollary~\ref{cor: Mumford Lemma 3.15}, we have
\begin{align*}
\eh (I) 
&= \eh (I, R[[U]]/(x,y)) + \eh (I, R[[U]]/(x,z)) + \eh (I, R[[U]]/(y,z)) 
\\ &\leq 
2 \left ( \length \left(\frac{R[[U]]}{I + (x,y)} \right)
+ \length \left (\frac{R[[U]]}{I+(x,z)} \right) + \length \left (\frac{R[[U]]}{I+(y,z)} \right)
\right ) 
- 3 \length \left (\frac{R[[U]]}{I+(x,y,z)}\right )\\
& \leq  3 \left ( \length \left (\frac{R[[U]]}{I + (x,y)}\right )
+ \length \left (\frac{R[[U]]}{I+(x,z)} \right) + \length \left (\frac{R[[U]]}{I+(y,z)} \right)
\right ) 
- 6 \length \left (\frac{R[[U]]}{I+(x,y,z)} \right) \\
& = 3 \length (R[[U]]/I).
\end{align*}
This completes the proof that $\lm(R[[U]])=3/2$.
    
Finally, we estimate $\lm(R[[U, V]])$. We set $S\coloneqq k[[x,y,z,U, V]]$. By Proposition~\ref{prop: LM multigraded} it suffices to consider monomial $(\m+(U,V))$-primary ideals $I\subseteq S$. By Corollary~\ref{cor: monomial inc/exc}, we have
\[
\length (S/I) = \sum_{i = 1}^3 \length (S/I+P_i) - 2 \length (S/I+(x,y,z))
\]
where $P_1=(x,y)S$, $P_2=(y,z)S$, $P_3=(x,z)S$. Since $S/P_i$ is a power series ring in three variables, by Proposition~\ref{prop: skew Lech} we have
\[
\eh (I, S/P_i)
\leq \frac {15}{2} \length (S/I+P_i) - 5 \length (S/I+(x,y,z)). 
\]
Since $\eh (I) = \sum_{i = 1}^3 \eh (I, S/P_i)$, it follows that 
\[
\eh(I) 
\leq \frac {15}{2} \sum_{i = 1}^3 \length (S/(I,P_i)) 
- 15 \length (S/I+(x,y,z))
= \frac{15}{2} \length (S/I).
\]
Therefore 
$\lm(R[[U,V]])\leq 3! \times 15/2 = 5/4$ as wanted.
\end{proof}

\begin{remark}
One sees a drastic difference between the (higher) Lech--Mumford constants of $R=k[[x,y,z]]/(x,y)\cap (y,z) \cap (x,z)$ and $R'=k[[x,y]]/(x^3+y^3)$: while $\lm(R[[t_1,t_2]])\leq 5/4$ (and thus $\limlm(R)\leq 5/4$) by Example~\ref{example: double drop},   by Remark~\ref{rmk: lim LM of x^3+y^3} one has $\limlm(R')\geq 1.26>5/4$ (and thus $\lm(R'[[t_1,\dots,t_n]])>5/4$ for all $n$).
\end{remark}

\subsection{Semi-log canonical hypersurfaces in dimension two}
Two-dimensional semi-log canonical hypersurface singularities over $\mathbb{C}$ are classified, see for example \cite[Table 1]{LiuRollenske}. Using this classification, 
we will show in this subsection that most of these singularities are semistable. In fact, we suspect that all of them are semistable, but our techniques cannot handle the case of simple elliptic singularities,\footnote{In \cite[page 79, Case 3 a)]{Mumford}, it was stated without proof that cones over nonsingular elliptic curves are semistable. We have not been able to verify this claim. A concrete example that we do not know how to show semistability is the singularity $\mathbb{C}[[x,y,z]]/(x^3+y^3+z^3)$.} see Conjecture~\ref{conj: elliptic curve semistable}.

We start by recalling the classification. In what follows, all singularities are hypersurfaces of dimension two over $\mathbb{C}$, i.e., (up to completion) they can be written as $\mathbb{C}[[x,y,z]]/(f)$. 
\begin{enumerate}
    \item[(0)] Canonical singularities are either regular or ADE type singularities, in particular, they are Lech-stable by Theorem~\ref{thm: Lech-stable CM surface}.
\end{enumerate}
Next, log canonical but not canonical singularities consists of the next four families, the first three are simple elliptic singularities, and the fourth family is called cusp singularities:
\begin{enumerate}
\item[(1)] $X_{1,0}\colon$ $f=x^2 + y^4 + z^4 + \lambda xyz$, $\lambda^4 \neq 64$,
\item[(2)] $J_{2,0}\colon$ $f=x^2 + y^3 + z^6 + \lambda xyz$, $\lambda^6 \neq 432$,
\item[(3)] $T_{3,3,3}\colon$ $f=x^3 + y^3 + z^3 + \lambda xyz$, $\lambda^3 \neq -27$,
\item[(4)] $T_{p,q,r}$: $f=xyz + x^p + y^q + z^r$, $\frac 1p + \frac 1q + \frac 1r < 1$.
\end{enumerate}
Lastly, semi-log canonical but not log canonical singularities are classified as follows. First, we have normal crossings $A_\infty$ and pinched point $D_\infty$:
\begin{enumerate}
\item[(5)] $A_\infty$: $f=x^2+y^2$,
\item[(6)] $D_\infty$: $f=x^2+y^2z$.
\end{enumerate}
The rest semi-log canonical singularities are degenerate cusps:
\begin{enumerate}
\item[(7)] $T_{2, \infty, \infty}$: $f = x^2 + y^2z^2$,
\item[(8)] $ T_{2, q, \infty}$: $f = x^2 + y^2(z^2 + y^{q-2})$, $q \geq 3$,
\item[(9)] $T_{\infty, \infty, \infty}$: $f = xyz$,
\item[(10)] $T_{p, \infty, \infty}$: $f = xyz + x^p$, $p \geq 3$,
\item[(11)] $T_{p,q, \infty}$: $f = xyz + x^p + y^q$, $q \geq p \geq 3$.
\end{enumerate}

\begin{theorem}\label{thm: slc semistable}
All the singularities in the families (4)--(11) of the classification exhibited above are semistable.
\end{theorem}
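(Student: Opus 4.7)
The plan is to handle each family either by invoking an earlier result or by performing a Gröbner degeneration (after a possible preliminary change of variables) that reduces the defining polynomial $f$ to $xyz$. In the latter case, semistability of $R$ follows from Corollary~\ref{cor: grober degeneration} applied to $k[x,y,z,T]/(f)$ with suitable weights on $x,y,z$ (any non-negative weight on $T$ works, since $f$ does not involve $T$), combined with the completion-invariance of $\lm$ (Proposition~\ref{prop: basic results on cLM}(\ref{part restrict to complete})) and Theorem~\ref{thm: snc is semistable}, yielding
\[
\lm(R[[T]]) \leq \lm\bigl((k[[x,y,z]]/(xyz))[[T]]\bigr) = 1.
\]
Families (5) $A_\infty$ and (6) $D_\infty$ are already shown to be Lech-stable (hence semistable) by Proposition~\ref{prop: BS for non normal limit} (the first after the coordinate change $x^2+y^2 = (x+iy)(x-iy)$), and family (9) $T_{\infty,\infty,\infty}$ is exactly $k[[x,y,z]]/(xyz)$. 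Thus the general strategy above needs only be carried out for families (4), (7), (8), (10), (11).

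In families (4), (10), and (11) the polynomial $f$ already contains an $xyz$ term. For (4), setting $N = \lcm(p,q,r)$ and assigning weights $(N/p, N/q, N/r)$ to $(x,y,z)$ gives $w(xyz) = N\bigl(\tfrac{1}{p} + \tfrac{1}{q} + \tfrac{1}{r}\bigr) < N = w(x^p) = w(y^q) = w(z^r)$, so $xyz$ is the unique minimum-weight term and hence the initial form of $f$. For (10), the weights $(2,1,1)$ give $w(xyz)=4 < 2p = w(x^p)$ since $p \geq 3$. For (11), the weights $(2,2,1)$ give $w(xyz)=5 < 2p,\,2q$ since $p,q \geq 3$. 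Corollary~\ref{cor: grober degeneration} then finishes these three cases.

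The principal obstacle is families (7) and (8), whose defining polynomials contain no $xyz$ monomial in the given coordinates, so no positive weighting on $(x,y,z)$ can make $xyz$ the initial form of $f$. The fix is a preliminary coordinate change available in characteristic zero: the substitution $x \mapsto x+iyz$ (with $i^2=-1$) converts $f = x^2+y^2z^2$ into $(x+iyz)^2 + y^2z^2 = x^2+2ixyz$ for family (7), and $f = x^2+y^2z^2+y^q$ into $x^2+2ixyz+y^q$ for family (8). For (7) the weights $(3,1,1)$ give $w(xyz)=5 < 6 = w(x^2)$. For (8) the weights $(q+3,q+1,1)$ give $w(x^2)=2q+6$, $w(xyz)=2q+5$, and $w(y^q)=q^2+q$; since $q\geq 3$ implies $q^2-q-5 \geq 1>0$, the term $xyz$ is strictly dominant. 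Invoking Corollary~\ref{cor: grober degeneration} on the transformed polynomial then completes the proof uniformly in all remaining families.
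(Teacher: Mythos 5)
Your proof is correct, and it takes a genuinely different (and in places cleaner) route than the paper. The paper cites Theorem~\ref{thm: Lech-stable CM surface} for families (5)--(6), Theorem~\ref{thm: snc is semistable} for (9), and for (4), (7), (8), (10), (11) it performs Gr\"obner degenerations whose \emph{targets} are the elliptic polygonal cones of Proposition~\ref{prop: Mumford elliptic cones} (for instance, $x^2+y^2z^2$, or $xyz+x^2+y^3$), invoking that proposition to close each case. Family (4) is handled there by a four-way case split on $(p,q)$ involving two successive degenerations. You instead degenerate \emph{everything} to $xyz$ and rely only on Theorem~\ref{thm: snc is semistable} plus the $A_\infty$, $D_\infty$ results. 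Two ingredients make this work: the $\lcm$ weight $(N/p, N/q, N/r)$ for $T_{p,q,r}$, which makes $w(xyz) = N(1/p+1/q+1/r) < N$ precisely from the cusp hypothesis and replaces the paper's case analysis with a single uniform argument; and the shear $x \mapsto x+iyz$ for families (7)--(8), which manufactures an $xyz$ monomial so that a positive weighting can select it as the initial form---the paper avoids this coordinate change by degenerating only to $x^2+y^2z^2$ and then citing Proposition~\ref{prop: Mumford elliptic cones}. Your argument therefore trades a dependence on the elliptic-cone computations of Section 7 for a slightly longer direct computation, and the uniform treatment of (4) is a genuine simplification. The only minor caveat worth noting (and which you implicitly handle, since the classification is stated over $\mathbb{C}$) is that the shear $x\mapsto x+iyz$ and the factorization $x^2+y^2=(x+iy)(x-iy)$ require $i\in k$; over a field where $-1$ is not a square these steps would need adjustment, but this does not affect the theorem as stated.
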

\begin{proof}
We first handle the families (5)--(11). $A_\infty$ and $D_\infty$ are semistable by Theorem~\ref{thm: Lech-stable CM surface} (they are even Lech-stable). $T_{2,\infty, \infty}$ is semistable by Proposition~\ref{prop: Mumford elliptic cones} $n=2$ case (see Remark~\ref{rmk: Mumford equation of elliptic cones}). For $T_{2,q,\infty}$, by assigning $x,y,z$ weights $4, 3, 1$ respectively, the initial term is $x^2+y^2z^2$ which is semistable by the $T_{2,\infty, \infty}$ case. Thus $T_{2,q,\infty}$ is semistable by Corollary~\ref{cor: grober degeneration}. $T_{\infty, \infty, \infty}$ is semistable by Theorem~\ref{thm: snc is semistable}. For $T_{p, \infty, \infty}$ and $T_{p, q, \infty}$, we can assign $x,y,z$ weights $2, 2, 1$ so that the initial term is $xyz$, which is semistable by the $T_{\infty,\infty, \infty}$ case. Thus $T_{p, \infty, \infty}$ and $T_{p, q, \infty}$ are also semistable by Corollary~\ref{cor: grober degeneration}. 

We now handle the family (4), which follows from a degeneration very similar to above. We may assume $p\leq q\leq r$. If $p=2$ and $q=3$, then $r\geq 7$. So by assigning $x,y,z$ weights $3, 2, 1$, the initial term is $xyz+x^2+y^3$, which is semistable by Proposition~\ref{prop: Mumford elliptic cones} $n=1$ case. Thus $T_{2,3,r}$ is semistable by Corollary~\ref{cor: grober degeneration}. If $p=2$ and $q=4$, then $r\geq 5$. By assigning $x,y,z$ weights $2, 1, 1$, the initial term is $xyz+x^2+y^4$, but then assigning $x,y,z$ weights $3,2,1$, the initial term is $xyz+x^2$, which is semistable by Proposition~\ref{prop: Mumford elliptic cones} $n=2$ case. Thus $T_{2,4,r}$ is semistable by Corollary~\ref{cor: grober degeneration} (applied twice). If $p=2$ and $q,r\geq 5$, then by assigning $x,y,z$ weights $2, 1, 1$, the initial term is $xyz+x^2$, which is semistable by Proposition~\ref{prop: Mumford elliptic cones} $n=2$ case. Thus $T_{2,q,r}$ is semistable by Corollary~\ref{cor: grober degeneration}. Finally, if $p\geq 3$, then by assigning $x,y,z$ weights $1, 1, 1$, the initial term becomes either $T_{3, \infty, \infty}$ or $T_{3.3, \infty}$, which are semistable by the already established families (10) and (11). Thus they are semistable by Corollary~\ref{cor: grober degeneration} again. This completes the proof.
\end{proof}


Since our technique is essentially to degenerate the singularities to the monomial case where explicit computations are available, it does not seem to work for the simple elliptic singularities. However, we suspect that all two-dimensional semi-log canonical hypersurface singularities are semistable, see also \cite[page 79]{Mumford}. The following conjecture is all that remains to show.

\begin{conjecture}
\label{conj: elliptic curve semistable}
All the singularities in the families (1)--(3) above are semistable.
\end{conjecture}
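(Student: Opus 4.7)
The plan is to combine the homogeneous decomposition used in the proof of Proposition~\ref{prop: Mumford elliptic cones} (for the polygonal cones) with Kato's Riemann--Roch formula (Theorem~\ref{thm: Kato RR}) applied on the minimal resolution of the two-dimensional simple elliptic singularity. Let $R$ be one of the rings in families (1), (2), or (3). In each case, the minimal resolution $X\to \Spec R$ has a single exceptional divisor which is a smooth elliptic curve $E_0$ satisfying $E_0^2=-e$ with $e\in\{2,1,3\}$ respectively, $K_X$ is numerically equivalent to $-E_0$, and $p_g(R)=1$.

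First, by Corollary~\ref{cor: compute LM by homogeneous} and Proposition~\ref{prop: basic results on cLM}(\ref{part restrict to integrally closed}), it suffices to verify $\eh(I)\leq 6\length(R[[T]]/I)$ for every $(\m,T)$-primary ideal of the homogeneous form $I = I_0\oplus I_1T\oplus \cdots \oplus I_{N-1}T^{N-1}\oplus T^NR[[T]]$, with each $I_k$ an integrally closed $\m$-primary ideal of $R$. Applying Proposition~\ref{prop: Mumford 4.3} in dimension two with the sequence $r_k=k$ and the convention $I_N=R$ gives
$$\eh(I)\leq \sum_{k=0}^{N-1}\sum_{i=0}^{2}\eh\bigl(I_k^{[i]}\mid I_{k+1}^{[2-i]}\bigr),$$
while $\length(R[[T]]/I)=\sum_{k=0}^{N-1}\length(R/I_k)$. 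The problem thus reduces to a uniform sharp bound on multiplicities and mixed multiplicities of descending chains of integrally closed $\m$-primary ideals in the two-dimensional elliptic cone $R$.

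The next step is to realize all the $I_k$'s simultaneously on a common resolution $f\colon Y\to \Spec R$ dominating each of their normalized blowups. Writing $I_k\sO_Y=\sO_Y(-W_k)$ for effective anti-nef exceptional $\Q$-divisors with $W_0\geq W_1\geq \cdots\geq W_{N-1}\geq W_N=0$, we have $\eh(I_k^{[i]}\mid I_{k+1}^{[2-i]})=(-W_k)^i\cdot (-W_{k+1})^{2-i}$, and Theorem~\ref{thm: Kato RR} yields
$$\length(R/I_k) = \tfrac12\bigl(-W_k^2-K_Y\cdot W_k\bigr)+1 - h^1(Y,\sO_Y(-W_k)).$$
By the $p_g$-ideal theory of Okuma--Watanabe--Yoshida \cite{OWY}, one has $h^1(Y,\sO_Y(-W_k))\in\{0,1\}$, with equality precisely when $I_k$ is a $p_g$-ideal, in which case $\overline{I_k}$ is stable by \cite[Corollary~3.6]{OWY}. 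For $p_g$-ideals, Lemma~\ref{lem: LM for two dimensional stable} yields $\eh(I_k)\leq 2\length(R/I_k)$; for non-$p_g$-ideals, the defect term $h^1(Y,\sO_Y(-W_k))=0$ provides a compensating positive contribution to the right-hand side. The goal is to bundle these contributions termwise so that the Mumford-type bound above collapses to the required inequality $\sum_{k,i} \eh(I_k^{[i]}\mid I_{k+1}^{[2-i]})\leq 6\sum_k\length(R/I_k)$, using a telescoping argument on the quadratic form $W\mapsto -W^2-K_Y\cdot W$ restricted to the anti-nef cone.

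The hard part will be making this termwise bundling work uniformly across all $N$ and all admissible chains $(W_k)$. Unlike the polygonal case treated in Proposition~\ref{prop: Mumford elliptic cones}, where each exceptional component is rational and the required inequalities factor through the sharp three-variable bound of Theorem~\ref{thm: optimal Lech dim three} applied in local analytic charts on $Y$, no such chart-by-chart reduction is available for the smooth elliptic cone; one must work directly with the global intersection form on $Y$ and handle a genuine quadratic optimization over anti-nef cycles with elliptic support. An additional difficulty specific to the degree-one family $J_{2,0}$ is that $\sO_X(-nE_0)$ fails to be globally generated for $n=1$, so Lemma~\ref{lem: Watanabe} does not immediately reduce cycles on $Y$ to cycles on the minimal resolution $X$; the refined analysis used in the proof of Theorem~\ref{thm: minimal elliptic deg one} would have to be adapted. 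Finally, since the numerical invariants $E_0^2$, $K_X\cdot E_0$, and $p_g(R)$ depend only on the analytic type (equivalently only on the degree $e$) and not on $\lambda$, the intersection-theoretic approach would treat all admissible values of $\lambda$ uniformly, handling the modular parameter in families (1), (2), and (3) simultaneously.
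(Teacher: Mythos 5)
This statement is a \emph{conjecture} in the paper, not a theorem: the authors explicitly say they have been unable to verify it, and single out $\mathbb{C}[[x,y,z]]/(x^3+y^3+z^3)$ (family (3) with $\lambda=0$) as a concrete case they cannot settle. So there is no proof in the paper to compare against, and a proposal that genuinely resolved it would be new.

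Your reductions are sound and consistent with the paper's toolkit: restricting to integrally closed homogeneous ideals via Corollary~\ref{cor: compute LM by homogeneous}, applying Proposition~\ref{prop: Mumford 4.3} to bound $\eh(I)$ by mixed multiplicities of the slices $I_k$, representing the $I_k$ as anti-nef exceptional cycles $W_k$ on a common resolution $Y$ (note these are automatically integral Cartier divisors, not merely $\Q$-divisors, since $Y$ is smooth and each $I_k\sO_Y$ is invertible), and invoking Theorem~\ref{thm: Kato RR} together with the $p_g$-ideal theory of \cite{OWY} to express the colengths. But you then declare the core step --- showing that $\sum_{k,i}\eh(I_k^{[i]}\mid I_{k+1}^{[2-i]}) \leq 6\sum_k \length(R/I_k)$ holds for arbitrary descending chains of anti-nef cycles on $Y$ --- to be ``the hard part,'' and you do not establish it. That inequality \emph{is} the conjecture after your reformulation; nothing done earlier makes it easier. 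The obstructions you yourself list (no chart-by-chart reduction through Theorem~\ref{thm: optimal Lech dim three} because the exceptional curve is elliptic rather than rational, so the monomial-degeneration methods of Section~\ref{section: Lech's inequality revisited} are unavailable; failure of global generation of $\sO_X(-E_0)$ in the degree-one case $J_{2,0}$; a genuine quadratic optimization over the anti-nef cone) are precisely why the paper leaves this open. Writing that the refined analysis of Theorem~\ref{thm: minimal elliptic deg one} ``would have to be adapted'' names the difficulty without resolving it. As written, this is a plausible reduction together with a to-do list, not a proof.
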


More generally, we do not know whether the following could be true.

\begin{question}
Suppose $R$ is essentially of finite type over a field of characteristic zero that is $\Q$-Gorenstein and log canonical (or merely semi-log canonical), and $\eh(R) \leq \dim(R) + 1$. Then is $R$ semistable?
\end{question}

\newpage

\bibliographystyle{alpha}
\bibliography{refs}

\end{document}